\documentclass[11pt]{article}

\usepackage[a4paper,margin=1in]{geometry}
\usepackage{lmodern}
\usepackage{amsmath,amssymb,amsthm,mathtools}
\usepackage{bm}
\usepackage{booktabs}
\usepackage{array}
\usepackage{tikz}
\usepackage{microtype}
\usepackage{hyperref}

\hypersetup{colorlinks=true,linkcolor=blue,citecolor=blue,urlcolor=blue,
  pdfauthor={Ziqian Lin and Huangang Lu},
  pdftitle={A Tensor Critical Threshold in Cubic Spherical Kernel Graphs}}

\newtheorem{theorem}{Theorem}[section]
\newtheorem{proposition}[theorem]{Proposition}
\newtheorem{lemma}[theorem]{Lemma}
\newtheorem{corollary}[theorem]{Corollary}
\newtheorem{remark}[theorem]{Remark}

\numberwithin{equation}{section}
\numberwithin{table}{section}
\numberwithin{figure}{section}

\newcommand{\Sph}{\mathbb S}
\newcommand{\E}{\mathbb E}
\newcommand{\Pp}{\mathsf P}
\newcommand{\Qq}{\mathsf Q}
\newcommand{\TV}{\operatorname{TV}}
\newcommand{\tr}{\operatorname{tr}}
\newcommand{\HS}{\mathrm{HS}}
\newcommand{\1}{\mathbf 1}
\newcommand{\Kop}{\mathsf K}
\newcommand{\Adj}{\bm A}
\newcommand{\cH}{\mathcal H}
\newcommand{\Normal}{\mathcal N}

\title{A Tensor Critical Threshold in Cubic Spherical Kernel Graphs}
\author{Ziqian Lin\thanks{Corresponding author. School of Mathematics and Statistics,
Fuzhou University, Fuzhou, China.
Email: \href{mailto:linziqian@fzu.edu.cn}{linziqian@fzu.edu.cn}.}
\and Huangang Lu\thanks{School of Physics, Peking University,
Beijing, China.
Email: \href{mailto:luhuangang@pku.edu.cn}{luhuangang@pku.edu.cn}.}}
\date{}

\begin{document}
\maketitle

\begin{abstract}
We identify a strong-detection transition within the critical scale for
testing cubic spherical kernel graphs against \(G(n,1/2)\).
Let \(X_1,\ldots,X_n\) be independent uniform points on
\(\mathbb S^{d-1}\).  Conditional on these points, edges are independent,
with connection probability
\[
 \frac{1+\beta Z_{3,d}(\langle X_i,X_j\rangle)}2.
\]
Here \(Z_{3,d}\) is the normalized cubic zonal harmonic, with multiplicity
\(N_{3,d}\sim d^3/6\).  Assume \(n\to\infty\), \(d=d_n\to\infty\),
\(\beta=\beta_n\) with \(|\beta|\le1\), and \(n\le d^A\), with
\(A<\infty\) fixed.
Write \(r_n=n\beta^2/(N_{3,d}d)\).
On the critical scale \(r_n\to r\in(0,\infty)\), we identify the tensor
variational threshold
\[
 r_{\mathrm{tens}}
 :=\min_{0<q<1}\frac{-2\{\log(1-q)+q\}}{q^6}.
\]
For \(0<r<r_{\mathrm{tens}}\), the log-likelihood ratio under the null admits
an asymptotically Gaussian expansion in the signed triangle, and the
total-variation distance converges to \(2\Phi(r^{3/2}/2)-1\), where \(\Phi\)
is the standard normal distribution function.  For \(r>r_{\mathrm{tens}}\),
the likelihood-ratio test is strongly consistent, although the centered
signed triangle still has Gaussian fluctuations.  In the strict regimes
\(r_n\to0\) and \(r_n\to\infty\), total variation tends to zero and one,
respectively, even when \(\beta\to0\).
\end{abstract}

\section{Introduction}
\label{sec:introduction}

\subsection{Why the cubic layer is a separate critical problem}

We observe a graph whose edges are conditionally independent given latent
points on a high-dimensional sphere, and test it against an Erd\H{o}s--R\'enyi
graph with identical one-edge marginals.  Such matched-marginal tests ask
whether higher-order edge dependence retains enough information to certify
the latent geometry.  They arose for hard spherical graphs in
\cite{BubeckDingEldanRacz2016} and now form a central problem in
high-dimensional random graph comparison
\cite{BrennanBreslerNagaraj2020,LiuRacz2023,
LiuMohantySchrammYang2024,MaoWuXu2026}.

This paper studies the pure degree-three connection kernel
\[
 K^{(3)}_{d,\beta}(t)=\frac{1+\beta Z_{3,d}(t)}2,
\]
where \(Z_{3,d}\) is the normalized cubic zonal harmonic and
\(N_{3,d}=d(d-1)(d+4)/6\) is its multiplicity.  The matched null is
\(G(n,1/2)\).  The centered kernel has a single spectral block whose
nonzero eigenvalues all have the same sign, so the cube trace does not
cancel.

A natural statistic for detecting geometric dependence is the signed
triangle.  Let \(a_{ij}\in\{0,1\}\) indicate whether vertices \(i\) and \(j\)
are adjacent, and write \(Y_{ij}=2a_{ij}-1\) for the centered edge signs.
Summing the products around all vertex triples gives
\[
 T_n=\sum_{i<j<k}Y_{ij}Y_{jk}Y_{ki}.
\]

Under \(G(n,1/2)\), it has mean zero and standard deviation
\(\sqrt{\binom n3}\).  Under the geometric model, its mean is
\(\binom n3\beta^3/N_{3,d}^2\).  The absolute mean shift divided by the
null standard deviation is therefore
\[
 \sqrt{\binom n3}\,\frac{|\beta|^3}{N_{3,d}^2}
 \asymp \frac{n^{3/2}|\beta|^3}{N_{3,d}^2}.
\]
Signal and null fluctuations are comparable at the cube-trace scale
\(n\beta^2\asymp N_{3,d}^{4/3}\).

For degree three, \(N_{3,d}\asymp d^3\), so this scale is also
\(n\beta^2\asymp N_{3,d}d\).  The variational analysis compares the signal
and entropy contributions on the respective scales
\(n^2\beta^2/N_{3,d}\) and \(nd\).
These two contributions are therefore comparable in the cubic critical
regime.

The triangle's mean shift measures the strength of one statistic, whereas
optimal testing can use the entire graph.  A distributional limit for the
triangle does not by itself identify the limiting likelihood or the optimal
testing errors.  The coincidence of the two scales therefore raises the
central question: does the signed triangle describe the full critical likelihood,
or can the full graph provide additional information for testing?

\subsection{Main results}

Throughout the results below, \(n\to\infty\), \(d=d_n\to\infty\),
\(\beta=\beta_n\) with \(|\beta|\le1\), and \(n\le d^A\), with
\(A<\infty\) fixed.  Write
\[
 r_n=\frac{n\beta^2}{N_{3,d}d}.
\]

At the critical scale \(r_n\to r\in(0,\infty)\), the threshold
\[
 r_{\mathrm{tens}}
 :=\min_{0<q<1}\frac{-2\{\log(1-q)+q\}}{q^6}
\]
separates two distinct testing behaviors.  For
\(0<r<r_{\mathrm{tens}}\), the log-likelihood ratio under the null has an
asymptotically Gaussian expansion determined by the signed triangle.
The total-variation distance therefore satisfies
\[
 \TV\longrightarrow2\Phi(r^{3/2}/2)-1,
\]
where \(\Phi\) is the standard normal distribution function.  For
\(r>r_{\mathrm{tens}}\), the likelihood-ratio test is strongly consistent
and \(\TV\to1\).  Throughout the critical scale, the normalized signed
triangle under the alternative has a unit-variance Gaussian limit after
subtracting its mean,
whose absolute value tends to \(r^{3/2}\).  The transition in optimal
testing thus occurs while the triangle retains a finite normalized mean shift.

When \(A>4\), there are parameter sequences with
\(r_n\to r_{\mathrm{tens}}\) along which total variation tends to one, and
others along which it tends to
\(2\Phi(r_{\mathrm{tens}}^{3/2}/2)-1\).  The limiting critical parameter
alone therefore does not determine the total-variation limit at the threshold.

Outside the critical scale, we prove that \(r_n\to0\) implies \(\TV\to0\), whereas
\(r_n\to\infty\) implies \(\TV\to1\).  In the latter regime, the signed
triangle gives a strongly consistent test.  Both conclusions allow
\(\beta\to0\).

The critical-scale and endpoint results are stated in
Theorem~\ref{thm:pure-cubic-critical}.
Theorem~\ref{thm:pure-cubic-triangle-clt} gives the Gaussian limits for
the signed triangle throughout the critical scale, and
Theorem~\ref{thm:pure-cubic-dichotomy} treats the regimes outside it.

\subsection{Proof strategy}

The proof combines variational estimates at order \(nd\) with a sharp
second-moment calculation at order one.  The former locate the threshold
and control pairs of latent samples with nonvanishing feature overlap.
The second-moment bound and triangle limits then identify the limiting
likelihood below the threshold.

The spherical tensor variational formula
\cite[Appendix~A.3]{JagannathLopattoMiolane2020} and a comparison with the
cubic harmonic features determine the limiting Gaussian potential on
scalar matrices.  Optimizing over the Gaussian observation
strength gives the \(q^6\) objective defining \(r_{\mathrm{tens}}\).
To justify this threshold for the graph, we must also control nonscalar
matrix directions as the harmonic dimension \(N_{3,d}\) grows.
Proposition~\ref{prop:cubic-matrix-branch-gap} extends the scalar bound
to every positive-semidefinite matrix, giving a uniform negative bound
away from zero overlap when \(r<r_{\mathrm{tens}}\).
Gaussian interpolation and Bernoulli--Gaussian comparison then control
the graph likelihood integrated over these overlaps.  Concentration
allows an observation-dependent truncation that discards \(o(1)\)
likelihood mass, as shown in
Section~\ref{subsec:cubic-variational-mechanism}.

For the remaining small overlaps, expansion of the Bernoulli likelihood
factors and entropy estimates preserve the exact second-moment constant.
Writing \(L_n\) for the graph likelihood ratio relative to
\(\Qq_n=G(n,1/2)\), the normalized truncation \(\widetilde L_n\) satisfies
\[
 \|L_n-\widetilde L_n\|_{L^1(\Qq_n)}\to0,
 \qquad \limsup\E_{\Qq_n}\widetilde L_n^2\le e^{r^3}.
\]
The triangle limits under the null and alternative identify the
conditional mean of any subsequential likelihood limit given the
limiting triangle.  This conditional mean is a Gaussian likelihood ratio
with second moment \(e^{r^3}\).  It therefore attains the upper bound,
and equality in conditional Jensen's inequality identifies the full
likelihood limit.  Section~\ref{subsec:cubic-likelihood-mechanism}
completes this argument.

Above the threshold, a scalar trial matrix gives a positive Gaussian
free-energy lower bound.  Bernoulli--Gaussian comparison and concentration
of the log-likelihood under the alternative yield strong detection.
Diagonal sequences of the fixed-parameter limits give the two distinct
total-variation limits on approach to \(r_{\mathrm{tens}}\).
Outside the critical scale, triangle moments give strong detection,
while an amplitude-dependent second-moment bound gives
indistinguishability, including when \(\beta\to0\).

\subsection{Relation to previous work}

Latent-space, random dot-product, and random connection models describe
network dependence through unobserved vertex positions
\cite{HoffRafteryHandcock2002,AthreyaEtAl2018,Penrose2003,DettmannGeorgiou2016}.
For hard spherical graphs, Bubeck, Ding, Eldan, and R\'acz
\cite{BubeckDingEldanRacz2016} introduced matched-marginal testing.
Subsequent work refined the dependence on edge density
\cite{BrennanBreslerNagaraj2020}, treated sparse graphs
\cite{LiuMohantySchrammYang2024}, improved bounds when \(d>n\)
\cite{DuMaoSunWuXu2026}, and developed sandwiching couplings
\cite{BangachevBreslerSandwiching2025}.
For soft graphs, Liu and R\'acz study noisy spherical models
\cite{LiuRacz2023} and monotone connection functions with Gaussian latent
vectors \cite{LiuRaczLatent2023}.

Smooth connection functions bring the role of the kernel's spectrum into
focus.  For fixed smooth spherical kernels bounded away from zero and one
and satisfying \(K'(0)\ne0\), Mao, Wu, and Xu \cite{MaoWuXu2026} identify
the critical dimension \(d\asymp n^{3/4}\).  They develop a posterior-overlap
analysis and propose a more general spectral conjecture based on the cube
trace of the standardized kernel.

The relative performance of cycle statistics also depends on the model.
Fourier methods analyze low-degree detection in spherical and random
algebraic graphs \cite{BangachevBreslerFourier2024,BangachevBresler2025}.
For \(L_\infty\) threshold graphs on a torus, Bangachev and Bresler
\cite[Theorems~5--6]{BangachevBresler2024} show that signed four-cycles
outperform signed triangles and establish lower bounds for low-degree
polynomial tests.

Exact critical testing limits have precedents in several related models.
For Wishart--Gaussian comparison, entropy estimates establish
total-variation convergence above the \(n^3\) dimension scale
\cite{BubeckGanguly2018}, and R\'acz and Richey \cite{RaczRichey2019}
compute the limiting total variation within that scale.
Related likelihood limits hold below the reconstruction threshold in
spiked Wigner models \cite{ElAlaouiKrzakalaJordan2020}, while signed
cycles determine the asymptotic likelihood-ratio test in stochastic
block models with growing average degree \cite{BanerjeeMa2017}.

Our variational analysis uses the replica-symmetric (RS) formula for
spherical tensor PCA of Jagannath, Lopatto, and Miolane
\cite[Appendix~A.3]{JagannathLopattoMiolane2020}.  Matrix variational
formulas also arise in finite-rank inference \cite{LelargeMiolane2017},
and Guerra interpolation \cite{Guerra2001} bounds free energies subject
to overlap constraints, as in the Franz--Parisi framework
\cite{FranzParisi1995,ElAlaouiKrzakala2018}.
Observation-dependent truncation has a precedent in the refined
second-moment analysis of spiked tensors by Perry, Wein, and Bandeira
\cite{PerryWeinBandeira2020}.

\subsection{Organization}

Section~\ref{sec:model} introduces the cubic harmonic kernel and its
feature representation, defines the graph model and testing notation, and
records the spectral and cycle identities used in the proofs.
Section~\ref{sec:results} states the critical testing theorem, the
triangle central limit theorem, and the strict dichotomy.
Section~\ref{sec:cubic-strict} proves the strict dichotomy, and
Section~\ref{sec:cubic} proves the triangle limits and critical testing
theorem.
Appendix~\ref{app:cubic-fixed-amplitude} proves the partition-function
bound used in the second-moment argument, with explicit dependence on the
amplitude.
Appendix~\ref{app:cubic-triangle-clt} gives the
triangle moment estimates.
Appendix~\ref{app:cubic-franz-parisi} proves the matrix and Franz--Parisi
estimates, the likelihood truncation, and the supercritical and endpoint
results.
The regularity and local partition-function bound used in that
likelihood argument is stated in
Proposition~\ref{prop:cubic-raw-local-partition} and proved in
Appendix~\ref{app:cubic-local-partition}.

\section{The cubic model and notation}
\label{sec:model}

\subsection{The cubic harmonic kernel}
\label{subsec:cubic-harmonic-kernel}

For \(d\ge2\), let \(\sigma_d\) be uniform probability measure on
\(\Sph^{d-1}\subset\mathbb R^d\).  The normalized cubic zonal harmonic is
\begin{equation}
 Z_{3,d}(t)=\frac{(d+2)t^3-3t}{d-1},\qquad -1\le t\le1.
 \label{eq:cubic-zonal-polynomial}
\end{equation}
To describe its harmonic representation, let \(\cH_{3,d}\) be the space
of real spherical harmonics of degree three, with dimension
\(N_{3,d}:=\dim\cH_{3,d}=d(d-1)(d+4)/6\).
For any orthonormal basis \((\psi_{3,m})_{m=1}^{N_{3,d}}\) of
\(\cH_{3,d}\) in \(L^2(\sigma_d)\), the spherical harmonic addition formula gives
\begin{equation}
 Z_{3,d}(\langle x,y\rangle)
 =\frac1{N_{3,d}}\sum_{m=1}^{N_{3,d}}\psi_{3,m}(x)\psi_{3,m}(y).
 \label{eq:cubic-addition-formula}
\end{equation}
The dimension and addition formulas are standard facts from spherical
harmonic theory \cite[Chapter~1]{DaiXu2013}.
For points \(x,y\in\Sph^{d-1}\), we use \(Z_{3,d}(x,y)\) as shorthand
for \(Z_{3,d}(\langle x,y\rangle)\).

For the spectral and likelihood calculations, we introduce the normalized
feature map and express the kernel as an inner product:
\[
 q_{3,d}(x)
 :=\frac1{\sqrt{N_{3,d}}}
   \bigl(\psi_{3,1}(x),\ldots,\psi_{3,N_{3,d}}(x)\bigr),
 \qquad
 Z_{3,d}(x,y)=\langle q_{3,d}(x),q_{3,d}(y)\rangle.
\]
Taking \(y=x\) in \eqref{eq:cubic-addition-formula} gives
\(N_{3,d}^{-1}\sum_m\psi_{3,m}(x)^2=1\).
The definition of \(q_{3,d}\), the
orthonormality of the basis, and the oddness of degree-three harmonics
then yield, for \(X\sim\sigma_d\),
\[
 \|q_{3,d}(x)\|=1,\qquad
 \E q_{3,d}(X)q_{3,d}(X)^{\mathsf T}=N_{3,d}^{-1}I_{N_{3,d}},\qquad
 q_{3,d}(-x)=-q_{3,d}(x).
\]
Consequently, the polynomial \(Z_{3,d}\) satisfies \(Z_{3,d}(1)=1\) and
\(Z_{3,d}(-t)=-Z_{3,d}(t)\).  The feature representation and the
Cauchy--Schwarz inequality imply that \(|Z_{3,d}(t)|\le1\) for every
\(t\in[-1,1]\).

\subsection{Graph model and testing notation}
\label{subsec:testing-convention}

We study the connection kernel
\begin{equation*}
 K^{(3)}_{d,\beta}(t)=\frac{1+\beta Z_{3,d}(t)}2,
 \qquad |\beta|\le1.
\end{equation*}
The bound \(|Z_{3,d}|\le1\) ensures that this kernel takes values in
\([0,1]\).  Sample
\(X_1,\ldots,X_n\stackrel{\mathrm{iid}}{\sim}\sigma_d\).
Conditional on these points, the upper-triangular entries of the symmetric
adjacency matrix \(\Adj=(a_{ij})_{i,j\le n}\) are independent, with
\[
 a_{ij}\mid X_1,\ldots,X_n
 \sim\operatorname{Bernoulli}
       \bigl(K^{(3)}_{d,\beta}(\langle X_i,X_j\rangle)\bigr),
 \qquad i<j,
\]
and \(a_{ii}=0\).  For a measurable connection function
\(K:[-1,1]\to[0,1]\), write \(\Pp_{n,d,K}\)
for the law of this construction with \(K\) in place of \(K^{(3)}_{d,\beta}\).
Our alternative and null are
\[
 \Pp_n^{(3)}:=\Pp_{n,d,K^{(3)}_{d,\beta}},
 \qquad \Qq_n:=G(n,1/2).
\]
This is a spherical soft random geometric graph
\cite{MaoWuXu2026}; under the Erd\H{o}s--R\'enyi null, all upper-triangular
entries are independent Bernoulli variables \cite{JansonLuczakRucinski2000}.
Oddness of \(Z_{3,d}\) makes the marginal edge probability exactly \(1/2\)
under the alternative as well.  Thus, any distinction between the two
graph laws must arise from joint edge dependence rather than individual
edge marginals.

For graph laws \(P,Q\) on \(\{0,1\}^{\binom n2}\), let
\[
 \TV(P,Q):=\sup_B|P(B)-Q(B)|=\frac12\int|dP-dQ|.
\]
If a test takes value one under the alternative, its optimal sum of errors
satisfies
\begin{equation}
 \inf_\varphi\{Q(\varphi=1)+P(\varphi=0)\}=1-\TV(P,Q);
 \label{eq:testing-total-variation}
\end{equation}
see \cite[Chapter~2]{Tsybakov2009}.  We say that strong detection is possible
when \(\TV(P_n,Q_n)\to1\).

\subsection{Spectral and cycle identities}
\label{subsec:cubic-spectral-identities}

We first compute the spectrum of the integral operator associated with
the centered kernel.  Traces of its powers then give the expected products
of edge signs around simple cycles.  For a bounded real zonal kernel
\(\kappa\), define
\[
 (\Kop_\kappa f)(x)
 =\int_{\Sph^{d-1}}\kappa(\langle x,y\rangle)f(y)\,d\sigma_d(y).
\]
This is a self-adjoint Hilbert--Schmidt operator.  Its powers
\(\Kop_\kappa^\ell\) are trace class for every integer \(\ell\ge2\)
\cite[Chapter~2]{Simon2005}.  In the cubic model,
\(\kappa=2K^{(3)}_{d,\beta}-1=\beta Z_{3,d}\).  For
\(f\in L^2(\sigma_d)\), equation~\eqref{eq:cubic-addition-formula} gives
\[
 \begin{aligned}
 (\Kop_{\beta Z_{3,d}}f)(x)
 &=\beta\int_{\Sph^{d-1}}Z_{3,d}(x,y)f(y)\,d\sigma_d(y)\\
 &=\frac{\beta}{N_{3,d}}\sum_{m=1}^{N_{3,d}}\psi_{3,m}(x)
   \int_{\Sph^{d-1}}\psi_{3,m}(y)f(y)\,d\sigma_d(y).
 \end{aligned}
\]
Since the \(\psi_{3,m}\) form an orthonormal basis of \(\cH_{3,d}\),
the finite sum equals \(f\) when \(f\in\cH_{3,d}\) and is zero when
\(f\in\cH_{3,d}^{\perp}\).  Thus \(\Kop_{\beta Z_{3,d}}\) is
\(\beta/N_{3,d}\) times the orthogonal projection onto \(\cH_{3,d}\), with
\begin{equation}
 \left.\Kop_{\beta Z_{3,d}}\right|_{\cH_{3,d}}
 =\frac{\beta}{N_{3,d}}I_{\cH_{3,d}},
 \qquad
 \left.\Kop_{\beta Z_{3,d}}\right|_{\cH_{3,d}^{\perp}}=0,
 \label{eq:single-layer-spectrum}
\end{equation}
Taking powers and summing over the \(N_{3,d}\) basis vectors gives
\begin{equation}
 \tr(\Kop_{\beta Z_{3,d}}^\ell)
 =N_{3,d}\left(\frac{\beta}{N_{3,d}}\right)^\ell
 =\frac{\beta^\ell}{N_{3,d}^{\ell-1}},
 \qquad \ell\ge2.
 \label{eq:single-layer-traces}
\end{equation}
For \(\beta\ne0\), the operator has rank \(N_{3,d}\), with all nonzero
eigenvalues of the same sign.  Thus the odd traces do not exhibit the
cross-block cancellation possible for mixed kernels.  At \(\beta=0\),
the centered operator is zero and the graph law equals the null.

We relate these traces to graph statistics through the edge signs
\(Y_{ij}=2a_{ij}-1\).  For a connection function \(K\), set
\(\kappa=2K-1\).
For a simple cycle of length \(\ell\ge3\) on distinct vertices
\(i_1,\ldots,i_\ell\), set \(i_{\ell+1}=i_1\) and \(x_{\ell+1}=x_1\).
Conditional independence of the cycle edges, together with
\(\E(Y_{ij}\mid X_1,\ldots,X_n)=\kappa(\langle X_i,X_j\rangle)\), gives
\begin{equation}
  \E_{\Pp_{n,d,K}}
  \prod_{r=1}^{\ell}Y_{i_ri_{r+1}}
  =
  \int
  \prod_{r=1}^{\ell}
  \kappa(\langle x_r,x_{r+1}\rangle)
  \,d\sigma_d(x_1)\cdots d\sigma_d(x_\ell)
  =
  \tr(\Kop_\kappa^\ell).
  \label{eq:cycle-trace-identity}
\end{equation}
For the last equality, write the trace as the Hilbert--Schmidt inner
product of \(\Kop_\kappa\) and \(\Kop_\kappa^{\ell-1}\), using
self-adjointness.  Composition of their kernels gives the cyclic integral;
boundedness of \(\kappa\) justifies Fubini's theorem.  Under \(\Qq_n\),
the cycle moment is zero because its distinct edges are independent
and centered.

\section{Main results}
\label{sec:results}

We now state the testing results for the cubic model.  The main result
identifies a transition within the critical scale.  Below a tensor threshold,
the signed triangle determines the asymptotic log-likelihood ratio, whereas
above it the full graph is strongly distinguishable.  We present this result
first, followed by the Gaussian limits for the signed triangle and the
testing limits outside the critical scale.

For \(n\ge3\), define the signed triangle and its normalized version by
\begin{equation}
 T_n=\sum_{i<j<k}Y_{ij}Y_{jk}Y_{ki},\qquad
 \widehat T_n=\binom n3^{-1/2}T_n.
 \label{eq:cubic-triangle-statistic}
\end{equation}
The testing regimes and the normalized triangle mean are described by
\begin{equation}
 r_n=\frac{n\beta^2}{N_{3,d}d},
 \qquad
 \Gamma_{3,n}=\frac{n^3\beta^6}{N_{3,d}^4},
 \qquad
 \gamma_n=\frac{\sqrt{\binom n3}\,\beta^3}{N_{3,d}^2}.
 \label{eq:cubic-model-parameters}
\end{equation}
These parameters satisfy
\[
 \Gamma_{3,n}=r_n^3\frac{d^3}{N_{3,d}},
 \qquad
 \gamma_n^2=\frac{\binom n3}{n^3}\Gamma_{3,n}.
\]
All limits are taken as \(n\to\infty\) and \(d=d_n\to\infty\), with
\(\beta=\beta_n\) allowed to vary.  We suppress the dependence of
\(d\) and \(\beta\) on \(n\) in the notation.
Since \(N_{3,d}\sim d^3/6\), the strict regimes \(r_n\to0\) and
\(r_n\to\infty\) are equivalent to \(\Gamma_{3,n}\to0\) and
\(\Gamma_{3,n}\to\infty\), respectively.  The critical scale is
\(r_n\to r\in(0,\infty)\), where \(\Gamma_{3,n}\to6r^3\).  We write
\(\Normal(\mu,\Sigma)\) for a normal law with mean \(\mu\) and covariance
\(\Sigma\) (variance in one dimension).

Theorem~\ref{thm:pure-cubic-critical} describes testing at the critical
scale and identifies the tensor threshold for strong detection.

\begin{theorem}[Testing at the cubic critical scale]
\label{thm:pure-cubic-critical}
Fix $A<\infty$.  Let $d=d_n\to\infty$,
$n\le d^A$, and $|\beta|\le1$.
Suppose $r_n\to r\in(0,\infty)$.  Then
\(\gamma_n^2\to r^3\) and
\(\Gamma_{3,n}=6\gamma_n^2\{1+o(1)\}\to6r^3\).  Define
\[
 r_{\mathrm{tens}}
 :=\min_{0<q<1}\frac{-2\{\log(1-q)+q\}}{q^6}.
\]
If \(0<r<r_{\mathrm{tens}}\), then, under \(\Qq_n\),
\[
 \log\frac{d\Pp_n^{(3)}}{d\Qq_n}
 =\gamma_n\widehat T_n-\frac12r^3+o_{\Qq_n}(1),
\]
and
\[
 \TV(\Pp_n^{(3)},\Qq_n)
 \longrightarrow2\Phi(r^{3/2}/2)-1,
\]
where \(\Phi\) is the standard normal distribution function.
If \(r>r_{\mathrm{tens}}\), then the likelihood-ratio test is strongly
consistent and \(\TV(\Pp_n^{(3)},\Qq_n)\to1\).

When \(A>4\), there are parameter sequences with \(d=d_n\to\infty\),
\(n\le d^A\), \(|\beta|\le1\), and \(r_n\to r_{\mathrm{tens}}\)
along which total variation tends to one.
There are also such sequences along which total variation converges to
\(2\Phi(r_{\mathrm{tens}}^{3/2}/2)-1\).
\end{theorem}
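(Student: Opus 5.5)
The plan is to prove the four assertions in order: the parameter identities, the subcritical likelihood expansion with its total-variation limit, the supercritical strong detection, and the boundary sequences when \(A>4\).

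\textbf{Parameter identities.} These are immediate from \eqref{eq:cubic-model-parameters}. We have \(\Gamma_{3,n}=r_n^3d^3/N_{3,d}\), and \(N_{3,d}\sim d^3/6\), so \(\Gamma_{3,n}\to6r^3\). Since \(\binom n3/n^3\to1/6\), also \(\gamma_n^2\to r^3\).

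\textbf{Subcritical regime \(r<r_{\mathrm{tens}}\).} I would work with the likelihood ratio \(L_n=d\Pp_n^{(3)}/d\Qq_n\) in the conditional form
\[
 L_n=\E_X\prod_{i<j}\bigl(1+\beta Y_{ij}Z_{3,d}(X_i,X_j)\bigr).
\]
The argument has three steps.

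\emph{Step (a): truncation.} I would construct an observation-dependent truncation \(\widetilde L_n\) that restricts to latent configurations whose feature overlap matrices with an independent replica are small. The target is \(\|L_n-\widetilde L_n\|_{L^1(\Qq_n)}\to0\).

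\emph{Step (b): second moment.} I would prove \(\limsup\E_{\Qq_n}\widetilde L_n^2\le e^{r^3}\). Write the second moment as a replica average of
\[
 \prod_{i<j}\bigl(1+\beta^2Z(X_i,X_j)Z(X_i',X_j')\bigr).
\]
Expand \(\log(1+u)\) to second order. The linear term is
\[
 \beta^2\sum_{i<j}\langle q(X_i)\otimes q(X_i'),\,q(X_j)\otimes q(X_j')\rangle,
\]
which is quadratic in the overlap matrix \(R=\sum_i q(X_i)q(X_i')^{\mathsf T}\). At small overlap, Gaussian-chaos asymptotics produce the factor \(\exp(\tfrac12\cdot\text{cube-trace})\), which yields \(e^{\gamma_n^2}\to e^{r^3}\).

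\emph{Step (c): identify the limit.} By the triangle CLT (Theorem~\ref{thm:pure-cubic-triangle-clt}), \(\widehat T_n\Rightarrow\Normal(0,1)\) under \(\Qq_n\) and \(\widehat T_n\Rightarrow\Normal(r^{3/2},1)\) under \(\Pp_n^{(3)}\), after fixing the sign through \(\gamma_n\). Take any subsequential joint limit \((L,T)\). Then \(\E[L\mid T]=e^{r^{3/2}T-r^3/2}\), whose second moment is \(e^{r^3}\). Uniform integrability from step (b) gives \(\E L^2\le e^{r^3}\). Equality in conditional Jensen then forces \(L=\E[L\mid T]\), which is the claimed expansion. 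The total-variation limit follows from
\[
 \TV=\E_{\Qq_n}(L_n-1)_+,
\]
computed for a log-normal likelihood with variance \(r^3\), giving \(2\Phi(r^{3/2}/2)-1\).

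\textbf{The main obstacle.} It is step (a), and within it the control of moderate overlaps. The route I would try first is the following. Bound the replica second moment restricted to overlap \(R\) by an exponential of \(nd\) times a variational potential \(\Psi(R)\). The entropy term comes from the spherical tensor RS formula \cite[Appendix~A.3]{JagannathLopattoMiolane2020}, compared against cubic harmonic features. The energy term is \(\tfrac{r}{2}\,\tr\)-type sixth-order in the scalar overlap \(q\). On scalar matrices this gives
\[
 \Psi=\tfrac{r}{2}q^6+\tfrac12\{\log(1-q)+q\}\cdot\text{const},
\]
which is negative away from \(q=0\) exactly when \(r<r_{\mathrm{tens}}\). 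Extending this negativity uniformly to all positive-semidefinite matrices, as the growing dimension \(N_{3,d}\) requires, is the key proposition (Proposition~\ref{prop:cubic-matrix-branch-gap}). I would prove it by Guerra interpolation, together with Bernoulli--Gaussian comparison to pass from graph edges to Gaussian observations. A net argument with the polynomial bound \(n\le d^A\) then handles the union over overlaps. Concentration of the restricted free energy lets us discard these overlaps from \(L_n\) at \(o(1)\) cost in \(L^1\).

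\textbf{Supercritical regime \(r>r_{\mathrm{tens}}\).} Choose a scalar trial overlap \(q^\ast\) with positive potential. Guerra interpolation gives a lower bound
\[
 \tfrac1{nd}\log L_n\ge c>0
\]
with high \(\Pp_n^{(3)}\)-probability. Concentration of \(\log L_n\) under the alternative, together with Bernoulli--Gaussian comparison, then shows that \(\log L_n\to+\infty\) in \(\Pp_n^{(3)}\)-probability. Since \(L_n\) is tight under \(\Qq_n\), it follows that \(\TV\to1\).

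\textbf{Boundary sequences (\(A>4\)).} Use a diagonal argument. Pick \(r_k\downarrow r_{\mathrm{tens}}\) and, separately, \(r_k\uparrow r_{\mathrm{tens}}\). For each \(k\), choose \(n_k\) large enough that the fixed-\(r_k\) limit is reached within \(1/k\). Continuity of \(2\Phi(r^{3/2}/2)-1\) then gives the two different limits. The condition \(A>4\) guarantees room to realize the required \(n\asymp d^4\beta^2\) with \(|\beta|\le1\) and \(n\le d^A\).
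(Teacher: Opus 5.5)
Your overall architecture matches the paper's, and the supercritical and endpoint parts are essentially the paper's argument. The shared architecture is: observation-dependent truncation, a sharp second-moment bound $e^{r^3}$, and identification through the triangle limits and equality in conditional Jensen; then a scalar trial matrix with Bernoulli--Gaussian comparison and concentration above the threshold, and a diagonal construction at $r_{\mathrm{tens}}$.

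The subcritical half, however, rests on two steps for which you give no workable mechanism. The first is step (b). On $\{s_n<\delta\}$ the sharp constant cannot come from ``Gaussian-chaos asymptotics''. Under $\rho_n\otimes\rho_n$ the linear term $\beta^2E_n=\beta^2\sum_{i<j}h_{ij}$ has variance $\beta^4\binom n2/N^2\asymp r^2d^2\to\infty$. Inside $\{s_n<\delta\}$ the weight $W_{n,\beta}\le e^{\beta^2(\|S_n\|_{\HS}^2-n)/2}\le e^{r_nnd\,s_n/2}$ can still be exponentially large in $nd$. The would-be Gaussian factor $e^{\beta^4\binom n2/(2N^2)}$ must cancel exactly against $-\frac{\beta^4}{2}\sum_{i<j}h_{ij}^2$, and $\binom n3\beta^6/N^4$ appears only after that cancellation. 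You therefore need exponential moments with $o(1)$ accuracy in the exponent, which no distributional limit supplies. The paper obtains this in Appendix~\ref{app:cubic-local-partition}. It conditions the prior on a regularity event. It then removes three regions by separate large-deviation estimates: the mesoscopic shells $\underline s_d\le s_n<\delta$, the high-energy region, and the region $\beta^2\ell(S_n)>\epsilon_\star d$. The central region is bounded through the Gibbs variational formula and the chain rule over rows, the compensated row expansion with $\eta=\beta^2(n-2)/(3N^2)$, exchangeable-prefix moment bounds, and separate control of the fourth-order statistic $T_{4,n}$.

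The second gap is the extension to nonscalar matrices. Guerra interpolation, Bernoulli--Gaussian comparison and covering only reduce the constrained graph free energy to $\sup\{\mathcal V_{d,r}(Q):Q\succeq0,\ N\|Q\|_{\HS}^2\ge\delta/2\}$, as in Proposition~\ref{prop:cubic-Gaussian-variational-bound}. They say nothing about why this supremum is negative over all positive-semidefinite $Q$ as $N\to\infty$. That is a separate deterministic problem, and the missing idea is a rigidity argument. You maximize the perturbed potential $\mathcal V_{d,r}(Q)+\kappa_KN\|Q\|_{\HS}^2+\varepsilon\tr Q$ and show every maximizer is positive definite. Four ingredients then force the maximizer to be scalar:
\begin{itemize}
\item the I--MMSE gradient and Hessian formulas;
\item irreducibility of the posterior covariances, which comes from the positive curvature of the cubic harmonic embedding;
\item Perron--Frobenius simplicity of the top eigenvalue of $\mathcal T_B$;
\item $\mathrm{SO}(d)$-equivariance.
\end{itemize}
Even the scalar input needs more than negativity of $v_r$ away from $0$. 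You need $v_{d,r}(z)\le-4\kappa_Kz^2$ uniformly down to $z\to0$ at finite $d$, which locally uniform convergence $v_{d,r}\to v_r$ does not give. The paper supplies this small-$z$ bound through Lemma~\ref{lem:cubic-directional-mgf}.
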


For \(0<r<r_{\mathrm{tens}}\), the signed triangle determines the asymptotic
log-likelihood ratio, but the positive total-variation limit is strictly less
than one.  The models can therefore be distinguished with a nonvanishing
advantage, although no test makes both error probabilities tend to zero.
For \(r>r_{\mathrm{tens}}\), strongly consistent testing becomes possible.
Thus \(r_{\mathrm{tens}}\) marks the onset of strong detection within the
critical scale.  Its defining variational problem has a unique minimizer,
characterized by the stationarity equation in
\eqref{eq:cubic-main-threshold-stationarity}.

For \(A>4\), the two sequence constructions show that
\(r_n\to r_{\mathrm{tens}}\) alone does not determine the total-variation
limit.
The theorem establishes the existence of these distinct limits but does not
identify a finite-size window or determine the limit for a prescribed
approach rate.

Theorem~\ref{thm:pure-cubic-triangle-clt} gives the Gaussian limits for
the signed triangle throughout the critical scale, including at
\(r_{\mathrm{tens}}\).

\begin{theorem}[Gaussian limits for the signed triangle]
\label{thm:pure-cubic-triangle-clt}
Fix \(A<\infty\).  Let \(d=d_n\to\infty\), \(n\le d^A\), and
\(|\beta|\le1\).  Suppose \(r_n\to r\in(0,\infty)\).  Then
\[
 \widehat T_n\Rightarrow\Normal(0,1)\quad\hbox{under }\Qq_n,\qquad
 \widehat T_n-\gamma_n\Rightarrow\Normal(0,1)
       \quad\hbox{under }\Pp_n^{(3)},
\]
with \(\gamma_n^2\to r^3\).
\end{theorem}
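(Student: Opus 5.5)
We prove both limits by the method of moments, or more efficiently by a martingale/U-statistic CLT under \(\Qq_n\) and a conditional decomposition under \(\Pp_n^{(3)}\). We first verify \(\gamma_n^2\to r^3\): since \(\binom n3\sim n^3/6\) and \(N_{3,d}\sim d^3/6\), we get \(\gamma_n^2\sim\Gamma_{3,n}/6=r_n^3d^3/(6N_{3,d})\to r^3\).

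\textbf{The null.} Under \(\Qq_n\) the \(Y_{ij}\) are i.i.d.\ Rademacher, and \(T_n\) is a degenerate homogeneous chaos of degree three with variance \(\binom n3\). We apply the fourth-moment theorem for Rademacher chaos of de Jong type, or directly a martingale CLT that reveals edges in lexicographic order. The required input is that \(\E\widehat T_n^4\to3\). This reduces to counting pairs of triangles that share an edge: such configurations number \(O(n^4)\), which is \(o(n^6)\), so the fourth cumulant vanishes. Hence \(\widehat T_n\Rightarrow\Normal(0,1)\).

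\textbf{The alternative.} Condition on the latent points \(X=(X_1,\ldots,X_n)\) and write \(Y_{ij}=\kappa_{ij}+\xi_{ij}\), where \(\kappa_{ij}=\beta Z_{3,d}(X_i,X_j)\) and the \(\xi_{ij}\) are conditionally independent and centered with variance \(1-\kappa_{ij}^2\). Expanding each product \(Y_{ij}Y_{jk}Y_{ki}\) gives four pieces:
\begin{itemize}
\item the pure-noise triangle \(\sum\xi\xi\xi\);
\item the one-signal terms \(\sum\kappa_{ij}\xi_{jk}\xi_{ki}\), etc.;
\item the two-signal terms \(\sum\kappa_{ij}\kappa_{jk}\xi_{ki}\);
\item the deterministic-given-\(X\) term \(S_n=\sum_{i<j<k}\kappa_{ij}\kappa_{jk}\kappa_{ki}\).
\end{itemize}
We handle each piece in turn.

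For the deterministic term, \(\E S_n=\binom n3\beta^3/N_{3,d}^2\) by \eqref{eq:cycle-trace-identity} and \eqref{eq:single-layer-traces}, so \(\E S_n/\sqrt{\binom n3}=\gamma_n\). We must show \(\operatorname{Var}(S_n)=o(n^3)\). Using the feature representation \(\kappa_{ij}=\beta\langle q(X_i),q(X_j)\rangle\), we decompose \(S_n\) by Hoeffding. Its variance is governed by the traces \(\tr(\Kop^\ell)\) and by the overlaps \(\E\kappa_{12}^2=\beta^2/N_{3,d}\). The dominant contribution comes from triangles sharing one vertex, giving \(n^5\E[(\E(\kappa_{12}\kappa_{23}\kappa_{31}\mid X_1))^2]\). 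The projection \(\E(\kappa_{12}\kappa_{23}\kappa_{31}\mid X_1)=\beta^3\,q(X_1)^{\mathsf T}(\E qq^{\mathsf T})^2q(X_1)=\beta^3/N_{3,d}^2\) is constant, so this contribution vanishes exactly. Triangles sharing an edge contribute \(n^4\beta^6\E[(\langle q_1,q_2\rangle\, q_1^{\mathsf T}q_2/N)^2]\approx n^4\beta^6/N^3\), which is \(o(n^3)\) because \(n\beta^6\lesssim n\ll N^3\). The remaining term is of order \(n^3\beta^6\E\kappa^6\), which is \(o(n^3)\). Hence \((S_n-\E S_n)/\sqrt{\binom n3}\to0\) in \(L^2\).

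The two-signal noise terms \(\sum_{i,k}\xi_{ik}\sum_j\kappa_{ij}\kappa_{jk}\) have conditional variance of order \(n^2\sum_{ik}(n\beta^2/N)^2Z^2\approx n^4\cdot n^2\beta^4/N^3\), up to the centering \(\E_j\kappa_{ij}\kappa_{jk}=\beta^2 Z_{ik}/N\). This is \(o(n^3)\) because \(n^3\beta^4\ll N^3\) on the critical scale, where \(n\beta^2\asymp d^4\). The one-signal terms \(\sum\kappa\xi\xi\) have conditional variance \(\asymp n^3\E\kappa^2=n^3\beta^2/N=o(n^3)\).

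For the pure-noise triangle, we apply the conditional fourth-moment or martingale CLT given \(X\). Its conditional variance is \(\sum\prod(1-\kappa^2)=\binom n3(1-o(1))\) in probability, and the same fourth-cumulant counting as under the null applies. Hence \(\widehat T_n-\gamma_n\Rightarrow\Normal(0,1)\).

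\textbf{Main obstacle.} The delicate step is the variance control of \(S_n\) and of the two-signal term, which requires exact moments of \(\langle q(X_1),q(X_2)\rangle\) rather than crude bounds. We would use the identities \(\E Z_{3,d}(X_1,X_2)^2=1/N_{3,d}\) and \(\E Z^4=O(N^{-2})\), obtained via Gegenbauer or hypercontractivity on the sphere, together with the assumption \(n\le d^A\). That assumption enters only in absorbing polylogarithmic or \(\beta\)-dependent error terms uniformly in the range.
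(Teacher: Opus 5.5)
Your route is the paper's: de Jong's fourth-moment CLT under \(\Qq_n\), and under \(\Pp_n^{(3)}\) the split \(Y_{ij}=\kappa_{ij}+\xi_{ij}\) into the pure-noise triangle chaos (conditional de Jong) plus remainders with one, two or three mean factors, each shown to be \(o(\binom n3^{1/2})\) in \(L^2\). The two-signal remainder, however, is not controlled by what you wrote. You estimate its variance by \(n^6\beta^4/N^3\) and then assert \(n^3\beta^4\ll N^3\) on the critical scale. That assertion is false for every admissible sequence. Indeed \(n\beta^2\asymp d^4\) and \(|\beta|\le1\) force \(n\gtrsim d^4\), so \(n^3\beta^4=n(n\beta^2)^2\gtrsim d^{12}\gg d^9\asymp N^3\), and your bound is not \(o(n^3)\). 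The error is a spurious factor \(n^2\) in the count. The correct computation conditions on the shared vertices and uses \(\int Z_{3,d}(x,z)Z_{3,d}(z,y)\,d\sigma_d(z)=Z_{3,d}(x,y)/N\). This gives exactly \(\E(\kappa_{ik}^2\kappa_{jk}^2)=\beta^4/N^2\), and for \(k\ne\ell\), \(\E(\kappa_{ik}\kappa_{jk}\kappa_{i\ell}\kappa_{j\ell})=\beta^4\E Z_{3,d}(X_i,X_j)^2/N^2=\beta^4/N^3\). Summing \(n-2\) diagonal and \((n-2)(n-3)\) off-diagonal terms over the \(\binom n2\) edges gives
\[
 \binom n3^{-1}\E\Bigl[\sum_{i<j}\xi_{ij}\sum_{k\ne i,j}\kappa_{ik}\kappa_{jk}\Bigr]^2
 \le\frac{3\beta^4}{N^2}+\frac{3(n-3)\beta^4}{N^3},
\]
and the last term is at most \(3r_n\beta^2d/N^2\to0\).

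Two smaller repairs are also needed. First, the shared-edge covariance in \(S_n\) equals \(\beta^6\E Z_{3,d}(X_1,X_2)^4/N^2-\beta^6/N^4=O(\beta^6/N^4)\), not \(\beta^6/N^3\). More importantly, your justification \(n\ll N^3\) is unavailable: it fails once \(A>9\) and \(\beta\) is small. The valid one is \(n\beta^6\le n\beta^2=r_nNd\). Second, the conditional CLT for the noise chaos requires a high-probability event on which \(\max_{i<j}|\kappa_{ij}|=o(1)\); the paper uses \(\mathcal G_n\) from Lemma~\ref{lem:cubic-hereditary-envelope}. On that event the conditional variances \(1-\kappa_{ij}^2\) are bounded below and the normalized noise variables are bounded. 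The influence and fourth-moment hypotheses of de Jong's theorem then hold uniformly in \(X\); you should also state the maximal-influence condition, which is trivial under the null. The unconditional limit follows by averaging the conditional characteristic functions. With these corrections your argument becomes the paper's.
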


Section~\ref{subsec:cubic-triangle-critical} gives the proof, with the
detailed moment estimates in
Appendix~\ref{app:cubic-triangle-clt}.
Below \(r_{\mathrm{tens}}\), these limits combine
with a sharp second-moment bound for a truncated likelihood to prove the
expansion in Theorem~\ref{thm:pure-cubic-critical}.  Above the threshold,
they contrast with strong detection from the full graph.

Theorem~\ref{thm:pure-cubic-dichotomy} treats the regimes outside the
critical scale.  These require separate estimates because the fixed-\(r\)
limits above do not by themselves
provide uniformity as \(r\) tends to zero or infinity.
The indistinguishability conclusion tracks the amplitude throughout and
therefore remains valid when \(\beta\) tends to zero.

\begin{theorem}[Testing outside the critical scale]
\label{thm:pure-cubic-dichotomy}
Fix \(A<\infty\).  Let \(d=d_n\to\infty\),
\(n\le d^A\), and \(|\beta|\le1\).  For the graph laws
\(\Pp_n^{(3)}\) and \(\Qq_n\) from Section~\ref{subsec:testing-convention},
\[
  \Gamma_{3,n}\to\infty
  \quad\Longrightarrow\quad
  \TV(\Pp_n^{(3)},\Qq_n)\to1,
\]
whereas
\[
  \Gamma_{3,n}\to0
  \quad\Longrightarrow\quad
  \TV(\Pp_n^{(3)},\Qq_n)\to0.
\]
\end{theorem}

When \(\Gamma_{3,n}\to\infty\), the signed triangle alone gives a strongly
consistent test, as shown in Proposition~\ref{prop:single-positive}.
When \(\Gamma_{3,n}\to0\), the conclusion applies to every test based on
the full graph.  The rejection probabilities under the null and alternative
differ by \(o(1)\), so no statistic can yield a nonvanishing testing advantage.

The same cubic spectral quantity governs both regimes.  By
\eqref{eq:single-layer-traces},
\(\Gamma_{3,n}=n^3\tr^2(\Kop_{\beta Z_{3,d}}^3)\).
For nonzero \(\beta\), strong detection
holds when \(N_{3,d}\ll n^{3/4}|\beta|^{3/2}\), whereas indistinguishability
holds when \(N_{3,d}\gg n^{3/4}|\beta|^{3/2}\).
These limits identify the two sides of the critical scale.
Within that scale, Theorem~\ref{thm:pure-cubic-critical} describes the
further transition at \(r_{\mathrm{tens}}\).

\section{Strict testing regimes}
\label{sec:cycles}
\label{sec:cubic-strict}

We first establish the strict regimes \(r_n\to0\) and \(r_n\to\infty\),
proving Theorem~\ref{thm:pure-cubic-dichotomy}.  The detectable direction
follows from the exact triangle moments.  For indistinguishability, we bound
the second moment of a conditioned likelihood ratio using
Appendix~\ref{app:cubic-fixed-amplitude}, retaining the dependence on
\(\beta\) to cover vanishing amplitudes.  Throughout this section write
\(N=N_{3,d}\).

\subsection{Triangle moments and strong detection}

Recall \(T_n\) from \eqref{eq:cubic-triangle-statistic}.
Rotational invariance and the cycle--trace identity reduce its variance
calculation in Proposition~\ref{prop:cubic-triangle-moments} to counting
pairs of triangles according to their shared vertices.

\begin{proposition}[Exact cubic triangle moments]
\label{prop:cubic-triangle-moments}
Under \(\Qq_n=G(n,1/2)\),
\[
 \E_{\Qq_n}T_n=0,
 \qquad
 \operatorname{Var}_{\Qq_n}(T_n)=\binom n3.
\]
Under \(\Pp_n^{(3)}\),
\begin{align}
 \E_{\Pp_n^{(3)}}T_n
 &=\binom n3\frac{\beta^3}{N^2},
 \label{eq:cubic-triangle-mean}\\
 \operatorname{Var}_{\Pp_n^{(3)}}(T_n)
 &=\binom n3\left(1-\frac{\beta^6}{N^4}\right)
   +12\binom n4\left(\frac{\beta^4}{N^3}-\frac{\beta^6}{N^4}\right).
 \notag
\end{align}
\end{proposition}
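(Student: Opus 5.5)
The plan is to compute both moments directly by expanding \(T_n\) and \(T_n^2\) over triangles. For a triangle \(\tau=\{i,j,k\}\), write \(Y_\tau=Y_{ij}Y_{jk}Y_{ki}\). Then
\[
 T_n=\sum_\tau Y_\tau,\qquad T_n^2=\sum_{\tau,\tau'}Y_\tau Y_{\tau'},
\]
where the second sum runs over ordered pairs of triangles.

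\textbf{Means.} Linearity and the cycle--trace identity \eqref{eq:cycle-trace-identity} with \(\ell=3\), combined with \eqref{eq:single-layer-traces}, give
\[
 \E_{\Pp_n^{(3)}}Y_\tau=\tr(\Kop_{\beta Z_{3,d}}^3)=\frac{\beta^3}{N^2}=:m
\]
for every \(\tau\). This yields \eqref{eq:cubic-triangle-mean}. Under \(\Qq_n\) the three edges of \(\tau\) are independent and centered, so the mean is zero.

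\textbf{Second moments.} I will classify ordered pairs \((\tau,\tau')\) by the number of shared vertices, which is \(3\), \(2\), \(1\) or \(0\).
\begin{itemize}
\item If \(\tau=\tau'\), then \(Y_\tau^2=1\) because \(Y_{ij}^2=1\). There are \(\binom n3\) such pairs.
\item If \(\tau\) and \(\tau'\) share exactly two vertices, they share one edge, whose square equals \(1\). The remaining four edges form a simple \(4\)-cycle. By \eqref{eq:cycle-trace-identity} and \eqref{eq:single-layer-traces}, the expectation is \(\tr(\Kop^4)=\beta^4/N^3\). Each set of four vertices contains four triangles, and any two of them share an edge. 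This gives \(6\) unordered and \(12\) ordered pairs per \(4\)-set, hence \(12\binom n4\) pairs in total.
\item If \(\tau\) and \(\tau'\) share no vertex, they are independent under \(\Pp_n^{(3)}\), since the latent points are i.i.d.\ and edges are conditionally independent. So \(\E Y_\tau Y_{\tau'}=m^2\) and the covariance vanishes.
\item If \(\tau\) and \(\tau'\) share exactly one vertex \(i\), condition on \(X_i=x\). The two triangles then become conditionally independent. The conditional mean of each equals the diagonal value of the kernel of \(\Kop^3\), namely \((\beta/N)^3\sum_m\psi_{3,m}(x)^2\). By the addition formula at \(y=x\), this equals \(\beta^3/N^2=m\) for every \(x\). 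So \(\E Y_\tau Y_{\tau'}=m^2\) and the covariance again vanishes.
\end{itemize}

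\textbf{Assembling.} Subtracting \((\binom n3 m)^2\) leaves only the diagonal and shared-edge contributions:
\[
 \operatorname{Var}_{\Pp_n^{(3)}}(T_n)=\binom n3(1-m^2)+12\binom n4\Bigl(\frac{\beta^4}{N^3}-m^2\Bigr),
\]
which is the claimed formula. Under \(\Qq_n\), every pair \(\tau\neq\tau'\) has an edge appearing exactly once in \(Y_\tau Y_{\tau'}\). That edge is independent of the others and centered, so the expectation is zero. Only the diagonal survives, giving \(\operatorname{Var}_{\Qq_n}(T_n)=\binom n3\).

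\textbf{Main obstacle.} The one non-routine step is the one-shared-vertex case. There, conditional constancy of the diagonal of \(\Kop^3\) is what kills the covariance. This constancy comes from rotational invariance, made explicit by the addition formula giving \(\sum_m\psi_{3,m}(x)^2=N\). Everything else is bookkeeping of the counts.
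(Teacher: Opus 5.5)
Your proposal is correct and follows essentially the same route as the paper: expand the second moment over ordered pairs of triangles by overlap type, kill the disjoint and one-shared-vertex covariances by (conditional) independence with constant conditional means, and evaluate the identical and shared-edge pairs via the cycle--trace identity as \(1\) and \(\beta^4/N^3\) on \(12\binom n4\) pairs. The only cosmetic difference is that you verify constancy of the one-vertex conditional mean explicitly through the diagonal of the kernel of \(\Kop_{\beta Z_{3,d}}^3\) and the addition formula, where the paper simply appeals to rotational invariance.
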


\begin{proof}
Write \(U_{ijk}=Y_{ij}Y_{jk}Y_{ki}\).  Under \(\Qq_n\), these monomials
are centered and orthogonal, with square one, proving the null formulas.
Under \(\Pp_n^{(3)}\), disjoint triangles are independent.  Triangles sharing
one vertex are conditionally independent given its latent position, and
rotational invariance makes their conditional means constant.  Thus the
overlap covariance formula gives
\[
 \operatorname{Var}_{\Pp_n^{(3)}}(T_n)
 =\binom n3\operatorname{Var}_{\Pp_n^{(3)}}(U_{123})
 +12\binom n4\operatorname{Cov}_{\Pp_n^{(3)}}(U_{123},U_{124}),
\]
with the second term omitted when \(n=3\).
By \eqref{eq:single-layer-traces}--\eqref{eq:cycle-trace-identity} and
\(Y_{ij}^2=1\),
\[
 \E_{\Pp_n^{(3)}}U_{123}=\frac{\beta^3}{N^2},
 \qquad
 \E_{\Pp_n^{(3)}}U_{123}^2=1,
 \qquad
 \E_{\Pp_n^{(3)}}(U_{123}U_{124})=\frac{\beta^4}{N^3}.
\]
Substitution and linearity of expectation prove the remaining formulas.
\end{proof}

When \(\Gamma_{3,n}\to\infty\), the squared mean in
\eqref{eq:cubic-triangle-mean} dominates the variance under either graph
law.  Proposition~\ref{prop:single-positive} converts this separation into
a strongly consistent triangle test.

\begin{proposition}[Strong detection by the signed triangle]
\label{prop:single-positive}
If \(\Gamma_{3,n}\to\infty\), then the signed-triangle test oriented by
\(\operatorname{sgn}(\beta)\) is
strongly consistent and
\[
 \TV(\Pp_n^{(3)},\Qq_n)\longrightarrow1.
\]
\end{proposition}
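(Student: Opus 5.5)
We will use a Chebyshev argument built on the exact moments in Proposition~\ref{prop:cubic-triangle-moments}. Set \(s=\operatorname{sgn}(\beta)\); note \(\beta\neq0\) eventually, since \(\Gamma_{3,n}\to\infty\). Write \(\mu_n=\E_{\Pp_n^{(3)}}T_n=\binom n3\beta^3/N^2\), so that \(s\mu_n=|\mu_n|\). The test is
\[
\varphi_n=\1\{sT_n>|\mu_n|/2\},
\]
and by \eqref{eq:testing-total-variation} it suffices to show that both error probabilities of \(\varphi_n\) vanish.

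\emph{Type I error.} Under \(\Qq_n\) the statistic is centered with variance \(\binom n3\). Chebyshev gives
\[
\Qq_n(\varphi_n=1)\le\frac{4\binom n3}{\mu_n^2}=\frac{4}{\gamma_n^2}.
\]
Since \(\gamma_n^2=\binom n3 n^{-3}\Gamma_{3,n}\sim\Gamma_{3,n}/6\to\infty\), this tends to zero.

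\emph{Type II error.} Under \(\Pp_n^{(3)}\), Chebyshev gives
\[
\Pp_n^{(3)}(\varphi_n=0)\le\frac{4\operatorname{Var}_{\Pp_n^{(3)}}(T_n)}{\mu_n^2}.
\]
We bound the variance by \(\binom n3+12\binom n4\beta^4/N^3\). The first term contributes \(4/\gamma_n^2\to0\). For the second, since \(\binom n4/\binom n3^2\asymp n^{-2}\), the ratio is
\[
\frac{12\binom n4\beta^4N^{-3}}{\binom n3^2\beta^6N^{-4}}
\asymp\frac{N}{n^2\beta^2}.
\]

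\emph{The main obstacle} is showing this last quantity vanishes, since \(\beta\) may tend to zero. Write
\[
\frac{N}{n^2\beta^2}=\Bigl(\frac{N^4}{n^3\beta^6}\Bigr)^{1/3}\cdot\frac{N^{-1/3}}{\beta^0}\cdot n^{-1}
=\Gamma_{3,n}^{-1/3}\,N^{-1/3}\,n^{-1}\cdot n\,\beta^0\cdots,
\]
or more cleanly
\[
\frac{N}{n^2\beta^2}=\Gamma_{3,n}^{-2/3}\,\frac{N^{-5/3}\cdot N^{8/3}}{n^{2}\beta^{2}}\cdot\frac{n^{2}\beta^{4}}{N^{8/3}}\cdots.
\]
The simplest route is directly:
\[
\frac{N}{n^2\beta^2}=\frac{N}{n^2\beta^2}\cdot\frac{(n\beta^2)^{1}}{n\beta^2}
=\frac{N}{n\beta^2}\cdot\frac1n .
\]
Now \(\Gamma_{3,n}\to\infty\) means \(n\beta^2\gg N^{4/3}\), so \(N/(n\beta^2)\ll N^{-1/3}\to0\), and the extra factor \(1/n\) only helps. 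Hence the second contribution vanishes with no lower bound on \(|\beta|\) needed, and the \(-\beta^6/N^4\) corrections can simply be dropped as nonpositive.

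Both errors therefore tend to zero, so \(\varphi_n\) is strongly consistent and \(\TV(\Pp_n^{(3)},\Qq_n)\to1\). The hypothesis \(n\le d^A\) is not needed here.
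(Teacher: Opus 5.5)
Your proof is correct and follows the paper's argument essentially step for step. Both use the sign-oriented test \(\1\{\operatorname{sgn}(\beta)T_n>|\mu_n|/2\}\), Chebyshev under both laws with the exact moments, and the bound \(N/(n^2\beta^2)=\Gamma_{3,n}^{-1/3}N^{-1/3}n^{-1}\to0\), which covers vanishing \(\beta\). Delete the two abandoned, garbled displays in your ``main obstacle'' paragraph (the ones trailing off in \(\cdots\)); the only thing needed there is \(N/(n^2\beta^2)=\{N/(n\beta^2)\}\cdot n^{-1}\) together with \(N/(n\beta^2)=\Gamma_{3,n}^{-1/3}N^{-1/3}\).
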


\begin{proof}
Let \(\mu_n=\E_{\Pp_n^{(3)}}T_n\).  The moment formulas in
Proposition~\ref{prop:cubic-triangle-moments} give
\[
 \mu_n^2\asymp\frac{n^6\beta^6}{N^4},
 \qquad
 \operatorname{Var}_{\Qq_n}(T_n)\asymp n^3,
 \qquad
 \operatorname{Var}_{\Pp_n^{(3)}}(T_n)
 =O\!\left(n^3+\frac{n^4\beta^4}{N^3}\right).
\]
Dividing by \(\mu_n^2\), we obtain
\[
 \frac{\operatorname{Var}_{\Qq_n}(T_n)}{\mu_n^2}
 =O(\Gamma_{3,n}^{-1}),
 \qquad
 \frac{\operatorname{Var}_{\Pp_n^{(3)}}(T_n)}{\mu_n^2}
 =O\!\left(
   \Gamma_{3,n}^{-1}+\frac{N}{n^2\beta^2}
 \right).
\]
Both ratios tend to zero, since \(\Gamma_{3,n}\to\infty\) and
\[
 \frac{N}{n^2\beta^2}
 =\frac{\Gamma_{3,n}^{-1/3}}{nN^{1/3}}
 \le\frac{\Gamma_{3,n}^{-1/3}}n\longrightarrow0.
\]
Since \(\operatorname{sgn}(\mu_n)=\operatorname{sgn}(\beta)\), define the test
\[
 \varphi_n=\1_{\{\operatorname{sgn}(\mu_n)T_n>|\mu_n|/2\}}.
\]
Since \(\E_{\Qq_n}T_n=0\), Chebyshev's inequality bounds the Type I and
Type II errors by
\[
 \begin{aligned}
 \Qq_n(\varphi_n=1)
 &\le \Qq_n\bigl(|T_n|\ge|\mu_n|/2\bigr)
 \le \frac{4\operatorname{Var}_{\Qq_n}(T_n)}{\mu_n^2}=o(1),\\
 \Pp_n^{(3)}(\varphi_n=0)
 &\le \Pp_n^{(3)}\bigl(|T_n-\mu_n|\ge|\mu_n|/2\bigr)
 \le \frac{4\operatorname{Var}_{\Pp_n^{(3)}}(T_n)}{\mu_n^2}=o(1).
 \end{aligned}
\]
Thus the test is strongly consistent, and the testing identity
\eqref{eq:testing-total-variation} yields
\(\TV(\Pp_n^{(3)},\Qq_n)\to1\).
\end{proof}

\subsection{Indistinguishability via the second moment}
\label{subsec:cubic-likelihood-preliminaries}

Proposition~\ref{prop:cubic-amplitude-tracked-reverse} proves
indistinguishability when \(n\beta^2=o(Nd)\).  The proof bounds a conditioned
likelihood second moment and then removes the conditioning.

For probability measures \(P\ll Q\), write \(L=dP/dQ\)
and \(\chi^2(P\|Q):=\E_Q(L-1)^2\).  We also write
\(D(P\Vert Q)=\int\log L\,dP\) for the relative entropy, and set
\(D(P\Vert Q)=+\infty\) when \(P\not\ll Q\).
Cauchy--Schwarz gives
\begin{equation}
  \TV(P,Q)
  \le\frac12\sqrt{\chi^2(P\|Q)}
  =\frac12\sqrt{\E_QL^2-1}.
 \label{eq:tv-chi-square-bound}
\end{equation}

By \eqref{eq:tv-chi-square-bound}, to show that \(\TV(P,Q)=o(1)\), it
suffices to establish \(\E_QL^2=1+o(1)\).  We first derive a formula
for this second moment that also applies after conditioning.
For any probability measure \(\mu\)
on \((\Sph^{d-1})^n\), let \(\Pp_{\mu,\beta}^{(3)}\) denote the graph
model of Section~\ref{subsec:testing-convention} with
\((X_1,\ldots,X_n)\sim\mu\).
Its likelihood ratio with respect to \(\Qq_n\) is
\[
  L_{\mu,\beta}:=\frac{d\Pp_{\mu,\beta}^{(3)}}{d\Qq_n}
  =\mathbb E_{X\sim\mu}\prod_{i<j}
    \{1+\beta Y_{ij}Z_{3,d}(X_i,X_j)\}.
\]
Let \(X=(X_i)_{i\le n}\) and \(X'=(X_i')_{i\le n}\) be two independent
draws from \(\mu\), called prior replicas, and put
\[
  h_{ij}=Z_{3,d}(X_i,X_j)Z_{3,d}(X_i',X_j').
\]
Averaging \(L_{\mu,\beta}^2\) over the independent edge signs under
\(\Qq_n\) gives
\begin{equation}
  \mathbb E_{\Qq_n}L_{\mu,\beta}^2
  =\mathbb E_{\mu\otimes\mu}
    \prod_{i<j}(1+\beta^2h_{ij}).
  \label{eq:cubic-exact-second-moment}
\end{equation}

In the model of Section~\ref{subsec:testing-convention},
\((X_1,\ldots,X_n)\) has law \(\rho_n:=\sigma_d^{\otimes n}\).
Rare pairs of latent samples \(X,X'\sim\rho_n\) can prevent
\(\mathbb E_{\Qq_n}L_{\rho_n,\beta}^2\) from converging to one.
We therefore estimate the likelihood second moment after conditioning
each sample on the high-probability event
\(\mathcal G_n\) supplied by Lemma~\ref{lem:cubic-hereditary-envelope}
under the polynomial growth condition \(n\le d^A\).  Set
\begin{equation}
  g_n=\rho_n(\mathcal G_n),
  \qquad
  \pi_n=\rho_n(\,\cdot\mid\mathcal G_n).
  \label{eq:cubic-conditioned-prior}
\end{equation}
Lemma~\ref{lem:cubic-hereditary-envelope} gives
\(1-g_n=\rho_n(\mathcal G_n^c)=o(1)\).  We will prove
\(\mathbb E_{\Qq_n}L_{\pi_n,\beta}^2=1+o(1)\) and bound the change in
the graph law when \(\rho_n\) is replaced by \(\pi_n\).
The proof of Proposition~\ref{prop:cubic-amplitude-tracked-reverse}
establishes this estimate by expanding the logarithm of the product in
\eqref{eq:cubic-exact-second-moment} to second order and retaining its
term \(-\frac{\beta^4}{2}\sum_{i<j}h_{ij}^2\).  The Gibbs variational
formula then allows us to combine estimates for successive latent pairs
while retaining the restrictions on the samples.

\begin{proposition}[Cubic indistinguishability bound]
\label{prop:cubic-amplitude-tracked-reverse}
Fix \(A<\infty\).  Let \(d=d_n\to\infty\) and \(\beta=\beta_n\),
with \(n\le d^A\) and \(|\beta|\le1\).  If
\[
  n\beta^2=o(N_{3,d}d),
\]
then
\[
  \TV\bigl(\Pp_n^{(3)},\Qq_n\bigr)\to0.
\]
\end{proposition}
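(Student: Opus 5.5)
We prove the bound by running the conditioned second-moment method set up above. The conditioning costs little in total variation. Since \(\Pp_n^{(3)}=g_n\Pp^{(3)}_{\pi_n,\beta}+(1-g_n)\Pp^{(3)}_{\rho_n(\cdot\mid\mathcal G_n^c),\beta}\), we have \(\TV(\Pp_n^{(3)},\Pp^{(3)}_{\pi_n,\beta})\le 1-g_n=o(1)\) by Lemma~\ref{lem:cubic-hereditary-envelope}. By the triangle inequality and \eqref{eq:tv-chi-square-bound}, it then suffices to show \(\E_{\Qq_n}L_{\pi_n,\beta}^2\le 1+o(1)\). By \eqref{eq:cubic-exact-second-moment}, this is the quantity
\[
 \E_{\pi_n\otimes\pi_n}\prod_{i<j}(1+\beta^2h_{ij})
 \le g_n^{-2}\,\E_{\rho_n\otimes\rho_n}\Bigl[\1_{\mathcal G_n}(X)\1_{\mathcal G_n}(X')
 \exp\Bigl(\beta^2\sum_{i<j}h_{ij}-\tfrac{\beta^4}{2}\sum_{i<j}h_{ij}^2+R\Bigr)\Bigr],
\]
where \(R\) is the cubic remainder of \(\log(1+x)\). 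On \(\mathcal G_n\) the overlaps \(|Z_{3,d}(X_i,X_j)|\) are of order \(d^{-3/2}\) up to logarithms, so \(R=O(n^2\beta^6d^{-9/2+o(1)})\), which is negligible. We keep the quadratic term with its negative sign because, for small \(\beta\), it is needed to offset the Gaussian-like growth of the linear term.

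Next write the linear term in feature form. With \(Q=\sum_i q_{3,d}(X_i)\otimes q_{3,d}(X_i')\), an \(N\times N\) matrix, we have \(\sum_{i<j}h_{ij}=\tfrac12(\|Q\|_{\HS}^2-n)\). This is an overlap of the tensor-product features of the two replicas. Each summand \(u_i=q(X_i)\otimes q(X_i')\) is a unit vector in dimension \(N^2\) with covariance \(N^{-2}I\). Heuristically, then, \(\|Q\|_{\HS}^2-n\) behaves like \(n/N\) times a chi-square-type fluctuation, and \(\beta^2\|Q\|^2_{\HS}\) has exponential moments controlled by \(\beta^2 n/N^2\cdot N^2\) effective directions. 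To make this rigorous, we reveal the vertex pairs \((X_i,X_i')\) one at a time. At step \(k\) we bound the conditional expectation of \(\exp\bigl(\beta^2\langle u_k,Q_{k-1}\rangle-\tfrac{\beta^4}{2}\sum_{i<k}h_{ik}^2\bigr)\) given the past. Here \(\langle u_k,Q_{k-1}\rangle=\sum_{i<k}h_{ik}\), and this step is exactly the content of the partition-function bound in Appendix~\ref{app:cubic-fixed-amplitude}. Conditionally on the past, \(\sum_{i<k}h_{ik}\) is a degree-three polynomial in \(X_k\) times a degree-three polynomial in \(X_k'\). Hypercontractivity on the sphere, combined with the envelope restrictions from \(\mathcal G_n\), should give a bound of the form \(\exp\bigl(C\beta^4\|Q_{k-1}\|_{\HS}^2/N^2+\text{small}\bigr)\). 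Its mean \(\beta^4 k/N^2\) is cancelled to leading order by the retained \(-\tfrac{\beta^4}{2}\sum h_{ik}^2\) term, whose conditional mean is \(\beta^4 k/(2N^2)\) as well. We combine the successive steps with the Gibbs variational formula, as the text indicates. The remaining excess \(\|Q_{k-1}\|^2_{\HS}-(k-1)\) is then controlled on \(\mathcal G_n\) by an envelope of order \(k^2/N\) plus lower-order terms.

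Summing over \(k\), the total exponent is of order \(\beta^4\sum_k k^2/N^3\asymp n^3\beta^4/N^3\), together with the triangle-type term \(\asymp n^3\beta^6/N^4=\Gamma_{3,n}\). Both tend to zero when \(n\beta^2=o(Nd)\): since \(N\asymp d^3\), we get \(n^3\beta^4/N^3\le (n\beta^2)^3/N^3=o(d^3)/N^{0}\cdot N^{-0}\), more precisely \((n\beta^2/(Nd))^3 d^3\cdot\beta^{-2}/1\), and this requires care. Notably, the quantity \(n^3\beta^4/N^3\) is not automatically small for small \(\beta\). The tracked amplitude dependence must therefore show that only the \(\beta^6\) cube-trace term survives, while the \(\beta^4\) contributions cancel exactly against the retained second-order term. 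I expect this to be the main obstacle: achieving that exact cancellation uniformly over the prefix, and controlling the nonscalar directions of \(Q_{k-1}\) near the envelope boundary. My first attempt would be to write \(\E[e^{\beta^2 S-\beta^4 V/2}]\le \E[e^{\beta^2 (S-\E S)}]\cdot e^{-\beta^4(\E V-\mathrm{Var} S)/2}\). In that form, the variance identity \(\mathrm{Var}(S\mid\text{past})\approx \E(V\mid\text{past})\) supplies the cancellation, and the third cumulant supplies the \(\beta^6\) term. Once \(\E_{\Qq_n}L_{\pi_n,\beta}^2=1+o(1)\) is established, \eqref{eq:tv-chi-square-bound} and the conditioning step conclude the proof.
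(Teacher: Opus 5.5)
Your reduction (conditioning on $\mathcal G_n$, the identity \eqref{eq:cubic-exact-second-moment}, the second-order expansion that keeps $-\frac{\beta^4}{2}\sum_{i<j}h_{ij}^2$, and revealing the pairs one at a time inside the Gibbs variational formula) follows the paper's architecture. The proposal nevertheless has a genuine gap, exactly at the step you flag.

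After the per-row cancellation, row $k$ leaves $\frac{\beta^4}{2N^2}\bigl(\|S_{k-1}\|_{\HS}^2-(k-1)\bigr)=\frac{\beta^4}{N^2}\sum_{j<l<k}h_{jl}$. You propose to bound this by the deterministic envelope $k^2/N$ available on $\mathcal G_n$. Summed over $k$, that gives order $\beta^4n^3/N^3=r_n^3d^3/\beta^2$ up to logarithms (using $n\beta^2=r_nNd$), which is unbounded in general. Your comparison $n^3\beta^4\le(n\beta^2)^3$ runs the wrong way when $|\beta|<1$.

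The same envelope also ruins the quartic row errors. Since $\|S_k\|_{\HS}^2\lesssim k+k^2\log^3d/N$, the sum $\beta^8N^{-4}\sum_k\|S_k\|_{\HS}^4$ is of order $r_n^5d^2\beta^{-2}$ up to logarithms.

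Your suggested remedy does not close the gap. The displayed inequality is false in general: with $S\equiv0$, Jensen gives the reverse inequality. The intended cumulant heuristic $\log\mathbb E[e^{\beta^2S-\beta^4V/2}]\approx\frac{\beta^4}{2}(\operatorname{Var}S-\mathbb E V)$ only reproduces the diagonal cancellation and leaves precisely $(\|S_{k-1}\|_{\HS}^2-(k-1))/N^2$. The conditional third cumulant vanishes by oddness, so it is not the source of the $\beta^6$ term. A smaller slip: on $\mathcal G_n$ one has $|h_{ij}|\lesssim\log^3d/N$, so the Taylor remainder is $O(\beta^6n^2\log^9d/N^3)$. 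The $d^{-9/2}$ you wrote would not be negligible.

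Two ideas are missing.

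\emph{First, restrict to $\{\|S_n\|_{\HS}^2\le Rn\}$.} Given $X\in\mathcal G_n$, the map $X'\mapsto\|S_n\|_{\HS}/\sqrt n$ is Lipschitz with constant $O\bigl((N^{-1}+\log N/n)^{1/2}\bigr)$. Spherical log-Sobolev concentration therefore beats the weight $W_{n,\beta}\le e^{\beta^2(\|S_n\|_{\HS}^2-n)/2}$, because $n\beta^2=o(Nd)$ (Proposition~\ref{prop:cubic-weighted-overlap}). For exchangeable $Q$ supported on this set, sampling without replacement gives $\mathbb E_Q\|S_r\|_{\HS}^2\le C_Rr$ and $\mathbb E_Q\|S_r\|_{\HS}^4\le C_Rr^2$ (Lemma~\ref{lem:cubic-random-prefix}). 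This makes the quartic errors $O(\beta^8n^3/N^4)$.

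\emph{Second, absorb the off-diagonal term self-consistently.} Even after the restriction, that term is still of size $C_R\beta^4n^2/N^2=C_Rr_n^2d^2$, so it cannot simply be bounded. Apply the row duality to $H_i/(1-\eta)$, which reserves $\eta D(Q\Vert P_0^{\otimes n})$. By exchangeability, $\frac{\beta^4}{N^2}\sum_i\sum_{j<l<i}\mathbb E_Qh_{jl}=\frac{\beta^4(n-2)}{6N^2}\mathbb E_Q(\|S_n\|_{\HS}^2-n)$. For $\eta=\beta^2(n-2)/(3N^2)$ this equals $\eta\cdot\frac{\beta^2}{2}\mathbb E_Q(\|S_n\|_{\HS}^2-n)$. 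Together with the reserved entropy, it is at most $\eta\Psi_n(\beta^2)$. A second row argument with $c_0=0$ gives $\Psi_n(\beta^2)\le C(\beta^4n^2/N^2+\beta^8n^3/N^4)$ (Lemma~\ref{lem:cubic-linear-free-energy}). The product $\eta\beta^4n^2/N^2\asymp\beta^6n^3/N^4$ is the cube-trace term.

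Finally, the row bound must carry the exact coefficient $u^2(\|S\|_{\HS}^2-c_0r)/(2N^2)$ with only a quartic error, because the diagonal contributions alone total $\beta^4n^2/N^2=r_n^2d^2$. This needs exponential moments of $\langle S,w\rangle_{\HS}$ at $u\asymp\beta^2$. The paper obtains them from a Bakry--\'Emery argument under $2|u|\ell(S)\le c_{\rm src}d$ (Lemma~\ref{lem:cubic-compensated-row}), not from hypercontractive moment bounds alone.
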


\begin{proof}
If \(\beta=0\), the two graph laws coincide.  They also coincide when
\(n\le2\), since there is at most one edge and its marginal probability
is \(1/2\).  We may therefore assume
\(n\ge3\) and \(0<|\beta|\le1\).
In this proof, expectations without a subscript are under
\(\rho_n\otimes\rho_n\).
Step 1 derives \eqref{eq:cubic-second-moment-reduction}, reducing the
problem to bounding \(\mathbb E[\1_{\mathcal E_{n,R}}e^{F_n^{(2)}(\beta)}]\).
Step 2 applies the Gibbs variational formula to laws \(Q\) of the paired
sample supported on \(\mathcal E_{n,R}\).  It combines the chain rule for
\(D(Q\Vert(\sigma_d\otimes\sigma_d)^{\otimes n})\) with conditional
exponential-moment bounds to prove \eqref{eq:cubic-main-compensated-bound}
for the logarithm of this expectation.
Step 3 obtains
\(\mathbb E_{\Qq_n}L_{\pi_n,\beta}^2=1+o(1)\) and compares the graph
laws generated from \(\pi_n\) and \(\rho_n\).

\medskip
\noindent\textit{Step 1. Restrict the second moment and expand the logarithm.}
For independent \(X,X'\sim\rho_n\), set
\(w_i=q_{3,d}(X_i)q_{3,d}(X_i')^{\mathsf T}\) and
\(S_r=\sum_{j=1}^r w_j\) for \(0\le r\le n\), with \(S_0=0\).
With the Hilbert--Schmidt inner product
\(\langle H,K\rangle_{\HS}=\tr(H^{\mathsf T}K)\),
the feature identities in Section~\ref{subsec:cubic-harmonic-kernel}
give \(h_{ij}=\langle w_i,w_j\rangle_{\HS}\) and
\(\|w_i\|_{\HS}=1\).  Thus
\(\|S_n\|_{\HS}^2=n+2\sum_{i<j}h_{ij}\).  Write
\[
  W_{n,\beta}=\prod_{i<j}(1+\beta^2h_{ij}).
\]
By \eqref{eq:cubic-exact-second-moment},
\begin{equation}
  \mathbb E_{\Qq_n}L_{\pi_n,\beta}^2
  =g_n^{-2}\mathbb E[\1_{\mathcal G_n(X)}\1_{\mathcal G_n(X')}W_{n,\beta}].
  \label{eq:cubic-restricted-second-moment}
\end{equation}
Proposition~\ref{prop:cubic-weighted-overlap} supplies a fixed \(R\)
such that
\begin{equation}
  \mathbb E[\1_{\mathcal G_n(X)}\1_{\mathcal G_n(X')}
    \1_{\{\|S_n\|_{\HS}^2>Rn\}}W_{n,\beta}]=o(1).
  \label{eq:cubic-partition-tail}
\end{equation}
We thus restrict to
\(\mathcal E_{n,R}=\mathcal G_n(X)\cap\mathcal G_n(X')\cap\{\|S_n\|_{\HS}^2\le Rn\}\).
On this event, the bound \eqref{eq:cubic-pair-separation} in
Lemma~\ref{lem:cubic-hereditary-envelope} gives
\(\max_{i<j}|h_{ij}|\le C_A\log^3 d/N=o(1)\).
Define
\begin{equation*}
  F_n^{(2)}(\beta)
  =\frac{\beta^2}{2}(\|S_n\|_{\HS}^2-n)-\frac{\beta^4}{2}\sum_{i<j}h_{ij}^2.
\end{equation*}
Since \(\max_{i<j}|\beta^2h_{ij}|=o(1)\) on \(\mathcal E_{n,R}\),
Taylor's theorem gives, for sufficiently large \(d\),
\begin{equation}
  |\log W_{n,\beta}-F_n^{(2)}(\beta)|
  \le C\beta^6\sum_{i<j}|h_{ij}|^3
  \le C_A\frac{\beta^6n^2\log^9 d}{N^3}.
  \label{eq:cubic-partition-taylor}
\end{equation}
Using \(g_n=1-o(1)\), we combine \eqref{eq:cubic-partition-tail} and
\eqref{eq:cubic-partition-taylor} with
\eqref{eq:cubic-restricted-second-moment} to obtain
\begin{equation}
  \mathbb E_{\Qq_n}L_{\pi_n,\beta}^2
  \le g_n^{-2}\exp\left(C_A\frac{\beta^6n^2\log^9 d}{N^3}\right)
    \mathbb E[\1_{\mathcal E_{n,R}}e^{F_n^{(2)}(\beta)}]+o(1).
  \label{eq:cubic-second-moment-reduction}
\end{equation}

\medskip
\noindent\textit{Step 2. Bound the truncated exponential moment by entropy duality.}
If \((\rho_n\otimes\rho_n)(\mathcal E_{n,R})=0\), the upper bound
\eqref{eq:cubic-main-compensated-bound} is immediate.
Otherwise, put \(P_0=\sigma_d\otimes\sigma_d\).
Under \(\rho_n\otimes\rho_n\), the pairs \((X_i,X_i')\), \(1\le i\le n\),
are iid with law \(P_0\).
To retain the restriction to \(\mathcal E_{n,R}\), we apply
the Gibbs variational formula \eqref{eq:cubic-restricted-variational}
with \(m=n\) and \(H=F_n^{(2)}(\beta)\), obtaining
\begin{equation}
  \log\mathbb E[\1_{\mathcal E_{n,R}}e^{F_n^{(2)}(\beta)}]
  =\sup_{Q:\,Q(\mathcal E_{n,R})=1}
    \{\mathbb E_QF_n^{(2)}(\beta)-D(Q\Vert P_0^{\otimes n})\}.
  \label{eq:cubic-main-variational}
\end{equation}
Here \(Q\) ranges over laws of the paired sample satisfying
\(D(Q\Vert P_0^{\otimes n})<\infty\).
The Gibbs law \eqref{eq:cubic-gibbs-law}, with \(m=n\) and
\(H=F_n^{(2)}(\beta)\), attains this supremum and is exchangeable:
\(P_0^{\otimes n}\), \(\mathcal E_{n,R}\), and \(F_n^{(2)}(\beta)\)
are all invariant under permutations of the pairs.
Thus it suffices to consider exchangeable \(Q\).
For such a \(Q\), let \(Q_i\) be the conditional law of \((X_i,X_i')\)
given \((X_j,X_j')_{j<i}\).  Write
\(\mathcal D_Q=D(Q\Vert P_0^{\otimes n})\) and
\(\mathcal D_{Q,i}=D(Q_i\Vert P_0)\).
The chain rule for relative entropy gives
\(\mathcal D_Q=\sum_{i=1}^n\mathbb E_Q\mathcal D_{Q,i}\).
We combine this decomposition with
\eqref{eq:cubic-linear-entropy-duality} to bound the objective in
\eqref{eq:cubic-main-variational} using exponential moments under \(P_0\),
which we estimate by Lemma~\ref{lem:cubic-compensated-row}.

To decompose \(F_n^{(2)}(\beta)\) in the same order, define, for fixed
values of \((X_j,X_j')_{j<i}\),
\begin{equation*}
  H_i(w)=\beta^2\langle S_{i-1},w\rangle_{\HS}
    -\frac{\beta^4}{2}\sum_{j<i}\langle w_j,w\rangle_{\HS}^2.
\end{equation*}
Then \(F_n^{(2)}(\beta)=\sum_{i=1}^n H_i(w_i)\).
Let \((\widetilde X,\widetilde X')\sim P_0\) be independent of
the full paired sample and set
\(w=q_{3,d}(\widetilde X)q_{3,d}(\widetilde X')^{\mathsf T}\).
Write \(\mathbb E_w\) for expectation over this independent pair.

Fix \(0<\eta<1/2\), to be chosen below.
With \((X_j,X_j')_{j<i}\) fixed, inequality
\eqref{eq:cubic-linear-entropy-duality}, applied to
\(H_i/(1-\eta)\), gives
\begin{equation}
  \mathbb E_{Q_i}H_i(w_i)-(1-\eta)\mathcal D_{Q,i}
  \le(1-\eta)\log\mathbb E_w e^{H_i(w)/(1-\eta)}.
  \label{eq:cubic-main-conditional-entropy}
\end{equation}
To apply Lemma~\ref{lem:cubic-compensated-row} to the exponential moment
in \eqref{eq:cubic-main-conditional-entropy}, take \(\rho=2\),
\(r=i-1\), \(v_j=w_j\), \(u=\beta^2/(1-\eta)\le2\beta^2\), and
\(c_0=1-\eta\in[1/2,1]\).
Lemma~\ref{lem:cubic-subset-tensor-bounds}, with \(m=n\) and
\(I=\{1,\ldots,i-1\}\), verifies these hypotheses uniformly on
\(\mathcal E_{n,R}\) for \(0<\eta<1/2\).
Applying Lemma~\ref{lem:cubic-compensated-row} and using
\((1-\eta)u^4=\beta^8/(1-\eta)^3\le8\beta^8\), we obtain
\begin{equation}
\begin{aligned}
  &(1-\eta)\log\mathbb E_w e^{H_i(w)/(1-\eta)}\\
  &\quad\le
    \frac{\beta^4}{2(1-\eta)N^2}
      \{\|S_{i-1}\|_{\HS}^2-(1-\eta)(i-1)\}
    +C\beta^8\frac{\|S_{i-1}\|_{\HS}^4+(i-1)^2}{N^4}.
\end{aligned}
  \label{eq:cubic-main-row-bound}
\end{equation}
To identify the contribution of the negative term in \(H_i\), expand
\[
  \|S_{i-1}\|_{\HS}^2
  =\sum_{j<i}\|w_j\|_{\HS}^2
    +2\sum_{j<k<i}\langle w_j,w_k\rangle_{\HS}
  =(i-1)+2\sum_{j<k<i}h_{jk}.
\]
Using \(1/(1-\eta)=1+\eta/(1-\eta)\), the quadratic term in
\eqref{eq:cubic-main-row-bound} becomes
\begin{equation}
  \frac{\beta^4\{\|S_{i-1}\|_{\HS}^2-(1-\eta)(i-1)\}}{2(1-\eta)N^2}
  =\frac{\beta^4}{N^2}\sum_{j<k<i}h_{jk}
    +\frac{\eta\beta^4}{2(1-\eta)N^2}\|S_{i-1}\|_{\HS}^2.
  \label{eq:cubic-row-cancellation}
\end{equation}
Under an exchangeable \(Q\), all \(\mathbb E_Qh_{jk}\), \(j<k\),
are equal.  Since
\(\sum_{i=1}^n\binom{i-1}{2}=\binom n3\) and
\(\|S_n\|_{\HS}^2-n=2\sum_{j<k}h_{jk}\), the first term on the right-hand side
of \eqref{eq:cubic-row-cancellation} satisfies
\begin{equation}
  \frac{\beta^4}{N^2}\sum_{i=1}^n\sum_{j<k<i}\mathbb E_Qh_{jk}
  =\frac{\beta^4(n-2)}{6N^2}\mathbb E_Q(\|S_n\|_{\HS}^2-n).
  \label{eq:cubic-main-summed-interaction}
\end{equation}
We now choose \(\eta=\beta^2(n-2)/(3N^2)\), so that the right-hand side
of \eqref{eq:cubic-main-summed-interaction} equals
\(\frac{\eta\beta^2}{2}\mathbb E_Q(\|S_n\|_{\HS}^2-n)\).
Since \(n\beta^2=o(Nd)\) and \(N\asymp d^3\),
\(0<\eta\le(n\beta^2/(Nd))\,d/(3N)=o(1)\), so this choice satisfies
\(0<\eta<1/2\) for all sufficiently large \(d\).
For the other term in \eqref{eq:cubic-row-cancellation} and the
fourth-order term in \eqref{eq:cubic-main-row-bound},
Lemma~\ref{lem:cubic-random-prefix} applies because \(Q\) is
exchangeable and \(\|S_n\|_{\HS}^2\le Rn\) holds \(Q\)-almost surely.
Taking \(r=i-1\) in the lemma gives
\[
  \mathbb E_Q\|S_{i-1}\|_{\HS}^2\le C_R(i-1),
  \qquad
  \mathbb E_Q\|S_{i-1}\|_{\HS}^4\le C_R(i-1)^2.
\]
Consequently, taking \(Q\)-expectations in
\eqref{eq:cubic-main-conditional-entropy} and
\eqref{eq:cubic-main-row-bound}, using
\eqref{eq:cubic-row-cancellation}, and summing over \(i\) yields
\begin{equation}
  \mathbb E_QF_n^{(2)}(\beta)-\mathcal D_Q
  \le\eta\left\{\frac{\beta^2}{2}\mathbb E_Q(\|S_n\|_{\HS}^2-n)-\mathcal D_Q\right\}
  +C_R\left(\eta\frac{\beta^4n^2}{N^2}
    +\frac{\beta^8n^3}{N^4}\right).
  \label{eq:cubic-main-entropy-reduction}
\end{equation}
Here we used the chain rule for relative entropy
\(\mathcal D_Q=\sum_i\mathbb E_Q\mathcal D_{Q,i}\) and
\eqref{eq:cubic-main-summed-interaction} with the chosen value of \(\eta\),
and subtracted \(\eta\mathcal D_Q\) from both sides.

Define \(\Psi_n(t)=\log\mathbb E[
\1_{\mathcal E_{n,R}}e^{t(\|S_n\|_{\HS}^2-n)/2}]\), \(t\ge0\).
By \eqref{eq:cubic-restricted-variational} with \(m=n\) and
\(H=\beta^2(\|S_n\|_{\HS}^2-n)/2\), the expression in braces in
\eqref{eq:cubic-main-entropy-reduction} is at most \(\Psi_n(\beta^2)\).
Taking the supremum over \(Q\) in
\eqref{eq:cubic-main-entropy-reduction} and applying
\eqref{eq:cubic-main-variational} gives the first inequality below.
By \(n\beta^2=o(Nd)\), the bounds \eqref{eq:cubic-scaled-prefix-source}
in Lemma~\ref{lem:cubic-subset-tensor-bounds}, with \(m=n\), verify the
hypotheses of Lemma~\ref{lem:cubic-linear-free-energy} at \(t=\beta^2\).
Applying Lemma~\ref{lem:cubic-linear-free-energy} gives
\begin{equation}
\begin{aligned}
  \log\mathbb E[\1_{\mathcal E_{n,R}}e^{F_n^{(2)}(\beta)}]
  &\le\eta\Psi_n(\beta^2)
    +C_R\left(\eta\frac{\beta^4n^2}{N^2}
      +\frac{\beta^8n^3}{N^4}\right)\\
  &\le C_R\left\{\eta\frac{\beta^4n^2}{N^2}
    +(1+\eta)\frac{\beta^8n^3}{N^4}\right\}
  \le C_R\frac{\beta^6n^3}{N^4}.
\end{aligned}
  \label{eq:cubic-main-compensated-bound}
\end{equation}
The last inequality in \eqref{eq:cubic-main-compensated-bound} uses
\(\eta\le\beta^2n/(3N^2)=o(1)\) and \(|\beta|\le1\).

\medskip
\noindent\textit{Step 3. Combine the bounds and compare the two graph laws.}
Substituting \eqref{eq:cubic-main-compensated-bound} into
\eqref{eq:cubic-second-moment-reduction} gives
\begin{equation}
  1\le\mathbb E_{\Qq_n}L_{\pi_n,\beta}^2
  \le g_n^{-2}\exp\left[C_A\left(
    \frac{n^3\beta^6}{N^4}
    +\frac{n^2\beta^6\log^9 d}{N^3}
  \right)+o(1)\right].
  \label{eq:cubic-conditioned-partition-bound}
\end{equation}
The additive \(o(1)\) in \eqref{eq:cubic-second-moment-reduction} has
been absorbed into the exponent, since the exponential upper bound is
at least one.  The lower bound follows from
\(\mathbb E_{\Qq_n}L_{\pi_n,\beta}=1\) and Cauchy--Schwarz.
The derivation of \eqref{eq:cubic-conditioned-partition-bound} is
uniform in the sample size and does not require it to diverge.  Thus
the same bound applies with \(n\) replaced by an auxiliary sample size
\(m\le d^A\) satisfying \(m\beta^2=o(Nd)\).

By hypothesis, \(r_n\to0\).  Since \(N\asymp d^3\) and \(|\beta|\le1\),
the definition in \eqref{eq:cubic-model-parameters} gives
\[
 \begin{aligned}
  \frac{n^3\beta^6}{N^4}
  &=r_n^3\frac{d^3}{N}=o(1),\\
  \frac{n^2\beta^6\log^9 d}{N^3}
  &=\beta^2r_n^2\frac{d^2\log^9 d}{N}
    =O\!\left(r_n^2\frac{\log^9 d}d\right)=o(1).
 \end{aligned}
\]
Together with \(g_n=1-o(1)\),
\eqref{eq:cubic-conditioned-partition-bound} therefore yields
\(\mathbb E_{\Qq_n}L_{\pi_n,\beta}^2=1+o(1)\).

Finally, replace the distribution \(\rho_n=\sigma_d^{\otimes n}\) of
\((X_1,\ldots,X_n)\) by
\(\pi_n=\rho_n(\,\cdot\mid\mathcal G_n)\) from
\eqref{eq:cubic-conditioned-prior}.
The conditional graph construction in Section~\ref{subsec:testing-convention}
is the same for both distributions and cannot increase total variation.
Since \(d\pi_n/d\rho_n=\1_{\mathcal G_n}/g_n\),
\begin{equation}
  \TV(\Pp_n^{(3)},\Pp_{\pi_n,\beta}^{(3)})
  \le\TV(\rho_n,\pi_n)
  =\frac12\int\left|1-\frac{\1_{\mathcal G_n}}{g_n}\right|\,d\rho_n
  =1-g_n.
  \label{eq:cubic-conditioning-comparison}
\end{equation}
The triangle inequality, \eqref{eq:cubic-conditioning-comparison},
and \eqref{eq:tv-chi-square-bound} now give
\[
 \begin{aligned}
  \TV(\Pp_n^{(3)},\Qq_n)
  &\le\TV(\Pp_n^{(3)},\Pp_{\pi_n,\beta}^{(3)})
       +\TV(\Pp_{\pi_n,\beta}^{(3)},\Qq_n)\\
  &\le1-g_n+\frac12\sqrt{\mathbb E_{\Qq_n}L_{\pi_n,\beta}^2-1}=o(1).
 \end{aligned}
\]
\end{proof}

\begin{proof}[Proof of Theorem~\ref{thm:pure-cubic-dichotomy}]
The centered cubic operator has eigenvalue \(\beta/N_{3,d}\) with multiplicity
\(N_{3,d}\), and therefore
\[
  \Gamma_{3,n}=n^3\tr^2(\Kop_\kappa^3)
  =\frac{n^3\beta^6}{N_{3,d}^4}.
\]
If \(\Gamma_{3,n}\to\infty\), Proposition~\ref{prop:single-positive}
gives \(\TV(\Pp_n^{(3)},\Qq_n)\to1\).  If
\(\Gamma_{3,n}\to0\), then
\[
  \frac{n\beta^2}{N_{3,d}^{4/3}}=\Gamma_{3,n}^{1/3}\to0.
\]
Since \(N_{3,d}\asymp d^3\), one has
\(N_{3,d}^{4/3}\asymp N_{3,d}d\).  Hence the hypothesis of
Proposition~\ref{prop:cubic-amplitude-tracked-reverse} holds.  It follows that
\(\TV(\Pp_n^{(3)},\Qq_n)\to0\), proving the second strict implication.
\end{proof}

The remaining regime is \(r_n\to r\in(0,\infty)\), in which the
normalized triangle mean stays of constant order.  The likelihood on
this scale is analyzed in Section~\ref{sec:cubic}.

\section{Testing at the cubic critical scale}
\label{sec:cubic}

Section~\ref{sec:cubic-strict} establishes indistinguishability when
\(r_n\to0\) and triangle-based strong detection when \(r_n\to\infty\).
We now prove Theorems~\ref{thm:pure-cubic-critical}
and~\ref{thm:pure-cubic-triangle-clt} when
\(r_n\to r\in(0,\infty)\).  Throughout this section,
\(n\to\infty\) and \(d=d_n\to\infty\), with \(n\le d^A\) for a fixed
\(A<\infty\) and \(|\beta_n|\le1\).  We write \(\beta=\beta_n\) and
\(N=N_{3,d}\).

The below-threshold likelihood proof uses two inputs.
Section~\ref{subsec:cubic-triangle-critical} proves
the triangle limits in Theorem~\ref{thm:pure-cubic-triangle-clt}, which
hold for every fixed \(r\in(0,\infty)\).
Section~\ref{subsec:cubic-variational-mechanism} derives the tensor
threshold and combines the variational estimates with the local bound in
Proposition~\ref{prop:cubic-raw-local-partition} to construct a truncated
likelihood satisfying \eqref{eq:cubic-main-likelihood-interface} for
\(0<r<r_{\mathrm{tens}}\).  Section~\ref{subsec:cubic-likelihood-mechanism}
combines these two inputs to identify the log-likelihood and the
total-variation limit.

Above \(r_{\mathrm{tens}}\), the same variational potential gives a
positive order-\(nd\) mean log-likelihood bound.
Section~\ref{subsec:cubic-supercritical-mechanism} combines this bound
with concentration to prove strong detection, then invokes
Proposition~\ref{prop:cubic-endpoint} to complete the theorem at
\(r_n\to r_{\mathrm{tens}}\).
The technical estimates are proved in
Appendices~\ref{app:cubic-triangle-clt}--\ref{app:cubic-local-partition}.

\subsection{Gaussian limits for the signed triangle}
\label{subsec:cubic-triangle-critical}

The proof of Theorem~\ref{thm:pure-cubic-triangle-clt} uses de Jong's
central limit theorem for degree-three multilinear forms.  Under the
alternative, we first separate the products of three conditionally
centered edge variables from the terms involving conditional edge means.
We present this decomposition and the resulting convergence argument
below, using the conditional fourth-moment calculations and remainder estimates from
the proof of Lemma~\ref{lem:cubic-critical-triangle-clt}.

\begin{proof}[Proof of Theorem~\ref{thm:pure-cubic-triangle-clt}]
Write \(M_n=\binom n3\) and recall from
\eqref{eq:cubic-triangle-statistic} that \(\widehat T_n=T_n/\sqrt{M_n}\).
Under \(\Qq_n\), the variables
\(Y_{ij}=2a_{ij}-1\), \(i<j\), are independent Rademacher variables.
Proposition~\ref{prop:cubic-triangle-moments} gives
\(\E_{\Qq_n}\widehat T_n=0\) and
\(\operatorname{Var}_{\Qq_n}(\widehat T_n)=1\).
Each edge belongs to \(n-2\) triangles, so the sum of squared coefficients
of terms containing any fixed edge is \((n-2)/M_n\to0\).
For four triangles with edge sets \(\Delta_1,\ldots,\Delta_4\), let
\(\nu_e=\sum_{a=1}^4\mathbf1_{\{e\in\Delta_a\}}\) be the number
of occurrences of edge \(e\), and write \(Y_e=Y_{ij}\) for
\(e=\{i,j\}\).  Independence gives
\[
 \E_{\Qq_n}\prod_{a=1}^4\prod_{e\in\Delta_a}Y_e
 =\prod_e\E_{\Qq_n}Y_e^{\nu_e}
 =\prod_e\mathbf1_{\{\nu_e\text{ is even}\}}.
\]
If a triangle is repeated, this parity condition forces four identical
triangles or two distinct triangles each appearing twice.  If all four
are distinct, each edge of the first must occur in a different remaining
triangle.  After relabeling, their vertex triples are
\((1,2,3),(1,2,a),(1,3,b),(2,3,c)\), with
\(a,b,c\notin\{1,2,3\}\).
Even multiplicity of the edges incident to vertices \(1\) and \(2\)
forces \(a=b=c\).  These are the four faces of a tetrahedron, for which
\[
 (Y_{12}Y_{23}Y_{31})(Y_{12}Y_{24}Y_{41})
 (Y_{13}Y_{34}Y_{41})(Y_{23}Y_{34}Y_{42})
 =\prod_{1\le i<j\le4}Y_{ij}^2=1.
\]
Counting the orderings in these three cases gives
\begin{equation}
 \begin{aligned}
 \E_{\Qq_n}T_n^4
 &=M_n+\frac{4!}{2!2!}\binom{M_n}{2}+4!\binom n4,\\
 \E_{\Qq_n}\widehat T_n^4
 &=3-\frac2{M_n}+\frac{24\binom n4}{M_n^2}\longrightarrow3.
 \end{aligned}
 \label{eq:cubic-null-triangle-fourth-moment}
\end{equation}
De Jong's central limit theorem \cite[Theorem~1]{deJong1990}, applied to
this normalized sum of products of three distinct centered edge variables,
therefore yields \(\widehat T_n\Rightarrow\Normal(0,1)\).

For the alternative model, \(\E\) below averages over both the latent
sample \(X\) and the graph.  Write
\(m_{ij}:=\E(Y_{ij}\mid X)=\beta Z_{3,d}(X_i,X_j)\) and
\(\varepsilon_{ij}:=Y_{ij}-m_{ij}\).  Conditional on \(X\), the
\(\varepsilon_{ij}\) are independent and centered, with variance
\(1-m_{ij}^2\).  Substituting \(Y_{ij}=\varepsilon_{ij}+m_{ij}\) into
\eqref{eq:cubic-triangle-statistic} gives
\begin{equation}
 T_n=\sum_{i<j<k}(\varepsilon_{ij}+m_{ij})
    (\varepsilon_{jk}+m_{jk})(\varepsilon_{ki}+m_{ki})
 =T_n^{(0)}+R_{1,n}+R_{2,n}+R_{3,n},
 \label{eq:cubic-triangle-mean-expansion}
\end{equation}
where
\[
 \begin{aligned}
 T_n^{(0)}
 &=\sum_{i<j<k}\varepsilon_{ij}\varepsilon_{jk}\varepsilon_{ki},\\
 R_{1,n}
 &=\sum_{i<j<k}\bigl(m_{ij}\varepsilon_{jk}\varepsilon_{ki}
   +\varepsilon_{ij}m_{jk}\varepsilon_{ki}
   +\varepsilon_{ij}\varepsilon_{jk}m_{ki}\bigr),\\
 R_{2,n}
 &=\sum_{i<j}\varepsilon_{ij}\sum_{k\ne i,j}m_{ik}m_{jk},\\
 R_{3,n}
 &=\sum_{i<j<k}m_{ij}m_{jk}m_{ki}.
 \end{aligned}
\]
Conditional centering gives
\(\E T_n^{(0)}=\E R_{1,n}=\E R_{2,n}=0\).
Proposition~\ref{prop:cubic-triangle-moments} therefore gives
\(\E R_{3,n}=\E T_n=M_n\beta^3/N^2=\sqrt{M_n}\,\gamma_n\), and hence
\begin{equation}
 \widehat T_n-\gamma_n
 =\frac{T_n^{(0)}}{\sqrt{M_n}}
  +\frac{R_{1,n}+R_{2,n}+R_{3,n}-\E R_{3,n}}{\sqrt{M_n}}.
 \label{eq:cubic-main-triangle-decomposition}
\end{equation}

Step 2 of the proof of Lemma~\ref{lem:cubic-critical-triangle-clt}
applies de Jong's theorem conditionally on \(X\), using the influence
and fourth-moment bounds in \eqref{eq:cubic-conditional-influence} and
\eqref{eq:cubic-conditional-fourth-error}.  It also verifies the variance
normalization and integrates the conditional characteristic functions
over \(X\sim\rho_n\), proving
\[
 \frac{\operatorname{Var}(T_n^{(0)}\mid X)}{M_n}
 \longrightarrow1\quad\text{in probability},\qquad
 \frac{T_n^{(0)}}{\sqrt{M_n}}\Rightarrow\Normal(0,1).
\]

Step 3 of the proof of Lemma~\ref{lem:cubic-critical-triangle-clt}
bounds the three remaining groups in
\eqref{eq:cubic-main-triangle-decomposition} and gives
\begin{equation}
 \frac{\E R_{1,n}^2+\E R_{2,n}^2+\operatorname{Var}(R_{3,n})}{M_n}
 \le C\left[\frac{\beta^2}{N}
  +\beta^4\left(\frac1{N^2}+\frac n{N^3}\right)
  +\beta^6\left(\frac1{N^3}+\frac n{N^4}\right)\right]=o(1).
 \label{eq:cubic-main-triangle-remainders}
\end{equation}
Indeed, \(n\beta^2=r_nNd\) gives
\(n\beta^4/N^3=r_n\beta^2d/N^2=o(1)\) and
\(n\beta^6/N^4=r_n\beta^4d/N^3=o(1)\); the other terms tend to zero
because \(N\to\infty\) and \(|\beta|\le1\).
Cauchy--Schwarz and \eqref{eq:cubic-main-triangle-remainders} show that
the second fraction in \eqref{eq:cubic-main-triangle-decomposition}
tends to zero in mean square.  Slutsky's theorem now gives
\(\widehat T_n-\gamma_n\Rightarrow\Normal(0,1)\) under \(\Pp_n^{(3)}\).
Finally, \eqref{eq:cubic-model-parameters} and \(N\sim d^3/6\) give
\(\gamma_n^2\to r^3\).
\end{proof}

Under the alternative, the mean shift \(\gamma_n\) comes from the sum
of products of three conditional edge means.
Equation~\eqref{eq:cubic-main-triangle-remainders} shows that, after
division by \(\sqrt{\binom n3}\), the centered part of this sum and
the terms containing one or two conditional means tend to zero in \(L^2\).
The alternative therefore changes the asymptotic mean of \(\widehat T_n\)
but leaves the variance of its Gaussian limit equal to one.
These Gaussian limits hold for every fixed \(r>0\), including
\(r=r_{\mathrm{tens}}\).

\subsection{The tensor threshold and likelihood truncation}
\label{subsec:cubic-variational-mechanism}

We now identify \(r_{\mathrm{tens}}\) through variational estimates
and use them to truncate the full likelihood
\(L_n=d\Pp_n^{(3)}/d\Qq_n\) below the threshold.
For \(0<r<r_{\mathrm{tens}}\), we construct
\(\widetilde L_n\ge0\), with
\(\E_{\Qq_n}\widetilde L_n=1\), satisfying
\begin{equation}
 \|L_n-\widetilde L_n\|_{L^1(\Qq_n)}\to0,\qquad
 \limsup\E_{\Qq_n}\widetilde L_n^2\le e^{r^3}.
 \label{eq:cubic-main-likelihood-interface}
\end{equation}
The \(L^1\) approximation preserves the graph law asymptotically, while
the sharp bound \(e^{r^3}\) matches the second moment of the Gaussian
likelihood ratio associated with the triangle's limiting mean shift.

For the construction in \eqref{eq:cubic-main-likelihood-interface},
choose a compact interval \(K\subset(0,r_{\mathrm{tens}})\) containing
\(r_n\) eventually, fix \(M_\star>4A+10\), and take
\(\mathcal G_n^\dagger\) from
Proposition~\ref{prop:cubic-raw-local-partition} for \(A,K,M_\star\).
Then \(\rho_n(\mathcal G_n^\dagger)=1-o(d^{-M_\star})\).
Write \(\pi_n^\dagger=\rho_n(\,\cdot\mid\mathcal G_n^\dagger)\).
For two arrays of latent vertex positions,
\(x=(x_1,\ldots,x_n)\) and \(x'=(x'_1,\ldots,x'_n)\) in
\((\Sph^{d-1})^n\), define
\[
 s_n(x,x')
 :=N\left\|\frac1n\sum_{i=1}^n
       q_{3,d}(x_i)q_{3,d}(x_i')^{\mathsf T}\right\|_{\HS}^2.
\]
Write \(L_x(\Adj)=\prod_{i<j}\{1+\beta Y_{ij}Z_{3,d}(x_i,x_j)\}\)
for the graph likelihood ratio conditional on \(X=x\), relative to
\(\Qq_n\).  Fix \(0<\delta\le\delta_0\) in the range of
Proposition~\ref{prop:cubic-raw-local-partition}.
Its local bound controls the integral of
\(\E_{\Qq_n}(L_xL_{x'})\) over \(s_n(x,x')<\delta\), with respect to
\(\pi_n^\dagger\otimes\pi_n^\dagger\).
To treat the complementary region, define
\[
 Z_{x,\delta}^\dagger(\Adj)
 :=\int_{\{s_n(x,x')\ge\delta\}}L_{x'}(\Adj)\,d\pi_n^\dagger(x').
\]
We seek \(\E_{\Adj\mid X=x}\log Z_{x,\delta}^\dagger\le-g_0nd\)
for some \(g_0>0\), outside a set of \(\pi_n^\dagger\)-probability
\(o(1)\).  Conditional concentration will then allow us to restrict
to graphs with \(Z_{x,\delta}^\dagger\le e^{-g_0nd/2}\), losing
\(o(1)\) likelihood mass.

After comparison with Gaussian observations, the interpolation in
Proposition~\ref{prop:cubic-Gaussian-variational-bound} bounds
\((nd)^{-1}\E_{\Adj\mid x}\log Z_{x,\delta}^\dagger\) using a
Gaussian observation of a single harmonic feature, with the
normalization \(n^2\beta^2/N=r_nnd\).
Let \(\mu_d\) be the law of \(q_{3,d}(X)\) for \(X\sim\sigma_d\), and
let \(\gamma_N=\Normal(0,I_N)\).
For a positive-semidefinite \(N\times N\) matrix \(B\), let \(P_B\)
be the law of \(B^{1/2}u+g\), where \(u\sim\mu_d\) and \(g\sim\gamma_N\)
are independent.  The interpolation leads to a matrix potential of the
form used in finite-rank matrix estimation \cite{LelargeMiolane2017}.
In the present normalization, it is
\[
 \mathcal V_{d,r}(Q)
 :=\frac1dD(P_{rNdQ}\Vert\gamma_N)
   -\frac{rN}{4}\|Q\|_{\HS}^2,
 \qquad Q\succeq0.
\]
Here \(Q\) specifies the strength and directions of the Gaussian
observation, and the quadratic penalty comes from the graph interpolation.

For \(Q=zI_N/N\), the Gaussian signal is
\(\sqrt{rzd}\,q_{3,d}(X)\).
Equation~\eqref{eq:cubic-scalar-channel-limit}, derived from the
RS variational formula for the spherical tensor model
\cite[Appendix~A.3]{JagannathLopattoMiolane2020}, gives
\[
 \mathcal V_{d,r}(zI_N/N)\longrightarrow
 v_r(z):=\mathcal D_3(rz)-\frac r4z^2
\]
locally uniformly for \(z\ge0\), where
\(\mathcal D_3(s):=\tfrac12\max_{0\le q<1}
\{sq^3+\log(1-q)+q\}\).
Optimizing also over the scalar strength \(z\) yields
\begin{equation}
 \begin{split}
 \sup_{z\ge0}v_r(z)
 &=\sup_{\substack{z\ge0\\0\le q<1}}
   \left[\frac r2zq^3-\frac r4z^2
                 +\frac12\{\log(1-q)+q\}\right]\\
 &=\sup_{0\le q<1}
   \left[\frac r4q^6+\frac12\{\log(1-q)+q\}\right].
 \end{split}
 \label{eq:cubic-main-graph-potential}
\end{equation}
Indeed, for fixed \(q\), completing the square gives \(z=q^3\).
The last supremum in \eqref{eq:cubic-main-graph-potential} becomes
positive exactly when \(r>r_{\mathrm{tens}}\), with
\(r_{\mathrm{tens}}\) as defined in
Theorem~\ref{thm:pure-cubic-critical}.

By Lemma~\ref{lem:cubic-scalar-threshold}, the unique minimizer
\(q_\star\in(4/5,1)\) defining \(r_{\mathrm{tens}}\) is characterized
by the first identity below.  The second follows by substitution into
the formula for \(r_{\mathrm{tens}}\).
\begin{equation}
 \frac{q_\star^2}{1-q_\star}
 =6\{-\log(1-q_\star)-q_\star\},
 \qquad
 r_{\mathrm{tens}}=\frac1{3q_\star^4(1-q_\star)}.
 \label{eq:cubic-main-threshold-stationarity}
\end{equation}
Numerically, \(r_{\mathrm{tens}}\approx5.274\).
The scalar objective in \eqref{eq:cubic-main-graph-potential} is
\(-q^2/4+O(q^3)\) as \(q\downarrow0\), so zero remains a strict
local maximizer for every fixed \(r\).  At \(r=r_{\mathrm{tens}}\),
the distinct points \(0\) and \(q_\star\) both attain the maximum zero.

For \(r>r_{\mathrm{tens}}\), a scalar matrix with positive limiting
potential provides the lower bound for strong detection in
Section~\ref{subsec:cubic-supercritical-mechanism}.
For \(0<r<r_{\mathrm{tens}}\), controlling
\(\E_{\Adj\mid x}\log Z_{x,\delta}^\dagger\) requires an upper bound
over all matrix directions.
Proposition~\ref{prop:cubic-matrix-branch-gap} extends the scalar
analysis to the uniform estimate
\begin{equation*}
 \mathcal V_{d,r}(Q)\le-\kappa_KN\|Q\|_{\HS}^2
 \quad(r\in K,\ Q\succeq0,\ d\ \hbox{sufficiently large}),
\end{equation*}
with \(\kappa_K>0\) for the compact interval \(K\) chosen above.
In particular, \(\mathcal V_{d,r}(Q)\le-\kappa_K\delta\) whenever
\(N\|Q\|_{\HS}^2\ge\delta>0\).

To bound \(\E_{\Adj\mid x}\log Z_{x,\delta}^\dagger\),
Proposition~\ref{prop:cubic-Gaussian-variational-bound} combines Gaussian
interpolation with covering and polar decomposition.
Applying the Bernoulli comparison in
Lemma~\ref{lem:cubic-constrained-edge-replacement}, as in Step~1 of the
proof of Lemma~\ref{lem:cubic-legal-truncation}, and choosing the
covering error sufficiently small, we obtain
\[
 \begin{split}
 \frac1{nd}\E_{\Adj\mid x}\log Z_{x,\delta}^\dagger
 &\le\sup_{\substack{Q\succeq0\\N\|Q\|_{\HS}^2\ge\delta/2}}
       \mathcal V_{d,r_n}(Q)+\frac{\kappa_K\delta}{4}+o(1)
 \le-\frac{\kappa_K\delta}{4}+o(1),
 \end{split}
\]
uniformly for \(x\in\mathcal G_n^\dagger\setminus\mathcal E_n\), where
\(\pi_n^\dagger(\mathcal E_n)=o(1)\).  The constraint
\(N\|Q\|_{\HS}^2\ge\delta/2\) follows from the covering construction in
Proposition~\ref{prop:cubic-Gaussian-variational-bound}.  That proposition and
Lemma~\ref{lem:cubic-legal-truncation} use the same prior
\(\pi_n^\dagger\) chosen above.
Taking \(g_0=\kappa_K\delta/8\), we obtain, for all sufficiently large \(n\),
\begin{equation}
 \E_{\Adj\mid x}\log Z_{x,\delta}^\dagger\le-g_0nd
 \quad(x\in\mathcal G_n^\dagger\setminus\mathcal E_n),
 \qquad \pi_n^\dagger(\mathcal E_n)=o(1).
 \label{eq:cubic-main-constrained-mean}
\end{equation}
We take \(\log Z_{x,\delta}^\dagger=-\infty\) when
\(\pi_n^\dagger\{x':s_n(x,x')\ge\delta\}=0\).

The negative conditional mean in \eqref{eq:cubic-main-constrained-mean},
together with concentration over the graph edges as in the proof of
Lemma~\ref{lem:cubic-legal-truncation}, yields, after averaging over
\(X\sim\pi_n^\dagger\) and setting \(g=g_0/2\),
\[
 \mathbb P_{X\sim\pi_n^\dagger,\ \Adj\mid X}
   \{Z_{X,\delta}^\dagger(\Adj)>e^{-gnd}\}=o(1).
\]
Accordingly, we truncate the conditional likelihoods \(L_x\) by setting
\[
 \mathcal A_{x,\delta}^\dagger
 :=\{\Adj:Z_{x,\delta}^\dagger(\Adj)\le e^{-gnd}\},
 \qquad
 \overline L_n(\Adj)
 :=\int L_x(\Adj)\1_{\mathcal A_{x,\delta}^\dagger}(\Adj)
       \,d\pi_n^\dagger(x).
\]
Since \(L_x\) is the likelihood ratio conditional on \(X=x\), Fubini's
theorem identifies the discarded likelihood mass with this probability,
\[
 1-\E_{\Qq_n}\overline L_n
 =\int\mathbb P_{\Adj\mid x}\!\left\{
       Z_{x,\delta}^\dagger>e^{-gnd}\right\}\,d\pi_n^\dagger(x)
 =o(1).
\]
Moreover, \(0\le\overline L_n\le\int L_x\,d\pi_n^\dagger(x)\).
Together with the prior-conditioning estimate
\eqref{eq:cubic-regular-prior-likelihood-transfer}, this gives
\(\|L_n-\overline L_n\|_{L^1(\Qq_n)}
\le o(1)+1-\E_{\Qq_n}\overline L_n=o(1)\).

The same truncation also controls \(\E_{\Qq_n}\overline L_n^2\), whose
pair-integral expansion we split at \(s_n=\delta\).
On \(s_n\ge\delta\), the restriction
\(\mathcal A_{x,\delta}^\dagger\) enforces
\(Z_{x,\delta}^\dagger\le e^{-gnd}\), so
\eqref{eq:cubic-truncation-pair} bounds that part by \(e^{-gnd}\).
On \(s_n<\delta\), discard
\(\1_{\mathcal A_{x,\delta}^\dagger}\1_{\mathcal A_{x',\delta}^\dagger}\).
We obtain
\[
 \E_{\Qq_n}\overline L_n^2\le e^{-gnd}
 +\iint_{\{s_n(x,x')<\delta\}}
   \E_{\Qq_n}(L_xL_{x'})\,d\pi_n^\dagger(x)\,d\pi_n^\dagger(x').
\]
By \eqref{eq:cubic-exact-second-moment},
Proposition~\ref{prop:cubic-raw-local-partition} bounds the last integral
by \(e^{r^3}+o(1)\), since conditioning both samples contributes the
factor \(\rho_n(\mathcal G_n^\dagger)^{-2}=1+o(1)\).  Consequently,
\(\limsup\E_{\Qq_n}\overline L_n^2\le e^{r^3}\).

Finally, the \(L^1(\Qq_n)\) approximation gives
\(c_n:=\E_{\Qq_n}\overline L_n\to1\).  Set
\(\widetilde L_n:=\overline L_n/c_n\), so that
\(\E_{\Qq_n}\widetilde L_n=1\).  The identities
\(\|\overline L_n-\widetilde L_n\|_{L^1(\Qq_n)}=|c_n-1|\) and
\(\E_{\Qq_n}\widetilde L_n^2=c_n^{-2}\E_{\Qq_n}\overline L_n^2\),
together with the triangle inequality, establish both estimates in
\eqref{eq:cubic-main-likelihood-interface}.

\subsection{Gaussian likelihood asymptotics below the threshold}
\label{subsec:cubic-likelihood-mechanism}

For \(0<r<r_{\mathrm{tens}}\),
Section~\ref{subsec:cubic-variational-mechanism} provides a normalized
truncation \(\widetilde L_n\) satisfying
\eqref{eq:cubic-main-likelihood-interface}.
The triangle limits in Theorem~\ref{thm:pure-cubic-triangle-clt} will
identify the conditional mean of any subsequential likelihood limit
given the limiting triangle.  The sharp second-moment bound \(e^{r^3}\)
then forces the likelihood limit to equal that conditional mean, and the
\(L^1\) approximation transfers the resulting expansion to \(L_n\).

Let \(\widetilde\Pp_n\) have density \(\widetilde L_n\) relative to
\(\Qq_n\).
By \eqref{eq:cubic-main-likelihood-interface},
\(\TV(\Pp_n^{(3)},\widetilde\Pp_n)
=\tfrac12\|L_n-\widetilde L_n\|_{L^1(\Qq_n)}=o(1)\),
so the alternative triangle limit in
Theorem~\ref{thm:pure-cubic-triangle-clt} also holds under
\(\widetilde\Pp_n\).
Since \(\gamma_n^2\to r^3\), every subsequence has a further one along
which \(\gamma_n\to\Delta\in\{-r^{3/2},r^{3/2}\}\).
Along this subsequence, the null limit in
Theorem~\ref{thm:pure-cubic-triangle-clt} and the \(L^2\) bound in
\eqref{eq:cubic-main-likelihood-interface} make
\((\widehat T_n,\widetilde L_n)\) jointly tight under \(\Qq_n\).
Let \((Z,L)\) be any limit in distribution of
\((\widehat T_n,\widetilde L_n)\) along a further subsequence, with
\(Z\sim\Normal(0,1)\).

The \(L^2(\Qq_n)\) bound in
\eqref{eq:cubic-main-likelihood-interface} makes
\(\{\widetilde L_n\}\) uniformly integrable under \(\Qq_n\).
For every bounded continuous \(f\), the identity
\(d\widetilde\Pp_n=\widetilde L_n\,d\Qq_n\) and the triangle limit
under \(\widetilde\Pp_n\) give
\[
 \E_{\Qq_n}\{\widetilde L_n f(\widehat T_n)\}
 =\E_{\widetilde\Pp_n}f(\widehat T_n)
 \longrightarrow\E f(Z+\Delta)
 =\E\{e^{\Delta Z-\Delta^2/2}f(Z)\}.
\]
Joint convergence and uniform integrability also show that the
left-hand side converges to \(\E\{Lf(Z)\}\).
Thus \(\E(L\mid Z)=e^{\Delta Z-\Delta^2/2}\), the likelihood ratio
of \(\Normal(\Delta,1)\) relative to \(\Normal(0,1)\).

The sharp upper bound in \eqref{eq:cubic-main-likelihood-interface}
now determines \(L\) itself.  Conditional Jensen's inequality and
lower semicontinuity give
\[
 e^{\Delta^2}=\E[\{\E(L\mid Z)\}^2]
 \le\E L^2
 \le\liminf_n\E_{\Qq_n}\widetilde L_n^2
 \le e^{r^3}=e^{\Delta^2}.
\]
Equality forces \(\operatorname{Var}(L\mid Z)=0\), and therefore
\(L=e^{\Delta Z-\Delta^2/2}\).
The law of \((Z,L)\) is therefore uniquely determined.
Along the chosen subsequence with \(\gamma_n\to\Delta\), tightness then
yields \((\widehat T_n,\widetilde L_n)
\Rightarrow(Z,e^{\Delta Z-\Delta^2/2})\) under \(\Qq_n\).
The continuous-mapping theorem gives
\(\widetilde L_n-e^{\gamma_n\widehat T_n-\gamma_n^2/2}
=o_{\Qq_n}(1)\).

The \(L^1\) approximation in \eqref{eq:cubic-main-likelihood-interface}
now transfers this relation to the original likelihood.
Since \(\widehat T_n=O_{\Qq_n}(1)\) and \(\gamma_n^2\to r^3\),
we have \(e^{-\gamma_n\widehat T_n+\gamma_n^2/2}=O_{\Qq_n}(1)\).
Consequently,
\[
 \frac{L_n}{e^{\gamma_n\widehat T_n-\gamma_n^2/2}}-1
 =\bigl(L_n-e^{\gamma_n\widehat T_n-\gamma_n^2/2}\bigr)
    e^{-\gamma_n\widehat T_n+\gamma_n^2/2}
 =o_{\Qq_n}(1).
\]
Taking logarithms and using \(\gamma_n^2\to r^3\) gives
\begin{equation}
 \log L_n=\gamma_n\widehat T_n-\frac12r^3+o_{\Qq_n}(1).
 \label{eq:cubic-main-log-likelihood}
\end{equation}
Every subsequence has a further subsequence along which
\eqref{eq:cubic-main-log-likelihood} holds, so this expansion is valid
along the original sequence, including when the sign of \(\beta\) changes.

Uniform integrability of \(\{L_n\}\) under \(\Qq_n\) follows from
the two bounds in \eqref{eq:cubic-main-likelihood-interface}.
Indeed, for \(t>0\),
\[
 \E_{\Qq_n}\{L_n\mathbf1_{\{L_n>t\}}\}
 \le 2\|L_n-\widetilde L_n\|_{L^1(\Qq_n)}
       +\frac4t\E_{\Qq_n}\widetilde L_n^2.
\]
Taking \(\limsup_{n\to\infty}\) and then \(t\to\infty\) gives
vanishing tail expectations.
Uniform integrability therefore allows us to pass from the distributional
limit given by \eqref{eq:cubic-main-log-likelihood} to the expectation
below.  Using the symmetry of the standard normal law, we obtain
\[
 \TV(\Pp_n^{(3)},\Qq_n)
 =\frac12\E_{\Qq_n}|L_n-1|
 \longrightarrow
 \frac12\E|e^{r^{3/2}Z-r^3/2}-1|
 =2\Phi(r^{3/2}/2)-1.
\]

\subsection{Strong detection and limits at the threshold}
\label{subsec:cubic-supercritical-mechanism}

The Gaussian limits for \(\widehat T_n\) under \(\Qq_n\) and for
\(\widehat T_n-\gamma_n\) under \(\Pp_n^{(3)}\) in
Theorem~\ref{thm:pure-cubic-triangle-clt} hold on both sides of
\(r_{\mathrm{tens}}\).
The full graph laws nevertheless pass from a total-variation limit
strictly between zero and one, established in
Section~\ref{subsec:cubic-likelihood-mechanism}, to strong detection
above the threshold.
For \(r>r_{\mathrm{tens}}\), we establish this separation by showing
that \(\log L_n\) exceeds a positive multiple of \(nd\) with probability
tending to one under \(\Pp_n^{(3)}\).

Let \(\Pp_n^{\rm reg}\) be the graph law with position distribution
\(\rho_n(\,\cdot\mid\mathcal R_{n,d})\), where
\(\rho_n=\sigma_d^{\otimes n}\) and \(\mathcal R_{n,d}\) is the
high-probability event from Lemma~\ref{lem:cubic-superpoly-regularity}.
Write \(L_n^{\rm reg}=d\Pp_n^{\rm reg}/d\Qq_n\).
To estimate \(D(\Pp_n^{\rm reg}\Vert\Qq_n)\), let
\(\Pp_{\rho_n}^{\rm G}\) be the law of
\((|\beta|Z_{3,d}(X_i,X_j)+g_{ij})_{i<j}\), where
\(X\sim\rho_n\) and the \(g_{ij}\) are independent
standard Gaussian variables, independent of \(X\).
Write \(\Qq^{\rm G}\) for the law of \((g_{ij})_{i<j}\).
Applying Lemma~\ref{lem:cubic-constrained-edge-replacement} and
accounting for conditioning as in
\eqref{eq:cubic-supercritical-mean-reduction}, we obtain
\begin{equation}
 \frac1{nd}\E_{\Pp_n^{\rm reg}}\log L_n^{\rm reg}
 =\frac1{nd}D(\Pp_n^{\rm reg}\Vert\Qq_n)
 \ge\frac1{nd}D(\Pp_{\rho_n}^{\rm G}\Vert\Qq^{\rm G})-o(1).
 \label{eq:cubic-main-Bernoulli-Gaussian-reduction}
\end{equation}
Here we used
\(0<\rho_n(\mathcal R_{n,d})\le1\) and
\(D(\Pp_{\rho_n}^{\rm G}\Vert\Qq^{\rm G})\ge0\) in
\eqref{eq:cubic-supercritical-mean-reduction}.

By \eqref{eq:cubic-main-Bernoulli-Gaussian-reduction}, it remains to
bound \(D(\Pp_{\rho_n}^{\rm G}\Vert\Qq^{\rm G})\) from below.
Step~2 of the proof of
Proposition~\ref{prop:cubic-supercritical-free-energy} establishes,
by Gaussian interpolation, that for every \(0\le z\le1\),
\begin{equation}
 \frac1{nd}D(\Pp_{\rho_n}^{\rm G}\Vert\Qq^{\rm G})
 \ge \mathcal V_{d,r_n}(zI_N/N)-\frac{\beta^2}{4d}.
 \label{eq:cubic-main-Gaussian-entropy-lower}
\end{equation}
Combining \eqref{eq:cubic-main-Bernoulli-Gaussian-reduction} and
\eqref{eq:cubic-main-Gaussian-entropy-lower} with the locally uniform
convergence in \eqref{eq:cubic-scalar-potential-uniform-limit} gives
\[
 \liminf_{n\to\infty}\frac1{nd}
 \E_{\Pp_n^{\rm reg}}\log L_n^{\rm reg}
 \ge\sup_{0\le z\le1}v_r(z)=:\psi(r)>0.
\]
Here \(\psi(r)>0\) follows from
\eqref{eq:cubic-main-graph-potential} and \(r>r_{\mathrm{tens}}\).
Together with the variance estimate
\eqref{eq:cubic-supercritical-regular-concentration}, this yields, for
all sufficiently large \(n\),
\[
 \E_{\Pp_n^{\rm reg}}\log L_n^{\rm reg}\ge\frac{\psi(r)}2nd,
 \qquad
 \operatorname{Var}_{\Pp_n^{\rm reg}}(\log L_n^{\rm reg})=o((nd)^2),
\]
so the standard deviation is of smaller order than the mean.
Step~4 of the proof of Proposition~\ref{prop:cubic-supercritical-free-energy} applies
Chebyshev's inequality, removes the conditioning, and uses Markov's
inequality under the null to obtain, for \(0<\eta(r)<\psi(r)/4\),
\[
 \Pp_n^{(3)}\{\log L_n\ge\eta(r)nd\}\longrightarrow1,
 \qquad
 \Qq_n\{\log L_n\ge\eta(r)nd\}\le e^{-\eta(r)nd}\longrightarrow0.
\]
Thus the likelihood-ratio test has vanishing Type I and Type II errors,
and \(\TV(\Pp_n^{(3)},\Qq_n)\to1\).

\begin{proof}[Completion of Theorem~\ref{thm:pure-cubic-critical}]
The relations in \eqref{eq:cubic-model-parameters} give
\(\gamma_n^2\to r^3\) and \(\Gamma_{3,n}\to6r^3\).
Section~\ref{subsec:cubic-likelihood-mechanism} proves the likelihood
expansion \eqref{eq:cubic-main-log-likelihood} and the total-variation
limit below the threshold.  The preceding test proves strong detection
above it.

For \(A>4\), Proposition~\ref{prop:cubic-endpoint} provides two sequences
with \(d_n\to\infty\), \(n\le d_n^A\), a fixed nonzero amplitude
\(\beta_0\in[-1,1]\), and \(r_n\to r_{\mathrm{tens}}\).
Their total-variation limits are
\(2\Phi(r_{\mathrm{tens}}^{3/2}/2)-1\) and \(1\), respectively,
which proves the endpoint assertion.
\end{proof}

\section{Discussion}

This paper identifies a transition within the critical scale of the pure
cubic model.  For \(0<r<r_{\mathrm{tens}}\), the signed triangle
determines the asymptotic log-likelihood ratio, and the total-variation
distance converges to a value strictly between zero and one.
For \(r>r_{\mathrm{tens}}\), the full graph admits strongly consistent
testing, while the centered and normalized signed triangle continues to
have a Gaussian limit.

The scaling relation \(N_{3,d}^{1/3}\asymp d\) explains why degree three is
special.  For a pure harmonic of fixed degree \(k\ge2\), its multiplicity
satisfies \(N_{k,d}\asymp d^k\) \cite[Chapter~1]{DaiXu2013}.
The same projection argument as in \eqref{eq:single-layer-spectrum}
gives a cube trace of \(\beta^3/N_{k,d}^2\).  The resulting cube-trace
scale \(n\beta^2\asymp N_{k,d}^{4/3}\) can be compared with
\(n\beta^2\asymp N_{k,d}d\).  At the cube-trace scale, the ratio
\(n\beta^2/(N_{k,d}d)\asymp d^{k/3-1}\) vanishes for \(k=2\), remains
of constant order for \(k=3\), and diverges for every fixed \(k\ge4\).
This comparison motivates a separate analysis of higher-degree kernels,
where the two scales no longer coincide.

For the cubic model itself, a finer description is needed near
\(r_{\mathrm{tens}}\).  When \(A>4\), the sequence-dependent limits
at \(r_{\mathrm{tens}}\)
raise the question of how total variation depends on the rate at which
\(r_n\) approaches the threshold.  Identifying the width and limiting
behavior of this transition window is a natural next problem.
The coexistence of the maximizers \(q=0\) and \(q=q_\star\) in
\eqref{eq:cubic-main-graph-potential} suggests comparing the corresponding
likelihood contributions beyond their leading \(nd\)-scale terms.

Another question is whether the testing limits extend beyond the
polynomial growth condition \(n\le d^A\).  Explicit bounds on the
approximation errors could help determine the admissible growth of
\(n\) relative to \(d\), while also providing convergence rates.

Computational questions remain as well.  The signed triangle gives a
polynomial-time strongly consistent test when \(r_n\to\infty\).
For \(r_n\to r>r_{\mathrm{tens}}\), the likelihood-ratio argument
establishes strong detection.  Whether this test, or another strongly
consistent test, admits a polynomial-time implementation throughout this
regime remains open.


\appendix
\section{A cubic partition-function bound}
\label{app:cubic-fixed-amplitude}

This appendix supplies the estimates used in
Proposition~\ref{prop:cubic-amplitude-tracked-reverse} to prove that the
conditioned likelihood second moment tends to one when
\(m\beta^2=o(N_{3,d}d)\).

We retain \(d=d_n\to\infty\) and \(\beta=\beta_n\), with
\(|\beta|\le1\), but state the estimates for an auxiliary sample size
\(m\le d^A\), where \(A<\infty\) is fixed.  The estimates are uniform
in \(m\), which need not tend to infinity.
Write \(N=N_{3,d}\) and \(\rho_m=\sigma_d^{\otimes m}\).
Constants may depend on the fixed growth exponent \(A\), but not on
\(m\), \(d\), or \(\beta\).

For independent samples \(X,X'\sim\rho_m\), write
\[
  h_{ij}=Z_{3,d}(X_i,X_j)Z_{3,d}(X_i',X_j'),\qquad
  W_{m,\beta}=\prod_{i<j}(1+\beta^2h_{ij}).
\]
Section~\ref{subsec:cubic-envelope} constructs an event \(\mathcal G_m\)
with \(\rho_m(\mathcal G_m)=1-o(1)\).
For \(\pi_m=\rho_m(\,\cdot\mid\mathcal G_m)\),
\eqref{eq:cubic-exact-second-moment} gives
\[
  \mathbb E_{\Qq_m}L_{\pi_m,\beta}^2
  =\frac{\mathbb E_{\rho_m\otimes\rho_m}\!
    [\1_{\mathcal G_m(X)}\1_{\mathcal G_m(X')}W_{m,\beta}]}
    {\rho_m(\mathcal G_m)^2}.
\]
Thus it suffices to bound the numerator by \(1+o(1)\).

Recall \(w_i=q_{3,d}(X_i)q_{3,d}(X_i')^{\mathsf T}\) and
\(S_r=\sum_{i=1}^r w_i\) for \(0\le r\le m\), with \(S_0=0\).
Then \(\|S_m\|_{\HS}^2=m+2\sum_{i<j}h_{ij}\).
For a sufficiently large fixed \(R\), set
\(\mathcal E_{m,R}=\mathcal G_m(X)\cap\mathcal G_m(X')\cap\{\|S_m\|_{\HS}^2\le Rm\}\).
Section~\ref{subsec:cubic-overlap-deletion} shows that the contribution from
\(\|S_m\|_{\HS}^2>Rm\) to the numerator above is \(o(1)\).
On \(\mathcal E_{m,R}\), the bound
\(\max_{i<j}|h_{ij}|\le C\log^3d/N=o(1)\) from
Section~\ref{subsec:cubic-envelope} permits the Taylor estimate
\eqref{eq:cubic-partition-taylor}, with \(n=m\) and
\(F_m^{(2)}(\beta)=\tfrac{\beta^2}{2}(\|S_m\|_{\HS}^2-m)
  -\tfrac{\beta^4}{2}\sum_{i<j}h_{ij}^2\).
Combining these estimates gives
\begin{equation}
\begin{aligned}
  &\mathbb E_{\rho_m\otimes\rho_m}\!
    [\1_{\mathcal G_m(X)}\1_{\mathcal G_m(X')}W_{m,\beta}]\\
  &\qquad\le
    \exp\left\{C\frac{\beta^6m^2\log^9 d}{N^3}\right\}
    \mathbb E_{\rho_m\otimes\rho_m}\!
      [\1_{\mathcal E_{m,R}}e^{F_m^{(2)}(\beta)}]+o(1).
\end{aligned}
\label{eq:cubic-roadmap-product-reduction}
\end{equation}
The cases \(\beta=0\) or \(m\le2\) are immediate, as noted in the proof of
Theorem~\ref{thm:cubic-amplitude-tracked-partition}.  In the remaining
case, it suffices to prove
\begin{equation}
  \log\mathbb E_{\rho_m\otimes\rho_m}\!
    [\1_{\mathcal E_{m,R}}e^{F_m^{(2)}(\beta)}]
  \le C\frac{\beta^6m^3}{N^4}.
\label{eq:cubic-roadmap-compensated-target}
\end{equation}

To prove \eqref{eq:cubic-roadmap-compensated-target},
Section~\ref{subsec:cubic-row-laplace} bounds the exponential moment in
\eqref{eq:cubic-compensated-row}, retaining the negative squared terms.
Section~\ref{subsec:cubic-prefix-moments} supplies the partial-sum moment
bounds, entropy identities, and hypothesis checks used to combine these
estimates in \eqref{eq:cubic-main-entropy-reduction}.
With \(\eta=\beta^2(m-2)/(3N^2)=o(1)\), the resulting reduction and the
linear bound from Section~\ref{subsec:cubic-linear-partition} give
\[
\begin{aligned}
 &\log\mathbb E_{\rho_m\otimes\rho_m}
   [\1_{\mathcal E_{m,R}}e^{F_m^{(2)}(\beta)}]\\
 &\;\le
   \eta\log\mathbb E_{\rho_m\otimes\rho_m}
   [\1_{\mathcal E_{m,R}}e^{\beta^2(\|S_m\|_{\HS}^2-m)/2}]
   +C\left(\eta\frac{\beta^4m^2}{N^2}
      +\frac{\beta^8m^3}{N^4}\right)
 &&\text{by Proposition~\ref{prop:cubic-linear-reduction}}\\[3pt]
 &\;\le C\eta\left(\frac{\beta^4m^2}{N^2}
                      +\frac{\beta^8m^3}{N^4}\right)
       +C\frac{\beta^8m^3}{N^4}
 &&\text{by Lemma~\ref{lem:cubic-linear-free-energy}}\\[3pt]
 &\;\le C\frac{\beta^6m^3}{N^4}
 &&\begin{gathered}
   \text{by the choice of }\eta,\\[-2pt]
   |\beta|\le1,\quad\beta^4m/N^2=o(1).
   \end{gathered}
\end{aligned}
\]
Combining
\eqref{eq:cubic-roadmap-product-reduction} and
\eqref{eq:cubic-roadmap-compensated-target} with the second-moment identity
above gives
\[
  \mathbb E_{\Qq_m}L_{\pi_m,\beta}^2
  \le\frac{\exp\left[C\left(\frac{\beta^6m^3}{N^4}
    +\frac{\beta^6m^2\log^9 d}{N^3}\right)\right]+o(1)}
    {\rho_m(\mathcal G_m)^2}
  =1+o(1),
\]
since \(m\beta^2=o(Nd)\), \(N\asymp d^3\), and
\(\rho_m(\mathcal G_m)=1-o(1)\).
Together with \(\mathbb E_{\Qq_m}L_{\pi_m,\beta}^2\ge1\), this proves the
required convergence.
Section~\ref{subsec:cubic-partition-proof} records the corresponding
bound when either latent sample is further restricted to a measurable
subset of \(\mathcal G_m\).

\subsection{Uniform bounds over sample subsets}
\label{subsec:cubic-envelope}

Lemma~\ref{lem:cubic-hereditary-envelope} gives bounds that hold
simultaneously for all subsets of an iid spherical sample.  These bounds
remain valid when points are deleted later in the proof, with \(m\)
still denoting the original sample size.

\begin{lemma}[Uniform bounds over sample subsets]
\label{lem:cubic-hereditary-envelope}
Fix \(A,M<\infty\).  Uniformly over \(m\le d^A\), there is an event
\(\mathcal G_{m,M}\subseteq(\Sph^{d-1})^m\) such that
\[
  \rho_m(\mathcal G_{m,M}^c)=o(d^{-M}).
\]
Membership in \(\mathcal G_{m,M}\) is unchanged by applying the same
orthogonal transformation to every point, by permuting the points, or
by replacing any individual point \(X_i\) with \(-X_i\).
On \(\mathcal G_{m,M}\), the following estimates hold for every coordinate set
\(\varnothing\ne I\subseteq[m]\), \(|I|=r\), and every
\(u\in\Sph^{d-1}\):
\begin{align}
  \left\|\sum_{i\in I}X_iX_i^{\mathsf T}\right\|_{\mathrm{op}}
  &\le C_{A,M}\left(1+\frac rd\log\frac{em}{r}\right),
  \label{eq:cubic-linear-hereditary}\\
  \sum_{i\in I}Z_{3,d}(u,X_i)^2
  &\le C_{A,M}\left\{1+\frac rN
    \left(\log\frac{em}{r}\right)^3\right\}.
  \label{eq:cubic-zonal-hereditary}
\end{align}
If \(\mathbf Q_m\) is the \(m\times N\) matrix whose \(i\)th row is
\(q_{3,d}(X_i)^{\mathsf T}\), then
\begin{equation}
  \|\mathbf Q_m\|_{\mathrm{op}}^2
  \le C_{A,M}\left(\frac mN+\log N\right).
  \label{eq:cubic-feature-gram}
\end{equation}
In addition, the kernel values at distinct sample points satisfy
\begin{equation}
  \max_{i\ne j}|Z_{3,d}(X_i,X_j)|
  \le C_{A,M}\frac{\log^{3/2} d}{\sqrt N}.
  \label{eq:cubic-pair-separation}
\end{equation}
\end{lemma}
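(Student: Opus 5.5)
The plan is to define \(\mathcal G_{m,M}\) as the intersection of four events, one for each estimate, and to show that each fails with probability \(o(d^{-M})\) after a union bound over the relevant index sets. Before the union bound, each event is enlarged to make the invariances hold. The bounds \eqref{eq:cubic-linear-hereditary}--\eqref{eq:cubic-pair-separation} are stated through quantities invariant under simultaneous rotation and permutation. Since \(q_{3,d}(-x)=-q_{3,d}(x)\), \(Z_{3,d}\) is odd and \(X_iX_i^{\mathsf T}\) is even, so every quantity is invariant under sign flips of individual points: they are \(X_iX_i^{\mathsf T}\), \(Z_{3,d}(u,X_i)^2\), \(|Z_{3,d}(X_i,X_j)|\), and \(\mathbf Q_m^{\mathsf T}\mathbf Q_m\), which is even in each row. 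If \(\mathcal G_{m,M}\) is defined as the set where these inequalities hold with the same constants, the invariances are therefore automatic.

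For \eqref{eq:cubic-pair-separation}, fix \(i\ne j\). Conditional on \(X_j\), \(Z_{3,d}(X_i,X_j)\) is a degree-three polynomial in \(X_i\) with variance \(1/N\). Hypercontractivity on the sphere, equivalently Gaussian comparison for degree-three chaos, gives
\[
\Pr(|Z_{3,d}|>t/\sqrt N)\le C e^{-ct^{2/3}}.
\]
Taking \(t=C\log^{3/2}d\) and a union over the at most \(d^{2A}\) pairs gives probability \(o(d^{-M})\).

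For \eqref{eq:cubic-linear-hereditary}, use an \(\varepsilon\)-net of \(\Sph^{d-1}\) of size \(9^d\) together with Bernstein's inequality for \(\sum_{i\in I}(\langle u,X_i\rangle^2-1/d)\), whose summands are subexponential with scale \(1/d\). The union over the \(\binom{m}{r}\le e^{r\log(em/r)}\) subsets and the net gives the deviation \(C\bigl(\tfrac rd+\sqrt{(d+r\log(em/r)+M\log d)\,r}/d+(d+r\log(em/r)+M\log d)/d\bigr)\), which is bounded by the stated form. The additive constant absorbs the \(\sqrt{\cdot}\) term by AM--GM.

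For \eqref{eq:cubic-zonal-hereditary}, the test vector \(u\) ranges over the whole sphere. Write \(Z_{3,d}(u,X_i)^2=\langle q(u),q(X_i)\rangle^2\), so that the left side is \(q(u)^{\mathsf T}\mathbf Q_I^{\mathsf T}\mathbf Q_I q(u)\). It is also a polynomial of degree six in \(u\), so a net of size \(e^{Cd\log d}\) on \(u\) together with a Lipschitz bound (derivatives of \(Z_{3,d}\) are \(O(d)\)) reduces the problem to fixed \(u\). For fixed \(u\), the summands \(\xi_i=Z_{3,d}(u,X_i)^2\) have mean \(1/N\) and tails \(\Pr(\xi_i>s/N)\le e^{-cs^{1/3}}\). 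A truncated Bernstein or moment argument for sums of variables with heavy (\(\psi_{1/3}\)) tails then bounds \(\sum_{i\in I}\xi_i\) by
\[
C\Bigl\{\tfrac rN+\tfrac{(\text{log-cardinality})^3}{N}\Bigr\}.
\]
The log-cardinality is \(d\log d+r\log(em/r)\), and with \(N\asymp d^3\) the term \((d\log d)^3/N\) is only polylogarithmic. This is the main obstacle. A naive net over \(u\) costs \(d\log d\) in the exponent, and cubing it exceeds the constant allowed by the statement. The first remedy I would try is a bound uniform in \(u\) through the feature operator, namely
\[
\textstyle\sum_{i\in I}Z_{3,d}(u,X_i)^2\le\|\mathbf Q_I\|_{\mathrm{op}}^2.
\]
It suffices to show \(\|\mathbf Q_I\|_{\mathrm{op}}^2\le C(1+\tfrac rN(\log\tfrac{em}r)^3)\) for \(r\le d^A\). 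I would prove this through the Gram matrix \(\mathbf Q_I\mathbf Q_I^{\mathsf T}\), whose diagonal is \(1\) and whose off-diagonal entries are \(Z_{3,d}(X_i,X_j)\). For \(r\lesssim N\), matrix-chaos bounds, namely decoupling and the moment method on \(\sum_{j\ne i}Z_{3,d}(X_i,X_j)e_ie_j^{\mathsf T}\), should give operator norm \(O(1)\) with the required tails. For \(r\gg N\), matrix Bernstein applied to \(\sum_i q(X_i)q(X_i)^{\mathsf T}\), with mean \(\tfrac rN I\) and summands of norm \(1\), gives \(\tfrac rN+C\sqrt{\tfrac rN\log}+C\log\), and the union over subsets supplies the \(\log(em/r)\) factors.

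Finally, \eqref{eq:cubic-feature-gram} is the case \(I=[m]\) of this matrix Bernstein bound. Matrix Bernstein in dimension \(N\) gives \(\|\mathbf Q_m\|_{\mathrm{op}}^2\le \tfrac mN+C\sqrt{\tfrac mN(\log N+M\log d)}+C(\log N+M\log d)\), which is at most \(C(\tfrac mN+\log N)\). Intersecting the four events and taking a union bound completes the argument.
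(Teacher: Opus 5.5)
Your arguments for \eqref{eq:cubic-linear-hereditary}, \eqref{eq:cubic-feature-gram}, \eqref{eq:cubic-pair-separation} and the invariances are sound. For the pair bound, hypercontractivity is a valid alternative to the paper's route, which combines $|\langle X_i,X_j\rangle|\le C\sqrt{\log d/d}$ with $|Z_{3,d}(t)|\le C(|t|^3+|t|/d)$.

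The gap is \eqref{eq:cubic-zonal-hereditary}. You correctly diagnose that netting over $u$ after cubing loses $(d\log d)^3/N$. Your remedy, however, replaces the $d$-dimensional family of zonal directions $q_{3,d}(u)$ by all of $\Sph^{N-1}$. It asks for a bound on $\|\mathbf Q_I\|_{\mathrm{op}}^2$ uniform over all subsets, which is a restricted-isometry-type statement. That is much stronger than needed, and neither half of your sketch proves it.

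For $r\gg N$, the union over $\binom mr\approx e^{r\log(em/r)}$ subsets forces the matrix Bernstein tail parameter to be of order $r\log(em/r)$, not $\log N$. Matrix Bernstein only sees $\|q_iq_i^{\mathsf T}\|_{\mathrm{op}}\le1$, so the resulting deviation is of order $r\log(em/r)$. That is worse than the trivial bound $\|\mathbf Q_I\|_{\mathrm{op}}^2\le r$ and nowhere near $(r/N)\log^3(em/r)$. The union over subsets does not supply the $\log(em/r)$ factors; it is what breaks the argument.

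For $r\lesssim N$, the claim that decoupling and the moment method give $O(1)$ with the required tails is unproved. Those tails must beat $e^{r\log(em/r)}$ for each subset, uniformly over an $N$-dimensional set of directions, and that is exactly the hard part.

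The missing idea is that \eqref{eq:cubic-zonal-hereditary} needs no new probability estimate. It follows deterministically from \eqref{eq:cubic-linear-hereditary}, because the net cost is paid at the level of $\langle u,X_i\rangle^2$, before cubing.

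Fix $u$ and order the values $\langle u,X_i\rangle^2$ as $b_{(1)}(u)\ge b_{(2)}(u)\ge\cdots$. Since \eqref{eq:cubic-linear-hereditary} holds for every subset, apply it to the $k$ points realizing the top $k$ values:
\[
 b_{(k)}(u)\le\frac1k\sum_{j\le k}b_{(j)}(u)\le\frac1k\Bigl\|\sum_{j\le k}X_{(j)}X_{(j)}^{\mathsf T}\Bigr\|_{\mathrm{op}}\le C\Bigl(\frac1k+\frac1d\log\frac{em}{k}\Bigr),
\]
uniformly in $u$.

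Next, $Z_{3,d}(t)^2\le C(t^6+t^2/d^2)$, and $b\mapsto b^3+b/d^2$ is increasing. Hence $\sum_{i\in I}Z_{3,d}(u,X_i)^2\le C\sum_{k\le r}\{b_{(k)}(u)^3+b_{(k)}(u)/d^2\}$. The bound then follows from $\sum_k k^{-3}<\infty$, from $d^{-2}\sum_{k\le m}k^{-1}=O_A(1)$, and from $\sum_{k\le r}\log^3(em/k)\le Cr\log^3(em/r)$, together with $d^{-3}\asymp N^{-1}$.

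The net cost $d$ enters only through the $1/k$ term in $b_{(k)}$. After cubing it becomes the summable $k^{-3}$, which is precisely what netting after cubing throws away.
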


\begin{proof}
First, we prove \eqref{eq:cubic-linear-hereditary}.  For \(X\sim\sigma_d\),
the sub-Gaussian bound for \(\sqrt d\,X\) in
\cite[Theorem~3.4.6]{Vershynin2018} gives, for every fixed
\(u\in\Sph^{d-1}\),
\begin{equation}
  \mathbb P\{d\langle u,X\rangle^2>t\}\le2e^{-ct},
  \qquad t\ge1.
  \label{eq:cubic-spherical-coordinate-tail}
\end{equation}
The variables \(d\langle u,X_i\rangle^2-1\) are independent and centered,
with uniformly bounded subexponential norms.  Bernstein's inequality
\cite[Theorem~2.8.1]{Vershynin2018} implies that, for a fixed
\(I\subseteq[m]\), \(|I|=r\), and every \(s\ge1\),
\[
  \mathbb P\left\{
    \sum_{i\in I}\langle u,X_i\rangle^2
    >\frac{C(r+s)}d
  \right\}
  \le2e^{-cs}.
\]
Choose a \(1/4\)-net \(\mathcal N\) of \(\Sph^{d-1}\) with
\(|\mathcal N|\le9^d\), as supplied by
\cite[Corollary~4.2.13]{Vershynin2018}, and take
  \(s=C_{A,M}\{d+r\log(em/r)\}\).  Since the number of \(r\)-subsets is at most
\((em/r)^r\), a union bound over \(u\in\mathcal N\), over all such subsets,
and then over \(1\le r\le m\) has failure probability \(o(d^{-M})\) when
\(C_{A,M}\) is chosen large enough.  The estimate for quadratic forms on a
net \cite[Exercise~4.4.3(b)]{Vershynin2018} then gives
\[
  \sup_{I:|I|=r}
  \left\|\sum_{i\in I}X_iX_i^{\mathsf T}\right\|_{\mathrm{op}}
  \le 2\sup_{I:|I|=r}\max_{u\in\mathcal N}
    \sum_{i\in I}\langle u,X_i\rangle^2
  \le C_{A,M}\left(1+\frac rd\log\frac{em}{r}\right),
\]
which is \eqref{eq:cubic-linear-hereditary}.

Next, we deduce \eqref{eq:cubic-zonal-hereditary}.  For each
\(u\in\Sph^{d-1}\), arrange the values \(|\langle u,X_i\rangle|^2\)
in nonincreasing order \(b_{(1)}(u)\ge\cdots\ge b_{(m)}(u)\).
Let \(X_{(j)}\) denote the corresponding sample point, so that
\(b_{(j)}(u)=|\langle u,X_{(j)}\rangle|^2\), with ties resolved
arbitrarily.  Since \eqref{eq:cubic-linear-hereditary} holds for every
subset, it applies to the first \(k\) points in this ordering.  Thus,
uniformly in \(u\in\Sph^{d-1}\) and \(1\le k\le m\),
\[
\begin{aligned}
  b_{(k)}(u)
  &\le\frac1k\sum_{j=1}^k b_{(j)}(u)
   =\frac1k u^{\mathsf T}\left(\sum_{j=1}^k
     X_{(j)}X_{(j)}^{\mathsf T}\right)u\\
  &\le\frac1k\left\|\sum_{j=1}^k
     X_{(j)}X_{(j)}^{\mathsf T}\right\|_{\mathrm{op}}
   \le C_{A,M}\left(\frac1k+\frac1d\log\frac{em}{k}\right).
\end{aligned}
\]
The formula \(Z_{3,d}(t)=\{(d+2)t^3-3t\}/(d-1)\) from
Section~\ref{subsec:cubic-harmonic-kernel} implies
\(|Z_{3,d}(t)|^2\le C(t^6+t^2/d^2)\) for \(d\ge2\).
Using \(N\asymp d^3\) and \(\log(em/k)\ge1\), the preceding bound for
\(b_{(k)}(u)\) therefore yields
\[
  b_{(k)}(u)^3+\frac{b_{(k)}(u)}{d^2}
  \le C_{A,M}\left\{
    k^{-3}+(kd^2)^{-1}
    +N^{-1}\log^3\frac{em}{k}
  \right\}.
\]
For every \(I\subseteq[m]\) with \(|I|=r\), monotonicity of
\(b\mapsto b^3+b/d^2\) on \([0,\infty)\) gives
\[
\begin{aligned}
  \sum_{i\in I}Z_{3,d}(u,X_i)^2
  &\le C\sum_{i\in I}\left(
    |\langle u,X_i\rangle|^6+\frac{|\langle u,X_i\rangle|^2}{d^2}
  \right)\\
  &\le C\sum_{k=1}^r\left\{b_{(k)}(u)^3+
    \frac{b_{(k)}(u)}{d^2}\right\}\\
  &\le C_{A,M}\left\{1+\frac rN
    \left(\log\frac{em}{r}\right)^3\right\}.
\end{aligned}
\]
The last inequality uses \(\sum_{k=1}^r k^{-3}\le C\),
\(d^{-2}\sum_{k=1}^r k^{-1}\le d^{-2}(1+\log m)=O_A(1)\),
and the integral comparison
\[
  \sum_{k=1}^r\left(\log\frac{em}{k}\right)^3
  \le C r\left(\log\frac{em}{r}\right)^3.
\]
This proves \eqref{eq:cubic-zonal-hereditary} simultaneously for all
\(I\) and \(u\).

For \eqref{eq:cubic-feature-gram}, the feature identities in
Section~\ref{subsec:cubic-harmonic-kernel} give
\(\mathbb E[q_{3,d}(X)q_{3,d}(X)^{\mathsf T}]=N^{-1}I_N\).  Matrix Bernstein
\cite[Theorem~6.1]{Tropp2012} yields
\[
  \left\|\mathbf Q_m^{\mathsf T}\mathbf Q_m-\frac mN I_N\right\|_{\mathrm{op}}
  \le C_{A,M}\left(
    \sqrt{\frac{m\log N}N}+\log N
  \right)
\]
with probability \(1-o(d^{-M})\), uniformly for \(m\le d^A\), after increasing
\(C_{A,M}\).  The inequality
\(2\sqrt{xy}\le x+y\) now gives \eqref{eq:cubic-feature-gram}.

It remains to prove \eqref{eq:cubic-pair-separation}.  For \(i\ne j\),
\(X_j\) is independent of \(X_i\) and has law \(\sigma_d\).
Equation~\eqref{eq:cubic-spherical-coordinate-tail} therefore gives, almost surely,
\[
  \mathbb P\left\{d\langle X_i,X_j\rangle^2>t\mid X_i\right\}
  \le2e^{-ct},\qquad t\ge1.
\]
Integrating over \(X_i\) and taking a union bound over \(i<j\) yields
\[
\begin{aligned}
  &\mathbb P\left\{\max_{i\ne j}|\langle X_i,X_j\rangle|
    >C_{A,M}\sqrt{\frac{\log d}d}\right\}\\
  &\qquad\le m(m-1)d^{-cC_{A,M}^2}
    \le d^{2A-cC_{A,M}^2}=o(d^{-M}),
\end{aligned}
\]
where \(C_{A,M}\) is chosen so that \(cC_{A,M}^2>2A+M\).
Substituting this inner-product bound into
\eqref{eq:cubic-zonal-polynomial} proves
\eqref{eq:cubic-pair-separation}.

Define \(\mathcal G_{m,M}\) by requiring
\eqref{eq:cubic-linear-hereditary}--\eqref{eq:cubic-pair-separation}
to hold for all the indicated subsets and directions.  The preceding
probability estimates give \(\rho_m(\mathcal G_{m,M}^c)=o(d^{-M})\).
To check its invariance, note that
\[
  \|\mathbf Q_m\|_{\mathrm{op}}^2
  =\|\mathbf Q_m\mathbf Q_m^{\mathsf T}\|_{\mathrm{op}}
  =\bigl\|[Z_{3,d}(X_i,X_j)]_{i,j\le m}\bigr\|_{\mathrm{op}}.
\]
For an orthogonal matrix \(O\), a permutation \(\tau\) of \([m]\), and
signs \(\varepsilon_i\in\{-1,1\}\), set
\(\widetilde X_i=\varepsilon_i OX_{\tau(i)}\).
Let \(P\) be the permutation matrix with
\(P_{ij}=\1_{\{j=\tau(i)\}}\), and put
\(D=\operatorname{diag}(\varepsilon_1,\ldots,\varepsilon_m)\).
Rotational invariance and oddness of \(Z_{3,d}\) give
\[
\begin{aligned}
  \sum_{i\in I}\widetilde X_i\widetilde X_i^{\mathsf T}
  &=O\left(\sum_{j\in\tau(I)}X_jX_j^{\mathsf T}\right)O^{\mathsf T},\\
  \sum_{i\in I}Z_{3,d}(u,\widetilde X_i)^2
  &=\sum_{j\in\tau(I)}Z_{3,d}(O^{\mathsf T}u,X_j)^2,\\
  [Z_{3,d}(\widetilde X_i,\widetilde X_j)]_{i,j\le m}
  &=DP[Z_{3,d}(X_i,X_j)]_{i,j\le m}P^{\mathsf T}D.
\end{aligned}
\]
Since \(|\tau(I)|=|I|\) and \(O^{\mathsf T}u\) ranges over the unit sphere,
the first two identities preserve
\eqref{eq:cubic-linear-hereditary}--\eqref{eq:cubic-zonal-hereditary}.
The last identity preserves both the operator norm and the maximum absolute
off-diagonal entry of the Gram matrix, giving invariance of
\eqref{eq:cubic-feature-gram}--\eqref{eq:cubic-pair-separation}.
Thus membership in \(\mathcal G_{m,M}\) is unchanged.
\end{proof}

In the remainder of this appendix, we write
\(\mathcal G_m=\mathcal G_{m,M}\) for a fixed \(M>0\) and suppress the
dependence of constants on \(M\).

Lemma~\ref{lem:cubic-regular-channel-endpoints} gives edge and likelihood
bounds after conditioning on \(\mathcal G_m\) or on the enlarged regularity
event used for the Gaussian comparison in
Appendix~\ref{app:cubic-franz-parisi}.

\begin{lemma}[Edge and likelihood bounds after conditioning]
\label{lem:cubic-regular-channel-endpoints}
Let \(\mathcal E_m\subseteq(\Sph^{d-1})^m\) have prior probability
\(p_m=\rho_m(\mathcal E_m)=1-o(1)\), and suppose that on this event
\[
 \max_{i<j}|Z_{3,d}(X_i,X_j)|\le\varepsilon_d=o(1).
\]
Put \(\pi_m=\rho_m(\,\cdot\mid\mathcal E_m)\).

Then the following bounds hold uniformly over \(|\beta|\le1\).
Under the graph model with prior \(\pi_m\), the edge signs
\(Y_{ij}=2a_{ij}-1\) satisfy, for \(X\in\mathcal E_m\) and \(i<j\),
\[
\begin{aligned}
  |\mathbb E(Y_{ij}\mid X)|
  &=|\beta Z_{3,d}(X_i,X_j)|\le\varepsilon_d,\\
  \operatorname{Var}(Y_{ij}\mid X)
  &=1-\beta^2Z_{3,d}(X_i,X_j)^2\ge1-\varepsilon_d^2.
\end{aligned}
\]
In particular, these variances are at least \(3/4\) for all sufficiently
large \(d\).  For \(X,X'\in\mathcal E_m\), we also have
\(|\beta^2h_{ij}|\le\varepsilon_d^2\) for every \(i<j\).

The original and conditioned graph likelihoods satisfy
\[
 \|L_{\rho_m,\beta}-L_{\pi_m,\beta}\|_{L^1(\Qq_m)}
 \le2(1-p_m)=o(1).
\]
Both likelihoods are strictly positive on every graph for all sufficiently
large \(d\).
In particular, \(\mathcal E_m=\mathcal G_m\) satisfies these hypotheses
with \(\varepsilon_d=C_A\log^{3/2} d/\sqrt N\).
\end{lemma}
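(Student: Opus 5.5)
Most of the lemma follows directly from the definitions; only the \(L^1\) comparison of likelihoods needs a short coupling argument. I would proceed in four steps.

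\emph{Edge moments.} Conditional on \(X\), the sign \(Y_{ij}=2a_{ij}-1\) takes the value \(+1\) with probability \((1+\beta Z_{3,d}(X_i,X_j))/2\). Therefore
\[
\E(Y_{ij}\mid X)=\beta Z_{3,d}(X_i,X_j),
\qquad
\operatorname{Var}(Y_{ij}\mid X)=1-\beta^2Z_{3,d}(X_i,X_j)^2,
\]
using \(Y_{ij}^2=1\). For \(X\in\mathcal E_m\) and \(|\beta|\le1\), the hypothesis \(\max_{i<j}|Z_{3,d}(X_i,X_j)|\le\varepsilon_d\) gives both displayed bounds at once. Since \(\varepsilon_d\to0\), the variances exceed \(3/4\) once \(\varepsilon_d\le1/2\). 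For \(X,X'\in\mathcal E_m\), we have \(|\beta^2h_{ij}|\le|Z_{3,d}(X_i,X_j)|\,|Z_{3,d}(X_i',X_j')|\le\varepsilon_d^2\).

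\emph{\(L^1\) comparison.} Write \(L_x(\Adj)=\prod_{i<j}\{1+\beta Y_{ij}Z_{3,d}(x_i,x_j)\}\). Then \(L_{\rho_m,\beta}=\int L_x\,d\rho_m\) and \(L_{\pi_m,\beta}=\int L_x\,d\pi_m\), with \(d\pi_m/d\rho_m=\1_{\mathcal E_m}/p_m\). Since \(L_x\ge0\), we get
\[
\|L_{\rho_m,\beta}-L_{\pi_m,\beta}\|_{L^1(\Qq_m)}
\le\int\Bigl|1-\frac{\1_{\mathcal E_m}(x)}{p_m}\Bigr|\,\E_{\Qq_m}L_x\,d\rho_m(x).
\]
Here \(\E_{\Qq_m}L_x=1\), and the remaining integral equals \((1-p_m)+p_m(p_m^{-1}-1)=2(1-p_m)\). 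This matches \eqref{eq:cubic-conditioning-comparison}, which is the same quantity up to the factor \(1/2\) in total variation.

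\emph{Positivity.} For \(x\in\mathcal E_m\), each factor of \(L_x\) is at least \(1-\varepsilon_d>0\) once \(\varepsilon_d<1\). Hence \(L_{\pi_m,\beta}(\Adj)\ge(1-\varepsilon_d)^{\binom m2}>0\) for every graph. For \(L_{\rho_m,\beta}\), the factors are nonnegative everywhere since \(|\beta Z_{3,d}|\le1\). Thus \(L_{\rho_m,\beta}\ge\int_{\mathcal E_m}L_x\,d\rho_m\ge p_m(1-\varepsilon_d)^{\binom m2}>0\), provided \(p_m>0\), which holds for large \(d\).

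\emph{Application to \(\mathcal G_m\).} Lemma~\ref{lem:cubic-hereditary-envelope} gives \(\rho_m(\mathcal G_m^c)=o(d^{-M})\). Together with \eqref{eq:cubic-pair-separation} it supplies \(\varepsilon_d=C_A\log^{3/2}d/\sqrt N\), which is \(o(1)\) because \(N\asymp d^3\).

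No step is a real obstacle. The only points needing care are that the positivity bound uses \(\varepsilon_d<1\), so it is meaningful only for large \(d\), and that every bound is uniform in \(|\beta|\le1\) and in \(m\).
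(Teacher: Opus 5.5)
Your proposal is correct and follows the paper's proof essentially step for step: the conditional moment identities, the product bound for \(\beta^2h_{ij}\), the positivity bound \(L_{\rho_m,\beta}\ge p_mL_{\pi_m,\beta}\ge p_m(1-\varepsilon_d)^{\binom m2}\), and the appeal to \eqref{eq:cubic-pair-separation} all match. The only cosmetic difference is the \(L^1\) step: the paper uses the nonnegative decomposition \(L_{\rho_m,\beta}-p_mL_{\pi_m,\beta}\ge0\) and the triangle inequality, while you integrate \(|1-\mathbf 1_{\mathcal E_m}/p_m|\) against \(\mathbb E_{\Qq_m}L_x=1\); both give exactly \(2(1-p_m)\).
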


\begin{proof}
The conditional identities
\(\mathbb E(Y_{ij}\mid X)=\beta Z_{3,d}(X_i,X_j)\) and
\(\mathbb E(Y_{ij}^2\mid X)=1\) give the moment bounds.
Multiplying the kernel bounds for \(X\) and \(X'\) gives
\(|\beta^2h_{ij}|\le\varepsilon_d^2\).

Splitting the likelihood integral over \(\mathcal E_m\) and its complement gives
\[
  L_{\rho_m,\beta}-p_mL_{\pi_m,\beta}
  =\mathbb E_{X\sim\rho_m}\!\left[
    \1_{\{X\notin\mathcal E_m\}}
    \prod_{i<j}\{1+\beta Y_{ij}Z_{3,d}(X_i,X_j)\}\right]
  \ge0.
\]
Since both likelihood ratios have expectation one under \(\Qq_m\),
\[
\begin{aligned}
  \|L_{\rho_m,\beta}-L_{\pi_m,\beta}\|_{L^1(\Qq_m)}
  &\le\mathbb E_{\Qq_m}(L_{\rho_m,\beta}-p_mL_{\pi_m,\beta})
    +(1-p_m)\mathbb E_{\Qq_m}L_{\pi_m,\beta}\\
  &=2(1-p_m).
\end{aligned}
\]
On \(\mathcal E_m\), each factor satisfies
\(1+\beta Y_{ij}Z_{3,d}(X_i,X_j)\ge1-\varepsilon_d\).  Thus, for every
graph and all sufficiently large \(d\),
\[
  L_{\rho_m,\beta}\ge p_mL_{\pi_m,\beta}
  \ge p_m(1-\varepsilon_d)^{\binom m2}>0.
\]
The final assertion follows from \eqref{eq:cubic-pair-separation}.
\end{proof}

On \(\mathcal G_m\), inequality~\eqref{eq:cubic-pair-separation} gives,
for every \(I\subseteq[m]\) with \(|I|=r\),
\begin{equation*}
  \sum_{\substack{i<j\\i,j\in I}}|Z_{3,d}(X_i,X_j)|^6
  \le\binom r2\max_{i\ne j}|Z_{3,d}(X_i,X_j)|^6
  \le C_A\frac{r^2\log^9 d}{N^3}.
\end{equation*}
For \(r=o(Nd)\), we have
\(r^2\log^9 d/N^3
  =\{r/(Nd)\}^2\,d^2\log^9 d/N=o(1)\).

\subsection{Likelihood-weighted overlap tail bounds}
\label{subsec:cubic-overlap-deletion}

For matrices \(H,K\in\mathbb R^{N\times N}\), we use the
inner product \(\langle H,K\rangle=\tr(H^{\mathsf T}K)\) and
Hilbert--Schmidt norm \(\|H\|_{\HS}=\langle H,H\rangle^{1/2}\).
The feature representation in Section~\ref{subsec:cubic-harmonic-kernel}
gives \(h_{ij}=\langle w_i,w_j\rangle\).  Put
\[
  \kappa_m=\frac{d}{N^{-1}+(\log N)/m}.
\]
For a function \(f:(\Sph^{d-1})^m\to\mathbb R\), write
\[
  \operatorname{Lip}(f)
  :=\sup_{\substack{x',y'\in(\Sph^{d-1})^m\\x'\ne y'}}
    \frac{|f(x')-f(y')|}{\|x'-y'\|_2},
\]
where \(\|x'-y'\|_2\) is the Euclidean norm of the difference of the
sample vectors in \(\mathbb R^{md}\).

Proposition~\ref{prop:cubic-weighted-overlap} shows that, for a sufficiently
large fixed \(R\), the likelihood-weighted contribution from \(\|S_m\|_{\HS}^2>Rm\)
is at most \(e^{-c_R\kappa_m}=o(1)\) when \(m\beta^2=o(Nd)\).
This bounds the contribution from \(\|S_m\|_{\HS}^2>Rm\) to the \(o(1)\) remainder
in \eqref{eq:cubic-roadmap-product-reduction}.

\begin{proposition}[Weighted tail bound for the overlap]
\label{prop:cubic-weighted-overlap}
There are constants \(R<\infty\) and \(c_R>0\) such that, uniformly for
\(|\beta|\le1\),
\begin{equation}
  \mathbb E_{\rho_m\otimes\rho_m}\left[
    \1_{\mathcal G_m(X)}\1_{\mathcal G_m(X')}
    \1_{\{\|S_m\|_{\HS}^2>Rm\}}
    W_{m,\beta}
  \right]
  \le e^{-c_R\kappa_m},
  \label{eq:cubic-weighted-overlap}
\end{equation}
whenever \(m\beta^2=o(Nd)\) and \(d\) is sufficiently large.
\end{proposition}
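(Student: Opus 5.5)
Let me set up the target. On \(\mathcal G_m(X)\cap\mathcal G_m(X')\), inequality \eqref{eq:cubic-pair-separation} gives \(|\beta^2h_{ij}|=o(1)\). Hence \(\log(1+x)\le x\) yields
\[
  W_{m,\beta}\le\exp\Bigl(\beta^2\sum_{i<j}h_{ij}\Bigr)
  =\exp\Bigl(\tfrac{\beta^2}{2}(\|S_m\|_{\HS}^2-m)\Bigr).
\]
It therefore suffices to bound
\(\mathbb E[\1_{\mathcal G_m(X)}\1_{\mathcal G_m(X')}\1_{\{\|S_m\|_{\HS}^2>Rm\}}e^{\beta^2\|S_m\|_{\HS}^2/2}]\).
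Write \(s=\|S_m\|_{\HS}\), and note that \(S_m=\mathbf Q_m^{\mathsf T}\mathbf Q_m'\), where \(\mathbf Q_m'\) is the feature matrix of \(X'\).

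The plan is to establish a tail bound for \(s\) of the form
\[
  \mathbb P(s^2>t)\le\exp(-c\,t\,\kappa_m/m)
  \qquad\text{for } t\ge Rm,
\]
restricted to \(\mathcal G_m\times\mathcal G_m\), and then integrate by layers. Since \(\beta^2/2\le o(Nd/m)\), the exponential weight \(e^{\beta^2t/2}\) is dominated by \(e^{ct\kappa_m/(2m)}\): indeed \(\kappa_m/m=d/(m/N+\log N)\), and for \(m\gtrsim N\log N\) this ratio is \(\asymp Nd/m\), which exceeds \(\beta^2\) by a factor tending to infinity. For smaller \(m\), \(\kappa_m/m\gtrsim d/\log N\gg\beta^2\) because \(|\beta|\le1\). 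Layer-cake integration over \(t\ge Rm\) then gives \(e^{-c_R\kappa_m}\).

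To get the tail, I would condition on \(X'\in\mathcal G_m\) and view \(f(X)=\|\mathbf Q_m^{\mathsf T}\mathbf Q_m'\|_{\HS}\) as a function of \(X\) with the spherical product measure, for which log-Sobolev and Gaussian-type concentration hold with variance proxy \(1/d\) per Lipschitz unit. This is why \(\operatorname{Lip}\) was introduced.

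\emph{Mean.} We have \(\mathbb E_X f^2=m\|\mathbf Q_m'\|_{\HS}^2/N=m^2/N\le m\) when \(m\le N\). In general the mean is \(O(\sqrt m+m/\sqrt N)\), and this needs care: for \(m>N\), \(m^2/N>m\), so the raw quantity \(\|S_m\|^2\) has mean \(m+m(m-1)/N\). I suspect the intended object after centering still works, with \(R\) absorbing the \(m/N\) factor via the restriction \(m\beta^2=o(Nd)\) in the exponent. If it does not, I would replace \(Rm\) by \(R(m+m^2/N)\) and check that the Taylor step is unaffected.

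\emph{Lipschitz constant.} Perturbing \(X_i\) changes the row \(q(X_i)\), and \(q\) is Lipschitz with constant \(O(\sqrt d)\) in the appropriate sense, since the harmonic gradient has norm \(\sqrt{3(d+1)}\) on average. Thus \(\operatorname{Lip}(f)\lesssim\sqrt d\,\|\mathbf Q_m'\|_{\mathrm{op}}\le\sqrt{d(m/N+\log N)}\) by \eqref{eq:cubic-feature-gram}. The worst-case gradient is not bounded uniformly on the sphere. I would therefore work with a smoothed or truncated version valid on \(\mathcal G_m\), using Kirszbraun-type Lipschitz extension from \(\mathcal G_m\), or bound the gradient via \eqref{eq:cubic-zonal-hereditary}.

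\emph{Concentration.} Concentration then gives
\[
  \mathbb P(f>\mathbb E f+u)\le\exp\bigl(-c\,u^2 d/(m/N+\log N)\bigr)=\exp(-cu^2\kappa_m/m\cdot\ldots).
\]
Taking \(u^2\asymp t\) gives the desired tail with rate \(t\,d/(m/N+\log N)\ge t\,\kappa_m/m\).

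The main obstacle is the Lipschitz control. The map \(x\mapsto q_{3,d}(x)\) has pointwise gradient norm up to order \(d^{3/2}/\sqrt N\cdot\)(polynomial), not merely \(\sqrt d\). So one must exploit the regularity event, via hereditary bounds on directional cubic moments, to get an effective Lipschitz bound of order \(\sqrt{d}\,\|\mathbf Q_m'\|_{\mathrm{op}}\) on \(\mathcal G_m\), and then use extension so that concentration of measure applies. I would first try computing \(\nabla_{X_i}f\) explicitly as a contraction of \(\nabla q(X_i)\) with \(\mathbf Q_m'^{\mathsf T}\mathbf Q_m'\) applied to \(S_m\), and bound it using \eqref{eq:cubic-linear-hereditary} and \eqref{eq:cubic-feature-gram}.
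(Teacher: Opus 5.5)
Your architecture matches the paper's proof. You dominate \(W_{m,\beta}\) by \(e^{\beta^2(\|S_m\|_{\HS}^2-m)/2}\), fix one sample in \(\mathcal G_m\), and concentrate \(\|S_m\|_{\HS}\) as a function of the other sample using the \(O(1/d)\) log-Sobolev constant of the product sphere, with the Lipschitz constant controlled by \eqref{eq:cubic-feature-gram}. You then integrate the tail against \(e^{\beta^2t/2}\) using \(m\beta^2/\kappa_m=m\beta^2/(Nd)+\beta^2(\log N)/d=o(1)\).

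The gap is in the step you call the main obstacle. The feature map is globally Lipschitz with an absolute constant. Because \(\|q_{3,d}(y)\|=1\), one has \(\|q_{3,d}(y)-q_{3,d}(y')\|^2=2\{1-Z_{3,d}(\langle y,y'\rangle)\}\le 2Z_{3,d}'(1)(1-\langle y,y'\rangle)=Z_{3,d}'(1)\|y-y'\|_2^2\), since \(\sup_{|t|\le1}|Z_{3,d}'(t)|=Z_{3,d}'(1)=3(d+1)/(d-1)\le9\). Your figure \(\sqrt{3(d+1)}\) is the Hilbert--Schmidt norm of \(Dq_{3,d}(x)\), summed over the \(d-1\) tangent directions. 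The operator norm, which is what controls Lipschitz constants, equals \(\sqrt{3(d+1)/(d-1)}\) at every point, and there is no pointwise blow-up.

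This is not cosmetic. With your bound \(\operatorname{Lip}(f)\lesssim\sqrt d\,\|\mathbf Q_m'\|_{\rm op}\), the log-Sobolev inequality gives only \(\exp\{-cu^2/(m/N+\log N)\}\). That is a factor \(d\) weaker than the inequality you display, and it is too weak to dominate \(e^{\beta^2t/2}\): \(\beta^2\log N\) does not vanish when \(|\beta|\) is of order one, and the hypothesis gives only \(\beta^2m/N=o(d)\). So your displayed concentration bound does not follow from your own Lipschitz estimate. Your proposed remedy, an effective constant \(\sqrt d\,\|\mathbf Q_m'\|_{\rm op}\) on \(\mathcal G_m\) plus Lipschitz extension, would fail even if carried out. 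With the correct constant, \(f\) is \(3\|\mathbf Q_m'\|_{\rm op}\)-Lipschitz on all of \((\Sph^{d-1})^m\). No smoothing, extension, or regularity of the varied sample is needed; you simply drop its indicator.

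The mean computation is also wrong, and your fallback would change the statement. For each fixed \(X'\), \(\mathbb E_X[\mathbf Q_m\mathbf Q_m^{\mathsf T}]=I_m\), because the off-diagonal entries \(Z_{3,d}(X_i,X_j)\) are centered by oddness. Hence \(\mathbb E_Xf^2=\tr\{(\mathbf Q_m')^{\mathsf T}\mathbf Q_m'\}=m\) for every \(m\). Equivalently, \(\mathbb Eh_{ij}=0\), so \(\mathbb E\|S_m\|_{\HS}^2=m\), not \(m+m(m-1)/N\).

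Thus \(\mathbb Ef\le\sqrt m\) in all regimes, no case split \(m\lessgtr N\) arises, and a fixed threshold \(Rm\) works directly. Replacing it by \(R(m+m^2/N)\) would prove a weaker statement. Moreover, the restriction \(\|S_m\|_{\HS}^2\le Rm\) is exactly what Lemma~\ref{lem:cubic-random-prefix} needs downstream.

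With these two corrections, your tail bound \(\mathbb P(\|S_m\|_{\HS}^2>t)\le\exp(-ct\kappa_m/m)\) for \(t\ge4m\) and the layer-cake integration go through as you describe.
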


\begin{proof}
Fix \(X\in\mathcal G_m\), and let
\(\mathbf Q,\mathbf R\in\mathbb R^{m\times N}\) have rows
\(q_{3,d}(X_i)^{\mathsf T}\) and \(q_{3,d}(X_i')^{\mathsf T}\), respectively,
where \(X'\sim\rho_m\).  Define
\[
  F(X'):=\frac{\|\mathbf Q^{\mathsf T}\mathbf R\|_{\HS}}{\sqrt m}
  =\frac{\|S_m\|_{\HS}}{\sqrt m}.
\]
Since \(\sum_{i<j}h_{ij}=(\|S_m\|_{\HS}^2-m)/2\) and
\(0\le1+\beta^2h_{ij}\le e^{\beta^2h_{ij}}\),
\[
  0\le W_{m,\beta}
  \le e^{\beta^2(\|S_m\|_{\HS}^2-m)/2}
  \le e^{\beta^2mF(X')^2/2}.
\]
Thus it suffices to prove, uniformly over \(X\in\mathcal G_m\),
\[
  \mathbb E_{X'}\!\left[
    e^{\beta^2mF(X')^2/2}\1_{\{F(X')>\sqrt R\}}\mid X
  \right]\le e^{-c_R\kappa_m}.
\]

We first control the tail of \(F\).  The oddness and unit norm of
\(q_{3,d}\), together with independence of the \(X_i'\),
give \(\mathbb E_{X'}(\mathbf R\mathbf R^{\mathsf T})=I_m\).  Hence
\[
  \mathbb E_{X'}F(X')^2
  =\frac1m\tr\left\{\mathbf Q^{\mathsf T}
    \mathbb E_{X'}(\mathbf R\mathbf R^{\mathsf T})\mathbf Q\right\}
  =\frac1m\tr(\mathbf Q^{\mathsf T}\mathbf Q)=1,
\]
and Cauchy--Schwarz gives \(\mathbb E_{X'}F(X')\le1\).

For the Lipschitz bound, use the identity
\(Z_{3,d}(y,y')=\langle q_{3,d}(y),q_{3,d}(y')\rangle\) from
Section~\ref{subsec:cubic-harmonic-kernel}.  The polynomial formula
\eqref{eq:cubic-zonal-polynomial} gives
\[
  L_{3,d}^2:=\sup_{-1\le t\le1}|Z_{3,d}'(t)|
  =Z_{3,d}'(1)=\frac{3(d+1)}{d-1}.
\]
Thus, for \(t=\langle y,y'\rangle\),
\begin{equation}
  \|q_{3,d}(y)-q_{3,d}(y')\|^2
  =2\{1-Z_{3,d}(t)\}
  \le2L_{3,d}^2(1-t)=L_{3,d}^2\|y-y'\|_2^2.
  \label{eq:cubic-feature-Lipschitz}
\end{equation}
Writing \(\mathbf R(x')\) and \(\mathbf R(y')\) for the corresponding
feature matrices, the triangle inequality and
\eqref{eq:cubic-feature-Lipschitz} give
\[
  |F(x')-F(y')|
  \le\frac{\|\mathbf Q\|_{\mathrm{op}}}{\sqrt m}
    \|\mathbf R(x')-\mathbf R(y')\|_{\HS}
  \le\frac{L_{3,d}\|\mathbf Q\|_{\mathrm{op}}}{\sqrt m}
    \|x'-y'\|_2.
\]
Consequently, \eqref{eq:cubic-feature-gram} yields
\begin{equation}
  \operatorname{Lip}(F)^2
  \le \frac{L_{3,d}^2\|\mathbf Q\|_{\mathrm{op}}^2}{m}
  \le C_A\left(\frac1N+\frac{\log N}m\right).
  \label{eq:cubic-overlap-Lipschitz}
\end{equation}
The spherical log-Sobolev inequality has constant \(O(d^{-1})\)
\cite[Corollary~2 and the spherical example, pp.~199--200]{BakryEmery1985}.
By tensorization \cite[Theorem~5.7]{Meckes2019}, the same bound holds for
\(\rho_m=\sigma_d^{\otimes m}\), uniformly in \(m\).
The Euclidean Lipschitz bound \eqref{eq:cubic-overlap-Lipschitz} also bounds
the squared norm of the tangential gradient on the product sphere.  The Herbst argument
\cite[proof of Theorem~5.5]{Meckes2019}, together with
\(\mathbb E_{X'}F\le1\), therefore gives
\begin{equation}
  \mathbb P_{X'}\{F(X')\ge1+t\mid X\}
  \le \exp\!\left(-\frac{cdt^2}{N^{-1}+(\log N)/m}\right)
  =\exp(-c\kappa_mt^2),
  \qquad t\ge0.
  \label{eq:cubic-conditional-overlap-tail}
\end{equation}

The assumption \(m\beta^2=o(Nd)\) implies
\(m\beta^2/\kappa_m=m\beta^2/(Nd)+\beta^2(\log N)/d=o(1)\).
The conditional tail-integral formula gives
\begin{align*}
  &\mathbb E_{X'}\!\left[
    e^{\beta^2mF(X')^2/2}\1_{\{F(X')>\sqrt R\}}\mid X
  \right]\\
  &\quad= e^{\beta^2mR/2}\mathbb P_{X'}\{F>\sqrt R\mid X\}
  +\frac{\beta^2m}{2}\int_R^\infty e^{\beta^2mu/2}
       \mathbb P_{X'}\{F^2>u\mid X\}\,du.
\end{align*}
For \(u\ge R\ge4\), one has \((\sqrt u-1)^2\ge u/4\).  Since
\(\beta^2m/\kappa_m=o(1)\), the tail bound
\eqref{eq:cubic-conditional-overlap-tail} makes the last display at most
\(e^{-c_R\kappa_m}\) after \(R\) is chosen large enough.
Inserting \(\1_{\mathcal G_m(X')}\le1\) and integrating over
\(X\in\mathcal G_m\) proves \eqref{eq:cubic-weighted-overlap}.
\end{proof}

\subsection{A one-row exponential-moment bound}
\label{subsec:cubic-row-laplace}

To estimate the exponential moment in
\eqref{eq:cubic-roadmap-compensated-target}, we first decompose
\(F_m^{(2)}(\beta)\) into contributions from individual rows.  With
\(S_{i-1}=\sum_{j<i}w_j\), the identity \(h_{ij}=\langle w_i,w_j\rangle\)
gives
\[
  F_m^{(2)}(\beta)
  =\sum_{i=1}^m\left(
    \beta^2\langle S_{i-1},w_i\rangle
    -\frac{\beta^4}{2}\sum_{j<i}\langle w_j,w_i\rangle^2
  \right).
\]
Recall \(P_0=\sigma_d\otimes\sigma_d\).  For \((X,X')\sim P_0\), write
\(w=q_{3,d}(X)q_{3,d}(X')^{\mathsf T}\).
For \(H\in\mathbb R^{N\times N}\), let
\[
  \ell(H)=\sup_{x,y\in\Sph^{d-1}}
  |\langle H,q_{3,d}(x)q_{3,d}(y)^{\mathsf T}\rangle|
\]
be the largest absolute value of \(\langle H,w\rangle\) over all possible
feature matrices \(w\).  Then we have
\(\bigl|\sum_{j<i}h_{ij}\bigr|\le\ell(S_{i-1})\).

Lemma~\ref{lem:cubic-compensated-row} estimates the exponential moment of
one row under \(P_0\), with \(w_1,\ldots,w_{i-1}\) fixed.  The summand
indexed by \(i\) corresponds to \(r=i-1\), \(v_j=w_j\),
\(u=\beta^2\), and \(c_0=1\).  Its leading coefficient involves
\(\|S_{i-1}\|_{\HS}^2-(i-1)=2\sum_{j<k<i}h_{jk}\), so the diagonal
terms cancel.

\begin{lemma}[Exponential-moment bound with a quadratic correction]
\label{lem:cubic-compensated-row}
There is a universal constant \(c_{\rm src}>0\) with the following
property.  For deterministic points
\(x_1,\ldots,x_r,x_1',\ldots,x_r'\in\Sph^{d-1}\), put
\[
  v_j=q_{3,d}(x_j)q_{3,d}(x_j')^{\mathsf T},\quad 1\le j\le r,
  \qquad S=\sum_{j=1}^r v_j.
\]
Fix \(\rho<\infty\), and let \(|u|\le\rho\) and \(0\le c_0\le1\).
Suppose that, as \(d\to\infty\),
\[
  2|u|\ell(S)\le c_{\rm src}d,
  \qquad \frac{u^2(r+\|S\|_{\HS}^2)}{N^2}=o(1).
\]
Then, for all sufficiently large \(d\),
\begin{equation}
  \log\mathbb E_{P_0}
  \exp\left\{u\langle S,w\rangle
  -\frac{c_0u^2}{2}\sum_{j=1}^r\langle v_j,w\rangle^2\right\}
  \le \frac{u^2(\|S\|_{\HS}^2-c_0r)}{2N^2}
  +C_\rho u^4\frac{\|S\|_{\HS}^4+r^2}{N^4}.
  \label{eq:cubic-compensated-row}
\end{equation}
The constant \(C_\rho\) depends only on \(\rho\), and the bound is uniform
over the choices above.  For \(c_0=0\), the numerator
\(\|S\|_{\HS}^4+r^2\) in \eqref{eq:cubic-compensated-row} can be replaced
by \(\|S\|_{\HS}^4\).
\end{lemma}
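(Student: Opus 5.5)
The plan is a fourth-order cumulant expansion of the exponent under \(P_0\). The exact low moments come from the feature identities of Section~\ref{subsec:cubic-harmonic-kernel}; the expansion remainder is controlled by a Bernstein-type exponential bound that uses the hypothesis \(2|u|\ell(S)\le c_{\rm src}d\).

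\emph{Exact moments.} Write
\[
f=u\langle S,w\rangle,\qquad g=\tfrac{c_0u^2}{2}\sum_{j\le r}\langle v_j,w\rangle^2\ge0,\qquad \Phi=f-g,
\]
with \(w=q_{3,d}(X)q_{3,d}(X')^{\mathsf T}\) and \(X,X'\) independent.
\begin{itemize}
\item Since \(q_{3,d}\) is odd, \(\E f=0\), and every monomial of odd total degree in \(q_{3,d}(X)\) has mean zero.
\item From \(\E q_{3,d}q_{3,d}^{\mathsf T}=N^{-1}I_N\), independence of the replicas, and \(\|v_j\|_{\HS}=1\), we get for any \(H\)
\[
\E\langle H,w\rangle^2=\frac{\tr(HH^{\mathsf T})}{N^2}=\frac{\|H\|_{\HS}^2}{N^2}.
\]
\item Hence \(\E f^2=u^2\|S\|_{\HS}^2/N^2\) and \(\E g=c_0u^2r/(2N^2)\).
\end{itemize}
These two quantities give the leading term \(u^2(\|S\|_{\HS}^2-c_0r)/(2N^2)\). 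The cross term \(\E fg\) and \(\E f^3\) are of odd degree in \(q_{3,d}(X)\), so they vanish.

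\emph{Expansion and remainder.} I will use the elementary inequality
\[
e^{x}\le 1+x+\tfrac{x^2}{2}+\tfrac{x^3}{6}+x^4e^{|x|}.
\]
I apply it to \(\Phi\) and then use \(\log(1+y)\le y\). The second- and third-order terms \(\E\Phi^2/2\) and \(\E\Phi^3/6\) reduce, after the cancellations above, to
\begin{itemize}
\item the leading term \(u^2\|S\|_{\HS}^2/(2N^2)\);
\item terms involving \(\E g^2\), \(\E f^2g\) and \(\E g^3\), all of order \(u^4\) or higher.
\end{itemize}
These are bounded by hypercontractivity on \(\Sph^{d-1}\times\Sph^{d-1}\). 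Each \(\langle H,w\rangle\) is a polynomial of bidegree \((3,3)\), so its \(L^p\) norms are at most \(C_p\|H\|_{\HS}/N\). This gives
\[
\E f^4\le C u^4\|S\|_{\HS}^4/N^4,\qquad \E g^2\le C u^4\Bigl(\sum_j\|\langle v_j,w\rangle^2\|_{L^2}\Bigr)^2\le Cu^4r^2/N^4.
\]
Products are handled by Hölder. Higher powers of \(g\) carry extra factors \(u^2r/N^2=o(1)\).

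When \(c_0=0\), \(g\equiv0\), so only the \(f\)-moments appear and the \(r^2\) term disappears; this is the final assertion of the lemma.

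\emph{Main obstacle: the exponential factor.} It remains to bound \(\E[\Phi^4e^{|\Phi|}]\). Since \(g\ge0\), I will use \(e^{|\Phi|}\le e^{|f|}e^{g}\) together with Cauchy--Schwarz. Hypercontractivity handles the polynomial factor \(\E\Phi^8\). The real task is to show \(\E e^{2|f|}+\E e^{2g}=O(1)\).

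For \(f\), the hypercontractive tails \(\exp(-ct^{1/3})\) are too weak to give an exponential moment directly. The bounded-variance condition \(u^2\|S\|_{\HS}^2/N^2=o(1)\) is also not sufficient alone. My first attempt is a Bernstein-type bound obtained from log-Sobolev (Herbst) on the sphere, with \(X'\) fixed:
\begin{itemize}
\item The map \(X\mapsto\langle Sq_{3,d}(X'),q_{3,d}(X)\rangle\) is a degree-three spherical harmonic.
\item Its gradient is controlled through \(\Delta\psi=-3(d+1)\psi\) and the sup bound \(\ell(S)\).
\item The log-Sobolev constant is \(O(1/d)\).
\end{itemize}
Together these should give
\[
\log\E e^{\lambda f}\le\frac{C\lambda^2\E f^2}{1-C\lambda|u|\ell(S)/d}.
\]
Taking \(\lambda=2\) and using \(2|u|\ell(S)\le c_{\rm src}d\) with \(c_{\rm src}\) small then makes \(\E e^{2|f|}\) bounded. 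The bound for \(e^{2g}\) is analogous, using \(\langle v_j,w\rangle^2\le\ell(v_j)^2\) and the smallness of \(u^2r/N^2\). If the Herbst route proves too lossy, the fallback is to truncate on \(\{|f|\le1\}\), use the fourth-order expansion there, and bound the complement through the sup bound \(\ell(S)\) together with the spherical concentration of Lemma~\ref{lem:cubic-hereditary-envelope}'s type.
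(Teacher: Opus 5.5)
Your outline follows the paper's route: a fourth-order Taylor expansion, odd moments killed by oddness of \(q_{3,d}\), second moments from isotropy, hypercontractive moment bounds, and an exponential moment of \(\langle S,w\rangle\) driven by \(2|u|\ell(S)\le c_{\rm src}d\). However, the remainder step fails as written.

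You bound the remainder by \(\mathbb E[\Phi^4e^{|\Phi|}]\) and then use \(e^{|\Phi|}\le e^{|f|}e^{g}\), which requires \(\mathbb E e^{2g}=O(1)\). Under the lemma's hypotheses this is false, and \(\mathbb E[\Phi^4e^{|\Phi|}]\) itself can diverge. Take \(r\) even, \(x_j=a\) for all \(j\), and \(x_j'=(-1)^jb\). Then \(v_{2k}=-v_{2k-1}\), so \(S=0\) and \(\ell(S)=0\). With \(u=c_0=1\), both hypotheses hold for any \(r=o(N^2)\), say \(r\asymp d^5\). Here \(f=0\) and \(2g=rZ_{3,d}(a,X)^2Z_{3,d}(b,X')^2\). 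Since \(|Z_{3,d}(a,X)|\ge1/2\) with probability at least \(e^{-Cd}\), one gets \(\mathbb E e^{2g}\ge e^{r/16-2Cd}\to\infty\), and likewise \(\mathbb E[g^4e^{g}]\to\infty\). Yet the quantity being bounded, \(\log\mathbb E e^{-g}\), is nonpositive.

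Your justification (\(\langle v_j,w\rangle^2\le1\) together with \(u^2r/N^2=o(1)\)) gives only \(g\le c_0u^2r/2\). The smallness condition controls the mean of \(g\), not its exponential moment. The repair is to keep the sign of the compensator. The Lagrange remainder is \(\Phi^4e^{\xi}/24\) with \(\xi\) between \(0\) and \(\Phi\), and \(e^{\xi}\le e^{\max(\Phi,0)}\le e^{|f|}\) because \(g\ge0\). Cauchy--Schwarz then needs only your hypercontractive bound on \(\mathbb E\Phi^8\) and \(\mathbb E e^{2|f|}=O(1)\); for the former, note \(\|g\|_{L^8(P_0)}\le Cu^2r/N^2\) and \(u^8r^4/N^8\le u^4r^2/N^4\). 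The paper does exactly this, bounding the tilt in its fourth-derivative estimate by \(e^{|t||\langle S,w\rangle|}\).

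The bound \(\mathbb E e^{2|f|}=O(1)\) is where the real work lies, and your Herbst sketch does not yet deliver it. Herbst with a uniform gradient bound gives only \(\log\mathbb E e^{2f}\lesssim u^2\ell(S)^2/d\), which can be of order \(d\). The eigenvalue identity controls \(\mathbb E\|\nabla f\|^2\) only under the untilted law, whereas you need it under every tilt \(e^{\lambda f}\). Conditioning on \(X'\) also leaves an exponential moment of \(u^2\|Sq_{3,d}(X')\|^2/N\) over \(X'\), which your hypotheses do not control directly.

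The paper instead argues on the product sphere with \(F_S(x,y)=q_{3,d}(x)^{\mathsf T}Sq_{3,d}(y)\):
\begin{enumerate}
\item Bernstein's inequality for degree-three trigonometric polynomials along great circles gives \(\|\nabla^2F_S\|_{\rm op}\le18\ell(S)\).
\item Bakry--\'Emery then gives a Poincar\'e constant \(C/d\) for every tilted law \(\propto e^{tF_S}\,dP_0\) with \(|t|\ell(S)\le c_{\rm src}d\).
\item With \(G=\|\nabla F_S\|^2\) one has \(\|\nabla G\|^2\le C\ell(S)^2G\), so Gronwall propagates \(\mathbb E_0G=6(d+1)\|S\|_{\HS}^2/N^2\) along the tilt.
\item This yields \(\operatorname{Var}_t(F_S)\le C\|S\|_{\HS}^2/N^2\), and integrating twice gives \(\log\mathbb E e^{tF_S}\le Ct^2\|S\|_{\HS}^2/N^2\).
\end{enumerate}
Your truncation fallback would not replace this. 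On \(\{|f|>1\}\) the weight can be as large as \(e^{c_{\rm src}d/2}\), and chaos-type tail bounds for \(f\) do not beat that.
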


\begin{proof}
For scalar random variables, write
\(\|U\|_p=(\mathbb E_{P_0}|U|^p)^{1/p}\), \(1\le p<\infty\).
Write \(f(w)=\langle S,w\rangle\) and
\(g(w)=\sum_{j=1}^r\langle v_j,w\rangle^2\), and define
\[
  Z_{c_0}(t)=\mathbb E_{P_0}
  \exp\left\{tf-\frac{c_0t^2}{2}g\right\}.
\]
The left-hand side of \eqref{eq:cubic-compensated-row} is
\(\log Z_{c_0}(u)\).  We will justify the Taylor bound
\[
  \log Z_{c_0}(u)\le Z_{c_0}(u)-1
  \le\frac{u^2}{2}Z_{c_0}''(0)
    +\frac{u^4}{24}\sup_{|t|\le|u|}|Z_{c_0}^{(4)}(t)|.
\]
Step 1 verifies evenness and computes
\(Z_{c_0}''(0)=(\|S\|_{\HS}^2-c_0r)/N^2\), giving the main term in
\eqref{eq:cubic-compensated-row}.  To control the remainder, Step 2
bounds the exponential weights that occur in \(Z_{c_0}^{(4)}\), and
Step 3 uses this bound to estimate the fourth derivative and conclude.

\medskip
\noindent\textit{Step 1. Compute the quadratic coefficient.}
By definition, \(Z_{c_0}(0)=1\).  Since \(w\) and \(-w\) have the same
law, \(Z_{c_0}\) is even, so \(Z_{c_0}'(0)=Z_{c_0}'''(0)=0\).
The same symmetry gives \(\mathbb E_{P_0}f=0\), while isotropy yields
\(\mathbb E_{P_0}f^2=\|S\|_{\HS}^2/N^2\) and
\(\mathbb E_{P_0}g=r/N^2\).  Hence
\[
  Z_{c_0}''(0)=\mathbb E_{P_0}f^2-c_0\mathbb E_{P_0}g
  =\frac{\|S\|_{\HS}^2-c_0r}{N^2}.
\]
For \(c_0=1\), the diagonal contribution cancels, since
\(\|S\|_{\HS}^2-r=2\sum_{j<k}\langle v_j,v_k\rangle\).

\medskip
\noindent\textit{Step 2. Control the exponential weights.}
For deterministic \(H\), set
\[
  F_H(x,y)=\langle H,q_{3,d}(x)q_{3,d}(y)^{\mathsf T}\rangle
  =q_{3,d}(x)^{\mathsf T}Hq_{3,d}(y).
\]
In expectations under \(P_0\), \(F_H\) denotes \(F_H(X,X')\) for the
independent pair \((X,X')\sim P_0\).
Recall from Section~\ref{subsec:cubic-harmonic-kernel} that
\(q_{3,d}(-x)=-q_{3,d}(x)\) and
\(\mathbb E_{\sigma_d}[q_{3,d}(X)q_{3,d}(X)^{\mathsf T}]=I_N/N\).
The first identity and symmetry of \(\sigma_d\) give
\(\mathbb E_{P_0}F_H=0\).  Since \(X\) and \(X'\) are independent under
\(P_0\), applying the second identity first to \(X'\) and then to \(X\) gives
\[
  \mathbb E_{P_0}F_H^2
  =\frac1N\mathbb E_X
    [q_{3,d}(X)^{\mathsf T}HH^{\mathsf T}q_{3,d}(X)]
  =\frac{\tr(HH^{\mathsf T})}{N^2}
  =\frac{\|H\|_{\HS}^2}{N^2}.
\]
The definition of the feature map also gives
\[
  F_H(x,y)=\frac1N\sum_{a,b=1}^N
  H_{ab}\psi_{3,a}(x)\psi_{3,b}(y).
\]
Thus \(F_H(\cdot,y)\in\cH_{3,d}\) for every fixed \(y\), and likewise
\(F_H(x,\cdot)\in\cH_{3,d}\) for every fixed \(x\).
The spherical hypercontractive inequality
\cite[Theorem~1]{Beckner1992}, tensorized over the two sphere factors,
contributes a factor \((p-1)^{3/2}\) for each degree-three variable.
Hence, for every fixed \(p\ge2\),
\begin{equation}
  \|\langle H,w\rangle\|_p
  \le (p-1)^3\|F_H\|_{L^2(P_0)}
  =(p-1)^3\frac{\|H\|_{\HS}}{N}.
  \label{eq:cubic-product-hypercontractivity}
\end{equation}
We next show that, if \(|t|\ell(H)\le c_{\rm src}d\), then
\begin{equation}
  \log\mathbb E_{P_0}e^{t\langle H,w\rangle}
  \le C\frac{t^2\|H\|_{\HS}^2}{N^2}.
  \label{eq:cubic-bounded-source}
\end{equation}
We first bound the Riemannian Hessian of \(F_H\) on the product sphere.
For \(x_0,y_0\in\Sph^{d-1}\), choose unit tangent vectors
\(e_x\perp x_0\) and \(e_y\perp y_0\).  The curves
\[
  x(\theta_1)=x_0\cos\theta_1+e_x\sin\theta_1,
  \qquad
  y(\theta_2)=y_0\cos\theta_2+e_y\sin\theta_2
\]
are great circles with \(\|x'(\theta_1)\|_2=\|y'(\theta_2)\|_2=1\).
Each basis function \(\psi_{3,a}\) is the restriction of a homogeneous
harmonic cubic polynomial.  Along either great circle, it is therefore
a polynomial of degree at most three in the sine and cosine of the
corresponding angle.  Substituting into the feature expansion of \(F_H\)
gives a trigonometric polynomial of degree at most three in each angle,
which we denote by
\[
  p_H(\theta_1,\theta_2)
  :=F_H(x(\theta_1),y(\theta_2))
  =\sum_{k,l=-3}^3 c_{kl}e^{\mathrm i(k\theta_1+l\theta_2)}.
\]
The definition of \(\ell(H)\) gives \(\|p_H\|_\infty\le\ell(H)\).
Bernstein's inequality for trigonometric polynomials
\cite[Theorem~1.1]{QueffelecZarouf2019} states that
\(\|T'\|_\infty\le m\|T\|_\infty\) when \(T\) has degree at most \(m\).
Differentiation does not increase either trigonometric degree, so applying
this inequality twice, with the other variable fixed at each application, gives
\[
  \|\partial_{\theta_i}\partial_{\theta_j}p_H\|_\infty
  \le3\|\partial_{\theta_j}p_H\|_\infty
  \le9\|p_H\|_\infty\le9\ell(H),
  \qquad i,j\in\{1,2\}.
\]
For a tangent vector \(v=(v_x,v_y)\) at \((x_0,y_0)\), choose
\(e_x=v_x/\|v_x\|_2\) if \(v_x\ne0\), and
\(e_y=v_y/\|v_y\|_2\) if \(v_y\ne0\).
For a zero component, choose any unit tangent vector at the corresponding
base point.  Since
\(x''(\theta_1)=-x(\theta_1)\) and \(y''(\theta_2)=-y(\theta_2)\),
the accelerations are normal to the respective spheres, so both curves
are geodesics.  Thus
\(s\mapsto(x(s\|v_x\|_2),y(s\|v_y\|_2))\) is a product geodesic
with initial velocity \((\|v_x\|_2e_x,\|v_y\|_2e_y)=v\).
The Hessian along this geodesic therefore satisfies
\[
\begin{aligned}
  |\nabla^2F_H[v,v]|
  &=\left|\left.\frac{d^2}{ds^2}
    F_H\bigl(x(s\|v_x\|_2),y(s\|v_y\|_2)\bigr)\right|_{s=0}\right|\\
  &\le9\ell(H)(\|v_x\|_2+\|v_y\|_2)^2
   \le18\ell(H)(\|v_x\|_2^2+\|v_y\|_2^2).
\end{aligned}
\]
Taking the supremum over unit tangent vectors gives
\(\|\nabla^2F_H\|_{\mathrm{op}}\le18\ell(H)\).
Let \(\nu_t\) be the probability measure on
\(\Sph^{d-1}\times\Sph^{d-1}\) with density proportional to \(e^{tF_H}\)
relative to \(P_0\), and write \(\mathbb E_t\) for expectation under
\(\nu_t\).  On the product of the unit spheres, the Ricci tensor satisfies
\(\operatorname{Ric}(v,v)=(d-2)\|v\|_2^2\) for every tangent vector \(v\)
\cite[Sections~1.16 and~2.2.4]{BakryGentilLedoux2014}.
The weighted Ricci tensor for \(\nu_t\) is
\(\operatorname{Ric}-t\nabla^2F_H\), so its lower bound is
\(d-2-18|t|\ell(H)\ge c'd\) when
\(|t|\ell(H)\le c_{\rm src}d\), for sufficiently small universal
\(c_{\rm src}\) and all sufficiently large \(d\).
Its Poincar\'e inequality \cite{BakryEmery1985} is therefore
\[
  \operatorname{Var}_t(H_0)
  \le \frac C d\mathbb E_t\|\nabla H_0\|_2^2
  \qquad\text{for every smooth }H_0.
\]
Put \(G=\|\nabla F_H\|_2^2\).  Since
\(\nabla G=2\nabla^2F_H(\nabla F_H,\cdot)\),
\[
  \|\nabla G\|_2^2\le C\ell(H)^2G.
\]
Differentiation under the tilted law and two applications of the Poincar\'e
inequality now give
\[
  \left|\frac{d}{dt}\mathbb E_tG\right|
  =|\operatorname{Cov}_t(G,F_H)|
  \le\sqrt{\operatorname{Var}_t(G)\operatorname{Var}_t(F_H)}
  \le C\frac{\ell(H)}d\mathbb E_tG.
\]
Let \(\Delta=\Delta_x+\Delta_y\) be the Laplace--Beltrami operator on
the product sphere, with sign convention \(\Delta=\operatorname{div}\nabla\).
Since \(F_H\) has spherical harmonic degree three in each variable,
\(-\Delta_xF_H=-\Delta_yF_H=3(d+1)F_H\)
\cite[Section~2.2.3]{BakryGentilLedoux2014}, and hence
\(-\Delta F_H=6(d+1)F_H\).
Integration by parts under \(P_0=\nu_0\) gives
\[
  \mathbb E_0G
  =-\mathbb E_0[F_H\Delta F_H]
  =6(d+1)\mathbb E_0F_H^2
  =6(d+1)\frac{\|H\|_{\HS}^2}{N^2}.
\]
Gronwall's inequality yields
\(\mathbb E_tG\le Cd\|H\|_{\HS}^2/N^2\) throughout the specified range of \(t\).  A final
application of Poincar\'e gives
\[
  \operatorname{Var}_t(F_H)\le C\frac{\|H\|_{\HS}^2}{N^2}.
\]
If \(\Lambda_H(t)=\log\mathbb E_{P_0}e^{tF_H}\), then
\(\Lambda_H(0)=\Lambda_H'(0)=0\) and
\(\Lambda_H''(t)=\operatorname{Var}_t(F_H)\).  Integrating twice proves
\eqref{eq:cubic-bounded-source}.
Taking \(H=S\) and applying \eqref{eq:cubic-bounded-source} at
\(\pm2t\), which is permitted by \(2|u|\ell(S)\le c_{\rm src}d\), gives
\[
  \sup_{|t|\le|u|}\mathbb E_{P_0}e^{2|t||f|}
  \le 2\exp\left(\frac{Cu^2\|S\|_{\HS}^2}{N^2}\right)=O(1).
\]
Here we used \(e^{2|t||f|}\le e^{2tf}+e^{-2tf}\) and
\(u^2\|S\|_{\HS}^2/N^2=o(1)\).

\medskip
\noindent\textit{Step 3. Bound the fourth derivative and conclude.}
By \eqref{eq:cubic-product-hypercontractivity} and the triangle inequality
in \(L^p(P_0)\),
\begin{equation}
  \|f\|_8\le C\frac{\|S\|_{\HS}}{N},
  \qquad
  \|g\|_4+\|g\|_8\le C\frac r{N^2}.
  \label{eq:cubic-row-moment-bounds}
\end{equation}

With \(\phi_t=tf-c_0t^2g/2\), four differentiations give
\[
  Z_{c_0}^{(4)}(t)=\mathbb E_{P_0}e^{\phi_t}
  \left\{(f-c_0tg)^4-6c_0g(f-c_0tg)^2+3c_0^2g^2\right\}.
\]
Since \(g\ge0\) and \(0\le c_0\le1\),
\[
  |f-c_0tg|^4\le8(f^4+t^4g^4),
  \qquad g(f-c_0tg)^2\le2(gf^2+t^2g^3).
\]
Using \(e^{\phi_t}\le e^{|t||f|}\), Cauchy--Schwarz, the moment bounds
in \eqref{eq:cubic-row-moment-bounds}, and
\(\sup_{|t|\le|u|}\mathbb E_{P_0}e^{2|t||f|}=O(1)\) from Step 2, we obtain
\[
 \mathbb E_{P_0}e^{\phi_t}
 \left(f^4+gf^2+g^2+t^2g^3+t^4g^4\right)
 \le C_\rho\left(
 \frac{\|S\|_{\HS}^4+r^2}{N^4}
 +\frac{t^2r^3}{N^6}
 +\frac{t^4r^4}{N^8}\right).
\]
Here \(r\|S\|_{\HS}^2\le(\|S\|_{\HS}^4+r^2)/2\) bounds the
contribution of \(gf^2\).
The terms \(t^2r^3/N^6\) and \(t^4r^4/N^8\) are \(o(r^2/N^4)\),
uniformly for \(|t|\le|u|\), because
\(t^2r/N^2\le u^2r/N^2=o(1)\).  Hence
\[
  \sup_{|t|\le |u|}|Z_{c_0}^{(4)}(t)|
  \le C_\rho\frac{\|S\|_{\HS}^4+r^2}{N^4}.
\]
When \(c_0=0\), every term containing \(g\) disappears and the right side is
\(C_\rho\|S\|_{\HS}^4/N^4\).
Taylor's theorem at zero, with \(Z_{c_0}(0)=1\),
\(Z_{c_0}'(0)=Z_{c_0}'''(0)=0\), and
\(Z_{c_0}''(0)=(\|S\|_{\HS}^2-c_0r)/N^2\) as computed in Step 1, gives
\[
  Z_{c_0}(u)-1
  \le \frac{u^2(\|S\|_{\HS}^2-c_0r)}{2N^2}
    +C_\rho u^4\frac{\|S\|_{\HS}^4+r^2}{N^4},
\]
with \(r^2\) omitted when \(c_0=0\).
Finally, \(\log Z_{c_0}(u)\le Z_{c_0}(u)-1\) proves
\eqref{eq:cubic-compensated-row}.
\end{proof}

\subsection{Partial-sum bounds and variational estimates}
\label{subsec:cubic-prefix-moments}

This subsection supplies the partial-sum bounds and entropy formulas used
in \eqref{eq:cubic-main-entropy-reduction}.
Lemma~\ref{lem:cubic-subset-tensor-bounds} gives deterministic bounds
that verify the hypotheses of Lemma~\ref{lem:cubic-compensated-row}
in that argument.
Proposition~\ref{prop:cubic-linear-reduction} then records the resulting
bound for the exponential moment of \(F_m^{(2)}(\beta)\).
We begin with Lemma~\ref{lem:cubic-random-prefix}, which bounds the
second and fourth moments of \(S_r\) under exchangeable laws satisfying
\(\|S_m\|_{\HS}^2\le Rm\).

\begin{lemma}[Partial-sum moments under exchangeability]
\label{lem:cubic-random-prefix}
Fix \(R<\infty\).  Let \(Q\) be a law of
\(((X_i,X_i'))_{i=1}^m\) invariant under permutations of the \(m\) pairs,
and suppose \(Q(\|S_m\|_{\HS}^2\le Rm)=1\).  Then, for
\(S_r=\sum_{i=1}^rw_i\) and every \(0\le r\le m\),
\begin{equation}
  \mathbb E_Q\|S_r\|_{\HS}^2\le C_Rr,
  \qquad
  \mathbb E_Q\|S_r\|_{\HS}^4\le C_Rr^2.
  \label{eq:cubic-Q-prefix}
\end{equation}
The constant \(C_R\) depends only on \(R\).
\end{lemma}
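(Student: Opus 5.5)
The plan is to reduce both bounds to statements about sampling without replacement from a fixed configuration, using only that each \(w_i\) has unit Hilbert--Schmidt norm and that \(\|S_m\|_{\HS}^2\le Rm\). Exchangeability lets us average over a uniformly random relabelling of the pairs. So it suffices to prove, for \emph{deterministic} matrices \(w_1,\ldots,w_m\) with \(\|w_i\|_{\HS}=1\) and \(\|\sum_iw_i\|_{\HS}^2\le Rm\), that for a uniformly random \(r\)-subset \(I\subseteq[m]\),
\[
 \mathbb E_I\Bigl\|\sum_{i\in I}w_i\Bigr\|_{\HS}^2\le C_Rr,
 \qquad
 \mathbb E_I\Bigl\|\sum_{i\in I}w_i\Bigr\|_{\HS}^4\le C_Rr^2 .
\]
Taking the \(Q\)-expectation of these conditional bounds then gives \eqref{eq:cubic-Q-prefix}. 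The cases \(r=0\) and \(m=1\) are trivial.

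For the second moment there is a direct argument without conditioning. Since \(\|S_r\|_{\HS}^2=r+2\sum_{j<k\le r}h_{jk}\) and exchangeability makes \(c:=\mathbb E_Qh_{jk}\) independent of \(j<k\), we get
\[
 \mathbb E_Q\|S_r\|_{\HS}^2=r+r(r-1)c .
\]
Applying this at \(r=m\) together with \(\|S_m\|_{\HS}^2\le Rm\) gives \(c\le (R-1)/(m-1)\). Hence \(\mathbb E_Q\|S_r\|_{\HS}^2\le r+r(r-1)(R-1)/(m-1)\le Rr\). A negative \(c\) only helps.

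For the fourth moment, work with a fixed configuration and split off the empirical mean. Set \(\mu=m^{-1}\sum_iw_i\) and \(z_i=w_i-\mu\). Then
\[
 \|\mu\|_{\HS}^2\le R/m,\qquad \sum_iz_i=0,\qquad \sum_i\|z_i\|_{\HS}^2\le m,\qquad \|z_i\|_{\HS}\le 1+\sqrt R .
\]
Write \(\sum_{i\in I}w_i=r\mu+\sum_{i\in I}z_i\). The mean part contributes \(\|r\mu\|_{\HS}^4\le r^4R^2/m^2\le R^2r^2\), using \(r\le m\).

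For the fluctuation part \(\sum_{i\in I}z_i\), I would invoke Hoeffding's comparison between sampling without and with replacement. It applies to continuous convex functions of the sum, and \(x\mapsto\|x\|_{\HS}^4\) is convex. This bounds \(\mathbb E_I\|\sum_{i\in I}z_i\|_{\HS}^4\) by \(\mathbb E\|\sum_{a=1}^r\zeta_a\|_{\HS}^4\), where the \(\zeta_a\) are iid uniform draws from \(\{z_i\}\). These draws are centered, satisfy \(\mathbb E\|\zeta\|_{\HS}^2\le1\), and are bounded by \(1+\sqrt R\). Expanding \(\mathbb E\bigl(\sum_{a,b}\langle\zeta_a,\zeta_b\rangle\bigr)^2\), only index patterns in which every index appears at least twice survive. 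This yields the bound
\[
 r\,\mathbb E\|\zeta\|_{\HS}^4+3r^2(\mathbb E\|\zeta\|_{\HS}^2)^2\le C_Rr^2 .
\]
Combining with \((a+b)^4\le8(a^4+b^4)\) gives the claimed fourth-moment bound. The second-moment bound can be recovered the same way, as a cross-check.

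The main obstacle is the sampling-without-replacement step. Hoeffding's theorem is classically stated for real-valued samples, so one should check that it extends to vectors in a Hilbert space with a convex test function. The extension holds because Hoeffding's proof writes the without-replacement mean as a conditional expectation of a with-replacement statistic, which is dimension-free, and then applies conditional Jensen. If that citation is unavailable, the fallback is a direct expansion of the fourth moment over index patterns for the random subset. In that expansion the centering \(\sum_iz_i=0\) controls the off-pattern sums, each of which is within \(O(1/m)\) relative error of the iid case, and the resulting combinatorics is routine.
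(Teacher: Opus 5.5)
Your proposal is correct and follows essentially the paper's proof: reduce to a fixed configuration by averaging over uniform $r$-subsets, subtract the empirical mean, apply Hoeffding's convex comparison in its vector (Gross--Nesme) form, count the surviving index patterns for the i.i.d.\ sum, and bound the mean part by $R^2r^2$. The only difference is that you obtain the second moment directly from the exchangeability identity $\mathbb E_Q\|S_r\|_{\HS}^2=r+r(r-1)\mathbb E_Qh_{12}$, whereas the paper runs the second moment through the same comparison argument as the fourth.
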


\begin{proof}
The case \(r=0\) is immediate.  For \(1\le r\le m\), we first prove a
sampling bound for deterministic unit vectors \(v_1,\ldots,v_m\) in a
real Hilbert space with norm \(\|\cdot\|\), satisfying
\(\|\sum_i v_i\|^2\le Rm\).  Let \(I_r\) be a uniform \(r\)-subset of
\([m]\), and put \(\bar v=m^{-1}\sum_i v_i\) and \(x_i=v_i-\bar v\).
Then \(\sum_i x_i=0\) and \(\|x_i\|\le2\).
For independent uniform draws \(\xi_1,\ldots,\xi_r\) from the indexed population
\(x_1,\ldots,x_m\), Hoeffding's convex comparison
\cite[Theorem~4]{Hoeffding1963}, in its vector form
\cite[Section~D]{GrossNesme2010}, gives
\[
  \mathbb E_{I_r}\left\|\sum_{i\in I_r}x_i\right\|^p
  \le \mathbb E\left\|\sum_{k=1}^r\xi_k\right\|^p,
  \qquad p\in\{2,4\}.
\]
Since \(\mathbb E\xi_k=0\) and \(\|\xi_k\|\le2\), independence gives
\(\mathbb E\|\sum_{k=1}^r\xi_k\|^2
=\sum_{k=1}^r\mathbb E\|\xi_k\|^2\le4r\).
In the fourth-moment expansion, independence and centering eliminate
terms with an index occurring exactly once.  The remaining terms have
either four equal indices or two pairs, with \(r\) and \(6\binom r2\)
choices, respectively.  Since each term has absolute value at most \(16\),
\[
  \mathbb E\left\|\sum_{k=1}^r\xi_k\right\|^4
  =\sum_{i,j,k,l=1}^r
    \mathbb E[\langle\xi_i,\xi_j\rangle\langle\xi_k,\xi_l\rangle]
  \le16\left(r+6\binom r2\right)\le48r^2.
\]
The mean part satisfies
\[
  \|r\bar v\|^2\le Rr^2/m\le Rr,
  \qquad
  \|r\bar v\|^4\le R^2r^4/m^2\le R^2r^2.
\]
Combining the centered and mean parts yields
\begin{equation}
  \mathbb E_{I_r}\left\|\sum_{i\in I_r}v_i\right\|^2\le C_Rr,
  \qquad
  \mathbb E_{I_r}\left\|\sum_{i\in I_r}v_i\right\|^4\le C_Rr^2.
  \label{eq:cubic-random-prefix}
\end{equation}
Since \(\|w_i\|_{\HS}=1\) and
\(\|S_m\|_{\HS}^2\le Rm\) hold \(Q\)-almost surely,
\eqref{eq:cubic-random-prefix} applies to \(v_i=w_i\) for almost every
sample.  By exchangeability, \(\sum_{i\in I}w_i\) has the same law as
\(S_r\) for every \(r\)-subset \(I\).  Hence, for \(p\in\{2,4\}\),
\[
  \mathbb E_Q\|S_r\|_{\HS}^p
  =\binom mr^{-1}\sum_{\substack{I\subseteq[m]\\|I|=r}}
    \mathbb E_Q\left\|\sum_{i\in I}w_i\right\|_{\HS}^p
  =\mathbb E_Q\left[
    \binom mr^{-1}\sum_{\substack{I\subseteq[m]\\|I|=r}}
    \left\|\sum_{i\in I}w_i\right\|_{\HS}^p\right]
    \le C_Rr^{p/2},
\]
where the last inequality follows from \eqref{eq:cubic-random-prefix}
for each such sample.  This proves \eqref{eq:cubic-Q-prefix}.
\end{proof}

\paragraph{Variational and conditional entropy formulas.}
Under \(\rho_m\otimes\rho_m\), the full paired sample
\(((X_i,X_i'))_{i=1}^m\) has law \(P_0^{\otimes m}\).
Suppose \(P_0^{\otimes m}(\mathcal E_{m,R})>0\), and let \(H\) be a
bounded measurable function of the paired sample.  A law with finite
relative entropy with respect to
\(P_0^{\otimes m}(\cdot\mid\mathcal E_{m,R})\) is supported on
\(\mathcal E_{m,R}\).
Conditioning on \(\mathcal E_{m,R}\) and applying the Gibbs variational
formula \cite[Lemma~4.4]{Fischer2013RelativeEntropy} therefore give
\begin{equation}
\begin{aligned}
  &\log\mathbb E_{P_0^{\otimes m}}[\1_{\mathcal E_{m,R}}e^H]\\
  &\quad=\log P_0^{\otimes m}(\mathcal E_{m,R})
    +\log\mathbb E_{P_0^{\otimes m}}[e^H\mid\mathcal E_{m,R}]\\
  &\quad=\log P_0^{\otimes m}(\mathcal E_{m,R})
    +\sup_{Q:Q(\mathcal E_{m,R})=1}
    \left[\mathbb E_QH-D\{Q\Vert
      P_0^{\otimes m}(\cdot\mid\mathcal E_{m,R})\}\right]\\
  &\quad=\sup_{Q:Q(\mathcal E_{m,R})=1}
    \{\mathbb E_QH-D(Q\Vert P_0^{\otimes m})\}.
\end{aligned}
  \label{eq:cubic-restricted-variational}
\end{equation}
The last equality uses
\[
  D\{Q\Vert P_0^{\otimes m}(\cdot\mid\mathcal E_{m,R})\}
  =D(Q\Vert P_0^{\otimes m})+\log P_0^{\otimes m}(\mathcal E_{m,R}),
  \qquad Q(\mathcal E_{m,R})=1.
\]
The supremum in \eqref{eq:cubic-restricted-variational} is attained at
the Gibbs law \(Q_H\) with density
\begin{equation}
  \frac{dQ_H}{dP_0^{\otimes m}}
  =\frac{\1_{\mathcal E_{m,R}}e^H}
    {\mathbb E_{P_0^{\otimes m}}[\1_{\mathcal E_{m,R}}e^H]}.
  \label{eq:cubic-gibbs-law}
\end{equation}
Since \(H\) is bounded, \(Q_H\) has finite relative entropy.
If \(H\) is invariant under permutations of the \(m\) pairs, then
\(Q_H\) is exchangeable, because both \(P_0^{\otimes m}\) and
\(\mathcal E_{m,R}\) are invariant under these permutations.
In that case, the supremum in \eqref{eq:cubic-restricted-variational}
is unchanged if we restrict to exchangeable laws \(Q\) with
\(Q(\mathcal E_{m,R})=1\) and finite relative entropy.

For each \(i\), let \(Q_i\) be the
conditional law of \((X_i,X_i')\) given the pairs \((X_j,X_j')\),
\(1\le j<i\).  Write
\[
  \mathcal D_Q:=D(Q\Vert P_0^{\otimes m}),
  \qquad
  \mathcal D_{Q,i}:=D(Q_i\Vert P_0).
\]
The conditional relative entropy \(\mathcal D_{Q,i}\) is a random variable
depending on the preceding pairs.  The chain rule for relative entropy
\cite[Lemma~4.3]{Fischer2013RelativeEntropy} gives
\(\mathcal D_Q=\sum_{i=1}^m\mathbb E_Q\mathcal D_{Q,i}\).
With the preceding pairs fixed, applying the same Gibbs variational formula
to \(P_0\) gives, for every bounded measurable function \(H\) of the current pair,
\begin{equation}
  \mathbb E_{Q_i}H-\mathcal D_{Q,i}\le\log\mathbb E_{P_0}e^H.
  \label{eq:cubic-linear-entropy-duality}
\end{equation}

To apply Lemma~\ref{lem:cubic-compensated-row} after the conditional
entropy inequality \eqref{eq:cubic-linear-entropy-duality}, we need bounds
on \(\ell(S_r)\) and \(\|S_r\|_{\HS}^2\).
Lemma~\ref{lem:cubic-subset-tensor-bounds} obtains these from the
single-sample estimate \eqref{eq:cubic-zonal-hereditary}.

\begin{lemma}[Uniform bounds for sums of tensor features]
\label{lem:cubic-subset-tensor-bounds}
Fix \(A<\infty\) and \(m\le d^A\).  There is a constant \(C_A\) such
that, with \(K_0=0\) and
\[
 K_r=C_A\left\{1+\frac rN\left(\log\frac{em}{r}\right)^3\right\},
 \qquad 1\le r\le m,
\]
the following bounds hold on \(\mathcal G_m(X)\cap\mathcal G_m(X')\)
simultaneously for every \(I\subseteq[m]\), where \(r=|I|\) and
\(S_I=\sum_{i\in I}w_i\):
\begin{equation}
  \ell(S_I)\le K_r,
  \qquad \|S_I\|_{\HS}^2\le rK_r.
  \label{eq:cubic-prefix-source}
\end{equation}
If, in addition, \(m\beta^2=o(Nd)\), then, for another constant \(C_A'\),
\begin{equation}
\begin{aligned}
  \sup_{0\le r\le m}\frac{\beta^2K_r}{d}
  &\le C_A'\left(\frac{\beta^2}{d}+\frac{m\beta^2}{Nd}\right)=o(1),\\
  \sup_{0\le r\le m}\frac{\beta^4(r+rK_r)}{N^2}
  &\le C_A'\left(\frac{\beta^4m}{N^2}
    +\frac{\beta^4m^2}{N^3}\right)=o(1).
\end{aligned}
\label{eq:cubic-scaled-prefix-source}
\end{equation}
\end{lemma}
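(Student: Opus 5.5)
The bounds in \eqref{eq:cubic-prefix-source} are deterministic consequences of \eqref{eq:cubic-zonal-hereditary}, which holds simultaneously for all subsets on \(\mathcal G_m\). I will treat \(\ell(S_I)\) first, then \(\|S_I\|_{\HS}^2\), and finally the scaled bounds \eqref{eq:cubic-scaled-prefix-source}.

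\emph{Bound on \(\ell(S_I)\).} For \(x,y\in\Sph^{d-1}\), the feature identity gives
\[
 \langle S_I,q_{3,d}(x)q_{3,d}(y)^{\mathsf T}\rangle
 =\sum_{i\in I}Z_{3,d}(x,X_i)Z_{3,d}(y,X_i').
\]
By Cauchy--Schwarz this is at most
\[
 \Bigl(\sum_{i\in I}Z_{3,d}(x,X_i)^2\Bigr)^{1/2}
 \Bigl(\sum_{i\in I}Z_{3,d}(y,X_i')^2\Bigr)^{1/2}.
\]
Applying \eqref{eq:cubic-zonal-hereditary} to \(X\in\mathcal G_m\) with \(u=x\), and to \(X'\in\mathcal G_m\) with \(u=y\), bounds each factor by \(C\{1+(r/N)\log^3(em/r)\}\). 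Hence \(\ell(S_I)\le K_r\), with \(C_A\) equal to the constant of \eqref{eq:cubic-zonal-hereditary}. The uniformity in \(x,y\) and in \(I\) is inherited from that lemma.

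\emph{Bound on \(\|S_I\|_{\HS}^2\).} Expand \(\|S_I\|_{\HS}^2=\sum_{j\in I}\langle S_I,w_j\rangle\). Each \(w_j\) is a feature matrix \(q_{3,d}(X_j)q_{3,d}(X_j')^{\mathsf T}\), so each term is at most \(\ell(S_I)\le K_r\). Summing the \(r\) terms gives \(\|S_I\|_{\HS}^2\le rK_r\). The case \(r=0\) is trivial with \(K_0=0\).

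\emph{The scaled bounds.} First, \(\beta^2K_r/d\le C_A\beta^2/d+C_A\beta^2(r/(Nd))\log^3(em/r)\). The main obstacle is that the log factor \(\log^3(em/r)\) can be as large as \(\log^3(em)\asymp\log^3 d\) for small \(r\), so one cannot simply replace \(r\) by \(m\). I handle this with the elementary inequality \(r\log^3(em/r)\le C m\) for \(1\le r\le m\): the function \(t\mapsto t\log^3(e/t)\) is bounded on \((0,1]\). This gives
\[
 \frac{\beta^2 r}{Nd}\log^3\frac{em}{r}\le C\,\frac{m\beta^2}{Nd},
\]
which is \(o(1)\) by hypothesis. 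The term \(\beta^2/d\) is \(o(1)\) since \(|\beta|\le1\). For the second line, \(rK_r\le C_A r+C_A (r^2/N)\log^3(em/r)\). The same inequality, applied once more, gives \(r^2\log^3(em/r)\le Crm\le Cm^2\). Therefore
\[
 \frac{\beta^4(r+rK_r)}{N^2}\le C\Bigl(\frac{\beta^4m}{N^2}+\frac{\beta^4m^2}{N^3}\Bigr).
\]
Both terms are \(o(1)\). Indeed, \(\beta^4m/N^2\le (m\beta^2/(Nd))\,d/N\). Also, \(\beta^4m^2/N^3=(m\beta^2/(Nd))^2 d^2/N\), and \(d^2/N\to0\) since \(N\asymp d^3\).
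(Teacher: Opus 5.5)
Your proposal is correct and follows the paper's proof essentially step by step. It uses Cauchy--Schwarz with \eqref{eq:cubic-zonal-hereditary} applied to each sample for \(\ell(S_I)\), the identity \(\|S_I\|_{\HS}^2=\sum_{i\in I}\langle S_I,w_i\rangle\le r\ell(S_I)\), and boundedness of \(x^j\log^3(e/x)\) on \((0,1]\) with \(x=r/m\) for the scaled bounds. The only cosmetic difference is that you derive \(r^2\log^3(em/r)\le Cm^2\) from the \(j=1\) case together with \(r\le m\), whereas the paper uses the \(j=2\) case directly.
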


\begin{proof}
For \(I=\varnothing\), both bounds in \eqref{eq:cubic-prefix-source}
hold because \(S_\varnothing=0\) and \(K_0=0\).
For \(r\ge1\), the feature representation and Cauchy--Schwarz give,
for every \(x,y\in\Sph^{d-1}\),
\[
\begin{aligned}
  \left|\left\langle S_I,q_{3,d}(x)q_{3,d}(y)^{\mathsf T}\right\rangle\right|
  &=\left|\sum_{i\in I}Z_{3,d}(x,X_i)Z_{3,d}(y,X_i')\right|\\
  &\le\left[\sum_{i\in I}Z_{3,d}(x,X_i)^2\right]^{1/2}
    \left[\sum_{i\in I}Z_{3,d}(y,X_i')^2\right]^{1/2}
  \le K_r.
\end{aligned}
\]
The last inequality applies \eqref{eq:cubic-zonal-hereditary} from
Lemma~\ref{lem:cubic-hereditary-envelope} to each sample on
\(\mathcal G_m(X)\cap\mathcal G_m(X')\).
Taking the supremum over \(x,y\) gives \(\ell(S_I)\le K_r\).
Since each \(w_i\) is one of the feature matrices in the definition of
\(\ell\), we also have
\[
  \|S_I\|_{\HS}^2
  =\sum_{i\in I}\langle S_I,w_i\rangle
  \le\sum_{i\in I}|\langle S_I,w_i\rangle|
  \le r\ell(S_I)\le rK_r,
\]
which proves \eqref{eq:cubic-prefix-source}.

The inequalities in \eqref{eq:cubic-scaled-prefix-source} follow from
the definition of \(K_r\) and
\(\sup_{0<x\le1}x^j\log^3(e/x)<\infty\) for \(j=1,2\), with
\(x=r/m\).  Under \(m\beta^2=o(Nd)\), their right-hand sides tend
to zero because \(|\beta|\le1\), \(N\asymp d^3\), and
\[
 \frac{\beta^4m}{N^2}
 =\beta^2\frac{m\beta^2}{Nd}\frac dN=o(1),
 \qquad
 \frac{\beta^4m^2}{N^3}
 =\left(\frac{m\beta^2}{Nd}\right)^2\frac{d^2}{N}=o(1).
\]
Finally, for \(S_r=\sum_{i=1}^r w_i\) and \(|u|\le2\beta^2\),
\[
  \frac{2|u|\ell(S_r)}d\le\frac{4\beta^2K_r}d=o(1),
  \qquad
  \frac{u^2(r+\|S_r\|_{\HS}^2)}{N^2}
  \le\frac{4\beta^4(r+rK_r)}{N^2}=o(1).
\]
Thus Lemma~\ref{lem:cubic-compensated-row} applies with \(\rho=2\),
uniformly over these partial sums and \(0\le c_0\le1\) on
\(\mathcal G_m(X)\cap\mathcal G_m(X')\).
\end{proof}

For \(t\ge0\), write
\begin{equation}
  \Psi_m(t):=\log\mathbb E_{P_0^{\otimes m}}
    [\1_{\mathcal E_{m,R}}e^{t(\|S_m\|_{\HS}^2-m)/2}].
  \label{eq:cubic-linear-partition-definition}
\end{equation}
Proposition~\ref{prop:cubic-linear-reduction} follows from
\eqref{eq:cubic-main-entropy-reduction}, with \(n=m\), and the
variational formula \eqref{eq:cubic-restricted-variational}.

\begin{proposition}[Reduction to a linear partition function]
\label{prop:cubic-linear-reduction}
Fix \(R<\infty\).  Suppose \(m\ge3\), \(0<|\beta|\le1\), and
\(m\beta^2=o(Nd)\).  With
\(\eta=\beta^2(m-2)/(3N^2)\), one has, for all sufficiently large \(d\),
\begin{equation}
  \log\mathbb E_{P_0^{\otimes m}}
    [\1_{\mathcal E_{m,R}}e^{F_m^{(2)}(\beta)}]
  \le\eta\Psi_m(\beta^2)
    +C_R\left(\eta\frac{\beta^4m^2}{N^2}
      +\frac{\beta^8m^3}{N^4}\right).
  \label{eq:cubic-linear-reduction}
\end{equation}
\end{proposition}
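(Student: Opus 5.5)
The argument repeats Step~2 of the proof of Proposition~\ref{prop:cubic-amplitude-tracked-reverse} with \(n\) replaced by \(m\). It has three stages: a variational representation, a one-row entropy bound, and a cancellation that reassembles everything into \(\eta\Psi_m(\beta^2)\).

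\emph{Variational representation.} Apply the restricted Gibbs formula \eqref{eq:cubic-restricted-variational} with \(H=F_m^{(2)}(\beta)\). This function is bounded on \(\mathcal E_{m,R}\), because \(\|S_m\|_{\HS}^2\le Rm\) and \(|h_{ij}|\le1\); if \(P_0^{\otimes m}(\mathcal E_{m,R})=0\) the claim is trivial. Since \(H\), \(\mathcal E_{m,R}\) and \(P_0^{\otimes m}\) are invariant under permutations of the pairs, the Gibbs law \eqref{eq:cubic-gibbs-law} is exchangeable. So it suffices to bound \(\mathbb E_QF_m^{(2)}(\beta)-\mathcal D_Q\) for exchangeable \(Q\) with \(Q(\mathcal E_{m,R})=1\).

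\emph{One-row bound.} Write \(F_m^{(2)}(\beta)=\sum_iH_i(w_i)\) with the row functions \(H_i\) used earlier, and use the chain rule \(\mathcal D_Q=\sum_i\mathbb E_Q\mathcal D_{Q,i}\). For each \(i\), apply \eqref{eq:cubic-linear-entropy-duality} to \(H_i/(1-\eta)\). Here \(\eta=\beta^2(m-2)/(3N^2)\), and \(\eta\to0\) because \(m\beta^2=o(Nd)\) and \(N\asymp d^3\). Then bound the resulting exponential moment by Lemma~\ref{lem:cubic-compensated-row} with parameters \(\rho=2\), \(u=\beta^2/(1-\eta)\), \(c_0=1-\eta\) and \(v_j=w_j\) for \(j<i\). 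The hypotheses of that lemma hold uniformly on \(\mathcal E_{m,R}\) by Lemma~\ref{lem:cubic-subset-tensor-bounds}. The result has the form \eqref{eq:cubic-main-row-bound}: a quadratic term in \(\|S_{i-1}\|_{\HS}^2\) plus an error of order \(\beta^8(\|S_{i-1}\|_{\HS}^4+(i-1)^2)/N^4\).

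\emph{Cancellation and reassembly.} Split the quadratic term as in \eqref{eq:cubic-row-cancellation}. Exchangeability makes all \(\mathbb E_Qh_{jk}\) equal, and \(\sum_i\binom{i-1}2=\binom m3\). Hence the summed interaction equals \(\frac{\beta^4(m-2)}{6N^2}\mathbb E_Q(\|S_m\|_{\HS}^2-m)\), which is exactly \(\frac{\eta\beta^2}{2}\mathbb E_Q(\|S_m\|_{\HS}^2-m)\) for the chosen \(\eta\). The leftover terms are \(\eta\beta^4\|S_{i-1}\|_{\HS}^2/N^2\) and the quartic error. Lemma~\ref{lem:cubic-random-prefix} bounds their \(Q\)-expectations by \(C_R(i-1)\) and \(C_R(i-1)^2\), respectively. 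Summing over \(i\le m\) gives the error \(C_R(\eta\beta^4m^2/N^2+\beta^8m^3/N^4)\). After moving \(\eta\mathcal D_Q\) to the right, this yields \eqref{eq:cubic-main-entropy-reduction}. Finally, the bracket \(\frac{\beta^2}{2}\mathbb E_Q(\|S_m\|_{\HS}^2-m)-\mathcal D_Q\) is at most \(\Psi_m(\beta^2)\), by \eqref{eq:cubic-restricted-variational} applied with \(H=\beta^2(\|S_m\|_{\HS}^2-m)/2\). Take the supremum over \(Q\) to conclude.

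The main obstacle is the exact bookkeeping of the diagonal and interaction terms. The \((1-\eta)\) rescaling must make the \(\beta^4\) interaction exactly an \(\eta\)-multiple of the linear functional while the \(c_0r\) diagonal cancels. This identity only closes under exchangeability, which is why the reduction to exchangeable \(Q\) is needed.
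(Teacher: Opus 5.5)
Your proposal is correct and follows the paper's proof. Like the paper, you reduce to exchangeable laws supported on \(\mathcal E_{m,R}\) via \eqref{eq:cubic-restricted-variational}. You then verify the hypotheses of Lemma~\ref{lem:cubic-compensated-row} at \(u=\beta^2/(1-\eta)\), \(c_0=1-\eta\) through Lemma~\ref{lem:cubic-subset-tensor-bounds}. You rerun Step~2 of Proposition~\ref{prop:cubic-amplitude-tracked-reverse} with \(n=m\), using Lemma~\ref{lem:cubic-random-prefix}, and finally bound the bracket by \(\Psi_m(\beta^2)\). The only difference is that you write out the row-by-row cancellation explicitly, whereas the paper simply cites \eqref{eq:cubic-main-entropy-reduction}.
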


\begin{proof}
If \(P_0^{\otimes m}(\mathcal E_{m,R})=0\), both logarithmic
expectations in \eqref{eq:cubic-linear-reduction} equal \(-\infty\),
and the bound is immediate.
Otherwise, \(\eta=o(1)\), so \(0<\eta<1/2\) for all sufficiently large
\(d\).  Lemma~\ref{lem:cubic-subset-tensor-bounds} verifies the
hypotheses of Lemma~\ref{lem:cubic-compensated-row} with
\(u=\beta^2/(1-\eta)\) and \(c_0=1-\eta\).
Together with the partial-sum moments \eqref{eq:cubic-Q-prefix},
these are the inputs to \eqref{eq:cubic-main-entropy-reduction},
which therefore applies with \(n=m\) to each exchangeable law \(Q\)
supported on \(\mathcal E_{m,R}\) with finite relative entropy.
By \eqref{eq:cubic-restricted-variational} with
\(H=\beta^2(\|S_m\|_{\HS}^2-m)/2\),
\[
  \frac{\beta^2}{2}\mathbb E_Q(\|S_m\|_{\HS}^2-m)-\mathcal D_Q
  \le\Psi_m(\beta^2).
\]
Substitute this inequality into \eqref{eq:cubic-main-entropy-reduction}
with \(n=m\), and take the supremum of its left-hand side over \(Q\).
Formula \eqref{eq:cubic-restricted-variational} with
\(H=F_m^{(2)}(\beta)\) identifies this supremum with the left-hand side
of \eqref{eq:cubic-linear-reduction}, proving the claim.
\end{proof}

\subsection{A bound for the linear log-partition function}
\label{subsec:cubic-linear-partition}

We obtain the bound on \(\Psi_m(\beta^2)\) needed in
\eqref{eq:cubic-linear-reduction} from
Lemma~\ref{lem:cubic-linear-free-energy} below.  Its proof combines
Lemma~\ref{lem:cubic-compensated-row} at \(c_0=0\) with the partial-sum
bounds \eqref{eq:cubic-Q-prefix}.

\begin{lemma}[Linear log-partition bound]
\label{lem:cubic-linear-free-energy}
For fixed \(\rho,R<\infty\) and \(0\le t\le\rho\), suppose that, with
\(K_r\) as in Lemma~\ref{lem:cubic-subset-tensor-bounds},
\[
 \sup_{1\le r\le m}\frac{2tK_r}{d}\le c_{\rm src},
 \qquad
 \sup_{1\le r\le m}\frac{t^2(r+rK_r)}{N^2}=o(1).
\]
Then
\begin{equation}
  \Psi_m(t)
  \le C_{\rho,R}\left(\frac{t^2m^2}{N^2}
  +\frac{t^4m^3}{N^4}\right).
  \label{eq:cubic-linear-free-energy}
\end{equation}
\end{lemma}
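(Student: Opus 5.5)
The plan is to bound \(\Psi_m(t)\) by the same row-by-row entropy argument used for \(F_m^{(2)}\), this time with no quadratic correction (\(c_0=0\)). By the restricted Gibbs formula \eqref{eq:cubic-restricted-variational} with \(H=t(\|S_m\|_{\HS}^2-m)/2\), we have
\[
 \Psi_m(t)=\sup_{Q}\{\E_QH-\mathcal D_Q\},
\]
where \(Q\) ranges over laws with \(Q(\mathcal E_{m,R})=1\). Since \(H\), \(\mathcal E_{m,R}\) and \(P_0^{\otimes m}\) are permutation invariant, the Gibbs maximizer is exchangeable, so it suffices to treat exchangeable \(Q\). We then decompose
\[
 H=\sum_{i=1}^m t\langle S_{i-1},w_i\rangle_{\HS},
\]
which holds because \(\|S_m\|_{\HS}^2-m=2\sum_{j<i}h_{ij}\). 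The chain rule gives \(\mathcal D_Q=\sum_i\E_Q\mathcal D_{Q,i}\).

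For each \(i\), with the preceding pairs fixed, apply \eqref{eq:cubic-linear-entropy-duality} to \(t\langle S_{i-1},w\rangle\). Then apply Lemma~\ref{lem:cubic-compensated-row} with \(u=t\), \(c_0=0\), \(r=i-1\) and \(v_j=w_j\). Its hypotheses hold \(Q\)-a.s. because \(\mathcal E_{m,R}\subseteq\mathcal G_m(X)\cap\mathcal G_m(X')\). On this set Lemma~\ref{lem:cubic-subset-tensor-bounds} gives
\[
 \ell(S_{i-1})\le K_{i-1},\qquad \|S_{i-1}\|_{\HS}^2\le(i-1)K_{i-1},
\]
and these feed the two displayed assumptions on \(t\). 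The outcome is
\[
 \E_{Q_i}\bigl[t\langle S_{i-1},w_i\rangle\bigr]-\mathcal D_{Q,i}
 \le\frac{t^2\|S_{i-1}\|_{\HS}^2}{2N^2}+C_\rho t^4\frac{\|S_{i-1}\|_{\HS}^4}{N^4}.
\]

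Next take \(Q\)-expectations and use Lemma~\ref{lem:cubic-random-prefix}, which applies because \(Q\) is exchangeable and \(\|S_m\|_{\HS}^2\le Rm\) holds \(Q\)-a.s. It gives
\[
 \E_Q\|S_{i-1}\|_{\HS}^2\le C_R(i-1),\qquad \E_Q\|S_{i-1}\|_{\HS}^4\le C_R(i-1)^2.
\]
Summing over \(i\le m\) yields
\[
 \E_QH-\mathcal D_Q\le C_R\frac{t^2m^2}{N^2}+C_{\rho,R}\frac{t^4m^3}{N^4},
\]
and taking the supremum over \(Q\) proves \eqref{eq:cubic-linear-free-energy}. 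If \(P_0^{\otimes m}(\mathcal E_{m,R})=0\), the left side is \(-\infty\) and there is nothing to prove.

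The main point to check is that \(H\) is bounded, so that the Gibbs formula and the attainment by an exchangeable law apply. This holds because \(\|S_m\|_{\HS}^2\le m^2\) always. A second point is that the row bound must be applied pointwise under the conditional laws \(Q_i\), before averaging. This is legitimate because the hypotheses of Lemma~\ref{lem:cubic-compensated-row} are deterministic on \(\mathcal E_{m,R}\), and its constants are uniform in \(r\le m\). Notice that no cancellation of the quadratic term is needed here, unlike in Proposition~\ref{prop:cubic-linear-reduction}. The crude leading term \(t^2m^2/N^2\) is acceptable because \(\Psi_m\) is multiplied by the small factor \(\eta\) in that proposition.
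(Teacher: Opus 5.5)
Your proposal is correct and follows essentially the same route as the paper. The paper evaluates the variational formula directly at the exchangeable Gibbs maximizer \(Q_H\) instead of taking a supremum over exchangeable laws. It then runs the identical chain: row decomposition, entropy duality \eqref{eq:cubic-linear-entropy-duality}, Lemma~\ref{lem:cubic-compensated-row} with \(c_0=0\) verified via \eqref{eq:cubic-prefix-source}, and the moment bounds \eqref{eq:cubic-Q-prefix}.
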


\begin{proof}
If \(P_0^{\otimes m}(\mathcal E_{m,R})=0\), then \(\Psi_m(t)=-\infty\)
and the bound is immediate.  Otherwise, take \(Q=Q_H\) from
\eqref{eq:cubic-gibbs-law}, with \(H=t(\|S_m\|_{\HS}^2-m)/2\).
This law is exchangeable, has finite relative entropy, and satisfies
\(Q(\mathcal E_{m,R})=1\).
Use the conditional laws \(Q_i\) and entropies
\(\mathcal D_Q,\mathcal D_{Q,i}\) from
Section~\ref{subsec:cubic-prefix-moments}.
Since \((\|S_m\|_{\HS}^2-m)/2=\sum_i\langle S_{i-1},w_i\rangle\), apply
inequality~\eqref{eq:cubic-linear-entropy-duality} to
\(t\langle S_{i-1},w_i\rangle\), and then use
Lemma~\ref{lem:cubic-compensated-row} with \(c_0=0\), to obtain
\[
  t\mathbb E_{Q_i}\langle S_{i-1},w_i\rangle-\mathcal D_{Q,i}
  \le \frac{t^2}{2N^2}\|S_{i-1}\|_{\HS}^2
  +C_\rho t^4\frac{\|S_{i-1}\|_{\HS}^4}{N^4}.
\]
The two conditions of Lemma~\ref{lem:cubic-compensated-row} with \(u=t\)
hold pointwise by the assumptions on \(t\) and the bounds
\eqref{eq:cubic-prefix-source} of Lemma~\ref{lem:cubic-subset-tensor-bounds},
applied to \(I=\{1,\ldots,i-1\}\).
Taking \(Q\)-expectations, summing over \(i\), and using the entropy chain
rule and \eqref{eq:cubic-Q-prefix} gives
\[
\begin{aligned}
  \frac t2\mathbb E_Q(\|S_m\|_{\HS}^2-m)-\mathcal D_Q
  &\le\frac{t^2}{2N^2}\sum_{r=0}^{m-1}\mathbb E_Q\|S_r\|_{\HS}^2
    +\frac{C_\rho t^4}{N^4}\sum_{r=0}^{m-1}\mathbb E_Q\|S_r\|_{\HS}^4\\
  &\le C_{\rho,R}\left(\frac{t^2m^2}{N^2}
    +\frac{t^4m^3}{N^4}\right).
\end{aligned}
\]
For this maximizing law, the left-hand side equals \(\Psi_m(t)\) by
\eqref{eq:cubic-restricted-variational}, proving
\eqref{eq:cubic-linear-free-energy}.
\end{proof}

\subsection{Partition-function bounds under further restrictions}
\label{subsec:cubic-partition-proof}

The proof of Proposition~\ref{prop:cubic-amplitude-tracked-reverse}
combines the estimates above to bound the expectation of \(W_{m,\beta}\)
on \(\mathcal G_m(X)\cap\mathcal G_m(X')\).
Theorem~\ref{thm:cubic-amplitude-tracked-partition} extends the upper
bound to arbitrary single-sample restrictions within \(\mathcal G_m\).
When both restrictions are the same event of probability \(1-o(1)\),
Cauchy--Schwarz gives the matching lower bound.

\begin{theorem}[Cubic partition-function bound]
\label{thm:cubic-amplitude-tracked-partition}
Fix \(A<\infty\).  Uniformly over \(m\le d^A\), \(|\beta|\le1\),
and measurable single-sample sets
\(\mathcal H_X,\mathcal H_{X'}\subseteq\mathcal G_m\), if \(m\beta^2=o(Nd)\), then
\begin{equation}
\begin{aligned}
  &\mathbb E_{\rho_m\otimes\rho_m}\left[
    \1_{\mathcal H_X}(X)\1_{\mathcal H_{X'}}(X')
    \prod_{i<j}(1+\beta^2h_{ij})
  \right]\\
  &\qquad\le \exp\left[C_A\left(
    \frac{\beta^6m^3}{N^4}
    +\frac{\beta^6m^2\log^9 d}{N^3}\right)+o(1)\right]\\
  &\qquad=1+o(1),
  \label{eq:cubic-amplitude-tracked-partition}
\end{aligned}
\end{equation}
uniformly over \(|\beta|\le1\) and the two sets.  If
\(\mathcal H_X=\mathcal H_{X'}=\mathcal H\) and
\(\rho_m(\mathcal H)=1-o(1)\), the left side equals \(1+o(1)\).
\end{theorem}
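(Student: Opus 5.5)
The plan is to rerun the argument already used for the full sample in the proof of Proposition~\ref{prop:cubic-amplitude-tracked-reverse} and summarized in \eqref{eq:cubic-roadmap-product-reduction}--\eqref{eq:cubic-roadmap-compensated-target}. Two changes are needed: the indicator \(\1_{\mathcal G_m(X)}\1_{\mathcal G_m(X')}\) is replaced by \(\1_{\mathcal H_X}(X)\1_{\mathcal H_{X'}}(X')\), and we do \emph{not} divide by the prior mass of the restriction. The cases \(\beta=0\) and \(m\le2\) are disposed of first. There the product is identically one (for \(m\le2\) there are no pairs, or one pair with \(\E h_{12}=0\)), so the left side is at most \(\rho_m(\mathcal H_X)\rho_m(\mathcal H_{X'})\le1\).

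In the main case, since \(\mathcal H_X,\mathcal H_{X'}\subseteq\mathcal G_m\), the integrand is nonnegative and bounded by the same expression with \(\mathcal G_m\) in place of \(\mathcal H\) wherever a pointwise bound is needed. First, Proposition~\ref{prop:cubic-weighted-overlap} bounds the region \(\|S_m\|_{\HS}^2>Rm\) by \(e^{-c_R\kappa_m}=o(1)\), because the extra indicators only decrease the integrand. Second, on the complementary event \(\mathcal E'_{m,R}:=\mathcal H_X\times\mathcal H_{X'}\cap\{\|S_m\|_{\HS}^2\le Rm\}\subseteq\mathcal E_{m,R}\), the Taylor bound \eqref{eq:cubic-partition-taylor} holds pointwise via \eqref{eq:cubic-pair-separation}. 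This reduces the claim to bounding \(\log\E[\1_{\mathcal E'_{m,R}}e^{F_m^{(2)}(\beta)}]\).

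The key point, and the step needing care, is that the variational machinery of Step~2 used only three properties of \(\mathcal E_{m,R}\): the deterministic bounds of Lemma~\ref{lem:cubic-subset-tensor-bounds}, which hold on \(\mathcal G_m\times\mathcal G_m\) and hence on any subset; the constraint \(\|S_m\|_{\HS}^2\le Rm\) feeding Lemma~\ref{lem:cubic-random-prefix}; and exchangeability of the Gibbs law. Exchangeability can fail for arbitrary \(\mathcal H\). I would handle this by symmetrization. The objective \(\E_QF-D(Q\Vert P_0^{\otimes m})\) is concave in \(Q\), and \(F\) and \(P_0^{\otimes m}\) are permutation invariant, so averaging the maximizer over pair permutations gives an exchangeable \(\bar Q\) with no smaller value. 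This \(\bar Q\) is supported on the symmetrized set \(\bigcup_\tau\tau(\mathcal E'_{m,R})\subseteq\mathcal E_{m,R}\). This matters because permutations of pairs permute points of \(X\) and \(X'\) simultaneously, and \(\mathcal G_m\) is permutation invariant by Lemma~\ref{lem:cubic-hereditary-envelope}. The supremum over \(Q\) supported on \(\mathcal E'_{m,R}\) is therefore at most the supremum over exchangeable \(Q\) supported on \(\mathcal E_{m,R}\). That supremum is exactly what \eqref{eq:cubic-main-entropy-reduction}, Proposition~\ref{prop:cubic-linear-reduction} and Lemma~\ref{lem:cubic-linear-free-energy} bound by \(C\beta^6m^3/N^4\). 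Combining this with the Taylor factor and the \(o(1)\) tail gives \eqref{eq:cubic-amplitude-tracked-partition}. The step \(=1+o(1)\) follows from \(m\beta^2=o(Nd)\), exactly as in Step~3 of Proposition~\ref{prop:cubic-amplitude-tracked-reverse}. The constants are uniform because every cited estimate depends only on \(A\) and \(R\).

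For the final assertion, take \(\mathcal H_X=\mathcal H_{X'}=\mathcal H\) with \(p=\rho_m(\mathcal H)=1-o(1)\). The left side equals \(p^2\,\E_{\Qq_m}L_{\mu,\beta}^2\) with \(\mu=\rho_m(\cdot\mid\mathcal H)\), by \eqref{eq:cubic-exact-second-moment}. Cauchy--Schwarz gives \(\E_{\Qq_m}L_{\mu,\beta}^2\ge(\E_{\Qq_m}L_{\mu,\beta})^2=1\), so the left side is at least \(p^2=1-o(1)\). Together with the upper bound, it equals \(1+o(1)\).
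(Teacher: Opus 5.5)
Your argument is correct in its main case, but it takes a much longer route than the paper, which never reopens the variational machinery. Since $|\beta^2h_{ij}|\le1$, every factor of $W_{m,\beta}$ is nonnegative. The inclusions $\mathcal H_X,\mathcal H_{X'}\subseteq\mathcal G_m$ therefore give immediately
\[
 \E_{\rho_m\otimes\rho_m}\bigl[\1_{\mathcal H_X}(X)\1_{\mathcal H_{X'}}(X')W_{m,\beta}\bigr]
 \le\E_{\rho_m\otimes\rho_m}\bigl[\1_{\mathcal G_m}(X)\1_{\mathcal G_m}(X')W_{m,\beta}\bigr].
\]
The right side is exactly what \eqref{eq:cubic-conditioned-partition-bound}, with $n=m$ and multiplied by $\rho_m(\mathcal G_m)^2$, already bounds. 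Uniformity over the two sets is then automatic, because the bound does not depend on them.

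You use this domination pointwise in the tail and Taylor steps, but not for the whole expectation. You then spend effort on exchangeability, which is avoidable even at your level. Indeed, $\mathcal E'_{m,R}\subseteq\mathcal E_{m,R}$ and $e^{F_m^{(2)}(\beta)}>0$ directly give $\E[\1_{\mathcal E'_{m,R}}e^{F_m^{(2)}(\beta)}]\le\E[\1_{\mathcal E_{m,R}}e^{F_m^{(2)}(\beta)}]$. Your symmetrization is valid, but it only re-derives this monotonicity through concavity of the Gibbs functional. Re-running the argument gains nothing; the paper's one-line reduction is the efficient route.

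There is one small slip in the trivial cases. For $m=2$ and $\beta\ne0$, the product $1+\beta^2h_{12}$ is not identically one. The bound by $\rho_2(\mathcal H_X)\rho_2(\mathcal H_{X'})$ can also fail for non-symmetric sets, because the restricted cross term $\beta^2\E[\1_{\mathcal H_X}Z_{3,d}(X_1,X_2)]\,\E[\1_{\mathcal H_{X'}}Z_{3,d}(X_1',X_2')]$ need not vanish. For example, take $\mathcal H_X=\mathcal H_{X'}=\mathcal G_2\cap\{Z_{3,d}(X_1,X_2)>0\}$. The correct bound is $\le\E W_{2,\beta}=1$ by nonnegativity of the integrand, as in the paper. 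Your lower-bound argument for $\mathcal H_X=\mathcal H_{X'}=\mathcal H$ matches the paper's.
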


\begin{proof}
Each factor of \(W_{m,\beta}\) is nonnegative, since
\(|h_{ij}|\le1\) and \(|\beta|\le1\).  If \(\beta=0\) or \(m\le1\),
then \(W_{m,\beta}=1\).  For \(m=2\), independence of the two samples
and the zero mean of \(Z_{3,d}(X_1,X_2)\) give
\[
  \mathbb E_{\rho_2\otimes\rho_2}W_{2,\beta}
  =1+\beta^2\bigl[\mathbb E_{\rho_2}Z_{3,d}(X_1,X_2)\bigr]^2=1.
\]
The restricted expectation is therefore at most one in these cases.

For \(m\ge3\) and \(0<|\beta|\le1\), use
\eqref{eq:cubic-conditioned-partition-bound} with \(n=m\) and
multiply by \(\rho_m(\mathcal G_m)^2\).
Since \(\mathcal H_X,\mathcal H_{X'}\subseteq\mathcal G_m\),
nonnegativity gives
\[
\begin{aligned}
  &\mathbb E_{\rho_m\otimes\rho_m}
    [\1_{\mathcal H_X}(X)\1_{\mathcal H_{X'}}(X')W_{m,\beta}]\\
  &\quad\le\mathbb E_{\rho_m\otimes\rho_m}
    [\1_{\mathcal G_m(X)}\1_{\mathcal G_m(X')}W_{m,\beta}]\\
  &\quad\le\exp\left[C_A\left(\frac{\beta^6m^3}{N^4}
      +\frac{\beta^6m^2\log^9 d}{N^3}\right)+o(1)\right]
    =1+o(1).
\end{aligned}
\]
The last equality follows from \(m\beta^2=o(Nd)\) and
\(N\asymp d^3\), as in Step 3 of the proof of
Proposition~\ref{prop:cubic-amplitude-tracked-reverse}.
The bound does not depend on the two restriction sets, proving
\eqref{eq:cubic-amplitude-tracked-partition} uniformly over those sets.

For the lower bound, let \(\mathcal H_X=\mathcal H_{X'}=\mathcal H\).
Since \(\rho_m(\mathcal H)=1-o(1)>0\) for all sufficiently large \(d\),
we may apply \eqref{eq:cubic-exact-second-moment} with \(n=m\) and
\(\mu=\rho_m(\,\cdot\mid\mathcal H)\).
Using \(\mathbb E_{\Qq_m}L_{\mu,\beta}=1\), Cauchy--Schwarz gives
\[
  \mathbb E_{\rho_m\otimes\rho_m}
    [\1_{\mathcal H}(X)\1_{\mathcal H}(X')W_{m,\beta}]
  =\rho_m(\mathcal H)^2\mathbb E_{\Qq_m}L_{\mu,\beta}^2
  \ge\rho_m(\mathcal H)^2=1-o(1),
\]
which, together with the upper bound, proves the final assertion.
\end{proof}

\begin{remark}[Role of the restriction \(\|S_m\|_{\HS}^2\le Rm\)]
For exchangeable laws supported on \(\{\|S_m\|_{\HS}^2\le Rm\}\),
Lemma~\ref{lem:cubic-random-prefix} gives the partial-sum moment bounds
\eqref{eq:cubic-Q-prefix}.  These bounds control the accumulated errors in
Proposition~\ref{prop:cubic-linear-reduction} and
Lemma~\ref{lem:cubic-linear-free-energy}.
Proposition~\ref{prop:cubic-weighted-overlap} justifies imposing this
restriction by bounding the omitted weighted expectation in
\eqref{eq:cubic-weighted-overlap} by \(e^{-c_R\kappa_m}=o(1)\).
\end{remark}


\clearpage
\section{Gaussian limits for the signed triangle}
\label{app:cubic-triangle-clt}

We use the model and notation of Section~\ref{sec:model}.  Recall that
\(\sigma_d\) is uniform probability measure on \(\Sph^{d-1}\),
\(N=N_{3,d}=d(d-1)(d+4)/6\), and
\(Z_{3,d}(t)=\{(d+2)t^3-3t\}/(d-1)\).
The normalized harmonic feature map \(q_{3,d}:\Sph^{d-1}\to\mathbb R^N\)
satisfies
\(Z_{3,d}(x,y):=Z_{3,d}(\langle x,y\rangle)
=\langle q_{3,d}(x),q_{3,d}(y)\rangle\).
Under \(\Pp_n^{(3)}\), the positions \(X_i\) are iid with law
\(\sigma_d\), and the edges are conditionally independent with means
\(\{1+\beta Z_{3,d}(X_i,X_j)\}/2\), where \(|\beta|\le1\).  The null law is
\(\Qq_n=G(n,1/2)\), and \(L_n=d\Pp_n^{(3)}/d\Qq_n\).

Throughout Appendices B--D, graph limits are taken as \(n\to\infty\), with
\(d=d_n\to\infty\) and \(\beta=\beta_n\).  We write \(d\) and
\(\beta\) for these sequences.

Lemma~\ref{lem:cubic-critical-triangle-clt} proves the Gaussian limits
in Theorem~\ref{thm:pure-cubic-triangle-clt} and
shows that the alternative limit remains valid when the latent sample is
conditioned on an event of probability \(1-o(1)\).  Its proof provides the
conditional fourth-moment and remainder estimates used in
Section~\ref{subsec:cubic-triangle-critical}.
In this appendix, write \(N=N_{3,d}\) and \(M_n=\binom n3\).
Recall \(\widehat T_n=T_n/\sqrt{M_n}\),
\(\gamma_n=\sqrt{M_n}\beta^3/N^2\), and \(r_n=n\beta^2/(Nd)\)
from \eqref{eq:cubic-triangle-statistic}--\eqref{eq:cubic-model-parameters}.
Since \(d^3/(6N)\to1\),
\begin{equation}
  \gamma_n^2
  =\frac{M_n\beta^6}{N^4}
  =r_n^3\frac{d^3}{6N}\{1+o(1)\}.
  \label{eq:cubic-critical-parameter-match}
\end{equation}
Thus \(r_n\to r\in(0,\infty)\) is exactly the cubic critical scale, and
\(|\gamma_n|\to r^{3/2}\).

\begin{lemma}[Gaussian limits for the signed triangle]
\label{lem:cubic-critical-triangle-clt}
Suppose \(d=d_n\to\infty\), \(n\le d^A\), \(|\beta|\le1\), and
\(r_n\to r\in(0,\infty)\).  Then
\[
  \widehat T_n\Rightarrow\Normal(0,1)
  \quad\text{under }\Qq_n,
\]
whereas
\[
  \widehat T_n-\gamma_n\Rightarrow\Normal(0,1)
  \quad\text{under }\Pp_n^{(3)}.
\]
The alternative limit is unchanged after conditioning the latent prior on
any event of probability \(1-o(1)\).
\end{lemma}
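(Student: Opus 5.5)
The proof splits into three steps. The null limit is immediate. The alternative is handled by the decomposition \eqref{eq:cubic-triangle-mean-expansion} together with a conditional application of de Jong's theorem. Conditioning on a high-probability event then costs nothing by a total-variation argument.

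\textit{Step 1 (null).} Under \(\Qq_n\), \(\widehat T_n\) is a homogeneous degree-three multilinear form in independent Rademacher variables. Its variance is one, and each edge has maximal influence \((n-2)/M_n\to0\). The fourth moment \eqref{eq:cubic-null-triangle-fourth-moment} tends to \(3\) by the tetrahedron counting already given in the proof of Theorem~\ref{thm:pure-cubic-triangle-clt}. De Jong's theorem then yields \(\widehat T_n\Rightarrow\Normal(0,1)\).

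\textit{Step 2 (conditional CLT for \(T_n^{(0)}\)).} Fix \(X\) in the event \(\mathcal G_n\) of Lemma~\ref{lem:cubic-hereditary-envelope}; there \(\max|m_{ij}|\le\varepsilon_d=C\log^{3/2}d/\sqrt N\). Conditionally, the \(\varepsilon_{ij}\) are independent and centered with variances \(1-m_{ij}^2\in[1-\varepsilon_d^2,1]\), and \(T_n^{(0)}\) is a degenerate degree-three form in them. This gives
\[
 \operatorname{Var}(T_n^{(0)}\mid X)=\sum_{i<j<k}(1-m_{ij}^2)(1-m_{jk}^2)(1-m_{ki}^2)=M_n\{1+O(\varepsilon_d^2)\},
\]
and the conditional influence of each edge is \(O(n/M_n)\). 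For the conditional fourth moment, I would repeat the parity count. Conditional fourth moments of \(\varepsilon_{ij}\) are bounded, since \(|\varepsilon_{ij}|\le2\), so the normalized fourth moment equals \(3+O(\varepsilon_d^2)+O(1/n)\). De Jong's theorem, in its quantitative form or applied along subsequences, gives
\[
 \E[e^{itT_n^{(0)}/\sqrt{M_n}}\mid X]\to e^{-t^2/2}
\]
uniformly on \(\mathcal G_n\). Dominated convergence over \(X\) then gives the unconditional limit, since \(\rho_n(\mathcal G_n^c)=o(1)\).

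\textit{Step 3 (remainders).} I would bound \(\E R_{1,n}^2\), \(\E R_{2,n}^2\) and \(\operatorname{Var}R_{3,n}\) by expanding them and computing each term with the cycle–trace identity \eqref{eq:cycle-trace-identity} and the feature isotropy \(\E q q^{\mathsf T}=I/N\).

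\(R_{1,n}\): conditional orthogonality of the \(\varepsilon\)-monomials leaves \(\E R_{1,n}^2\le\sum 3\E m_{ij}^2\le CM_n\beta^2/N\).

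\(R_{2,n}\): \(\E R_{2,n}^2\le\sum_{i<j}\E\bigl(\sum_k m_{ik}m_{jk}\bigr)^2\). Expanding gives diagonal terms \(n^3\beta^4/N^2\) and off-diagonal terms \(n^4\beta^4/N^3\), the latter coming from four-cycle traces.

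\(R_{3,n}\): \(\operatorname{Var}R_{3,n}\) is a sum over pairs of triangles. Pairs sharing an edge contribute \(\beta^6/N^3\)-type terms times \(n^4\). Pairs sharing one vertex contribute \(\beta^6\cdot n^5\) times
\[
 \operatorname{Cov}\bigl(m_{12}m_{23}m_{31},\,m_{14}m_{45}m_{51}\bigr),
\]
and this covariance vanishes because the conditional triangle mean given \(X_1\) is constant by rotational invariance. Disjoint pairs are independent.

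Dividing by \(M_n\) gives \eqref{eq:cubic-main-triangle-remainders}. It is \(o(1)\) because \(n\beta^2\asymp Nd\) and \(N\asymp d^3\). Slutsky's theorem then yields \(\widehat T_n-\gamma_n\Rightarrow\Normal(0,1)\).

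\textit{Conditioning.} If the prior is conditioned on an event of probability \(1-o(1)\), the graph law moves by \(o(1)\) in total variation, as in \eqref{eq:cubic-conditioning-comparison}, so the weak limit is unchanged.

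The main obstacle is Step 3. Each term needs careful bookkeeping of which cycle traces survive, and the one-vertex-overlap cancellation in \(\operatorname{Var}R_{3,n}\) is essential: without it, that contribution would be of order \(n^5\beta^6/N^4\gg M_n\).
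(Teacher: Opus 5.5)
Your proposal is correct and follows the paper's proof essentially step for step: de Jong under the null, a conditional de Jong argument on \(\mathcal G_n\) made uniform over the event, the same three remainder bounds including the vanishing covariance for triangles sharing one vertex, and the total-variation transfer for conditioning. One small correction concerns your closing remark, which claims the one-vertex cancellation is essential. It is not: a crude H\"older bound gives that contribution at most \(Cn^5\beta^6/N^3\), and since \(n^2\beta^6/N^3=r_n^2\beta^2d^2/N\to0\), it is already \(o(M_n)\).
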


\begin{proof}
We prove the null central limit theorem, establish the same limit for the
leading conditional edge chaos, and show that the mean-factor remainders
contribute only the stated deterministic shift.

\medskip
\noindent\textit{Step 1. The null limit.}
Under \(\Qq_n\), the normalized triangle is a degree-three homogeneous
multilinear form in independent Rademacher variables.  Its maximum influence
is \((n-2)/M_n=O(n^{-2})\).
The fourth-moment calculation in
\eqref{eq:cubic-null-triangle-fourth-moment} gives
\[
  \mathbb E_{\Qq_n}\widehat T_n^4
  =
  3-\frac2{M_n}+\frac{24\binom n4}{M_n^2}\longrightarrow3.
\]
The degree-three de Jong theorem \cite[Theorem~1]{deJong1990} therefore proves the first
assertion.

\medskip
\noindent\textit{Step 2. The leading conditional chaos.}
Write \(B_{ij}=Z_{3,d}(X_i,X_j)\),
\(m_{ij}=\mathbb E(Y_{ij}\mid X)=\beta B_{ij}\), and
\(\varepsilon_{ij}=Y_{ij}-m_{ij}\).  Conditional on the latent sample \(X\), the
\(\varepsilon_{ij}\) are independent, centered, and bounded by two.
Recall the decomposition \eqref{eq:cubic-triangle-mean-expansion}, in which
\[
  T_n^{(0)}:=\sum_{i<j<k}
  \varepsilon_{ij}\varepsilon_{jk}\varepsilon_{ki}
\]
and \(R_{s,n}\) collects the terms with exactly \(s\) conditional mean
factors \(m_{ij}\), for \(s=1,2,3\).  Thus
\(T_n=T_n^{(0)}+R_{1,n}+R_{2,n}+R_{3,n}\).

The required kernel moment estimates are
\begin{equation}
  \mathbb EB_{12}^2=\frac1N,
  \qquad
  \mathbb EB_{12}^4\le\frac C{N^2},
  \qquad
  \mathbb EB_{12}^6\le\frac C{N^3}.
  \label{eq:cubic-critical-zonal-moments}
\end{equation}
By \eqref{eq:cubic-addition-formula} and orthonormality,
\begin{equation}
 \int_{\Sph^{d-1}}Z_{3,d}(x,z)Z_{3,d}(z,y)\,d\sigma_d(z)
 =\frac{Z_{3,d}(x,y)}{N}.
 \label{eq:cubic-zonal-reproduction}
\end{equation}
Taking \(y=x\) and using \(Z_{3,d}(1)=1\) proves the second-moment
identity in \eqref{eq:cubic-critical-zonal-moments}.
For the higher moments, rotational invariance and
\cite[Proposition~2.5]{Meckes2019}, rescaled to the unit sphere, give
\begin{equation}
 \mathbb E\langle X_1,X_2\rangle^{2j}
 =\prod_{\ell=1}^j\frac{2\ell-1}{d+2\ell-2},\qquad j\ge1.
 \label{eq:cubic-spherical-even-moments}
\end{equation}
The polynomial formula \eqref{eq:cubic-zonal-polynomial} implies
\(|Z_{3,d}(t)|\le C(|t|^3+|t|/d)\).  Together with
\eqref{eq:cubic-spherical-even-moments} and \(N\asymp d^3\), this yields
\begin{equation}
 \|Z_{3,d}\|_{L^p(\sigma_d^{\otimes2})}
 \le C_pN^{-1/2},\qquad 2\le p<\infty\ \text{fixed}.
 \label{eq:cubic-zonal-Lp-bound}
\end{equation}
The calculation first gives \eqref{eq:cubic-zonal-Lp-bound} for even
integers \(p\).  Monotonicity of \(L^p\)-norms extends it to every fixed
real \(p\ge2\).  Taking \(p=4,6\) proves the remaining estimates in
\eqref{eq:cubic-critical-zonal-moments}.
Let \(v_{ij}:=\mathbb E(\varepsilon_{ij}^2\mid X)=1-m_{ij}^2\) be the
conditional edge variance.  On the event \(\mathcal G_n\) from
Lemma~\ref{lem:cubic-hereditary-envelope}, which has probability \(1-o(1)\),
equation~\eqref{eq:cubic-pair-separation} gives
\[
 \max_{i<j}|m_{ij}|
 \le C_A\frac{\log^{3/2} d}{\sqrt N}=o(1).
\]
Thus \(|m_{ij}|\le1/2\) for all sufficiently large \(d\), and hence
\(3/4\le v_{ij}=1-m_{ij}^2\le1\), uniformly for \(|\beta|\le1\).
Conditional orthogonality of distinct triangle products gives
\[
 V_n:=\operatorname{Var}(T_n^{(0)}\mid X)
 =\sum_{i<j<k}v_{ij}v_{jk}v_{ki},\qquad
 0\le M_n-V_n\le(n-2)\sum_{i<j}m_{ij}^2.
\]
Using \(\mathbb Em_{ij}^2=\beta^2/N\) from
\eqref{eq:cubic-critical-zonal-moments}, we obtain
\[
 \mathbb E\left|\frac{V_n}{M_n}-1\right|
 \le\frac{n-2}{M_n}\sum_{i<j}\mathbb Em_{ij}^2
 =\frac{3\beta^2}{N}=o(1).
\]
Therefore \(V_n/M_n\to1\) in \(L^1\) and in probability.
On \(\mathcal G_n\), normalize
\[
\widetilde\varepsilon_{ij}=\varepsilon_{ij}/\sqrt{v_{ij}},\qquad
c_{ijk}=\frac{\sqrt{v_{ij}v_{jk}v_{ki}}}{\sqrt{V_n}}.
\]
Here \(c_{ijk}\) is symmetric in the three distinct indices.
Conditional on \(X=(X_1,\ldots,X_n)\in\mathcal G_n\), the
\(\widetilde\varepsilon_{ij}\) are independent and satisfy
\[
 \mathbb E(\widetilde\varepsilon_{ij}\mid X)=0,\qquad
 \mathbb E(\widetilde\varepsilon_{ij}^2\mid X)=1,\qquad
 |\widetilde\varepsilon_{ij}|\le\frac4{\sqrt3}.
\]
For a triangle \(t=\{i,j,k\}\), let \(E(t)\) be its edge set and write
\(c_t=c_{ijk}\).  For an edge \(e=\{i,j\}\), write
\(\widetilde\varepsilon_e=\widetilde\varepsilon_{ij}\).
Then
\[
 \frac{T_n^{(0)}}{\sqrt{V_n}}
 =\sum_t c_t\prod_{e\in E(t)}\widetilde\varepsilon_e,
 \qquad
 \sum_t c_t^2=1,
\]
where \(t\) ranges over all triangles on \(\{1,\ldots,n\}\).
For each edge \(\{i,j\}\), its coefficient influence satisfies
\begin{equation}
 I_{ij}:=\sum_{k\ne i,j}c_{ijk}^2
 =\frac{v_{ij}}{V_n}\sum_{k\ne i,j}v_{ik}v_{jk}
 \le\frac{n-2}{V_n}
 \le\left(\frac43\right)^3\frac{n-2}{M_n}=O(n^{-2}),
 \label{eq:cubic-conditional-influence}
\end{equation}
where we used \(v_{ij}\le1\) and \(V_n\ge(3/4)^3M_n\).
Writing \(I_e=I_{ij}\) for \(e=\{i,j\}\), we have
\(\max_e I_e=O(n^{-2})\) uniformly over \(X\in\mathcal G_n\).

We next show that the conditional fourth moment of
\(T_n^{(0)}/\sqrt{V_n}\) tends to three, uniformly on \(\mathcal G_n\).
Given triangles \(t_1,\ldots,t_4\), let
\(\nu_e:=\sum_{a=1}^4\1_{\{e\in E(t_a)\}}\) count the occurrences of edge
\(e\).  Conditional independence gives
\[
 \mathbb E\left[\left.
  \prod_{a=1}^4\prod_{e\in E(t_a)}\widetilde\varepsilon_e\right|X\right]
 =\prod_{e\in\bigcup_a E(t_a)}
       \mathbb E(\widetilde\varepsilon_e^{\nu_e}\mid X).
\]
Since the edge variables are conditionally centered, any term containing an
edge that appears exactly once has zero conditional expectation.
The absence of such edges leaves the three types counted in
\eqref{eq:cubic-null-triangle-fourth-moment}, namely two distinct triangles
each appearing twice, four identical triangles, and the four faces of a tetrahedron.
For distinct \(t,s\), conditional independence and unit conditional variances give
\begin{equation}
 \mathbb E\!\left[
  \prod_{a\in E(t)}\widetilde\varepsilon_a^2
  \prod_{b\in E(s)}\widetilde\varepsilon_b^2\,\middle|\,X\right]
 =\prod_{e\in E(t)\cap E(s)}\mathbb E(\widetilde\varepsilon_e^4\mid X).
 \label{eq:cubic-conditional-paired-moment}
\end{equation}
Grouping the three types gives the decomposition
\begin{equation}
\begin{aligned}
 \mathbb E\!\left[(T_n^{(0)}/\sqrt{V_n})^4\mid X\right]
 &=3\sum_{t\ne s}c_t^2c_s^2
   \prod_{e\in E(t)\cap E(s)}\mathbb E(\widetilde\varepsilon_e^4\mid X)\\
 &\quad+\sum_t c_t^4\prod_{e\in E(t)}\mathbb E(\widetilde\varepsilon_e^4\mid X)
   +24\sum_{i<j<k<\ell}c_{ijk}c_{ij\ell}c_{ik\ell}c_{jk\ell}.
\end{aligned}
 \label{eq:cubic-conditional-fourth-decomposition}
\end{equation}

The ordered sum over \(t\ne s\) in
\eqref{eq:cubic-conditional-fourth-decomposition} gives the coefficient
\(3=\{4!/(2!\,2!)\}/2\).
Four identical triangles have one ordering and contribute the product
of their edge fourth moments.  The tetrahedron has \(4!=24\) orderings,
and each edge occurs twice, giving
\mbox{\(\mathbb E[\prod_{1\le i<j\le4}\widetilde\varepsilon_{ij}^{\,2}\mid X]=1\)}.
We estimate these three terms in the same order.
For the first term, the normalization \(\sum_t c_t^2=1\) gives
\(3\sum_{t\ne s}c_t^2c_s^2=3-3\sum_t c_t^4\).
To bound the difference between the first term and
\(3\sum_{t\ne s}c_t^2c_s^2\),
we use the fact that distinct triangles share at most one edge.  For each
edge \(e\),
\[
 \sum_{\substack{t\ne s\\E(t)\cap E(s)=\{e\}}}c_t^2c_s^2
 \le\left(\sum_{t:\,e\in E(t)}c_t^2\right)^2=I_e^2.
\]
Moreover, \(\mathbb E(\widetilde\varepsilon_e^2\mid X)=1\) and
\(|\widetilde\varepsilon_e|\le4/\sqrt3\) imply
\(1\le\mathbb E(\widetilde\varepsilon_e^4\mid X)\le16/3\).
It follows that
\[
\begin{aligned}
 0&\le3\sum_{t\ne s}c_t^2c_s^2
   \left[\prod_{e\in E(t)\cap E(s)}
     \mathbb E(\widetilde\varepsilon_e^4\mid X)-1\right]\\
 &=3\sum_e\left[\mathbb E(\widetilde\varepsilon_e^4\mid X)-1\right]
   \sum_{\substack{t\ne s\\E(t)\cap E(s)=\{e\}}}c_t^2c_s^2
   \le13\sum_e I_e^2.
\end{aligned}
\]
Consequently, the first term in
\eqref{eq:cubic-conditional-fourth-decomposition} satisfies
\begin{equation}
 3\sum_{t\ne s}c_t^2c_s^2
  \prod_{e\in E(t)\cap E(s)}\mathbb E(\widetilde\varepsilon_e^4\mid X)
 =3-3\sum_t c_t^4+O\!\left(\sum_e I_e^2\right),
 \label{eq:cubic-conditional-pairing-estimate}
\end{equation}
uniformly on \(\mathcal G_n\).
Since \(c_t\le V_n^{-1/2}\) and \(\sum_e I_e=3\sum_t c_t^2=3\),
\[
 \sum_t c_t^4\le\frac1{V_n}\sum_t c_t^2=\frac1{V_n},\qquad
 \sum_e I_e^2\le(\max_e I_e)\sum_e I_e=3\max_e I_e.
\]
Together with \eqref{eq:cubic-conditional-influence} and
\(V_n\ge(3/4)^3M_n\asymp n^3\), these bounds show that the first term
in \eqref{eq:cubic-conditional-fourth-decomposition} is \(3+O(n^{-2})\).

For the second term, \(\mathbb E(\widetilde\varepsilon_e^4\mid X)\le16/3\)
and \(\sum_t c_t^4\le1/V_n\) give
\[
 \sum_t c_t^4\prod_{e\in E(t)}\mathbb E(\widetilde\varepsilon_e^4\mid X)
 \le\left(\frac{16}{3}\right)^3\sum_t c_t^4
 \le\frac{4096}{27V_n}=O(n^{-3}).
\]

For the third term, \(c_t\le V_n^{-1/2}\) gives
\[
 24\sum_{i<j<k<\ell}c_{ijk}c_{ij\ell}c_{ik\ell}c_{jk\ell}
 \le\frac{24\binom n4}{V_n^2}=O(n^{-2}).
\]
Substituting the bounds for all three terms into
\eqref{eq:cubic-conditional-fourth-decomposition}, we conclude that
\begin{equation}
 \left|
 \mathbb E\!\left[(T_n^{(0)}/\sqrt{V_n})^4\mid X\right]-3\right|
 \le C\left(\frac1{V_n}+\max_e I_e+\frac{n^4}{V_n^2}\right)
 =O(n^{-2}),
 \label{eq:cubic-conditional-fourth-error}
\end{equation}
uniformly on \(\mathcal G_n\).

We can now apply de Jong's theorem conditionally on the latent sample.
Fix any deterministic sequence \(x_n\in\mathcal G_n\).
For each triangle \(t\) and edge \(e\in E(t)\), conditioning on \(X\)
and all other edge variables leaves \(\widetilde\varepsilon_e\) centered
by conditional independence.  Hence
\[
 \mathbb E\!\left[
 c_t\prod_{a\in E(t)}\widetilde\varepsilon_a
 \,\middle|\,X,(\widetilde\varepsilon_b)_{b\ne e}\right]
 =c_t\prod_{a\in E(t)\setminus\{e\}}\widetilde\varepsilon_a\,
   \mathbb E(\widetilde\varepsilon_e\mid X)=0.
\]
Thus integrating any one of the three edge variables makes the triangle
summand vanish, which is the degeneracy condition in de Jong's theorem.
Under \(X=x_n\), conditional orthogonality, the normalization
\(\sum_t c_t^2=1\), \eqref{eq:cubic-conditional-influence}, and
\eqref{eq:cubic-conditional-fourth-error} give
\[
\begin{aligned}
 &\mathbb E\!\left[T_n^{(0)}/\sqrt{V_n}\mid X=x_n\right]=0,
 \qquad
 \operatorname{Var}\!\left(T_n^{(0)}/\sqrt{V_n}\mid X=x_n\right)=1,\\
 &\max_e I_e(x_n)\longrightarrow0,
 \qquad
 \mathbb E\!\left[(T_n^{(0)}/\sqrt{V_n})^4\mid X=x_n\right]
   \longrightarrow3.
\end{aligned}
\]
Together with \(|\widetilde\varepsilon_e|\le4/\sqrt3\), these facts verify
the hypotheses of \cite[Theorem~1]{deJong1990}, which gives
\[
 \mathcal L\!\left(T_n^{(0)}/\sqrt{V_n}\mid X=x_n\right)
 \Rightarrow\Normal(0,1).
\]
Finally, we remove the conditioning on \(X\).  Since \(x_n\in\mathcal G_n\)
was arbitrary, the conditional characteristic functions converge uniformly
over \(\mathcal G_n\) for each fixed \(u\in\mathbb R\).
Otherwise, one could choose a deterministic sequence contradicting the
conditional limit just proved.  After extending \(T_n^{(0)}/\sqrt{V_n}\)
by zero on \(\mathcal G_n^c\), we average over \(X\) to obtain
\[
 \left|\mathbb E e^{iuT_n^{(0)}/\sqrt{V_n}}-e^{-u^2/2}\right|
 \le\sup_{x\in\mathcal G_n}
 \left|\mathbb E\!\left[e^{iuT_n^{(0)}/\sqrt{V_n}}\mid X=x\right]
       -e^{-u^2/2}\right|
 +2\mathbb P(X\notin\mathcal G_n)\longrightarrow0.
\]
L\'evy's continuity theorem \cite[Theorem~3.3.17(ii)]{Durrett2019}
therefore yields the first limit below.
The second follows from Slutsky's theorem, since \(V_n/M_n\to1\) in
probability and \(\mathbb P(X\notin\mathcal G_n)=o(1)\).
Thus
\[
  \frac{T_n^{(0)}}{\sqrt{V_n}}\Rightarrow\Normal(0,1),
  \qquad
  \frac{T_n^{(0)}}{\sqrt{M_n}}\Rightarrow\Normal(0,1).
\]

\medskip
\noindent\textit{Step 3. The mean-factor remainders.}
Regrouping the terms in \eqref{eq:cubic-triangle-mean-expansion} according
to the edge carrying the mean factor gives
\(R_{1,n}=\sum_{i<j}\sum_{k\ne i,j}m_{ij}\varepsilon_{ik}\varepsilon_{jk}\).
Expanding the conditional second moment gives
\[
 \mathbb E(R_{1,n}^2\mid X)
 =\sum_{i<j}\sum_{k\ne i,j}m_{ij}^2v_{ik}v_{jk}
   \le(n-2)\sum_{i<j}m_{ij}^2.
\]
Indeed, the unordered pair of centered edges \(\{\{i,k\},\{j,k\}\}\)
determines \((i,j,k)\) with \(i<j\).
Products of distinct summands therefore contain an edge variable
appearing exactly once and have zero conditional expectation.
The inequality uses \(v_{ik},v_{jk}\le1\).
Taking expectations and using
\(\mathbb Em_{ij}^2=\beta^2/N\) from
\eqref{eq:cubic-critical-zonal-moments}, we obtain
\[
 \frac{\mathbb ER_{1,n}^2}{M_n}
 \le\frac{\binom n2(n-2)}{M_n}\frac{\beta^2}{N}
 =\frac{3\beta^2}{N}\le\frac3N\longrightarrow0.
\]

Recall that \(R_{2,n}=\sum_{i<j}\varepsilon_{ij}
\sum_{k\ne i,j}m_{ik}m_{jk}\).
Conditional independence and centering of the \(\varepsilon_{ij}\) give
\[
 \mathbb E(R_{2,n}^2\mid X)
 =\sum_{i<j}v_{ij}\left(\sum_{k\ne i,j}m_{ik}m_{jk}\right)^2
 \le \sum_{i<j}\left(\sum_{k\ne i,j}m_{ik}m_{jk}\right)^2.
\]
For fixed \(i<j\), separate the diagonal and off-diagonal terms in
the square.  Conditional independence and
\eqref{eq:cubic-zonal-reproduction} give, for distinct
\(k,\ell\notin\{i,j\}\),
\[
\begin{aligned}
 \mathbb E(m_{ik}^2m_{jk}^2)
 &=\beta^4\mathbb E\!\left[
    \mathbb E(B_{ik}^2\mid X_k)\mathbb E(B_{jk}^2\mid X_k)\right]
   =\frac{\beta^4}{N^2},\\
 \mathbb E(m_{ik}m_{jk}m_{i\ell}m_{j\ell})
 &=\frac{\beta^4}{N^2}\mathbb EB_{ij}^2=\frac{\beta^4}{N^3}.
\end{aligned}
\]
The first line conditions on \(X_k\), and the second integrates
\(X_k,X_\ell\) conditional on \(X_i,X_j\), then uses
\eqref{eq:cubic-critical-zonal-moments}.
There are \(n-2\) diagonal terms and \((n-2)(n-3)\) off-diagonal
terms per edge.  Since \(\binom n2(n-2)=3M_n\), we obtain
\[
 \frac{\mathbb ER_{2,n}^2}{M_n}
 \le\frac{3\beta^4}{N^2}+\frac{3(n-3)\beta^4}{N^3}
 \le\frac{3\beta^4}{N^2}+\frac{3r_n\beta^2d}{N^2}
 \longrightarrow0,
\]
where we used \(n\beta^2=r_nNd\), \(r_n\to r\in(0,\infty)\),
\(|\beta|\le1\), and \(N\sim d^3/6\).

Finally, recall that
\(R_{3,n}=\beta^3\sum_{i<j<k}B_{ij}B_{jk}B_{ki}\).
Equation~\eqref{eq:cubic-zonal-reproduction} gives
\(\mathbb E(B_{23}B_{31}\mid X_1,X_2)=B_{12}/N\), so
\[
 \mathbb ER_{3,n}
 =\frac{M_n\beta^3}{N}\mathbb EB_{12}^2
 =\frac{M_n\beta^3}{N^2}.
\]
Products \(B_{ij}B_{jk}B_{ki}\) associated with disjoint triangles are
independent.  If two triangles share only vertex \(1\), their products
are conditionally independent given \(X_1\).  By
\eqref{eq:cubic-zonal-reproduction},
\(\mathbb E(B_{12}B_{23}B_{31}\mid X_1)=1/N^2\), so the conditional
means are constant and the covariance is zero.
For identical triangles, H\"older's inequality and
\eqref{eq:cubic-critical-zonal-moments} give
\[
 \operatorname{Var}(B_{12}B_{23}B_{31})
 \le\mathbb E(B_{12}^2B_{23}^2B_{31}^2)
 \le(\mathbb EB_{12}^6\mathbb EB_{23}^6\mathbb EB_{31}^6)^{1/3}
 \le\frac C{N^3}.
\]
For triangles sharing \(\{1,2\}\), integrating \(X_3,X_4\) conditional
on \(X_1,X_2\) by \eqref{eq:cubic-zonal-reproduction} gives
\[
 \operatorname{Cov}(B_{12}B_{23}B_{31},B_{12}B_{24}B_{41})
 =\frac{\mathbb EB_{12}^4}{N^2}-\frac1{N^4}
 \le\frac C{N^4}.
\]
There are \(M_n\) identical pairs and
\(\binom n2(n-2)(n-3)\) ordered pairs sharing an edge.  Thus
\[
 \frac{\operatorname{Var}(R_{3,n})}{M_n}
 \le C\beta^6\left(\frac1{N^3}+\frac n{N^4}\right)
 =C\left(\frac{\beta^6}{N^3}
       +\frac{r_n\beta^4d}{N^3}\right)\longrightarrow0,
\]
again using \(n\beta^2=r_nNd\), bounded \(r_n\), \(|\beta|\le1\),
and \(N\sim d^3/6\).

The three estimates and Cauchy--Schwarz imply
\[
 \mathbb E\left[
  \left(\frac{R_{1,n}+R_{2,n}+R_{3,n}-\mathbb ER_{3,n}}{\sqrt{M_n}}\right)^2
 \right]
 \le\frac3{M_n}
  \left[\mathbb ER_{1,n}^2+\mathbb ER_{2,n}^2
        +\operatorname{Var}(R_{3,n})\right]=o(1).
\]
Since \(\mathbb ER_{3,n}/\sqrt{M_n}=\gamma_n\),
the decomposition \eqref{eq:cubic-triangle-mean-expansion}, Step 2,
and Slutsky's theorem yield
\(\widehat T_n-\gamma_n\Rightarrow\Normal(0,1)\).
For any latent-sample event \(E_n\) with \(\rho_n(E_n)=1-o(1)\),
the conditioning comparison \eqref{eq:cubic-conditioning-comparison},
with \(\mathcal G_n\) replaced by \(E_n\), gives
\[
 \TV\!\left(\mathcal L(\Adj\mid X\in E_n),\Pp_n^{(3)}\right)
 \le\TV\{\rho_n(\,\cdot\mid E_n),\rho_n\}
 =\rho_n(E_n^c)=o(1).
\]
This proves that the alternative limit is unchanged by the conditioning.
\end{proof}

\section{Variational analysis and critical likelihood asymptotics}
\label{app:cubic-franz-parisi}

This appendix proves the likelihood and total-variation assertions of
Theorem~\ref{thm:pure-cubic-critical}.  Write \(N=N_{3,d}\) and
\(L_n=d\Pp_n^{(3)}/d\Qq_n\).
For \(r_n\to r\in(0,r_{\mathrm{tens}})\), we construct a normalized
truncated likelihood \(\widetilde L_n\) such that
\[
 \|L_n-\widetilde L_n\|_{L^1(\Qq_n)}\longrightarrow0,
 \qquad
 \limsup_n\mathbb E_{\Qq_n}\widetilde L_n^2\le e^{r^3}.
\]
Together with the triangle limits in Appendix~\ref{app:cubic-triangle-clt},
these estimates identify the Gaussian log-likelihood limit through the
argument in Section~\ref{subsec:cubic-likelihood-mechanism}.

Let \(\rho_n=\sigma_d^{\otimes n}\) and
\(\pi_n^\dagger=\rho_n(\,\cdot\mid\mathcal G_n^\dagger)\), where
\(\mathcal G_n^\dagger\) is the event in
Proposition~\ref{prop:cubic-raw-local-partition}.
Recall \(L_x(\Adj)=\prod_{i<j}\{1+\beta Y_{ij}Z_{3,d}(x_i,x_j)\}\),
with \(Y_{ij}=2a_{ij}-1\), and
\(\mathbb E_{\Qq_n}[L_xL_{x'}]=W_{n,\beta}(x,x')\).
The conditioned likelihood \(L_n^\dagger=\int L_x\,d\pi_n^\dagger(x)\)
satisfies \(\|L_n-L_n^\dagger\|_{L^1(\Qq_n)}=o(1)\) by
\eqref{eq:cubic-regular-prior-likelihood-transfer}.
For two latent samples, write
\[
 R(x,x')=\frac1n\sum_{i=1}^n
 q_{3,d}(x_i)q_{3,d}(x_i')^{\mathsf T},
 \qquad s_n(x,x')=N\|R(x,x')\|_{\HS}^2.
\]
For a small fixed \(\delta>0\), define
\(Z_{x,\delta}^\dagger(\Adj)
=\int_{\{s_n(x,x')\ge\delta\}}L_{x'}(\Adj)\,d\pi_n^\dagger(x')\).
With \(g>0\) chosen below, set
\[
 \mathcal A_{x,\delta}^\dagger
 =\{\Adj:Z_{x,\delta}^\dagger(\Adj)\le e^{-gnd}\},
 \qquad
 \overline L_n(\Adj)
 =\int L_x(\Adj)\1_{\mathcal A_{x,\delta}^\dagger}(\Adj)
       \,d\pi_n^\dagger(x).
\]
Splitting the second moment at \(s_n=\delta\) and applying
Proposition~\ref{prop:cubic-raw-local-partition} gives
\[
 \mathbb E_{\Qq_n}\overline L_n^2
 \le \iint_{\{s_n<\delta\}}W_{n,\beta}\,
        d(\pi_n^\dagger\otimes\pi_n^\dagger)+e^{-gnd}
 \le e^{r^3}+o(1).
\]
It remains to show that the truncation loses \(o(1)\) likelihood mass.
Writing \(c_n=\mathbb E_{\Qq_n}\overline L_n\), we need
\[
 1-c_n
 =\mathbb P_{X\sim\pi_n^\dagger,\ \Adj\mid X}
   \{Z_{X,\delta}^\dagger(\Adj)>e^{-gnd}\}=o(1).
\]

To prove this probability bound,
Section~\ref{subsec:cubic-bernoulli-Gaussian-reduction} replaces the
Bernoulli exponent by the Gaussian exponent in
Lemma~\ref{lem:cubic-constrained-edge-replacement}, keeping the same
prior \(\pi_n^\dagger\) and restriction \(s_n\ge\delta\).
The interpolation in Proposition~\ref{prop:cubic-Gaussian-variational-bound}
then bounds the Gaussian conditional mean by
\(\mathcal V_{d,r}(Q)
=d^{-1}D(P_{rNdQ}\Vert\gamma_N)-rN\|Q\|_{\HS}^2/4\).
For \(B\succeq0\), \(P_B=\operatorname{Law}(B^{1/2}q_{3,d}(\widetilde X)+G)\),
where \(\widetilde X\sim\sigma_d\) and
\(G\sim\gamma_N=\Normal(0,I_N)\) are independent.
Sections~\ref{subsec:cubic-scalar-channel}--\ref{subsec:cubic-matrix-potential}
prove \(\mathcal V_{d,r_n}(Q)\le-\kappa N\|Q\|_{\HS}^2\)
when \(r_n\) stays in a compact subinterval of \((0,r_{\mathrm{tens}})\).
For the covering error \(\varepsilon_{\rm cov}>0\) in
Proposition~\ref{prop:cubic-Gaussian-variational-bound}, these estimates
give, for \(x\in\mathcal G_n^\dagger\setminus\mathcal E_n\) with
\(\pi_n^\dagger(\mathcal E_n)=o(1)\),
\[
 \frac1{nd}\mathbb E_{\Adj\mid x}\log Z_{x,\delta}^\dagger
 \le
 \sup_{\substack{Q\succeq0\\N\|Q\|_{\HS}^2\ge\delta/2}}
       \mathcal V_{d,r_n}(Q)+C\varepsilon_{\rm cov}+o(1)
 \le-\frac{\kappa\delta}{2}+C\varepsilon_{\rm cov}+o(1).
\]
The logarithmic bound is needed only when
\(\pi_n^\dagger\{x':s_n(x,x')\ge\delta\}>0\), since otherwise
\(Z_{x,\delta}^\dagger=0\).
Choosing \(\varepsilon_{\rm cov}\) small makes the last bound at most
\(-2g\) for some fixed \(g>0\).
Conditional concentration in Lemma~\ref{lem:cubic-legal-truncation} yields
\[
 \sup_{x\in\mathcal G_n^\dagger\setminus\mathcal E_n}
 \mathbb P_{\Adj\mid x}\{Z_{x,\delta}^\dagger>e^{-gnd}\}=o(1),
 \qquad 1-c_n\le\pi_n^\dagger(\mathcal E_n)+o(1)=o(1).
\]
Section~\ref{subsec:cubic-supp-likelihood} completes the construction
with \(\widetilde L_n=\overline L_n/c_n\).

For \(r>r_{\mathrm{tens}}\),
Section~\ref{subsec:cubic-supp-supercritical} instead uses a positive
scalar variational value to prove, for some \(\eta>0\),
\[
 \Pp_n^{(3)}\{\log L_n\ge\eta nd\}\longrightarrow1,
 \qquad
 \Qq_n\{\log L_n\ge\eta nd\}\le e^{-\eta nd}\longrightarrow0.
\]
Proposition~\ref{prop:cubic-endpoint} then constructs sequences approaching
\(r_{\mathrm{tens}}\) with different total-variation limits.
We begin with the regularity and local second-moment estimates.

\begin{proposition}[Regularity and local partition-function bound]
\label{prop:cubic-raw-local-partition}
Fix \(A<\infty\), a compact interval \(K\subset(0,\infty)\), and
\(M_\star>4A+10\).  Suppose \(d=d_n\to\infty\), \(n\le d^A\),
\(0<|\beta|\le1\), and \(r_n=n\beta^2/(Nd)\in K\).
Let \(\mathcal G_{n,M_\star}\) be the event from
Lemma~\ref{lem:cubic-hereditary-envelope}.
There are events \(\mathcal G_n^\dagger\subseteq\mathcal G_{n,M_\star}\),
invariant under common orthogonal rotations, coordinate permutations,
and individual sign changes, such that
\(\rho_n(\mathcal G_n^\dagger)=1-o(d^{-M_\star})\).
The events depend on \(A,K,M_\star,n,d\), but not on \(\beta\).
For every \(X\in\mathcal G_n^\dagger\),
\[
 \left\|\sum_{i=1}^nq_{3,d}(X_i)q_{3,d}(X_i)^{\mathsf T}\right\|_{\rm op}
 \le C\frac nN,
 \qquad
 \max_{i\ne j}|Z_{3,d}(X_i,X_j)|
 \le C\frac{\log^3 d}{\sqrt N},
\]
where \(C\) depends only on \(A,K,M_\star\).
There is \(\delta_0=\delta_0(A,K,M_\star)>0\) such that, for every fixed
\(0<\delta\le\delta_0\),
\begin{equation}
 \limsup_{n\to\infty}
 \mathbb E_{\rho_n\otimes\rho_n}\!\left[
 \1_{\mathcal G_n^\dagger}(X)\1_{\mathcal G_n^\dagger}(X')
 W_{n,\beta}\1_{\{s_n<\delta\}}\right]
 \le e^{r^3}
 \quad\text{whenever }r_n\to r\in K.
 \label{eq:cubic-raw-local-partition}
\end{equation}
For each such \(\delta\), the expectation is uniformly bounded when
\(r_n\in K\).
\end{proposition}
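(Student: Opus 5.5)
The plan has three parts: build the regular event, prove the two deterministic bounds on it, and then compute the local second moment by cluster expansion.

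\textbf{The regular event.} I would take \(\mathcal G_n^\dagger=\mathcal G_{n,M_\star}\cap\mathcal F_n\). Here \(\mathcal F_n\) requires two bounds. The first is \(\|\mathbf Q_n^{\mathsf T}\mathbf Q_n\|_{\rm op}\le Cn/N\), which follows from the matrix Bernstein bound in Lemma~\ref{lem:cubic-hereditary-envelope}. On the critical scale, \(n\asymp r_nNd\ge cNd\), so \(\log N\ll n/N\), and the \(\log N\) term in \eqref{eq:cubic-feature-gram} is absorbed. The second bound is on the pair separation: \(\max_{i\ne j}|Z_{3,d}(X_i,X_j)|\le C\log^3d/\sqrt N\). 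This already follows from \eqref{eq:cubic-pair-separation}, since \(\log^{3/2}d\le\log^3 d\). Both conditions are defined through the Gram matrix \([Z_{3,d}(X_i,X_j)]\). That matrix transforms by \(DP(\cdot)P^{\mathsf T}D\) under the three symmetries, exactly as in the proof of Lemma~\ref{lem:cubic-hereditary-envelope}, so \(\mathcal G_n^\dagger\) has the stated invariances. The failure probability is \(o(d^{-M_\star})\) once constants are enlarged, and the event does not depend on \(\beta\).

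\textbf{The local bound.} Write \(W_{n,\beta}=\exp\{\sum_{i<j}\log(1+\beta^2h_{ij})\}\) and expand to third order, using \(|\beta^2h_{ij}|\le C\log^6d/N\). The linear term is \(\tfrac{\beta^2}{2}(\|S_n\|^2_{\HS}-n)=\tfrac{\beta^2 n^2}{2N}s_n-\tfrac{\beta^2 n}{2}\). Since \(\beta^2n^2/N=r_nnd\), this term is of order \(nd\), and so is the entropy cost of forcing \(s_n\) to be large. On \(\{s_n<\delta\}\) the question is therefore a second-moment computation near zero overlap.

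I would redo the argument of Proposition~\ref{prop:cubic-amplitude-tracked-reverse} (Steps 1 and 2), replacing the restriction \(\|S_n\|^2_{\HS}\le Rn\) by \(\|S_n\|_{\HS}^2\le\delta n^2/N\). That argument lost only \(O(\beta^6n^3/N^4)=O(r_n^3)\) without a sharp constant. Here the constant must be sharp: the term \(\tfrac{\beta^4 n}{6N^2}\mathbb E_Q(\|S_n\|^2-n)\) in \eqref{eq:cubic-main-summed-interaction}, together with the cubic Taylor term \(\tfrac{\beta^6}{3}\sum h_{ij}^3\), should produce exactly \(\tfrac{\beta^6}{N^4}\binom n3\to r^3\). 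This matches the cycle expansion \(\mathbb E\prod(1+\beta^2h_{ij})\approx\exp\{\sum_{\ell\ge3}\binom n\ell\tfrac{(\ell-1)!}{2}\,\mathrm{tr}^2(\Kop^\ell)\}\). In that expansion the triangle contributes \(\binom n3\beta^6/N^4\to r^3\), and longer cycles contribute \(n^\ell\beta^{2\ell}/N^{2\ell-2}\asymp r^\ell d^\ell/N^{\ell-2}\to0\) for \(\ell\ge4\).

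Concretely, the plan is a cluster/moment computation. I would expand \(\prod(1+\beta^2h_{ij})\) over edge subgraphs \(E\). For each graph \(E\), the expectation \(\mathbb E\prod_{e\in E}h_e\) factorises as \((\mathbb E\prod_{e\in E} Z_{3,d})^2\), which vanishes unless every vertex of \(E\) has degree at least \(2\). Such graphs are unions of cycles and more complex cores. Disjoint triangles give \(\sum_k(\binom n3\beta^6/N^4)^k/k!\to e^{r^3}\). All other cores contribute \(o(1)\) by the trace bound \eqref{eq:single-layer-traces} and the moment bounds \eqref{eq:cubic-critical-zonal-moments}.

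\textbf{Main obstacle.} The plain cluster expansion is not absolutely convergent at order \(nd\). This is exactly why the indicator \(\{s_n<\delta\}\) and the regular event are imposed. I would handle it by the same Gibbs-variational/exchangeability route as before. In the regime \(s_n<\delta\), one has \(\mathbb E_Q\|S_r\|^2\le C(r+\delta r^2/N)\), and the row bound of Lemma~\ref{lem:cubic-compensated-row} then yields an entropy inequality whose deviation term is \(\eta\{\tfrac{\beta^2}{2}\mathbb E_Q(\|S_n\|^2-n)-\mathcal D_Q\}\) with \(\eta\asymp r_nd/N\). This deviation term is controlled by the variational potential at small overlap: its value for \(N\|Q\|^2_{\HS}<\delta\) is \(\le0\) once \(\delta\le\delta_0\), because \(q=0\) is a strict local maximiser of \eqref{eq:cubic-main-graph-potential} for every \(r\).

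What remains is to show that the second-order terms reproduce the constant \(r^3\) exactly, not merely up to \(C_R\). For this I would replace the crude bounds in \eqref{eq:cubic-main-entropy-reduction} by the exact quadratic coefficient of Lemma~\ref{lem:cubic-compensated-row}, combined with the explicit triangle moment \(\mathbb E[B_{12}B_{23}B_{31}]=1/N^2\). I expect this sharp-constant step to be the main difficulty. Uniform boundedness for each fixed \(\delta\) follows from the same estimate, run with the non-sharp constants.
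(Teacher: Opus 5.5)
Your regular event is fine for the two displayed deterministic bounds. On the critical scale $n/N\ge(\min K)d$, so the $\log N$ term in \eqref{eq:cubic-feature-gram} is absorbed, and \eqref{eq:cubic-pair-separation} is stronger than required. The gap is in the local bound \eqref{eq:cubic-raw-local-partition}. Rerunning Steps~1--2 of Proposition~\ref{prop:cubic-amplitude-tracked-reverse} with $\|S_n\|_{\HS}^2\le\delta n^2/N$ in place of $\|S_n\|_{\HS}^2\le Rn$ breaks down at $r_n\asymp1$ in three places.

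(i) The hypothesis $2|u|\ell(S_{i-1})\le c_{\rm src}d$ of Lemma~\ref{lem:cubic-compensated-row} can no longer be verified deterministically. Lemma~\ref{lem:cubic-subset-tensor-bounds} only gives $\beta^2\ell(S_r)\le C_A(r_n+o(1))d$, which is not small for $r_n\in K$.

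(ii) For exchangeable $Q$ supported on $\{s_n<\delta\}$ you only have $\mathbb E_Q\|S_r\|_{\HS}^2\le r+\delta r^2/N$. The diagonal term $\frac{\eta\beta^4}{2(1-\eta)N^2}\sum_r\mathbb E_Q\|S_r\|_{\HS}^2$ then carries an error of order $\delta\beta^6n^4/N^5\asymp\delta d/\beta^2\to\infty$. Moreover $\eta\asymp d^{-2}$ and $\eta\beta^4n(n-1)/(4N^2)$ is exactly half of $\binom n3\beta^6/N^4$. So the linear term $\eta\Psi_n(\beta^2)$ is itself part of the constant, and $\Psi_n(\beta^2)$ must be computed as $\beta^4n(n-1)/(4N^2)+o(d^2)$. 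That is a precision far below the $nd$ scale.

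(iii) Even where $\|S_r\|_{\HS}^2\approx r$, the fourth-order remainder of Lemma~\ref{lem:cubic-compensated-row} sums to order $\beta^8n^3/N^4\asymp\beta^2$. This does not vanish for fixed $\beta$, so the exact quadratic coefficient alone cannot give $e^{r^3}$.

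Separately, $\frac{\beta^6}{3}\sum_{i<j}h_{ij}^3$ is $o(1)$ on the regular event (cf.\ \eqref{eq:cubic-partition-taylor}). The triangle constant comes from the off-diagonal interaction \eqref{eq:cubic-main-summed-interaction} and from $\eta\Psi_n$, not from $\sum h_{ij}^3$.

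Your explanation of why small $\delta$ suffices also invokes the wrong object. \eqref{eq:cubic-main-graph-potential} is the quenched Gaussian-channel potential, which the paper uses to truncate $\{s_n\ge\delta\}$. $\Psi_n$ is an annealed exponential moment, and Jensen's inequality goes the wrong way for bounding it by a quenched quantity.

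What is missing is the reduction of $\{s_n<\delta\}$ to the thin band $|\,\|S_n\|_{\HS}^2-n\,|\le K_dd^2/\beta^2$:
\begin{itemize}
\item The band $\underline s_d\le s_n<\delta$ is deleted by the cross-fitted large-deviation bound $P_0^{\otimes n}\{\mathcal E_n^{\rm reg},\ s\le s_n<2s\}\le e^{-cnds^{2/3}}$, set against $W_{n,\beta}\le e^{Cnds}$. This, together with the high-energy entropy bound of Lemma~\ref{lem:cubic-high-energy-entropy}, is where $\delta\le\delta_0$ is used, since $s\ll s^{2/3}$.
\item The event $\beta^2\ell(S_n)>\epsilon_\star d$ and the positive tail beyond $K_dd^2/\beta^2$ are removed by Lemmas~\ref{lem:cubic-clipped-orbit-tail} and~\ref{lem:cubic-positive-energy-tail}.
\item On the surviving band one imposes $\|S_n\|_{\rm op}\le\epsilon_dd/\beta^2$ and $\beta^8n|T_{4,n}|\le\epsilon_d$. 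One then transfers the $\ell$-bound to prefixes by sampling without replacement and expands each row to fourth order with a sixth-order remainder. Finally one controls the summed fourth cumulant via Newton's identity and the degenerate $U$-statistic $T_{4,n}$ (Lemmas~\ref{lem:cubic-fourth-U-moments} and~\ref{lem:cubic-fourth-U-exponential}).
\end{itemize}
These steps rely on the subset covariance, fourth-moment and even-degree frame bounds built into the paper's $\mathcal G_n^\dagger$ (Lemma~\ref{lem:cubic-hereditary-square-profile}). Your event omits them and is therefore larger, so you would have to prove the local bound on more configurations than the paper does.
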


The proof of Proposition~\ref{prop:cubic-raw-local-partition} is given in
Appendix~\ref{app:cubic-local-partition}.

\subsection{Conditioning and Gaussian comparison}
\label{subsec:cubic-bernoulli-Gaussian-reduction}

We first bound the change from \(L_n\) to \(L_n^\dagger\), then compare
the conditional means of the Bernoulli and Gaussian log integrals.
For the likelihood analysis below \(r_{\mathrm{tens}}\), fix
\(A<\infty\), a compact interval \(K\subset(0,r_{\mathrm{tens}})\), and
\(M_\star>4A+10\).  Suppose \(d=d_n\to\infty\), \(n\le d^A\),
\(0<|\beta|\le1\), and \(r_n=n\beta^2/(Nd)\in K\).
Take \(\mathcal G_n^\dagger\) from
Proposition~\ref{prop:cubic-raw-local-partition} for these parameters,
and use the same conditioned prior for the Gaussian comparison,
the variational estimates, and the truncated second moment:
\begin{equation}
 \pi_n^\dagger:=\rho_n(\,\cdot\mid\mathcal G_n^\dagger),
 \qquad g_n^\dagger:=\rho_n(\mathcal G_n^\dagger)
       =1-o(d^{-M_\star}).
 \label{eq:cubic-common-conditioned-prior}
\end{equation}

Write \(L_n^\dagger(\Adj)=\int L_x(\Adj)\,d\pi_n^\dagger(x)\), and let
\(\Pp_{\pi_n^\dagger,\beta}^{(3)}\) be the graph law with latent
positions distributed according to \(\pi_n^\dagger\).
For any graph event \(\mathcal B\), split
\(\Pp_n^{(3)}(\mathcal B)
=\int\mathbb P(\Adj\in\mathcal B\mid X=x)\,d\rho_n(x)\)
over \(\mathcal G_n^\dagger\) and its complement to obtain
\[
\begin{aligned}
 \big|\Pp_n^{(3)}(\mathcal B)
       -\Pp_{\pi_n^\dagger,\beta}^{(3)}(\mathcal B)\big|
 &=\left|\int_{(\mathcal G_n^\dagger)^c}
     \big[\mathbb P(\Adj\in\mathcal B\mid X=x)
        -\Pp_{\pi_n^\dagger,\beta}^{(3)}(\mathcal B)\big]\,d\rho_n(x)\right|\\
 &\le\rho_n\big((\mathcal G_n^\dagger)^c\big)=1-g_n^\dagger.
\end{aligned}
\]
Taking the supremum over \(\mathcal B\) therefore gives
\begin{equation}
 \|L_n-L_n^\dagger\|_{L^1(\Qq_n)}
 =2\,\TV(\Pp_n^{(3)},\Pp_{\pi_n^\dagger,\beta}^{(3)})
 \le2(1-g_n^\dagger)=o(d^{-M_\star}).
 \label{eq:cubic-regular-prior-likelihood-transfer}
\end{equation}

It remains to estimate \(\mathbb E_{\Adj\mid x}\log Z_{x,\delta}^\dagger\).
Lemma~\ref{lem:cubic-constrained-edge-replacement} first compares the
Bernoulli and Gaussian integrals for a common prior and integration set.
We then apply it with prior \(\pi_n^\dagger\) and use the kernel bound
in Proposition~\ref{prop:cubic-raw-local-partition} to verify its hypotheses.

\begin{lemma}[Bernoulli--Gaussian comparison]
\label{lem:cubic-constrained-edge-replacement}
Let \({\cal E}\) be a finite set, \(\nu\) a probability measure on a
configuration space \({\cal X}\), and \({\cal S}\subseteq{\cal X}\) a
measurable set of positive \(\nu\)-measure.  Suppose there exists
\(0<\varepsilon_{\rm ch}\le1/2\) such that, for every \(e\in{\cal E}\),
\(|b_e^0|\le\varepsilon_{\rm ch}\) and
\(\sup_{x\in{\cal S}}|b_e(x)|\le\varepsilon_{\rm ch}\).
Let the independent signs \(Y_e\) have means \(b_e^0\), and let
\((g_e)_{e\in{\cal E}}\) be independent standard Gaussians.  Define
\begin{align*}
  Z_{\rm B}
  &:=
  \int_{\cal S}\prod_{e\in{\cal E}}
       \{1+Y_eb_e(x)\}\,d\nu(x),\\
  Z_{\rm G}
  &:=
  \int_{\cal S}\exp\left[
       \sum_{e\in{\cal E}}
       \left\{g_eb_e(x)+b_e^0b_e(x)-\frac12b_e(x)^2\right\}
       \right]\,d\nu(x).
\end{align*}
Then, for an absolute constant \(C\) independent of \(\nu\) and
\({\cal S}\),
\begin{equation}
  \left|\mathbb E\log Z_{\rm B}-\mathbb E\log Z_{\rm G}\right|
  \le C|{\cal E}|(\varepsilon_{\rm ch}^3+\varepsilon_{\rm ch}^4).
  \label{eq:cubic-constrained-edge-replacement}
\end{equation}
\end{lemma}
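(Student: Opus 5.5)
This is a Lindeberg-type edge-by-edge replacement. The plan is to swap one coordinate at a time and control each swap by a third-order Taylor remainder, in the spirit of Chatterjee's generalized Lindeberg principle. The key observation is that both integrands factor over edges. For a single edge \(e\), write the Bernoulli factor as
\[
 1+Y_eb_e(x)=\exp\{\phi_e^{\rm B}(x)\},\qquad \phi_e^{\rm B}(x)=\log\{1+Y_eb_e(x)\},
\]
and the Gaussian factor as
\[
 \phi_e^{\rm G}(x)=g_eb_e(x)+b_e^0b_e(x)-\tfrac12b_e(x)^2 .
\]
Order the edges as \(e_1,\dots,e_M\) with \(M=|\mathcal E|\). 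Define the hybrid partition functions \(Z_k\), in which the first \(k\) edges use Gaussian exponents and the remaining edges use Bernoulli exponents, so that \(Z_0=Z_{\rm B}\) and \(Z_M=Z_{\rm G}\). By telescoping, it suffices to show
\[
 |\mathbb E\log Z_{k}-\mathbb E\log Z_{k-1}|\le C(\varepsilon_{\rm ch}^3+\varepsilon_{\rm ch}^4)
\]
with an absolute constant \(C\).

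Fix \(k\) and set \(e=e_k\). Let \(\langle\cdot\rangle\) denote the Gibbs average on \(\mathcal S\) with weight \(\exp\{\sum_{e'\ne e}\phi_{e'}\}\,d\nu\), where the other edges are in their hybrid states; this average is independent of \(Y_e\) and \(g_e\). Then
\[
 \log Z_k-\log Z_{k-1}=\log\langle e^{\phi_e^{\rm G}}\rangle-\log\langle e^{\phi_e^{\rm B}}\rangle .
\]
Expand \(\phi_e^{\rm B}=Y_eb_e-\tfrac12b_e^2+\tfrac13Y_eb_e^3+O(\varepsilon_{\rm ch}^4)\), using \(Y_e^2=1\) and \(|b_e|\le\varepsilon_{\rm ch}\le1/2\). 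Then expand \(\log\langle e^{\phi}\rangle\) to second order in the random coefficient, \(Y_e\) or \(g_e\). The central term is
\[
 F(t)=\log\langle \exp\{tb_e+b_e^0b_e-\tfrac12b_e^2\}\rangle ,
\]
written as a function of a scalar \(t\). The Gaussian increment is \(\mathbb EF(g_e)\), and the Bernoulli increment is \(\mathbb E F(Y_e)\) up to the \(O(\varepsilon^3)\) corrections coming from the difference between \(\phi^{\rm B}\) and \(tb_e-\tfrac12 b_e^2\), and between \(b_e^0b_e\) and the realized sign. I will absorb the mean by writing \(Y_e=b_e^0+\xi_e\), where \(\xi_e\) is centered with variance \(1-(b_e^0)^2\).

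Match moments in \(t\) up to order two. The first derivative \(F'(0)\) is \(\langle b_e\rangle\) tilted, and the second derivative is the tilted variance of \(b_e\). Derivatives of order \(j\) are tilted cumulants of \(b_e\), bounded by \(C_j\varepsilon_{\rm ch}^j\) uniformly, since \(|b_e|\le\varepsilon_{\rm ch}\) on \(\mathcal S\). With the Gaussian exponent including \(b_e^0b_e-\tfrac12b_e^2\), the first moment (mean \(b_e^0\)) and the combined quadratic term agree with the Bernoulli side to order \(\varepsilon^2\). Concretely, for the sign,
\[
 \log\{1+Y b\}=Yb-\tfrac12b^2+O(|b|^3),
\]
whose conditional expectation given the Gibbs average matches \(\mathbb E[gb+b^0b-\tfrac12b^2]\) at first order. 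The second-order Taylor terms in \(\log\langle\cdot\rangle\) involve \(\mathbb EY_e^2=1=\mathbb E g_e^2\). Mismatch in the variance of \(\xi_e\) versus \(g_e\) is \((b_e^0)^2\cdot\varepsilon^2\le\varepsilon^4\), and mismatch from \(b^0\) appearing in the variance and mean cross terms is also \(O(\varepsilon^3)\). The remainders are controlled by third derivatives \(\le C\varepsilon^3\mathbb E(|g|^3+1)\), giving the \(\varepsilon^3\) term; the Gaussian tail is harmless because the \(t\)-derivatives bound uniformly in \(t\). The \(\varepsilon^4\) term collects the quartic Taylor error of \(\log(1+Yb)\) and the \((b^0)^2b^2\) discrepancy.

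I expect the main obstacle to be the careful bookkeeping that makes all order-one and order-two contributions cancel exactly, in particular the interaction between \(b_e^0\) and the \(-\tfrac12 b_e^2\) compensator. I will handle it by treating both sides as \(\log\langle\exp\{\alpha b_e+\beta b_e^2+R\}\rangle\) with random \((\alpha,\beta)\). I then expand to second order in \((\alpha,\beta)\) jointly and check that the first and second moments of \(\alpha\), together with the mean of \(\beta\), agree up to \(O(\varepsilon^2)\) weighted errors. The mismatch in \(\mathbb E\alpha^2\) is \((b^0)^2\), giving \(\varepsilon^2\cdot\varepsilon^2\); the cubic remainder \(R\) has size \(\varepsilon^3\). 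Uniformity in \(\nu\) and \(\mathcal S\) holds because only pointwise bounds on \(b_e\) over \(\mathcal S\) enter. Summing over the \(M\) edges yields \eqref{eq:cubic-constrained-edge-replacement}.
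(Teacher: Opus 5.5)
Your proposal is correct and takes essentially the paper's route. The paper first replaces \(\log Z_{\rm B}\) globally by \(\Phi(Y-b^0)\), where \(\Phi(z)=\log\int_{\mathcal S}\exp[\sum_e\{z_eb_e+b_e^0b_e-\tfrac12b_e^2\}]\,d\nu\), at cost \(C|\mathcal E|\varepsilon_{\rm ch}^3\). It then applies Chatterjee's Lindeberg theorem to the centered coordinates \(Y_e-b_e^0\) versus \(g_e\), which is exactly your per-edge comparison of \(F(\xi_e)\) with \(F(g_e)\); there the variance mismatch \((b_e^0)^2\) produces the \(\varepsilon_{\rm ch}^4\) term.

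Make sure the Bernoulli side is compared through \(F(\xi_e)\), not \(F(Y_e)\). Using \(F(Y_e)\) would double count \(b_e^0b_e\) and leave an \(O(\varepsilon_{\rm ch}^2)\) error per edge. Once that is fixed, the joint \((\alpha,\beta)\) expansion in your last paragraph is unnecessary.
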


\begin{proof}
We apply the Lindeberg comparison theorem
\cite[Theorem~1.1]{Chatterjee2006Lindeberg} to
\[
 \Phi(z):=\log\int_{\cal S}\exp\left[
   \sum_{e\in{\cal E}}\left\{z_eb_e(x)+b_e^0b_e(x)
                               -\frac12b_e(x)^2\right\}\right]d\nu(x),
 \qquad z\in\mathbb R^{\cal E}.
\]
Here \(Y\), \(b^0\), and \(g\) denote the vectors indexed by
\(e\in{\cal E}\), so that \(\Phi(g)=\log Z_{\rm G}\).
Since \(Y_e^2=1\), Taylor's formula gives
\[
 \sup_{x\in{\cal S}}\left|
 \sum_{e\in{\cal E}}
 \left[\log\{1+Y_eb_e(x)\}-Y_eb_e(x)+\frac12b_e(x)^2\right]
 \right|
 \le C|{\cal E}|\varepsilon_{\rm ch}^3.
\]
The exponent in \(\Phi(Y-b^0)\) is
\(\sum_e\{Y_eb_e(x)-b_e(x)^2/2\}\).
Exponentiating the preceding bound and integrating over \({\cal S}\)
therefore gives
\[
 e^{-C|{\cal E}|\varepsilon_{\rm ch}^3}e^{\Phi(Y-b^0)}
 \le Z_{\rm B}\le
 e^{C|{\cal E}|\varepsilon_{\rm ch}^3}e^{\Phi(Y-b^0)},
 \qquad
 |\log Z_{\rm B}-\Phi(Y-b^0)|
 \le C|{\cal E}|\varepsilon_{\rm ch}^3.
\]

It remains to compare \(\mathbb E\Phi(Y-b^0)\) and
\(\mathbb E\Phi(g)\).  Let \(\nu_z\) be the probability measure on
\({\cal S}\) given by
\[
 \nu_z(dx):=\exp\left[
   \sum_e\left\{z_eb_e(x)+b_e^0b_e(x)-\frac12b_e(x)^2\right\}
   -\Phi(z)\right]\nu(dx),\qquad x\in{\cal S}.
\]
Boundedness of the functions \(b_e\) permits differentiation under the
integral and gives, uniformly in \(z\),
\[
\begin{aligned}
 |\partial_{z_e}\Phi(z)|
   &=|\mathbb E_{\nu_z}b_e|\le\varepsilon_{\rm ch},\\
 0\le\partial_{z_e}^2\Phi(z)
   &=\operatorname{Var}_{\nu_z}(b_e)\le\varepsilon_{\rm ch}^2,\\
 |\partial_{z_e}^3\Phi(z)|
   &=\left|\mathbb E_{\nu_z}
           [(b_e-\mathbb E_{\nu_z}b_e)^3]\right|
     \le8\varepsilon_{\rm ch}^3.
\end{aligned}
\]
The independent coordinates \(Y_e-b_e^0\) and the independent standard
Gaussians satisfy
\[
 \mathbb E(Y_e-b_e^0)=\mathbb E g_e=0,\qquad
 \left|\mathbb E(Y_e-b_e^0)^2-\mathbb E g_e^2\right|=(b_e^0)^2,
 \qquad
 \mathbb E|Y_e-b_e^0|^3+\mathbb E|g_e|^3\le C,
\]
where the last bound uses \(|Y_e-b_e^0|\le3/2\).
Thus the first-moment error in
\cite[Theorem~1.1]{Chatterjee2006Lindeberg} is zero, and its
second-moment and third-derivative terms yield
\[
 \left|\mathbb E\Phi(Y-b^0)-\mathbb E\Phi(g)\right|
 \le\frac{\varepsilon_{\rm ch}^2}{2}\sum_e(b_e^0)^2
       +C|{\cal E}|\varepsilon_{\rm ch}^3
 \le C|{\cal E}|(\varepsilon_{\rm ch}^4+\varepsilon_{\rm ch}^3).
\]
Combining this estimate with
\(|\log Z_{\rm B}-\Phi(Y-b^0)|
\le C|{\cal E}|\varepsilon_{\rm ch}^3\) proves
\eqref{eq:cubic-constrained-edge-replacement}.
\end{proof}

We now apply Lemma~\ref{lem:cubic-constrained-edge-replacement} to the
prior \(\pi_n^\dagger\) fixed above.  For \(x\in\mathcal G_n^\dagger\)
and \(\delta>0\), recall
\[
 Z_{x,\delta}^\dagger(\Adj)
 :=\int_{\{s_n(x,x')\ge\delta\}}L_{x'}(\Adj)\,d\pi_n^\dagger(x').
\]
Let \(Z_{x,\delta}^{\mathrm G,\dagger}\) be the Gaussian integral
\(Z_{\rm G}\) in Lemma~\ref{lem:cubic-constrained-edge-replacement}
with \(\nu=\pi_n^\dagger\), \({\cal S}=\{x':s_n(x,x')\ge\delta\}\), and
\[
 b_{ij}^0=\beta Z_{3,d}(x_i,x_j),\qquad
 b_{ij}(x')=\beta Z_{3,d}(x_i',x_j'),\qquad i<j.
\]
Thus \(Z_{\rm B}=Z_{x,\delta}^\dagger\), and both integrals use
the same candidate prior and integration region.
If this region has zero \(\pi_n^\dagger\)-measure, both integrals
vanish.  The logarithmic comparison below concerns positive
\(\pi_n^\dagger\)-measure.

The kernel bound in Proposition~\ref{prop:cubic-raw-local-partition}
gives, uniformly for \(x\in\mathcal G_n^\dagger\),
\[
 \max_{i<j}\max\left\{|b_{ij}^0|,
   \sup_{x'\in\mathcal G_n^\dagger}|b_{ij}(x')|\right\}
 \le\tau_{n,d}:=C_{A,K,M_\star}|\beta|\frac{\log^3d}{\sqrt N}=o(1).
\]
Using \(n\beta^2=r_nNd\), we obtain
\begin{align}
 \frac{n^2\tau_{n,d}^3}{nd}
 &\le C_{A,K,M_\star}\frac{|\beta|\log^9d}{\sqrt N}=o(1),
 \label{eq:cubic-third-replacement-ledger}\\
 \frac{n^2\tau_{n,d}^4}{nd}
 &\le C_{A,K,M_\star}\frac{\beta^2\log^{12}d}{N}=o(1).
 \label{eq:cubic-fourth-replacement-ledger}
\end{align}
For sufficiently large \(n\), \(\tau_{n,d}\le1/2\).
Lemma~\ref{lem:cubic-constrained-edge-replacement}, applied
conditionally on \(X=x\) with \(|{\cal E}|=\binom n2\), therefore gives
\begin{equation}
 \frac1{nd}\left|
 \mathbb E_{\Adj\mid x}\log Z_{x,\delta}^\dagger
 -\mathbb E_g\log Z_{x,\delta}^{\mathrm G,\dagger}\right|
 \le\frac{Cn^2}{nd}(\tau_{n,d}^3+\tau_{n,d}^4)=o(1).
 \label{eq:cubic-Bernoulli-Gaussian-reduction}
\end{equation}
Here \(\mathbb E_g\) averages over the independent Gaussian variables
in \(Z_{x,\delta}^{\mathrm G,\dagger}\).  The bound is uniform over
\(x\in\mathcal G_n^\dagger\) with positive constrained prior mass.
It remains to bound \(\mathbb E_g\log Z_{x,\delta}^{\mathrm G,\dagger}\)
by a negative multiple of \(nd\) outside a set of small
\(\pi_n^\dagger\)-probability.

\subsection{Gaussian interpolation and reduction to a matrix potential}
\label{subsec:cubic-franz-parisi-gap}

Equation~\eqref{eq:cubic-Bernoulli-Gaussian-reduction} leaves the
Gaussian conditional mean
\((nd)^{-1}\mathbb E_g\log Z_{x,\delta}^{\mathrm G,\dagger}\) to be bounded.
We express this integral in harmonic feature coordinates and use
Gaussian interpolation to reduce its conditional mean to a
deterministic matrix optimization.
Proposition~\ref{prop:cubic-Gaussian-variational-bound} makes this
reduction precise.

Let \(\mu_d\) be the law of \(q_{3,d}(X)\) for \(X\sim\sigma_d\).
For \(u\sim\mu_d\), the feature identities give
\(\|u\|=1\) and \(\mathbb E uu^{\mathsf T}=N^{-1}I_N\).
Retain \(A,K,M_\star,\pi_n^\dagger\) from
\eqref{eq:cubic-common-conditioned-prior}.
Let \(\mu_{d,n}^\dagger\) be the law of
\((q_{3,d}(X_i))_{i=1}^n\) for \(X\sim\pi_n^\dagger\), and denote
the image of the retained sample set by
\[
 \mathcal U_n^\dagger
 :=\{(q_{3,d}(x_i))_{i=1}^n:x\in\mathcal G_n^\dagger\}.
\]
Thus \(\mu_{d,n}^\dagger\) is the feature-coordinate representation
of \(\pi_n^\dagger\), with the same normalization \(g_n^\dagger\).
The sampling relation gives
\[
 \frac{N\log N}{n}
 =\frac{\beta^2\log N}{r_nd}
 \le\frac{\log N}{d\min K}=o(1).
\]
This will make the logarithm of the covering cardinality \(o(nd)\).
All constants in the following sample estimates may depend on the
fixed \(A,K,M_\star\).

For \(r\in K\), put \(c=rNd\) and fix a latent feature sample
\(u^0=(u_i^0)_{i=1}^n\) in \(\mathcal U_n^\dagger\).
Conditional on \(u^0\),
consider the auxiliary Gaussian observations
\[
 Y_{ij}^{\rm G}=\sqrt{\frac cn}\langle u_i^0,u_j^0\rangle+g_{ij},
 \qquad 1\le i<j\le n,
\]
where the \(g_{ij}\) are independent standard Gaussian variables.
For a candidate feature sample \(u=(u_i)_{i=1}^n\) in
\(\mathcal U_n^\dagger\), the Gaussian density formula gives its
log-likelihood ratio relative to independent standard Gaussian
observations.  We call this log-likelihood ratio the Gaussian Hamiltonian
and use \(e^{\mathsf{H}^{\rm pair}}\) for its Gibbs weight.  Explicitly,
\[
 \mathsf{H}^{\rm pair}(u;u^0)
 :=\sqrt{\frac cn}\sum_{i<j}Y_{ij}^{\rm G}\langle u_i,u_j\rangle
   -\frac{c}{2n}\sum_{i<j}\langle u_i,u_j\rangle^2.
\]

Substituting the expression for \(Y_{ij}^{\rm G}\) separates the
Gaussian noise from the deterministic terms.  Writing \(\mathbb E_g\)
for expectation over \((g_{ij})_{i<j}\), we denote the centered Gaussian
term and its covariance for candidate samples \(u,v\) by
\begin{align*}
  H_n(u)
  &:=
  \sqrt{\frac cn}\sum_{i<j}g_{ij}\langle u_i,u_j\rangle, \\
  C_n(u,v)
  &:=
  \mathbb E_g\{H_n(u)H_n(v)\}
  =
  \frac cn\sum_{i<j}
    \langle u_i,u_j\rangle\langle v_i,v_j\rangle .
\end{align*}
The Hamiltonian therefore takes the form
\[
 \mathsf{H}^{\rm pair}(u;u^0)
 =H_n(u)+C_n(u,u^0)-\frac12C_n(u,u).
\]

To express \(C_n(u,v)\) in matrix form, define the overlap matrix
\[
 R(u,v):=\frac1n\sum_{i=1}^n u_i v_i^{\mathsf T}.
\]
Its \((k,\ell)\) entry is
\(n^{-1}\sum_i(u_i)_k(v_i)_\ell\), the empirical cross-moment of the
corresponding feature coordinates.
Since \(\|u_i\|=\|v_i\|=1\), the diagonal terms in
\(n^2\|R(u,v)\|_{\HS}^2
=\sum_{i,j}\langle u_i,u_j\rangle\langle v_i,v_j\rangle\)
sum to \(n\).  Hence
\begin{equation}
  C_n(u,v)
  =
  \frac{cn}{2}\|R(u,v)\|_{\HS}^2
  -\frac c2.
  \label{eq:cubic-pair-covariance-overlap}
\end{equation}
For the sample \(u^0\) generating the observations and candidate
samples \(u^1,u^2\), all
restricted to \(\mathcal U_n^\dagger\), write
\(R_{ab}:=R(u^a,u^b)\).
We integrate \(e^{\mathsf{H}^{\rm pair}(u;u^0)}\) over candidates
\(u=u^1\) satisfying \(N\|R_{01}\|_{\HS}^2\ge\delta\), defining
\[
 Z^{\rm G}_{u^0,\delta}
 :=\int\1_{\{N\|R_{01}\|_{\HS}^2\ge\delta\}}
        e^{\mathsf{H}^{\rm pair}(u;u^0)}\,d\mu_{d,n}^\dagger(u).
\]
We set \(\log Z^{\rm G}_{u^0,\delta}=-\infty\) when the integration set
has zero \(\mu_{d,n}^\dagger\)-measure.  Our main target is the
conditional mean \(\mathbb E_g\log Z^{\rm G}_{u^0,\delta}\) for fixed
\(u^0\).  Averaging also over
\(u^0\sim\mu_{d,n}^\dagger\), independently of \((g_{ij})_{i<j}\),
defines the expected constrained free energy
\begin{equation}
  {\cal F}_{n,d}(r,\delta)
  :=\frac1{nd}\mathbb E_{u^0\sim\mu_{d,n}^\dagger}
                   \mathbb E_g\log Z^{\rm G}_{u^0,\delta}.
  \label{eq:cubic-constrained-Gaussian-pressure}
\end{equation}
To relate \(Z^{\rm G}_{u^0,\delta}\) to
\(Z_{x,\delta}^{\mathrm G,\dagger}\), take
\(u_i^0=q_{3,d}(x_i)\) and \(u_i=q_{3,d}(x_i')\).
Then \(N\|R(u^0,u)\|_{\HS}^2=s_n(x,x')\), so the two integrals
use the corresponding indicators
\[
 \1_{\{N\|R(u^0,u)\|_{\HS}^2\ge\delta\}}
 =\1_{\{s_n(x,x')\ge\delta\}}.
\]
When \(r=r_n\), \(c=r_nNd=n\beta^2\) and
\(\sqrt{c/n}=|\beta|\).  Symmetry of the independent standard Gaussian
variables gives
\((\beta g_{ij})_{i<j}\overset{\mathrm d}=(|\beta|g_{ij})_{i<j}\),
while the deterministic terms in \(\mathsf{H}^{\rm pair}\) depend only on
\(\beta^2\).  Consequently, at \(r=r_n\),
\(Z^{\rm G}_{u^0,\delta}\) has the same distribution as
\(Z_{x,\delta}^{\mathrm G,\dagger}\) for
\(u_i^0=q_{3,d}(x_i)\).  Both integrals use the prior
\(\pi_n^\dagger\), expressed in the corresponding coordinates.

The interpolation compares the pair observations with independent
Gaussian observations of individual feature vectors.
For these single-vertex integrals, take \(u\sim\mu_d\).
For \(B\succeq0\), put
\begin{equation*}
  Z_B(g)
  :=
  \int
  \exp\left\{
    \langle g,B^{1/2}u\rangle-\frac12u^{\mathsf T}Bu
  \right\}\,d\mu_d(u),
\end{equation*}
where \(g\sim\Normal(0,I_N)\) is independent of \(u\).
Define \(P_B:=\operatorname{Law}(B^{1/2}u+g)\) and write
\(\gamma_N=\operatorname{Law}(g)\).  The Gaussian density formula gives
\(dP_B/d\gamma_N=Z_B\).

For \(r>0\) and \(Q\succeq0\), define
\begin{equation*}
  {\cal V}_{d,r}(Q)
  :=
  \frac1dD(P_{rNdQ}\Vert\gamma_N)
  -\frac{rN}{4}\|Q\|_{\HS}^2.
\end{equation*}
Convexity of relative entropy and
\(\mathbb E uu^{\mathsf T}=N^{-1}I_N\) give
\[
 D(P_B\Vert\gamma_N)\le\frac{\tr B}{2N},
 \qquad
 {\cal V}_{d,r}(Q)\le\frac r2\tr Q-\frac{rN}{4}\|Q\|_{\HS}^2
 \le\frac r4.
\]
Thus the suprema of \({\cal V}_{d,r}\) appearing below are finite.

For \(\mathsf A\in\mathbb R^{N\times N}\), the singular value decomposition
gives an orthogonal matrix \(U\) such that
\begin{equation}
  \mathsf A=UQ_R,\qquad
  Q_R=(\mathsf A^{\mathsf T}\mathsf A)^{1/2},\qquad
  Q_L=(\mathsf A\mathsf A^{\mathsf T})^{1/2}=UQ_RU^{\mathsf T}.
  \label{eq:cubic-polar-overlap}
\end{equation}
Thus \(Q_L,Q_R\succeq0\), and they have the same operator and
Hilbert--Schmidt norms as \(\mathsf A\).
Lemma~\ref{lem:cubic-polar-cross-entropy} converts the expected
single-vertex log integral associated with \(\mathsf A\) into
\({\cal V}_{d,r}(Q_L)\).

\begin{lemma}[Polar cross-entropy bound]
\label{lem:cubic-polar-cross-entropy}
Put \(c=rNd\) and \(T_{\mathsf A}=\sqrt c\,Q_R^{1/2}U^{\mathsf T}\).
Let \(u^0\sim\mu_d\) and \(g\sim\Normal(0,I_N)\) be independent, and
set \(\mathsf Y_{\mathsf A}=T_{\mathsf A}u^0+g\).  Then
\begin{align}
  \frac1d\mathbb E\log Z_{cQ_R}(\mathsf Y_{\mathsf A})
  -\frac{rN}{4}\|\mathsf A\|_{\HS}^2
  &\le {\cal V}_{d,r}(Q_L).
  \label{eq:cubic-polar-cross-entropy}
\end{align}
\end{lemma}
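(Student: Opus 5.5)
The plan is to read the left-hand side of \eqref{eq:cubic-polar-cross-entropy} as a cross-entropy and bound it by a relative entropy. Write \(B=cQ_R\), so that \(B^{1/2}=\sqrt c\,Q_R^{1/2}\). The Gaussian density formula noted before the lemma gives \(Z_B=dP_B/d\gamma_N\). Let \(P'\) be the law of \(\mathsf Y_{\mathsf A}\). Then
\[
 \mathbb E\log Z_{cQ_R}(\mathsf Y_{\mathsf A})
 =\int\log\frac{dP_{cQ_R}}{d\gamma_N}\,dP'
 =D(P'\Vert\gamma_N)-D(P'\Vert P_{cQ_R})
 \le D(P'\Vert\gamma_N).
\]
To justify these manipulations I will first check integrability. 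Since \(\|u\|=1\) on the support of \(\mu_d\), for every \(y\in\mathbb R^N\) we have
\[
 \bigl|\log Z_B(y)\bigr|\le\|B^{1/2}\|_{\rm op}\|y\|+\tfrac12\|B\|_{\rm op}.
\]
This bound is integrable under \(P'\), because \(P'\) has Gaussian tails, so all terms are finite. If \(D(P'\Vert\gamma_N)\) were infinite the bound would be trivial, but it is finite here because \(P'\) is a bounded-shift Gaussian mixture.

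The key step is to identify \(D(P'\Vert\gamma_N)\) with \(D(P_{cQ_L}\Vert\gamma_N)\), using the polar relation \eqref{eq:cubic-polar-overlap}. Since \(Q_L=UQ_RU^{\mathsf T}\) with \(U\) orthogonal, we have \(Q_L^{1/2}=UQ_R^{1/2}U^{\mathsf T}\), and therefore
\[
 U\mathsf Y_{\mathsf A}=\sqrt c\,UQ_R^{1/2}U^{\mathsf T}u^0+Ug
 =\sqrt c\,Q_L^{1/2}u^0+Ug .
\]
Here \(Ug\sim\gamma_N\) is independent of \(u^0\sim\mu_d\), so \(U\mathsf Y_{\mathsf A}\) has law \(P_{cQ_L}\). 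The map \(y\mapsto Uy\) is a bijection that leaves \(\gamma_N\) invariant. Relative entropy is invariant under a common measurable bijection, so \(D(P'\Vert\gamma_N)=D(P_{cQ_L}\Vert\gamma_N)\). I do not need the law of \(U^{\mathsf T}u^0\) to equal \(\mu_d\), which is why the rotation is applied on the observation side.

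To conclude, divide by \(d\). Since \(c=rNd\), we have \(D(P_{cQ_L}\Vert\gamma_N)=D(P_{rNdQ_L}\Vert\gamma_N)\). Singular values are invariant under the rotation, so \(\|\mathsf A\|_{\HS}=\|Q_L\|_{\HS}\), and subtracting \(\tfrac{rN}{4}\|\mathsf A\|_{\HS}^2\) gives exactly \({\cal V}_{d,r}(Q_L)\), which is \eqref{eq:cubic-polar-cross-entropy}.

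The only delicate point is the bookkeeping in the middle step: observing that the mismatch between \(T_{\mathsf A}\) and the symmetric root \(B^{1/2}\) is absorbed by rotating the observation rather than the prior. Once that is in place, the rest is the nonnegativity of \(D(P'\Vert P_{cQ_R})\).
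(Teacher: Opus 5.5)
Your proof is correct and matches the paper's argument essentially step for step. The paper likewise writes $\mathbb E\log Z_{cQ_R}(\mathsf Y_{\mathsf A})=D(P'\Vert\gamma_N)-D(P'\Vert P_{cQ_R})\le D(P'\Vert\gamma_N)$, rotates on the observation side to get $\operatorname{Law}(U\mathsf Y_{\mathsf A})=P_{cQ_L}$, and uses invariance of relative entropy under $y\mapsto Uy$ together with $\|Q_L\|_{\HS}=\|\mathsf A\|_{\HS}$. Your integrability check via $|\log Z_B(y)|\le\|B^{1/2}\|_{\rm op}\|y\|+\tfrac12\|B\|_{\rm op}$ is a harmless addition that the paper leaves implicit.
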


\begin{proof}
Let \(\nu_{\mathsf A}=\operatorname{Law}(\mathsf Y_{\mathsf A})\).  Recall that
\(P_B=\operatorname{Law}(B^{1/2}u^0+g)\) for \(B\succeq0\), and
\(Z_B=dP_B/d\gamma_N\), where \(\gamma_N=\Normal(0,I_N)\).
Applying this density identity with \(B=cQ_R\), we obtain
\[
  \mathbb E\log Z_{cQ_R}(\mathsf Y_{\mathsf A})
  =D(\nu_{\mathsf A}\Vert\gamma_N)
    -D(\nu_{\mathsf A}\Vert P_{cQ_R})
  \le D(\nu_{\mathsf A}\Vert\gamma_N).
\]

To express this upper bound in terms of \({\cal V}_{d,r}(Q_L)\),
we identify the law of \(U\mathsf Y_{\mathsf A}\).  The identity
\(Q_L=UQ_RU^{\mathsf T}\) gives
\[
 UT_{\mathsf A}=\sqrt c\,UQ_R^{1/2}U^{\mathsf T}=(cQ_L)^{1/2},
 \qquad U\mathsf Y_{\mathsf A}=(cQ_L)^{1/2}u^0+Ug.
\]
Because \(U\) is orthogonal, \(Ug\sim\gamma_N\) and \(Ug\) remains
independent of \(u^0\).  Hence
\(\operatorname{Law}(U\mathsf Y_{\mathsf A})=P_{cQ_L}\).
Since the invertible transformation \(y\mapsto Uy\) preserves
\(\gamma_N\), invariance of relative entropy gives
\[
 D(\nu_{\mathsf A}\Vert\gamma_N)
 =D(P_{cQ_L}\Vert\gamma_N).
\]

Using \(c=rNd\) and \(\|Q_L\|_{\HS}=\|\mathsf A\|_{\HS}\), we conclude that
\begin{align*}
 \frac1d\mathbb E\log Z_{cQ_R}(\mathsf Y_{\mathsf A})
 -\frac{rN}{4}\|\mathsf A\|_{\HS}^2
 &\le \frac1dD(P_{cQ_L}\Vert\gamma_N)
       -\frac{rN}{4}\|Q_L\|_{\HS}^2 \\
 &= {\cal V}_{d,r}(Q_L),
\end{align*}
which proves \eqref{eq:cubic-polar-cross-entropy}.
\end{proof}

We can now bound the Gaussian conditional mean by the supremum of
\({\cal V}_{d,r}\) over matrices with \(N\|Q\|_{\HS}^2\ge\delta/2\).
The constraint is inherited from \(s_n\ge\delta\) through the covering
construction in Proposition~\ref{prop:cubic-Gaussian-variational-bound}.

\begin{proposition}[Variational upper bound for constrained Gaussian integrals]
\label{prop:cubic-Gaussian-variational-bound}
Fix \(A<\infty\), a compact interval \(K\subset(0,r_{\mathrm{tens}})\),
and \(M_\star>4A+10\).
Suppose \(d=d_n\to\infty\), \(n\le d^A\), \(0<|\beta|\le1\),
and \(r_n=n\beta^2/(Nd)\in K\).
Use \(\pi_n^\dagger\) from
\eqref{eq:cubic-common-conditioned-prior} and its feature law
\(\mu_{d,n}^\dagger\).
For every fixed \(\delta,\varepsilon,M>0\), uniformly for \(r\in K\),
\begin{equation}
 \mu_{d,n}^\dagger\left\{
 u^0:\frac1{nd}\mathbb E_g\log Z^{\rm G}_{u^0,\delta}
 >
 \sup_{\substack{Q\succeq0\\N\|Q\|_{\HS}^2\ge\delta/2}}
       {\cal V}_{d,r}(Q)+\varepsilon\right\}
 =o(d^{-M}).
 \label{eq:cubic-Gaussian-variational-probability}
\end{equation}
Moreover,
\begin{equation}
 \limsup_{n\to\infty}\sup_{r\in K}
 \left\{{\cal F}_{n,d}(r,\delta)
       -\sup_{\substack{Q\succeq0\\N\|Q\|_{\HS}^2\ge\delta/2}}
              {\cal V}_{d,r}(Q)\right\}\le0.
 \label{eq:cubic-Gaussian-variational-average}
\end{equation}
\end{proposition}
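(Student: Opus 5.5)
We prove Proposition~\ref{prop:cubic-Gaussian-variational-bound} by a Franz--Parisi/Guerra interpolation carried out on small cells of the overlap. Throughout, $u^0$ is held fixed.

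\emph{Step 1: discretization.} By \eqref{eq:cubic-pair-covariance-overlap}, both the Hamiltonian and its covariance depend on the candidate only through the overlaps $R_{01}=R(u^0,u)$ and $R_{11}=R(u,u)$. On $\mathcal U_n^\dagger$, Proposition~\ref{prop:cubic-raw-local-partition} gives $\|R_{11}\|_{\rm op}\le C/N$. Also $\|R_{01}\|_{\HS}\le\|R_{00}\|_{\rm op}^{1/2}\|R_{11}\|_{\HS}^{1/2}\lesssim N^{-1/2}$. We cover the admissible set of rescaled matrices $\sqrt N R_{01}$ by $\varepsilon_{\rm cov}$-balls in Hilbert--Schmidt norm. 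Because we only need to resolve directions up to the effective dimension controlled by $N\log N/n=o(1)$, the log-cardinality of this cover should be $o(nd)$. Concretely, we would cover the image $\{R(u^0,u)\}$, which lies in a set of effective rank at most $N$ with bounded operator norm, using $O(N^2\log(1/\varepsilon_{\rm cov}))$ balls, and check that $N^2\ll nd$. This holds because $n\beta^2=r_nNd$ forces $n\gtrsim Nd$, so $N^2/(nd)\lesssim N/d^2\cdot(Nd/n)$. If this step fails, we would use instead a net on the polar factors from \eqref{eq:cubic-polar-overlap}. Since $\log\sum_k a_k\le\log(\#\text{cells})+\max_k\log a_k$, it then suffices to bound $\mathbb E_g\log Z_k$ for a single cell $\{\|\sqrt N R_{01}-\mathsf A_k\|_{\HS}\le\varepsilon_{\rm cov}\}$. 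Each such cell satisfies $\|\mathsf A_k\|_{\HS}^2\ge\delta/2$ once $\varepsilon_{\rm cov}$ is small. Here $\mathsf A_k$ plays the role of $N^{1/2}$ times the overlap, and we set $Q=\mathsf A/\sqrt N$ after rescaling.

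\emph{Step 2: interpolation on a cell.} For $t\in[0,1]$, we interpolate between the pair Hamiltonian at strength $tc$ and independent single-vertex Gaussian channels $y_i=\sqrt{(1-t)}\,T_{\mathsf A}u_i^0+g_i'$ with Hamiltonian $\sum_i\{\langle y_i,\sqrt{1-t}\,T_{\mathsf A}u_i\rangle-\tfrac{1-t}{2}u_i^{\mathsf T}T_{\mathsf A}^{\mathsf T}T_{\mathsf A}u_i\}$. Gaussian integration by parts gives a derivative of the form $-\tfrac{c n}{4}\langle\|R_{12}\|^2-2\langle R_{12},\cdot\rangle+\ldots\rangle$. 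More precisely, the planted terms combine into $\tfrac{cn}{4}\{\|R_{01}\|^2-2\langle R_{01},\mathsf A/\sqrt N\rangle_{\rm polar}\}$ plus the replica term $-\tfrac{cn}{4}\|R_{12}-\cdot\|^2\le0$. On the cell, $R_{01}$ is within $\varepsilon_{\rm cov}/\sqrt N$ of $\mathsf A/\sqrt N$, so completing the square yields, up to $O(\varepsilon_{\rm cov})nd$, the bound
\[
\frac1{nd}\mathbb E_g\log Z_k\le \frac1d\,\mathbb E\log Z_{cQ_R}(\mathsf Y_{\mathsf A})-\frac{rN}{4}\|Q\|_{\HS}^2+C\varepsilon_{\rm cov}+o(1).
\]
The single-vertex term is averaged over the empirical law of $u^0_i$, not over $\mu_d$.

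\emph{Step 3: concentration and the polar bound.} We replace the empirical average over $u_i^0$ by the expectation under $u^0\sim\mu_d$ uniformly over the cover. Each summand $\log Z_{cQ_R}(T u_i^0+g_i)$ is Lipschitz-controlled on the sphere, so spherical log-Sobolev bounds together with a union bound over the $e^{o(nd)}$ cells give a failure probability of $o(d^{-M})$. The same argument removes the conditioning to $\mathcal G_n^\dagger$, whose cost is $o(d^{-M_\star})$. Lemma~\ref{lem:cubic-polar-cross-entropy} then bounds the right side by $\mathcal V_{d,r}(Q_L)$ with $N\|Q_L\|_{\HS}^2\ge\delta/2$, which proves \eqref{eq:cubic-Gaussian-variational-probability}. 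For \eqref{eq:cubic-Gaussian-variational-average}, we integrate over $u^0$. On the exceptional set we use the crude bound $\mathbb E_g\log Z\le\log\mathbb E_g Z\le c\,\|R_{00}\|$-type bounds of order $nd$, so this set contributes $o(1)$.

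The main obstacle is Step 2 for nonscalar and non-symmetric $R_{01}$. Making the replica remainder sign-definite requires choosing the channel matrix through the polar decomposition so that the cross term matches $\langle R_{01},\mathsf A\rangle$. We would try $T_{\mathsf A}=\sqrt c\,Q_R^{1/2}U^{\mathsf T}$ exactly as in Lemma~\ref{lem:cubic-polar-cross-entropy}, and verify the sign of the remainder using $R_{11},R_{12}\succeq$-type Cauchy--Schwarz bounds.
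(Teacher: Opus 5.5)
Step~1 rests on a false estimate, and Step~2 stops exactly where a new idea is needed.

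\textbf{Covering.} A volumetric net on $N\times N$ matrices has log-cardinality of order $N^2$. So does a net on the polar factors $(U,Q_R)$. Your check $N^2\ll nd$ fails. Since $nd=r_nNd^2/\beta^2$, one has $N^2/(nd)=\beta^2N/(r_nd^2)\asymp\beta^2d\to\infty$ whenever $|\beta|$ stays bounded below. The proposition must allow this case: the sequences in Proposition~\ref{prop:cubic-endpoint} have fixed $\beta_0$.

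Yet $\log J=o(nd)$ is indispensable. It enters through $\log J/(nd)$ and through the union bound against $e^{-cndt^2}$ for the empirical averages. It also enters through the cost $C\sqrt{nd\log(2J)}$ of exchanging $\mathbb E_g$ with the maximum over cells. Your remark that ``it suffices to bound a single cell'' needs this Gaussian concentration step as well, using $\|\nabla_g\log Z_{\mathsf A}\|^2\le Crnd$, which follows from $\|\mathbf U\|_{\rm op}^2\le Cn/N$.

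The missing idea is that each overlap $R(u^0,u^1)$ is an average of rank-one matrices $q_{3,d}(x)q_{3,d}(y)^{\mathsf T}$ indexed by $(x,y)\in(\Sph^{d-1})^2$. Maurey's empirical method approximates it to Hilbert--Schmidt accuracy $\rho=\varepsilon_{\rm cov}\sqrt{\delta/N}$ by an average of $k\asymp N/(\varepsilon_{\rm cov}^2\delta)$ such terms. Rounding each point to a net of the sphere via \eqref{eq:cubic-feature-Lipschitz} then gives $\log J=O(kd\log N)=O(Nd\log N)=o(nd)$, because $N\log N=o(n)$.

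The centers must also be projected onto $\{N\|\mathsf A\|_{\rm op}\le L_0\}$. Only this cap gives $\|\nabla f_{\mathsf A}\|\le c\|\mathsf A\|_{\rm op}\le C_Kd$, and hence Lipschitz constant $C/\sqrt n$ for $\Xi_{\mathsf A}$. With a Hilbert--Schmidt bound alone, the concentration rate drops to $nd/N$, which does not beat $\log J$.

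\textbf{Interpolation.} In your Hamiltonian both the planted vector and the candidate are seen through $T_{\mathsf A}$. Since $T_{\mathsf A}^{\mathsf T}T_{\mathsf A}=cQ_L$, the cross term is $(1-t)cn\langle Q_L,R_{01}\rangle_{\HS}$. The derivative then contains $\tfrac{rN}{2}\mathbb E_g\langle\|R_{01}-Q_L\|_{\HS}^2\rangle_t\approx\tfrac{rN}{2}\|\mathsf A-Q_L\|_{\HS}^2$ on the cell. This is not small unless $\mathsf A\succeq0$. It is of order $r$ on the cells around $R(u^0,-u^0)=-R(u^0,u^0)$, which belong to the constrained region for $\delta\le1$ because $\mathcal G_n^\dagger$ is sign invariant. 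No Cauchy--Schwarz bound on $R_{11},R_{12}$ repairs this. The $\mu_d$-averaged endpoint of your Hamiltonian is harmless, equal to $D(P_{cQ_L}\Vert\gamma_N)$; the obstruction is the cross term.

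The paper instead uses a deliberately mismatched single-vertex Hamiltonian with three terms: noise $\sqrt{(1-t)c}\,\langle g_i,Q_R^{1/2}u_i\rangle$, self term $-(1-t)\tfrac c2u_i^{\mathsf T}Q_Ru_i$, and the deterministic linear term $(1-t)c(u_i^0)^{\mathsf T}\mathsf Au_i=(1-t)\langle T_{\mathsf A}u_i^0,(cQ_R)^{1/2}u_i\rangle$. Gaussian integration by parts then gives
\[
 \partial_t\phi_{\mathsf A}
 =\frac{rN}{2}\mathbb E_g\langle\|R_{01}-\mathsf A\|_{\HS}^2\rangle_t
  -\frac{rN}{4}\mathbb E_g\langle\|R_{12}-Q_R\|_{\HS}^2\rangle_t
  -\frac{rN}{4}\|\mathsf A\|_{\HS}^2-\frac{rN}{4n}.
\]
The cell constraint bounds the first term by $rN\rho^2/2$, and the replica term is trivially nonpositive. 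No Nishimori identity is used, and none is available under the cell restriction. At $t=0$, drop the cell constraint and factorize via $\mu_{d,n}^\dagger\le(g_n^\dagger)^{-1}\mu_d^{\otimes n}$. The endpoint becomes the mismatched free energy $\sum_i\mathbb E_g\log Z_{cQ_R}(T_{\mathsf A}u_i^0+g_i)$, whose $\mu_d$-average Lemma~\ref{lem:cubic-polar-cross-entropy} bounds by $D(P_{cQ_L}\Vert\gamma_N)$.
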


\begin{proof}
Step~1 covers \(\{N\|R_{01}\|_{\HS}^2\ge\delta\}\) by \(e^{o(nd)}\)
balls.  Step~2 uses Guerra's Gaussian interpolation \cite{Guerra2001}
and Lemma~\ref{lem:cubic-polar-cross-entropy} to prove
\eqref{eq:cubic-Gaussian-cell-upper-bound}, with
\({\cal V}_{d,r}(Q_L)\) as the upper bound for each restricted integral.
We adapt the overlap-constrained argument in
\cite{ElAlaouiKrzakala2018} to matrix overlaps.
Step~3 controls the maximum over these integrals and the sample-average
errors, giving \eqref{eq:cubic-Gaussian-variational-probability}
and \eqref{eq:cubic-Gaussian-variational-average}.

\emph{Step 1: construct a finite cover.}
We cover the matrix set
\[
  \mathcal O_\delta=
  \{R(u^0,u^1):u^0,u^1\in\mathcal U_n^\dagger,\
                 N\|R(u^0,u^1)\|_{\HS}^2\ge\delta\}.
\]
If this set is empty, both conclusions follow from
\(Z^{\rm G}_{u^0,\delta}=0\) and \(\log0=-\infty\).
Otherwise, fix \(0<\varepsilon_{\rm cov}<1/4\), put
\(\rho=\varepsilon_{\rm cov}\sqrt{\delta/N}\) and
\(k=\lceil16/\rho^2\rceil\), and approximate each overlap by a
\(k\)-term average.  For fixed \(u_i^a=q_{3,d}(x_i^a)\), \(a\in\{0,1\}\),
take independent uniform indices \(I_1,\ldots,I_k\) in \([n]\) and set
\(\widehat R_k=k^{-1}\sum_{\ell=1}^k u_{I_\ell}^0(u_{I_\ell}^1)^{\mathsf T}\).
Independence and the unit feature norms give
\[
 \mathbb E_I\widehat R_k=R_{01},\qquad
 \mathbb E_I\|\widehat R_k-R_{01}\|_{\HS}^2
 =\frac1k\mathbb E_I
       \|u_{I_1}^0(u_{I_1}^1)^{\mathsf T}-R_{01}\|_{\HS}^2
 =\frac{1-\|R_{01}\|_{\HS}^2}{k}\le\frac{\rho^2}{16},
\]
where \(\mathbb E_I\) averages over the indices.
Thus some choice satisfies \(\|\widehat R_k-R_{01}\|_{\HS}\le\rho/4\).

By \eqref{eq:cubic-feature-Lipschitz}, the feature map is
\(L_{3,d}\)-Lipschitz, with \(L_{3,d}^2=3(d+1)/(d-1)\le9\).
Choose a Euclidean \(\eta\)-net \(\mathcal N_\eta\) of \(\Sph^{d-1}\),
where \(\eta=\rho/(8L_{3,d})\), with
\(|\mathcal N_\eta|\le(1+2/\eta)^d\le(C/\rho)^d\).
Replace each \(x_{I_\ell}^a\) by a net point \(\bar x_\ell^a\) within
distance \(\eta\), and let
\(\widetilde R_k=k^{-1}\sum_{\ell=1}^k
q_{3,d}(\bar x_\ell^0)q_{3,d}(\bar x_\ell^1)^{\mathsf T}\).
The inequality
\(\|ab^{\mathsf T}-\bar a\bar b^{\mathsf T}\|_{\HS}
 \le\|a-\bar a\|+\|b-\bar b\|\) for unit vectors gives
\[
 \|\widehat R_k-\widetilde R_k\|_{\HS}
 \le\frac1k\sum_{\ell=1}^k\sum_{a=0}^1
       \|u_{I_\ell}^a-q_{3,d}(\bar x_\ell^a)\|
 \le2L_{3,d}\eta=\frac\rho4.
\]
Consequently, the deterministic set
\[
 \mathcal C_0=
 \left\{\frac1k\sum_{\ell=1}^k
 q_{3,d}(a_\ell)q_{3,d}(b_\ell)^{\mathsf T}:
 a_\ell,b_\ell\in\mathcal N_\eta\right\}
\]
covers \(\mathcal O_\delta\) with radius \(\rho/2\) and satisfies
\(\log|\mathcal C_0|\le2dk\log(C/\rho)\).

The centers must also satisfy the operator-norm bound for the overlaps.
If \(\mathbf U_a\) has rows \((u_i^a)^{\mathsf T}\), then
\(R_{ab}=n^{-1}\mathbf U_a^{\mathsf T}\mathbf U_b\), and
Proposition~\ref{prop:cubic-raw-local-partition} gives
\(\|\mathbf U_a\|_{\rm op}^2\le Cn/N\).  Hence
\begin{equation}
 N\|R_{ab}\|_{\rm op}
 \le\frac Nn\|\mathbf U_a\|_{\rm op}\|\mathbf U_b\|_{\rm op}
 \le C=:L_0
 \label{eq:cubic-overlap-operator-cap}
\end{equation}
for all \(u^a,u^b\in\mathcal U_n^\dagger\) and sufficiently large \(n\).
Project \(\mathcal C_0\), in Hilbert--Schmidt norm, onto the closed
convex set \(\mathcal B=\{\mathsf A:N\|\mathsf A\|_{\rm op}\le L_0\}\).
For \(\mathsf A=\Pi_{\mathcal B}(\mathsf A')\), minimality gives
\(\langle\mathsf A'-\mathsf A,R-\mathsf A\rangle_{\HS}\le0\)
for \(R\in\mathcal B\), and therefore
\[
 \|\mathsf A-R\|_{\HS}^2
 \le\|\mathsf A'-R\|_{\HS}^2-\|\mathsf A'-\mathsf A\|_{\HS}^2
 \le\|\mathsf A'-R\|_{\HS}^2.
\]
Retain only projected centers within distance \(\rho\) of
\(\mathcal O_\delta\), writing
\(\mathcal C=\{\mathsf A\in\Pi_{\mathcal B}(\mathcal C_0):
 \exists R\in\mathcal O_\delta,\ \|\mathsf A-R\|_{\HS}\le\rho\}\)
and \(J=|\mathcal C|\).
This preserves the covering radius \(\rho/2\), while
\(J\le|\mathcal C_0|\).  Every retained center satisfies
\[
 N\|\mathsf A\|_{\rm op}\le L_0,\qquad
 \sqrt N\|\mathsf A\|_{\HS}
 \ge\sqrt\delta-\sqrt N\rho
 =(1-\varepsilon_{\rm cov})\sqrt\delta,
\]
so \(N\|\mathsf A\|_{\HS}^2\ge\delta/2\).
Since \(k\le16N/(\varepsilon_{\rm cov}^2\delta)+1\) and
\(N\log N=o(n)\),
\begin{equation}
 \log J\le\log|\mathcal C_0|
 =O_{\varepsilon_{\rm cov},\delta}\{Nd\log(N/\delta)\}
 =o(nd).
 \label{eq:cubic-relative-cover}
\end{equation}

For fixed \(u^0\), define
\[
 Z_{\mathsf A}^{\rm G}(g;u^0)
 :=\int_{\{\|R(u^0,u)-\mathsf A\|_{\HS}\le\rho\}}
       e^{\mathsf{H}^{\rm pair}(u;u^0)}\,d\mu_{d,n}^\dagger(u).
\]
The expected constrained free energy
\((nd)^{-1}\mathbb E_g\log Z_{\mathsf A}^{\rm G}(g;u^0)\)
is a matrix-overlap analogue of the Franz--Parisi potential
\cite{FranzParisi1995}.  The covering property gives
\begin{equation}
 Z^{\rm G}_{u^0,\delta}\le
 \sum_{\mathsf A\in\mathcal C}Z_{\mathsf A}^{\rm G}(g;u^0).
 \label{eq:cubic-Gaussian-cell-cover}
\end{equation}
For this fixed \(u^0\), omit balls of zero \(\mu_{d,n}^\dagger\)-measure,
independently of the Gaussian variables.
If none remain, \(Z^{\rm G}_{u^0,\delta}=0\) and the bound is immediate.
Otherwise, Step~2 bounds \(\mathbb E_g\log Z_{\mathsf A}^{\rm G}\)
for each remaining center, and Step~3 combines them using
\[
 \mathbb E_g\log Z^{\rm G}_{u^0,\delta}
 \le\log J+\mathbb E_g\max_{\mathsf A}\log Z_{\mathsf A}^{\rm G}(g;u^0).
\]

\emph{Step 2: bound the expected logarithm of each restricted integral.}
Fix one of the remaining centers \(\mathsf A\), keeping \(u^0\) fixed.

\emph{Step 2(a): construct the interpolation.}
To bound \(\mathbb E_g\log Z_{\mathsf A}^{\rm G}(g;u^0)\), we first examine
the Hamiltonian
\[
 \mathsf{H}^{\rm pair}(u;u^0)
 =H_n(u)+C_n(u,u^0)-\frac12C_n(u,u).
\]
We begin with \(C_n(u,u^0)\).
For candidates satisfying \(\|R(u^0,u)-\mathsf A\|_{\HS}\le\rho\), expanding
the square in \eqref{eq:cubic-pair-covariance-overlap} around \(\mathsf A\) gives
\[
 \begin{aligned}
 C_n(u,u^0)
 &=M_{\mathsf A}(u;u^0)-\frac{cn}{2}\|\mathsf A\|_{\HS}^2-\frac c2
   +\frac{cn}{2}\|R(u^0,u)-\mathsf A\|_{\HS}^2\\
 &\le M_{\mathsf A}(u;u^0)-\frac{cn}{2}\|\mathsf A\|_{\HS}^2-\frac c2
       +\frac{cn\rho^2}{2},
 \end{aligned}
\]
where \(M_{\mathsf A}(u;u^0):=cn\langle \mathsf A,R(u^0,u)\rangle_{\HS}
=c\sum_i(u_i^0)^{\mathsf T}\mathsf A u_i\) is a sum of terms depending on
individual candidate feature vectors \(u_i\).

The remaining expression is
\[
 H_n(u)-\frac12C_n(u,u)
 =\sum_{i<j}\left(
   \sqrt{\frac cn}\,g_{ij}\langle u_i,u_j\rangle
   -\frac{c}{2n}\langle u_i,u_j\rangle^2
 \right),
\]
whose summands involve both \(u_i\) and \(u_j\).
We compare \(\mathsf{H}^{\rm pair}\) with a Hamiltonian whose summands
each depend on only one \(u_i\), with deterministic term \(M_{\mathsf A}\).
Recall \(\mathsf A=UQ_R\) and \(Q_L=UQ_RU^{\mathsf T}\) from
\eqref{eq:cubic-polar-overlap}.
The matrix \(Q_R=(\mathsf A^{\mathsf T}\mathsf A)^{1/2}\) is positive semidefinite
and satisfies \(\|Q_R\|_{\HS}=\|\mathsf A\|_{\HS}\).
Let \(g_1,\ldots,g_n\) be independent \(\Normal(0,I_N)\) vectors,
independent of \((g_{ij})_{i<j}\), and define the Gaussian sum and
its covariance by
\begin{align*}
  G_{\mathsf A}(u)
  &:=\sqrt c\sum_i\langle g_i,Q_R^{1/2}u_i\rangle,\\
  C_{\mathsf A}(u,v)
  &:=\mathbb E_{(g_i)}\{G_{\mathsf A}(u)G_{\mathsf A}(v)\}
   =c\sum_i u_i^{\mathsf T}Q_Rv_i.
\end{align*}
The resulting Hamiltonian is
\[
 G_{\mathsf A}(u)+M_{\mathsf A}(u;u^0)-\frac12C_{\mathsf A}(u,u)
 =\sum_{i=1}^n\left\{
   \sqrt c\,\langle g_i,Q_R^{1/2}u_i\rangle
   +c(u_i^0)^{\mathsf T}\mathsf A u_i-\frac c2u_i^{\mathsf T}Q_Ru_i
 \right\}.
\]
For fixed \(u^0\) and \((g_i)_{i=1}^n\), the \(i\)th summand depends
only on \(u_i\), so integrating the exponential of this sum under
\(\mu_d^{\otimes n}\) gives a product of one-variable integrals.
To compare the corresponding expected free energies, define the
interpolating Hamiltonian, for \(0\le t\le1\), by
\[
\begin{split}
  \mathsf{H}_{\mathsf A,t}(u;u^0)
  :={}&
  \sqrt t\,H_n(u)+\sqrt{1-t}\,G_{\mathsf A}(u)
  +tC_n(u,u^0)+(1-t)M_{\mathsf A}(u;u^0)\\
  &-\frac12\{tC_n(u,u)+(1-t)C_{\mathsf A}(u,u)\}.
\end{split}
\]
Write \(\mathbb E_g\) for expectation over both Gaussian arrays
\((g_{ij})_{i<j}\) and \((g_i)_{i=1}^n\).  The expected constrained
free energy along the interpolation is
\[
 \phi_{\mathsf A}(t;u^0):=\frac1{nd}\mathbb E_g\log
 \int_{\{\|R(u^0,u)-\mathsf A\|_{\HS}\le\rho\}}
 e^{\mathsf{H}_{\mathsf A,t}(u;u^0)}\,d\mu_{d,n}^\dagger(u).
\]
At \(t=1\), we have
\(\mathsf{H}_{\mathsf A,1}(u;u^0)=\mathsf{H}^{\rm pair}(u;u^0)\), so
\(\phi_{\mathsf A}(1;u^0)=(nd)^{-1}\mathbb E_g\log Z_{\mathsf A}^{\rm G}(g;u^0)\).
At \(t=0\), we have
\(\mathsf{H}_{\mathsf A,0}(u;u^0)=G_{\mathsf A}(u)+M_{\mathsf A}(u;u^0)-C_{\mathsf A}(u,u)/2\).
We first bound \(\phi_{\mathsf A}(0;u^0)\), then integrate
\(\partial_t\phi_{\mathsf A}(t;u^0)\) from \(0\) to \(1\).

\emph{Step 2(b): bound \(\phi_{\mathsf A}(0;u^0)\).}
Recall \(g_n^\dagger=\rho_n(\mathcal G_n^\dagger)=1-o(1)\)
from \eqref{eq:cubic-common-conditioned-prior}.
For every nonnegative measurable function \(G\), the definition of
\(\mu_{d,n}^\dagger\) gives
\begin{equation}
 \int G(u)\,d\mu_{d,n}^\dagger(u)
 =\frac1{g_n^\dagger}\int_{\mathcal G_n^\dagger}
      G((q_{3,d}(x_i))_{i=1}^n)\,d\rho_n(x)
 \le (g_n^\dagger)^{-1}\int G(u)\,d\mu_d^{\otimes n}(u).
 \label{eq:cubic-regularized-product-upper-bound}
\end{equation}
Recall \(T_{\mathsf A}=\sqrt c\,Q_R^{1/2}U^{\mathsf T}\) from
Lemma~\ref{lem:cubic-polar-cross-entropy}.
Since \(T_{\mathsf A}^{\mathsf T}(cQ_R)^{1/2}=c\mathsf A\), the definition of \(Z_{cQ_R}\)
gives
\[
 \int\exp\left\{\langle g_i,(cQ_R)^{1/2}u_i\rangle
       +c(u_i^0)^{\mathsf T}\mathsf A u_i-\frac12\|(cQ_R)^{1/2}u_i\|^2\right\}
       \,d\mu_d(u_i)
 =Z_{cQ_R}(T_\mathsf A u_i^0+g_i).
\]
Since \(e^{\mathsf{H}_{\mathsf A,0}(u;u^0)}>0\), integrating over all \(u\)
instead of restricting to \(\|R(u^0,u)-\mathsf A\|_{\HS}\le\rho\) increases
the integral.
Then \eqref{eq:cubic-regularized-product-upper-bound} with
\(G(u)=e^{\mathsf{H}_{\mathsf A,0}(u;u^0)}\) and the product structure of
\(\mu_d^{\otimes n}\) give
\[
\begin{aligned}
 \int_{\{\|R(u^0,u)-\mathsf A\|_{\HS}\le\rho\}}
 e^{\mathsf{H}_{\mathsf A,0}(u;u^0)}\,d\mu_{d,n}^\dagger(u)
 &\le\int e^{\mathsf{H}_{\mathsf A,0}(u;u^0)}\,d\mu_{d,n}^\dagger(u)\\
 &\le (g_n^\dagger)^{-1}\int e^{\mathsf{H}_{\mathsf A,0}(u;u^0)}
       \,d\mu_d^{\otimes n}(u)\\
 &=(g_n^\dagger)^{-1}\prod_{i=1}^n Z_{cQ_R}(T_\mathsf A u_i^0+g_i).
\end{aligned}
\]
Let \(f_{\mathsf A}(v):=\mathbb E_g\log Z_{cQ_R}(T_{\mathsf A}v+g)\) be the deterministic
function obtained by averaging over \(g\sim\Normal(0,I_N)\) with
\(v\) fixed.
Taking logarithms and Gaussian expectations in the preceding bound gives
\begin{equation}
 \phi_{\mathsf A}(0;u^0)\le\frac1{nd}\sum_i f_{\mathsf A}(u_i^0)-\frac{\log g_n^\dagger}{nd}.
 \label{eq:cubic-Gaussian-vector-endpoint}
\end{equation}
Here \(-\log g_n^\dagger/(nd)=o(1)\), since \(g_n^\dagger=1-o(1)\).

\emph{Step 2(c): compute and integrate \(\partial_t\phi_{\mathsf A}(t;u^0)\).}
To extend the bound for \(\phi_{\mathsf A}(0;u^0)\) in
\eqref{eq:cubic-Gaussian-vector-endpoint} to \(\phi_{\mathsf A}(1;u^0)\),
we compute the derivative by Gaussian integration by parts and use
\(\|R_{01}-\mathsf A\|_{\HS}\le\rho\) to bound its integral.

For the constrained integral defining \(\phi_{\mathsf A}(t;u^0)\), introduce
the Gibbs measure
\[
 \nu_{\mathsf A,t}(du):=
 \frac{\1_{\{\|R(u^0,u)-\mathsf A\|_{\HS}\le\rho\}}
       e^{\mathsf{H}_{\mathsf A,t}(u;u^0)}\,d\mu_{d,n}^\dagger(u)}
      {\displaystyle\int_{\{\|R(u^0,v)-\mathsf A\|_{\HS}\le\rho\}}
       e^{\mathsf{H}_{\mathsf A,t}(v;u^0)}\,d\mu_{d,n}^\dagger(v)}.
\]
The denominator is positive by the choice of \(\mathsf A\) in Step~1.
For \(k=1,2\) and a bounded measurable function \(F\), define
\[
 \langle F(u^1,\ldots,u^k)\rangle_t
 :=\int F(u^1,\ldots,u^k)\prod_{a=1}^k\nu_{\mathsf A,t}(du^a),
\]
where each \(u^a=(u_i^a)_{i=1}^n\) is a full candidate sample.
We call \(u^1,u^2\) replicas, meaning that, conditional on \(u^0\)
and both Gaussian arrays, they are independent draws from the same
Gibbs measure \(\nu_{\mathsf A,t}\).  The expectation \(\mathbb E_g\) averages
over the Gaussian arrays.  We retain this convention in Step~3.

For fixed \(n,d,r,\mathsf A\) and \(0<\varepsilon<1/2\), the expression for
\(\mathsf{H}_{\mathsf A,t}\) in Step~2(a) and \(\|u_i\|=1\) give
\[
 \mathbb E_g\sup_{\varepsilon\le t\le1-\varepsilon}\sup_u
 |\partial_t\mathsf{H}_{\mathsf A,t}(u;u^0)|
 \le C_{\varepsilon}\left(
  1+\sum_{i<j}\mathbb E_g|g_{ij}|+\sum_i\mathbb E_g\|g_i\|
 \right)<\infty.
\]
The supremum is over \(\|u_i\|=1\), and \(C_\varepsilon\) may depend on
the fixed parameters \(n,d,r,\mathsf A\).
Since
\(|\langle\partial_t\mathsf{H}_{\mathsf A,t}\rangle_t|
\le\sup_u|\partial_t\mathsf{H}_{\mathsf A,t}|\),
this integrable bound justifies differentiation under the candidate
integral and Gaussian expectation, giving
\[
\begin{aligned}
 &\partial_t\phi_{\mathsf A}(t;u^0)
 =\frac1{nd}\mathbb E_g\langle\partial_t\mathsf{H}_{\mathsf A,t}(u^1;u^0)\rangle_t\\
 &\quad=\frac1{nd}\mathbb E_g\Big\langle
   \frac{H_n(u^1)}{2\sqrt t}-\frac{G_{\mathsf A}(u^1)}{2\sqrt{1-t}}
   +C_n(u^1,u^0)-M_{\mathsf A}(u^1;u^0)
   -\frac12\{C_n(u^1,u^1)-C_{\mathsf A}(u^1,u^1)\}\Big\rangle_t.
\end{aligned}
\]
To evaluate the terms involving \(H_n\) and \(G_{\mathsf A}\), we differentiate
averages under \(\nu_{\mathsf A,t}\) with respect to their Gaussian
coordinates.  The constraint \(\|R(u^0,u)-\mathsf A\|_{\HS}\le\rho\) is
independent of these coordinates.  Writing
\(g_i=(g_{i,1},\ldots,g_{i,N})\), we have
\(\partial_{g_{ij}}\mathsf{H}_{\mathsf A,t}(u;u^0)
=\sqrt{tc/n}\,u_i^{\mathsf T}u_j\) and
\(\partial_{g_{i,k}}\mathsf{H}_{\mathsf A,t}(u;u^0)
=\sqrt{(1-t)c}\,(Q_R^{1/2}u_i)_k\).
Differentiating the numerator and denominator in the definition of
\(\nu_{\mathsf A,t}\) therefore gives
\[
\begin{aligned}
 \partial_{g_{ij}}\langle (u_i^1)^{\mathsf T}u_j^1\rangle_t
 &=\sqrt{\frac{tc}{n}}\,
   \operatorname{Var}_{\nu_{\mathsf A,t}}(u_i^{\mathsf T}u_j),\\
 \partial_{g_{i,k}}\langle(Q_R^{1/2}u_i^1)_k\rangle_t
 &=\sqrt{(1-t)c}\,
   \operatorname{Var}_{\nu_{\mathsf A,t}}\big((Q_R^{1/2}u_i)_k\big).
\end{aligned}
\]
The bounds \(|u_i^{\mathsf T}u_j|\le1\) and
\(|(Q_R^{1/2}u_i)_k|\le\|Q_R\|_{\rm op}^{1/2}\) bound both averages
and their derivatives, justifying Gaussian integration by parts.
Summing over the coordinates gives
\[
\begin{aligned}
 \mathbb E_g\langle H_n(u^1)\rangle_t
 &=\frac{c\sqrt t}{n}\sum_{i<j}\mathbb E_g
   \operatorname{Var}_{\nu_{\mathsf A,t}}(u_i^{\mathsf T}u_j)\\
 &=\sqrt t\,\mathbb E_g\langle
   C_n(u^1,u^1)-C_n(u^1,u^2)\rangle_t,\\
 \mathbb E_g\langle G_{\mathsf A}(u^1)\rangle_t
 &=c\sqrt{1-t}\sum_{i,k}\mathbb E_g
    \operatorname{Var}_{\nu_{\mathsf A,t}}\big((Q_R^{1/2}u_i)_k\big)\\
 &=c\sqrt{1-t}\sum_i\mathbb E_g\langle
    (u_i^1)^{\mathsf T}Q_Ru_i^1-(u_i^1)^{\mathsf T}Q_Ru_i^2\rangle_t\\
 &=\sqrt{1-t}\,\mathbb E_g\langle
   C_{\mathsf A}(u^1,u^1)-C_{\mathsf A}(u^1,u^2)\rangle_t,
\end{aligned}
\]
Here we use independence of the two replicas and
\(C_n(u,v)=\frac cn\sum_{i<j}(u_i^{\mathsf T}u_j)(v_i^{\mathsf T}v_j)\)
and \(C_{\mathsf A}(u,v)=c\sum_i u_i^{\mathsf T}Q_Rv_i\).

Substituting these expressions for
\(\mathbb E_g\langle H_n(u^1)\rangle_t\) and
\(\mathbb E_g\langle G_{\mathsf A}(u^1)\rangle_t\) into the derivative of
\(\phi_{\mathsf A}\) cancels the terms involving
\(C_n(u^1,u^1)\) and \(C_{\mathsf A}(u^1,u^1)\), giving
\[
\begin{aligned}
 \partial_t\phi_{\mathsf A}(t;u^0)
 &=\frac1{nd}\mathbb E_g\Big\langle
   C_n(u^1,u^0)-M_{\mathsf A}(u^1;u^0)
   +\frac12\{C_n(u^1,u^1)-C_n(u^1,u^2)\}\\
 &\qquad-\frac12\{C_{\mathsf A}(u^1,u^1)-C_{\mathsf A}(u^1,u^2)\}
   -\frac12\{C_n(u^1,u^1)-C_{\mathsf A}(u^1,u^1)\}\Big\rangle_t\\
 &=\frac1{nd}\mathbb E_g\Big\langle
   C_n(u^1,u^0)-M_{\mathsf A}(u^1;u^0)
   -\frac12C_n(u^1,u^2)+\frac12C_{\mathsf A}(u^1,u^2)\Big\rangle_t.
\end{aligned}
\]
It remains to write this derivative in terms of \(R_{01}\) and
\(R_{12}\).  Recall from Step~2(a) that
\(M_{\mathsf A}(u^1;u^0)=cn\langle \mathsf A,R_{01}\rangle_{\HS}\).
Using \(R_{12}=n^{-1}\sum_i u_i^1(u_i^2)^{\mathsf T}\) and the
Hilbert--Schmidt inner product, we also have
\[
 C_{\mathsf A}(u^1,u^2)
 =c\sum_i(u_i^1)^{\mathsf T}Q_Ru_i^2
 =c\sum_i\tr\{Q_R^{\mathsf T}u_i^1(u_i^2)^{\mathsf T}\}
 =cn\langle Q_R,R_{12}\rangle_{\HS}.
\]
After substituting \eqref{eq:cubic-pair-covariance-overlap}, we use
the identities
\[
\begin{aligned}
 \frac12\|R_{01}\|_{\HS}^2-\langle \mathsf A,R_{01}\rangle_{\HS}
 &=\frac12\|R_{01}-\mathsf A\|_{\HS}^2-\frac12\|\mathsf A\|_{\HS}^2,\\
 -\frac14\|R_{12}\|_{\HS}^2+\frac12\langle Q_R,R_{12}\rangle_{\HS}
 &=-\frac14\|R_{12}-Q_R\|_{\HS}^2+\frac14\|Q_R\|_{\HS}^2.
\end{aligned}
\]
Together with \(c=rNd\), these identities give
\begin{align}
  \partial_t\phi_{\mathsf A}(t;u^0)
  ={}&
  \frac{rN}{2}\,
    \mathbb E_g\left\langle\|R_{01}-\mathsf A\|_{\HS}^2\right\rangle_t
  -\frac{rN}{4}\,
    \mathbb E_g\left\langle\|R_{12}-Q_R\|_{\HS}^2\right\rangle_t
  \notag\\
  &-\frac{rN}{2}\|\mathsf A\|_{\HS}^2
   +\frac{rN}{4}\|Q_R\|_{\HS}^2-\frac{rN}{4n}.
  \label{eq:cubic-fixed-overlap-derivative}
\end{align}
The term \(-rN/(4n)=(-c/2+c/4)/(nd)\) comes from the constant
\(-c/2\) in \eqref{eq:cubic-pair-covariance-overlap}.
On the support of \(\nu_{\mathsf A,t}\), we have
\(\|R_{01}-\mathsf A\|_{\HS}\le\rho\).  Substituting
\(\|Q_R\|_{\HS}=\|\mathsf A\|_{\HS}\) and the choice
\(\rho=\varepsilon_{\rm cov}\sqrt{\delta/N}\) from Step~1 into
\eqref{eq:cubic-fixed-overlap-derivative} gives
\[
\begin{aligned}
 \partial_t\phi_{\mathsf A}(t;u^0)
 &=\frac{rN}{4}\left(
   2\mathbb E_g\langle\|R_{01}-\mathsf A\|_{\HS}^2\rangle_t
   -\mathbb E_g\langle\|R_{12}-Q_R\|_{\HS}^2\rangle_t
   -\|\mathsf A\|_{\HS}^2-\frac1n\right)\\
 &\le\frac{rN\rho^2}{2}-\frac{rN}{4}\|\mathsf A\|_{\HS}^2
 =\frac{r\delta\varepsilon_{\rm cov}^2}{2}
   -\frac{rN}{4}\|\mathsf A\|_{\HS}^2.
\end{aligned}
\]

For integration up to \(t=0,1\), the fixed measure and constraint give,
for \(s,t\in[0,1]\),
\[
 \begin{aligned}
 |\phi_{\mathsf A}(t;u^0)-\phi_{\mathsf A}(s;u^0)|
 &\le\frac1{nd}\mathbb E_g\sup_u
   |\mathsf{H}_{\mathsf A,t}(u;u^0)-\mathsf{H}_{\mathsf A,s}(u;u^0)|\\
 &\le C\left(|\sqrt t-\sqrt s|
       +|\sqrt{1-t}-\sqrt{1-s}|+|t-s|\right).
 \end{aligned}
\]
Here \(C<\infty\) may depend on \(n,d,r,\mathsf A\), and the second
inequality uses \(\|u_i\|=1\) and finite Gaussian first moments.
Thus \(\phi_{\mathsf A}\) is continuous on \([0,1]\).
Moreover, \(\|R_{ab}\|_{\HS}\le1\) and
\(\|Q_R\|_{\HS}=\|\mathsf A\|_{\HS}\) imply, by
\eqref{eq:cubic-fixed-overlap-derivative},
\[
 \sup_{0<t<1}|\partial_t\phi_{\mathsf A}(t;u^0)|
 \le\frac{rN}{4}\left(3(1+\|\mathsf A\|_{\HS})^2
                     +\|\mathsf A\|_{\HS}^2+\frac1n\right)<\infty.
\]
Integrating on \([\varepsilon,1-\varepsilon]\) and letting
\(\varepsilon\downarrow0\) therefore yields
\begin{equation}
 \phi_{\mathsf A}(1;u^0)-\phi_{\mathsf A}(0;u^0)
 =\int_0^1\partial_t\phi_{\mathsf A}(t;u^0)\,dt
 \le-\frac{rN}{4}\|\mathsf A\|_{\HS}^2
       +\frac{r\delta\varepsilon_{\rm cov}^2}{2}.
 \label{eq:cubic-integrated-interpolation-bound}
\end{equation}

\emph{Step 2(d): express the bound through the matrix potential.}
Combining \eqref{eq:cubic-Gaussian-vector-endpoint} and
\eqref{eq:cubic-integrated-interpolation-bound} gives
\[
 \phi_{\mathsf A}(1;u^0)
 \le\frac1{nd}\sum_i f_{\mathsf A}(u_i^0)-\frac{rN}{4}\|\mathsf A\|_{\HS}^2
     +\frac{r\delta\varepsilon_{\rm cov}^2}{2}
     -\frac{\log g_n^\dagger}{nd}.
\]

The matrix potential controls the \(\mu_d\)-mean of \(f_{\mathsf A}\).
We therefore separate this mean from the dependence on the fixed
sample \(u^0\) by writing
\[
 \frac1{nd}\sum_i f_{\mathsf A}(u_i^0)
 =\frac1d\mathbb E_{\mu_d}f_{\mathsf A}+\Xi_{\mathsf A},
\]
where \(\Xi_{\mathsf A}:=(nd)^{-1}\sum_i\{f_{\mathsf A}(u_i^0)-\mathbb E_{\mu_d}f_{\mathsf A}\}\).
For independent \(v\sim\mu_d\) and \(g\sim\Normal(0,I_N)\), the
definition of \(f_{\mathsf A}\) gives
\(\mathbb E_{\mu_d}f_{\mathsf A}=\mathbb E_{v,g}\log Z_{cQ_R}(T_{\mathsf A}v+g)\).
Lemma~\ref{lem:cubic-polar-cross-entropy} gives
\[
 \frac1d\mathbb E_{\mu_d}f_{\mathsf A}-\frac{rN}{4}\|\mathsf A\|_{\HS}^2
 \le {\cal V}_{d,r}(Q_L),
 \qquad N\|Q_L\|_{\HS}^2=N\|\mathsf A\|_{\HS}^2\ge\delta/2,
\]
where the last inequality follows from Step~1.
Substituting the decomposition of \((nd)^{-1}\sum_i f_{\mathsf A}(u_i^0)\)
and this bound into the upper bound for \(\phi_{\mathsf A}(1;u^0)\), with
\(\varepsilon_{\rm cov}^2\le\varepsilon_{\rm cov}\) and
\(-\log g_n^\dagger/(nd)=o(1)\), yields
\begin{equation}
 \frac1{nd}\mathbb E_g\log Z_{\mathsf A}^{\rm G}(g;u^0)
 =\phi_{\mathsf A}(1;u^0)
 \le{\cal V}_{d,r}(Q_L)+\Xi_{\mathsf A}
       +C_{K,\delta}\varepsilon_{\rm cov}+o(1),
 \label{eq:cubic-Gaussian-cell-upper-bound}
\end{equation}
uniformly in the center \(\mathsf A\) and retained sample \(u^0\).
The right-hand side of \eqref{eq:cubic-Gaussian-cell-upper-bound}
depends on \(u^0\) only through \(\Xi_{\mathsf A}\).

\emph{Step 3: combine the integrals and control the sample averages.}
For fixed \(u^0\), first take maxima only over the balls of positive
\(\mu_{d,n}^\dagger\)-measure retained in Step~1.
For candidates \(u\in\mathcal U_n^\dagger\), the matrix \(\mathbf U\)
with rows \(u_i^{\mathsf T}\) satisfies
\(\|\mathbf U\|_{\rm op}^2\le Cn/N\) by
Proposition~\ref{prop:cubic-raw-local-partition}, and hence
\[
 \sum_{i<j}\langle u_i,u_j\rangle^2
 \le\frac12\tr\{(\mathbf U^{\mathsf T}\mathbf U)^2\}
 \le\frac12\|\mathbf U\|_{\rm op}^2
            \tr(\mathbf U^{\mathsf T}\mathbf U)
 \le\frac{Cn^2}{N}.
\]
Using the candidate expectation \(\langle\cdot\rangle_1\) from
Step~2(c), differentiation and Jensen's inequality give
\[
 \|\nabla_g\log Z_{\mathsf A}^{\rm G}\|^2
 =\frac cn\sum_{i<j}\langle u_i^{\mathsf T}u_j\rangle_1^2
 \le\frac cn\left\langle\sum_{i<j}(u_i^{\mathsf T}u_j)^2\right\rangle_1
 \le\frac{Ccn}{N}=Crnd\le C_Knd.
\]
Thus Gaussian concentration
\cite[Theorem~5.5]{BoucheronLugosiMassart2013} yields, for \(t>0\),
\(\mathbb E_g e^{t(\log Z_{\mathsf A}^{\rm G}-\mathbb E_g\log Z_{\mathsf A}^{\rm G})}
\le e^{C_Kndt^2/2}\).
Jensen's inequality and \(\max_{\mathsf A}e^{x_{\mathsf A}}
\le\sum_{\mathsf A}e^{x_{\mathsf A}}\) imply
\[
 \mathbb E_g\max_{\mathsf A}
 \{\log Z_{\mathsf A}^{\rm G}-\mathbb E_g\log Z_{\mathsf A}^{\rm G}\}
 \le\inf_{t>0}\left(\frac{\log J}{t}+\frac{C_Kndt}{2}\right)
 \le C_K\sqrt{nd\log(2J)}.
\]
Combining this with \eqref{eq:cubic-Gaussian-cell-cover} gives
\begin{equation}
 \frac1{nd}\mathbb E_g\log Z^{\rm G}_{u^0,\delta}
 \le \max_{\mathsf A}\frac1{nd}\mathbb E_g\log Z_{\mathsf A}^{\rm G}(g;u^0)
       +C_K\sqrt{\frac{\log(2J)}{nd}}+\frac{\log J}{nd}.
 \label{eq:cubic-cover-Gaussian-maximum}
\end{equation}
Substituting \eqref{eq:cubic-Gaussian-cell-upper-bound}, and enlarging
the maximum of \(|\Xi_{\mathsf A}|\) to the fixed cover \(\mathcal C\), yields
\begin{equation}
 \frac1{nd}\mathbb E_g\log Z^{\rm G}_{u^0,\delta}
 \le\sup_{\substack{Q\succeq0\\N\|Q\|_{\HS}^2\ge\delta/2}}
       {\cal V}_{d,r}(Q)+\max_{\mathsf A\in\mathcal C}|\Xi_{\mathsf A}|
       +C_{K,\delta}\varepsilon_{\rm cov}
       +C_K\sqrt{\frac{\log(2J)}{nd}}+\frac{\log J}{nd}+o(1).
 \label{eq:cubic-Gaussian-conditional-upper-bound}
\end{equation}
For fixed \(\varepsilon_{\rm cov}>0\), the last three terms tend to
zero uniformly in \(u^0\), by \eqref{eq:cubic-relative-cover}.

To control the centered average \(\Xi_{\mathsf A}\) from Step~2(d),
we bound the gradient of
\(f_{\mathsf A}(v)=\mathbb E_g\log Z_{cQ_R}(T_{\mathsf A}v+g)\).
Differentiating the integral defining \(Z_{cQ_R}\) gives
\[
 \nabla_y\log Z_{cQ_R}(y)
 =(cQ_R)^{1/2}
   \frac{\int u\,e^{\langle y,(cQ_R)^{1/2}u\rangle-u^{\mathsf T}cQ_Ru/2}
                    \,d\mu_d(u)}{Z_{cQ_R}(y)}.
\]
The fraction has norm at most one.
Since \(T_{\mathsf A}^{\mathsf T}(cQ_R)^{1/2}=c\mathsf A\),
we have \(\|\nabla f_{\mathsf A}\|\le c\|\mathsf A\|_{\rm op}\le C_Kd\).
By \eqref{eq:cubic-feature-Lipschitz}, \(\Xi_{\mathsf A}\) therefore has
Lipschitz constant at most \(C_K/\sqrt n\) as a function of
\((X_1,\ldots,X_n)\), with \(u_i^0=q_{3,d}(X_i)\).
The product-sphere concentration estimate used in
Proposition~\ref{prop:cubic-weighted-overlap}
\cite[Theorem~5.5]{Meckes2019} gives
\begin{equation}
 \mathbb P_{\mu_d^{\otimes n}}\{|\Xi_{\mathsf A}|>t\}
 \le 2e^{-c_Kndt^2},\qquad t>0.
 \label{eq:cubic-endpoint-empirical-concentration}
\end{equation}
A union bound and \eqref{eq:cubic-regularized-product-upper-bound} yield
\begin{equation}
 \mathbb P_{\mu_{d,n}^\dagger}\{\max_{\mathsf A\in\mathcal C}|\Xi_{\mathsf A}|>t\}
 \le (g_n^\dagger)^{-1}\mathbb P_{\mu_d^{\otimes n}}
                   \{\max_{\mathsf A\in\mathcal C}|\Xi_{\mathsf A}|>t\}
 \le\frac{2J}{g_n^\dagger}e^{-c_Kndt^2}.
 \label{eq:cubic-endpoint-uniform-concentration}
\end{equation}
Integrating the union bound under \(\mu_d^{\otimes n}\), then applying
\eqref{eq:cubic-regularized-product-upper-bound}, also gives
\begin{equation}
 \mathbb E_{\mu_{d,n}^\dagger}\max_{\mathsf A\in\mathcal C}|\Xi_{\mathsf A}|
 \le (g_n^\dagger)^{-1}\mathbb E_{\mu_d^{\otimes n}}\max_{\mathsf A\in\mathcal C}|\Xi_{\mathsf A}|
 \le\frac{C_K}{g_n^\dagger}\sqrt{\frac{\log(2J)}{nd}}=o(1).
 \label{eq:cubic-endpoint-empirical-mean-bound}
\end{equation}

Choose \(\varepsilon_{\rm cov}\) so that
\(C_{K,\delta}\varepsilon_{\rm cov}\le\varepsilon/3\).
For large \(n\), the remaining deterministic errors in
\eqref{eq:cubic-Gaussian-conditional-upper-bound} sum to at most
\(\varepsilon/3\), so the event in
\eqref{eq:cubic-Gaussian-variational-probability} is contained in
\(\{\max_{\mathsf A\in\mathcal C}|\Xi_{\mathsf A}|>\varepsilon/3\}\).
By \eqref{eq:cubic-endpoint-uniform-concentration},
\[
 \mu_{d,n}^\dagger\{\max_{\mathsf A\in\mathcal C}|\Xi_{\mathsf A}|>\varepsilon/3\}
 \le\frac{2J}{g_n^\dagger}
       \exp\left(-\frac{c_Knd\varepsilon^2}{9}\right)
 =o(d^{-M}),
\]
where \(g_n^\dagger=1-o(1)\), \(\log J=o(nd)\), and
\(\log d=o(nd)\).  This proves
\eqref{eq:cubic-Gaussian-variational-probability}, uniformly for \(r\in K\).
Taking expectations in \eqref{eq:cubic-Gaussian-conditional-upper-bound}
and using \eqref{eq:cubic-endpoint-empirical-mean-bound} gives
\[
 \limsup_{n\to\infty}\sup_{r\in K}
 \left\{{\cal F}_{n,d}(r,\delta)
       -\sup_{\substack{Q\succeq0\\N\|Q\|_{\HS}^2\ge\delta/2}}
              {\cal V}_{d,r}(Q)\right\}
 \le C_{K,\delta}\varepsilon_{\rm cov}.
\]
Letting \(\varepsilon_{\rm cov}\downarrow0\) proves
\eqref{eq:cubic-Gaussian-variational-average}.
\end{proof}

\subsection{The Gaussian potential and the scalar threshold}
\label{subsec:cubic-scalar-channel}

Proposition~\ref{prop:cubic-Gaussian-variational-bound} reduces the
Gaussian estimate to controlling \({\cal V}_{d,r}(Q)\) when
\(N\|Q\|_{\HS}^2\ge\delta/2\).
We first study its restriction to scalar matrices,
\(v_{d,r}(z):={\cal V}_{d,r}(zI_N/N)
=d^{-1}D(P_{rzdI_N}\Vert\gamma_N)-rz^2/4\), for \(z\ge0\).
Lemma~\ref{lem:cubic-gaussian-entropy-comparison} and the spherical
tensor formula below give the locally uniform limit
\begin{equation}
  v_{d,r}(z)\longrightarrow
  v_r(z):={\cal D}_3(rz)-\frac r4z^2,
  \label{eq:cubic-scalar-channel-limit}
\end{equation}
where \({\cal D}_3(s):=\tfrac12\max_{0\le q<1}
\{sq^3+\log(1-q)+q\}\).
Optimizing \(v_r\) gives the threshold characterized in
Lemma~\ref{lem:cubic-scalar-threshold}.
For compact \(K\subset(0,r_{\mathrm{tens}})\), the limiting potential
is strictly negative for \(z>0\).
Lemma~\ref{lem:cubic-scalar-branch-coercivity} combines this fact with
the small-\(z\) estimate from Lemma~\ref{lem:cubic-directional-mgf}
and the bound \(v_{d,r}(z)\le rz/2-rz^2/4\) for large \(z\), obtaining
\[
 v_{d,r}(z)\le-4\kappa_Kz^2,
 \qquad r\in K,\quad z\ge0,\quad d\ge d_K.
\]
Section~\ref{subsec:cubic-matrix-potential} extends this bound to
general positive semidefinite matrices.

For the scalar limit, let \(X_0\sim\sigma_d\) and let \(G_0\) be an
independent standard Gaussian tensor in \((\mathbb R^d)^{\otimes3}\).
The replica-symmetric (RS) formula in
\cite[Appendix~A.3]{JagannathLopattoMiolane2020}, with dimension \(d\),
order \(k=3\), and parameters \(\lambda=\sqrt s\) and
\(\gamma=\lambda^2=s\), gives
\begin{equation}
 \frac1dD\!\left(
   \operatorname{Law}\{\sqrt{sd}\,X_0^{\otimes3}+G_0\}\,\middle\|\,
   \operatorname{Law}(G_0)\right)
 \longrightarrow {\cal D}_3(s).
 \label{eq:cubic-standard-tensor-channel-limit}
\end{equation}
The Gaussian density formula identifies the relative entropy on the
left with the expected free energy in that reference,
\[
 \frac1d\mathbb E_{X_0,G_0}\log\int
 \exp\left(
   \sqrt{sd}\,\langle G_0,x^{\otimes3}\rangle
   +sd\langle X_0,x\rangle^3-\frac{sd}{2}
 \right)\,d\sigma_d(x).
\]
Since \(P_{sdI_N}=\operatorname{Law}(\sqrt{sd}\,q_{3,d}(X)+g)\),
Lemma~\ref{lem:cubic-gaussian-entropy-comparison} transfers
\eqref{eq:cubic-standard-tensor-channel-limit} to the harmonic feature
with an error at most \(Cs/d\).

\begin{lemma}[Gaussian relative-entropy comparison]
\label{lem:cubic-gaussian-entropy-comparison}
For \(d\ge2\), let \(X_0\sim\sigma_d\) and let \(G_0\) be an
independent standard Gaussian tensor in \((\mathbb R^d)^{\otimes3}\).
There is a universal constant \(C\) such that, for every \(s\ge0\),
\begin{equation}
 \frac1d\left|
 D(P_{sdI_N}\Vert\gamma_N)
 -D\!\left(
   \operatorname{Law}\{\sqrt{sd}\,X_0^{\otimes3}+G_0\}\,\middle\|\,
   \operatorname{Law}(G_0)\right)\right|
 \le\frac{Cs}{d}.
 \label{eq:cubic-harmonic-tensor-free-energy-comparison}
\end{equation}
\end{lemma}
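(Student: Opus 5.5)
The plan is to decompose the cubic tensor $x^{\otimes3}$ into its degree-three harmonic part and its degree-one trace part. The harmonic part reproduces the feature channel $P_{sdI_N}$ at a slightly reduced strength, and the trace part contributes only an $O(s)$ amount of extra relative entropy. The starting identity is the polynomial one from \eqref{eq:cubic-zonal-polynomial}: solving $Z_{3,d}(t)=\{(d+2)t^3-3t\}/(d-1)$ for $t^3$ gives, for $x,y\in\Sph^{d-1}$,
\[
 \langle x^{\otimes3},y^{\otimes3}\rangle=\langle x,y\rangle^3
 =\frac{d-1}{d+2}\,Z_{3,d}(x,y)+\frac{3}{d+2}\,\langle x,y\rangle .
\]
The two kernels on the right are positive definite and carried by the orthogonal subspaces $\cH_{3,d}$ and $\cH_{1,d}$ of $L^2(\sigma_d)$. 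Hence there are linear isometries $J_3:\mathbb R^N\to(\mathbb R^d)^{\otimes3}$ and $J_1:\mathbb R^d\to(\mathbb R^d)^{\otimes3}$ with mutually orthogonal ranges such that
\[
 x^{\otimes3}=a\,J_3q_{3,d}(x)+b\,J_1x,\qquad a^2=\tfrac{d-1}{d+2},\quad b^2=\tfrac3{d+2}.
\]
Concretely, $J_3$ is the harmonic (traceless) projection of the symmetric tensor and $J_1$ is the symmetrized trace embedding. I would check this by matching Gram kernels on a spanning set, which forces the linear maps to exist and be isometric.

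Next, decompose the Gaussian tensor $G_0$ along $\operatorname{ran}J_3$, $\operatorname{ran}J_1$ and their orthogonal complement. These three components are independent standard Gaussians, and the complement component is pure noise that is independent of $X_0$, so it contributes nothing to the relative entropy. Therefore $D(\operatorname{Law}\{\sqrt{sd}X_0^{\otimes3}+G_0\}\Vert\operatorname{Law}(G_0))$ equals the relative entropy, with respect to the standard Gaussian, of the joint observation
\[
 Y_3=\sqrt{sd}\,a\,q_{3,d}(X_0)+g,\qquad Y_1=\sqrt{sd}\,b\,X_0+g',
\]
where $g$ and $g'$ are independent standard Gaussians.

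For a channel $Y=\sqrt\lambda U+g$ with $\|U\|=1$ almost surely, the density formula gives the identity
\[
 D(\operatorname{Law}Y\Vert\gamma)=\tfrac\lambda2-I(U;Y).
\]
The same identity holds for the joint channel, with $\lambda$ replaced by the total strength $sd(a^2+b^2)=sd$, because $a^2\|q_{3,d}\|^2+b^2\|X_0\|^2=a^2+b^2=1$. Using this identity, the difference between the joint relative entropy and the relative entropy of $Y_3$ alone is
\[
 \frac{sdb^2}{2}-\bigl\{I(X_0;Y_1,Y_3)-I(X_0;Y_3)\bigr\}.
\]
By the chain rule, $0\le I(X_0;Y_1\mid Y_3)\le I(X_0;Y_1)\le sdb^2/2$. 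Hence the difference lies in $[0,sdb^2/2]$, and $sdb^2/2\le 3s/2$.

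It remains to compare $D(P_{sda^2I_N}\Vert\gamma_N)$ with $D(P_{sdI_N}\Vert\gamma_N)$. By the I-MMSE relation, $\lambda\mapsto D(P_{\lambda I_N}\Vert\gamma_N)$ has derivative $\tfrac12\mathbb E\|\mathbb E[q_{3,d}(X)\mid Y]\|^2\in[0,\tfrac12]$. The strengths differ by $sd(1-a^2)=3sd/(d+2)\le 3s$, so this change contributes at most $3s/2$. Combining the two steps and dividing by $d$ gives \eqref{eq:cubic-harmonic-tensor-free-energy-comparison} with $C=3$.

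The main obstacle is the first step: making the orthogonal splitting rigorous and checking that the conditioning is respected. This requires that $J_3q_{3,d}(X_0)$ and $J_1X_0$ lie in orthogonal subspaces that are fixed independently of $X_0$, so that the Gaussian noise splits into independent pieces. I would verify this through the Gram-kernel identity together with the orthogonality $\cH_{3,d}\perp\cH_{1,d}$. The rest consists of standard entropy and I-MMSE bookkeeping.
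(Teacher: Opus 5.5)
Your argument is correct, and it follows a genuinely different route from the paper.

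The paper proves the lemma by a Guerra-type interpolation between the tensor observation $\sqrt{sd(1-t)}\,X_0^{\otimes3}+G_0$ and the feature observation $\sqrt{sdt}\,q_{3,d}(X_0)+G_1$. Gaussian integration by parts and the Nishimori identity turn the derivative of the interpolating free energy into $\frac{sd}{2}\mathbb E\langle Z_{3,d}(x^{(1)},x^{(2)})-\langle x^{(1)},x^{(2)}\rangle^3\rangle_t$. This is then bounded using only the uniform kernel estimate $\sup_{|t|\le1}|Z_{3,d}(t)-t^3|\le C/d$.

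You instead use the exact identity $t^3=a^2Z_{3,d}(t)+b^2t$ with $a^2,b^2\ge0$. After an orthogonal change of coordinates and the removal of independent pure-noise coordinates, it realizes the tensor channel as two pieces: the harmonic channel at strength $sda^2$, and a conditionally independent linear side channel $\sqrt{sd}\,bX_0+g'$. The spanning property behind your Gram-kernel construction follows from $\cH_{3,d}\perp\cH_{1,d}$. The comparison then reduces to three ingredients: the identity $D=\lambda/2-I$, the chain of bounds $0\le I(X_0;Y_1\mid Y_3)\le I(X_0;Y_1)\le sdb^2/2$, and the I--MMSE Lipschitz bound that the paper itself uses right after the lemma. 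The middle inequality in that chain rests on the conditional independence of $Y_1$ and $Y_3$ given $X_0$, not on the chain rule alone; this holds because $g$ and $g'$ are independent.

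\textbf{What your route gains.} It avoids differentiating under the Gibbs integrals and all replica bookkeeping. It also gives an explicit constant. Both $D(P_{sdI_N}\Vert\gamma_N)$ and the tensor relative entropy exceed the common reference $D(P_{sda^2I_N}\Vert\gamma_N)$ by an amount in $[0,3s/2]$, so you in fact get $C=3/2$ rather than $3$.

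\textbf{What the paper's route gains.} It is more robust: it needs only uniform closeness of the two covariance kernels, not an exact decomposition with nonnegative coefficients. It also reuses the interpolation machinery that appears again in the later Franz--Parisi and supercritical arguments.
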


\begin{proof}
At \(s=0\), both relative entropies are zero.  Assume \(s>0\).
We use Gaussian interpolation to bound the left-hand side of
\eqref{eq:cubic-harmonic-tensor-free-energy-comparison} by
\((s/2)\sup_{|a|\le1}|Z_{3,d}(a)-a^3|\).
The feature inner products are
\(\langle x^{\otimes3},y^{\otimes3}\rangle=\langle x,y\rangle^3\)
and
\(\langle q_{3,d}(x),q_{3,d}(y)\rangle=Z_{3,d}(\langle x,y\rangle)\).
By \eqref{eq:cubic-zonal-polynomial},
\begin{equation}
  Z_{3,d}(t)=t^3+\frac3{d-1}(t^3-t),\qquad -1\le t\le1,
  \label{eq:cubic-harmonic-tensor-covariance-comparison}
\end{equation}
so \(\sup_{|t|\le1}|Z_{3,d}(t)-t^3|\le C/d\).

Let \(G_1\sim\Normal(0,I_N)\) be independent of \((X_0,G_0)\), and
interpolate between the two observations through
\[
 \mathsf Y_t^{\mathrm{int}}=
 \left(\sqrt{sd(1-t)}\,X_0^{\otimes3}+G_0,
       \sqrt{sdt}\,q_{3,d}(X_0)+G_1\right),\qquad 0\le t\le1.
\]
The Gaussian density formula and the unit norms of the features give
the conditional log-likelihood ratio at this observation, relative to
\(\operatorname{Law}(G_0,G_1)\), as
\[
 \mathsf{H}_t(x)=h_t(x)+sd\,k_t(x,X_0)-\frac{sd}{2},
\]
where
\[
\begin{aligned}
 h_t(x)&=\sqrt{sd(1-t)}\,\langle G_0,x^{\otimes3}\rangle
           +\sqrt{sdt}\,\langle G_1,q_{3,d}(x)\rangle,\\
 k_t(x,y)&=(1-t)\langle x,y\rangle^3+tZ_{3,d}(x,y).
\end{aligned}
\]
In particular, \(\mathbb E[h_t(x)h_t(y)]=sd\,k_t(x,y)\) and
\(k_t(x,x)=1\).  Integrating \(e^{\mathsf{H}_t(x)}\) against
\(\sigma_d\) gives the mixture likelihood ratio, so
\[
 F_{d,s}(t):=\mathbb E\log\int e^{\mathsf{H}_t(x)}\,d\sigma_d(x)
 =D\!\left(
   \operatorname{Law}(\mathsf Y_t^{\mathrm{int}})\,\middle\|\,
   \operatorname{Law}(G_0,G_1)\right),
\]
where \(\mathbb E\) averages over \(X_0,G_0,G_1\).
At each endpoint, one component is independent pure noise.  Hence
\begin{equation}
\begin{aligned}
 F_{d,s}(0)&=D\!\left(
   \operatorname{Law}\{\sqrt{sd}\,X_0^{\otimes3}+G_0\}\,\middle\|\,
   \operatorname{Law}(G_0)\right),\\
 F_{d,s}(1)&=D(P_{sdI_N}\Vert\gamma_N).
\end{aligned}
\label{eq:cubic-channel-endpoint-entropies}
\end{equation}

To compare these endpoint values, let
\(\nu_t(dx):=e^{\mathsf{H}_t(x)}d\sigma_d(x)/
\int e^{\mathsf{H}_t(z)}d\sigma_d(z)\) denote the posterior law of \(X_0\)
given \(\mathsf Y_t^{\mathrm{int}}\).
For \(k=1,2\), write
\[
 \big\langle f(x^{(1)},\ldots,x^{(k)})\big\rangle_t
 :=\int f(x^{(1)},\ldots,x^{(k)})
        \prod_{j=1}^k\nu_t(dx^{(j)}),
\]
holding \(X_0,G_0,G_1\) fixed, with
\(\langle f(x^{(1)},X_0)\rangle_t=\int f(x,X_0)\,\nu_t(dx)\).
Conditional on \(\mathsf Y_t^{\mathrm{int}}\), the variables
\(X_0,x^{(1)},x^{(2)}\) are iid with law \(\nu_t\).
Bayes' formula therefore gives, for every bounded measurable \(f\),
\begin{equation}
 \mathbb E\langle f(x^{(1)},X_0)\rangle_t
 =\mathbb E\langle f(x^{(1)},x^{(2)})\rangle_t.
 \label{eq:cubic-tensor-channel-replica-identity}
\end{equation}
For \(0<t<1\), differentiating the expected log integral gives
\begin{equation}
 F'_{d,s}(t)
 =\mathbb E\langle\partial_t h_t(x^{(1)})\rangle_t
  +sd\,\mathbb E\langle\partial_t k_t(x^{(1)},X_0)\rangle_t.
 \label{eq:cubic-channel-log-partition-derivative}
\end{equation}
To evaluate its first term, the definitions of \(h_t\) and \(k_t\) give
\[
 \mathbb E_{G_0,G_1}[\partial_t h_t(x)\,h_t(y)]
 =\frac{sd}{2}\partial_t k_t(x,y),\qquad
 \partial_t k_t(x,y)=Z_{3,d}(x,y)-\langle x,y\rangle^3.
\]
Conditional on \(X_0\), Gaussian integration by parts in the Gibbs
expectation, as in \cite[Appendix~A.4]{JagannathLopattoMiolane2020},
therefore yields
\begin{equation}
\begin{aligned}
 \mathbb E\langle\partial_t h_t(x^{(1)})\rangle_t
 &=\frac{sd}{2}\mathbb E\big\langle
     \partial_t k_t(x^{(1)},x^{(1)})
       -\partial_t k_t(x^{(1)},x^{(2)})\big\rangle_t\\
 &=-\frac{sd}{2}\mathbb E\langle
      \partial_t k_t(x^{(1)},x^{(2)})\rangle_t,
\end{aligned}
\label{eq:cubic-channel-noise-derivative}
\end{equation}
where the diagonal term vanishes because \(k_t(x,x)=1\).
Here the factor \(\exp\{sd\,k_t(x,X_0)-sd/2\}\) in \(\nu_t\) is
deterministic conditional on \(X_0\) and does not change the
Gaussian integration-by-parts identity.
Substituting \eqref{eq:cubic-channel-noise-derivative} into
\eqref{eq:cubic-channel-log-partition-derivative} and using
\eqref{eq:cubic-tensor-channel-replica-identity} gives
\begin{equation}
 F'_{d,s}(t)=\frac{sd}{2}\mathbb E\langle
          \partial_t k_t(x^{(1)},x^{(2)})\rangle_t.
 \label{eq:cubic-harmonic-tensor-interpolation-derivative}
\end{equation}

For fixed \(d,s\) and \(0<\delta<1/2\), the bounds
\[
 \sup_{\substack{0\le t\le1\\x\in\Sph^{d-1}}}|\mathsf{H}_t(x)|
 \le\sqrt{sd}(\|G_0\|+\|G_1\|)+\frac{3sd}{2},
 \qquad
 \sup_{\substack{\delta\le t\le1-\delta\\x\in\Sph^{d-1}}}
 |\partial_t\mathsf{H}_t(x)|
 \le\frac{\sqrt{sd}}{2\sqrt\delta}(\|G_0\|+\|G_1\|)+2sd
\]
have right-hand sides with finite expectations under the joint law of
\(X_0,G_0,G_1\), because Gaussian norms have finite first moments.
These bounds justify differentiation under the integrals on
\([\delta,1-\delta]\) and continuity of \(F_{d,s}\) on \([0,1]\).
Integrating \eqref{eq:cubic-harmonic-tensor-interpolation-derivative}
on \([\delta,1-\delta]\), then letting \(\delta\downarrow0\) and
using \eqref{eq:cubic-harmonic-tensor-covariance-comparison}, gives
\[
 \frac{|F_{d,s}(1)-F_{d,s}(0)|}{d}
 \le\frac s2\sup_{|a|\le1}|Z_{3,d}(a)-a^3|
 \le\frac{Cs}{d}.
\]
Together with \eqref{eq:cubic-channel-endpoint-entropies}, this proves
\eqref{eq:cubic-harmonic-tensor-free-energy-comparison}.
\end{proof}

Put \(f_d(s):=d^{-1}D(P_{sdI_N}\Vert\gamma_N)\).
Combining Lemma~\ref{lem:cubic-gaussian-entropy-comparison} with
\eqref{eq:cubic-standard-tensor-channel-limit} gives
\(f_d(s)\to{\cal D}_3(s)\) for every fixed \(s\ge0\).
To obtain the locally uniform convergence asserted in
\eqref{eq:cubic-scalar-channel-limit}, we bound \(f_d'\) uniformly in \(d\).
For a scalar signal-to-noise parameter \(\alpha>0\), the relative-entropy
form of the Gaussian I--MMSE identity
\cite[Theorem~5]{GuoShamaiVerdu2005} states that
\[
 \frac{d}{d\alpha}D(P_{\alpha I_N}\Vert\gamma_N)
 =\frac12\mathbb E\left\|
   \mathbb E(u\mid\sqrt\alpha\,u+g)\right\|^2.
\]
Its finite-second-moment assumption holds because \(\|u\|=1\).
Setting \(\alpha=sd\) and applying the chain rule therefore gives
\[
 0\le f_d'(s)
 =\frac12\mathbb E\left\|
   \mathbb E(u\mid\sqrt{sd}\,u+g)\right\|^2\le\frac12
\]
for \(s>0\), with the right derivative at \(s=0\).
The last inequality follows from conditional Jensen's inequality.
Thus \(f_d\) and its pointwise limit \({\cal D}_3\) are
\(1/2\)-Lipschitz.  Pointwise convergence on a finite mesh, followed by
this common Lipschitz bound between mesh points, yields
\[
 \sup_{0\le s\le S}|f_d(s)-{\cal D}_3(s)|\longrightarrow0
 \qquad\text{for every }0<S<\infty.
\]
Since \(v_{d,r}(z)=f_d(rz)-rz^2/4\), for every nonempty compact
\(K\subset(0,\infty)\) and \(0<Z<\infty\), we obtain
\begin{equation}
 \sup_{\substack{r\in K\\0\le z\le Z}}
 |v_{d,r}(z)-v_r(z)|
 \le\sup_{0\le s\le(\max K)Z}|f_d(s)-{\cal D}_3(s)|
 \longrightarrow0.
 \label{eq:cubic-scalar-potential-uniform-limit}
\end{equation}
This proves \eqref{eq:cubic-scalar-channel-limit}, uniformly also for
\(r\) in compact subsets of \((0,\infty)\).
We next determine for which \(r\) the limiting function \(v_r\)
has a positive maximum.  Optimizing over \(z\ge0\) gives
\[
\begin{split}
  \sup_{z\ge0}v_r(z)
  &=
  \sup_{\substack{z\ge0\\0\le q<1}}
  \left[
    \frac r2zq^3-\frac r4z^2
    +\frac12\{\log(1-q)+q\}
  \right]\\
  &=
  \sup_{0\le q<1}
  \left[
    \frac r4q^6+\frac12\{\log(1-q)+q\}
  \right].
  \end{split}
\]
For each fixed \(q\), completing the square gives the optimizer
\(z=q^3\).  Thus \(\sup_zv_r(z)>0\) exactly when
\(r>r_{\mathrm{tens}}\), where the threshold from
Theorem~\ref{thm:pure-cubic-critical} is
\begin{equation}
  r_{\mathrm{tens}}
  =
  \min_{0<q<1}
  \frac{-2\{\log(1-q)+q\}}{q^6}.
  \label{eq:cubic-tensor-threshold}
\end{equation}
Lemma~\ref{lem:cubic-scalar-threshold} identifies the unique minimizer
and gives its stationarity equation.

\begin{lemma}[Scalar threshold and its optimizer]
\label{lem:cubic-scalar-threshold}
The minimum in \eqref{eq:cubic-tensor-threshold} is attained at a unique
point \(q_\star\in(4/5,1)\).  It is characterized by
\begin{equation}
 \frac{q_\star^2}{1-q_\star}
 =6\{-\log(1-q_\star)-q_\star\},
 \qquad
 r_{\mathrm{tens}}
 =\frac1{3q_\star^4(1-q_\star)}.
 \label{eq:cubic-threshold-stationarity}
\end{equation}
\end{lemma}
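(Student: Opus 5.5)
Write \(\phi(q)=-2\{\log(1-q)+q\}\), so the objective is \(F(q)=\phi(q)/q^6\) on \((0,1)\). The plan is a one-variable calculus argument. First, the minimum is attained: \(\phi(q)=q^2+\tfrac23q^3+\cdots\ge q^2\) as \(q\downarrow0\), so \(F(q)\ge q^{-4}\to\infty\), and \(\phi(q)\to\infty\) as \(q\uparrow1\), so \(F\to\infty\) there as well. Continuity then gives a minimizer in \((0,1)\), and every minimizer is a critical point.

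Next we compute the critical points. Since \(\phi'(q)=2q/(1-q)\), we get
\[
F'(q)=\frac{q\phi'(q)-6\phi(q)}{q^7}
=\frac{2}{q^7}\Bigl[\frac{q^2}{1-q}-6\{-\log(1-q)-q\}\Bigr].
\]
Thus \(F'(q)=0\) is exactly the first identity in \eqref{eq:cubic-threshold-stationarity}. Substituting \(6\{-\log(1-q_\star)-q_\star\}=q_\star^2/(1-q_\star)\) into \(F(q_\star)=2\{-\log(1-q_\star)-q_\star\}/q_\star^6\) gives \(r_{\mathrm{tens}}=q_\star^2/\{3(1-q_\star)q_\star^6\}=1/\{3q_\star^4(1-q_\star)\}\), which is the second identity.

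Uniqueness is the main step. Set \(h(q)=q^2/(1-q)+6\log(1-q)+6q\); it has the sign of \(F'\). Then \(h(0)=0\), and
\[
h'(q)=\frac{2q-q^2}{(1-q)^2}-\frac{6}{1-q}+6=\frac{q(7q-4)}{(1-q)^2}\quad\text{(to be checked)}.
\]
The derivative computation needs checking. Write \(\frac{2q-q^2}{(1-q)^2}+\frac{-6+6(1-q)}{1-q}=\frac{2q-q^2-6q(1-q)}{(1-q)^2}=\frac{5q^2-4q}{(1-q)^2}=\frac{q(5q-4)}{(1-q)^2}\). So \(h\) decreases on \((0,4/5)\) and increases on \((4/5,1)\). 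Since \(h(0)=0\), we get \(h<0\) on \((0,4/5]\). Since \(h(q)\to+\infty\) as \(q\uparrow1\) (the term \(q^2/(1-q)\) dominates \(6\log(1-q)\)), \(h\) has exactly one zero \(q_\star\in(4/5,1)\), with \(h<0\) before it and \(h>0\) after it. Hence \(F\) decreases then increases, so \(q_\star\) is the unique minimizer and lies in \((4/5,1)\).

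The only delicate point is this sign analysis of \(h\), which the factorization \(q(5q-4)\) resolves cleanly. The rest is routine.
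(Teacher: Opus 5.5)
Your proof is correct and follows the paper's argument: you use the same auxiliary function \(h\) (the paper's \(g\)), the same derivative \(h'(q)=q(5q-4)/(1-q)^2\), and the same sign analysis giving a unique zero \(q_\star\in(4/5,1)\); the only difference is that you get negativity near zero from \(h(0)=0\) plus strict decrease instead of the paper's expansion \(g(q)=-2q^2+O(q^3)\). In a final write-up, delete the erroneous intermediate display with \(q(7q-4)\) and keep only the corrected factorization \(q(5q-4)\).
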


\begin{proof}
Let \(f(q)=-2\{\log(1-q)+q\}/q^6\).  Since
\(-\log(1-q)-q\sim q^2/2\) as \(q\downarrow0\) and
\(-\log(1-q)-q\to\infty\) as \(q\uparrow1\),
\(f(q)\to\infty\) at both endpoints.  Direct differentiation gives
\[
 f'(q)=\frac2{q^7}g(q),
 \qquad
 g(q):=\frac{q^2}{1-q}+6\{\log(1-q)+q\},
 \qquad
 g'(q)=\frac{q(5q-4)}{(1-q)^2}.
\]
The expansion \(g(q)=-2q^2+O(q^3)\) shows that \(g<0\) near zero.
The derivative formula makes \(g\) decreasing on \((0,4/5)\) and strictly
increasing on \((4/5,1)\), while \(g(q)\to\infty\) as \(q\uparrow1\).
Thus \(g\) has a unique zero \(q_\star\in(4/5,1)\), which is the unique
minimizer of \(f\).  At this zero, substituting
\(6\{-\log(1-q_\star)-q_\star\}=q_\star^2/(1-q_\star)\)
into \(f(q_\star)\) gives the second
identity in \eqref{eq:cubic-threshold-stationarity}.
\end{proof}

Numerically, \(q_\star\approx0.9063\).
It follows from Lemma~\ref{lem:cubic-scalar-threshold} and
\eqref{eq:cubic-tensor-threshold} that, for
\(0<r<r_{\mathrm{tens}}\), we have \(v_r(0)=0\) and
\(v_r(z)<0\) for every \(z>0\), so \(z=0\) is the unique global
maximizer.  At \(r=r_{\mathrm{tens}}\),
\(0\) and \(q_\star^3\) are both global maximizers.

For \(r\) in a compact subset of \((0,r_{\mathrm{tens}})\), the strict
negativity of \(v_r\) and
\eqref{eq:cubic-scalar-potential-uniform-limit} give a negative quadratic
bound for \(v_{d,r}(z)\) on bounded intervals separated from zero.
Near \(z=0\), we need
an estimate on the \(z^2\) scale, since the absolute \(o(1)\) error in
\eqref{eq:cubic-scalar-potential-uniform-limit} need not be
\(o(z^2)\) when \(z\) decreases with \(d\).
Lemma~\ref{lem:cubic-directional-mgf} provides the exponential-moment
estimate used in \eqref{eq:cubic-small-channel-chi-square} to prove
\(d^{-1}D(P_{sdI_N}\Vert\gamma_N)\le Cs^2/d^2\)
for sufficiently small \(s\).

\begin{lemma}[Directional exponential-moment bound]
\label{lem:cubic-directional-mgf}
There are universal constants \(c_0,C>0\) such that, for every
\(v\in\mathbb S^{N-1}\),
\begin{equation}
 \log\mathbb E_{\sigma_d}
  \exp\{\theta\langle v,q_{3,d}(X)\rangle\}
 \le\frac{C\theta^2}{N},
 \qquad |\theta|\le c_0d.
 \label{eq:cubic-directional-mgf}
\end{equation}
\end{lemma}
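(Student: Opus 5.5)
The plan is to write \(f(X)=\langle v,q_{3,d}(X)\rangle\). By the feature map, \(f\in\cH_{3,d}\) is a degree-three spherical harmonic with
\[
\mathbb E f=0,\qquad \mathbb E f^2=\|v\|^2/N=1/N .
\]
Since \(\|q_{3,d}\|=1\), we also have \(\|f\|_\infty\le1\). The model is Step~2 of Lemma~\ref{lem:cubic-compensated-row}, which proved exactly this kind of bound on the product sphere; here only one sphere factor is needed.

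\emph{Step 1: Hessian bound.} Along any unit-speed great circle \(x(\theta)=x_0\cos\theta+e\sin\theta\), the function \(f(x(\theta))\) is a trigonometric polynomial of degree at most three, with sup norm at most \(\|f\|_\infty\le1\). Applying Bernstein's inequality twice gives
\[
\|\nabla^2 f\|_{\rm op}\le 9 .
\]

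\emph{Step 2: the tilted measure.} Let \(\nu_\theta\propto e^{\theta f}\,d\sigma_d\). The Ricci curvature of \(\Sph^{d-1}\) is \(d-2\), so \(\nu_\theta\) has Bakry--\'Emery curvature at least \(d-2-9|\theta|\ge c'd\) once \(|\theta|\le c_0d\) with \(c_0\) small. Hence \(\nu_\theta\) satisfies a Poincar\'e inequality with constant \(C/d\).

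\emph{Step 3: Gronwall on the gradient energy.} Set \(G=\|\nabla f\|^2\). Then \(\|\nabla G\|^2\le 4\cdot 81\,G\). Two applications of Poincar\'e give
\[
\Bigl|\tfrac{d}{d\theta}\mathbb E_\theta G\Bigr|=|\operatorname{Cov}_\theta(G,f)|\le \frac{C}{d}\,\mathbb E_\theta G .
\]
At \(\theta=0\), integration by parts with \(-\Delta f=3(d+1)f\) gives \(\mathbb E_0G=3(d+1)/N\). Gronwall over \(|\theta|\le c_0d\) yields \(\mathbb E_\theta G\le C'd/N\).

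\emph{Step 4: integrate twice.} A last Poincar\'e step gives \(\operatorname{Var}_\theta f\le (C/d)\,\mathbb E_\theta G\le C/N\). Write \(\Lambda(\theta)=\log\mathbb E e^{\theta f}\), so \(\Lambda(0)=\Lambda'(0)=0\) and \(\Lambda''=\operatorname{Var}_\theta f\). Integrating twice gives
\[
\Lambda(\theta)\le \frac{C\theta^2}{2N}.
\]

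The main obstacle is Steps 2--3. The Gronwall growth factor is \(e^{C|\theta|/d}\), so it is essential that \(\operatorname{Cov}_\theta(G,f)\) is controlled by \(d^{-1}\mathbb E_\theta G\) rather than by something larger. That requires the uniform Hessian bound, which uses only \(\|f\|_\infty\le1\). The differentiation under the integral is routine because \(f\) is bounded and smooth. All constants are independent of \(v\), \(d\), \(N\), so the result is uniform.
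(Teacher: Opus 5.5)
Your proposal is correct and follows the paper's proof essentially step for step: the great-circle Bernstein bound \(\|\nabla^2 f\|_{\rm op}\le9\), the Bakry--\'Emery Poincar\'e inequality for the tilted law, Gronwall for \(\mathbb E_\theta\|\nabla f\|^2\) starting from \(3(d+1)/N\), and integrating \(\Lambda''=\operatorname{Var}_\theta f\) twice. The one missing detail is that the curvature bound \(d-2-9|\theta|\ge c'd\) requires \(d\ge3\), where \(d-2\ge d/3\); for \(d=2\) one has \(d-2=0\) and the bound fails, so the paper treats that case separately, using \(N=2\), oddness and \(|f|\le1\) to get \(\mathbb E e^{\theta f}=\mathbb E\cosh(\theta f)\le e^{\theta^2/2}=e^{\theta^2/N}\).
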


\begin{proof}
Put \(F_v(x)=\langle v,q_{3,d}(x)\rangle\).  Oddness of \(q_{3,d}\) and
\(\mathbb E_{\sigma_d}[q_{3,d}(X)q_{3,d}(X)^{\mathsf T}]=I_N/N\),
from Section~\ref{subsec:cubic-harmonic-kernel}, give
\(\mathbb E_{\sigma_d}F_v=0\),
and \(\mathbb E_{\sigma_d}F_v^2=\|v\|^2/N=N^{-1}\).
Also, \(|F_v(x)|\le\|v\|\|q_{3,d}(x)\|=1\).
For \(d=2\), we have \(N=2\), and oddness gives
\(\mathbb E_{\sigma_d}e^{\theta F_v}
=\mathbb E_{\sigma_d}\cosh(\theta F_v)
\le e^{\theta^2/2}=e^{\theta^2/N}\), which proves the claim.
We may therefore assume \(d\ge3\).
On each unit-speed great circle, \(F_v\) is a trigonometric polynomial
of degree at most three.  Applying the single-sphere part of the
Bernstein argument in Step 2 of the proof of
Lemma~\ref{lem:cubic-compensated-row} gives
\(\|\nabla^2F_v\|_{\rm op}\le9\|F_v\|_\infty\le9\).
Let \(\nu_t(dx):=e^{tF_v(x)}\,d\sigma_d(x)/
\mathbb E_{\sigma_d}e^{tF_v}\), and write \(\mathbb E_t\) for
expectation under \(\nu_t\).
For a density proportional to \(e^{-V}\) relative to Riemannian volume,
the Bakry--{\'E}mery Ricci tensor is
\(\operatorname{Ric}+\nabla^2V\)
\cite[p.~377]{WeiWylie2009}.  Taking \(V=-tF_v\) gives
\(\operatorname{Ric}_{\nu_t}=\operatorname{Ric}-t\nabla^2F_v\).
On the unit sphere \(\mathbb S^{d-1}\),
\(\operatorname{Ric}(\xi,\xi)=(d-2)\|\xi\|^2\).
Hence, taking \(c_0\le1/54\), for \(|t|\le c_0d\) and every tangent
vector \(\xi\), we obtain
\[
 \operatorname{Ric}_{\nu_t}(\xi,\xi)
 \ge(d-2-9|t|)\|\xi\|^2\ge\frac d6\|\xi\|^2.
\]
The curvature criterion and its spectral-gap consequence
\cite[Proposition~3 and p.~203]{BakryEmery1985} therefore give
\begin{equation}
 \operatorname{Var}_t(H)\le\frac6d\mathbb E_t\|\nabla H\|^2
 \qquad\text{for every smooth }H.
 \label{eq:cubic-directional-poincare}
\end{equation}

Put \(G=\|\nabla F_v\|^2\).  Using
\(\|\nabla^2F_v\|_{\rm op}\le9\), we obtain
\[
 \nabla G=2\nabla^2F_v(\nabla F_v,\cdot),\qquad
 \|\nabla G\|^2\le4\|\nabla^2F_v\|_{\rm op}^2G\le324G.
\]
Differentiating the normalized expectation under \(\nu_t\), applying
Cauchy--Schwarz, and using \eqref{eq:cubic-directional-poincare} for
\(H=G\) and \(H=F_v\) gives
\begin{equation}
 \left|\frac d{dt}\mathbb E_tG\right|
 =|\operatorname{Cov}_t(G,F_v)|
 \le\sqrt{\operatorname{Var}_t(G)\operatorname{Var}_t(F_v)}
 \le\frac C d\mathbb E_tG.
 \label{eq:cubic-directional-energy-derivative}
\end{equation}
Since \(F_v\in\cH_{3,d}\), we have
\(-\Delta F_v=3(d+1)F_v\), where
\(\Delta=\operatorname{div}\nabla\) is the spherical Laplace--Beltrami
operator.  Integration by parts under \(\nu_0=\sigma_d\) gives
\[
 \mathbb E_0G=-\mathbb E_{\sigma_d}(F_v\Delta F_v)
 =3(d+1)\mathbb E_{\sigma_d}F_v^2=\frac{3(d+1)}N.
\]
Gronwall's inequality applied to
\eqref{eq:cubic-directional-energy-derivative}, in both directions from
\(t=0\), yields
\[
 \mathbb E_tG\le e^{C|t|/d}\frac{3(d+1)}N\le\frac{Cd}N,
 \qquad |t|\le c_0d.
\]
By \eqref{eq:cubic-directional-poincare}, this implies
\(\operatorname{Var}_t(F_v)\le(6/d)\mathbb E_tG\le C/N\) throughout
the same interval.
Finally, let \(\Lambda_v(\theta)=\log\mathbb E_{\sigma_d}e^{\theta F_v}\).
Then \(\Lambda_v(0)=\Lambda_v'(0)=0\) and
\(\Lambda_v''(t)=\operatorname{Var}_t(F_v)\), so Taylor's formula with
integral remainder gives
\[
 \Lambda_v(\theta)
 =\theta^2\int_0^1(1-u)\operatorname{Var}_{u\theta}(F_v)\,du
 \le\frac{C\theta^2}N,\qquad |\theta|\le c_0d.
\]
This proves \eqref{eq:cubic-directional-mgf}.
\end{proof}

We now prove the uniform quadratic bound in
Lemma~\ref{lem:cubic-scalar-branch-coercivity}, beginning with the
small-\(z\) estimate obtained from \eqref{eq:cubic-directional-mgf}.

\begin{lemma}[Uniform quadratic bound for the scalar potential]
\label{lem:cubic-scalar-branch-coercivity}
For every compact
\(K\subset(0,r_{\mathrm{tens}})\), there are
\(\kappa_K>0\) and \(d_K<\infty\) such that, uniformly for
\(r\in K\), \(d\ge d_K\), and \(z\ge0\),
\begin{equation*}
 v_{d,r}(z)\le-4\kappa_Kz^2.
\end{equation*}
\end{lemma}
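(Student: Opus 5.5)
The plan is to split \(z\ge0\) into three ranges: a small-\(z\) range \(0\le z\le z_0/d\) (or more precisely \(rzd\) small compared with a multiple of \(d\)), a middle range \(z_0/d\le z\le Z\), and a large range \(z\ge Z\). On each range I will prove a bound of the form \(-cz^2\) with \(c\) depending only on \(K\), and then take \(4\kappa_K\) to be the minimum of these constants.

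\emph{Large \(z\).} The convexity bound \(D(P_B\Vert\gamma_N)\le\tr B/(2N)\) recorded before Lemma~\ref{lem:cubic-polar-cross-entropy} gives \(v_{d,r}(z)\le rz/2-rz^2/4\). For \(z\ge Z:=4\) this is at most \(-rz^2/8\le-(\min K)z^2/8\), uniformly in \(d\).

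\emph{Small \(z\).} I will bound the relative entropy by the \(\chi^2\)-divergence, \(D(P_{sdI_N}\Vert\gamma_N)\le\log(1+\chi^2)\). For Gaussian location mixtures,
\[
1+\chi^2=\mathbb E_{u,u'}\exp\{sd\langle u,u'\rangle\}
\]
with \(u,u'\sim\mu_d\) independent. Conditionally on \(u'\), I apply Lemma~\ref{lem:cubic-directional-mgf} with \(v=u'\) and \(\theta=sd\le c_0d\). This requires \(s\le c_0\), and gives \(1+\chi^2\le e^{Cs^2d^2/N}\). Hence
\[
f_d(s)\le Cs^2d/N\le C's^2/d^2.
\]
This crude bound loses the mean-zero structure. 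A better route is to expand to second order: by oddness \(\mathbb E e^{\theta\langle u,u'\rangle}\) has no linear term, so \(\log(1+\chi^2)\le\chi^2\) and then \(\chi^2\le\exp(Cs^2d^2/N)-1\le 2Cs^2d^2/N\) when \(s^2d^2/N\) is small. Either way, \(d^{-1}D\le Cs^2d/N\) for \(s\le c_0\), i.e., \(f_d(s)\le C s^2/d^2\). With \(s=rz\) this gives \(v_{d,r}(z)\le Cr^2z^2/d^2-rz^2/4\le-rz^2/8\) once \(d\ge d_K\), for all \(z\le c_0/\max K\). So the small range is in fact a fixed interval \([0,z_1]\), and no dependence on \(d\) in the cutoff is needed.

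\emph{Middle range.} For \(z\in[z_1,Z]\) I use the uniform convergence \eqref{eq:cubic-scalar-potential-uniform-limit}. The limit \(v_r(z)\) is continuous in \((r,z)\) and, by Lemma~\ref{lem:cubic-scalar-threshold} and the remark following it, strictly negative on the compact set \(K\times[z_1,Z]\) because \(r<r_{\mathrm{tens}}\). Hence \(v_r(z)\le-2\epsilon_K\) there. Uniform convergence then gives \(v_{d,r}(z)\le-\epsilon_K\le-(\epsilon_K/Z^2)z^2\) for \(d\ge d_K\).

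Taking \(4\kappa_K=\min\{\min K/8,\ \epsilon_K/Z^2\}\) finishes the argument. The one point needing care is the joint continuity and strict negativity of \(v_r\) on the compact set. This follows because \(\mathcal D_3\) is \(1/2\)-Lipschitz and \(v_r(z)<0\) for \(z>0\), \(r<r_{\mathrm{tens}}\); the latter holds since \(v_r(z)\le\sup_q[\tfrac r4q^6+\tfrac12(\log(1-q)+q)]\) and equality analysis via completing the square shows a positive value would force the \(q\)-supremum to be at least that value. I expect the small-\(z\) \(\chi^2\) step to be the main technical point, and it is handled entirely by Lemma~\ref{lem:cubic-directional-mgf}.
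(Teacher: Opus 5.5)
Your proposal is correct and follows the paper's proof essentially step for step. It uses the same three ranges: the Jensen/$\chi^2$ bound $D(P_{sdI_N}\Vert\gamma_N)\le\log\mathbb E\,e^{sd\,Z_{3,d}(X,X')}$ combined with Lemma~\ref{lem:cubic-directional-mgf} on a fixed interval $rz\le s_0$; strict negativity of $v_r$ together with the uniform convergence \eqref{eq:cubic-scalar-potential-uniform-limit} on the compact middle range; and the trace bound $v_{d,r}(z)\le rz/2-rz^2/4$ for $z\ge4$. Your aside that the first small-$z$ bound ``loses the mean-zero structure'' is unnecessary, since that bound already gives $f_d(s)\le Cs^2/d^2$, which is all the argument needs.
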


\begin{proof}
We first bound \(D(P_{sdI_N}\Vert\gamma_N)\) for small \(s\).
Recall that the likelihood-ratio function is \(Z_B=dP_B/d\gamma_N\).
Jensen's inequality under \(P_B\) gives
\[
 D(P_B\Vert\gamma_N)
 =\mathbb E_{P_B}\log Z_B
 \le\log\mathbb E_{P_B}Z_B
 =\log\mathbb E_{\gamma_N}Z_B^2.
\]
To estimate \(\mathbb E_{\gamma_N}Z_B^2\), let \(u,u'\) be independent with
law \(\mu_d\), and let \(g\sim\gamma_N\) be independent of both.
Here \(\mathbb E_{\gamma_N}Z_B^2=\mathbb E_g[Z_B(g)^2]\).
Expanding \(Z_B(g)^2\) and integrating first over \(g\), we obtain
\[
\begin{aligned}
 \mathbb E_g[Z_B(g)^2]
 &=\mathbb E_{u,u'}\left[
   e^{-\frac12u^{\mathsf T}Bu-\frac12u'^{\mathsf T}Bu'}
   \,\mathbb E_g e^{\langle g,B^{1/2}(u+u')\rangle}\right]\\
 &=\mathbb E_{u,u'}\exp\left(
   \frac12\|B^{1/2}(u+u')\|^2
   -\frac12u^{\mathsf T}Bu-\frac12u'^{\mathsf T}Bu'\right)\\
 &=\mathbb E_{u,u'}e^{u^{\mathsf T}Bu'}.
\end{aligned}
\]
The second equality uses
\(\mathbb E_g e^{\langle g,h\rangle}=e^{\|h\|^2/2}\) with
\(h=B^{1/2}(u+u')\), keeping \(u,u'\) fixed.
Now take \(B=sdI_N\), \(u=q_{3,d}(X)\), and \(u'=q_{3,d}(X')\), where
\(X,X'\) are independent with law \(\sigma_d\).  Substituting
\(\mathbb E_{\gamma_N}Z_B^2=\mathbb E_{u,u'}e^{u^{\mathsf T}Bu'}\)
into \(D(P_B\Vert\gamma_N)\le\log\mathbb E_{\gamma_N}Z_B^2\)
and using
\(\langle q_{3,d}(X),q_{3,d}(X')\rangle=Z_{3,d}(X,X')\), we obtain
\begin{equation}
 D(P_{sdI_N}\Vert\gamma_N)
 \le
 \log\mathbb E_{X,X'}
 \exp\{sd\,Z_{3,d}(X,X')\}
 \le C\frac{s^2d^2}{N}
 \le C\frac{s^2}{d},
\label{eq:cubic-small-channel-chi-square}
\end{equation}
for \(0\le s\le s_0\), where \(s_0>0\) is absolute.
For the second inequality, condition on \(X\)
and apply Lemma~\ref{lem:cubic-directional-mgf} with
\(v=q_{3,d}(X)\) and \(\theta=sd\).  Since \(\|v\|=1\), the lemma gives
the bound \(Cs^2d^2/N\) whenever \(s\le s_0\le c_0\).

Write \(K=[r_-,r_+]\) after enlarging it harmlessly to an interval.  With
\(s=rz\), \eqref{eq:cubic-small-channel-chi-square} yields, for all large
\(d\),
\[
 v_{d,r}(z)
 \le C\frac{r^2z^2}{d^2}-\frac r4z^2
 \le-\frac{r_-}{8}z^2,
 \qquad 0\le rz\le s_0.
\]
To cover a bounded interval away from zero, choose
\(Z>\max\{4,s_0/r_+\}\).  Strict negativity of \(v_r(z)\) for
\(r\in K\) and \(z>0\), together with continuity, gives
\[
 c_K:=\frac12\min_{\substack{r\in K\\s_0/r_+\le z\le Z}}
       \frac{-v_r(z)}{z^2}>0.
\]
By \eqref{eq:cubic-scalar-potential-uniform-limit}, for all large \(d\),
\(\sup_{r\in K,\,0\le z\le Z}|v_{d,r}(z)-v_r(z)|
\le c_K(s_0/r_+)^2\).  Hence, uniformly for \(r\in K\) and
\(s_0/r_+\le z\le Z\),
\[
 v_{d,r}(z)\le v_r(z)+c_K\left(\frac{s_0}{r_+}\right)^2
 \le-2c_Kz^2+c_K\left(\frac{s_0}{r_+}\right)^2\le-c_Kz^2.
\]
It remains to treat \(z\ge Z\).  We use
\[
 D(P_B\Vert\gamma_N)
 =\frac12\frac{\tr B}{N}-I(u;B^{1/2}u+g)
 \le\frac12\frac{\tr B}{N}
\]
where the inequality follows from nonnegativity of mutual information.
Substituting \(B=rzdI_N\), we obtain
\[
 v_{d,r}(z)\le\frac r2z-\frac r4z^2
 \le-\frac{r_-}{8}z^2,\qquad z\ge Z,
\]
because \(Z>4\).  Choose \(\kappa_K>0\) with
\(4\kappa_K\le\min\{r_-/8,c_K\}\).
Combining the three ranges gives \(v_{d,r}(z)\le-4\kappa_Kz^2\) for
all \(z\ge0\), uniformly for \(r\in K\) and all sufficiently large
\(d\), proving Lemma~\ref{lem:cubic-scalar-branch-coercivity}.
\end{proof}

\subsection{Scalar maximizers and a trace perturbation}
\label{subsec:cubic-matrix-potential}

Lemma~\ref{lem:cubic-scalar-branch-coercivity} gives
\({\cal V}_{d,r}(zI_N/N)\le-4\kappa_Kz^2\) for \(z\ge0\), uniformly
for \(r\) in a compact subset \(K\subset(0,r_{\mathrm{tens}})\) and all
sufficiently large \(d\).  The objective of this subsection is the matrix
bound in Proposition~\ref{prop:cubic-matrix-branch-gap},
\[
 {\cal V}_{d,r}(Q)\le-\kappa_KN\|Q\|_{\HS}^2,
 \qquad Q\succeq0.
\]
To reduce the matrix optimization to scalar matrices, we first combine
the first- and second-order conditions at a positive-definite local
maximum with rotational invariance to show that every such maximum is
a scalar multiple of \(I_N\).  We then apply the same argument to
\({\cal V}_{d,r}(Q)+\kappa_KN\|Q\|_{\HS}^2+\varepsilon\tr Q\).
The positive trace term forces its maximizers to be positive definite,
and letting \(\varepsilon\downarrow0\) yields the required bound.

The derivative formulas involve the posterior moments of \(u\).
For \(B\succeq0\), let \(\mathsf Y_B=B^{1/2}u+g\), so that
\(\mathsf Y_B\) has law \(P_B\).  Bayes' formula defines the posterior
law of \(u\) given \(\mathsf Y_B=y\), for every \(y\in\mathbb R^N\), by
\[
 \mu_{B,y}(dv)
 :=\frac{\exp\left(\langle y,B^{1/2}v\rangle
              -\frac12v^{\mathsf T}Bv\right)}{Z_B(y)}\,\mu_d(dv).
\]
Its mean and covariance are the continuous functions
\[
 m_B(y):=\int v\,\mu_{B,y}(dv),\qquad
 \Sigma_B(y):=\int (v-m_B(y))(v-m_B(y))^{\mathsf T}\,\mu_{B,y}(dv).
\]
Averaging over \(\mathsf Y_B\sim P_B\) defines the deterministic
second-moment matrix of the posterior mean,
\[
  M(B):=\mathbb E\{m_B(\mathsf Y_B)m_B(\mathsf Y_B)^{\mathsf T}\}.
\]
Lemma~\ref{lem:cubic-gaussian-entropy-derivatives} expresses the first
and second derivatives of \(D(P_B\Vert\gamma_N)\) in terms of
these posterior moments.  Related derivatives with respect to the Gaussian
channel matrix are given in \cite[Theorem~2]{PayaroPalomar2009}.
The proof below works directly with the signal-to-noise matrix \(B\).

\begin{lemma}[Derivatives of the Gaussian relative entropy]
\label{lem:cubic-gaussian-entropy-derivatives}
Let \(B\succ0\) and let \(H\) be symmetric.  With the gradient taken
in the Hilbert--Schmidt inner product,
\begin{align}
  \nabla_BD(P_B\Vert\gamma_N)
  &=\frac12M(B),
  \label{eq:cubic-matrix-immse}\\
  \nabla_B^2 D(P_B\Vert\gamma_N)[H,H]
  &=\frac12\mathbb E\tr\{\Sigma_B(\mathsf Y_B)H\Sigma_B(\mathsf Y_B)H\}.
  \label{eq:cubic-matrix-hessian}
\end{align}
The derivative of \(M\) is
\begin{equation}
 D M(B)[H]
 =\mathbb E\{\Sigma_B(\mathsf Y_B)H\Sigma_B(\mathsf Y_B)\}.
 \label{eq:cubic-posterior-mean-derivative}
\end{equation}
For \(B\succeq0\) and \(H\succeq0\),
\eqref{eq:cubic-matrix-immse} extends to the one-sided derivative
\[
 \left.\frac d{dt}D(P_{B+tH}\Vert\gamma_N)\right|_{t=0+}
 =\frac12\tr\{M(B)H\}.
\]
\end{lemma}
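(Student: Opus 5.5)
The plan is to write the relative entropy as a Gaussian free energy and differentiate under the integral. Set \(\mathsf Y_B=B^{1/2}u+g\). Since \(Z_B=dP_B/d\gamma_N\), the quantity \(D(P_B\Vert\gamma_N)=\mathbb E\log Z_B(\mathsf Y_B)\) is awkward to differentiate directly, because \(B^{1/2}\) enters nonlinearly. I would avoid the square root by writing the mutual-information form
\[
 D(P_B\Vert\gamma_N)=\frac12\mathbb E\,u^{\mathsf T}Bu-I(u;\mathsf Y_B)=\frac{\tr B}{2N}-I(u;\mathsf Y_B).
\]
Because the law of \(\mathsf Y_B\) given \(u\) is \(\Normal(B^{1/2}u,I_N)\), the information \(I(u;\mathsf Y_B)\) depends on \(B\) only through the law of the channel. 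Equivalently, one can use the sufficient statistic \(B^{1/2}\mathsf Y_B=Bu+B^{1/2}g\sim\Normal(Bu,B)\), whose likelihood is \(\exp(\langle w,v\rangle-\tfrac12 v^{\mathsf T}Bv)\) with \(w=B^{1/2}y\). This gives the representation
\[
 D(P_B\Vert\gamma_N)=\mathbb E\log\int\exp\Bigl(\langle w,v\rangle-\tfrac12v^{\mathsf T}Bv\Bigr)\mu_d(dv),\qquad w=Bu+B^{1/2}g,
\]
and the matrix I--MMSE identity of Pay\'aro--Palomar \cite{PayaroPalomar2009} then applies. For a self-contained route I would instead use the scalar I--MMSE identity along rays. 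Perturb \(B\) to \(B+tH\) with \(H\succeq0\), and realize \(\mathsf Y_{B+tH}\) as a pair of independent observations \((B^{1/2}u+g,(tH)^{1/2}u+g')\). Additivity of independent Gaussian channels shows that the likelihood of the pair is the product, so the posterior given the pair is \(\mu_{B,y}\) tilted by the \(tH\) term.

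Differentiating \(\mathbb E\log Z\) at \(t\) is done by Gaussian integration by parts, exactly as in Step~2(c) of Proposition~\ref{prop:cubic-Gaussian-variational-bound} and in Lemma~\ref{lem:cubic-gaussian-entropy-comparison}, together with the Nishimori identity \eqref{eq:cubic-tensor-channel-replica-identity}. The signal term contributes \(\tr(H\,\mathbb E\langle vu^{\mathsf T}\rangle)\), and the noise term cancels half of it. Along the ray this yields
\[
 \frac{d}{dt}D=\frac12\mathbb E\,\tr\{H\,m\,m^{\mathsf T}\}=\frac12\tr\{M(B+tH)H\},
\]
which is \eqref{eq:cubic-matrix-immse} in direction \(H\succeq0\). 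At \(t=0+\) the same computation gives the one-sided statement for \(B\succeq0\). Differentiation under the integral is justified by the facts that \(\|u\|=1\), so all posterior moments are bounded, and that the Gaussian first moments dominate the exponent derivatives, as in the cited lemmas. For \(B\succ0\), every symmetric \(H\) is a difference of two positive semidefinite matrices, \(B\pm\epsilon H\succ0\), and the map is smooth there. This upgrades the directional formula to a genuine Hilbert--Schmidt gradient: linearity of \(H\mapsto\tr\{M(B)H\}\) and continuity of \(M\) give Fr\'echet differentiability.

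For \eqref{eq:cubic-posterior-mean-derivative}, I would differentiate \(M(B)=\mathbb E\,m_Bm_B^{\mathsf T}\) along \(B+tH\). Again I would use the two-channel realization, in which the extra observation \((tH)^{1/2}u+g'\) tilts the posterior. Differentiating \(m\) in \(t\) via integration by parts in \(g'\) produces posterior covariances. After averaging with Nishimori identities, the terms of the form \(\mathbb E\{\Sigma H m m^{\mathsf T}\}\) combine, and the third-moment terms cancel, leaving \(\mathbb E\{\Sigma_BH\Sigma_B\}\). This is the standard derivative of the MMSE matrix, \(\frac{d}{dt}\mathrm{MMSE}=-\mathbb E\,\Sigma H\Sigma\), transported through \(M=\mathbb E uu^{\mathsf T}-\mathbb E\Sigma\). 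I would organise the computation through that identity: it suffices to show \(\frac d{dt}\mathbb E\Sigma_{B+tH}=-\mathbb E\Sigma H\Sigma\), which follows from the tower property applied to the refined observation. The Hessian formula \eqref{eq:cubic-matrix-hessian} is then immediate, since \(\nabla^2D[H,H]=\tfrac12\tr\{DM(B)[H]H\}\).

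The main obstacle is the second-order bookkeeping in \eqref{eq:cubic-posterior-mean-derivative}. In particular, I must verify that the cross terms involving third posterior moments cancel after the Nishimori identity, and that the realization via an auxiliary channel remains valid for \(H\) that is merely symmetric rather than positive semidefinite. For the latter I would compute first for \(H\succeq0\), then extend by bilinearity and polarization, using the fact that \(\mathbb E\Sigma H\Sigma\) is linear in \(H\) and that \(B\succ0\) allows both \(\pm\) perturbations.
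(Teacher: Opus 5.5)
Your proposal is correct in substance and follows the paper's skeleton. It uses the sufficiency-based two-channel realization of \(\mathsf Y_{B+tH}\) and the identity \(D(P_B\Vert\gamma_N)=\tr B/(2N)-I(u;\mathsf Y_B)\). It proves formulas first along positive-semidefinite rays, extends them to symmetric \(H\) via \(H=H_+-H_-\), and reads off the Hessian as \(\tfrac12\tr\{DM(B)[H]H\}\). The real difference is where and how you differentiate.

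The paper works only at the base point \(t=0\). The first derivative comes from the conditional vector I--MMSE identity for the increment channel \(\mathsf Y_t^H\) given \(\mathsf Y_B\). For \(M\), the tower property turns the difference quotient into \(t^{-1}\mathbb E[(m_{B,t}-m_B)(m_{B,t}-m_B)^{\mathsf T}]\). The paper then needs only the first-order \(L^2\) expansion \(m_{B,t}-m_B=\sqrt t\,\Sigma_B(\mathsf Y_B)H^{1/2}g'+O(t)\), proved by Taylor expansion in \(s=\sqrt t\), where the tilt is polynomial.

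You instead differentiate at interior points \(t>0\) of the ray, using Gaussian integration by parts in \(g'\) and the Nishimori identity. That works, and the bookkeeping you flag as the main obstacle closes cleanly. Write \(\langle\cdot\rangle\) for the posterior average and \(v^1,\dots,v^4\) for independent replicas. After integration by parts and Nishimori, \(\frac{d}{dt}\mathbb E[\langle v_a\rangle\langle v_b\rangle]\) equals \(\tfrac12\mathbb E\langle v^1_av^3_b(v^1-v^2)^{\mathsf T}H(v^3-v^4)\rangle\) plus its \((a,b)\)-transpose. This factorizes over the replica pairs into \(\mathbb E(\Sigma H\Sigma)_{ab}\), so no third posterior moments survive. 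Your route gives the formulas along the whole ray from one replica calculus, without citing I--MMSE. The paper's route avoids replica bookkeeping and the \(t^{-1/2}\) singularity, and evaluates directly at \(t=0\).

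Two statements need tightening.

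First, ``follows from the tower property'' does not by itself prove \(\frac{d}{dt}\mathbb E\Sigma_{B+tH}=-\mathbb E\Sigma H\Sigma\). The tower property only reduces this to identifying the \(L^2\) limit of \((m_{B,t}-m_B)/\sqrt t\), which is the content of the paper's Step 3. If you use integration by parts instead, then for \(B\succ0\) base the ray at \(B-\varepsilon H\), so that \(B\) is an interior point.

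Second, ``at \(t=0+\) the same computation gives the one-sided statement'' is not literally true, because the integration-by-parts formula carries a factor \(1/(2\sqrt t)\). You need \(D(P_{B+tH}\Vert\gamma_N)-D(P_B\Vert\gamma_N)=\tfrac12\int_0^t\tr\{M(B+sH)H\}\,ds\), together with continuity of \(D\) and right-continuity of \(s\mapsto M(B+sH)\) at \(s=0\), including for singular \(B\). Your extension to symmetric \(H\) and the Hessian likewise rely on continuity of \(B\mapsto M(B)\) and \(B\mapsto\mathbb E\{\Sigma_BH\Sigma_B\}\), which you assert but do not prove. The paper obtains these from \(D=\int Z_B\log Z_B\,d\gamma_N\) and \(M=\int Z_Bm_Bm_B^{\mathsf T}\,d\gamma_N\) by dominated convergence with an explicit integrable envelope; the same argument works in your setting.
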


\begin{proof}
We represent \(B+tH\), initially for \(H\succeq0\), by an additional
Gaussian observation.  The I--MMSE identity gives
\eqref{eq:cubic-matrix-immse}, and an \(L^2\) expansion of the posterior
mean gives \eqref{eq:cubic-posterior-mean-derivative}.
We then extend to arbitrary symmetric directions and to the boundary.

\noindent\textit{Step 1. Represent the matrix perturbation.}
Fix \(B\succ0\), \(H\succeq0\), and let
\(\mathsf Y_t^H=\sqrt t\,H^{1/2}u+g'\), with \(g'\sim\gamma_N\)
independent of \((u,g)\).
Writing \(\phi_N\) for the density of \(\gamma_N\), we have
\[
 \log\frac{\phi_N(y-B^{1/2}u)\phi_N(z-\sqrt t\,H^{1/2}u)}
               {\phi_N(y)\phi_N(z)}
 =\langle u,B^{1/2}y+\sqrt t\,H^{1/2}z\rangle
  -\frac12u^{\mathsf T}(B+tH)u.
\]
Thus \(S_t=B^{1/2}\mathsf Y_B+\sqrt t\,H^{1/2}\mathsf Y_t^H\)
is sufficient for \(u\), and
\(\operatorname{Law}(S_t\mid u)=\Normal((B+tH)u,B+tH)\).
Whitening \(S_t\) gives the conditional law of \(\mathsf Y_{B+tH}\).
Sufficiency and the chain rule therefore yield
\[
 I(u;\mathsf Y_{B+tH})
 =I(u;\mathsf Y_B,\mathsf Y_t^H)
 =I(u;\mathsf Y_B)+I(u;\mathsf Y_t^H\mid\mathsf Y_B).
\]

\noindent\textit{Step 2. Differentiate relative entropy.}
Let \(m_{B,t}=\mathbb E(u\mid\mathsf Y_B,\mathsf Y_t^H)\).
The vector I--MMSE identity \cite[Theorem~2]{GuoShamaiVerdu2005} gives
\[
 0\le\frac d{dt}I(u;\mathsf Y_t^H\mid\mathsf Y_B=y)
 =\frac12\mathbb E\bigl(\|H^{1/2}(u-m_{B,t})\|^2
                         \mid\mathsf Y_B=y\bigr)
 \le\frac12\|H\|_{\rm op}.
\]
Since the conditional mutual information is zero at \(t=0\), its
difference quotient is bounded by \(\|H\|_{\rm op}/2\), which is
integrable under the fixed probability law \(P_B(dy)\).
Dominated convergence and \(m_{B,0}=m_B(\mathsf Y_B)\) imply
\[
 \left.\frac d{dt}I(u;\mathsf Y_{B+tH})\right|_{t=0+}
 =\frac12\mathbb E\tr\{H\Sigma_B(\mathsf Y_B)\}.
\]
Using \(\mathbb E\Sigma_B(\mathsf Y_B)=N^{-1}I_N-M(B)\) and
\(D(P_B\Vert\gamma_N)=\tr B/(2N)-I(u;\mathsf Y_B)\), we obtain
\[
 \left.\frac d{dt}D(P_{B+tH}\Vert\gamma_N)\right|_{t=0+}
 =\frac{\tr H}{2N}
  -\frac12\tr\{H(N^{-1}I_N-M(B))\}
 =\frac12\tr\{HM(B)\}.
\]

\noindent\textit{Step 3. Differentiate the posterior second moment.}
Step~1 and the tower property give
\(\mathbb E(m_{B,t}m_{B,t}^{\mathsf T})=M(B+tH)\) and
\(\mathbb E(m_{B,t}\mid\mathsf Y_B)=m_B(\mathsf Y_B)\).
Consequently,
\[
 \frac{M(B+tH)-M(B)}t
 =\mathbb E\left\{
 \frac{m_{B,t}-m_B(\mathsf Y_B)}{\sqrt t}
 \left(\frac{m_{B,t}-m_B(\mathsf Y_B)}{\sqrt t}\right)^{\mathsf T}
 \right\}.
\]
We show that the normalized difference converges in \(L^2\) to
\(\Sigma_B(\mathsf Y_B)H^{1/2}g'\).
For fixed \(\mathsf Y_B=y,u,g'\), Bayes' formula gives
\[
 \nu_s(dv)=\frac{e^{k_s(v)}\mu_{B,y}(dv)}
                   {\int e^{k_s(w)}\mu_{B,y}(dw)},\qquad
 m_s=\int v\,\nu_s(dv),\qquad 0\le s\le1,
\]
where \(k_s(v)=s(g')^{\mathsf T}H^{1/2}v
+s^2(u^{\mathsf T}Hv-v^{\mathsf T}Hv/2)\).
In particular, \(m_0=m_B(y)\) and \(m_{\sqrt t}=m_{B,t}\).
Differentiating these normalized integrals gives
\[
 \begin{aligned}
 m_s'&=\int(v-m_s)\partial_s k_s(v)\,\nu_s(dv),\\
 m_s''&=\int(v-m_s)\left\{\partial_s^2k_s(v)
   +\left(\partial_s k_s(v)-\int\partial_s k_s(w)\,\nu_s(dw)\right)^2
                         \right\}\nu_s(dv).
 \end{aligned}
\]
At zero, \(m_0'=\Sigma_B(y)H^{1/2}g'\).
For \(0\le s\le1\), the unit norms of \(u,v\) give
\[
 \|\partial_s k_s\|_\infty
 \le\|H\|_{\rm op}^{1/2}\|g'\|+3\|H\|_{\rm op},
 \qquad \|\partial_s^2k_s\|_\infty\le3\|H\|_{\rm op}.
\]
Since \(\|v-m_s\|\le2\), it follows that
\[
 \|m_s''\|
 \le2\|\partial_s^2k_s\|_\infty+8\|\partial_s k_s\|_\infty^2
 \le C_H(1+\|g'\|^2).
\]
Taylor's formula yields
\(m_{B,t}-m_B(y)=\sqrt t\,\Sigma_B(y)H^{1/2}g'+\mathcal R_t\),
where \(\mathcal R_t=\int_0^{\sqrt t}(\sqrt t-s)m_s''\,ds\).
The bound
\[
 \mathbb E(\|\mathcal R_t\|^2\mid\mathsf Y_B=y)
 \le\frac{C_H^2t^2}{4}\mathbb E(1+\|g'\|^2)^2
 =\frac{C_H^2}{4}(N^2+4N+1)t^2=:C_{N,H}t^2
\]
is uniform in \(y\), since \(g'\) is independent of \(\mathsf Y_B\).
It proves the required \(L^2\) convergence under the joint law of
\((u,g,g')\), with \(B,H,N\) fixed.
Moreover,
\(\mathbb E\|\Sigma_B(\mathsf Y_B)H^{1/2}g'\|^2
=\mathbb E\tr\{H\Sigma_B(\mathsf Y_B)^2\}\le\|H\|_{\rm op}\).
Cauchy--Schwarz therefore gives
\[
 \left\|\frac{M(B+tH)-M(B)}t
       -\mathbb E\{\Sigma_B(\mathsf Y_B)H\Sigma_B(\mathsf Y_B)\}
 \right\|_{\HS}
 \le2\sqrt{C_{N,H}\|H\|_{\rm op}t}+C_{N,H}t\longrightarrow0,
\]
proving \eqref{eq:cubic-posterior-mean-derivative} for \(H\succeq0\).

\noindent\textit{Step 4. Extend the derivatives and obtain the Hessian.}
Using \(dP_B=Z_B\,d\gamma_N\), write
\[
 D(P_B\Vert\gamma_N)=\int Z_B\log Z_B\,d\gamma_N,\qquad
 M(B)=\int Z_Bm_Bm_B^{\mathsf T}\,d\gamma_N.
\]
On a compact neighborhood \(\mathcal U\) within the cone of
positive-definite matrices, \(B\mapsto B^{1/2}\) has bounded continuous
derivative.  Differentiating the defining integrals and their quotient
gives
\[
 |D_BZ_B(y)[H]|\le C_{\mathcal U}(1+\|y\|)Z_B(y)\|H\|_{\HS},
 \qquad
 \|D_Bm_B(y)[H]\|\le2C_{\mathcal U}(1+\|y\|)\|H\|_{\HS}.
\]
Together with \(Z_B(y)\le e^{C_{\mathcal U}\|y\|}\),
\(|\log Z_B(y)|\le C_{\mathcal U}(1+\|y\|)\), and \(\|m_B(y)\|\le1\),
these bounds imply
\[
 \begin{aligned}
 &|D_B[Z_B(y)\log Z_B(y)][H]|
 +\|D_B[Z_B(y)m_B(y)m_B(y)^{\mathsf T}][H]\|_{\HS}\\
 &\qquad\le C_{\mathcal U}(1+\|y\|)^2
                    e^{C_{\mathcal U}\|y\|}\|H\|_{\HS}.
 \end{aligned}
\]
The right-hand side is \(\gamma_N\)-integrable for fixed \(N\), so both
integrals are continuously differentiable.
Writing \(H=H_+-H_-\), with \(H_\pm\succeq0\), extends
\eqref{eq:cubic-matrix-immse} and
\eqref{eq:cubic-posterior-mean-derivative} to every symmetric direction.
Differentiating the former now gives
\[
 \nabla_B^2D(P_B\Vert\gamma_N)[H,H]
 =\frac12\tr\{HDM(B)[H]\}
 =\frac12\mathbb E\tr\{\Sigma_B(\mathsf Y_B)H\Sigma_B(\mathsf Y_B)H\}.
\]

For the boundary assertion, \(B_j\to B\) with
\(0\preceq B_j\preceq RI_N\) implies pointwise convergence of
\(Z_{B_j}(y)\) and \(m_{B_j}(y)\).
The bounds
\(e^{-\sqrt R\|y\|-R/2}\le Z_{B_j}(y)\le e^{\sqrt R\|y\|}\) and
\(\|m_{B_j}(y)\|\le1\) give the common integrable envelope
\((1+\sqrt R\|y\|+R/2)e^{\sqrt R\|y\|}\)
for the two integrands above.
Thus relative entropy and \(M\) are continuous on \(B\succeq0\).
For \(B,H\succeq0\), integrate \eqref{eq:cubic-matrix-immse} along
\(B+\varepsilon I_N+sH\) and let \(\varepsilon\downarrow0\).
Since \(0\le\tr\{M(B+\varepsilon I_N+sH)H\}\le\|H\|_{\rm op}\),
dominated convergence gives
\[
 \frac{D(P_{B+tH}\Vert\gamma_N)-D(P_B\Vert\gamma_N)}t
 =\frac1{2t}\int_0^t\tr\{M(B+sH)H\}\,ds
 \longrightarrow\frac12\tr\{M(B)H\}.
\]
\end{proof}

We now apply Lemma~\ref{lem:cubic-gaussian-entropy-derivatives} to the
matrix potential.  For \(Q\succ0\), the chain rule with \(B=rNdQ\) gives
\begin{align}
  \nabla_Q{\cal V}_{d,r}(Q)
  &=\frac{rN}{2}\{M(B)-Q\},
  \label{eq:cubic-branch-gradient}\\
  D^2{\cal V}_{d,r}(Q)[H,H]
  &=\frac{rN}{2}
  \left[
    rNd\,\mathbb E\tr\{\Sigma_B(\mathsf Y_B)H\Sigma_B(\mathsf Y_B)H\}
    -\|H\|_{\HS}^2
  \right].
  \label{eq:cubic-branch-hessian}
\end{align}

For \(B\succ0\), define the self-adjoint map on symmetric matrices by
\[
 {\cal T}_B(H):=\mathbb E\{\Sigma_B(\mathsf Y_B)H\Sigma_B(\mathsf Y_B)\}.
\]
Since the expectation in \eqref{eq:cubic-branch-hessian} equals
\(\langle H,{\cal T}_B(H)\rangle_{\HS}\), the gradient formula
\eqref{eq:cubic-branch-gradient} and the nonpositivity of the Hessian
imply that every positive-definite local maximum satisfies
\begin{equation}
 Q=M(rNdQ),\qquad
 \lambda_{\max}({\cal T}_{rNdQ})\le\frac1{rNd}.
 \label{eq:cubic-local-maximizer-conditions}
\end{equation}
To combine \eqref{eq:cubic-local-maximizer-conditions} with rotational
invariance, we need to identify the eigenspace corresponding to the
largest eigenvalue of \({\cal T}_B\).
Lemma~\ref{lem:cubic-posterior-covariance-irreducible} establishes the
irreducibility needed to show that this eigenspace is one-dimensional
and spanned by a positive-definite matrix.

\begin{lemma}[Irreducibility of the posterior covariance family]
\label{lem:cubic-posterior-covariance-irreducible}
Assume \(d\ge3\) and \(B\succ0\).
The family \(\{\Sigma_B(y):y\in\mathbb R^N\}\) has no common
nontrivial invariant subspace, meaning that the only subspaces
\(S\subseteq\mathbb R^N\) satisfying \(\Sigma_B(y)S\subseteq S\) for every
\(y\in\mathbb R^N\) are \(\{0\}\) and \(\mathbb R^N\).
Hence \({\cal T}_B\) is irreducible
on the cone of positive-semidefinite matrices.
\end{lemma}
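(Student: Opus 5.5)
Suppose a nontrivial subspace \(S\) is invariant under every \(\Sigma_B(y)\); we will derive a contradiction. Since each \(\Sigma_B(y)\) is symmetric, \(S^\perp\) is also invariant, so every \(\Sigma_B(y)\) commutes with the orthogonal projection \(P\) onto \(S\). The strategy is to test this commutation in a regime where \(\Sigma_B(y)\) becomes explicit, using the posterior formula
\[
 \mu_{B,y}(dv)\propto\exp\bigl(\langle B^{1/2}y,v\rangle-\tfrac12v^{\mathsf T}Bv\bigr)\mu_d(dv).
\]
Because \(B^{1/2}\) is invertible, \(a:=B^{1/2}y\) ranges over all of \(\mathbb R^N\). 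Hence \(\Sigma_B(y)\) is the covariance of the exponential tilt of the fixed measure \(\tilde\mu(dv)\propto e^{-v^{\mathsf T}Bv/2}\mu_d(dv)\) by the linear field \(\langle a,v\rangle\).

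First we extract information from small tilts. Write \(\Sigma(a)\) for this covariance. It is a real-analytic function of \(a\), since \(\tilde\mu\) is supported on the compact set \(q_{3,d}(\Sph^{d-1})\). All of its Taylor coefficients at \(a=0\) must therefore commute with \(P\). These coefficients are the joint cumulant tensors \(\kappa_k\) of \(\tilde\mu\) of order \(k\ge2\), contracted with \(k-2\) copies of \(a\). Take \(k=2\) and then \(k=3\) with all directions \(a\). Commutation with \(P\) for every \(a\) means that every cumulant tensor is block-diagonal with respect to \(S\oplus S^\perp\): all mixed entries vanish, \(\kappa_k(s,t,\ldots)=0\) whenever the arguments include both a vector of \(S\) and a vector of \(S^\perp\). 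For \(k\ge3\) this follows by polarizing the quadratic form \(a\mapsto\kappa_k(a,\ldots,a,\cdot,\cdot)\).

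Vanishing of all mixed cumulants implies that the log-Laplace transform of \(\tilde\mu\) splits as \(\Lambda(a)=\Lambda_1(Pa)+\Lambda_2((I-P)a)\). By analyticity and uniqueness of the Laplace transform, \(\tilde\mu\) is then the product of its marginals on \(S\) and \(S^\perp\). Equivalently, \(Pv\) and \((I-P)v\) are independent under \(\tilde\mu\).

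Next we show that this independence is impossible. The measure \(\tilde\mu\) is mutually absolutely continuous with respect to \(\mu_d\), with a positive continuous density. Therefore the support of \(\tilde\mu\) is the compact manifold \(\mathcal M=q_{3,d}(\Sph^{d-1})\). Under independence this support must equal the product set \(\operatorname{supp}(P_\#\tilde\mu)\times\operatorname{supp}((I-P)_\#\tilde\mu)\).

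To rule this out we argue by dimension and algebra. Every point of \(\mathcal M\) has norm one, so \(\|Pv\|^2+\|(I-P)v\|^2=1\) on the support. On a product of two sets, this forces \(\|Pv\|\) to be constant on the first factor and \(\|(I-P)v\|\) to be constant on the second. Oddness of \(q_{3,d}\) together with the covariance \(N^{-1}I\) under \(\mu_d\) shows that both projections are nonzero. So \(\|Pq_{3,d}(x)\|^2\equiv c\in(0,1)\) is constant on the sphere.

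The function \(x\mapsto\|Pq_{3,d}(x)\|^2\) is the diagonal of a reproducing kernel \(\frac1N\sum\phi_k(x)\phi_k(y)\), where \((\phi_k)\) is an orthonormal basis of a subspace \(V\subseteq\cH_{3,d}\). Its constancy alone is not contradictory, since it holds for rotation-invariant subspaces. The product structure, however, also forces the joint law of \((Pv,(I-P)v)\) to factor. In particular the cross-moment
\[
 \mathbb E_{\tilde\mu}\bigl[\|Pv\|^2\langle e,v\rangle^2\bigr],\qquad e\in S^\perp,
\]
must factor. We would check that this fails, using that products of degree-three harmonics from \(V\) and from \(V^\perp\) have nonzero correlation in degree six. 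Here \(d\ge3\) enters: for \(d\ge3\), \(\cH_{3,d}\) is an irreducible \(SO(d)\)-representation with rich multiplication structure.

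The main obstacle is this final step, excluding a nontrivial product decomposition of \(\mathcal M\). The route I would try first is dimensional. A product of two compact sets whose union spans, with \(\mathcal M\) a connected \((d-1)\)-manifold, forces one factor to be finite. By oddness and connectedness that factor is then a single point. But this contradicts the mean-zero property of each marginal, which is inherited from the evenness of \(e^{-v^{\mathsf T}Bv/2}\) and the oddness of \(\mu_d\): a single-point marginal would have to sit at the origin, which is impossible since \(\|Pv\|^2=c>0\).

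Finally, irreducibility of \({\cal T}_B\) on the positive-semidefinite cone follows. If \(H\succeq0\) is nonzero but \({\cal T}_B(H)\) has a kernel vector \(w\), then \(H\Sigma_B(y)w=0\) for \(P_B\)-a.e. \(y\), and hence for all \(y\) by continuity. So \(\Sigma_B(y)\) maps the invariant span generated by \(w\) into \(\ker H\). This produces a common invariant subspace, which the first part excludes.
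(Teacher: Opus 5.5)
Up to the Cartesian decomposition your argument is sound and follows the paper. Commutation of every $\Sigma_B(y)$ with $P$ kills the mixed second derivatives of the log-Laplace transform of $\tilde\mu$; the paper integrates this directly over $[0,1]^2$ rather than going through cumulants. This gives independence of $Pv$ and $(I-P)v$, hence $\mathcal M=P\mathcal M\times(I-P)\mathcal M$, and your observation that $\|Pv\|^2$ is constant on $\mathcal M$ is correct.

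The gap is the step meant to exclude this decomposition, which is the real content of the lemma. Your cross-moment test is vacuous: since $\|Pv\|^2\equiv c$ on the support of $\tilde\mu$, the identity $\mathbb E_{\tilde\mu}[\|Pv\|^2\langle e,v\rangle^2]=c\,\mathbb E_{\tilde\mu}\langle e,v\rangle^2$ holds automatically. Your dimensional claim is false: a connected compact manifold that is an orthogonal Cartesian product of two compact sets need not have a finite factor. The Clifford torus $\{(x,y)\in\mathbb R^2\oplus\mathbb R^2:\|x\|^2=\|y\|^2=1/2\}$ is a connected compact $2$-manifold. It spans $\mathbb R^4$, lies on the unit sphere, is invariant under $v\mapsto-v$, has $\|Pv\|^2\equiv1/2$ and mean-zero marginals, and is a product of two circles. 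Every property you invoke is shared by this example, so your argument uses nothing specific to $\mathcal M=q_{3,d}(\Sph^{d-1})$ that products cannot also have.

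The missing ingredient is the intrinsic geometry of $\mathcal M$. The paper shows that for $d\ge3$ the map $q_{3,d}$ is injective, because $1-Z_{3,d}(t)$ vanishes on $[-1,1]$ only at $t=1$. It also shows that the pullback metric is $3(d+1)/(d-1)$ times the round metric. So $\mathcal M$ is an embedded rescaled round sphere with strictly positive sectional curvature; this, not irreducibility of $\cH_{3,d}$, is where $d\ge3$ enters. A Cartesian decomposition splits each tangent space as $(T_z\mathcal M\cap S)\oplus(T_z\mathcal M\cap S^\perp)$, with ranks constant by connectedness. The constant-rank and inverse function theorems then make $\mathcal M$ locally isometric to a Riemannian product. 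If both ranks were positive, a plane spanned by one vector from each factor would have zero curvature, a contradiction. Hence one projection has zero differential and is constant on $\mathcal M$; oddness forces the constant to be $0$, contradicting that $\mathcal M$ spans $\mathbb R^N$. Only this final step coincides with your single-point argument.

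Your last paragraph also has a gap. From $w^{\mathsf T}{\cal T}_B(H)w=0$ you only get $\Sigma_B(y)w\in\ker H$ for that one vector $w$, which does not produce an invariant subspace. The property you are aiming at, ${\cal T}_B(H)\succ0$ for every nonzero $H\succeq0$, is stronger than irreducibility and is not implied by the first part. Irreducibility means that ${\cal T}_B$ preserves no face ${\cal F}_S=\{H\succeq0:\operatorname{range}H\subseteq S\}$ with $0<\dim S<N$, and this follows at once. If it preserved such a face, then for $x\in S$ one has $\mathbb E\|P_{S^\perp}\Sigma_B(\mathsf Y_B)x\|^2=\tr\{P_{S^\perp}{\cal T}_B(xx^{\mathsf T})\}=0$. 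Continuity and full support of $P_B$ then give $\Sigma_B(y)S\subseteq S$ for all $y$.
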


\begin{proof}
We prove the first assertion by contradiction.  A common nontrivial
invariant subspace for all \(\Sigma_B(y)\) would force the log-Laplace
transform introduced below to split additively, and hence the feature
image \(q_{3,d}(\Sph^{d-1})\) to split as an orthogonal Cartesian product.
The strictly positive sectional curvature of this image excludes two
positive-dimensional factors, while oddness of \(q_{3,d}\) and the fact
that its image spans \(\mathbb R^N\) exclude a singleton factor.
Finally, we prove the second assertion by showing that \({\cal T}_B\)
preserves no nonzero proper face of the positive-semidefinite cone.
Here a nonzero proper face is a face other than \(\{0\}\) or the entire
cone.  Any such invariant face would yield a common nontrivial invariant
subspace for all \(\Sigma_B(y)\), contradicting the first assertion.

Let \(\nu_B\) be the probability measure on the image
\(M_d:=q_{3,d}(\Sph^{d-1})\) whose density with respect to \(\mu_d\) is
proportional to \(\exp(-u^{\mathsf T}Bu/2)\), and put
\[
  \psi_B(\theta)
  :=\log\int_{M_d}e^{\langle\theta,u\rangle}\,d\nu_B(u).
\]
The posterior law \(\mu_{B,y}\) has density proportional to
\(e^{\langle B^{1/2}y,u\rangle}\) relative to \(\nu_B\).  Differentiating
\(\psi_B\) twice therefore gives
\begin{equation}
  \Sigma_B(y)=\nabla^2\psi_B(B^{1/2}y).
  \label{eq:cubic-posterior-Hessian}
\end{equation}
Because \(B\succ0\), the parameter \(B^{1/2}y\) ranges over all of
\(\mathbb R^N\).

Suppose that a subspace \(S\subseteq\mathbb R^N\) with \(0<\dim S<N\)
is invariant under every matrix in \eqref{eq:cubic-posterior-Hessian},
and let \(P\) be its orthogonal
projection, with \(P^\perp:=I_N-P\).  The matrices are symmetric, so \(S^\perp\) is invariant as
well and
\[
  P^\perp\nabla^2\psi_B(\theta)P=0,
  \qquad \theta\in\mathbb R^N.
\]
For \(\theta_S\in S\) and \(\theta_\perp\in S^\perp\), the mixed
second derivative in these two directions therefore satisfies
\[
 \frac{\partial^2}{\partial s\,\partial t}
   \psi_B(s\theta_S+t\theta_\perp)
 =\theta_\perp^{\mathsf T}\nabla^2\psi_B(s\theta_S+t\theta_\perp)\theta_S
 =0,\qquad s,t\in[0,1].
\]
Integrating over \([0,1]^2\) gives
\begin{equation}
  \psi_B(\theta_S+\theta_\perp)-\psi_B(0)
  =\psi_B(\theta_S)-\psi_B(0)
   +\psi_B(\theta_\perp)-\psi_B(0).
  \label{eq:cubic-mgf-factorization}
\end{equation}
Let \(U_B\sim\nu_B\).
By \eqref{eq:cubic-mgf-factorization} and uniqueness of Laplace transforms
for compactly supported laws, \(PU_B\) and \(P^\perp U_B\) are independent.
Since \(\nu_B\) has a
strictly positive density on \(M_d\), equality of supports in this
independence statement gives the orthogonal Cartesian decomposition
\begin{equation}
  M_d=PM_d\times P^\perp M_d.
  \label{eq:cubic-orbit-product}
\end{equation}

We recall why \eqref{eq:cubic-orbit-product} is impossible.
Equation~\eqref{eq:cubic-zonal-polynomial} gives
\[
 1-Z_{3,d}(t)
 =\frac{1-t}{d-1}
   \left\{(d+2)(t+1/2)^2+\frac{3(d-2)}4\right\}.
\]
For \(d\ge3\), this vanishes on \([-1,1]\) only at \(t=1\).
If \(q_{3,d}(x)=q_{3,d}(y)\), their unit norms and
\eqref{eq:cubic-addition-formula} give \(Z_{3,d}(x,y)=1\), so
\(\langle x,y\rangle=1\).  Thus \(q_{3,d}\) is injective.
The pullback of the Euclidean metric under \(q_{3,d}\) is given, for
tangent vectors \(v,w\in T_x\Sph^{d-1}\), by
\begin{equation}
  \bigl\langle Dq_{3,d}(x)v,Dq_{3,d}(x)w\bigr\rangle
  =\frac{3(d+1)}{d-1}\langle v,w\rangle.
  \label{eq:cubic-harmonic-pullback-metric}
\end{equation}
The addition formula \eqref{eq:cubic-addition-formula} implies that the
pullback is rotation invariant.
Rotations fixing \(x\) act transitively on the unit tangent vectors at
\(x\), so the pullback is a constant multiple of the standard spherical
metric.  Let \(\Delta_{\Sph}\) be the Laplace--Beltrami operator on
\(\Sph^{d-1}\), with sign convention \(\Delta_{\Sph}=\operatorname{div}\nabla\).
Each coordinate of \(q_{3,d}\) is a degree-three spherical harmonic, so
\(\Delta_{\Sph}q_{3,d}=-3(d+1)q_{3,d}\), where the Laplacian acts
coordinatewise \cite[Section~2.2.3]{BakryGentilLedoux2014}.
Since \(\|q_{3,d}(x)\|^2=1\), the product rule gives
\[
 0=\frac12\Delta_{\Sph}\|q_{3,d}(x)\|^2
  =\tr\{Dq_{3,d}(x)^{\mathsf T}Dq_{3,d}(x)\}-3(d+1).
\]
The trace is taken on the \((d-1)\)-dimensional tangent space
\(T_x\Sph^{d-1}\).  Dividing it by \(d-1\) determines the proportionality
constant and proves \eqref{eq:cubic-harmonic-pullback-metric}.
Taking \(w=v\) in \eqref{eq:cubic-harmonic-pullback-metric} gives
\[
 \|Dq_{3,d}(x)v\|^2=\frac{3(d+1)}{d-1}\|v\|^2>0
 \qquad\text{for }v\in T_x\Sph^{d-1}\setminus\{0\}.
\]
Thus \(Dq_{3,d}(x)\) is injective on \(T_x\Sph^{d-1}\).
The map is consequently a smooth embedding, since its domain is compact.
Thus \(M_d\), with its induced metric, is a
constant rescaling of the round sphere and has strictly positive sectional
curvature \((d-1)/\{3(d+1)\}\).

We next show that \eqref{eq:cubic-orbit-product} makes \(M_d\)
locally isometric to a Riemannian product.  Fix \(z\in M_d\) and
\(v\in T_zM_d\).  Choose a smooth curve \(c\) in \(M_d\) with
\(c(0)=z\) and \(c'(0)=v\).  The Cartesian decomposition implies
\[
 Pc(t)+P^\perp z\in M_d,\qquad
 Pz+P^\perp c(t)\in M_d.
\]
Differentiating at zero shows that \(Pv,P^\perp v\in T_zM_d\), so
\begin{equation*}
 T_zM_d=(T_zM_d\cap S)\oplus(T_zM_d\cap S^\perp).
\end{equation*}
Since \(P(T_zM_d)\subseteq T_zM_d\), the restriction \(P|_{T_zM_d}\)
inherits the identities \(P^2=P\) and \(P^{\mathsf T}=P\).
It is therefore the orthogonal projection onto \(T_zM_d\cap S\).
Thus its eigenvalues belong to \(\{0,1\}\), so its rank equals its trace.
In a local smooth orthonormal frame \(e_1(z),\ldots,e_{d-1}(z)\) for
the tangent spaces, this gives
\[
 \operatorname{rank}(P|_{T_zM_d})
 =\tr(P|_{T_zM_d})
 =\sum_{\ell=1}^{d-1}\langle e_\ell(z),Pe_\ell(z)\rangle.
\]
The right-hand side is continuous in \(z\), so the rank is a continuous
integer-valued function on \(M_d\).
Since \(M_d\) is the continuous image of the connected sphere
\(\Sph^{d-1}\), it is connected.  Hence this rank is constant.  Denote its value by
\(p\).  The complementary projection satisfies
\(P^\perp|_{T_zM_d}=I_{T_zM_d}-P|_{T_zM_d}\).
Its rank is therefore \(d-1-p\).
The maps \(P|_{M_d}\) and \(P^\perp|_{M_d}\) consequently have constant
ranks \(p\) and \(d-1-p\).  The constant-rank theorem
\cite[Theorem~4.12]{Lee2013Smooth} gives local smooth image patches
\(\mathcal U\subset PM_d\) and \(\mathcal V\subset P^\perp M_d\), of these respective
dimensions, through \(Pz\) and \(P^\perp z\).  By
\eqref{eq:cubic-orbit-product}, the map \((a,b)\mapsto a+b\) sends
\(\mathcal U\times\mathcal V\) into \(M_d\).  Its differential is injective because
\(S\perp S^\perp\), and the dimensions of its domain and target both
equal \(d-1\).  After shrinking \(\mathcal U\) and \(\mathcal V\), the inverse
function theorem therefore makes \((a,b)\mapsto a+b\) a diffeomorphism
from \(\mathcal U\times\mathcal V\) onto a neighborhood of \(z\) in \(M_d\).

For \(v_1,v_2\in T_a\mathcal U\) and \(w_1,w_2\in T_b\mathcal V\),
orthogonality of \(S\) and \(S^\perp\) gives
\[
 \langle v_1+w_1,v_2+w_2\rangle
 =\langle v_1,v_2\rangle+\langle w_1,w_2\rangle.
\]
Thus this diffeomorphism is an isometry when each factor carries its
induced Euclidean metric and \(\mathcal U\times\mathcal V\) carries
the product metric.  Hence \(M_d\) is locally isometric to a
Riemannian product.  If
\(0<p<d-1\), choose nonzero tangent vectors \(v\in T_{Pz}\mathcal U\) and
\(w\in T_{P^\perp z}\mathcal V\).  The curvature tensor of a product metric
is the sum of the curvature tensors of its factors
\cite[Section~4.7]{Petersen2016Riemannian}, so a plane with one direction
in each factor has
\[
 \operatorname{Sec}_{M_d}
 \bigl(\operatorname{span}\{(v,0),(0,w)\}\bigr)=0.
\]
This contradicts the positive curvature given by
\eqref{eq:cubic-harmonic-pullback-metric}.  Hence \(p=0\) or \(p=d-1\).
The corresponding projection has zero differential everywhere on the
connected manifold \(M_d\), and is therefore constant.  Oddness of
\(q_{3,d}\) makes that constant zero.  Since \(M_d\) spans
\(\mathbb R^N\), either \(S=\{0\}\) or \(S=\mathbb R^N\), contrary to
the choice of \(S\).  This proves the first assertion.

To deduce irreducibility of \({\cal T}_B\) from the first assertion,
recall from \cite[Section~3.7.1]{Pataki2000Geometry} that every nonzero
proper face of the positive-semidefinite cone has the form
\[
 {\cal F}_S=\{H\succeq0:\operatorname{range}(H)\subseteq S\}
\]
for a subspace \(S\subseteq\mathbb R^N\) with \(0<\dim S<N\).
If \({\cal T}_B\) preserved this
face, then for every \(x\in S\),
\[
 0=\tr\{P_{S^\perp}{\cal T}_B(xx^{\mathsf T})\}
  =\mathbb E\|P_{S^\perp}\Sigma_B(\mathsf Y_B)x\|^2.
\]
The law \(P_B\) has Lebesgue density
\((2\pi)^{-N/2}e^{-\|y\|^2/2}Z_B(y)>0\), and \(y\mapsto \Sigma_B(y)\)
is continuous.  Hence \(\Sigma_B(y)S\subseteq S\) for every \(y\).  This contradicts
the first assertion.  Hence \({\cal T}_B\) preserves no nontrivial face
and is irreducible on the positive-semidefinite cone.
\end{proof}

The irreducibility established in
Lemma~\ref{lem:cubic-posterior-covariance-irreducible} allows us to determine
the eigenspace of \({\cal T}_B\) corresponding to its largest eigenvalue.
Lemma~\ref{lem:cubic-perron-rigidity} combines this spectral characterization
with the local-maximizer conditions \eqref{eq:cubic-local-maximizer-conditions}
and rotational invariance to show that every positive-definite local
maximizer of \({\cal V}_{d,r}\) is a scalar matrix.

\begin{lemma}[Scalar form of positive-definite local maximizers]
\label{lem:cubic-perron-rigidity}
For \(d\ge3\) and \(r>0\), every positive-definite local maximum of
\({\cal V}_{d,r}\) is of the form \(zI_N/N\).
\end{lemma}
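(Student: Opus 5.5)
The argument uses the local-maximizer conditions \eqref{eq:cubic-local-maximizer-conditions}, a Perron--Frobenius statement for \({\cal T}_B\), and equivariance of \(M\) under the orthogonal group. Let \(Q\succ0\) be a local maximum, put \(B=rNdQ\), and write \(\lambda:=\lambda_{\max}({\cal T}_B)\). Then \(Q=M(B)\) and \(\lambda\le1/(rNd)\).

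\emph{Step 1: Perron--Frobenius for \({\cal T}_B\).} The map \({\cal T}_B\) is self-adjoint on symmetric matrices, maps the positive-semidefinite cone into itself, and is irreducible by Lemma~\ref{lem:cubic-posterior-covariance-irreducible}. I will invoke the Krein--Rutman/Perron--Frobenius theorem for irreducible cone-preserving maps on a self-dual cone. It gives two facts. First, the spectral radius is a simple eigenvalue with an eigenvector \(P\succ0\). Second, no other eigenvector lies in the cone. Self-adjointness makes the spectral radius equal to \(\lambda_{\max}\), because the top eigenvalue of a cone-preserving self-adjoint map is attained on the cone.

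\emph{Step 2: \(Q\) is an eigenvector of \({\cal T}_B\).} The equivariance \(M(OBO^{\mathsf T})=OM(B)O^{\mathsf T}\) holds only for orthogonal \(O\) induced by rotations of the sphere acting on \(\cH_{3,d}\). So I will not try to make \(Q\) invariant directly. Instead I will use the derivative formula \eqref{eq:cubic-posterior-mean-derivative} along the ray \(B+tB\), that is, with \(H=B\):
\[
\tfrac{d}{dt}M((1+t)B)\big|_{t=0}={\cal T}_B(B).
\]
To relate this to \(Q\), I will compare with a scalar-channel identity. For the ray one also has the Nishimori-type relation
\[
\mathbb E\,\Sigma_B(\mathsf Y_B)=N^{-1}I_N-M(B).
\]
This alone does not immediately produce an eigen-relation, so the actual plan is a second-variation argument. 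Evaluate the Hessian \eqref{eq:cubic-branch-hessian} in the direction \(H=P\), the Perron vector. Since \(Q\succ0\), the matrix \(Q+tP\) stays positive definite for small \(|t|\), so this is an admissible two-sided direction. Local maximality gives \(rNd\,\lambda\le1\), which is already known, so the second variation gives no new information and the real content must come from elsewhere.

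\emph{Step 3: scalarity via averaging.} Let \(G\) be the group of orthogonal maps of \(\mathbb R^N\) induced by rotations of \(\Sph^{d-1}\) on degree-three harmonics. The measure \(\mu_d\) is \(G\)-invariant, so \({\cal V}_{d,r}(OQO^{\mathsf T})={\cal V}_{d,r}(Q)\) for \(O\in G\). The plan has three parts.
\begin{itemize}
\item[(i)] Use concavity of the map \(Q\mapsto{\cal V}_{d,r}(Q)\) along segments inside the region where \(rNd\,\lambda_{\max}({\cal T}_{rNdQ})\le1\). There the Hessian \eqref{eq:cubic-branch-hessian} is nonpositive.
\item[(ii)] Conclude that the local maximizer is unique in a neighbourhood in which the strict version of this condition holds. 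Simplicity of the Perron eigenvalue handles the borderline case \(rNd\lambda=1\): the only flat direction is \(P\), and a third-order check along \(P\) rules it out.
\item[(iii)] Consider the Haar average \(\bar Q=\int OQO^{\mathsf T}\,dO\) over \(G\). Concavity along the orbit hull gives \({\cal V}_{d,r}(\bar Q)\ge{\cal V}_{d,r}(Q)\), and local uniqueness then forces \(OQO^{\mathsf T}=Q\) for all \(O\in G\).
\end{itemize}
Finally, \(\cH_{3,d}\) is an irreducible \(SO(d)\)-representation for \(d\ge3\). Schur's lemma then gives \(Q=zI_N/N\).

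\emph{Main obstacle.} The hard part is making the local-to-orbit step rigorous. Concavity is only known near \(Q\), yet the orbit of \(Q\) may not stay near \(Q\). My first attempt is infinitesimal instead. For a skew generator \(A\) of \(G\), the curve \(e^{tA}Qe^{-tA}\) is a curve of maximizers. Differentiating the stationarity equation \(Q=M(rNdQ)\) along it gives
\[
[A,Q]=rNd\,{\cal T}_B([A,Q]).
\]
If \([A,Q]\ne0\), then \([A,Q]\) is an eigenvector of \({\cal T}_B\) with eigenvalue \(1/(rNd)\ge\lambda\). It therefore belongs to the top eigenspace, which Step~1 says is spanned by \(P\succ0\). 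But \([A,Q]\) is traceless, and a nonzero positive definite matrix is not. Hence \([A,Q]=0\) for every generator \(A\), so \(Q\) commutes with \(G\), and Schur's lemma finishes the proof.
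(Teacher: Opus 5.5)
Your closing infinitesimal argument is the paper's proof. Differentiating $Q=M(rNdQ)$ along the rotation orbit gives $rNd\,{\cal T}_{rNdQ}([A,Q])=[A,Q]$. The Hessian condition forces a nonzero $[A,Q]$ into the top eigenspace spanned by some $P\succ0$, tracelessness rules this out, and connectedness of $\operatorname{SO}(d)$ plus irreducibility of $\cH_{3,d}$ finish the proof. The only difference is that the paper proves the Perron property directly, via $\langle H,{\cal T}_B(H)\rangle_{\HS}\le\langle |H|,{\cal T}_B(|H|)\rangle_{\HS}$ and Lemma~\ref{lem:cubic-posterior-covariance-irreducible}, instead of citing Krein--Rutman for irreducible cone-preserving maps.

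Steps~2 and~3(i)--(iii) are not used by the final argument and should be deleted. The local-uniqueness, third-order, and Haar-averaging claims made there are unsupported.
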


\begin{proof}
For each \(B\succ0\), we first prove that there exists \(P\succ0\) such that
\[
 \{H=H^{\mathsf T}:{\cal T}_B(H)=\lambda_{\max}({\cal T}_B)H\}
 =\{cP:c\in\mathbb R\}.
\]
This is the Perron--Frobenius property \cite{EvansHoeghKrohn1978} needed
for the self-adjoint operator \({\cal T}_B\) on the finite-dimensional
space of real symmetric matrices.  We prove it directly below.
We then combine it with \(D M(B)[H]={\cal T}_B(H)\) from
Lemma~\ref{lem:cubic-gaussian-entropy-derivatives} and rotational invariance
to show that a positive-definite local maximum must be a scalar multiple
of \(I_N\).

For a real symmetric matrix \(H\), choose an orthonormal eigenbasis
\(e_1,\ldots,e_N\) with \(He_i=h_i e_i\), and let \(|H|:=(H^2)^{1/2}\).
Then
\[
 H=\sum_{i=1}^N h_i e_i e_i^{\mathsf T},\qquad
 |H|=\sum_{i=1}^N |h_i|e_i e_i^{\mathsf T}.
\]
Consequently, for every \(y\in\mathbb R^N\),
\[
  \begin{aligned}
   \tr\{H\Sigma_B(y)H\Sigma_B(y)\}
   &=\sum_{i,j}h_ih_j\bigl(e_i^{\mathsf T}\Sigma_B(y)e_j\bigr)^2\\
   &\le\sum_{i,j}|h_i h_j|\bigl(e_i^{\mathsf T}\Sigma_B(y)e_j\bigr)^2
    =\tr\{|H|\Sigma_B(y)|H|\Sigma_B(y)\}.
  \end{aligned}
\]
Integrating against \(P_B(dy)\) gives
\(\langle H,{\cal T}_B(H)\rangle_{\HS}
\le\langle |H|,{\cal T}_B(|H|)\rangle_{\HS}\).
Self-adjointness gives the Rayleigh variational formula
\[
 \lambda_{\max}({\cal T}_B)
 =\max_{\substack{H=H^{\mathsf T}\\\|H\|_{\HS}=1}}
   \langle H,{\cal T}_B(H)\rangle_{\HS}.
\]
Since \(\||H|\|_{\HS}=\|H\|_{\HS}\), the preceding inequality shows that
the absolute value of a maximizing matrix is also a maximizer.
There is therefore a matrix \(P\succeq0\) satisfying
\[
 \|P\|_{\HS}=1,\qquad
 {\cal T}_B(P)=\lambda_{\max}({\cal T}_B)P.
\]
For \(x\in\ker P\), we consequently have
\[
 0=\lambda_{\max}({\cal T}_B)x^{\mathsf T}Px
  =x^{\mathsf T}{\cal T}_B(P)x
  =\mathbb E\|P^{1/2}\Sigma_B(\mathsf Y_B)x\|^2.
\]
Continuity of \(\Sigma_B(y)\) and the full support of \(P_B\) imply that \(\ker P\)
is invariant under every \(\Sigma_B(y)\).
Lemma~\ref{lem:cubic-posterior-covariance-irreducible} therefore forces
\(P\succ0\).  The same argument applies to every nonzero positive-semidefinite
matrix in the eigenspace corresponding to \(\lambda_{\max}({\cal T}_B)\).
If a symmetric matrix \(H\) satisfying
\({\cal T}_B(H)=\lambda_{\max}({\cal T}_B)H\) were not proportional to \(P\), then
\[
 \lambda_{\max}(P^{-1/2}HP^{-1/2})P-H
\]
would be nonzero, singular, and positive semidefinite, and would belong
to the same eigenspace.  This contradicts the positive definiteness just
proved.  The largest eigenvalue is therefore simple.

Let \(Q\succ0\) be a local maximum, so that
\eqref{eq:cubic-local-maximizer-conditions} holds.
To prove that \(Q\) is a scalar multiple of \(I_N\), we show that it is
unchanged by rotations of the latent sphere.
For \(R\in\operatorname{SO}(d)\), composition with \(R\) preserves the
degree and harmonicity of a homogeneous harmonic polynomial \(p\), since
\(\Delta(p\circ R)=(\Delta p)\circ R=0\).
Thus the rotated functions \(\psi_{3,i}\circ R\) belong to
\(\cH_{3,d}\).  Expanding them in the orthonormal basis from
Section~\ref{subsec:cubic-harmonic-kernel} defines a matrix \(U_R\), and
rotation invariance of \(\sigma_d\) gives
\[
 \begin{aligned}
 \psi_{3,i}(Rx)&=\sum_{j=1}^N(U_R)_{ij}\psi_{3,j}(x),\\
 (U_RU_R^{\mathsf T})_{ij}
 &=\int\psi_{3,i}(Rx)\psi_{3,j}(Rx)\,d\sigma_d(x)=\delta_{ij}.
 \end{aligned}
\]
Consequently, \(U_R\) is orthogonal and
\(q_{3,d}(Rx)=U_Rq_{3,d}(x)\).
The coefficient formula
\[
 (U_R)_{ij}=\int\psi_{3,i}(Rx)\psi_{3,j}(x)\,d\sigma_d(x)
\]
shows that \(U_R\) depends smoothly on \(R\), and composition gives
\(U_{R_1R_2}=U_{R_1}U_{R_2}\).
In particular, if \(u=q_{3,d}(X)\) with \(X\sim\sigma_d\), then
\(U_Ru=q_{3,d}(RX)\) also has law \(\mu_d\).

Recall the integral formula for the posterior mean,
\[
 m_B(y)=\frac1{Z_B(y)}\int v
 \exp\left(\langle y,B^{1/2}v\rangle-\frac12v^{\mathsf T}Bv\right)
 \,\mu_d(dv).
\]
Since \((U_RBU_R^{\mathsf T})^{1/2}=U_RB^{1/2}U_R^{\mathsf T}\),
the change of variables \(v=U_Rw\) in the normalizing integral gives
\(Z_{U_RBU_R^{\mathsf T}}(U_Ry)=Z_B(y)\).  Applying the same substitution
to the numerator yields
\[
 m_{U_RBU_R^{\mathsf T}}(U_Ry)
 =\frac1{Z_B(y)}\int U_Rw
 e^{\langle y,B^{1/2}w\rangle-\frac12w^{\mathsf T}Bw}\,\mu_d(dw)
 =U_Rm_B(y).
\]
Moreover, \(U_R\mathsf Y_B=(U_RBU_R^{\mathsf T})^{1/2}U_Ru+U_Rg\)
has law \(P_{U_RBU_R^{\mathsf T}}\), since the independent variables
\(U_Ru\) and \(U_Rg\) have laws \(\mu_d\) and \(\gamma_N\), respectively.
Using \(M(B)=\mathbb E\{m_B(\mathsf Y_B)m_B(\mathsf Y_B)^{\mathsf T}\}\),
we obtain
\begin{equation}
 M(U_RBU_R^{\mathsf T})
 =U_RM(B)U_R^{\mathsf T}.
 \label{eq:cubic-posterior-equivariance}
\end{equation}
To differentiate the stationary equation along rotations of \(Q\), let
\(R(t)\) be a smooth
curve in \(\operatorname{SO}(d)\) with \(R(0)=I_d\), and set
\(Q(t):=U_{R(t)}QU_{R(t)}^{\mathsf T}\).
Since \(M(rNdQ)=Q\) by \eqref{eq:cubic-local-maximizer-conditions},
\eqref{eq:cubic-posterior-equivariance} gives
\[
 M(rNdQ(t))=U_{R(t)}M(rNdQ)U_{R(t)}^{\mathsf T}=Q(t).
\]
Write \(L:=\left.\frac d{dt}U_{R(t)}\right|_{t=0}\), so that
\(L^{\mathsf T}=-L\) and \(H:=Q'(0)=[L,Q]:=LQ-QL\) is symmetric.
Differentiating the displayed equation and applying
\(DM(B)[H]={\cal T}_B(H)\) from
Lemma~\ref{lem:cubic-gaussian-entropy-derivatives} gives
\[
 rNd\,{\cal T}_{rNdQ}(H)=H,\qquad \tr H=\tr(LQ-QL)=0.
\]
If \(H\ne0\), then \(1/(rNd)\) is an eigenvalue, and
\eqref{eq:cubic-local-maximizer-conditions} makes it the largest one.
The spectral conclusion proved above then gives \(H=cP\) for some
\(P\succ0\).  Taking traces yields \(0=c\,\tr P\), hence \(c=0\),
a contradiction.  Therefore \([L,Q]=0\) for every such \(L\).

To pass from these derivatives to an arbitrary rotation, fix
\(R\in\operatorname{SO}(d)\).  This group is connected
\cite[Chapter~2, Exercise~11]{Hall2000}.
Its Lie algebra consists of real skew-symmetric matrices, so
\cite[Chapter~3, Section~5 and Corollary~3.26]{Hall2000} gives
\(R=e^{A_1}\cdots e^{A_k}\) with \(A_j^{\mathsf T}=-A_j\).
Thus \(R(t):=e^{tA_1}\cdots e^{tA_k}\), \(0\le t\le1\), is a smooth
path in \(\operatorname{SO}(d)\) with \(R(0)=I_d\) and \(R(1)=R\).
Write \(U(t)=U_{R(t)}\) and \(L(t)=U(t)^{\mathsf T}U'(t)\).
For each \(t\in(0,1)\), the curve \(s\mapsto R(t)^{-1}R(t+s)\)
starts at \(I_d\), and its representing matrix is
\(U(t)^{\mathsf T}U(t+s)\).  The preceding argument therefore gives
\([L(t),Q]=0\), so
\[
 \frac d{dt}\{U(t)QU(t)^{\mathsf T}\}
 =U(t)[L(t),Q]U(t)^{\mathsf T}=0.
\]
Evaluating at \(t=0\) and \(t=1\) yields
\(U_RQU_R^{\mathsf T}=Q\), hence \(QU_R=U_RQ\).

Under the identification \(a\mapsto\langle a,q_{3,d}(\cdot)\rangle\)
of \(\mathbb R^N\) with \(\cH_{3,d}\), the matrix \(U_R\) represents
\(f\mapsto f\circ R^{-1}\).  This representation is irreducible for
\(d\ge3\) \cite[Section~1.7]{DaiXu2013}, so its only invariant subspaces
are \(\{0\}\) and \(\mathbb R^N\).
If \(Qv=\lambda v\), the commutation relation gives
\(Q(U_Rv)=U_RQv=\lambda U_Rv\).  Thus each eigenspace of \(Q\) is
invariant under every \(U_R\).  Since \(Q\) is symmetric, irreducibility
forces it to have only one eigenvalue.  Therefore \(Q=zI_N/N\), where
\(z=\tr Q>0\).
\end{proof}

Proposition~\ref{prop:cubic-matrix-branch-gap} extends the scalar bound of
Lemma~\ref{lem:cubic-scalar-branch-coercivity} to all positive-semidefinite
matrices.  Its proof uses the spectral and rotation arguments established
in the proof of Lemma~\ref{lem:cubic-perron-rigidity}.

\begin{proposition}[Uniform quadratic bound for the matrix potential]
\label{prop:cubic-matrix-branch-gap}
For every compact \(K\subset(0,r_{\mathrm{tens}})\), there are
\(\kappa_K>0\) and \(d_K<\infty\) such that, uniformly for
\(r\in K\), \(d\ge d_K\), and \(Q\succeq0\),
\begin{equation}
 {\cal V}_{d,r}(Q)
 \le-\kappa_KN\|Q\|_{\HS}^2.
 \label{eq:cubic-uniform-quadratic-branch-coercivity}
\end{equation}
Consequently, for every fixed \(\delta>0\),
\begin{equation}
  \limsup_{d\to\infty}
  \sup_{\substack{Q\succeq0\\N\|Q\|_{\HS}^2\ge\delta}}
  {\cal V}_{d,r}(Q)
  \le-\kappa_K\delta
  \label{eq:cubic-uniform-branch-gap}
\end{equation}
uniformly for \(r\in K\).
\end{proposition}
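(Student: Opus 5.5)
We argue by contradiction with a perturbed objective, as announced at the start of Section~\ref{subsec:cubic-matrix-potential}. Let \(\kappa_K\) be the constant from Lemma~\ref{lem:cubic-scalar-branch-coercivity}, so that \(v_{d,r}(z)\le-4\kappa_Kz^2\) for \(r\in K\), \(d\ge d_K\). For \(\varepsilon>0\) we consider
\[
 \mathcal W_\varepsilon(Q):={\cal V}_{d,r}(Q)+\kappa_KN\|Q\|_{\HS}^2+\varepsilon\tr Q,\qquad Q\succeq0.
\]
The goal is \(\sup_{Q\succeq0}\mathcal W_\varepsilon(Q)\le C\varepsilon\) with \(C\) independent of \(d\) and \(r\in K\). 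Letting \(\varepsilon\downarrow0\) at fixed \(Q\) then gives \eqref{eq:cubic-uniform-quadratic-branch-coercivity}. The consequence \eqref{eq:cubic-uniform-branch-gap} is then immediate.

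\textbf{Step 1: existence and interior location of a maximizer.} The bound \(D(P_B\Vert\gamma_N)\le\tr B/(2N)\) gives
\[
 \mathcal W_\varepsilon(Q)\le(\tfrac r2+\varepsilon)\tr Q-(\tfrac r4-\kappa_K)N\|Q\|_{\HS}^2 .
\]
With \(\tr Q\le\sqrt N\|Q\|_{\HS}\), and after shrinking \(\kappa_K\le r_-/8\), this is coercive. By the continuity of \(D\) on the closed cone (Step~4 of Lemma~\ref{lem:cubic-gaussian-entropy-derivatives}), \(\mathcal W_\varepsilon\) therefore attains its maximum at some \(Q_\varepsilon\succeq0\).

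Next we show \(Q_\varepsilon\succ0\). Suppose a unit vector \(x\) satisfies \(Q_\varepsilon x=0\), and take the direction \(H=xx^{\mathsf T}\). The one-sided derivative from Lemma~\ref{lem:cubic-gaussian-entropy-derivatives} gives
\[
 \tfrac{rN}{2}x^{\mathsf T}M(B)x+\varepsilon .
\]
The quadratic terms contribute zero at first order, because \(\langle Q_\varepsilon,H\rangle=0\). Since \(M(B)\succeq0\), this derivative is at least \(\varepsilon>0\), which contradicts maximality. Thus \(Q_\varepsilon\) is an interior local maximum.

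\textbf{Step 2: rigidity.} We repeat the proof of Lemma~\ref{lem:cubic-perron-rigidity} for \(\mathcal W_\varepsilon\). Stationarity now reads
\[
 M(rNdQ)=(1-\tfrac{4\kappa_K}{r})Q-\tfrac{2\varepsilon}{rN}I_N .
\]
The Hessian condition becomes \(\lambda_{\max}({\cal T}_B)\le(1-4\kappa_K/r)/(rNd)\). Both the added quadratic term and the trace term are invariant under conjugation by \(U_R\), and the stationarity equation is equivariant because \(I_N\) is rotation invariant. Differentiating along \(Q(t)=U_{R(t)}QU_{R(t)}^{\mathsf T}\) gives \(rNd\,{\cal T}_B(H)=(1-4\kappa_K/r)H\) with \(\tr H=0\). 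By the Hessian condition this is the top eigenvalue, so the Perron argument forces \(H=0\). Irreducibility of \(U_R\) then gives \(Q_\varepsilon=zI_N/N\).

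\textbf{Step 3: scalar evaluation.} At \(Q=zI_N/N\),
\[
 \mathcal W_\varepsilon(zI_N/N)=v_{d,r}(z)+\kappa_Kz^2+\varepsilon z\le-3\kappa_Kz^2+\varepsilon z\le\frac{\varepsilon^2}{12\kappa_K}.
\]
Hence \({\cal V}_{d,r}(Q)+\kappa_KN\|Q\|_{\HS}^2\le\varepsilon\tr Q+\varepsilon^2/(12\kappa_K)\) for every \(Q\), and letting \(\varepsilon\to0\) finishes the proof.

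The main obstacle is Step~2. We must check that the Perron--Frobenius simplicity argument still applies when the Hessian threshold is shifted by the penalty. The argument only needs that the derived eigenvalue is the maximal one, which the second-order condition supplies. We also need \(d\ge3\), which holds for large \(d\).
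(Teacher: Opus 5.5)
Your proposal is correct and follows the paper's proof essentially step for step: the same perturbed objective $\mathcal V_{d,r}(Q)+\kappa_KN\|Q\|_{\HS}^2+\varepsilon\tr Q$ with $\kappa_K<(\min K)/8$ and $d\ge3$, coercivity from $D(P_B\Vert\gamma_N)\le\tr B/(2N)$, positive definiteness of every maximizer via the one-sided derivative along $xx^{\mathsf T}$, and the shifted stationarity and Hessian conditions combined with the Perron simplicity and trace argument to force a scalar maximizer. You then conclude from the bound $\varepsilon^2/(12\kappa_K)$ by letting $\varepsilon\downarrow0$, as the paper does.
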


\begin{proof}
We first show that the perturbed potential below attains its maximum and
that every maximizer is positive definite.  The spectral and rotation
arguments from the proof of Lemma~\ref{lem:cubic-perron-rigidity} then
reduce the optimization to scalar matrices.  We bound the resulting
scalar maximum and let the trace perturbation tend to zero to obtain
\eqref{eq:cubic-uniform-quadratic-branch-coercivity}.
Choose \(\kappa_K,d_K\) as in
Lemma~\ref{lem:cubic-scalar-branch-coercivity}, increasing \(d_K\) to at
least three and decreasing \(\kappa_K\) so that
\(\kappa_K<(\min K)/8\).
Fix \(d\ge d_K\) and \(r\in K\).
For \(\varepsilon>0\), set
\[
  {\cal W}_{r,\varepsilon}(Q)
  :={\cal V}_{d,r}(Q)
    +\kappa_K N\|Q\|_{\HS}^2+\varepsilon\tr Q.
\]
To prove that \({\cal W}_{r,\varepsilon}\) attains its maximum, we first
bound its relative-entropy term.  Applying the relative-entropy identity
from Step~2 of the proof of
Lemma~\ref{lem:cubic-gaussian-entropy-derivatives} with \(B=rNdQ\) gives
\[
 \begin{aligned}
  {\cal W}_{r,\varepsilon}(Q)
  &=\frac1dD(P_{rNdQ}\Vert\gamma_N)
    -\left(\frac r4-\kappa_K\right)N\|Q\|_{\HS}^2+\varepsilon\tr Q\\
  &=\left(\frac r2+\varepsilon\right)\tr Q
    -\left(\frac r4-\kappa_K\right)N\|Q\|_{\HS}^2
    -\frac1d I\bigl(u;(rNdQ)^{1/2}u+g\bigr)\\
  &\le\left(\frac r2+\varepsilon\right)\tr Q
    -\left(\frac r4-\kappa_K\right)N\|Q\|_{\HS}^2.
 \end{aligned}
\]
Here \(\tr(rNdQ)/(2Nd)=r\tr Q/2\), and the inequality uses
nonnegativity of mutual information.
Since \((\tr Q)^2\le N\|Q\|_{\HS}^2\) and \(r/4-\kappa_K>0\),
\({\cal W}_{r,\varepsilon}(Q)\to-\infty\) as \(\|Q\|_{\HS}\to\infty\).
Hence the set
\(\{Q\succeq0:{\cal W}_{r,\varepsilon}(Q)\ge0\}\) is bounded and is
nonempty because \({\cal W}_{r,\varepsilon}(0)=0\).
The continuity proved in Step~4 of the proof of
Lemma~\ref{lem:cubic-gaussian-entropy-derivatives} makes this set closed,
and thus compact.  The continuous function \({\cal W}_{r,\varepsilon}\)
attains its maximum on this set.  Its values outside the set are negative,
so this is also a global maximum over \(Q\succeq0\).

Let \(Q\) be any global maximizer.  If \(Q\) were singular, choose a
unit vector \(v\) with \(Qv=0\).  The matrices
\(Q+tvv^{\mathsf T}\) remain positive semidefinite for \(t\ge0\).
The one-sided derivative in
Lemma~\ref{lem:cubic-gaussian-entropy-derivatives}, together with
\(v^{\mathsf T}Qv=0\) and \(M(rNdQ)\succeq0\), gives
\[
 \left.\frac d{dt}{\cal W}_{r,\varepsilon}(Q+tvv^{\mathsf T})\right|_{t=0+}
 =\frac{rN}{2}v^{\mathsf T}M(rNdQ)v+\varepsilon
 \ge\varepsilon>0.
\]
Thus \({\cal W}_{r,\varepsilon}(Q+tvv^{\mathsf T})>
{\cal W}_{r,\varepsilon}(Q)\) for all sufficiently small \(t>0\),
contradicting maximality.  Therefore \(Q\succ0\), so the maximum is
interior to the positive-semidefinite cone and its gradient vanishes.
Using \eqref{eq:cubic-branch-gradient}, we obtain
\[
 0=\nabla_Q{\cal W}_{r,\varepsilon}(Q)
  =\frac{rN}{2}\{M(rNdQ)-Q\}+2\kappa_K NQ+\varepsilon I_N.
\]
Rearranging gives
\begin{equation}
  M(rNdQ)
  =\left(1-\frac{4\kappa_K}{r}\right)Q
    -\frac{2\varepsilon}{rN}I_N.
  \label{eq:cubic-perturbed-stationarity}
\end{equation}
Let \(R(t)\) be a smooth curve in \(\operatorname{SO}(d)\) with
\(R(0)=I_d\), and use the matrices \(U_R\) constructed in the proof of
Lemma~\ref{lem:cubic-perron-rigidity}.
Write \(U(t)=U_{R(t)}\), \(Q(t)=U(t)QU(t)^{\mathsf T}\), and
\(L=U'(0)\).  Since \(U(0)=I_N\) and \(L^{\mathsf T}=-L\),
\(H:=Q'(0)=LQ-QL\) is symmetric.
Equation~\eqref{eq:cubic-posterior-equivariance} gives
\[
 M(rNdQ(t))=U(t)M(rNdQ)U(t)^{\mathsf T}.
\]
Differentiating at \(t=0\), using
\(DM(B)[H]={\cal T}_B(H)\) from
Lemma~\ref{lem:cubic-gaussian-entropy-derivatives}, and substituting
\eqref{eq:cubic-perturbed-stationarity}, we obtain
\[
 \begin{aligned}
 rNd\,{\cal T}_{rNdQ}(H)
 &=LM(rNdQ)-M(rNdQ)L\\
 &=\left(1-\frac{4\kappa_K}{r}\right)[L,Q]
   -\frac{2\varepsilon}{rN}[L,I_N]
  =\left(1-\frac{4\kappa_K}{r}\right)H.
 \end{aligned}
\]
If \(H\ne0\), this calculation identifies
\((1-4\kappa_K/r)/(rNd)\) as an eigenvalue of \({\cal T}_{rNdQ}\).
It follows that
\(\lambda_{\max}({\cal T}_{rNdQ})\ge(1-4\kappa_K/r)/(rNd)\).
On the other hand, the Hessian formula
\eqref{eq:cubic-branch-hessian} and the nonpositivity of the Hessian of
\({\cal W}_{r,\varepsilon}\) at its maximizer imply
\(\lambda_{\max}({\cal T}_{rNdQ})\le(1-4\kappa_K/r)/(rNd)\).
Combining these two bounds gives
\[
 \lambda_{\max}({\cal T}_{rNdQ})
 =\frac{1-4\kappa_K/r}{rNd}.
\]
Hence \(H\) lies in the eigenspace for the largest eigenvalue.
The first part of the proof of Lemma~\ref{lem:cubic-perron-rigidity}
shows that this space is \(\{cP:c\in\mathbb R\}\) for some \(P\succ0\).
Writing \(H=cP\) and taking traces gives
\[
 0=\tr(LQ-QL)=\tr H=c\,\tr P.
\]
Since \(\tr P>0\), we have \(c=0\), contradicting \(H\ne0\).
Therefore \([L,Q]=0\) for every
rotation derivative \(L\).  The path argument and the irreducibility of
the harmonic representation used in the proof of
Lemma~\ref{lem:cubic-perron-rigidity} then show that every maximizer is a
scalar multiple of \(I_N\).  Hence
\begin{equation*}
  \sup_{Q\succeq0}
  \{{\cal V}_{d,r}(Q)+\kappa_K N\|Q\|_{\HS}^2
       +\varepsilon\tr Q\}
  =
  \sup_{z\ge0}\{v_{d,r}(z)+\kappa_K z^2+\varepsilon z\}.
\end{equation*}
Lemma~\ref{lem:cubic-scalar-branch-coercivity} bounds the right side by
\[
 \sup_{z\ge0}\{-3\kappa_K z^2+\varepsilon z\}
 \le\frac{\varepsilon^2}{12\kappa_K}.
\]
For an arbitrary \(Q\succeq0\), drop the nonnegative term
\(\varepsilon\tr Q\) and send \(\varepsilon\downarrow0\).  This proves
\eqref{eq:cubic-uniform-quadratic-branch-coercivity}.
Restricting to \(N\|Q\|_{\HS}^2\ge\delta\) gives
\eqref{eq:cubic-uniform-branch-gap}.
\end{proof}

The matrix bound now completes the variational estimate required by
the appendix roadmap.
Proposition~\ref{prop:cubic-strict-Franz-Parisi} combines it with
Proposition~\ref{prop:cubic-Gaussian-variational-bound} to give a
strictly negative conditional mean on \(s_n\ge\delta\).

\begin{proposition}[Negative bound for the expected constrained free energy]
\label{prop:cubic-strict-Franz-Parisi}
Fix \(A<\infty\), a compact interval \(K\subset(0,r_{\mathrm{tens}})\),
and \(M_\star>4A+10\).
Suppose \(d=d_n\to\infty\), \(n\le d^A\), \(0<|\beta|\le1\),
and \(r_n=n\beta^2/(Nd)\in K\).
Use the prior \(\pi_n^\dagger\) from
\eqref{eq:cubic-common-conditioned-prior} and its feature law
\(\mu_{d,n}^\dagger\) from Section~\ref{subsec:cubic-franz-parisi-gap}.
For every \(\delta>0\), there is \(g=g(K,\delta)>0\) such
that, for every fixed \(M>0\), uniformly for \(r\in K\),
\begin{equation}
 \mu_{d,n}^\dagger\left\{
 u^0:\frac1{nd}\mathbb E_g\log Z^{\rm G}_{u^0,\delta}
 >-2g(K,\delta)\right\}=o(d^{-M}).
 \label{eq:cubic-quenched-Gaussian-gap}
\end{equation}
Moreover,
\begin{equation}
  \limsup_{n\to\infty}\sup_{r\in K}{\cal F}_{n,d}(r,\delta)
  \le-2g(K,\delta).
  \label{eq:cubic-strict-Franz-Parisi}
\end{equation}
In particular, these bounds apply at the graph parameter \(r=r_n\).
\end{proposition}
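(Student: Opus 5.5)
The plan is to combine Proposition~\ref{prop:cubic-Gaussian-variational-bound} with the uniform matrix bound of Proposition~\ref{prop:cubic-matrix-branch-gap}. Given the compact interval \(K\subset(0,r_{\mathrm{tens}})\) and \(\delta>0\), first take \(\kappa_K,d_K\) from Proposition~\ref{prop:cubic-matrix-branch-gap}. Applied with threshold \(\delta/2\), inequality~\eqref{eq:cubic-uniform-quadratic-branch-coercivity} yields, for \(d\ge d_K\) and uniformly in \(r\in K\),
\[
 \sup_{\substack{Q\succeq0\\N\|Q\|_{\HS}^2\ge\delta/2}}{\cal V}_{d,r}(Q)
 \le-\frac{\kappa_K\delta}{2}.
\]
This bound is deterministic and exact for large \(d\), not just a limsup. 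We then define \(g(K,\delta):=\kappa_K\delta/8\), which gives \(-2g=-\kappa_K\delta/4\).

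For \eqref{eq:cubic-quenched-Gaussian-gap}, we use \eqref{eq:cubic-Gaussian-variational-probability} with the same \(\delta\), with \(\varepsilon=\kappa_K\delta/4\), and with the given \(M\). Outside a \(\mu_{d,n}^\dagger\)-set of measure \(o(d^{-M})\), uniformly for \(r\in K\), this gives
\[
 \frac1{nd}\mathbb E_g\log Z^{\rm G}_{u^0,\delta}
 \le-\frac{\kappa_K\delta}{2}+\frac{\kappa_K\delta}{4}=-2g(K,\delta).
\]
The exceptional event in \eqref{eq:cubic-quenched-Gaussian-gap}, where the strict inequality \(>-2g\) holds, is therefore contained in the exceptional event of \eqref{eq:cubic-Gaussian-variational-probability}. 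When the constrained region has zero measure, \(\log Z^{\rm G}_{u^0,\delta}=-\infty\) and that \(u^0\) is trivially excluded.

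For \eqref{eq:cubic-strict-Franz-Parisi}, we use \eqref{eq:cubic-Gaussian-variational-average}. For large \(n\), \(\sup_{r\in K}\{{\cal F}_{n,d}(r,\delta)-\sup{\cal V}_{d,r}\}\le\kappa_K\delta/4\), and with the display above this gives \(\limsup_n\sup_{r\in K}{\cal F}_{n,d}(r,\delta)\le-\kappa_K\delta/4=-2g\). A \(-\infty\) value of \({\cal F}_{n,d}\) only helps.

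Finally, the hypotheses of both propositions are the ones assumed here, \(r_n\in K\), so the uniformity in \(r\in K\) lets us substitute \(r=r_n\). The one place needing care is that the uniformity of Proposition~\ref{prop:cubic-Gaussian-variational-bound} in \(r\) is compatible with \(\kappa_K\) being chosen before \(\varepsilon\). The choice order \(K\to\kappa_K\to\varepsilon\to\) cover is consistent, so I expect no real obstacle; the substantive work is already contained in the two cited propositions.
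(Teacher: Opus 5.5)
Your proposal is correct and follows the paper's proof essentially step for step. You take $\kappa_K$ from Proposition~\ref{prop:cubic-matrix-branch-gap}, set $g(K,\delta)=\kappa_K\delta/8$, apply \eqref{eq:cubic-Gaussian-variational-probability} with $\varepsilon=\kappa_K\delta/4$ through the same event inclusion, and use \eqref{eq:cubic-Gaussian-variational-average} for the averaged bound. The only small difference is that the paper reads off the slightly stronger $\limsup\le-\kappa_K\delta/2$ in the averaged case, while your $-\kappa_K\delta/4=-2g$ is already enough.
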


\begin{proof}
Proposition~\ref{prop:cubic-matrix-branch-gap} gives, for all sufficiently
large \(d\), uniformly for \(r\in K\),
\[
 \sup_{\substack{Q\succeq0\\N\|Q\|_{\HS}^2\ge\delta/2}}
       {\cal V}_{d,r}(Q)
 \le-\frac{\kappa_K\delta}{2}.
\]
Apply \eqref{eq:cubic-Gaussian-variational-probability} in
Proposition~\ref{prop:cubic-Gaussian-variational-bound} with
\(\varepsilon=\kappa_K\delta/4\).  The event inclusion
\[
 \left\{u^0:\frac1{nd}\mathbb E_g\log Z^{\rm G}_{u^0,\delta}
              >-\frac{\kappa_K\delta}{4}\right\}
 \subseteq
 \left\{u^0:\frac1{nd}\mathbb E_g\log Z^{\rm G}_{u^0,\delta}
    >\sup_{\substack{Q\succeq0\\N\|Q\|_{\HS}^2\ge\delta/2}}
          {\cal V}_{d,r}(Q)+\frac{\kappa_K\delta}{4}\right\}
\]
therefore proves \eqref{eq:cubic-quenched-Gaussian-gap} with
\(g(K,\delta)=\kappa_K\delta/8\).
For the averaged conclusion,
\eqref{eq:cubic-Gaussian-variational-average} gives
\[
 \limsup_{n\to\infty}\sup_{r\in K}{\cal F}_{n,d}(r,\delta)
 \le-\frac{\kappa_K\delta}{2}\le-2g(K,\delta),
\]
which proves \eqref{eq:cubic-strict-Franz-Parisi}.
\end{proof}

\subsection{Observation-dependent truncation}
\label{subsec:cubic-bernoulli-truncation}

Proposition~\ref{prop:cubic-strict-Franz-Parisi} gives a negative
Gaussian conditional mean, and
\eqref{eq:cubic-Bernoulli-Gaussian-reduction} compares it with the
Bernoulli conditional mean.  We now control the region \(s_n\ge\delta\)
in the second moment by retaining \(L_x(\Adj)\) only when
\(Z_{x,\delta}^\dagger(\Adj)\le e^{-gnd}\).
Lemma~\ref{lem:cubic-legal-truncation} bounds the probability of the
discarded event under the joint law of \(X\sim\pi_n^\dagger\) and \(\Adj\mid X\).
Equations~\eqref{eq:cubic-truncation-L1} and
\eqref{eq:cubic-truncation-pair} then give an \(o(1)\) loss of
likelihood mass and an exponentially small second-moment integral
over \(s_n\ge\delta\).

\begin{lemma}[Observation-dependent macroscopic truncation]
\label{lem:cubic-legal-truncation}
Under the assumptions of Proposition~\ref{prop:cubic-strict-Franz-Parisi},
use the same prior \(\pi_n^\dagger\) and fix \(\delta>0\).
Then there is \(g=g(K,\delta)>0\) such that
\begin{equation}
  \mathbb P_{X\sim\pi_n^\dagger,\ \Adj\mid X}
  \{\log Z_{X,\delta}^\dagger(\Adj)>-gnd\}=o(d^{-M})
  \label{eq:cubic-component-Franz-Parisi}
\end{equation}
for every fixed \(M>0\).
\end{lemma}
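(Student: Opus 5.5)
The plan has three steps. First, transfer the Gaussian bound of Proposition~\ref{prop:cubic-strict-Franz-Parisi} to a bound on the Bernoulli conditional mean. Second, add conditional concentration of \(\log Z_{X,\delta}^\dagger\) over the edges. Third, combine the two with a union bound over the latent sample.

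\emph{Step 1 (conditional mean).} Fix \(x\in\mathcal G_n^\dagger\) with positive constrained prior mass; otherwise \(\log Z_{x,\delta}^\dagger=-\infty\) and there is nothing to prove. The distributional identity from Section~\ref{subsec:cubic-franz-parisi-gap} shows that, at \(r=r_n\), \(Z_{x,\delta}^{\mathrm G,\dagger}\) has the law of \(Z^{\rm G}_{u^0,\delta}\) with \(u_i^0=q_{3,d}(x_i)\). Outside the exceptional set \(\mathcal E_n\) of \eqref{eq:cubic-quenched-Gaussian-gap}, which has \(\pi_n^\dagger(\mathcal E_n)=o(d^{-M})\), we then have \((nd)^{-1}\mathbb E_g\log Z_{x,\delta}^{\mathrm G,\dagger}\le-2g_1\) with \(g_1=g(K,\delta)\). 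The Bernoulli--Gaussian comparison \eqref{eq:cubic-Bernoulli-Gaussian-reduction} is uniform in \(x\), so \(\mathbb E_{\Adj\mid x}\log Z_{x,\delta}^\dagger\le-\tfrac32 g_1nd\) for large \(n\), uniformly over \(x\in\mathcal G_n^\dagger\setminus\mathcal E_n\). We take \(g=g_1\) in the statement, leaving a margin of \(g_1nd/2\) for fluctuations.

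\emph{Step 2 (concentration over the edges).} Conditional on \(X=x\), the signs \(Y_{ij}\) are independent. View \(\log Z_{x,\delta}^\dagger\) as a function of the vector \((Y_{ij})\). Flipping one sign \(Y_{ij}\) multiplies each integrand \(L_{x'}\) by \((1-\beta Y_{ij}Z_{3,d}(x_i',x_j'))/(1+\beta Y_{ij}Z_{3,d}(x_i',x_j'))\). For \(x'\in\mathcal G_n^\dagger\) this ratio is \(e^{O(\tau_{n,d})}\), with \(\tau_{n,d}=C|\beta|\log^3d/\sqrt N\) as in Section~\ref{subsec:cubic-bernoulli-Gaussian-reduction}. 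Hence \(\log Z_{x,\delta}^\dagger\) has bounded differences \(c_{ij}\le C\tau_{n,d}\). McDiarmid's inequality then gives
\[
\mathbb P_{\Adj\mid x}\{\log Z_{x,\delta}^\dagger-\mathbb E_{\Adj\mid x}\log Z_{x,\delta}^\dagger>t\}\le\exp\Bigl(-\frac{ct^2}{n^2\tau_{n,d}^2}\Bigr).
\]
Set \(t=g_1nd/2\). The exponent is of order \(d^2/\tau_{n,d}^2=d^2N/(\beta^2\log^6d)\), and since \(\beta^2\le1\) and \(N\asymp d^3\) this is \(\gtrsim d^5/\log^6 d\). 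The bound is therefore \(o(d^{-M})\) for every \(M\), uniformly in \(x\).

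\emph{Step 3 (combination).} Write the joint probability as \(\int\mathbb P_{\Adj\mid x}\{\cdot\}\,d\pi_n^\dagger(x)\). The part from \(\mathcal E_n\) is at most \(\pi_n^\dagger(\mathcal E_n)=o(d^{-M})\), and the part from the complement is at most the McDiarmid bound. Together these give \eqref{eq:cubic-component-Franz-Parisi}.

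The step I expect to be the main obstacle is the bounded-differences estimate in Step 2. The flip ratio must be uniformly small over the whole integration set, including \(x'\) with \(s_n(x,x')\ge\delta\). This is where restricting the prior to \(\mathcal G_n^\dagger\), with its kernel bound from Proposition~\ref{prop:cubic-raw-local-partition}, is essential: without it \(1+\beta Y Z\) could be near zero and \(\log Z\) would not be Lipschitz. The reduction also needs care on the degenerate set where the constrained mass is zero, which the convention \(\log 0=-\infty\) handles.
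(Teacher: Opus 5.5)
Your proposal is correct and takes essentially the same route as the paper. Both transfer the Gaussian gap of Proposition~\ref{prop:cubic-strict-Franz-Parisi} to the Bernoulli conditional mean via \eqref{eq:cubic-Bernoulli-Gaussian-reduction}, then apply the bounded differences inequality with per-edge oscillation $C\tau_{n,d}$ and integrate over $x\sim\pi_n^\dagger$. The only difference is the choice of constant: you take $g=g(K,\delta)$ with a margin coming from the bound $-\tfrac32 g\,nd$, while the paper bounds the mean by $-g_0nd$ and takes $g=g_0/2$; this does not matter.
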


\begin{proof}
Step~1 combines the Gaussian estimate in
Proposition~\ref{prop:cubic-strict-Franz-Parisi} with the comparison
\eqref{eq:cubic-Bernoulli-Gaussian-reduction} to prove
\(\mathbb E_{\Adj\mid x}\log Z_{x,\delta}^\dagger(\Adj)\le-g_0nd\) for some
\(g_0>0\), outside a set of \(\pi_n^\dagger\)-probability \(o(d^{-M})\).
Step~2 bounds the change in \(\log Z_{x,\delta}^\dagger\) when one edge is
changed, as in \eqref{eq:cubic-edge-log-likelihood-oscillation}.
The bounded differences inequality and integration over \(x\sim\pi_n^\dagger\)
then give \eqref{eq:cubic-component-Franz-Parisi} with \(g=g_0/2\).

We use the convention \(\log0=-\infty\).
If \(\pi_n^\dagger\{x':s_n(x,x')\ge\delta\}=0\), then
\(Z_{x,\delta}^\dagger\equiv0\), so this \(x\) contributes zero to the
probability in \eqref{eq:cubic-component-Franz-Parisi}.
It therefore suffices to prove the conditional estimates below when
this set has positive \(\pi_n^\dagger\)-measure.

\medskip
\noindent\textit{Step 1. Bound the conditional mean of \(\log Z_{x,\delta}^\dagger\).}
Let \(g_0=g(K,\delta)>0\) be the constant in
Proposition~\ref{prop:cubic-strict-Franz-Parisi}.
By \eqref{eq:cubic-quenched-Gaussian-gap}, applied at \(r=r_n\),
the set
\[
 \mathcal T_n
 :=\left\{x\in\mathcal G_n^\dagger:
       \mathbb E_g\log Z_{x,\delta}^{\mathrm G,\dagger}\le-2g_0nd\right\}
 \quad\text{satisfies}\quad
 \pi_n^\dagger(\mathcal T_n^c)=o(d^{-M})
\]
for every fixed \(M>0\).
The Gaussian integral here is the one identified with
\(Z^{\rm G}_{u^0,\delta}\), for \(u_i^0=q_{3,d}(x_i)\), before
Proposition~\ref{prop:cubic-Gaussian-variational-bound}.
For \(x\in\mathcal T_n\) with positive constrained prior mass, the comparison
\eqref{eq:cubic-Bernoulli-Gaussian-reduction} now gives
\begin{equation}
\begin{split}
  \mathbb E_{\Adj\mid x}\log Z_{x,\delta}^\dagger(\Adj)
  &\le\mathbb E_g\log Z_{x,\delta}^{\mathrm G,\dagger}
       +Cn^2(\tau_{n,d}^3+\tau_{n,d}^4)\\
  &\le-2g_0nd+o(nd)\le-g_0nd
\end{split}
  \label{eq:cubic-quenched-Bernoulli-gap}
\end{equation}
for all sufficiently large \(n\), uniformly in \(x\).

\medskip
\noindent\textit{Step 2. Control deviations from the conditional mean.}
Fix \(x\in\mathcal T_n\) with
\(\pi_n^\dagger\{x':s_n(x,x')\ge\delta\}>0\), and let \(\Adj^{(e)}\) differ
from \(\Adj\) only in edge \(e=\{i,j\}\).
For every \(x'\in\mathcal G_n^\dagger\),
Proposition~\ref{prop:cubic-raw-local-partition} gives
\(|b_e(x')|=|\beta Z_{3,d}(x_i',x_j')|\le\tau_{n,d}=o(1)\).
Only the factor \(1+Y_e b_e(x')\) changes in \(L_{x'}\), so
\[
 \frac{1-\tau_{n,d}}{1+\tau_{n,d}}
 \le\frac{L_{x'}(\Adj^{(e)})}{L_{x'}(\Adj)}
 \le\frac{1+\tau_{n,d}}{1-\tau_{n,d}}.
\]
For all sufficiently large \(n\), \(\tau_{n,d}<1\), and for every
graph \(\Adj\),
\[
 Z_{x,\delta}^\dagger(\Adj)
 \ge(1-\tau_{n,d})^{\binom n2}
       \pi_n^\dagger\{x':s_n(x,x')\ge\delta\}>0.
\]
Multiplying the bounds for \(L_{x'}(\Adj^{(e)})/L_{x'}(\Adj)\)
by \(L_{x'}(\Adj)\) and integrating with respect to \(\pi_n^\dagger\)
over \(\{x':s_n(x,x')\ge\delta\}\) gives the same bounds for
\(Z_{x,\delta}^\dagger(\Adj^{(e)})/Z_{x,\delta}^\dagger(\Adj)\).
Taking logarithms of the ratio bounds therefore yields
\begin{equation}
 \left|\log Z_{x,\delta}^\dagger(\Adj^{(e)})-\log Z_{x,\delta}^\dagger(\Adj)\right|
 \le\log\frac{1+\tau_{n,d}}{1-\tau_{n,d}}
 \le C\tau_{n,d}.
 \label{eq:cubic-edge-log-likelihood-oscillation}
\end{equation}
The \(\binom n2\) edges are independent under \(\mathbb P_{\Adj\mid x}\).
By \eqref{eq:cubic-quenched-Bernoulli-gap}, on the event
\(\{\log Z_{x,\delta}^\dagger(\Adj)>-g_0nd/2\}\),
\[
 \log Z_{x,\delta}^\dagger(\Adj)
 -\mathbb E_{\Adj\mid x}\log Z_{x,\delta}^\dagger(\Adj)
 \ge\log Z_{x,\delta}^\dagger(\Adj)+g_0nd
 >\frac{g_0nd}{2}.
\]
Applying the bounded differences inequality
\cite[Theorem~6.2]{BoucheronLugosiMassart2013} with the bounds in
\eqref{eq:cubic-edge-log-likelihood-oscillation} therefore gives
\[
  \mathbb P_{\Adj\mid x}
  \{\log Z_{x,\delta}^\dagger(\Adj)>-g_0nd/2\}
  \le\exp\left(-c\frac{g_0^2d^2}{\tau_{n,d}^2}\right).
\]
Recall \(\tau_{n,d}=C_{A,K,M_\star}|\beta|\log^3d/\sqrt N\).
Since \(|\beta|\le1\) and \(N\asymp d^3\),
\[
 \frac{d^2}{\tau_{n,d}^2}
 \ge c\frac{d^2N}{\log^6d}\gg\log d.
\]
Thus \(\exp(-cg_0^2d^2/\tau_{n,d}^2)=o(d^{-M})\) for every fixed \(M\).
The same probability bound holds trivially when the constrained prior
mass is zero.
Taking \(g=g_0/2\) and integrating with respect to \(x\sim\pi_n^\dagger\), we obtain
\begin{align*}
 \mathbb P_{X\sim\pi_n^\dagger,\ \Adj\mid X}
   \{\log Z_{X,\delta}^\dagger(\Adj)>-gnd\}
 &=\int\mathbb P_{\Adj\mid x}
       \{\log Z_{x,\delta}^\dagger(\Adj)>-gnd\}\,d\pi_n^\dagger(x)\\
 &\le\pi_n^\dagger(\mathcal T_n^c)
       +\exp\left(-c\frac{g_0^2d^2}{\tau_{n,d}^2}\right)
   =o(d^{-M}).
\end{align*}
This proves \eqref{eq:cubic-component-Franz-Parisi}.
\end{proof}

For fixed \(\delta\), define
\[
  {\cal A}_{x,\delta}^\dagger
  :=\{\Adj:Z_{x,\delta}^\dagger(\Adj)\le e^{-gnd}\},\qquad
  \overline L_n(\Adj)
  :=\int L_x(\Adj)\1_{{\cal A}_{x,\delta}^\dagger}(\Adj)\,d\pi_n^\dagger(x).
\]
Since \(L_x=d\mathbb P(\,\cdot\mid x)/d\Qq_n\), Fubini's theorem and
Lemma~\ref{lem:cubic-legal-truncation} give
\begin{equation}
  \mathbb E_{\Qq_n}(L_n^\dagger-\overline L_n)
  =
  \int\mathbb P((\mathcal A_{x,\delta}^\dagger)^c\mid x)\,d\pi_n^\dagger(x)
  =o(1).
  \label{eq:cubic-truncation-L1}
\end{equation}
For the second moment, drop \(\1_{\mathcal A_{x',\delta}^\dagger}\) and use
\(\int_{\{s_n(x,x')\ge\delta\}}L_{x'}\,d\pi_n^\dagger(x')=Z_{x,\delta}^\dagger\).
Fubini's theorem then yields
\begin{align}
  \iint_{\{s_n(x,x')\ge\delta\}}
  \mathbb E_{\Qq_n}\!
  \left[
    L_xL_{x'}\1_{\mathcal A_{x,\delta}^\dagger}
              \1_{\mathcal A_{x',\delta}^\dagger}
  \right]d\pi_n^\dagger(x)d\pi_n^\dagger(x')
  &\le\mathbb E_{\Qq_n}\int
       L_x\1_{\mathcal A_{x,\delta}^\dagger}Z_{x,\delta}^\dagger\,d\pi_n^\dagger(x)
       \notag\\
  &\le e^{-gnd}\int\mathbb E_{\Qq_n}L_x\,d\pi_n^\dagger(x)
   =e^{-gnd}.
  \label{eq:cubic-truncation-pair}
\end{align}
By \eqref{eq:cubic-truncation-L1}, normalizing the truncated likelihood
changes the graph law with latent prior \(\pi_n^\dagger\) by \(o(1)\) in
total variation.
By \eqref{eq:cubic-truncation-pair}, the integral over
\(s_n(x,x')\ge\delta\) in the truncated second moment is at most
\(e^{-gnd}\), and hence tends to zero.

\subsection{Gaussian likelihood asymptotics below the threshold}
\label{subsec:cubic-supp-likelihood}

The estimates above justify the truncation constructed in
Section~\ref{subsec:cubic-variational-mechanism}.
Proposition~\ref{prop:cubic-full-critical-LAN} records the resulting
log-likelihood expansion.  Its proof also establishes the
\(L^2\) approximation by conditioning on \(\widehat T_n\)
in \eqref{eq:cubic-likelihood-projection}.

\begin{proposition}[Gaussian log-likelihood expansion below the tensor threshold]
\label{prop:cubic-full-critical-LAN}
Fix \(A<\infty\).  Suppose
\[
  d=d_n\to\infty,\qquad n\le d^A,\qquad
  |\beta|\le1,\qquad
  r_n=\frac{n\beta^2}{N_{3,d}d}\longrightarrow
  r\in(0,r_{\mathrm{tens}}).
\]
Write \(L_n=d\Pp_n^{(3)}/d\Qq_n\) and \(Y_{ij}=2a_{ij}-1\).  Define
\[
  \widehat T_n:=\binom n3^{-1/2}
  \sum_{i<j<k}Y_{ij}Y_{jk}Y_{ki},
  \qquad
  \gamma_n:=
  \frac{\sqrt{\binom n3}\,\beta^3}{N_{3,d}^2}.
\]
Then, under \(\Qq_n\),
\begin{equation}
  \log L_n
  =\gamma_n\widehat T_n-\frac12r^3+o_{\Qq_n}(1),
  \qquad
  \gamma_n^2\longrightarrow r^3.
  \label{eq:cubic-full-critical-LAN}
\end{equation}
\end{proposition}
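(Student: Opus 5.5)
The plan follows the two-input scheme of Section~\ref{subsec:cubic-likelihood-mechanism}. The first input is the truncated likelihood. Choose a compact interval $K\subset(0,r_{\mathrm{tens}})$ that contains $r_n$ for all large $n$. Fix $M_\star>4A+10$ and a small $\delta\le\delta_0$, and take $\mathcal G_n^\dagger$ and $\pi_n^\dagger$ as in Proposition~\ref{prop:cubic-raw-local-partition}. With $g=g(K,\delta)$ from Lemma~\ref{lem:cubic-legal-truncation}, form $\overline L_n$, and set $c_n=\E_{\Qq_n}\overline L_n$ and $\widetilde L_n=\overline L_n/c_n$. The estimate \eqref{eq:cubic-regular-prior-likelihood-transfer} gives $\|L_n-L_n^\dagger\|_{L^1(\Qq_n)}=o(1)$. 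Since $0\le\overline L_n\le L_n^\dagger$, the identity \eqref{eq:cubic-truncation-L1} gives $\|L_n^\dagger-\overline L_n\|_{L^1(\Qq_n)}=1-c_n=o(1)$. Together these yield the $L^1$ half of \eqref{eq:cubic-main-likelihood-interface}. For the second moment, expand $\E_{\Qq_n}\overline L_n^2$ as a double integral over $(x,x')$ and split it at $s_n=\delta$. The region $s_n\ge\delta$ contributes at most $e^{-gnd}$ by \eqref{eq:cubic-truncation-pair}. On $s_n<\delta$, drop the indicators, use $\E_{\Qq_n}L_xL_{x'}=W_{n,\beta}$, and apply \eqref{eq:cubic-raw-local-partition}; the normalisation $(g_n^\dagger)^{-2}=1+o(1)$ is absorbed. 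Dividing by $c_n^2\to1$ then gives $\limsup\E_{\Qq_n}\widetilde L_n^2\le e^{r^3}$.

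The second input identifies the limit. Let $\widetilde\Pp_n=\widetilde L_n\,d\Qq_n$. Then $\TV(\widetilde\Pp_n,\Pp_n^{(3)})=o(1)$, so Theorem~\ref{thm:pure-cubic-triangle-clt} gives $\widehat T_n-\gamma_n\Rightarrow\Normal(0,1)$ under $\widetilde\Pp_n$, while $\widehat T_n\Rightarrow\Normal(0,1)$ under $\Qq_n$. Pass to a subsequence along which $\gamma_n\to\Delta\in\{\pm r^{3/2}\}$ and $(\widehat T_n,\widetilde L_n)\Rightarrow(Z,L)$ under $\Qq_n$; tightness follows from the $L^2$ bound. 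Uniform integrability, also from the $L^2$ bound, lets us test against bounded continuous $f(\widehat T_n)$. This shows $\E(L\mid Z)=e^{\Delta Z-\Delta^2/2}$, whose second moment is $e^{\Delta^2}=e^{r^3}$. Fatou's lemma together with conditional Jensen then gives
\[
e^{r^3}\le\E L^2\le\liminf\E_{\Qq_n}\widetilde L_n^2\le e^{r^3}.
\]
Hence $\operatorname{Var}(L\mid Z)=0$ and $L=e^{\Delta Z-\Delta^2/2}$. Because the joint limit is unique, $\widetilde L_n-e^{\gamma_n\widehat T_n-\gamma_n^2/2}=o_{\Qq_n}(1)$.

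To transfer this to $L_n$, use the $L^1$ approximation and multiply by $e^{-\gamma_n\widehat T_n+\gamma_n^2/2}=O_{\Qq_n}(1)$. This gives $L_n/e^{\gamma_n\widehat T_n-\gamma_n^2/2}\to1$ in $\Qq_n$-probability. Taking logarithms and using $\gamma_n^2\to r^3$, which follows from \eqref{eq:cubic-critical-parameter-match}, yields \eqref{eq:cubic-full-critical-LAN} along the subsequence. The subsequence principle then gives it for the full sequence, including when the sign of $\beta$ varies. The optional projection statement \eqref{eq:cubic-likelihood-projection} follows the same way: compare $\E_{\Qq_n}(\widetilde L_n-e^{\gamma_n\widehat T_n-\gamma_n^2/2})^2$ using the limsup bound and the convergence of cross moments.

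The main obstacle is the sharp second-moment constant. Everything hinges on the limsup in \eqref{eq:cubic-raw-local-partition} being exactly $e^{r^3}$ and on the negative Franz--Parisi gap behind Lemma~\ref{lem:cubic-legal-truncation}. Both are supplied by the earlier propositions, so the proof itself reduces to carefully checking that the same prior $\pi_n^\dagger$ and the same $\delta$ are used throughout. One must also check that $r_n\in K$ holds eventually, which is needed for the uniformity in those propositions.
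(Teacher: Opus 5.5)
Your proposal is correct and is essentially the paper's proof. You rebuild the normalized truncation $\widetilde L_n$ exactly as in Section~\ref{subsec:cubic-variational-mechanism}, then run the subsequence, uniform-integrability and conditional-Jensen identification of Section~\ref{subsec:cubic-likelihood-mechanism}; these are Steps~1 and~3 of the paper's argument, and the paper's Step~2, the projection bound \eqref{eq:cubic-likelihood-projection}, is a by-product not needed for the statement.

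One caveat concerns only your optional aside. Bounding $\E_{\Qq_n}(\widetilde L_n-e^{\gamma_n\widehat T_n-\gamma_n^2/2})^2$ directly would require convergence of exponential moments such as $\E_{\Qq_n}e^{2\gamma_n\widehat T_n}$, which the triangle CLT does not provide. The paper avoids this by lower-bounding $\E_{\Qq_n}\{\E_{\Qq_n}(\widetilde L_n\mid\widehat T_n)\}^2$ with the truncated test functions $\min\{M,e^{\Delta z-\Delta^2/2}\}$.
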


\begin{proof}
We use the truncation constructed in
Section~\ref{subsec:cubic-variational-mechanism}.
Step~1 recalls its \(L^1\) and second-moment bounds.
Step~2 proves the additional \(L^2\) approximation
\eqref{eq:cubic-likelihood-projection}, and Step~3 applies the
likelihood-identification argument of
Section~\ref{subsec:cubic-likelihood-mechanism}.

\noindent\textit{Step 1. Recall the normalized truncation.}
Take \(\widetilde L_n=\overline L_n/c_n\) from
Section~\ref{subsec:cubic-variational-mechanism}, where
\(c_n=\mathbb E_{\Qq_n}\overline L_n\to1\).
This likelihood has mean one and, by
\eqref{eq:cubic-main-likelihood-interface}, satisfies
\begin{align}
 \|L_n-\widetilde L_n\|_{L^1(\Qq_n)}
 &\longrightarrow0,
 \label{eq:cubic-normalized-truncated-L1}\\
 \limsup_n\mathbb E_{\Qq_n}\widetilde L_n^2
 &\le e^{r^3}.
 \label{eq:cubic-truncated-L2-upper}
\end{align}

\noindent\textit{Step 2. Prove the conditional \(L^2\) approximation.}
Let \(G_n=\mathbb E_{\Qq_n}(\widetilde L_n\mid\widehat T_n)\).
The orthogonal-projection identity is
\begin{equation}
 \|\widetilde L_n-G_n\|_{L^2(\Qq_n)}^2
 =\mathbb E_{\Qq_n}\widetilde L_n^2-\mathbb E_{\Qq_n}G_n^2.
 \label{eq:cubic-likelihood-projection-identity}
\end{equation}
Let \(d\widetilde\Pp_n=\widetilde L_n\,d\Qq_n\).
By \eqref{eq:cubic-normalized-truncated-L1},
\(\TV(\widetilde\Pp_n,\Pp_n^{(3)})=o(1)\), so the alternative
triangle limit in Lemma~\ref{lem:cubic-critical-triangle-clt}
also holds under \(\widetilde\Pp_n\).
Moreover,
\(\gamma_n^2=r_n^3\{(n-1)(n-2)/n^2\}\{d^3/(6N)\}\to r^3\).
Along any subsequence, pass to a further one with
\(\gamma_n\to\Delta\in\{-r^{3/2},r^{3/2}\}\).
For bounded continuous \(f\), the inequality
\(\mathbb E_{\Qq_n}\{G_n-f(\widehat T_n)\}^2\ge0\) gives
\(\mathbb E_{\Qq_n}G_n^2\ge
2\mathbb E_{\widetilde\Pp_n}f(\widehat T_n)
-\mathbb E_{\Qq_n}f(\widehat T_n)^2\).
The triangle limits therefore imply
\[
 \liminf_n\mathbb E_{\Qq_n}G_n^2
 \ge 2\mathbb Ef(Z+\Delta)-\mathbb Ef(Z)^2,
 \qquad Z\sim\Normal(0,1),
\]
and taking \(f=f_M\), where
\(f_M(z)=\min\{M,e^{\Delta z-\Delta^2/2}\}\), yields
\[
 \liminf_n\mathbb E_{\Qq_n}G_n^2
 \ge e^{\Delta^2}
      -\mathbb E\big(e^{\Delta Z-\Delta^2/2}-f_M(Z)\big)^2
 \xrightarrow[M\to\infty]{}e^{\Delta^2}=e^{r^3}.
\]
The last limit uses dominated convergence and
\(\mathbb E e^{2\Delta Z-\Delta^2}=e^{\Delta^2}<\infty\).
Conditional Jensen's inequality and
\eqref{eq:cubic-truncated-L2-upper} give the matching upper bound,
\[
 \limsup_n\mathbb E_{\Qq_n}G_n^2
 \le\limsup_n\mathbb E_{\Qq_n}\widetilde L_n^2\le e^{r^3}.
\]
Both second moments therefore converge to \(e^{r^3}\) along every such
further subsequence.  Equation~\eqref{eq:cubic-likelihood-projection-identity}
then gives, along the full sequence,
\begin{equation}
 \big\|\widetilde L_n-
       \mathbb E_{\Qq_n}(\widetilde L_n\mid\widehat T_n)
 \big\|_{L^2(\Qq_n)}^2
 =\mathbb E_{\Qq_n}\widetilde L_n^2-\mathbb E_{\Qq_n}G_n^2
 \longrightarrow0.
 \label{eq:cubic-likelihood-projection}
\end{equation}

\noindent\textit{Step 3. Identify the original likelihood.}
Applying the argument in
Section~\ref{subsec:cubic-likelihood-mechanism} to
\eqref{eq:cubic-normalized-truncated-L1}--\eqref{eq:cubic-truncated-L2-upper}
and the triangle limits gives
\(L_n/e^{\gamma_n\widehat T_n-\gamma_n^2/2}=1+o_{\Qq_n}(1)\).
Taking logarithms and using \(\gamma_n^2\to r^3\) proves
\eqref{eq:cubic-full-critical-LAN}.
\end{proof}

\subsection{Strong detection and limits at the threshold}
\label{subsec:cubic-supp-supercritical}

Proposition~\ref{prop:cubic-full-critical-LAN} establishes the
below-threshold likelihood expansion in
Theorem~\ref{thm:pure-cubic-critical}.
We now prove the theorem's strong-detection conclusion for
\(r>r_{\mathrm{tens}}\).
Specifically, Proposition~\ref{prop:cubic-supercritical-free-energy} shows that
\(\{\log L_n\ge\eta(r)nd\}\) has probability tending to one under
\(\Pp_n^{(3)}\) and to zero under \(\Qq_n\).
This proves \(\TV(\Pp_n^{(3)},\Qq_n)\to1\), which
complements the below-threshold limit proved in
Section~\ref{subsec:cubic-likelihood-mechanism}.
Finally, Proposition~\ref{prop:cubic-endpoint} uses these fixed-\(r\)
limits to construct two sequences with \(r_n\to r_{\mathrm{tens}}\)
but different total-variation limits, completing the theorem's
assertion at the threshold.

For this argument, Lemma~\ref{lem:cubic-superpoly-regularity} constructs
an event \(\mathcal R_{n,d}\) with
\(\rho_n(\mathcal R_{n,d}^c)=o(d^{-M})\) for every fixed \(M\).
On this event, the conditional edge means satisfy
\(\max_{i\ne j}|\beta Z_{3,d}(X_i,X_j)|
\le C_A|\beta|\log^3d/\sqrt N=o(1)\).
These bounds allow us to apply the comparison in
Lemma~\ref{lem:cubic-constrained-edge-replacement} and control the
polynomial factors multiplying the conditioning error.

\begin{lemma}[Sample bounds with superpolynomially small failure probability]
\label{lem:cubic-superpoly-regularity}
Fix \(A<\infty\).  There are a constant \(C_A\) and events
\({\cal R}_{n,d}\subseteq(\Sph^{d-1})^n\) such that, on
\({\cal R}_{n,d}\), the bounds
\eqref{eq:cubic-linear-hereditary}--\eqref{eq:cubic-zonal-hereditary}
hold with \(m=n\) and \(C_{A,M}\) replaced by \(C_A\), and
\begin{align}
 \left\|\sum_{i=1}^n
 q_{3,d}(X_i)q_{3,d}(X_i)^{\mathsf T}\right\|_{\rm op}
 &\le C_A\left(\frac nN+\log^2 N\right),
 \label{eq:cubic-superpoly-feature-gram}\\
 \max_{i\ne j}|Z_{3,d}(X_i,X_j)|
 &\le C_A\frac{\log^3 d}{\sqrt N}.
 \label{eq:cubic-superpoly-regularity}
\end{align}
The events and \(C_A\) are independent of \(\beta\) and \(M\).
For every fixed \(M<\infty\), these same events satisfy, uniformly
over \(n\le d^A\),
\begin{equation}
 \varpi_n:=\rho_n({\cal R}_{n,d})=1-o(d^{-M}).
 \label{eq:cubic-regular-prior-mass}
\end{equation}
Membership in \({\cal R}_{n,d}\) is unchanged by common orthogonal
rotations, coordinate permutations, and individual sign changes.
\end{lemma}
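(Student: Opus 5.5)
The plan is to take \(\mathcal R_{n,d}\) to be the intersection of the hereditary event \(\mathcal G_{n,M'}\) from Lemma~\ref{lem:cubic-hereditary-envelope}, for a single large fixed \(M'\) (say \(M'=1\), with constants \(C_{A,1}\) frozen), with two further events defined by \eqref{eq:cubic-superpoly-feature-gram} and \eqref{eq:cubic-superpoly-regularity}, whose thresholds carry one extra logarithmic factor compared with \eqref{eq:cubic-feature-gram} and \eqref{eq:cubic-pair-separation}. That extra factor is what turns polynomially small failure probabilities into superpolynomially small ones, and it lets the events be chosen independently of \(M\). The constant \(C_A\) depends only on \(A\), and none of the events involve \(\beta\).

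\emph{First step.} Show that the hereditary bounds \eqref{eq:cubic-linear-hereditary}--\eqref{eq:cubic-zonal-hereditary} hold with superpolynomially small failure probability. In the proof of Lemma~\ref{lem:cubic-hereditary-envelope}, the union bound over the net and over all subsets already gives failure probability \(\exp\{-c(C-C_0)(d+r\log(em/r))\}\le e^{-cd}\) once the constant exceeds a threshold depending only on \(A\). Since \(e^{-cd}=o(d^{-M})\) for every \(M\), I would fix such a constant once and record that the only \(M\)-dependence there came from the union bound, which is now absorbed. The zonal bound \eqref{eq:cubic-zonal-hereditary} is deterministic given \eqref{eq:cubic-linear-hereditary}, so it inherits the same probability.

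\emph{Pair separation.} For \eqref{eq:cubic-superpoly-regularity}, I would use the tail \eqref{eq:cubic-spherical-coordinate-tail} with \(t=C\log^2 d\), giving \(|\langle X_i,X_j\rangle|\le C\log d/\sqrt d\) off probability \(n^2e^{-cC\log^2 d}=o(d^{-M})\) for every \(M\), since \(\log^2 d\gg\log d\). The polynomial formula \eqref{eq:cubic-zonal-polynomial} then gives \(|Z_{3,d}|\le C(|t|^3+|t|/d)\le C\log^3 d/d^{3/2}\asymp\log^3d/\sqrt N\).

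\emph{Feature Gram bound.} For \eqref{eq:cubic-superpoly-feature-gram}, I would apply matrix Bernstein \cite[Theorem~6.1]{Tropp2012} to the summands \(q_iq_i^{\mathsf T}-N^{-1}I_N\). These have norm at most \(1\) and variance proxy \(n/N\). The resulting bound is
\[
 \mathbb P\{\|\cdot\|_{\rm op}>s\}\le 2N\exp\!\bigl(-cs^2/(n/N+s)\bigr).
\]
Taking \(s=C(\sqrt{(n/N)}\log N+\log^2N)\) makes the exponent at least \(c\log^2N\), so the failure probability is \(2Ne^{-c\log^2N}=o(d^{-M})\) for all \(M\). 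Applying AM--GM as before gives \(\le C_A(n/N+\log^2N)\). I expect this to be the only step needing care: one must check that the Bernstein exponent really scales like \(\log^2N\) uniformly in \(n\le d^A\), in both regimes \(s\lesssim n/N\) and \(s\gtrsim n/N\). Both regimes follow from \(s^2/(n/N+s)\ge\min\{s^2N/(2n),s/2\}\).

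\emph{Invariance.} As in the last paragraph of the proof of Lemma~\ref{lem:cubic-hereditary-envelope}, I would write \(\sum_iq_iq_i^{\mathsf T}=\mathbf Q^{\mathsf T}\mathbf Q\), whose operator norm equals that of the Gram matrix \([Z_{3,d}(X_i,X_j)]\). Rotations, permutations and sign flips act on this Gram matrix by \(DP(\cdot)P^{\mathsf T}D\), which preserves both its operator norm and its maximal off-diagonal entry. Together with the invariance of \(\mathcal G_{n,M'}\) already established there, this shows that membership in \(\mathcal R_{n,d}\) is unchanged.
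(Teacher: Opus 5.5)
Your proposal has one concrete defect: the choice of event. You intersect with the whole event $\mathcal G_{n,M'}$ of Lemma~\ref{lem:cubic-hereditary-envelope}. That event is defined by all four conditions \eqref{eq:cubic-linear-hereditary}--\eqref{eq:cubic-pair-separation}, not only the two subset bounds. Its last two conditions, \eqref{eq:cubic-feature-gram} and \eqref{eq:cubic-pair-separation}, are imposed at the $M'$-dependent constant $C_{A,M'}$, with only $\log N$ and $\log^{3/2}d$ in the thresholds, and they fail with polynomially small probability rather than superpolynomially small probability.

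Already for a single pair, $\sqrt d\,\langle X_1,X_2\rangle$ is approximately standard Gaussian, and $|Z_{3,d}(t)|\asymp|t|^3$ for $|t|\gg d^{-1/2}$. So \eqref{eq:cubic-pair-separation} amounts to $|\langle X_1,X_2\rangle|\lesssim C_{A,M'}^{1/3}\sqrt{\log d/d}$, and it fails with probability of order $d^{-c\,C_{A,M'}^{2/3}}$ up to logarithmic factors. Hence $\rho_n(\mathcal R_{n,d}^c)\ge\rho_n(\mathcal G_{n,M'}^c)$ is not $o(d^{-M})$ once $M$ is large, and \eqref{eq:cubic-regular-prior-mass} fails for your event. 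Your remark that the only $M$-dependence came from the union bound is correct for the subset bounds. It is wrong for the pair-separation threshold, whose constant was chosen so that $cC_{A,M}^2>2A+M$; that dependence cannot be absorbed into one fixed event.

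The repair is what the paper does. Let $\mathcal R_{n,d}$ be the intersection of three events:
\begin{itemize}
\item the event on which \eqref{eq:cubic-linear-hereditary} holds for all subsets with a fixed constant, whose failure probability is at most $Ce^{-c'd}$ by your first step, and on which \eqref{eq:cubic-zonal-hereditary} then holds deterministically;
\item your log-enlarged event for \eqref{eq:cubic-superpoly-feature-gram};
\item your log-enlarged event for \eqref{eq:cubic-superpoly-regularity}.
\end{itemize}
Do not include the weaker conditions of $\mathcal G_{n,M'}$. Invariance of the subset bounds must then be checked directly, not by citing invariance of $\mathcal G_{n,M'}$. Use the identities $\sum_{i\in I}\widetilde X_i\widetilde X_i^{\mathsf T}=O\bigl(\sum_{j\in\tau(I)}X_jX_j^{\mathsf T}\bigr)O^{\mathsf T}$ and $\sum_{i\in I}Z_{3,d}(u,\widetilde X_i)^2=\sum_{j\in\tau(I)}Z_{3,d}(O^{\mathsf T}u,X_j)^2$.

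With this change, your pair-separation estimate, your matrix Bernstein computation (including the two-regime check of the exponent), and your Gram-matrix invariance argument coincide with the paper's proof.
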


\begin{proof}
We construct \({\cal R}_{n,d}\) as the intersection of the events
on which the four asserted sample bounds hold.  Compared with
\eqref{eq:cubic-feature-gram} and \eqref{eq:cubic-pair-separation},
the larger logarithmic factors allow us to choose the constants
independently of \(M\).
For the bounds
\eqref{eq:cubic-linear-hereditary}--\eqref{eq:cubic-zonal-hereditary}
from Lemma~\ref{lem:cubic-hereditary-envelope}, the \(1/4\)-net
used in the proof of \eqref{eq:cubic-linear-hereditary}
has at most \(9^d\) points, and the tail probability for a fixed net
point and a fixed \(k\)-subset is at most \(2e^{-cs}\).
Taking \(s=C_0\{d+k\log(en/k)\}\), the total failure probability is
bounded, for a sufficiently large absolute \(C_0\), by
\[
 \sum_{k=1}^n 2\,9^d\binom nk
       e^{-cC_0\{d+k\log(en/k)\}}
 \le 2e^{-(cC_0-\log9)d}
       \sum_{k=1}^n e^{-(cC_0-1)k\log(en/k)}
 \le Ce^{-c'd}.
\]
Here we used \(\binom nk\le(en/k)^k\) and \(\log(en/k)\ge1\).
The bound \eqref{eq:cubic-zonal-hereditary} follows
deterministically from \eqref{eq:cubic-linear-hereditary} by the
argument in that lemma.

We next prove \eqref{eq:cubic-superpoly-feature-gram}.
Write \(q_i=q_{3,d}(X_i)\), so that \(\|q_i\|=1\) and
\(\E q_iq_i^{\mathsf T}=N^{-1}I_N\).  Matrix
Bernstein \cite[Theorem~6.1]{Tropp2012}, at tail parameter
\(C\log^2 N\), gives
\[
 \left\|\sum_iq_iq_i^{\mathsf T}-\frac nN I_N\right\|_{\rm op}
 \le C\left(\sqrt{\frac nN}\log N+\log^2 N\right)
\]
outside an event of probability at most \(2N e^{-c\log^2 N}\).
Using \(2\sqrt{xy}\le x+y\) and adding the mean matrix
\((n/N)I_N\) gives \eqref{eq:cubic-superpoly-feature-gram}.

Finally, to prove \eqref{eq:cubic-superpoly-regularity}, we combine the
spherical concentration bound \cite[Theorem~3.4.6]{Vershynin2018}
with a union bound over at most \(n^2\) pairs to obtain
\[
 \max_{i\ne j}|\langle X_i,X_j\rangle|
 \le C\frac{\log d}{\sqrt d}
\]
outside an event of probability at most
\(2d^{2A}e^{-c\log^2 d}\).
The polynomial formula \eqref{eq:cubic-zonal-polynomial} then gives
\eqref{eq:cubic-superpoly-regularity}, since \(N\asymp d^3\).

Combining the three failure probabilities gives
\[
 \rho_n({\cal R}_{n,d}^{\,c})
 \le Ce^{-c'd}+2N e^{-c\log^2 N}
             +2d^{2A}e^{-c\log^2d}
 =o(d^{-M})
\]
for every fixed \(M\), since \(N\asymp d^3\).
The constants were chosen independently of \(M\), so this proves
\eqref{eq:cubic-regular-prior-mass} for the same event
\({\cal R}_{n,d}\) throughout.
Each defining condition is preserved by common orthogonal rotations,
coordinate permutations, and individual sign changes, and none involves
\(\beta\).  Their intersection \({\cal R}_{n,d}\) therefore has the
remaining asserted properties.
\end{proof}

We now use \(\mathcal R_{n,d}\) in
Proposition~\ref{prop:cubic-supercritical-free-energy}, first to compare
the expected log-likelihoods and then to control their variation.

\begin{proposition}[Likelihood-ratio separation above the tensor threshold]
\label{prop:cubic-supercritical-free-energy}
Fix \(A<\infty\).  Suppose
\[
 d=d_n\to\infty,\qquad n\le d^A,\qquad |\beta|\le1,
 \qquad r_n=\frac{n\beta^2}{Nd}\longrightarrow
 r>r_{\mathrm{tens}}.
\]
There is \(\eta(r)>0\) such that, for the original likelihood ratio
\(L_n=d\Pp_n^{(3)}/d\Qq_n\),
\begin{align}
  \mathbb P_{\Pp_n^{(3)}}
  \{\log L_n\ge\eta(r)nd\}&\longrightarrow1,
  \label{eq:cubic-supercritical-planted}\\
  \mathbb P_{\Qq_n}
  \{\log L_n\ge\eta(r)nd\}&\longrightarrow0,
  \label{eq:cubic-supercritical-null}
\end{align}
\end{proposition}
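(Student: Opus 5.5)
We follow the four-step outline sketched in Section~\ref{subsec:cubic-supercritical-mechanism}.

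\emph{Step 1: reduce to the regularized alternative.} Let \(\Pp_n^{\rm reg}\) be the graph law with latent prior \(\rho_n(\cdot\mid\mathcal R_{n,d})\) from Lemma~\ref{lem:cubic-superpoly-regularity}, and write \(L_n^{\rm reg}=d\Pp_n^{\rm reg}/d\Qq_n\). We compare \(D(\Pp_n^{\rm reg}\Vert\Qq_n)\) with the Gaussian relative entropy \(D(\Pp_{\rho_n}^{\rm G}\Vert\Qq^{\rm G})\). Both sides can be written as expectations of log integrals of the same form as \(Z_{\rm B}\) and \(Z_{\rm G}\): the planted graph is generated from \(X\), and we integrate over candidates \(x'\). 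Lemma~\ref{lem:cubic-constrained-edge-replacement} applies with \(\nu\) the conditioned prior, \(\mathcal S\) the whole space, and \(\varepsilon_{\rm ch}=\tau_{n,d}\). The ledgers \eqref{eq:cubic-third-replacement-ledger}--\eqref{eq:cubic-fourth-replacement-ledger} then make the error \(o(nd)\). Replacing the conditioned prior by \(\rho_n\) in the Gaussian model costs at most \(-\log\varpi_n\) plus a term of order \(o(d^{-M})\) times a polynomial, because the Gaussian log-likelihood has polynomially bounded moments. This gives \eqref{eq:cubic-main-Bernoulli-Gaussian-reduction}.

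\emph{Step 2: Gaussian lower bound.} We prove \eqref{eq:cubic-main-Gaussian-entropy-lower} by the reverse Guerra interpolation. Interpolate between the pair observation \(\sqrt{\beta^2}\,\langle u_i,u_j\rangle+g_{ij}\) and \(n\) independent vector channels \(\sqrt{rzd}\,u_i+g_i\) that use the scalar matrix \(Q=zI_N/N\). In the Bayes-optimal (planted) setting, the Nishimori identity turns the \(t\)-derivative of the free energy into \(\frac{rN}{4}\E\langle\|R_{12}-Q\|_{\HS}^2\rangle_t\) minus explicit terms, and this is nonnegative. So the pair entropy is bounded below by \(n\,D(P_{rNdQ}\Vert\gamma_N)-\frac{rN}{4}nd\|Q\|_{\HS}^2\), up to the diagonal correction \(\beta^2/(4d)\) per vertex. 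Dividing by \(nd\) yields \(\mathcal V_{d,r_n}(zI_N/N)-\beta^2/(4d)\). Applying \eqref{eq:cubic-scalar-potential-uniform-limit} and then optimizing over \(z\in[0,1]\) (the optimum \(z=q^3\) lies in \([0,1]\)) gives \(\liminf(nd)^{-1}\E_{\Pp_n^{\rm reg}}\log L_n^{\rm reg}\ge\psi(r)>0\) by \eqref{eq:cubic-main-graph-potential}.

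\emph{Step 3: concentration.} This is the main obstacle: we need \(\operatorname{Var}_{\Pp_n^{\rm reg}}\log L_n^{\rm reg}=o((nd)^2)\). Decompose the variance into the graph randomness given \(X\) and the randomness of \(X\).
\begin{itemize}
\item Given \(X\), flipping one edge changes \(\log L_n^{\rm reg}\) by \(O(\tau_{n,d})\), by the ratio bound \eqref{eq:cubic-edge-log-likelihood-oscillation} applied to the conditioned prior. Efron--Stein then gives variance \(O(n^2\tau_{n,d}^2)=o((nd)^2)\).
\item For the dependence on \(X\), first average over edges to get \(\Phi(X)=\E[\log L_n^{\rm reg}\mid X]\), and bound its sensitivity to moving one \(X_i\). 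Moving \(X_i\) changes the means of the \(n-1\) incident edges by \(O(\tau_{n,d})\) each. By Efron--Stein over vertices this gives \(O(n\cdot n^2\tau_{n,d}^2)\), which is \(o(n^2d^2)\) since \(n\tau_{n,d}^2=O(\beta^2 n\log^6d/N)=O(d\log^6 d)\).
\end{itemize}
The difficulty is that \(\mathcal R_{n,d}\) is not a product set. Efron--Stein must therefore be run under \(\rho_n\), with the off-event contribution handled using its superpolynomially small mass together with the crude bound \(|\log L|\le Cn^2\).

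\emph{Step 4: conclude.} Chebyshev gives \(\Pp_n^{\rm reg}\{\log L_n^{\rm reg}\ge\psi nd/4\}\to1\). Next, \(L_n\ge\varpi_nL_n^{\rm reg}\) and \(\TV(\Pp_n^{(3)},\Pp_n^{\rm reg})\le1-\varpi_n\). Together these yield \eqref{eq:cubic-supercritical-planted} with \(\eta=\psi/8\). Finally, Markov's inequality under \(\Qq_n\), namely \(\Qq_n\{L_n\ge e^{\eta nd}\}\le e^{-\eta nd}\), gives \eqref{eq:cubic-supercritical-null}.
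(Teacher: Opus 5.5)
Your proposal is correct and follows the paper's proof essentially step for step. The paper uses the same four ingredients: the Bernoulli--Gaussian comparison under the conditioned prior, with a small-mass/convexity transfer to \(\rho_n\); the Guerra interpolation against the scalar trial \(Q=zI_N/N\), closed by the Nishimori identity; the two-level Efron--Stein variance bound; and Chebyshev together with \(L_n\ge\varpi_nL_n^{\rm reg}\) and Markov under \(\Qq_n\).

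Two small corrections. First, take \(\mathcal S=\mathcal R_{n,d}\) rather than the whole space, so that the hypothesis \(\sup_{x\in\mathcal S}|b_e(x)|\le\tau_{n,d}\) of Lemma~\ref{lem:cubic-constrained-edge-replacement} holds. Second, in the vertex Efron--Stein step no splitting off \(\mathcal R_{n,d}^c\) is needed. The edge-flip bound for \(\log L_n^{\rm reg}\) holds for every graph, so coupling the edges and using only \(\mathbb P(Y_{ij}\ne Y_{ij}')\le1\) gives \(|\Phi(x)-\Phi(x^{(i)})|\le C(n-1)\tau_{n,d}\) for all \(x\). After that, \(\operatorname{Var}_{\pi_n^{\rm reg}}\Phi\le\varpi_n^{-1}\operatorname{Var}_{\rho_n}\Phi\) finishes the step.
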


\begin{proof}
For \eqref{eq:cubic-supercritical-planted}, we analyze the mean
and variance of a conditioned log-likelihood, then return to \(\log L_n\).
Step~1 uses Lemma~\ref{lem:cubic-constrained-edge-replacement} and bounds
the conditioning error to reduce the mean estimate to Gaussian relative
entropy in \eqref{eq:cubic-supercritical-mean-reduction}.
Step~2 applies interpolation to obtain the positive mean bound
\eqref{eq:cubic-supercritical-regular-mean}.
Step~3 proves the variance bound
\eqref{eq:cubic-supercritical-regular-concentration}.
Step~4 combines these two bounds by Chebyshev's inequality and removes
the conditioning to prove \eqref{eq:cubic-supercritical-planted}.

\medskip
\noindent\textit{Step 1. Reduce the log-likelihood mean to Gaussian relative entropy.}
Let \(\rho_n=\sigma_d^{\otimes n}\) and use the event
\(\mathcal R_{n,d}\) from Lemma~\ref{lem:cubic-superpoly-regularity}.
Recall \(\varpi_n=\rho_n(\mathcal R_{n,d})\to1\) from
\eqref{eq:cubic-regular-prior-mass}, and set
\(\pi_n^{\rm reg}=\rho_n(\,\cdot\mid\mathcal R_{n,d})\).
Write \(\Pp_n^{\rm reg}\) for the graph law with this
distribution of positions and
\(L_n^{\rm reg}=d\Pp_n^{\rm reg}/d\Qq_n\).
We start from
\(\mathbb E_{\Pp_n^{\rm reg}}\log L_n^{\rm reg}
=D(\Pp_n^{\rm reg}\Vert\Qq_n)\).
To estimate this relative entropy, we first apply the Bernoulli--Gaussian
comparison in Lemma~\ref{lem:cubic-constrained-edge-replacement}.
Let \(u_i^0=q_{3,d}(X_i)\), where \(X\sim\rho_n\), and let
\(\Pp_{\rho_n}^{\rm G}\) be the law of
\((|\beta|\langle u_i^0,u_j^0\rangle+g_{ij})_{i<j}\), where the standard
Gaussian variables \(g_{ij}\) are independent of one another and of \(X\).
Write \(\Qq^{\rm G}\) for the law of \((g_{ij})_{i<j}\).
Write \(\Pp_{\pi_n^{\rm reg}}^{\rm G}\) for the same Gaussian observation
law with \(X\sim\pi_n^{\rm reg}\), and \(\Pp_x^{\rm G}\) for the law
conditional on \(X=x\).

For \(x,x'\in\mathcal R_{n,d}\),
\eqref{eq:cubic-superpoly-regularity} bounds both
\(b_{ij}^0=\beta Z_{3,d}(x_i,x_j)\) and
\(b_{ij}(x')=\beta Z_{3,d}(x_i',x_j')\) in absolute value by
\(\tau_{n,d}:=C_A|\beta|\log^3d/\sqrt N=o(1)\).
Apply Lemma~\ref{lem:cubic-constrained-edge-replacement} with
\(\nu=\pi_n^{\rm reg}\), \(\mathcal S=\mathcal R_{n,d}\), and
\(\mathcal E=\{\{i,j\}:i<j\}\), first with \(X=x\) fixed and then
averaging over \(x\sim\pi_n^{\rm reg}\).
The two expected log-likelihoods are the Bernoulli and Gaussian
relative entropies.  For the Gaussian term, this uses
\((\beta g_{ij})_{i<j}\stackrel{\rm d}=(|\beta|g_{ij})_{i<j}\)
to match the Gaussian law just defined.
Consequently,
\begin{equation}
 \left|
 D(\Pp_n^{\rm reg}\Vert\Qq_n)
 -D(\Pp_{\pi_n^{\rm reg}}^{\rm G}\Vert\Qq^{\rm G})
 \right|
 \le Cn^2(\tau_{n,d}^3+\tau_{n,d}^4)=o(nd).
 \label{eq:cubic-supercritical-channel-comparison}
\end{equation}
Indeed, \(n\beta^2=r_nNd\) gives
\[
 \frac{n\tau_{n,d}^3}{d}
 =O_r\!\left(\frac{|\beta|\log^9d}{\sqrt N}\right)=o(1),
 \qquad
 \frac{n\tau_{n,d}^4}{d}
 =O_r\!\left(\frac{\beta^2\log^{12}d}{N}\right)=o(1).
\]

We next compare the Gaussian laws with position distributions
\(\pi_n^{\rm reg}\) and \(\rho_n\).
By the definition of \(\pi_n^{\rm reg}\),
\[
 \Pp_{\rho_n}^{\rm G}
 =\int \Pp_x^{\rm G}\,d\rho_n(x)
 =\varpi_n\Pp_{\pi_n^{\rm reg}}^{\rm G}
   +\int_{\mathcal R_{n,d}^c}\Pp_x^{\rm G}\,d\rho_n(x).
\]
The last integral has mass \(1-\varpi_n\), so convexity of
\(P\mapsto D(P\Vert\Qq^{\rm G})\) gives
\begin{equation}
 \varpi_n D(\Pp_{\pi_n^{\rm reg}}^{\rm G}\Vert\Qq^{\rm G})
 \ge D(\Pp_{\rho_n}^{\rm G}\Vert\Qq^{\rm G})
     -\int_{\mathcal R_{n,d}^c}
           D(\Pp_x^{\rm G}\Vert\Qq^{\rm G})\,d\rho_n(x).
 \label{eq:cubic-supercritical-Gaussian-conditioning}
\end{equation}
For fixed \(x\),
\(D(\Pp_x^{\rm G}\Vert\Qq^{\rm G})
=\frac{\beta^2}{2}\sum_{i<j}Z_{3,d}(x_i,x_j)^2\).
Since \(|Z_{3,d}|\le1\) and \(n\beta^2=r_nNd\),
\begin{equation}
 \int_{\mathcal R_{n,d}^c}
       D(\Pp_x^{\rm G}\Vert\Qq^{\rm G})\,d\rho_n(x)
 \le\frac{\beta^2}{2}\binom n2(1-\varpi_n)
 \le C_r nNd(1-\varpi_n)=o(nd),
 \label{eq:cubic-supercritical-conditioning-loss}
\end{equation}
where \(N(1-\varpi_n)\to0\) follows from \(N\asymp d^3\) and
\eqref{eq:cubic-regular-prior-mass}.
Combining \eqref{eq:cubic-supercritical-channel-comparison},
\eqref{eq:cubic-supercritical-Gaussian-conditioning}, and
\eqref{eq:cubic-supercritical-conditioning-loss},
and using \(\varpi_n\to1\), gives
\begin{equation}
 \mathbb E_{\Pp_n^{\rm reg}}\log L_n^{\rm reg}
 \ge D(\Pp_{\pi_n^{\rm reg}}^{\rm G}\Vert\Qq^{\rm G})-o(nd)
 \ge \varpi_n^{-1}
       D(\Pp_{\rho_n}^{\rm G}\Vert\Qq^{\rm G})-o(nd).
 \label{eq:cubic-supercritical-mean-reduction}
\end{equation}
Thus it remains to prove an order-\(nd\) positive lower bound on
\(D(\Pp_{\rho_n}^{\rm G}\Vert\Qq^{\rm G})\).

\medskip
\noindent\textit{Step 2. Complete the mean lower bound by Gaussian interpolation.}
For each \(0\le z\le1\), we will prove
\((nd)^{-1}D(\Pp_{\rho_n}^{\rm G}\Vert\Qq^{\rm G})
\ge v_{d,r_n}(z)-\beta^2/(4d)\) by a Guerra-type interpolation
\cite{Guerra2001} between the pairwise Gaussian observations and
independent Gaussian observations of the individual \(u_i^0\).
Specifically, set \(Q=zI_N/N\), and let \(g_1,\ldots,g_n\) be independent
\(\Normal(0,I_N)\) vectors, independent of \(u^0\) and \((g_{ij})_{i<j}\).
For \(0\le t\le1\), define
\begin{equation}
\begin{aligned}
 \mathsf{H}_{Q,t}(u;u^0)
 :={}&\sqrt t\,|\beta|\sum_{i<j}g_{ij}\langle u_i,u_j\rangle
 +t\beta^2\sum_{i<j}\left(
   \langle u_i^0,u_j^0\rangle\langle u_i,u_j\rangle
   -\frac12\langle u_i,u_j\rangle^2\right)\\
 &+\sqrt{(1-t)r_nzd}\sum_i\langle g_i,u_i\rangle
 +(1-t)r_nzd\sum_i\left(
   \langle u_i^0,u_i\rangle-\frac12\|u_i\|^2\right).
\end{aligned}
\label{eq:cubic-supercritical-Hamiltonian}
\end{equation}
This is the Hamiltonian from Step~2(a) of the proof of
Proposition~\ref{prop:cubic-Gaussian-variational-bound}, specialized to
\(\mathsf A=Q_R=Q\) and \(c=r_nNd=n\beta^2\).
Here we use the full product measure \(\mu_d^{\otimes n}\) for both
\(u^0\) and the integral over \(u\), with no constraint on
\(R(u^0,u)=n^{-1}\sum_i u_i^0u_i^{\mathsf T}\).
Define the normalized expected free energy by
\[
 F_{n,d}(t):=\frac1{nd}\mathbb E_{u^0,g}
       \log\int e^{\mathsf{H}_{Q,t}(u;u^0)}\,d\mu_d^{\otimes n}(u),
 \qquad 0\le t\le1,
\]
where the expectation includes \(u^0\sim\mu_d^{\otimes n}\) and both
Gaussian arrays in \(\mathsf{H}_{Q,t}\).
At \(t=1\), the integral is the likelihood ratio of
\(\Pp_{\rho_n}^{\rm G}\) relative to \(\Qq^{\rm G}\), evaluated at the
Gaussian observations.
At \(t=0\), it factors into the \(n\) single-vertex likelihood ratios
\(Z_{r_nzdI_N}(\sqrt{r_nzd}\,u_i^0+g_i)\).
Thus
\begin{equation}
 F_{n,d}(1)=\frac1{nd}D(\Pp_{\rho_n}^{\rm G}\Vert\Qq^{\rm G}),
 \qquad
 F_{n,d}(0)=\frac1dD(P_{r_nzdI_N}\Vert\gamma_N).
 \label{eq:cubic-supercritical-interpolation-endpoints}
\end{equation}

The expression \eqref{eq:cubic-supercritical-Hamiltonian} is the
conditional log-likelihood ratio, relative to independent standard
Gaussian noise, for the two observation arrays
\((\sqrt t\,|\beta|\langle u_i^0,u_j^0\rangle+g_{ij})_{i<j}\) and
\((\sqrt{(1-t)r_nzd}\,u_i^0+g_i)_{i=1}^n\).
Bayes' formula gives the conditional law of \(u^0\) given these arrays as
\[
 \nu_{Q,t}(du):=
 \frac{e^{\mathsf{H}_{Q,t}(u;u^0)}\,d\mu_d^{\otimes n}(u)}
      {\displaystyle\int e^{\mathsf{H}_{Q,t}(v;u^0)}\,
                       d\mu_d^{\otimes n}(v)}.
\]
Draw \(u^1,u^2\) independently from this posterior, independently of
\(u^0\) conditional on the observations.
Then \(u^0,u^1,u^2\), conditional on the two arrays, are iid with law
\(\nu_{Q,t}\).
Write \(\langle\cdot\rangle_t\) for integration over \(u^1,u^2\)
with the observations and \(u^0\) fixed, and let \(\mathbb E\) average
over \(u^0\) and the Gaussian variables.
Applying the conditional-expectation argument of
\eqref{eq:cubic-tensor-channel-replica-identity} to
\(\|R(u,v)-Q\|_{\HS}^2\), where
\(R(u,v)=n^{-1}\sum_i u_iv_i^{\mathsf T}\), gives the Nishimori
identity \cite{LelargeMiolane2017}
\begin{equation}
\begin{aligned}
 \mathbb E\langle\|R_{01}-Q\|_{\HS}^2\rangle_t
 &=\mathbb E\iint\|R(u,v)-Q\|_{\HS}^2\,
                      \nu_{Q,t}(du)\nu_{Q,t}(dv)\\
 &=\mathbb E\langle\|R_{12}-Q\|_{\HS}^2\rangle_t,
\end{aligned}
 \label{eq:cubic-supercritical-Nishimori}
\end{equation}
where \(R_{ab}=R(u^a,u^b)\).
Differentiating \(F_{n,d}(t)\) and integrating by parts in
\((g_{ij})_{i<j}\) and \((g_i)_{i=1}^n\), as in the derivation of
\eqref{eq:cubic-fixed-overlap-derivative}, give
\[
 F_{n,d}'(t)
 =\frac{r_nN}{2}\mathbb E\langle\|R_{01}-Q\|_{\HS}^2\rangle_t
  -\frac{r_nN}{4}\mathbb E\langle\|R_{12}-Q\|_{\HS}^2\rangle_t
  -\frac{r_nz^2}{4}-\frac{r_nN}{4n}.
\]
Here we used \(N\|Q\|_{\HS}^2=z^2\).
Applying \eqref{eq:cubic-supercritical-Nishimori} to the first term gives
\begin{equation}
 F_{n,d}'(t)
 =\frac{r_n}{4}\,
  \mathbb E\left\langle
   N\left\|R_{12}-\frac zN I_N\right\|_{\HS}^2-z^2
  \right\rangle_t-\frac{r_nN}{4n}.
 \label{eq:cubic-supercritical-trial-derivative}
\end{equation}
Dropping the nonnegative squared-norm term in
\eqref{eq:cubic-supercritical-trial-derivative}, integrating from
\(t=0\) to \(t=1\), and using
\eqref{eq:cubic-supercritical-interpolation-endpoints} give
\[
 \frac1{nd}D(\Pp_{\rho_n}^{\rm G}\Vert\Qq^{\rm G})
 \ge \frac1dD(P_{r_nzdI_N}\Vert\gamma_N)
       -\frac{r_nz^2}{4}-\frac{r_nN}{4n}
 =v_{d,r_n}(z)-\frac{\beta^2}{4d}.
\]
The error \(\beta^2/(4d)\) tends to zero uniformly in \(z\).
Optimizing over \(0\le z\le1\) and applying the locally uniform
convergence \eqref{eq:cubic-scalar-potential-uniform-limit}, we obtain
\begin{align}
 \liminf_{n\to\infty}\frac1{nd}
 D(\Pp_{\rho_n}^{\rm G}\Vert\Qq^{\rm G})
 &\ge\sup_{0\le z\le1}
       \left\{{\cal D}_3(rz)-\frac r4z^2\right\}\notag\\
 &=\sup_{0\le q<1}
       \left[\frac r4q^6+\frac12\{\log(1-q)+q\}\right]
 =:\psi(r)>0.
 \label{eq:cubic-supercritical-Gaussian-positive}
\end{align}
For the equality, substitute
\({\cal D}_3(rz)=\tfrac12\sup_{0\le q<1}
\{rzq^3+\log(1-q)+q\}\) and use
\(\sup_{0\le z\le1}(rzq^3/2-rz^2/4)=rq^6/4\), attained at \(z=q^3\).
The definition \eqref{eq:cubic-tensor-threshold} of
\(r_{\mathrm{tens}}\) then gives \(\psi(r)>0\) for
\(r>r_{\mathrm{tens}}\).

Substituting \eqref{eq:cubic-supercritical-Gaussian-positive} into
the second inequality of \eqref{eq:cubic-supercritical-mean-reduction}
and using \(\varpi_n\to1\) gives
\begin{equation}
 D(\Pp_{\pi_n^{\rm reg}}^{\rm G}\Vert\Qq^{\rm G})
 \ge\{\psi(r)+o(1)\}nd.
 \label{eq:cubic-supercritical-conditioned-Gaussian-lower}
\end{equation}
Together with \eqref{eq:cubic-supercritical-channel-comparison}, this
proves, for all sufficiently large \(n\),
\begin{equation}
 \mathbb E_{\Pp_n^{\rm reg}}\log L_n^{\rm reg}
 =D(\Pp_n^{\rm reg}\Vert\Qq_n)
 \ge\frac12\psi(r)nd.
 \label{eq:cubic-supercritical-regular-mean}
\end{equation}

\medskip
\noindent\textit{Step 3. Bound the variance of \(\log L_n^{\rm reg}\).}
Define \(f(x):=\mathbb E_{\Adj\mid X=x}\log L_n^{\rm reg}(\Adj)\)
for every \(x\in(\Sph^{d-1})^n\).
The law of total variance gives
\[
 \operatorname{Var}_{\Pp_n^{\rm reg}}(\log L_n^{\rm reg})
 =\mathbb E_{\pi_n^{\rm reg}}
       \operatorname{Var}(\log L_n^{\rm reg}\mid X)
       +\operatorname{Var}_{\pi_n^{\rm reg}}f.
\]
We first bound \(\mathbb E_{\pi_n^{\rm reg}}
\operatorname{Var}(\log L_n^{\rm reg}\mid X)\).

For every candidate \(x'\in\mathcal R_{n,d}\),
\(|\beta Z_{3,d}(x_i',x_j')|\le\tau_{n,d}\).
As in the proof of \eqref{eq:cubic-edge-log-likelihood-oscillation},
if \(\Adj^{(e)}\) differs from \(\Adj\) only at edge \(e\), then
\[
 \left|\log L_n^{\rm reg}(\Adj^{(e)})
       -\log L_n^{\rm reg}(\Adj)\right|
 \le\log\frac{1+\tau_{n,d}}{1-\tau_{n,d}}
 \le C\tau_{n,d}.
\]
The edges are independent conditional on \(X\).
The Efron--Stein inequality
\cite[Theorem~3.1]{BoucheronLugosiMassart2013}, applied conditionally
and then averaged over \(X\sim\pi_n^{\rm reg}\), therefore gives
\begin{equation}
 \mathbb E_{\pi_n^{\rm reg}}
       \operatorname{Var}(\log L_n^{\rm reg}\mid X)
 \le C\binom n2\tau_{n,d}^2\le Cn^2\tau_{n,d}^2.
 \label{eq:cubic-supercritical-edge-variance}
\end{equation}

To bound \(\operatorname{Var}_{\pi_n^{\rm reg}}f\), let \(x^{(i)}\)
differ from \(x\) only at its \(i\)th coordinate.
Generate the graphs conditional on \(x\) and \(x^{(i)}\) using the same
independent uniform variable for each edge, and denote their signs by
\(Y\) and \(Y'\).
Only edges incident to \(i\) can differ, so
\[
 |f(x)-f(x^{(i)})|
 \le C\tau_{n,d}\sum_{j\ne i}\mathbb P(Y_{ij}\ne Y'_{ij})
 \le C(n-1)\tau_{n,d}.
\]
Here the probability is over the coupling with \(x,x^{(i)}\) fixed.
Let \(X\sim\rho_n=\sigma_d^{\otimes n}\), and let \(X^{(i)}\) replace \(X_i\) by an
independent draw from \(\sigma_d\).  Efron--Stein gives
\[
 \operatorname{Var}_{\rho_n}f
 \le\frac12\sum_{i=1}^n
        \mathbb E\{f(X)-f(X^{(i)})\}^2
 \le Cn^3\tau_{n,d}^2.
\]
The definition of \(\pi_n^{\rm reg}\) then implies
\begin{equation}
 \operatorname{Var}_{\pi_n^{\rm reg}}f
 \le\mathbb E_{\pi_n^{\rm reg}}(f-\mathbb E_{\rho_n}f)^2
 \le\varpi_n^{-1}\operatorname{Var}_{\rho_n}f
 \le Cn^3\tau_{n,d}^2.
 \label{eq:cubic-supercritical-position-variance}
\end{equation}
Substituting \eqref{eq:cubic-supercritical-edge-variance} and
\eqref{eq:cubic-supercritical-position-variance} into the variance
decomposition at the start of Step~3 gives
\begin{equation}
 \operatorname{Var}_{\Pp_n^{\rm reg}}(\log L_n^{\rm reg})
 \le C(n^2+n^3)\tau_{n,d}^2=o((nd)^2).
 \label{eq:cubic-supercritical-regular-concentration}
\end{equation}
For the last equality, \(n\beta^2=r_nNd\) gives
\(n\tau_{n,d}^2/d^2=O_r(\log^6d/d)=o(1)\).

\medskip
\noindent\textit{Step 4. Prove separation for the original likelihood.}
Choose \(0<\eta(r)<\psi(r)/4\).
Chebyshev's inequality, \eqref{eq:cubic-supercritical-regular-mean},
and \eqref{eq:cubic-supercritical-regular-concentration} give
\[
 \Pp_n^{\rm reg}
       \{\log L_n^{\rm reg}<2\eta(r)nd\}
 \le
 \frac{\operatorname{Var}_{\Pp_n^{\rm reg}}
                 (\log L_n^{\rm reg})}
      {(\psi(r)/2-2\eta(r))^2(nd)^2}
 =o(1).
\]
The integral defining \(L_n\) decomposes as
\[
 L_n=\varpi_n L_n^{\rm reg}
       +\int_{\mathcal R_{n,d}^c}L_x\,d\rho_n(x)
 \ge\varpi_n L_n^{\rm reg}.
\]
Since \(\varpi_n\to1\), we have
\(2\eta(r)nd+\log\varpi_n\ge\eta(r)nd\) for all sufficiently large \(n\).
Thus
\(\{\log L_n<\eta(r)nd\}
 \subseteq\{\log L_n^{\rm reg}<2\eta(r)nd\}\).
The same mixture decomposition also gives
\(\TV(\Pp_n^{(3)},\Pp_n^{\rm reg})\le1-\varpi_n\), so
\[
 \Pp_n^{(3)}\{\log L_n<\eta(r)nd\}
 \le \Pp_n^{\rm reg}
           \{\log L_n^{\rm reg}<2\eta(r)nd\}
       +1-\varpi_n=o(1).
\]
This proves \eqref{eq:cubic-supercritical-planted}.
Under \(\Qq_n\), the identity \(\mathbb E_{\Qq_n}L_n=1\) and Markov's
inequality give
\[
 \Qq_n\{\log L_n\ge\eta(r)nd\}
 =\Qq_n\{L_n\ge e^{\eta(r)nd}\}
 \le e^{-\eta(r)nd}\longrightarrow0,
\]
which proves \eqref{eq:cubic-supercritical-null}.
\end{proof}

For fixed \(A<\infty\), suppose \(d=d_n\to\infty\), \(n\le d^A\),
and \(|\beta|\le1\) as \(n\to\infty\).
Section~\ref{subsec:cubic-likelihood-mechanism} and
Proposition~\ref{prop:cubic-supercritical-free-energy} give
\begin{equation}
 \TV(\Pp_n^{(3)},\Qq_n)\longrightarrow
 \begin{cases}
  2\Phi(r^{3/2}/2)-1,& r_n\to r\in(0,r_{\mathrm{tens}}),\\
  1,& r_n\to r>r_{\mathrm{tens}}.
 \end{cases}
 \label{eq:cubic-fixed-r-TV}
\end{equation}

The case \(r_n\to r_{\mathrm{tens}}\) requires a separate argument.
Proposition~\ref{prop:cubic-matrix-branch-gap} assumes a compact set
\(K\subset(0,r_{\mathrm{tens}})\), so that
\(\max K<r_{\mathrm{tens}}\).
To see why \eqref{eq:cubic-uniform-branch-gap} requires this restriction,
set \(Q_d=q_\star^3I_N/N\).  At \(r=r_{\mathrm{tens}}\),
Lemma~\ref{lem:cubic-scalar-threshold} and
\eqref{eq:cubic-scalar-channel-limit} give
\[
 N\|Q_d\|_{\HS}^2=q_\star^6,
 \qquad
 {\cal V}_{d,r_{\mathrm{tens}}}(Q_d)
 \longrightarrow v_{r_{\mathrm{tens}}}(q_\star^3)=0.
\]
Thus, for every \(0<\delta\le q_\star^6\),
\[
 \liminf_{d\to\infty}
 \sup_{\substack{Q\succeq0\\N\|Q\|_{\HS}^2\ge\delta}}
 {\cal V}_{d,r_{\mathrm{tens}}}(Q)
 \ge\lim_{d\to\infty}{\cal V}_{d,r_{\mathrm{tens}}}(Q_d)=0.
\]
Hence the strictly negative upper bound in
\eqref{eq:cubic-uniform-branch-gap} does not hold at
\(r=r_{\mathrm{tens}}\) for these values of \(\delta\).
For \(A>4\), Proposition~\ref{prop:cubic-endpoint} uses a diagonal
construction based on \eqref{eq:cubic-fixed-r-TV} to obtain two sequences
with \(r_n\to r_{\mathrm{tens}}\) but different total-variation limits.

\begin{proposition}[Sequence-dependent total-variation limits at the tensor threshold]
\label{prop:cubic-endpoint}
Fix \(4<A<\infty\) and any \(\beta_0\in[-1,1]\setminus\{0\}\).
Put
\[
 \tau_\star=2\Phi(r_{\mathrm{tens}}^{3/2}/2)-1<1.
\]
There are two parameter sequences with
\(d=d_n\to\infty\), \(n\le d^A\), \(\beta\equiv\beta_0\), and
\(r_n=n\beta^2/(N_{3,d}d)\to r_{\mathrm{tens}}\):
along one the total variation converges to
$\tau_\star$, whereas along the other it converges to one.
\end{proposition}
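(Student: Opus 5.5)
The plan is a diagonal construction built on the fixed-$r$ limits in \eqref{eq:cubic-fixed-r-TV}. Fix $\beta\equiv\beta_0$. For each fixed $r\in(0,\infty)$ there are admissible sequences with $r_n\to r$, $n\le d^A$: take $d$ as the index and set $n=n(d)=\lfloor r N_{3,d}d/\beta_0^2\rfloor$. Then $n\asymp d^4/\beta_0^2$, and $A>4$ gives $n\le d^A$ for large $d$. This is the only place the hypothesis $A>4$ enters: with $\beta$ held fixed, the critical scale forces $n\asymp d^4$. By \eqref{eq:cubic-fixed-r-TV}, along this sequence the total variation converges to $2\Phi(r^{3/2}/2)-1$ when $r<r_{\mathrm{tens}}$, and to $1$ when $r>r_{\mathrm{tens}}$. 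The same $d$-indexed construction works for any $r$, so the family $n_r(d)$ is defined uniformly.

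First, choose $r_k\uparrow r_{\mathrm{tens}}$ with $r_k<r_{\mathrm{tens}}$. For each $k$, the fixed-$r_k$ sequence gives $\TV\to\tau_k:=2\Phi(r_k^{3/2}/2)-1$. Pick $D_k$ increasing so that for every $d\ge D_k$ along the $r_k$-sequence the following three bounds hold:
\[
|\TV-\tau_k|\le 1/k,\qquad |r_n-r_k|\le 1/k,\qquad n\le d^A.
\]
Now define the diagonal sequence by using the $r_k$-construction for $D_k\le d<D_{k+1}$. Along it $r_n\to r_{\mathrm{tens}}$, and $\TV\to\lim\tau_k=\tau_\star$ by continuity of $\Phi$.

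Second, take $r_k\downarrow r_{\mathrm{tens}}$ with $r_k>r_{\mathrm{tens}}$. The same diagonal argument, now with $|\TV-1|\le1/k$, gives a sequence with $r_n\to r_{\mathrm{tens}}$ and $\TV\to1$. Finally, $\tau_\star<1$ because $r_{\mathrm{tens}}<\infty$.

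The main obstacle is making the diagonal sequence a legitimate sequence indexed by $n$. The theorem's setting takes $n\to\infty$ with $d=d_n$, so $n$ must be strictly increasing, or at least tend to infinity without repeats being an issue. Since $n_r(d)$ is increasing in $d$ for each fixed $r$, and the switches between $r_k$ and $r_{k+1}$ change $n$ only by a factor near one, I would either pass to a subsequence in $d$ to make $n$ strictly increasing or simply note that total-variation limits are insensitive to reindexing. The remaining care point is that \eqref{eq:cubic-fixed-r-TV} is applied only to genuine sequences with $r_n\to r_k$, which is exactly what the $d$-indexed construction supplies, so no uniformity in $r$ is needed.
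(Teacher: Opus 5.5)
Your proposal is correct and is the paper's diagonal argument: fixed-$r$ total-variation limits at $r_k$ approaching $r_{\mathrm{tens}}$ from each side with $\beta\equiv\beta_0$, where $A>4$ is needed because $n\asymp d^4$. The only difference is bookkeeping. The paper indexes by $n$, setting $d_k^\pm(n)=\lfloor(6\beta_0^2n/r_k^\pm)^{1/4}\rfloor$ and $d_n^\pm=d^\pm_{k(n)}(n)$ with $k(n)=\max\{k:n_k\le n\}$, so it defines $d_n$ for every large $n$ and the reindexing concern you raise never arises. In your supercritical sequence $n$ can genuinely drop at a switch, since $r_{k+1}<r_k$, so your proposed passage to a subsequence on which $n$ is strictly increasing is actually needed there.
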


\begin{proof}
For integers \(k\) large enough, set
\[
  r_k^-:=r_{\mathrm{tens}}-\frac1k,
  \qquad
  r_k^+:=r_{\mathrm{tens}}+\frac1k.
\]
For each sign \(\pm\) and each fixed \(k\), take \(\beta_n=\beta_0\) and
\[
  d_k^\pm(n):=
  \left\lfloor\left(\frac{6\beta_0^2n}{r_k^\pm}\right)^{1/4}\right\rfloor.
\]
Since \(N_{3,d}d=(d^4/6)\{1+O(d^{-1})\}\), these choices satisfy
\[
  d_k^\pm(n)\asymp n^{1/4},\qquad
  \frac{n\beta_0^2}{N_{3,d_k^\pm(n)}d_k^\pm(n)}
  =r_k^\pm+O(n^{-1/4}).
\]
In particular, \(n\le\{d_k^\pm(n)\}^A\) for all sufficiently large
\(n\), since \(A>4\).  Equation~\eqref{eq:cubic-fixed-r-TV}
therefore gives, as \(n\to\infty\),
\[
 \TV(\Pp_n^{(3)},\Qq_n)\longrightarrow
 \begin{cases}
  2\Phi\{(r_k^-)^{3/2}/2\}-1,& d=d_k^-(n),\\
  1,& d=d_k^+(n).
 \end{cases}
\]
Write \(\Pp_{n,d,\beta}^{(3)}\) for the cubic graph law with its dimension
and amplitude displayed.  The preceding convergences allow us to choose
integer thresholds \(n_k\) recursively, with \(n_{k+1}>n_k\), so that,
for every \(n\ge n_k\),
\[
 d_k^\pm(n)\ge k,\qquad n\le\{d_k^\pm(n)\}^A,\qquad
 \left|\frac{n\beta_0^2}{N_{3,d_k^\pm(n)}d_k^\pm(n)}-r_k^\pm\right|
 \le\frac1k,
\]
and
\[
\begin{aligned}
 \left|\TV(\Pp_{n,d_k^-(n),\beta_0}^{(3)},\Qq_n)
       -\left\{2\Phi\left(\frac{(r_k^-)^{3/2}}2\right)-1\right\}\right|
 &\le\frac1k,\\
 1-\TV(\Pp_{n,d_k^+(n),\beta_0}^{(3)},\Qq_n)&\le\frac1k.
\end{aligned}
\]
For sufficiently large \(n\), set
\[
 k(n)=\max\{k:n_k\le n\},\qquad
 d_n^\pm=d_{k(n)}^\pm(n),\qquad \beta_n=\beta_0.
\]
Then \(k(n)\to\infty\), \(d_n^\pm\ge k(n)\to\infty\), and
\(n\le(d_n^\pm)^A\).  Along either sequence, the parameter bound gives
\[
 \left|\frac{n\beta_0^2}{N_{3,d_n^\pm}d_n^\pm}-r_{\mathrm{tens}}\right|
 \le\frac1{k(n)}+|r_{k(n)}^\pm-r_{\mathrm{tens}}|
 =\frac2{k(n)}\longrightarrow0.
\]
The two total-variation bounds with \(k=k(n)\), together with
\(r_{k(n)}^-\to r_{\mathrm{tens}}\) and continuity of \(\Phi\),
give the claimed limits.
Since \(\Phi(x)<1\) for finite \(x\), \(\tau_\star<1\).
\end{proof}

\section{A local partition-function bound}
\label{app:cubic-local-partition}

This appendix proves Proposition~\ref{prop:cubic-raw-local-partition}.
The shared estimates are proved first in
Sections~\ref{subsec:cubic-local-regularity}--\ref{subsec:cubic-local-summed-estimates}.
Sections~\ref{subsec:cubic-local-deletion}--\ref{subsec:cubic-direct-orbit-closure}
then prove the first inequality below, and
Section~\ref{subsec:cubic-local-centered} proves the second.

For the partition-function estimates, fix \(A<\infty\) and a compact
set \(K\subset(0,\infty)\), and assume
\begin{equation}
 n\le d^A,\qquad 0<|\beta|\le1,\qquad
 r_n=\frac{n\beta^2}{N_{3,d}d}\in K.
 \label{eq:cubic-local-standing-assumptions}
\end{equation}

For two independent samples \(X,X'\sim\rho_n=\sigma_d^{\otimes n}\),
the pairs \((X_i,X_i')\) are iid with law
\(P_0=\sigma_d\otimes\sigma_d\).  With \(N=N_{3,d}\), recall
\[
\begin{gathered}
  w_i=q_{3,d}(X_i)q_{3,d}(X_i')^{\mathsf T},\qquad
  h_{ij}=\langle w_i,w_j\rangle_{\HS},\\
  S_m=\sum_{i=1}^mw_i,\qquad
  W_{n,\beta}=\prod_{i<j}(1+\beta^2h_{ij}).
\end{gathered}
\]
Here \(0\le m\le n\).  We also write
\(E_m=\sum_{i<j\le m}h_{ij}=(\|S_m\|_{\HS}^2-m)/2\) and recall
\(s_n=s_n(X,X')=N\|S_n\|_{\HS}^2/n^2\).

Let \(\mathcal E_n^{\rm reg}
=\{X\in\mathcal G_n^\dagger,\ X'\in\mathcal G_n^\dagger\}\), with
\(\mathcal G_n^\dagger\) constructed in
Lemma~\ref{lem:cubic-hereditary-square-profile}.  For \(K_d\uparrow\infty\) sufficiently slowly
and sufficiently small fixed \(\delta>0\), the proof reduces to
\begin{align}
 &\mathbb E_{P_0^{\otimes n}}\!\left[
   \1_{\mathcal G_n^\dagger(X)}\1_{\mathcal G_n^\dagger(X')}
   W_{n,\beta}\1_{\{s_n<\delta\}}\right]\notag\\
 &\quad\le\mathbb E_{P_0^{\otimes n}}\!\left[
   \1_{\mathcal G_n^\dagger(X)}\1_{\mathcal G_n^\dagger(X')}
   W_{n,\beta}\1_{\{\bigl|\|S_n\|_{\HS}^2-n\bigr|\le K_dd^2/\beta^2\}}\right]+o(1)
   \qquad\text{by Corollary~\ref{cor:cubic-local-central-reduction}}\notag\\
 &\quad\le\exp\left\{\binom n3\frac{\beta^6}{N^4}+o(1)\right\}+o(1)
   \qquad\text{by Lemma~\ref{lem:cubic-centered-simple-edge}}.
 \label{eq:cubic-local-roadmap}
\end{align}

For the first inequality,
Section~\ref{subsec:cubic-local-deletion} combines the probability estimates
with \eqref{eq:cubic-local-linear-reward} to obtain preliminary weighted
tail bounds.
Section~\ref{subsec:cubic-direct-orbit-closure} then restricts successively to
\(\|S_n\|_{\HS}^2-n\le n/(\beta^2\log^4d)\) and
\(\beta^2\ell(S_n)\le\epsilon_\star d\), at a weighted cost of \(o(1)\),
before controlling the remaining positive tail of \(\|S_n\|_{\HS}^2-n\).
Together these estimates give
\[
 \mathbb E_{P_0^{\otimes n}}\!\left[
   \1_{\mathcal G_n^\dagger(X)}\1_{\mathcal G_n^\dagger(X')}
   W_{n,\beta}\1_{\{s_n<\delta,\ \|S_n\|_{\HS}^2-n>K_dd^2/\beta^2\}}\right]=o(1).
\]
To control the region \(\|S_n\|_{\HS}^2-n<-K_dd^2/\beta^2\), we use
\(1+x\le e^x\) to obtain the pointwise bound
\begin{equation}
 0\le W_{n,\beta}
 \le e^{\beta^2\sum_{i<j}h_{ij}}
 =e^{\beta^2(\|S_n\|_{\HS}^2-n)/2}
 \le e^{r_n nd\,s_n/2}.
 \label{eq:cubic-local-linear-reward}
\end{equation}
Consequently,
\begin{equation}
 \mathbb E_{P_0^{\otimes n}}\!\left[
   W_{n,\beta}\1_{\{\|S_n\|_{\HS}^2-n<-K_dd^2/\beta^2\}}\right]
 \le e^{-K_dd^2/2}=o(1).
 \label{eq:cubic-local-negative-tail}
\end{equation}
Together they give the first inequality in \eqref{eq:cubic-local-roadmap}.

For the second inequality, let \(\mathcal E^{\rm term}_{n,d}\) be the event
defined in Step 1 of the proof of Lemma~\ref{lem:cubic-centered-simple-edge}.
Corollary~\ref{cor:cubic-central-weighted-deletions} applies the shared
probability bounds on the central region and gives
\[
 \mathbb E_{P_0^{\otimes n}}\!\left[
   \1_{\mathcal G_n^\dagger(X)}\1_{\mathcal G_n^\dagger(X')}
   W_{n,\beta}\1_{\{\bigl|\|S_n\|_{\HS}^2-n\bigr|\le K_dd^2/\beta^2\}}\right]
 =\mathbb E_{P_0^{\otimes n}}
   [\1_{\mathcal E^{\rm term}_{n,d}}W_{n,\beta}]+o(1).
\]
Set \(\eta=\beta^2(n-2)/(3N^2)=o(1)\), and let \(Q\) range over
exchangeable laws with \(Q(\mathcal E^{\rm term}_{n,d})=1\) and
\(D(Q\Vert P_0^{\otimes n})<\infty\).
Then, with the convention \(\log0=-\infty\),
\begin{align*}
 &\log\mathbb E_{P_0^{\otimes n}}
       [\1_{\mathcal E^{\rm term}_{n,d}}W_{n,\beta}]
 =\sup_Q\{\mathbb E_Q\log W_{n,\beta}-D(Q\Vert P_0^{\otimes n})\}
       \qquad\text{by \eqref{eq:cubic-restricted-variational}}\\
 &\;\le\sup_Q\!\biggl\{
   \frac{\beta^4}{2N^2}\sum_{m<n}\!
      \left(\frac{\mathbb E_Q\|S_m\|_{\HS}^2}{1-\eta}-m\right)
   -\eta D(Q\Vert P_0^{\otimes n})
   \biggr\}+o(1)
       \,\text{by \eqref{eq:cubic-terminal-fourth-error} and
                   Lemmas~\ref{lem:cubic-regular-prefix-estimates},
                   \ref{lem:cubic-regular-row-expansion}}\\
 &\;\le\frac{\eta\beta^4n(n-1)}{4N^2}
                 \left(1+\frac1{1-\eta}\right)+o(1)
       \quad\text{by \eqref{eq:cubic-prefix-linear-identity}--%
                    \eqref{eq:cubic-prefix-diagonal-identity}
                    and \eqref{eq:cubic-linear-entropy-absorption}}\\
 &\;=\binom n3\frac{\beta^6}{N^4}+o(1).
\end{align*}
Adding the discarded \(o(1)\) contribution gives the second inequality
in the roadmap.

Finally, since \(N\sim d^3/6\), the exponent satisfies
\(\binom n3\beta^6/N^4=(1+o(1))r_n^3d^3/(6N)\to r^3\) when
\(r_n\to r\in K\).
The final proof in Section~\ref{subsec:cubic-local-centered}
combines the two inequalities and
verifies uniform boundedness for \(r_n\in K\).

\subsection{Regularity bounds}
\label{subsec:cubic-local-regularity}

Lemma~\ref{lem:cubic-hereditary-square-profile} strengthens the event in
Lemma~\ref{lem:cubic-hereditary-envelope} by adding the fourth-moment bounds
\eqref{eq:cubic-hereditary-fourth-profile} and
\eqref{eq:cubic-hereditary-directional-fourth-profile}.
These bounds will control the coefficients in the conditional
exponential-moment estimates below.

\begin{lemma}[Uniform covariance and fourth-moment bounds over sample subsets]
\label{lem:cubic-hereditary-square-profile}
Fix \(A<\infty\), \(\underline c>0\), and \(M<\infty\).  Uniformly for
\[
  \underline c\,N_{3,d}d\le n\le d^A,
\]
there is an orthogonally invariant, sign-invariant, exchangeable event
\({\cal G}_{n,M}^\dagger\subseteq{\cal G}_{n,M}\) such that
\(\rho_n({\cal G}_{n,M}^\dagger)=1-o(d^{-M})\).  On this event, for every
\(J\subseteq[n]\) with \(|J|=m\ge n/3\) and every
\(0\preceq B\preceq I_N\),
\begin{align}
  \frac1m\sum_{j\in J}
    \{q_{3,d}(X_j)^{\mathsf T}Bq_{3,d}(X_j)\}^2
  &\le C_{A,\underline c,M}\left\{
    \left(\frac{\tr B}{N}\right)^2
    +\frac{\tr B}{Nd}
    +\frac{\log d\tr B}{m}
  \right\},
  \label{eq:cubic-hereditary-fourth-profile}\\
  \frac1m\sum_{j\in J}q_{3,d}(X_j)q_{3,d}(X_j)^{\mathsf T}
  &\preceq \frac{C_{A,\underline c,M}}{N}I_N.
  \label{eq:cubic-hereditary-covariance-profile}
\end{align}
For \(B=vv^{\mathsf T}\), the bound
\eqref{eq:cubic-hereditary-fourth-profile} improves to the following
estimate, uniformly for \(v\in\mathbb S^{N-1}\):
\begin{equation}
 \frac1m\sum_{j\in J}
   \langle v,q_{3,d}(X_j)\rangle^4
 \le C_{A,\underline c,M}\left(
       \frac1{N^2}+\frac1{Nd}+\frac1m\right).
 \label{eq:cubic-hereditary-directional-fourth-profile}
\end{equation}
Moreover,
\begin{equation}
 \max_{i\ne j}|\langle X_i,X_j\rangle|
 \le C_{A,M}\frac{\log d}{\sqrt d}.
 \label{eq:cubic-raw-pair-separation}
\end{equation}
The same assertions hold for the \(X'\)-sample.
\end{lemma}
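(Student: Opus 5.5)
We will construct \({\cal G}_{n,M}^\dagger\) as the intersection of \({\cal G}_{n,M}\) from Lemma~\ref{lem:cubic-hereditary-envelope} with three further events: one for \eqref{eq:cubic-hereditary-covariance-profile}, one for the fourth-moment bounds, and one for \eqref{eq:cubic-raw-pair-separation}. Each event will be defined by conditions that are manifestly invariant under common rotations, permutations and individual sign flips. The verification of invariance is therefore the same as at the end of the proof of Lemma~\ref{lem:cubic-hereditary-envelope}. The relevant facts are that \(q_{3,d}(RX)=U_Rq_{3,d}(X)\) with \(U_R\) orthogonal, so that \(B\mapsto U_R^{\mathsf T}BU_R\) preserves \(0\preceq B\preceq I_N\) and \(\tr B\); that sign flips leave \(qq^{\mathsf T}\) unchanged; and that the conditions range over all subsets \(J\).

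Bound \eqref{eq:cubic-raw-pair-separation} is the Gaussian-tail union bound already used for \eqref{eq:cubic-pair-separation}, with failure probability \(n^2d^{-cC^2}=o(d^{-M})\).

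For \eqref{eq:cubic-hereditary-covariance-profile}, the full-sample Gram bound \eqref{eq:cubic-feature-gram} gives \(\|\sum_{j\le n}q_jq_j^{\mathsf T}\|_{\rm op}\le C(n/N+\log N)\). Since the summands are positive semidefinite, every subset sum is dominated by the full sum. For \(m\ge n/3\) this gives \(m^{-1}\sum_J q_jq_j^{\mathsf T}\preceq C(1/N+\log N/n)I_N\). Because \(n\ge\underline c Nd\), we have \(\log N/n\le C/N\). So this bound requires no new probability estimate.

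The main work is \eqref{eq:cubic-hereditary-fourth-profile} and its directional refinement. Again by nonnegativity, it suffices to bound the full sum \(\sum_{j\le n}(q_j^{\mathsf T}Bq_j)^2\) and divide by \(m\ge n/3\).

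\emph{Expectation.} We first compute the mean. Set \(F_B(x)=q_{3,d}(x)^{\mathsf T}Bq_{3,d}(x)\). This is a polynomial of degree six on the sphere with mean \(\tr B/N\). We decompose \(F_B\) into its spherical-harmonic components of degrees \(0,2,4,6\). Using \(|q^{\mathsf T}Bq|\le1\) and hypercontractivity (Beckner), we aim to show
\[
\mathbb E F_B^2\le C\left\{(\tr B/N)^2+\tr B/(Nd)\right\}.
\]
The intended route is \(\mathbb E F_B^2\le \|F_B\|_\infty\,\mathbb E F_B\)-type reasoning refined by the Lipschitz structure: the fluctuation of \(F_B\) has variance of order \(\|\nabla F_B\|^2/d\), and \(\|\nabla F_B\|^2\lesssim q^{\mathsf T}Bq\cdot\|Dq\|^2\) with \(\|Dq\|_{\rm op}^2\le 9\). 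Averaging this bound produces the \(\tr B/(Nd)\) term.

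\emph{Concentration uniformly in \(B\).} Next we need a tail bound holding for all \(B\) at once. The summands lie in \([0,1]\), so Bernstein's inequality gives a deviation of order
\[
\sqrt{n\,\mathbb E F_B^2\cdot t}+t
\]
with failure probability \(e^{-t}\). Uniformity over \(B\) is the obstacle, since the set \(\{0\preceq B\preceq I_N\}\) has metric entropy of order \(N^2\), far exceeding \(n\) in general.

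To handle this, I would use the linearity of \(B\mapsto F_B\) and convexity. The function \(B\mapsto\sum_j F_B(X_j)^2\) is convex, and the bound on the right-hand side is linear-plus-quadratic in \(\tr B\). I would reduce to extreme points, namely projections of each rank \(k\), and control these through the operator-norm Gram bound. Concretely,
\[
\sum_j (q_j^{\mathsf T}Bq_j)^2\le \max_j(q_j^{\mathsf T}Bq_j)\cdot \tr\Big(B\sum_j q_jq_j^{\mathsf T}\Big).
\]
The second factor is at most \(C\tr B\,(n/N+\log N)\). It then remains to bound \(\max_j q_j^{\mathsf T}Bq_j\). This is where the \(\log d\,\tr B/m\) and \(\tr B/(Nd)\) terms must come from, and where I expect the difficulty.

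\emph{Directional case.} For \(B=vv^{\mathsf T}\) the entropy is only of order \(N\), and \(N\lesssim n/d\) because \(n\ge\underline c Nd\). A net on \(\mathbb S^{N-1}\) of size \(e^{CN\log d}\) is therefore affordable. On this net we apply Bernstein to \(\langle v,q_j\rangle^4\), with mean \(C/N^2\) from Lemma~\ref{lem:cubic-directional-mgf}'s hypercontractive moments, and extend from the net by Lipschitz continuity in \(v\). This yields \eqref{eq:cubic-hereditary-directional-fourth-profile}.

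\emph{General case.} I would obtain the general \(B\) bound by writing \(B=\sum\lambda_k v_kv_k^{\mathsf T}\) and applying Minkowski's inequality in \(\ell^2(J)\) only where it is not lossy. Where it is lossy, I would instead use a chaining argument over low-rank projections, which I expect to be the hardest and most delicate step.
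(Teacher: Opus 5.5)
Your treatment of \eqref{eq:cubic-hereditary-covariance-profile}, of \eqref{eq:cubic-raw-pair-separation}, and of the invariances is what the paper does. For the covariance bound, subset sums are dominated by the full Gram matrix in \eqref{eq:cubic-feature-gram}, and $\log N/n\le C/N$. Your Poincar\'e bound $\mathbb E F_B^2\le(\tr B/N)^2+C\tr B/(Nd)$ is also correct.

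\textbf{The gap.} The content of the lemma is uniformity over all $B$ and all $v$ in the two fourth-moment bounds. None of your proposed mechanisms gives this in the regime $m\asymp Nd$, which is where the lemma is used.
\begin{itemize}
\item The factorization $\sum_jF_B(X_j)^2\le\max_jF_B(X_j)\cdot\tr(B\sum_jq_jq_j^{\mathsf T})$ cannot work. The factor $\max_jF_B(X_j)$ is not small uniformly in $B$: for $B=q_1q_1^{\mathsf T}$ it equals one. So the best this gives is $C\tr B/N$. For rank one this exceeds the target, which is of order $\log d/(Nd)$, by a factor $d/\log d$.
\item Minkowski over the spectral decomposition loses a factor of order $\tr B$ in the last two terms. ``Chaining over low-rank projections'' is a placeholder, not an argument.
\item The net-plus-Bernstein route for directions fails for a structural reason. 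The summands are bounded only by one, so a union bound over $e^{cN\log d}$ net points costs a deviation of order $N\log d$ in $\sum_{j\in J}\langle v,q_j\rangle^4$. The target for this sum is $O(1)$ when $m\asymp Nd$.
\end{itemize}
The last failure is intrinsic. Take $v=q_{3,d}(x_0)$. Then $\mathbb P\{|\langle v,q_j\rangle|\ge 1/2\}\ge e^{-Cd}$, hence $\mathbb P\{\sum_j\langle v,q_j\rangle^4\ge K\}\ge e^{-CdK}$. No per-direction tail bound survives a union over $e^{cN\log d}$ directions unless $K\gtrsim d^2\log d$. The heavy projections must therefore be controlled deterministically, simultaneously for all $v$.

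\textbf{Directional bound: coherence.} The paper first uses \eqref{eq:cubic-raw-pair-separation} with \eqref{eq:cubic-zonal-polynomial} to get $\max_{i\ne j}|\langle q_i,q_j\rangle|\le 1/(2d)$.
\begin{itemize}
\item Set $J_t=\{j\in J:|\langle v,q_j\rangle|\ge t\}$. Expanding $\|\sum_{j\in J_t}\operatorname{sgn}(\langle v,q_j\rangle)q_j\|^2$ gives $t^2|J_t|^2\le |J_t|+|J_t|^2/(2d)$. Hence $|J_t|\le 2t^{-2}$ for $t\ge d^{-1/2}$, uniformly in $v$.
\item A layer-cake bound then gives $\sum_{j\in J_t}\langle v,q_j\rangle^4\le 4$.
\item The remaining indices contribute at most $t^2\sum_{j\in J}\langle v,q_j\rangle^2\le Ct^2m/N$, by \eqref{eq:cubic-hereditary-covariance-profile}.
\item Taking $t=d^{-1/2}$ gives \eqref{eq:cubic-hereditary-directional-fourth-profile}.
\end{itemize}

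\textbf{General $B$: linearization.} $F_B$ is a degree-six polynomial, so $F_B=\sum_{\ell\in\{0,2,4,6\}}\langle c_{\ell,B},q_{\ell,d}\rangle$. Here $q_{\ell,d}$ are the normalized degree-$\ell$ harmonic feature maps and $c_{0,B}=\tr B/N$.
\begin{itemize}
\item Computing the zonal coefficients $\alpha_{\ell,d}$ of $Z_{3,d}^2=\sum_\ell\alpha_{\ell,d}Z_{\ell,d}$ gives $\sum_{\ell\ge2}\|c_{\ell,B}\|^2\le C\|B\|_{\HS}^2$ and $\sum_{\ell\ge2}\|c_{\ell,B}\|^2/N_{\ell,d}\le C\|B\|_{\HS}^2/(Nd)$.
\item All dependence on $B$ now sits in the coefficient vectors. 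So one matrix-Bernstein event per degree handles every $B$ at once: $n^{-1}\sum_j q_{\ell,d}(X_j)q_{\ell,d}(X_j)^{\mathsf T}\preceq C(N_{\ell,d}^{-1}+\log d/n)I$.
\item Restricting to $J$ with $m\ge n/3$, summing the four components, and using $\|B\|_{\HS}^2\le\tr B$ gives \eqref{eq:cubic-hereditary-fourth-profile}.
\end{itemize}
For rank one this yields only $\log d/m$. That is why the directional refinement needs its own coherence argument rather than following from the general bound.
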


\begin{proof}
Step 1 restricts the full-sample Gram bound
\eqref{eq:cubic-feature-gram} to \(J\), giving the covariance bound
\eqref{eq:cubic-hereditary-covariance-profile}.
Step 2 decomposes \(q_{3,d}(x)^{\mathsf T}Bq_{3,d}(x)\) into its
constant term \(\tr B/N\) and harmonics of degrees \(2,4,6\).
Coefficient estimates and matrix Bernstein bounds for the empirical
second moments of these harmonic features give
\eqref{eq:cubic-hereditary-fourth-profile}.
For the sharper directional estimate, Step 3 proves
\eqref{eq:cubic-raw-pair-separation} by spherical concentration.
Together with \eqref{eq:cubic-zonal-polynomial}, this bounds the pairwise
feature inner products and hence the number of large projections
\(|\langle v,q_{3,d}(X_j)\rangle|\).
The covariance bound \eqref{eq:cubic-hereditary-covariance-profile}
controls the smaller projections, giving, uniformly in \(v\) and \(J\),
\[
 \frac1m\sum_{j\in J}\langle v,q_{3,d}(X_j)\rangle^4
 \le\frac C m+\frac C{Nd},
\]
which implies \eqref{eq:cubic-hereditary-directional-fourth-profile}.
Step 4 intersects the events from Steps 2--3 with \(\mathcal G_{n,M}\)
and verifies the required invariances and failure probability.

\emph{Step 1: covariance bounds.}
Recall from Lemma~\ref{lem:cubic-hereditary-envelope} that
\(\mathbf Q_n\in\mathbb R^{n\times N}\) has \(i\)th row
\(q_{3,d}(X_i)^{\mathsf T}\).
By \eqref{eq:cubic-feature-gram}, \(m=|J|\ge n/3\), and
\(n\ge\underline c\,Nd\),
\[
 \frac1{|J|}\sum_{j\in J}q_{3,d}(X_j)q_{3,d}(X_j)^{\mathsf T}
 \preceq
 \frac3n\mathbf Q_n^{\mathsf T}\mathbf Q_n
 \preceq C_{A,M}\left(\frac1N+\frac{\log N}{n}\right)I_N
 \preceq \frac{C_{A,\underline c,M}}N I_N,
\]
which proves \eqref{eq:cubic-hereditary-covariance-profile}.

\emph{Step 2: fourth moments of quadratic forms.}
To prove \eqref{eq:cubic-hereditary-fourth-profile}, we expand
\(q_{3,d}(x)^{\mathsf T}Bq_{3,d}(x)\) in harmonic degrees
\(\ell\in\{0,2,4,6\}\).  We extend the degree-three notation from
Section~\ref{subsec:cubic-harmonic-kernel} to these even degrees.  Let
\(\cH_{\ell,d}\) be the space of degree-\(\ell\) spherical harmonics and
\(N_{\ell,d}=\dim\cH_{\ell,d}\).  Choose a basis
\((\psi_{\ell,a})_{a=1}^{N_{\ell,d}}\) of \(\cH_{\ell,d}\), orthonormal
in \(L^2(\sigma_d)\), and set
\[
 q_{\ell,d}(x):=N_{\ell,d}^{-1/2}
       (\psi_{\ell,a}(x))_{a=1}^{N_{\ell,d}},\qquad
 Z_{\ell,d}(\langle x,y\rangle):=
       \langle q_{\ell,d}(x),q_{\ell,d}(y)\rangle.
\]
We likewise abbreviate \(Z_{\ell,d}(\langle x,y\rangle)\) by
\(Z_{\ell,d}(x,y)\).  The addition formula for degree \(\ell\)
\cite[Section~1.2]{DaiXu2013} is
\begin{equation}
 Z_{\ell,d}(x,y)
 =\frac1{N_{\ell,d}}\sum_{a=1}^{N_{\ell,d}}
       \psi_{\ell,a}(x)\psi_{\ell,a}(y),\qquad Z_{\ell,d}(x,x)=1.
 \label{eq:cubic-general-degree-addition}
\end{equation}
Equation~\eqref{eq:cubic-general-degree-addition} and orthonormality give
\(\|q_{\ell,d}(x)\|=1\) and
\(\mathbb E_{X\sim\sigma_d}q_{\ell,d}(X)q_{\ell,d}(X)^{\mathsf T}
  =I_{N_{\ell,d}}/N_{\ell,d}\).
For \(\ell=0\), choose \(\psi_{0,1}=1\), so
\(N_{0,d}=q_{0,d}(x)=Z_{0,d}(t)=1\).
For each \(\ell\in\{0,2,4,6\}\), define the coefficient vector
\[
 c_{\ell,B}:=N_{\ell,d}\int_{\Sph^{d-1}}
   \bigl(q_{3,d}(y)^{\mathsf T}Bq_{3,d}(y)\bigr)
   q_{\ell,d}(y)\,d\sigma_d(y)\in\mathbb R^{N_{\ell,d}}.
\]
The harmonic decomposition of degree-six polynomials
\cite[Section~1.1]{DaiXu2013} then gives
\begin{equation*}
  q_{3,d}(x)^{\mathsf T}Bq_{3,d}(x)
  =\sum_{\ell\in\{0,2,4,6\}}
    \langle c_{\ell,B},q_{\ell,d}(x)\rangle .
\end{equation*}
In particular, \(c_{0,B}=\tr(B)/N\).  To bound the remaining coefficients,
write
\begin{equation}
 Z_{3,d}(t)^2=\sum_{\ell\in\{0,2,4,6\}}\alpha_{\ell,d}Z_{\ell,d}(t),
 \qquad
 \alpha_{\ell,d}
 =N_{\ell,d}\int Z_{3,d}(x_0,x)^2Z_{\ell,d}(x_0,x)\,d\sigma_d(x).
 \label{eq:cubic-zonal-square-coefficients}
\end{equation}
Here \(x_0\in\Sph^{d-1}\) is arbitrary and fixed.
For a real symmetric \(N\times N\) matrix \(B\), define
\[
 ({\cal M}_dB)(x)=q_{3,d}(x)^{\mathsf T}Bq_{3,d}(x).
\]
With the Hilbert--Schmidt and \(L^2(\sigma_d)\) inner products,
\[
 {\cal M}_d^*f=\int f(y)q_{3,d}(y)q_{3,d}(y)^{\mathsf T}\,d\sigma_d(y),
 \qquad
 ({\cal M}_d{\cal M}_d^*f)(x)
 =\int Z_{3,d}(x,y)^2f(y)\,d\sigma_d(y).
\]
By \eqref{eq:cubic-general-degree-addition} and
\eqref{eq:cubic-zonal-square-coefficients},
\[
 {\cal M}_d{\cal M}_d^*f=\frac{\alpha_{\ell,d}}{N_{\ell,d}}f,
 \qquad f\in\cH_{\ell,d},\quad \ell\in\{0,2,4,6\}.
\]
For distinct \(\ell,k\in\{0,2,4,6\}\), take
\(f\in\cH_{\ell,d}\) and \(g\in\cH_{k,d}\).  Then
\[
 \langle{\cal M}_d^*f,{\cal M}_d^*g\rangle_{\HS}
 =\langle f,{\cal M}_d{\cal M}_d^*g\rangle_{L^2(\sigma_d)}
 =\frac{\alpha_{k,d}}{N_{k,d}}\langle f,g\rangle_{L^2(\sigma_d)}=0.
\]
Thus the matrix subspaces \({\cal M}_d^*(\cH_{\ell,d})\),
\(\ell\in\{0,2,4,6\}\), are mutually orthogonal in the Hilbert--Schmidt
inner product.  Let \(P_\ell\) be the orthogonal projection onto
\({\cal M}_d^*(\cH_{\ell,d})\).  The definition of \(c_{\ell,B}\) and
the identity for \({\cal M}_d{\cal M}_d^*\) give
\begin{equation*}
 \|c_{\ell,B}\|^2
 =N_{\ell,d}\|{\cal M}_dP_\ell B\|_{L^2(\sigma_d)}^2
 =\alpha_{\ell,d}\|P_\ell B\|_{\HS}^2.
\end{equation*}
To compute \(\alpha_{\ell,d}\), use \(Z_{0,d}=1\) and the explicit
even-degree polynomials
\begin{align*}
 Z_{2,d}(t)&=\frac{dt^2-1}{d-1},\\
 Z_{4,d}(t)&=\frac{(d+2)(d+4)t^4-6(d+2)t^2+3}{(d-1)(d+1)},\\
 Z_{6,d}(t)&=\frac{(d+4)(d+6)(d+8)t^6-15(d+4)(d+6)t^4
                     +45(d+4)t^2-15}{(d-1)(d+1)(d+3)}.
\end{align*}
Substituting these expressions and \eqref{eq:cubic-zonal-polynomial}
into \eqref{eq:cubic-zonal-square-coefficients}, and comparing the
coefficients of \(t^6,t^4,t^2,1\) in that order, gives
\begin{align*}
 \alpha_{0,d}&=\frac1N,
 &\alpha_{2,d}&=\frac{18(d-2)(d+1)}{d(d-1)(d+4)(d+6)},\\
 \alpha_{4,d}&=\frac{9(d-2)(d+1)}{(d-1)(d+4)(d+8)},
 &\alpha_{6,d}&=\frac{(d+3)(d+2)^2(d+1)}
 {(d-1)(d+4)(d+6)(d+8)}.
\end{align*}
Since \(N_{\ell,d}\asymp d^\ell\) for fixed \(\ell\) and \(N\asymp d^3\),
\[
 \max_{\ell\in\{2,4,6\}}\alpha_{\ell,d}\le C,
 \qquad
 \max_{\ell\in\{2,4,6\}}\frac{\alpha_{\ell,d}}{N_{\ell,d}}
 \le\frac{C}{Nd}.
\]
Orthogonality of the \(P_\ell\)'s now gives
\begin{equation*}
 \sum_{\ell\in\{2,4,6\}}
   \frac{\|c_{\ell,B}\|^2}{N_{\ell,d}}
 \le \frac{C}{Nd}\|B\|_{\HS}^2,
 \qquad
 \sum_{\ell\in\{2,4,6\}}\|c_{\ell,B}\|^2
 \le C\|B\|_{\HS}^2.
\end{equation*}
For \(\ell\in\{2,4,6\}\), the matrix Bernstein argument used for
\eqref{eq:cubic-feature-gram} gives, simultaneously with probability
\(1-o(d^{-M})\),
\begin{equation}
 \frac1n\sum_{j=1}^nq_{\ell,d}(X_j)q_{\ell,d}(X_j)^{\mathsf T}
 \preceq C_{A,M}\left(\frac1{N_{\ell,d}}+\frac{\log d}{n}\right)
 I_{N_{\ell,d}}.
 \label{eq:cubic-even-degree-frame}
\end{equation}
Here \(\log N_{\ell,d}=O(\log d)\), and the constants are chosen for
the prescribed \(M\).  Since \(m\ge n/3\),
\[
 \frac1m\sum_{j\in J}
   \langle c_{\ell,B},q_{\ell,d}(X_j)\rangle^2
 \le\frac3n\sum_{j=1}^n
   \langle c_{\ell,B},q_{\ell,d}(X_j)\rangle^2
 \le C_{A,M}\left(\frac1{N_{\ell,d}}+\frac{\log d}{m}\right)
 \|c_{\ell,B}\|^2.
\]
Applying Cauchy--Schwarz to the four harmonic components yields
\begin{align*}
 \frac1m\sum_{j\in J}\{q_{3,d}(X_j)^{\mathsf T}Bq_{3,d}(X_j)\}^2
 &\le4\left(\frac{\tr B}{N}\right)^2
    +\frac4m\sum_{\ell\in\{2,4,6\}}\sum_{j\in J}
       \langle c_{\ell,B},q_{\ell,d}(X_j)\rangle^2\\
 &\le C_{A,M}\left\{
  \left(\frac{\tr B}{N}\right)^2+\frac{\|B\|_{\HS}^2}{Nd}
  +\frac{\log d}{m}\|B\|_{\HS}^2\right\}.
\end{align*}
Since \(0\preceq B\preceq I_N\) implies
\(\|B\|_{\HS}^2\le\tr B\), this proves
\eqref{eq:cubic-hereditary-fourth-profile}.

\emph{Step 3: directional fourth moments.}
For \(i\ne j\), condition on \(X_i\) and apply spherical concentration
\cite[Theorem~3.4.6]{Vershynin2018} to the independent point \(X_j\).
A union bound gives
\[
 \mathbb P\!\left\{\max_{i\ne j}|\langle X_i,X_j\rangle|
       >C_{A,M}\frac{\log d}{\sqrt d}\right\}
 \le 2n^2\exp\{-cC_{A,M}^2\log^2 d\}=o(d^{-M}),
\]
since \(n\le d^A\).  On the event
\eqref{eq:cubic-raw-pair-separation}, the explicit polynomial
\eqref{eq:cubic-zonal-polynomial} and \(N\asymp d^3\) give, for
sufficiently large \(d\),
\begin{equation}
 \max_{i\ne j}|\langle q_{3,d}(X_i),q_{3,d}(X_j)\rangle|
 \le C_{A,M}\frac{\log^3 d}{\sqrt N}\le\frac1{2d}.
 \label{eq:cubic-degree-three-feature-coherence}
\end{equation}

Fix \(v\in\mathbb S^{N-1}\), and write \(q_j=q_{3,d}(X_j)\).
Equation~\eqref{eq:cubic-hereditary-covariance-profile} gives
\begin{equation}
 \frac1m\sum_{j\in J}\langle v,q_j\rangle^2\le \frac C N.
 \label{eq:cubic-directional-second-profile}
\end{equation}
For any \(0<t\le1\), define
\(J_t=\{j\in J:|\langle v,q_j\rangle|\ge t\}\).
Cauchy--Schwarz, \(\|q_j\|=1\), and
\eqref{eq:cubic-degree-three-feature-coherence} give
\[
 t^2|J_t|^2
 \le\left(\sum_{j\in J_t}|\langle v,q_j\rangle|\right)^2
 \le\left\|\sum_{j\in J_t}
       \operatorname{sgn}(\langle v,q_j\rangle)q_j\right\|^2
 \le |J_t|+\sum_{\substack{i,j\in J_t\\i\ne j}}
          |\langle q_i,q_j\rangle|
 \le |J_t|+\frac{|J_t|^2}{2d}.
\]
Consequently, \(|J_t|\le(t^2-1/(2d))^{-1}\) whenever
\(t^2>1/(2d)\).  In particular, \(|J_t|\le2t^{-2}\) for
\(d^{-1/2}\le t\le1\).

For any \(d^{-1/2}\le t\le1\), split the fourth-moment sum over
\(J_t\) and \(J\setminus J_t\).  Since \(|\langle v,q_j\rangle|\le1\),
writing the sum over \(J_t\) as an integral and applying
\(|J_s|\le2s^{-2}\) for \(t\le s\le1\) gives
\[
 \sum_{j\in J_t}\langle v,q_j\rangle^4
 =t^4|J_t|+4\int_t^1s^3|J_s|\,ds
 \le2t^2+8\int_t^1s\,ds\le4.
\]
For the complementary sum, \eqref{eq:cubic-directional-second-profile}
gives
\[
 \frac1m\sum_{j\in J\setminus J_t}\langle v,q_j\rangle^4
 \le\frac{t^2}{m}\sum_{j\in J}\langle v,q_j\rangle^2
 \le\frac{Ct^2}{N}.
\]
Thus \(m^{-1}\sum_{j\in J}\langle v,q_j\rangle^4\le Ct^2/N+4/m\)
for every \(d^{-1/2}\le t\le1\).  Taking \(t=d^{-1/2}\) proves
\eqref{eq:cubic-hereditary-directional-fourth-profile}.

\emph{Step 4: construction of the event.}
Define \({\cal G}_{n,M}^\dagger\) by intersecting \({\cal G}_{n,M}\) with
the events in \eqref{eq:cubic-even-degree-frame} and
\eqref{eq:cubic-raw-pair-separation}.  Every
constituent is invariant under common orthogonal rotations, coordinate
permutations, and individual sign changes.  A union bound, using the
probability estimates from Steps 2--3 and
Lemma~\ref{lem:cubic-hereditary-envelope}, gives
\(\rho_n({\cal G}_{n,M}^\dagger)=1-o(d^{-M})\).
The same construction applies to the independent sample \(X'\).
\end{proof}

With \(M_\star>4A+10\) from
Proposition~\ref{prop:cubic-raw-local-partition}, write
\({\cal G}_n^\dagger={\cal G}_{n,M_\star}^\dagger\).

\subsection{Operator-norm and fourth-order estimates}
\label{subsec:cubic-local-fourth-order}

On \(\mathcal E_n^{\rm reg}\),
Lemma~\ref{lem:cubic-hereditary-square-profile} bounds the empirical
covariances and fourth moments of \(X\) and \(X'\) separately.
We now control \(\|S_n\|_{\rm op}\) and the fourth-order statistic
defined below, both of which involve the two samples.
Combined with the bound
\(W_{n,\beta}\le e^{\beta^2(\|S_n\|_{\HS}^2-n)/2}\) from
\eqref{eq:cubic-local-linear-reward}, their tail probabilities yield
the coarse weighted bound in
Lemma~\ref{lem:cubic-coarse-terminal-deletions} on
\(\bigl|\|S_n\|_{\HS}^2-n\bigr|\le b_\star d^2\log d/\beta^2\).
Corollary~\ref{cor:cubic-central-weighted-deletions} uses sharper cutoffs on
\(\bigl|\|S_n\|_{\HS}^2-n\bigr|\le K_dd^2/\beta^2\)
for the second inequality.

For the remainder of this appendix, fix any positive auxiliary sequence
\((\epsilon_d)\) satisfying
\begin{equation}
 \epsilon_d\longrightarrow0,\qquad
 \frac{\epsilon_d^2\sqrt d}{\log d}\longrightarrow\infty.
 \label{eq:cubic-local-epsilon-conditions}
\end{equation}
Lemmas~\ref{lem:cubic-operator-norm-tail}
and~\ref{lem:cubic-fourth-U-moments} give the probability bounds needed
to impose the operator-norm and fourth-order restrictions at this
threshold.

\begin{lemma}[Operator-norm tail bound]
\label{lem:cubic-operator-norm-tail}
Under \eqref{eq:cubic-local-standing-assumptions} and
\eqref{eq:cubic-local-epsilon-conditions}, for sufficiently large \(d\),
\begin{equation}
 P_0^{\otimes n}\left\{
 \mathcal E_n^{\rm reg},\
 \|S_n\|_{\rm op}>\frac{\epsilon_dd}{\beta^2}\right\}
 \le2\exp\left(-\frac{c\epsilon_d^2d^{5/2}}{\beta^2}\right).
 \label{eq:cubic-operator-norm-tail}
\end{equation}
\end{lemma}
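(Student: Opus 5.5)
The plan is to condition on the first sample \(X\in\mathcal G_n^\dagger\) and treat \(S_n=\sum_i q_{3,d}(X_i)q_{3,d}(X_i')^{\mathsf T}\) as a random matrix in the independent sample \(X'\sim\rho_n\). We write \(\|S_n\|_{\rm op}=\sup_{a,b\in\mathbb S^{N-1}}\langle a,S_nb\rangle\), so the problem reduces to bounding a supremum of the bilinear functionals \(F_{a,b}(X')=\sum_i\langle a,q_{3,d}(X_i)\rangle\langle b,q_{3,d}(X_i')\rangle\). There are two ingredients: a bound on the typical size of \(\|S_n\|_{\rm op}\), and Lipschitz concentration on the product sphere.

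\emph{Step 1 (mean).} Conditional on \(X\), the vector \((\langle b,q_{3,d}(X_i')\rangle)_i\) has independent centered coordinates with variance \(1/N\). Hence \(S_n=\mathbf Q^{\mathsf T}\mathbf R\), where \(\mathbf Q\) has rows \(q_{3,d}(X_i)^{\mathsf T}\) and \(\mathbf R\) has rows \(q_{3,d}(X_i')^{\mathsf T}\). By Proposition~\ref{prop:cubic-raw-local-partition}, \(\|\mathbf Q\|_{\rm op}^2\le Cn/N\). We then write \(\|S_n\|_{\rm op}\le\|\mathbf Q\|_{\rm op}\|P_{\mathbf Q}\mathbf R\|_{\rm op}\), or more directly apply matrix Bernstein to \(S_n=\sum_i q_{3,d}(X_i)q_{3,d}(X_i')^{\mathsf T}\) as a sum of independent centered rank-one matrices of norm one. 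The variance proxies are \(\|\sum_i\mathbb E w_iw_i^{\mathsf T}\|=\|\mathbf Q^{\mathsf T}\mathbf Q\|/N\le Cn/N^2\), and likewise for the other side. This gives
\[
\mathbb E\bigl[\|S_n\|_{\rm op}\mid X\bigr]\le C\Bigl(\sqrt{n\log N}/N+\log N\Bigr).
\]
Since \(n\beta^2=r_nNd\), the first term is \(C\sqrt{r_n d\log N/N}\,/|\beta|\), which is \(O(d^{-1}\sqrt{\log d}/|\beta|)\). The target is \(\epsilon_dd/\beta^2\ge\epsilon_dd/|\beta|\), so the mean is at most half the threshold for large \(d\), with room to spare; the condition \(\epsilon_d^2\sqrt d/\log d\to\infty\) easily absorbs the \(\log N\) term.

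\emph{Step 2 (concentration).} For fixed \(X\), the map \(X'\mapsto\|S_n\|_{\rm op}\) is Lipschitz. A perturbation of \(X'\) changes \(\mathbf R\) in Hilbert--Schmidt norm by at most \(L_{3,d}\|x'-y'\|_2\), by \eqref{eq:cubic-feature-Lipschitz}. Therefore \(|\|S_n(x')\|_{\rm op}-\|S_n(y')\|_{\rm op}\|\le\|\mathbf Q\|_{\rm op}L_{3,d}\|x'-y'\|_2\le C\sqrt{n/N}\,\|x'-y'\|_2\). The log-Sobolev/Herbst argument used in Proposition~\ref{prop:cubic-weighted-overlap} then gives a tail bound of the form \(\exp(-cd\,t^2N/n)\) for deviations \(t\) above the mean. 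With \(t=\epsilon_dd/(2\beta^2)\) and \(N/n=1/(r_n\beta^2 d)\), the exponent becomes
\[
c\,d\cdot\frac{\epsilon_d^2d^2}{\beta^4}\cdot\frac{1}{r_n\beta^2d}=\frac{c\,\epsilon_d^2d^2}{r_n\beta^6}.
\]
This is at least \(c\epsilon_d^2d^{5/2}/\beta^2\), since \(|\beta|\le1\) and \(r_n\in K\) is bounded. Integrating over \(X\) and inserting \(\1_{\mathcal G_n^\dagger(X')}\le1\) yields \eqref{eq:cubic-operator-norm-tail}; the bound therefore holds with considerable slack.

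\emph{Main obstacle.} The delicate point is Step 1: we must make sure the conditional mean bound is uniform over \(X\in\mathcal G_n^\dagger\), and that the matrix Bernstein \(\log N\) factor and the \(\|\mathbf Q\|_{\rm op}\) bound are used correctly. If matrix Bernstein proves awkward, the fallback is to bound \(\mathbb E\|S_n\|_{\rm op}\le(\mathbb E\|S_n\|_{\HS}^2)^{1/2}=\sqrt n\). Using \(\sqrt n=\sqrt{r_nNd}/|\beta|\), this gives \(\sqrt n\le Cd^2/|\beta|\), whereas the threshold is \(\epsilon_dd/\beta^2\), which is smaller than \(d^2/|\beta|\) when \(|\beta|\) is not tiny. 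So the crude Hilbert--Schmidt bound does not suffice, and the genuine operator-norm estimate of Step 1 is needed.
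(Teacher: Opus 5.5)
There is a genuine gap in Step~2, and it is quantitative rather than cosmetic. From \(n\beta^2=r_nNd\) you get \(N/n=\beta^2/(r_nd)\), not \(1/(r_n\beta^2d)\). Take your Lipschitz constant \(L^2\le Cn/N\) and the Herbst bound \(\exp(-c\,d\,t^2/L^2)\) at \(t=\epsilon_dd/(2\beta^2)\). The exponent is then \(c\,d\cdot\epsilon_d^2d^2\beta^{-4}\cdot\beta^2/(r_nd)=c\,\epsilon_d^2d^2/(r_n\beta^2)\), which falls short of the required \(\epsilon_d^2d^{5/2}/\beta^2\) by a factor \(\sqrt d\). Even your own exponent \(\epsilon_d^2d^2/\beta^6\) dominates \(\epsilon_d^2d^{5/2}/\beta^2\) only when \(|\beta|\le d^{-1/8}\), so your final comparison fails for \(|\beta|\) of order one.

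The missing \(\sqrt d\) matters downstream. The lemma is applied against the weight \(e^{K_dd^2/2}\), with \(K_d\to\infty\), in Corollary~\ref{cor:cubic-central-weighted-deletions}, and against \(e^{b_\star d^2\log d/2}\) in Lemma~\ref{lem:cubic-coarse-terminal-deletions}. A tail of order \(e^{-c\epsilon_d^2d^2/\beta^2}\) cannot absorb either when \(|\beta|\asymp1\).

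By contrast, Step~1 is not the delicate part. Your variance proxy is wrong: \(\mathbb E[w_iw_i^{\mathsf T}\mid X]=q_{3,d}(X_i)q_{3,d}(X_i)^{\mathsf T}\), so the proxy is of order \(n/N\), not \(n/N^2\). The corrected mean bound \(O(\sqrt{d\log d}/|\beta|)\) is still \(o(\epsilon_dd/\beta^2)\), so the conclusion of Step~1 survives.

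The loss is inherent in concentrating through the Euclidean Lipschitz constant. The map \(x\mapsto\langle v,q_{3,d}(x)\rangle\) has Lipschitz constant of order one but variance \(1/N\asymp d^{-3}\), so log-Sobolev concentration pays a variance proxy \(1/d\) instead of \(1/N\). The paper instead uses the sharp degree-three estimate of Lemma~\ref{lem:cubic-directional-mgf}, which gives proxy \(C/N\) for \(|\theta|\le c_0d\). That estimate applies only to fixed directions, and a net over \(\mathbb S^{N-1}\times\mathbb S^{N-1}\) would cost \(e^{CN}=e^{Cd^3}\). The paper therefore decouples, in four steps:
\begin{enumerate}
\item It splits \([n]=I\cup J\) and shows \(\|S_I-(m/n)S_n\|_{\rm op}\le T/16\) with conditional probability \(1-o(1)\), by exchanging indices \(i\leftrightarrow i+m\) and applying a matrix Rademacher bound.
\item It takes top singular vectors \(u_I,v_I\) of \(S_I\), so that \(\langle u_I,S_Jv_I\rangle>T/4\) on that event.
\item It truncates \(\alpha_j'=\langle v_I,q_{3,d}(X_j')\rangle\) at \(b_d=C_0/(\epsilon_d\sqrt d)\), using the directional fourth-moment bound \eqref{eq:cubic-hereditary-directional-fourth-profile}.
\item It runs a conditional Chernoff bound in \((X_j)_{j\in J}\) with \(\lambda=c_0d/b_d\asymp\epsilon_dd^{3/2}\).
\end{enumerate}
The product \(\lambda T\asymp\epsilon_d^2d^{5/2}/\beta^2\) is exactly where the extra \(\sqrt d\) comes from. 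Your proposal has no substitute for these ingredients, so it cannot reach the stated rate.
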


\begin{proof}
Probabilities and expectations over \((X,X')\) are under \(P_0^{\otimes n}\).
Put \(T=\epsilon_dd/\beta^2\) and
\(\mathcal E=\mathcal E_n^{\rm reg}\cap\{\|S_n\|_{\rm op}>T\}\).
Fix \(I=\{1,\ldots,m\}\), \(m=\lfloor n/2\rfloor\), and
\(J=[n]\setminus I\), and write
\(S_I=\sum_{i\in I}w_i\), \(S_J=\sum_{j\in J}w_j\).
Step 1 uses exchanges of the indices \(i\) and \(i+m\) to control
\(\|S_I-(m/n)S_n\|_{\rm op}\), then selects singular vectors
\(u_I,v_I\) of \(S_I\).
Step 2 conditions on \((X_i,X_i')_{i\in I}\) and \((X_j')_{j\in J}\)
and applies Lemma~\ref{lem:cubic-directional-mgf} after truncation
to bound the probability of
\(\{\mathcal E_n^{\rm reg},\ \langle u_I,S_Jv_I\rangle>T/4\}\).
Together these two steps prove \eqref{eq:cubic-operator-norm-tail}.

\emph{Step 1: choosing directions from a fixed group of indices.}
If \(\mathbb P(\mathcal E)=0\), \eqref{eq:cubic-operator-norm-tail}
holds trivially.  We may therefore assume \(\mathbb P(\mathcal E)>0\).
Put \(E_I=S_I-(m/n)S_n\) and \(D_i=(w_i-w_{i+m})/2\) for
\(1\le i\le m\). For both even and odd \(n\),
\[
 E_I=\sum_{i=1}^mD_i+
 \1_{\{n=2m+1\}}\left(\frac{S_n}{2n}-\frac{w_n}{2}\right).
\]
Thus \(\|E_I-\sum_iD_i\|_{\rm op}\le1\), since
\(\|S_n\|_{\rm op}\le n\) and \(\|w_n\|_{\rm op}=1\).
Let \(\varepsilon_1,\ldots,\varepsilon_m\) be independent Rademacher random
variables, independent of \((X,X')\), with
\(\mathbb P(\varepsilon_i=1)=\mathbb P(\varepsilon_i=-1)=1/2\).
Exchanging \((X_i,X_i')\) with \((X_{i+m},X_{i+m}')\) changes
\(D_i\) to \(-D_i\) and preserves both the product law and
\(\mathcal E\). Independent exchanges therefore show that
\(\sum_iD_i\) and \(\sum_i\varepsilon_iD_i\) have the same
conditional law given \(\mathcal E\).

Since \(w_i=q_{3,d}(X_i)q_{3,d}(X_i')^{\mathsf T}\) and
\(\|q_{3,d}(x)\|=1\), we have
\[
 w_iw_i^{\mathsf T}=q_{3,d}(X_i)q_{3,d}(X_i)^{\mathsf T},
 \qquad
 w_i^{\mathsf T}w_i=q_{3,d}(X_i')q_{3,d}(X_i')^{\mathsf T}.
\]
On \(\mathcal E_n^{\rm reg}\), the inequality
\((A-B)(A-B)^{\mathsf T}\preceq2(AA^{\mathsf T}+BB^{\mathsf T})\)
and \eqref{eq:cubic-hereditary-covariance-profile}, applied separately
to \(X\) and \(X'\), give
\[
\begin{aligned}
 \sum_{i=1}^mD_iD_i^{\mathsf T}
 &\preceq\frac12\sum_{i=1}^{2m}q_{3,d}(X_i)q_{3,d}(X_i)^{\mathsf T}
 \preceq\frac{Cn}{N}I_N,\\
 \sum_{i=1}^mD_i^{\mathsf T}D_i
 &\preceq\frac12\sum_{i=1}^{2m}q_{3,d}(X_i')q_{3,d}(X_i')^{\mathsf T}
 \preceq\frac{Cn}{N}I_N.
\end{aligned}
\]
For a fixed sample, the rectangular matrix Rademacher bound
\cite[Corollary~4.2]{Tropp2012} applies to \(\sum_i\varepsilon_iD_i\).
Averaging over the sample conditional on \(\mathcal E\), and using
\(T\ge32\) for sufficiently large \(d\), gives
\begin{equation}
\begin{aligned}
 \mathbb P(\|E_I\|_{\rm op}>T/16\mid\mathcal E)
 &\le\mathbb E\!\left[
   \mathbb P_{\varepsilon}\!\left(
    \left\|\sum_{i=1}^m\varepsilon_iD_i\right\|_{\rm op}>T/32
       \,\middle|\,X,X'\right)
      \,\middle|\,\mathcal E\right]\\
 &\le2N\exp\left(-\frac{cT^2N}{n}\right)=o(1).
\end{aligned}
 \label{eq:cubic-terminal-split-concentration}
\end{equation}
Here \(T^2N/n\asymp\epsilon_d^2d/\beta^2\gg\log N\) by
\eqref{eq:cubic-local-epsilon-conditions}, since \(|\beta|\le1\)
and \(\log N\asymp\log d\).

Choose unit singular vectors \(u_I,v_I\), measurably from the coordinates
in \(I\), such that
\(\langle u_I,S_Iv_I\rangle=\|S_I\|_{\rm op}\).
The identity
\(S_J=(|J|/m)S_I-(n/m)E_I\) gives, on
\(\mathcal E\cap\{\|E_I\|_{\rm op}\le T/16\}\),
\begin{equation}
 \langle u_I,S_Jv_I\rangle
 \ge\frac{|J|}{n}\|S_n\|_{\rm op}
        -\frac{n+|J|}{m}\|E_I\|_{\rm op}
 \ge\frac12\|S_n\|_{\rm op}-4\|E_I\|_{\rm op}>\frac T4.
 \label{eq:cubic-terminal-crossfit-witness}
\end{equation}
We used \(|J|/n\ge1/2\) and \((n+|J|)/m\le4\) for \(n\ge5\).
Equations \eqref{eq:cubic-terminal-split-concentration} and
\eqref{eq:cubic-terminal-crossfit-witness} imply
\[
 (1-o(1))\mathbb P(\mathcal E)
 \le\mathbb P\{\mathcal E_n^{\rm reg},
                 \langle u_I,S_Jv_I\rangle>T/4\}.
\]

\emph{Step 2: truncation and conditional exponential moments.}
For \(j\in J\), let
\(\alpha_j=\langle u_I,q_{3,d}(X_j)\rangle\) and
\(\alpha_j'=\langle v_I,q_{3,d}(X_j')\rangle\).
On \(\mathcal E_n^{\rm reg}\), the covariance and directional
fourth-moment bounds
\eqref{eq:cubic-hereditary-covariance-profile} and
\eqref{eq:cubic-hereditary-directional-fourth-profile} in
Lemma~\ref{lem:cubic-hereditary-square-profile} give
\begin{equation}
 \sum_{j\in J}\{\alpha_j^2+(\alpha_j')^2\}\le\frac{Cd}{\beta^2},
 \qquad
 \sum_{j\in J}\{\alpha_j^4+(\alpha_j')^4\}
 \le C|J|\left(\frac1{N^2}+\frac1{Nd}+\frac1{|J|}\right)
 \le\frac C{\beta^2}.
 \label{eq:cubic-terminal-directional-second}
\end{equation}
Choose \(b_d=C_0/(\epsilon_d\sqrt d)\), with \(C_0\) sufficiently
large and fixed, and put
\(Z=\sum_{j\in J}\alpha_j'\1_{\{|\alpha_j'|\le b_d\}}\alpha_j\).
By Cauchy--Schwarz and \eqref{eq:cubic-terminal-directional-second},
on \(\mathcal E_n^{\rm reg}\),
\begin{equation}
\begin{aligned}
 |\langle u_I,S_Jv_I\rangle-Z|
 &=\left|\sum_{j\in J}\alpha_j\alpha_j'
             \1_{\{|\alpha_j'|>b_d\}}\right|\\
 &\le\left(\sum_{j\in J}\alpha_j^2\right)^{1/2}
     \left(\sum_{j\in J}(\alpha_j')^2
                 \1_{\{|\alpha_j'|>b_d\}}\right)^{1/2}\\
 &\le\left(\sum_{j\in J}\alpha_j^2\right)^{1/2}
     \left(\frac1{b_d^2}\sum_{j\in J}(\alpha_j')^4\right)^{1/2}
 \le\frac{C\sqrt d}{\beta^2b_d}\le\frac T8.
\end{aligned}
 \label{eq:cubic-terminal-truncation-error}
\end{equation}

Now condition only on
\(\mathcal F_I=\sigma((X_i,X_i')_{i\in I},(X_j')_{j\in J})\).
Under this conditioning, \(u_I,v_I,\alpha_j'\) are fixed and the
\(X_j\), \(j\in J\), remain independent with law \(\sigma_d\).
The event
\(\mathcal B_I=\{\sum_{j\in J}(\alpha_j')^2\le Cd/\beta^2\}\)
is \(\mathcal F_I\)-measurable and contains \(\mathcal E_n^{\rm reg}\).
Thus \eqref{eq:cubic-terminal-truncation-error} gives
\[
 \mathcal E_n^{\rm reg}\cap\{\langle u_I,S_Jv_I\rangle>T/4\}
 \subseteq\mathcal E_n^{\rm reg}\cap\{Z>T/8\}
 \subseteq\mathcal B_I\cap\{Z>T/8\}.
\]
Lemma~\ref{lem:cubic-directional-mgf}, with
\(\theta=\lambda\alpha_j'\1_{\{|\alpha_j'|\le b_d\}}\), yields on
\(\mathcal B_I\)
\[
 \log\mathbb E(e^{\lambda Z}\mid\mathcal F_I)
 \le\frac{C\lambda^2}{N}
        \sum_{j\in J}(\alpha_j')^2\1_{\{|\alpha_j'|\le b_d\}}
 \le\frac{C\lambda^2d}{\beta^2N},
 \qquad 0\le\lambda\le\frac{c_0d}{b_d}.
\]
Taking \(\lambda=c_0d/b_d\asymp\epsilon_dd^{3/2}\) and applying the
conditional Chernoff inequality to \(Z\) gives
\begin{equation}
\begin{aligned}
 \mathbb P\{\mathcal E_n^{\rm reg},
                  \langle u_I,S_Jv_I\rangle>T/4\}
 &\le\mathbb E[\1_{\mathcal B_I}
                    \mathbb P(Z>T/8\mid\mathcal F_I)]\\
 &\le\exp\left(-\frac{\lambda T}{8}
                   +\frac{C\lambda^2d}{\beta^2N}\right)
 \le\exp\left(-\frac{c\epsilon_d^2d^{5/2}}{\beta^2}\right).
\end{aligned}
 \label{eq:cubic-terminal-crossfit-tail}
\end{equation}
Indeed, the ratio of the positive term in the exponent to
\(\lambda T\) is \(O(d^{3/2}/N)=o(1)\).
The concentration estimate is under the product law conditional on
\(\mathcal F_I\), not conditional on \(\mathcal E_n^{\rm reg}\).

Combining \eqref{eq:cubic-terminal-split-concentration},
\eqref{eq:cubic-terminal-crossfit-witness}, and
\eqref{eq:cubic-terminal-crossfit-tail} gives
\(\mathbb P(\mathcal E)\le
2\exp(-c\epsilon_d^2d^{5/2}/\beta^2)\) for large \(d\).
This proves \eqref{eq:cubic-operator-norm-tail}.
\end{proof}

We next control the fourth-order contribution needed for the sharp
bound in Lemma~\ref{lem:cubic-centered-simple-edge}.
Grouping the factors of \(W_{n,\beta}\) by their larger vertex index gives
\[
 W_{n,\beta}
 =\prod_{m=1}^{n-1}\prod_{j=1}^{m}
   \left(1+\beta^2\langle w_j,w_{m+1}\rangle_{\HS}\right).
\]
For fixed \(w_1,\ldots,w_m\), let
\((\widetilde X,\widetilde X')\sim P_0\) be independent of the full sample
and recall \(w=q_{3,d}(\widetilde X)q_{3,d}(\widetilde X')^{\mathsf T}\).
Write \(\mathbb E_w\) for expectation over this independent pair.
Expanding the product gives
\[
 \prod_{j\le m}(1+\beta^2\langle w_j,w\rangle_{\HS})
 =\sum_{A\subseteq[m]}\beta^{2|A|}
       \prod_{j\in A}\langle w_j,w\rangle_{\HS}.
\]
Its fourth-order term in \(\beta^2\) is therefore
\[
 \beta^8e_{4,m}(w)
 :=\beta^8\sum_{\substack{A\subseteq[m]\\|A|=4}}
       \prod_{j\in A}\langle w_j,w\rangle_{\HS}.
\]
The quantity \(24\mathbb E_we_{4,m}(w)\) is the contribution of four
distinct indices to the fourth cumulant
\eqref{eq:cubic-complete-fourth-cumulant}.
On the central region \(\|S_n\|_{\HS}^2\le2n\), estimating that cumulant
by the moment bounds in
Lemmas~\ref{lem:cubic-regular-prefix-estimates}
and~\ref{lem:cubic-regular-row-expansion} gives only the error scale
\[
 \frac{\beta^8}{N^4}\sum_{m<n}m^2
 \asymp\frac{\beta^8n^3}{N^4}
 =r_n^3\beta^2\frac{d^3}{N}\asymp\beta^2.
\]
Since this need not tend to zero, we sum
\(\mathbb E_we_{4,m}(w)\) before taking absolute values.

To express this sum as a statistic of the full sample, define, for
\(x_1,\ldots,x_4\in\mathbb S^{d-1}\),
\[
 \gamma_{4,d}(x_1,x_2,x_3,x_4)
 :=\int_{\mathbb S^{d-1}}\prod_{\alpha=1}^4
 Z_{3,d}(x_\alpha,z)\,d\sigma_d(z).
\]
For \(\xi_i=(X_i,X_i')\), independence of
\(\widetilde X\) and \(\widetilde X'\) gives the kernel
\[
 K_{4,d}(\xi_1,\xi_2,\xi_3,\xi_4)
 :=\mathbb E_w\prod_{j=1}^4\langle w_j,w\rangle_{\HS}
 =\gamma_{4,d}(X_1,X_2,X_3,X_4)
   \gamma_{4,d}(X_1',X_2',X_3',X_4').
\]
Set \(T_{4,n}=\sum_{i<j<k<\ell}
K_{4,d}(\xi_i,\xi_j,\xi_k,\xi_\ell)\).
Lemma~\ref{lem:cubic-fourth-U-moments} bounds
\(P_0^{\otimes n}\{\mathcal E_n^{\rm reg},\
\beta^8n|T_{4,n}|>\epsilon_d\}\) using moment estimates for a
bounded-kernel statistic that agrees with \(T_{4,n}\) on
\(\mathcal E_n^{\rm reg}\).

\begin{lemma}[Moments and tail bound for the fourth-order statistic]
\label{lem:cubic-fourth-U-moments}
Under \eqref{eq:cubic-local-standing-assumptions}, there is a symmetric
kernel \(\widetilde K_{4,d}\) with
\(\|\widetilde K_{4,d}\|_\infty\le C_A\log^{12}d/N^6\).
It is completely degenerate under \(P_0\), meaning that
\[
 \int\widetilde K_{4,d}(\xi_1,\ldots,\xi_4)\,dP_0(\xi_i)=0,
 \qquad i=1,\ldots,4,
\]
for each fixed choice of the other three arguments.
For \(Z_{4,n}=\sum_{i<j<k<\ell}
\widetilde K_{4,d}(\xi_i,\xi_j,\xi_k,\xi_\ell)\),
\begin{align}
 T_{4,n}&=Z_{4,n}\quad\text{on }\mathcal E_n^{\rm reg},
 \label{eq:cubic-fourth-truncated-identity}\\
 \|Z_{4,n}\|_{L^p(P_0^{\otimes n})}
 &\le C_Ap^2n^2\frac{\log^{12}d}{N^6},\qquad p\ge2.
 \label{eq:cubic-fourth-U-moment}
\end{align}
If \(\epsilon_d\) satisfies \eqref{eq:cubic-local-epsilon-conditions},
then, for sufficiently large \(d\),
\begin{equation}
 P_0^{\otimes n}\{\mathcal E_n^{\rm reg},\
                  \beta^8n|T_{4,n}|>\epsilon_d\}
 \le e^{-d^2\log d}.
 \label{eq:cubic-fourth-U-tail}
\end{equation}
\end{lemma}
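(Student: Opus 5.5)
We will construct \(\widetilde K_{4,d}\) by truncating the single-sample factor \(\gamma_{4,d}\) and then taking its Hoeffding-degenerate projection. We first record that \(K_{4,d}\) is itself completely degenerate under \(P_0\). Integrating out \(X_i\) in \(\gamma_{4,d}\) gives \(\int Z_{3,d}(x_i,z)\,d\sigma_d(x_i)=0\), because \(Z_{3,d}\) is odd in its argument. The same holds in the primed sample, so the product kernel vanishes after integrating any \(\xi_i\).

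On \(\mathcal E_n^{\rm reg}\) we need a pointwise bound \(|\gamma_{4,d}(X_i,X_j,X_k,X_\ell)|\le C_A\log^6 d/N^3\) for distinct indices. To get it, write \(\gamma_{4,d}\) as a four-fold tensor contraction of the features \(q_{3,d}(X_\alpha)\) against \(\int q_{3,d}(z)^{\otimes4}d\sigma_d\). Equivalently, expand \(Z_{3,d}(x_1,z)Z_{3,d}(x_2,z)\) in the harmonics of degrees \(0,2,4,6\) as in the proof of Lemma~\ref{lem:cubic-hereditary-square-profile}. This gives \(\gamma_{4,d}\) as a sum over \(\ell\) of coefficients times \(Z_{\ell,d}\)-type functions of the pairwise inner products. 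The degree-\(0\) part is \(Z_{3,d}(x_1,x_2)Z_{3,d}(x_3,x_4)/N^2\), and the pair separation \eqref{eq:cubic-raw-pair-separation} bounds it by \(\log^6d/N^3\). We expect the higher-degree parts to be dominated similarly, using \(|\langle X_i,X_j\rangle|\le C\log d/\sqrt d\) and the coefficient sizes \(\alpha_{\ell,d}\).

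The truncation step then proceeds as follows. Let \(\bar\gamma\) be \(\gamma_{4,d}\) multiplied by a cutoff \(\chi\) enforcing the pairwise separation \eqref{eq:cubic-raw-pair-separation} among its four arguments; the cutoff is zero off that set. Set \(\bar K=\bar\gamma(X)\bar\gamma(X')\), and let \(\widetilde K_{4,d}\) be the fully degenerate Hoeffding projection of \(\bar K\). On the separated set \(\bar K=K_{4,d}\). Since \(K_{4,d}\) is degenerate, the projection only subtracts lower-order integrals of \(K_{4,d}-\bar K\), which is supported on the pair-separation failure set of probability \(e^{-c\log^2d}\). Two issues must be handled here. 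First, the identity \eqref{eq:cubic-fourth-truncated-identity} requires the projection corrections to vanish exactly, not merely be small. Second, the cutoff must be chosen so that \(\|\widetilde K_{4,d}\|_\infty\le C\log^{12}d/N^6\) holds everywhere. The subtle point, which we expect to be the main obstacle, is keeping the identity exact rather than approximate. We would try a cutoff that is a function of the inner products chosen so that the one-variable integrals vanish by oddness: the cutoff is symmetric under \(x_i\mapsto-x_i\), and \(\gamma_{4,d}\) is odd in each argument. Then \(\bar K\) is already completely degenerate, so \(\widetilde K_{4,d}=\bar K\) and the identity is exact.

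The moment bound \eqref{eq:cubic-fourth-U-moment} then follows from the standard hypercontractive moment inequality for completely degenerate \(U\)-statistics of order four with bounded kernels. That inequality gives \(\|Z_{4,n}\|_p\le C p^{2}\binom n4^{1/2}\|\widetilde K\|_{L^2}\), plus bounded-kernel terms, and with the sup bound this is \(\le Cp^2n^2\log^{12}d/N^6\). For the tail \eqref{eq:cubic-fourth-U-tail}, apply Markov's inequality with \(p\asymp d^2\log d\). By the identity, \(\beta^8n|T_{4,n}|=\beta^8n|Z_{4,n}|\) on \(\mathcal E_n^{\rm reg}\), and its \(L^p\) norm is at most \(Cp^2n^3\log^{12}d/N^6\). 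Using \(n\le CNd/\beta^2\), this is \(O(p^2d^3\log^{12}d/N^3)=O(p^2\log^{12}d/d^6)\). Taking \(p=d^2\log d\) makes this \(\ll\epsilon_d e^{-1}\), and Markov yields the bound \(e^{-p}\le e^{-d^2\log d}\).
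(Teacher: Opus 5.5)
Your architecture is the paper's. A cutoff enforcing \eqref{eq:cubic-raw-pair-separation} that is invariant under $X_i\mapsto -X_i$ makes $K_{4,d}\chi$ exactly degenerate by oddness and equal to $K_{4,d}$ on $\mathcal E_n^{\rm reg}$. A degree-four hypercontractive bound then gives the moments, and Markov's inequality with $p$ polynomial in $d$ gives the tail.

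The gap is the pointwise bound $|\gamma_{4,d}|\le C_A\log^6 d/N^3$ on the separated set. Every quantitative conclusion rests on it, and you do not establish it. You handle only the degree-zero piece $Z_{3,d}(x_1,x_2)Z_{3,d}(x_3,x_4)/N^2$. For the degree-$2,4,6$ pieces you say they should follow ``using the coefficient sizes $\alpha_{\ell,d}$.'' They do not. The $\alpha_{\ell,d}$ describe $Z_{3,d}(x,\cdot)^2$ at a single point. Combined with Cauchy--Schwarz, they bound $|\langle F^{12}_k,F^{34}_k\rangle|$ only by $O(N^{-2})$, where $F^{12}_k$ is the degree-$k$ harmonic component of $Z_{3,d}(x_1,\cdot)Z_{3,d}(x_2,\cdot)$. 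That is a factor $N/\log^6 d$ too large. Norm bounds cannot be sharpened here: for $x_1\perp x_2$ the product $Z_{3,d}(x_1,\cdot)Z_{3,d}(x_2,\cdot)$ has $L^2(\sigma_d)$ norm of order $N^{-1}$, essentially all of it in degree six. The smallness of $\gamma_{4,d}$ is a cancellation driven by the near-orthogonality of all six pairs, and it has to be computed.

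What is missing is an expansion of $\gamma_{4,d}$ as a polynomial in the six inner products, with sharp coefficients. The paper obtains it from the spherical Isserlis formula \eqref{eq:cubic-spherical-pairing}. By oddness, each monomial corresponds to a loopless multigraph on four vertices with all degrees in $\{1,3\}$. A graph with $\ell$ degree-one vertices therefore has $6-\ell$ edges, and its coefficient is $\omega_{\ell,d}\kappa_G$, where $\kappa_G\le 1296$ counts pairings. Each individual term in the sum defining $\omega_{\ell,d}$ is of order $d^{-6}$. At that size, matching graphs such as $\langle x_1,x_2\rangle\langle x_3,x_4\rangle$ would contribute $d^{-7}\log^2 d$, far above $d^{-9}\log^6 d$. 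The essential input is the cancellation $|\omega_{\ell,d}|\le Cd^{-6-\lceil\ell/2\rceil}$, read off from the explicit formulas. Together with $|\langle x_\alpha,x_\beta\rangle|\le C\log d/\sqrt d$, this gives $|\gamma_{4,d}|\le C_A d^{-9}\log^6 d$. Without an argument of this kind, none of $\|\widetilde K_{4,d}\|_\infty\le C_A\log^{12}d/N^6$, \eqref{eq:cubic-fourth-U-moment}, or \eqref{eq:cubic-fourth-U-tail} is established.

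A smaller point: once the kernel is odd in each $X_i$, you need no general decoupling inequality for degenerate $U$-statistics. Since $(\varepsilon_iX_i,X_i')_{i\le n}$ has law $P_0^{\otimes n}$, $Z_{4,n}$ equals in law the Rademacher chaos $\sum\varepsilon_i\varepsilon_j\varepsilon_k\varepsilon_\ell\widetilde K_{4,d}(\xi_i,\xi_j,\xi_k,\xi_\ell)$. Conditional hypercontractivity then gives \eqref{eq:cubic-fourth-U-moment} directly.
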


\begin{proof}
Step 1 bounds \(\gamma_{4,d}\) when all six pairwise inner products
are small.  Step 2 uses this bound to construct a uniformly bounded,
symmetric, completely degenerate kernel \(\widetilde K_{4,d}\).
It also proves \(T_{4,n}=Z_{4,n}\) on \(\mathcal E_n^{\rm reg}\)
and oddness of \(\widetilde K_{4,d}\) in each \(X_i\)-coordinate.
Step 3 uses the uniform bound and oddness to estimate
\(\|Z_{4,n}\|_p\), then takes \(p=d^2\)
to prove \eqref{eq:cubic-fourth-U-tail}.

\emph{Step 1: a bound for \(\gamma_{4,d}\) when the inner products are small.}
Fix \(x_1,\ldots,x_4\in\mathbb S^{d-1}\) satisfying
\[
 \max_{\alpha<\beta}|\langle x_\alpha,x_\beta\rangle|
 \le C_A\frac{\log d}{\sqrt d}.
\]
Using \eqref{eq:cubic-zonal-polynomial} in the definition of
\(\gamma_{4,d}\), with \(X\sim\sigma_d\) and
\(a_{3,d}=(d+2)/(d-1)\), gives
\[
\begin{aligned}
 \gamma_{4,d}(x_1,x_2,x_3,x_4)
 &=a_{3,d}^4\mathbb E_{\sigma_d}\prod_{\alpha=1}^4
   \left(\langle X,x_\alpha\rangle^3
              -\frac3{d+2}\langle X,x_\alpha\rangle\right)\\
 &=a_{3,d}^4\sum_{L\subseteq[4]}
 \left(-\frac3{d+2}\right)^{|L|}
 \mathbb E_{\sigma_d}\!\left[
   \prod_{\alpha\in L}\langle X,x_\alpha\rangle
   \prod_{\alpha\notin L}\langle X,x_\alpha\rangle^3\right].
\end{aligned}
\]
Here \(L\) records the indices at which the linear term is selected.
Each expectation contains \(|L|+3(4-|L|)=2(6-|L|)\) linear factors.
We evaluate these expectations using the spherical moment identity
\begin{equation}
 \mathbb E_{\sigma_d}\prod_{a=1}^{2k}\langle X,v_a\rangle
 =\frac1{\prod_{i=0}^{k-1}(d+2i)}\sum_{\pi\in\mathfrak P_2(2k)}
       \prod_{\{a,b\}\in\pi}\langle v_a,v_b\rangle,
 \label{eq:cubic-spherical-pairing}
\end{equation}
where \(v_1,\ldots,v_{2k}\in\mathbb R^d\) and
\(\mathfrak P_2(2k)\) is the set of partitions of
\(\{1,\ldots,2k\}\) into \(k\) disjoint unordered pairs.
To derive \eqref{eq:cubic-spherical-pairing}, let
\(g\sim\mathcal N(0,I_d)\).  Since \(g/\|g\|\sim\sigma_d\)
is independent of \(\|g\|\), Isserlis' formula \cite{Isserlis1918} gives
\[
 \mathbb E\|g\|^{2k}\,
     \mathbb E_{\sigma_d}\prod_{a=1}^{2k}\langle X,v_a\rangle
 =\mathbb E\prod_{a=1}^{2k}\langle g,v_a\rangle
 =\sum_{\pi\in\mathfrak P_2(2k)}
       \prod_{\{a,b\}\in\pi}\langle v_a,v_b\rangle,
\]
where \(\mathbb E[\langle g,v_a\rangle\langle g,v_b\rangle]
=\langle v_a,v_b\rangle\).
Dividing by \(\mathbb E\|g\|^{2k}=\prod_{i=0}^{k-1}(d+2i)\)
proves \eqref{eq:cubic-spherical-pairing}.

For each \(L\) in the expansion of \(\gamma_{4,d}\), the list
\(v_1,\ldots,v_{2k}\) contains one copy of \(x_\alpha\) if
\(\alpha\in L\) and three copies otherwise, so \(k=6-|L|\).
We represent the pairings in \eqref{eq:cubic-spherical-pairing} by
graphs on vertices \(1,\ldots,4\), with vertex \(\alpha\)
corresponding to \(x_\alpha\).
Pairing a factor \(\langle X,x_\alpha\rangle\) with a factor
\(\langle X,x_\beta\rangle\), where \(\alpha\ne\beta\), gives
\(\langle x_\alpha,x_\beta\rangle\) and an edge joining the two vertices.
Pairing two factors from the same vertex gives
\(\langle x_\alpha,x_\alpha\rangle=1\), so no edge is retained for
this pair.  The degree \(r_\alpha\) of vertex \(\alpha\) is therefore
\[
 r_\alpha=
 \begin{cases}
  1,&\alpha\in L,\\
  3-2=1,&\alpha\notin L\text{ and two of its factors are paired together},\\
  3,&\alpha\notin L\text{ and none of its factors are paired together}.
 \end{cases}
\]
The pairs joining distinct vertices define a loopless multigraph,
which may have multiple edges between two vertices but no edge from
a vertex to itself.
For \(0\le\ell\le4\), let \(\mathfrak M_\ell\) be the set
of these graphs on \(\{1,2,3,4\}\) with exactly \(\ell\) vertices of
degree one and all other vertices of degree three.
For \(G\in\mathfrak M_\ell\), let
\(m_{\alpha\beta}(G)=m_{\beta\alpha}(G)\) be the number of edges
joining distinct vertices \(\alpha\) and \(\beta\), counting parallel
edges separately.  Then
\[
 r_\alpha=\sum_{\beta\ne\alpha}m_{\alpha\beta}(G),\qquad
 \sum_{\alpha<\beta}m_{\alpha\beta}(G)
 =\frac12\sum_{\alpha=1}^4r_\alpha=6-\ell.
\]
To count the pairings giving \(G\), distinguish the \(r_\alpha\)
remaining factor positions at each vertex and temporarily label
the \(m_{\alpha\beta}(G)\) parallel edges between each pair of vertices.
At vertex \(\alpha\), assign its factor positions bijectively to
the \(r_\alpha\) incident edge ends in \(r_\alpha!\) ways.
The two positions assigned to the ends of each edge are then paired,
so there are \(\prod_\alpha r_\alpha!\) choices with labeled edges.
Each pairing of factor positions is counted
\(\prod_{\alpha<\beta}m_{\alpha\beta}(G)!\) times, since permuting
the labels of parallel edges leaves the paired positions unchanged.
Dividing by this multiplicity gives
\[
 \kappa_G
 =\frac{\prod_{\alpha=1}^4r_\alpha!}
       {\prod_{\alpha<\beta}m_{\alpha\beta}(G)!}
 =\frac{6^{4-\ell}}{\prod_{\alpha<\beta}m_{\alpha\beta}(G)!}
 \le1296.
\]

Fix \(G\in\mathfrak M_\ell\), and let \(j\) of its degree-one vertices
come from cubic terms with a pair at the same vertex.
Then \(|L|=\ell-j\), and the expectation has
\(|L|+3(4-|L|)=2(6-\ell+j)\) factors, so
\eqref{eq:cubic-spherical-pairing} is used with \(k=6-\ell+j\).
For this fixed \(G\) and \(j\), collect all choices of \(L\) and
pairings in \eqref{eq:cubic-spherical-pairing} whose pairs between
distinct vertices give \(G\) and which have exactly \(j\) pairs
within vertices.
There are \(\binom\ell j\) choices of the vertices with an internal
pair, \(3^j\) choices of those pairs, and \(\kappa_G\) choices of
the pairs between vertices.  Summing their contributions gives
\begin{equation}
\begin{aligned}
 \gamma_{4,d}(x_1,\ldots,x_4)
 &=a_{3,d}^4\sum_{\ell=0}^4\sum_{G\in\mathfrak M_\ell}\sum_{j=0}^\ell
   \frac{\binom\ell j3^j\kappa_G(-3/(d+2))^{\ell-j}}
        {\prod_{i=0}^{5-\ell+j}(d+2i)}
   \prod_{\alpha<\beta}
   \langle x_\alpha,x_\beta\rangle^{m_{\alpha\beta}(G)}\\
 &=\sum_{\ell=0}^4 \omega_{\ell,d}
   \sum_{G\in\mathfrak M_\ell}\kappa_G
   \prod_{\alpha<\beta}
   \langle x_\alpha,x_\beta\rangle^{m_{\alpha\beta}(G)},
\end{aligned}
 \label{eq:cubic-fourth-diagram}
\end{equation}
where
\[
 \omega_{\ell,d}
 :=3^\ell a_{3,d}^4\sum_{j=0}^\ell\binom\ell j
       \frac{\{-1/(d+2)\}^{\ell-j}}{\prod_{i=0}^{5-\ell+j}(d+2i)}.
\]
Evaluating the sum over \(j\) gives
\[
\begin{gathered}
 \omega_{0,d}=\frac{(d+2)^3}
 {d(d-1)^4(d+4)(d+6)(d+8)(d+10)},\\
 \omega_{1,d}=-\frac{24}{d+2}\,\omega_{0,d},\qquad
 \omega_{2,d}=-\frac{18(d-22)}{(d+2)^2}\,\omega_{0,d},\\
 \omega_{3,d}=\frac{216(5d-14)}{(d+2)^3}\,\omega_{0,d},\qquad
 \omega_{4,d}=\frac{324(3d^2-80d-76)}{(d+2)^4}\,\omega_{0,d}.
\end{gathered}
\]
In particular, \(|\omega_{\ell,d}|\le Cd^{-6-\lceil\ell/2\rceil}\).
Since \(\sum_{\alpha<\beta}m_{\alpha\beta}(G)=6-\ell\)
and \(\kappa_G\le1296\), the assumed bound on the pairwise inner
products gives
\[
 |\omega_{\ell,d}|\kappa_G
 \prod_{\alpha<\beta}|\langle x_\alpha,x_\beta\rangle|^{m_{\alpha\beta}(G)}
 \le C_A d^{-6-\lceil\ell/2\rceil}
       \left(\frac{\log d}{\sqrt d}\right)^{6-\ell}
 \le C_A d^{-9}\log^6d.
\]
There are only finitely many graphs, independently of \(d\), so
\(N\asymp d^3\) gives
\begin{equation}
 |\gamma_{4,d}(x_1,x_2,x_3,x_4)|
 \le C_A\frac{\log^6 d}{N^3}.
 \label{eq:cubic-fourth-local-kernel}
\end{equation}

\emph{Step 2: construction of a bounded, completely degenerate kernel.}
Let \(\chi(\xi_1,\ldots,\xi_4)\) be the indicator of
\[
 \max_{\alpha<\beta}
 \max\{|\langle X_\alpha,X_\beta\rangle|,
        |\langle X_\alpha',X_\beta'\rangle|\}
 \le C_{A,M_\star}\frac{\log d}{\sqrt d},
\]
with the constant from \eqref{eq:cubic-raw-pair-separation}, and put
\(\widetilde K_{4,d}=K_{4,d}\chi\).
On \(\{\chi=1\}\), equation~\eqref{eq:cubic-fourth-local-kernel}
applies to both \((X_1,\ldots,X_4)\) and
\((X_1',\ldots,X_4')\), whereas \(\widetilde K_{4,d}=0\) on
\(\{\chi=0\}\).  Hence, for every choice of the four paired arguments,
\[
 |\widetilde K_{4,d}(\xi_1,\ldots,\xi_4)|
 =\chi\,|\gamma_{4,d}(X_1,\ldots,X_4)|
          |\gamma_{4,d}(X_1',\ldots,X_4')|
 \le C_A\frac{\log^{12}d}{N^6}.
\]
Both \(K_{4,d}\) and \(\chi\) are symmetric in their four paired
arguments, so \(\widetilde K_{4,d}\) is symmetric as well.
On \(\mathcal E_n^{\rm reg}\),
\eqref{eq:cubic-raw-pair-separation} makes \(\chi=1\) for every
four-subset, giving
\[
 Z_{4,n}
 =\sum_{i<j<k<\ell}\widetilde K_{4,d}(\xi_i,\xi_j,\xi_k,\xi_\ell)
 =\sum_{i<j<k<\ell}K_{4,d}(\xi_i,\xi_j,\xi_k,\xi_\ell)
 =T_{4,n}.
\]
This proves \eqref{eq:cubic-fourth-truncated-identity}.
For \(\xi_i^-=(-X_i,X_i')\), oddness of \(Z_{3,d}\) and invariance
of \(\chi\) under this sign change imply
\[
 \widetilde K_{4,d}(\xi_1,\ldots,\xi_i^-,\ldots,\xi_4)
 =-\widetilde K_{4,d}(\xi_1,\ldots,\xi_i,\ldots,\xi_4).
\]
Since \(P_0\) is also invariant under \(\xi_i\mapsto\xi_i^-\),
\[
 \int\widetilde K_{4,d}(\xi_1,\ldots,\xi_4)\,dP_0(\xi_i)
 =-\int\widetilde K_{4,d}(\xi_1,\ldots,\xi_4)\,dP_0(\xi_i)=0
\]
for each \(i\), with the other arguments fixed.
Thus \(\widetilde K_{4,d}\) is completely degenerate under \(P_0\).

\emph{Step 3: moment and probability tail bounds.}
All probabilities and norms in this step are under \(P_0^{\otimes n}\),
with additional independent signs when indicated.
We first prove \eqref{eq:cubic-fourth-U-moment} for \(p\ge2\).
Let \(\varepsilon_1,\ldots,\varepsilon_n\) be independent Rademacher
variables, independent of the paired sample.
The pairs \((\varepsilon_iX_i,X_i')\) still have joint law
\(P_0^{\otimes n}\).  By the sign identity from Step 2,
\[
 Z_{4,n}\ \stackrel{\rm law}{=}\
 \sum_{i<j<k<\ell}\varepsilon_i\varepsilon_j\varepsilon_k\varepsilon_\ell
       \widetilde K_{4,d}(\xi_i,\xi_j,\xi_k,\xi_\ell).
\]
Conditional on all pairs \(\xi_i=(X_i,X_i')\), \(1\le i\le n\),
the kernel values are fixed coefficients and the \(\varepsilon_i\)
remain independent Rademacher variables.
Write \(\mathbb E_\varepsilon\) and \(\|\cdot\|_{L^p(\varepsilon)}\)
for expectation and norm over these signs, with \(\xi_1,\ldots,\xi_n\)
held fixed.  For four-subsets \(A,B\subseteq[n]\), independence,
\(\mathbb E_\varepsilon\varepsilon_i=0\), and \(\varepsilon_i^2=1\) give
\(\mathbb E_\varepsilon[(\prod_{i\in A}\varepsilon_i)
(\prod_{j\in B}\varepsilon_j)]=\1_{\{A=B\}}\).
Thus the squared \(L^2(\varepsilon)\)-norm of the degree-four polynomial
is the sum of its squared coefficients.  The hypercontractive inequality
\cite[Theorem~9.21]{ODonnell2014} therefore gives
\[
\begin{aligned}
 \left\|\sum_{i<j<k<\ell}
 \varepsilon_i\varepsilon_j\varepsilon_k\varepsilon_\ell
 \widetilde K_{4,d}(\xi_i,\xi_j,\xi_k,\xi_\ell)\right\|_{L^p(\varepsilon)}
 &\le(p-1)^2
 \left(\sum_{i<j<k<\ell}|\widetilde K_{4,d}(\xi_i,\xi_j,\xi_k,\xi_\ell)|^2
       \right)^{1/2}\\
 &\le C_Ap^2n^2\frac{\log^{12}d}{N^6}.
\end{aligned}
\]
Taking the \(L^p\)-norm over the paired sample proves
\eqref{eq:cubic-fourth-U-moment}, with a constant depending only on \(A\).

Choose \(p=d^2\).  Condition
\eqref{eq:cubic-local-epsilon-conditions} implies
\(\epsilon_d\ge d^{-1/4}\sqrt{\log d}\) eventually.
Since \(n\beta^2\asymp Nd\), \(|\beta|\le1\), and \(N\asymp d^3\),
\[
 \frac{\beta^8n\|Z_{4,n}\|_p}{\epsilon_d}
 \le C_A\frac{p^2\beta^8n^3\log^{12}d}{\epsilon_dN^6}
 \le C_A\frac{\log^{12}d}{\epsilon_dd^2}
 \le C_A\frac{\log^{23/2}d}{d^{7/4}}
 \le d^{-1}
\]
for sufficiently large \(d\).  Therefore
\eqref{eq:cubic-fourth-truncated-identity} and Markov's inequality give
\[
 \mathbb P\{\mathcal E_n^{\rm reg},\ \beta^8n|T_{4,n}|>\epsilon_d\}
 \le\mathbb P\{\beta^8n|Z_{4,n}|>\epsilon_d\}
 \le\left(\frac{\beta^8n\|Z_{4,n}\|_p}{\epsilon_d}\right)^p
 \le e^{-d^2\log d},
\]
which proves \eqref{eq:cubic-fourth-U-tail}.
\end{proof}

\subsection{Conditional entropy and row expansions}
\label{subsec:cubic-local-row-tools}

Under \eqref{eq:cubic-local-standing-assumptions}, let
\(\mathcal E\subseteq\mathcal E_n^{\rm reg}\) be invariant under
permutations of the paired sample.
We reduce \(\log\mathbb E_{P_0^{\otimes n}}[\1_{\mathcal E}W_{n,\beta}]\)
to conditional exponential moments, then expand them through fourth
order.  Section~\ref{subsec:cubic-local-summed-estimates} sums the
expansions using the sample bounds from
Section~\ref{subsec:cubic-local-fourth-order}.

On \(\mathcal G_n^\dagger(X)\cap\mathcal G_n^\dagger(X')\),
\eqref{eq:cubic-pair-separation} and Taylor's formula give, uniformly,
\begin{equation}
  \log W_{n,\beta}
  =F_n^{(2)}(\beta)+o(1),\qquad
  F_n^{(2)}(\beta)
  :=\beta^2E_n-\frac{\beta^4}{2}\sum_{i<j}h_{ij}^2,
  \label{eq:cubic-restricted-quadratic-energy}
\end{equation}
because
\[
  \beta^6\sum_{i<j}|h_{ij}|^3
  \le C\frac{\beta^6n^2\log^9 d}{N^3}
  =O\!\left(\frac{\beta^2d^2\log^9 d}N\right)=o(1).
\]
The uniform error and the Gibbs formula
\eqref{eq:cubic-restricted-variational} give
\begin{equation}
\begin{aligned}
 \log\mathbb E_{P_0^{\otimes n}}[\1_{\mathcal E}W_{n,\beta}]
 &\le\log\mathbb E_{P_0^{\otimes n}}
       [\1_{\mathcal E}e^{F_n^{(2)}(\beta)}]+o(1)\\
 &=\sup_{Q:Q(\mathcal E)=1}
 \{\mathbb E_QF_n^{(2)}(\beta)-D(Q\Vert P_0^{\otimes n})\}+o(1).
\end{aligned}
 \label{eq:cubic-shared-restricted-gibbs}
\end{equation}
The supremum is over laws \(Q\) of the paired sample with
\(Q(\mathcal E)=1\) and \(D(Q\Vert P_0^{\otimes n})<\infty\).
Averaging \(Q\) over permutations of the pairs preserves
\(Q(\mathcal E)=1\) and \(\mathbb E_QF_n^{(2)}(\beta)\), and cannot
increase \(D(Q\Vert P_0^{\otimes n})\).  It therefore suffices to
consider exchangeable \(Q\).

Recall the functions \(f,g\) in the proof of
Lemma~\ref{lem:cubic-compensated-row}.  With the first \(m\) features
fixed, specialize them to \(r=m\), \(v_j=w_j\), and \(S=S_m\), and write
\[
 f_m(w)=\langle S_m,w\rangle_{\HS},\qquad
 g_m(w)=\sum_{j\le m}\langle w_j,w\rangle_{\HS}^2,\qquad
 Z_m(u,c)=\mathbb E_w e^{uf_m(w)-cu^2g_m(w)/2}.
\]
Here \(\mathbb E_w\) integrates an independent pair with law \(P_0\).
The row function used in \eqref{eq:cubic-main-conditional-entropy}
is \(H_{m+1}(w)=\beta^2f_m(w)-\beta^4g_m(w)/2\), and
\(F_n^{(2)}(\beta)=\sum_{m<n}H_{m+1}(w_{m+1})\).
Let \(Q_{m+1}\) be the conditional law under \(Q\) of
\((X_{m+1},X_{m+1}')\) given the first \(m\) pairs.

For \(0\le\eta<1\), apply \eqref{eq:cubic-linear-entropy-duality}
conditionally to \(Q_{m+1}\) and \(H_{m+1}/(1-\eta)\).
Multiplication by \(1-\eta\) and expectation under \(Q\) give
\begin{equation}
 \mathbb E_QH_{m+1}(w_{m+1})
 -(1-\eta)\mathbb E_QD(Q_{m+1}\Vert P_0)
 \le(1-\eta)\mathbb E_Q\log Z_m(\beta^2/(1-\eta),1-\eta).
 \label{eq:cubic-conditional-entropy-row}
\end{equation}
The chain rule gives
\(\sum_{m<n}\mathbb E_QD(Q_{m+1}\Vert P_0)=D(Q\Vert P_0^{\otimes n})\).
Summing \eqref{eq:cubic-conditional-entropy-row} therefore yields
\begin{equation}
 \mathbb E_QF_n^{(2)}(\beta)-D(Q\Vert P_0^{\otimes n})
 \le (1-\eta)\sum_{m<n}\mathbb E_Q\log Z_m(\beta^2/(1-\eta),1-\eta)
      -\eta D(Q\Vert P_0^{\otimes n}).
 \label{eq:cubic-shared-entropy-sum}
\end{equation}

The expansion requires moment bounds for \(\|S_m\|_{\HS}\) and a tail
bound for \(\ell(S_m)\).
To use exchangeability, let \(\Pi\) be an independent uniform permutation
of \([n]\), and write \(\mathbb P_{\rm ord},\mathbb E_{\rm ord}\) for its
probability and expectation with the sample fixed.
For \(I\subseteq[n]\) with \(|I|=m\),
\(\mathbb P_{\rm ord}\{\{\Pi(1),\ldots,\Pi(m)\}=I\}
=m!(n-m)!/n!=\binom nm^{-1}\).
Thus each prefix is a uniform sample without replacement, and
exchangeability gives, for every bounded measurable \(F\),
\[
 \mathbb E_QF(w_1,\ldots,w_m)
 =\mathbb E_Q\mathbb E_{\rm ord}
    F(w_{\Pi(1)},\ldots,w_{\Pi(m)}).
\]
Inside \(\mathbb P_{\rm ord}\) and \(\mathbb E_{\rm ord}\),
\(S_m\) denotes the sum \(\sum_{i=1}^m w_{\Pi(i)}\) of the first \(m\)
features in this order.  Lemma~\ref{lem:cubic-regular-prefix-estimates}
gives moment and tail bounds for these sums in terms of \(S_n\).

\begin{lemma}[Moment and tail bounds for random prefixes]
\label{lem:cubic-regular-prefix-estimates}
Assume \eqref{eq:cubic-local-standing-assumptions},
and fix a full sample in
\(\mathcal G_n^\dagger(X)\cap\mathcal G_n^\dagger(X')\).
Probabilities and expectations with subscript \({\rm ord}\) are over
the independent uniform reveal order.  For fixed \(0<\epsilon<c_1\),
uniformly in \(m\le n\),
\begin{equation}
 \1_{\{\beta^2\ell(S_n)\le\epsilon d\}}\,
 \mathbb P_{\rm ord}\{\beta^2\ell(S_m)>c_1d\}
 \le \exp\{-c_{\epsilon,c_1}d/\beta^2\}.
 \label{eq:cubic-orbit-source-bridge-tail}
\end{equation}
If \(\|S_n\|_{\HS}^2\le(1+R)n\), where \(R\ge1\), then
\begin{equation}
 \mathbb E_{\rm ord}\|S_m\|_{\HS}^{2k}\le C_kR^km^k,
 \qquad k\in\{2,3\}.
 \label{eq:cubic-parameterized-Hilbert-bridge}
\end{equation}
\end{lemma}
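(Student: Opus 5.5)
Both bounds concern a fixed population of unit vectors, namely the $n$ features $w_1,\dots,w_n$ of the given sample, with $S_m$ the sum of a uniform random $m$-subset drawn without replacement. Throughout, we center at the population mean $\bar w=S_n/n$, so that $S_m=(m/n)S_n+\sum_{i\le m}(w_{\Pi(i)}-\bar w)$. The mean part is deterministic, and the centered part is a sampling-without-replacement sum of bounded vectors. Following Lemma~\ref{lem:cubic-random-prefix}, we reduce both bounds to sums of independent draws with replacement, using Hoeffding's convex comparison \cite[Theorem~4]{Hoeffding1963} in the vector form \cite[Section~D]{GrossNesme2010}.

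\emph{Hilbert--Schmidt moments.} The mean part satisfies $\|(m/n)S_n\|_{\HS}^2\le (1+R)m^2/n\le 2Rm$, so its $2k$-th power is at most $C_kR^km^k$. For the centered part, we write $x_i=w_i-\bar w$, so that $\|x_i\|_{\HS}\le2$, and we compare with $\sum_{\ell\le m}\xi_\ell$ for iid uniform draws $\xi_\ell$ from the $x_i$. This comparison is valid because $x\mapsto\|x\|^{2k}$ is convex. Expanding $\mathbb E\|\sum_\ell\xi_\ell\|^{2k}$ for $k=2,3$, centering kills every index that occurs exactly once, and the surviving terms number at most $C_km^k$, each of size $O(1)$. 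Combining the two parts with $(a+b)^{2k}\le 2^{2k-1}(a^{2k}+b^{2k})$ gives \eqref{eq:cubic-parameterized-Hilbert-bridge}.

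\emph{Tail for $\ell(S_m)$.} Since $\ell$ is a seminorm, $\ell(S_m)\le (m/n)\ell(S_n)+\ell(\sum_{i\le m}x_{\Pi(i)})$. On the indicator event, the first term is at most $\epsilon d/\beta^2$. It therefore suffices to bound
\[
 \mathbb P_{\rm ord}\Bigl\{\ell\Bigl(\sum_{i\le m}x_{\Pi(i)}\Bigr)>(c_1-\epsilon)d/\beta^2\Bigr\}.
\]
We take the supremum defining $\ell$ over a net of $q_{3,d}(x),q_{3,d}(y)$ pairs, using a Euclidean net of the sphere of size $(C)^{2d}$ together with the Lipschitz bound \eqref{eq:cubic-feature-Lipschitz}. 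This lets us replace $\ell$ by the maximum over the net at the cost of a constant factor, plus an additive error controlled by $\ell(\cdot)\le$ the crude bound from \eqref{eq:cubic-zonal-hereditary}. For each fixed net pair $(a,b)$, the scalar $\langle\sum x_{\Pi(i)},ab^{\mathsf T}\rangle$ is a without-replacement sum. Its exponential moments are dominated by the with-replacement ones, again by Hoeffding's comparison, since $e^{\lambda t}$ is convex. For the with-replacement sum, each summand $\langle x_i,ab^{\mathsf T}\rangle=Z_{3,d}(a,X_i)Z_{3,d}(b,X_i')-\langle\bar w,ab^{\mathsf T}\rangle$ has variance proxy controlled by $\frac1n\sum_i Z_{3,d}(a,X_i)^2Z_{3,d}(b,X'_i)^2\le \frac1n(\sum_iZ_{3,d}(a,X_i)^2)\max_i|Z|^2$. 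On $\mathcal G_n^\dagger$ this is small by \eqref{eq:cubic-zonal-hereditary} and \eqref{eq:cubic-pair-separation}, and each summand is bounded by $O(\log^3 d/\sqrt N)$. A Bernstein bound then gives a tail of the form $\exp\{-c\min(t^2/(mv),\,t/b)\}$ with $t\asymp d/\beta^2$. Because $n\beta^2\asymp Nd$ and the range satisfies $b\lesssim \log^3d/\sqrt N$, this exponent is at least of order $d^{3/2}/(\beta^2\log^3 d)$, which beats both the net cardinality $e^{Cd}$ and the required $e^{-cd/\beta^2}$.

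\emph{Main obstacle.} The hardest step is verifying that the variance term in the Bernstein bound is genuinely of order $m\,d/(N\beta^2)$ or smaller uniformly over the net. This requires using \eqref{eq:cubic-zonal-hereditary} with $r=n$, which gives $\sum_iZ_{3,d}(a,X_i)^2\lesssim 1+n\log^3/N$. Combined with the pair bound, it yields $mv\lesssim (n/N)\log^{6}d/N$, and we must check that $t^2/(mv)\gg d/\beta^2$. If this direct route turns out to be too lossy in logarithms, we would instead apply the Bernstein bound with the full covariance bound \eqref{eq:cubic-hereditary-covariance-profile} in place of the maximum.
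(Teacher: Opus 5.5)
Your Hilbert--Schmidt moment argument is essentially the paper's: Hoeffding's convex comparison for the centered part plus the deterministic mean term $(m/n)S_n$. That half is fine.

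The tail bound for $\ell(S_m)$ has a genuine gap. For a net pair $(a,b)$, you bound the range of $\langle x_i,q_{3,d}(a)q_{3,d}(b)^{\mathsf T}\rangle$ by $O(\log^3d/\sqrt N)$, and the variance through $\max_i|Z_{3,d}|^2$, citing \eqref{eq:cubic-pair-separation}. That estimate controls $Z_{3,d}(X_i,X_j)$ only when both arguments are \emph{sample} points.

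But $\ell$ is a supremum over all of $\mathbb S^{d-1}$. If a net point $a$ lies within distance $h$ of some $X_i$, then $|Z_{3,d}(a,X_i)|$ is close to one. Uniformly over the net, the honest inputs are therefore range $O(1)$ and per-draw variance $O(1/N)$, the latter from \eqref{eq:cubic-zonal-hereditary} with $r=n$. Bernstein at $t=(c_1-\epsilon)d/\beta^2\asymp n/N$ then gives only $\exp\{-c\,d/\beta^2\}$, with $c=c(\epsilon,c_1,K)$ tending to zero as $c_1-\epsilon\downarrow0$. Meanwhile the net has $e^{C_hd}$ points.

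When $|\beta|$ is of order one, the union bound does not close, and the lemma must hold uniformly over $0<|\beta|\le1$. Your fallback via \eqref{eq:cubic-hereditary-covariance-profile} improves nothing here: it yields the same $O(1/N)$ per-draw variance and leaves the range at $O(1)$.

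Your discretization step itself is sound. By Cauchy--Schwarz and \eqref{eq:cubic-feature-gram}, the Lipschitz error is $O(hn/N)$, which is at most $t/4$ for a fixed small $h$. What fails is the per-point Bernstein bound. There are two repairs.

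\emph{Truncation.} Inside your net scheme, truncate as in the proof of Lemma~\ref{lem:cubic-clipped-orbit-tail}. By \eqref{eq:cubic-zonal-hereditary}, indices with $|Z_{3,d}(a,X_i)|>\tau_d$, where $\tau_d^2=C\log^3d/N$, carry total squared mass $O(1)$. They therefore change the sum by at most $C\sqrt{n/N}=o(t)$, deterministically and uniformly in $(a,b)$. The remaining summands have range $O(\tau_d^2)$ and variance $O(\tau_d^2/N)$, so Bernstein gives $e^{-cn/\log^3d}$. Since $n\asymp Nd/\beta^2$, this beats both the net cardinality and $e^{-cd/\beta^2}$.

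\emph{The paper's route.} The paper avoids nets altogether. It uses $\ell(H)\le\|H\|_{\rm op}$ and the self-adjoint dilation of $B_i=w_i-S_n/n$. It then applies the Gross--Nesme trace-exponential comparison for sampling without replacement, together with matrix Bernstein, with variance $\frac mn\|\sum_iB_iB_i^{\mathsf T}\|_{\rm op}\le Cm/N$ from \eqref{eq:cubic-hereditary-covariance-profile}. The dimensional factor is then only $2N$, and $\log N\ll d/\beta^2$. This is exactly what your scalar net argument lacks without truncation.
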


\begin{proof}
We first prove \eqref{eq:cubic-orbit-source-bridge-tail}, then bound
the fourth and sixth moments.  Using the matrix representation
\(w_i=q_{3,d}(X_i)q_{3,d}(X_i')^{\mathsf T}\), set
\(B_i=w_i-S_n/n\).
Then \(\sum_iB_i=0\), \(\|B_i\|_{\rm op}\le\|B_i\|_{\HS}\le2\),
and \eqref{eq:cubic-hereditary-covariance-profile} gives
\[
 \sum_iB_iB_i^{\mathsf T}
 =\sum_iw_iw_i^{\mathsf T}-\frac1nS_nS_n^{\mathsf T}
 \preceq\sum_iq_{3,d}(X_i)q_{3,d}(X_i)^{\mathsf T}
 \preceq\frac{Cn}{N}I_N.
\]
The analogous identity for \(\sum_iB_i^{\mathsf T}B_i\) uses the
\(X'\)-sample.  In particular,
\begin{equation}
  \max\left\{
  \left\|\sum_iB_iB_i^{\mathsf T}\right\|_{\rm op},
  \left\|\sum_iB_i^{\mathsf T}B_i\right\|_{\rm op}
  \right\}
  \le \frac{Cn}{N}.
  \label{eq:cubic-matrix-bridge-variance}
\end{equation}
Let \(I_m=\{\Pi(1),\ldots,\Pi(m)\}\), and let
\(\widetilde B_1,\ldots,\widetilde B_m\) be independent uniform draws
from \(B_1,\ldots,B_n\).  The matrix Bernstein inequality in
\cite[Theorem~6.1]{Tropp2012} applies to self-adjoint matrices.
To use it for the possibly nonsymmetric \(B_i\), form their
self-adjoint dilations \cite[Section~2.6]{Tropp2012}
\[
 \mathcal D(B)=\begin{pmatrix}0&B\\B^{\mathsf T}&0\end{pmatrix},
 \qquad
 \mathcal D(B)^2=
 \begin{pmatrix}BB^{\mathsf T}&0\\0&B^{\mathsf T}B\end{pmatrix}.
\]
Since \(\sum_iB_i=0\) and
\(\|\mathcal D(B_i)\|_{\rm op}=\|B_i\|_{\rm op}\le2\), the
independent matrices \(\mathcal D(\widetilde B_j)\) are centered,
with norm at most two.
The comparison of expected trace exponentials for sampling without
replacement \cite[Section~D]{GrossNesme2010}, together with the
matrix Bernstein bound \cite[Theorem~6.1 and Remark~6.3]{Tropp2012},
therefore gives, for \(t>0\),
\begin{equation}
 \begin{aligned}
  \mathbb P_{\rm ord}\left\{
    \left\|\sum_{i\in I_m}B_i\right\|_{\rm op}>t\right\}
  &=\mathbb P_{\rm ord}\left\{
     \lambda_{\max}\left(\sum_{i\in I_m}\mathcal D(B_i)\right)>t\right\}\\
  &\le2N\exp\left(-\frac{ct^2}
      {\left\|\sum_{j=1}^m\mathbb E\mathcal D(\widetilde B_j)^2\right\|_{\rm op}+t}\right)\\
  &=2N\exp\left(-\frac{ct^2}
      {\frac mn\left\|\sum_{i=1}^n\mathcal D(B_i)^2\right\|_{\rm op}+t}\right)
  \le2N\exp\left(-\frac{ct^2}{Cm/N+t}\right),
 \end{aligned}
  \label{eq:cubic-centered-matrix-bridge-tail}
\end{equation}
where the last inequality uses \eqref{eq:cubic-matrix-bridge-variance}.
Equation~\eqref{eq:cubic-centered-matrix-bridge-tail} also controls
\(\ell(S_m)\).  Indeed, \(\ell(H)\le\|H\|_{\rm op}\) and
\[
 \ell(S_m)
 \le\left\|\sum_{i\in I_m}B_i\right\|_{\rm op}
    +\frac mn\ell(S_n).
\]
On \(\{\beta^2\ell(S_n)\le\epsilon d\}\), take
\(t=(c_1-\epsilon)d/\beta^2\) in
\eqref{eq:cubic-centered-matrix-bridge-tail}.  Since
\(m/N\le n/N=O(d/\beta^2)\), we obtain
\[
 \mathbb P_{\rm ord}\{\beta^2\ell(S_m)>c_1d\}
 \le2N\exp\left(-\frac{c(c_1-\epsilon)^2}{C+c_1-\epsilon}
                         \frac d{\beta^2}\right)
 \le e^{-c_{\epsilon,c_1}d/\beta^2},
\]
proving \eqref{eq:cubic-orbit-source-bridge-tail}.

For the moment bounds, assume \(m\ge1\), since \(m=0\) is immediate.
Fix \(k\in\{2,3\}\).
Independence and centering imply that any term in which an index occurs
exactly once has zero expectation.  Thus a term with nonzero expectation
contains \(r\le k\) distinct indices.  There are \(\binom mr\) choices
of these indices and at most \(r^{2k}\) assignments to the \(2k\)
positions.  Cauchy--Schwarz, \(\|\widetilde B_j\|_{\HS}\le2\), and
\(\binom mr\le m^r/r!\) therefore give
\begin{align*}
 \mathbb E\Big\|\sum_{j=1}^m\widetilde B_j\Big\|_{\HS}^{2k}
 &=\sum_{i_1,\ldots,i_{2k}=1}^m
     \mathbb E\prod_{a=1}^k
        \langle\widetilde B_{i_{2a-1}},\widetilde B_{i_{2a}}\rangle_{\HS}\\
 &\le\sum_{\substack{1\le i_1,\ldots,i_{2k}\le m\\
                      |\{i_1,\ldots,i_{2k}\}|\le k}}
      \mathbb E\prod_{b=1}^{2k}\|\widetilde B_{i_b}\|_{\HS}
 \le2^{2k}\sum_{r=1}^k\binom mr r^{2k}\\
 &\le2^{2k}\sum_{r=1}^k\frac{r^{2k}}{r!}m^r
 \le2^{2k}m^k\sum_{r=1}^k\frac{r^{2k}}{r!}\le C_km^k.
\end{align*}
Applying Hoeffding's comparison for sampling without replacement
\cite[Theorem~4]{Hoeffding1963}, in its vector form
\cite[Section~D]{GrossNesme2010}, to the convex function
\(H\mapsto\|H\|_{\HS}^{2k}\) gives
\[
 \mathbb E_{\rm ord}\left\|\sum_{i\in I_m}B_i\right\|_{\HS}^{2k}
 \le\mathbb E\left\|\sum_{j=1}^m\widetilde B_j\right\|_{\HS}^{2k}
 \le C_km^k.
\]
The mean term \(\mathbb E_{\rm ord}S_m=(m/n)S_n\) satisfies, by
\(\|S_n\|_{\HS}^2\le(1+R)n\le2Rn\) and \(m\le n\),
\[
 \|(m/n)S_n\|_{\HS}^{2k}
 \le2^k(m/n)^{2k}(Rn)^k\le2^kR^km^k,\qquad k=2,3.
\]
Using \(S_m=\sum_{i\in I_m}B_i+(m/n)S_n\), the triangle inequality
and convexity of \(x\mapsto x^{2k}\) on \([0,\infty)\) give
\[
 \mathbb E_{\rm ord}\|S_m\|_{\HS}^{2k}
 \le2^{2k-1}\left(
   \mathbb E_{\rm ord}\left\|\sum_{i\in I_m}B_i\right\|_{\HS}^{2k}
   +\|(m/n)S_n\|_{\HS}^{2k}\right)
 \le C_kR^km^k.
\]
This proves \eqref{eq:cubic-parameterized-Hilbert-bridge}.
\end{proof}

On \(\{\|S_n\|_{\rm op}\le\epsilon_dd/\beta^2\}\), the condition
\(\epsilon_d\to0\) in \eqref{eq:cubic-local-epsilon-conditions} gives,
for fixed \(0<\epsilon<c_1\) and sufficiently large \(d\),
\[
 \beta^2\ell(S_n)\le\beta^2\|S_n\|_{\rm op}
 \le\epsilon_dd\le\epsilon d.
\]
This verifies the condition \(\beta^2\ell(S_n)\le\epsilon d\) needed to
apply \eqref{eq:cubic-orbit-source-bridge-tail} in
Lemma~\ref{lem:cubic-regular-prefix-estimates}.

To use the moment estimate \eqref{eq:cubic-parameterized-Hilbert-bridge}
for \(\log Z_m(u,c)\), we next establish a fourth-order expansion whose
remainder is bounded by
\(C|u|^6(\|S_m\|_{\HS}^6+m^3)/N^6\).
The sixth-moment bound in Lemma~\ref{lem:cubic-regular-prefix-estimates}
then controls its average over permutations.
Lemma~\ref{lem:cubic-sixth-row-remainder} proves this expansion for fixed
features under explicit exponential-moment and smallness conditions.

\begin{lemma}[Fourth-order expansion with a sixth-order remainder]
\label{lem:cubic-sixth-row-remainder}
Fix the feature matrices \(w_1,\ldots,w_m\), and let
\(w=q_{3,d}(\widetilde X)q_{3,d}(\widetilde X')^{\mathsf T}\), where
\((\widetilde X,\widetilde X')\sim P_0=\sigma_d\otimes\sigma_d\)
is independent of the sample.  Recall \(S_m=\sum_{j\le m}w_j\), and write
\[
 f(w)=f_m(w)=\langle S_m,w\rangle_{\HS},
 \qquad g(w)=g_m(w)=\sum_{j\le m}\langle w_j,w\rangle_{\HS}^2.
\]
Fix \(C_0<\infty\).  Suppose that
\(u\in\mathbb R\), \(0\le c\le1\), and
\begin{equation}
 \mathbb E_we^{2uf(w)}\le C_0,
 \qquad
 u^2(m+\|S_m\|_{\HS}^2)/N^2=o(1).
 \label{eq:cubic-sixth-row-hypotheses}
\end{equation}
Then, for sufficiently large \(d\),
\begin{equation}
\begin{aligned}
 \log Z_m(u,c)
 &=\frac{u^2}{2}A_2(c)+\frac{u^4}{24}\Gamma_{4,m}(c)+R_{6,m}(u,c),\\
 |R_{6,m}(u,c)|
 &\le C_{C_0}|u|^6
       \frac{\|S_m\|_{\HS}^6+m^3}{N^6},
\end{aligned}
 \label{eq:cubic-sixth-row-remainder}
\end{equation}
where
\[
 A_2(c)=\mathbb E_w(f^2-cg),\qquad
 \Gamma_{4,m}(c)=\mathbb E_w(f^4-6cf^2g+3c^2g^2)-3A_2(c)^2.
\]
The constant depends only on \(C_0\).
\end{lemma}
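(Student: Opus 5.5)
We treat $t\mapsto \Lambda(t):=\log Z_m(tu,c)$ as a one-variable cumulant generating function and Taylor expand it at $t=0$ to sixth order. Put $\phi_t=tuf-ct^2u^2g/2$ and let $\nu_t$ be the tilted law with density proportional to $e^{\phi_t}$ relative to the law of $w$. Since $w$ and $-w$ have the same law, $f$ is odd and $g$ is even. Hence $\Lambda$ is even in $t$, so $\Lambda'(0)=\Lambda'''(0)=\Lambda^{(5)}(0)=0$. Taylor's formula with integral remainder then gives
\[
 \Lambda(1)=\tfrac12\Lambda''(0)+\tfrac1{24}\Lambda^{(4)}(0)+\tfrac1{120}\int_0^1(1-t)^5\Lambda^{(6)}(t)\,dt .
\]

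The first step identifies the two leading coefficients. Write $Z(t)=Z_m(tu,c)=\mathbb E_w e^{\phi_t}$. Expanding $e^{\phi_t}$ in powers of $t$, the coefficient of $t^2$ is $u^2(\mathbb E f^2-c\,\mathbb E g)/2$, and the coefficient of $t^4$ is $u^4\mathbb E(f^4-6cf^2g+3c^2g^2)/24$; this matches the polynomial $(f-ctg)^4-6cg(f-ctg)^2+3c^2g^2$ at $t=0$ from the proof of Lemma~\ref{lem:cubic-compensated-row}. The odd coefficients vanish by symmetry. Converting moments to cumulants via $\log(1+x)=x-x^2/2+\cdots$ gives $\Lambda''(0)=u^2A_2(c)$ and $\Lambda^{(4)}(0)=u^4\Gamma_{4,m}(c)$, with the $-3A_2^2$ correction arising from the squared second-order term. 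This is bookkeeping only.

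The second step, the main one, bounds $\Lambda^{(6)}(t)$ uniformly for $t\in[0,1]$. $\Lambda^{(6)}$ is a universal polynomial in the $\nu_t$-moments of $\partial_t\phi_t=uf-ctu^2g$ and $\partial_t^2\phi_t=-cu^2g$, with higher $t$-derivatives zero. Each monomial has total weight six, counting $\partial_t\phi_t$ as weight one and $\partial_t^2\phi_t$ as weight two. Each such term is at most a sum of products of $\nu_t$-expectations of $|u|^a|f|^a(u^2g)^b$ with $a+2b$ summing to six, plus terms of the form $|tu^2g|^{\cdot}$ that carry extra powers of $u^2g$. For every such moment we use $e^{\phi_t}\le e^{tuf}$ (because $g\ge0$), Cauchy--Schwarz, and the hypothesis $\mathbb E_w e^{2uf}\le C_0$. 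By convexity in $t$, this hypothesis also gives $\mathbb E e^{2tuf}\le \max(1,C_0)$. This bounds the numerator moment by $C_0^{1/2}\|\cdot\|_{L^2(P_0)}$.

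The denominators $Z(t)$ need a lower bound. Jensen gives
\[
 Z(t)\ge \exp\bigl(\mathbb E\phi_t\bigr)=\exp\bigl(-ct^2u^2\,\mathbb E g/2\bigr)=\exp\bigl(-ct^2u^2m/(2N^2)\bigr)\ge 1/2,
\]
by the smallness hypothesis. For the $L^2$ norms we use the hypercontractivity bound \eqref{eq:cubic-product-hypercontractivity}, which yields $\|f\|_p\le C_p\|S_m\|_{\HS}/N$ and $\|g\|_p\le C_p m/N^2$, as in \eqref{eq:cubic-row-moment-bounds}. Together these give
\[
 |\Lambda^{(6)}(t)|\le C_{C_0}\sum_{a+2b=6}|u|^{a+2b}\Bigl(\tfrac{\|S_m\|_{\HS}}{N}\Bigr)^{a}\Bigl(\tfrac{m}{N^2}\Bigr)^{b}+(\text{terms with extra factors }u^2m/N^2).
 \]
By AM--GM, $\|S_m\|^a m^b\le \|S_m\|^6+m^3$ when $a+2b=6$. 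The extra factors are $o(1)$ by \eqref{eq:cubic-sixth-row-hypotheses}, so they are absorbed. This gives $|R_{6,m}|\le C_{C_0}|u|^6(\|S_m\|_{\HS}^6+m^3)/N^6$.

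The main obstacle is this uniform control of $\Lambda^{(6)}$. Under the tilted measure the moments are no longer centered, and the denominators must be handled throughout the interval $t\in[0,1]$, not only at $t=0$. That is why we bound $\nu_t$-moments through the single exponential-moment hypothesis together with the Jensen lower bound on $Z(t)$, rather than attempting an exact cumulant expansion.
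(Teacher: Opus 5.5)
Your proof is correct, but it is organized differently from the paper's.

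\emph{The paper's route.} The paper never differentiates $\log Z_m$. It Taylor-expands $Z_m(u,c)$ itself to sixth order. It bounds the sixth derivative of $t\mapsto\mathbb E_w e^{tf-ct^2g/2}$ as a single unnormalized expectation of the explicit polynomial $(f-ctg)^6-15cg(f-ctg)^4+45c^2g^2(f-ctg)^2-15c^3g^3$. The tools are $e^{tf-ct^2g/2}\le e^{|u||f|}$, $\mathbb E_w e^{2|u||f|}\le 2C_0$ (by the symmetry $f\mapsto -f$), Young's inequality, Cauchy--Schwarz and \eqref{eq:cubic-product-hypercontractivity}. It then passes to the logarithm through $\log(1+x)=x-x^2/2+O(|x|^3)$ with $x=Z_m-1=o(1)$. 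The $-3A_2(c)^2$ in $\Gamma_{4,m}(c)$ comes out of the $-x^2/2$ term. A short H\"older bound on $A_2$ and on $\Gamma_{4,m}+3A_2^2$ then controls $x^2-(u^2A_2/2)^2$ and $|x|^3$.

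\emph{Your route.} You expand the cumulant generating function $\Lambda(t)=\log Z_m(tu,c)$ directly. The Taylor coefficients are therefore cumulants from the start, and no separate logarithm expansion is needed. The price is that $\Lambda^{(6)}(t)$ is a polynomial in normalized moments under the tilted laws $\nu_t$. That is what forces your two extra ingredients: the Jensen lower bound $Z(t)\ge e^{-ct^2u^2m/(2N^2)}\ge 1/2$, and the convexity bound on $\mathbb E_w e^{2tuf}$ along $t\in[0,1]$.

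\emph{Comparison.} The paper's version keeps every estimate at the level of one expectation with no denominators, and uses an explicit derivative formula. Yours avoids the second expansion step but trades it for normalization control and a sixth derivative known only through weight counting, which is nonetheless all you need.

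\emph{A small bookkeeping point.} Replacing a factor $uf$ in $\partial_t\phi_t$ by $ctu^2g$ does not produce integer powers of $u^2m/N^2$. After AM--GM reduces the remaining degree-six part to $\|S_m\|^6+m^3$, the extra factor is $(u^2m/N^2)^{l/2}$, possibly with half-integer exponent. This is still $o(1)$ by \eqref{eq:cubic-sixth-row-hypotheses}, so your absorption goes through.
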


\begin{proof}
We first expand \(Z_m(u,c)\) through fourth order and bound its
sixth-order remainder, then expand its logarithm to prove
\eqref{eq:cubic-sixth-row-remainder}.
All expectations and \(L^p(P_0)\) norms below integrate only the independent
pair defining \(w\), and \(C\) denotes an absolute constant.

\emph{Step 1: Taylor expansion of \(Z_m\).}
The bounds \(|f|\le m\) and \(0\le g\le m\) justify differentiation
under \(\mathbb E_w\).  Replacing \(\widetilde X\) by
\(-\widetilde X\) changes \(f\) to \(-f\) and leaves \(g\) unchanged.
Thus \(Z_m(-u,c)=Z_m(u,c)\), so its odd derivatives at zero vanish.
Direct differentiation gives
\[
 Z_m(0,c)=1,\qquad
 \partial_u^2Z_m(0,c)=A_2(c),\qquad
 \partial_u^4Z_m(0,c)=\Gamma_{4,m}(c)+3A_2(c)^2.
\]
Taylor's theorem therefore gives
\begin{equation}
 \left|Z_m(u,c)-1-\frac{u^2}{2}A_2(c)
       -\frac{u^4}{24}\big\{\Gamma_{4,m}(c)+3A_2(c)^2\big\}\right|
 \le\frac{|u|^6}{6!}\sup_{|t|\le|u|}|\partial_t^6Z_m(t,c)|.
 \label{eq:cubic-sixth-row-taylor-bound}
\end{equation}
To bound the derivative in \eqref{eq:cubic-sixth-row-taylor-bound},
use \(e^{tf-ct^2g/2}\le e^{|u||f|}\) for \(|t|\le|u|\), together
with Young's inequalities \(|f|^4g\le2|f|^6/3+g^3/3\) and
\(|f|^2g^2\le|f|^6/3+2g^3/3\), to obtain
\[
\begin{aligned}
 |\partial_t^6Z_m(t,c)|
 &=\left|\mathbb E_we^{tf-ct^2g/2}
 \bigl[(f-ctg)^6-15cg(f-ctg)^4+45c^2g^2(f-ctg)^2-15c^3g^3\bigr]\right|\\
 &\le C\mathbb E_we^{|u||f|}
       \bigl(|f|^6+g^3+u^2g^4+u^4g^5+u^6g^6\bigr).
\end{aligned}
\]
Apply \eqref{eq:cubic-product-hypercontractivity}
with \((H,p)=(S_m,12)\) and \((H,p)=(w_j,24)\).
Using \(\|w_j\|_{\HS}=1\) and the triangle inequality in
\(L^{12}(P_0)\), we obtain
\begin{equation}
 \begin{aligned}
 \|f\|_{L^{12}(P_0)}
 &\le C\|f\|_{L^2(P_0)}=C\frac{\|S_m\|_{\HS}}{N},\\
 \|g\|_{L^{12}(P_0)}
 &\le\sum_{j\le m}\|\langle w_j,w\rangle_{\HS}\|_{L^{24}(P_0)}^2
 \le\frac{C}{N^2}\sum_{j\le m}\|w_j\|_{\HS}^2
 =\frac{Cm}{N^2}.
 \end{aligned}
 \label{eq:cubic-sixth-row-moments}
\end{equation}
Since \(f\) and \(-f\) have the same law, the first condition in
\eqref{eq:cubic-sixth-row-hypotheses} gives
\[
 \mathbb E_we^{2|u||f|}
 \le\mathbb E_we^{2uf}+\mathbb E_we^{-2uf}
 =2\mathbb E_we^{2uf}\le2C_0.
\]
Cauchy--Schwarz, \(\mathbb E_we^{2|u||f|}\le2C_0\), and
\eqref{eq:cubic-sixth-row-moments} now give
\[
 \sup_{|t|\le|u|}|\partial_t^6Z_m(t,c)|
 \le C_{C_0}\left(\frac{\|S_m\|_{\HS}^6+m^3}{N^6}
       +\frac{m^3}{N^6}\sum_{j=1}^3\left(\frac{u^2m}{N^2}\right)^j\right)
 \le C_{C_0}\frac{\|S_m\|_{\HS}^6+m^3}{N^6},
\]
where the last inequality uses \(u^2m/N^2=o(1)\) from
\eqref{eq:cubic-sixth-row-hypotheses}.

Substituting this bound for
\(\sup_{|t|\le|u|}|\partial_t^6Z_m(t,c)|\) into
\eqref{eq:cubic-sixth-row-taylor-bound} gives
\begin{equation}
 Z_m(u,c)=1+\frac{u^2}{2}A_2(c)
   +\frac{u^4}{24}\big\{\Gamma_{4,m}(c)+3A_2(c)^2\big\}
   +O\!\left(|u|^6\frac{\|S_m\|_{\HS}^6+m^3}{N^6}\right).
 \label{eq:cubic-sixth-row-partition-taylor}
\end{equation}

\emph{Step 2: expansion of \(\log Z_m\).}
By \eqref{eq:cubic-sixth-row-moments} and H\"older's inequality,
\[
 |A_2(c)|\le C\frac{\|S_m\|_{\HS}^2+m}{N^2},\qquad
 |\Gamma_{4,m}(c)+3A_2(c)^2|
 \le C\frac{(\|S_m\|_{\HS}^2+m)^2}{N^4}.
\]
Writing \(x=Z_m(u,c)-1\), the expansion
\eqref{eq:cubic-sixth-row-partition-taylor} therefore gives
\(x=O\{u^2(m+\|S_m\|_{\HS}^2)/N^2\}=o(1)\) and
\(x-u^2A_2(c)/2=O\{u^4(m+\|S_m\|_{\HS}^2)^2/N^4\}\).
Consequently,
\[
 \left|x^2-\left\{\frac{u^2}{2}A_2(c)\right\}^2\right|+|x|^3
 \le C|u|^6\frac{(m+\|S_m\|_{\HS}^2)^3}{N^6}
 \le4C|u|^6\frac{m^3+\|S_m\|_{\HS}^6}{N^6}.
\]
Substituting \eqref{eq:cubic-sixth-row-partition-taylor} into
\(\log(1+x)=x-x^2/2+O(|x|^3)\) now proves
\eqref{eq:cubic-sixth-row-remainder}, since
\[
 \frac{u^4}{24}\big\{\Gamma_{4,m}(c)+3A_2(c)^2\big\}
 -\frac12\left\{\frac{u^2}{2}A_2(c)\right\}^2
 =\frac{u^4}{24}\Gamma_{4,m}(c).
\]
\end{proof}

We now apply Lemma~\ref{lem:cubic-sixth-row-remainder} to the sample
prefixes.  Lemma~\ref{lem:cubic-regular-row-expansion} verifies
\eqref{eq:cubic-sixth-row-hypotheses} when
\(\beta^2\ell(S_m)\le c_0d\), evaluates the expansion coefficients,
and, when \(\beta^2\ell(S_n)\le\epsilon d<c_0d\), uses the tail bound in
Lemma~\ref{lem:cubic-regular-prefix-estimates} to control the contribution
from the remaining prefixes.

\begin{lemma}[Expansions for sample prefixes]
\label{lem:cubic-regular-row-expansion}
Assume \eqref{eq:cubic-local-standing-assumptions}
and fix a full sample in
\(\mathcal G_n^\dagger(X)\cap\mathcal G_n^\dagger(X')\).
Retain \(f,g,A_2(c),\Gamma_{4,m}(c)\), and \(R_{6,m}(u,c)\) from
Lemma~\ref{lem:cubic-sixth-row-remainder}.
For each fixed \(0<C_\star<\infty\), choose \(c_0>0\) sufficiently small.
The assertions below are uniform for \(|u|\le C_\star\beta^2\) and
\(0\le c\le1\).

\smallskip
\noindent\textup{(i)} If \(\beta^2\ell(S_m)\le c_0d\), the expansion and
remainder bound \eqref{eq:cubic-sixth-row-remainder} hold.

\smallskip
\noindent\textup{(ii)} For every prefix, \(A_2(c)=(\|S_m\|_{\HS}^2-cm)/N^2\), and
\begin{equation}
 \Gamma_{4,m}(1)
 =24\mathbb E_we_{4,m}(w)
  +\lambda_d^2(-2m-16E_m)-3A_2(1)^2,
 \label{eq:cubic-complete-fourth-cumulant}
\end{equation}
where \(\lambda_d=\int Z_{3,d}(x,z)^4\,d\sigma_d(z)=\{93+o(1)\}N^{-2}\)
and \(e_{4,m}\) is defined in
Section~\ref{subsec:cubic-local-fourth-order}.
Moreover,
\begin{equation}
\begin{split}
 \Gamma_{4,m}(c)-\Gamma_{4,m}(1)
 &=6(1-c)\operatorname{Cov}_w(f^2,g)
   -3(1-c^2)\operatorname{Var}_w(g)\\
 &\le C(1-c)\frac{m\|S_m\|_{\HS}^2}{N^4}.
\end{split}
 \label{eq:cubic-compensator-cumulant}
\end{equation}
\smallskip
\noindent\textup{(iii)} If \(\beta^2\ell(S_n)\le\epsilon d\) for a fixed
\(0<\epsilon<c_0\), then
\begin{equation}
\begin{split}
 &\sum_{m<n}\mathbb E_{\rm ord}\!\left[
 \1_{\{\beta^2\ell(S_m)>c_0d\}}
 \big\{|\log Z_m(u,c)|+u^2|A_2(c)|+u^4|\Gamma_{4,m}(c)|\big\}\right]\\
 &\qquad\le
 C(\beta^2n^2+\beta^4n^3+\beta^8n^5)e^{-c'd/\beta^2}=o(1).
\end{split}
 \label{eq:cubic-exceptional-row-restoration}
\end{equation}
\end{lemma}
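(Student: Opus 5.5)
The plan is to prove the three parts separately. Part (i) reduces to the earlier bounded-source estimate. Part (ii) is an exact moment computation based on the reproducing properties of $Z_{3,d}$. Part (iii) combines crude deterministic bounds with the prefix tail bound \eqref{eq:cubic-orbit-source-bridge-tail}.

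\emph{Part (i).} We verify \eqref{eq:cubic-sixth-row-hypotheses} and then invoke Lemma~\ref{lem:cubic-sixth-row-remainder}. For the exponential moment, apply \eqref{eq:cubic-bounded-source} with $H=S_m$ and $t=2u$. This requires $2|u|\ell(S_m)\le 2C_\star c_0 d\le c_{\rm src}d$, which holds once $c_0\le c_{\rm src}/(2C_\star)$. The bound then gives $\log\mathbb E_we^{2uf}\le Cu^2\|S_m\|_{\HS}^2/N^2$. From Lemma~\ref{lem:cubic-subset-tensor-bounds} we have $\|S_m\|_{\HS}^2\le m\ell(S_m)\le mc_0d/\beta^2$. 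Hence
\[
u^2(m+\|S_m\|_{\HS}^2)/N^2\le C\beta^2nd/N^2=Cr_nd^2/N=o(1),
\]
using $m\le n=r_nNd/\beta^2$. This gives both conditions in \eqref{eq:cubic-sixth-row-hypotheses} with an absolute $C_0$.

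\emph{Part (ii).} The formula for $A_2(c)$ is Step~1 of Lemma~\ref{lem:cubic-compensated-row}. For $\Gamma_{4,m}(1)$, write $x_j=\langle w_j,w\rangle_{\HS}$, so that $f=p_1$ and $g=p_2$ in power sums. Newton's identity for the fourth elementary symmetric polynomial gives
\[
24e_4=f^4-6f^2g+3g^2+8fp_3-6p_4 .
\]
Therefore $\mathbb E_w(f^4-6f^2g+3g^2)=24\mathbb E_we_{4,m}-8\mathbb E_wfp_3+6\mathbb E_wp_4$. Two reproducing facts evaluate the last two terms. By Funk--Hecke, the degree-three component of $Z_{3,d}(y,\cdot)^3$ is $N\lambda_d\,Z_{3,d}(y,\cdot)$, so \eqref{eq:cubic-zonal-reproduction} yields $\int Z_{3,d}(x,z)Z_{3,d}(y,z)^3\,d\sigma_d(z)=\lambda_dZ_{3,d}(x,y)$. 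Using independence of $\widetilde X$ and $\widetilde X'$, this gives $\mathbb E_wx_j^4=\lambda_d^2$ and $\mathbb E_wx_jx_k^3=\lambda_d^2h_{jk}$. Consequently $6\mathbb E_wp_4-8\mathbb E_wfp_3=6m\lambda_d^2-8(m+2E_m)\lambda_d^2$, which is \eqref{eq:cubic-complete-fourth-cumulant}. The asymptotics of $\lambda_d$ follow from \eqref{eq:cubic-spherical-even-moments}: $Z_{3,d}\approx d^{-3/2}\mathrm{He}_3(g)$ and $\mathbb E\,\mathrm{He}_3(g)^4=3348$, so $\lambda_dN^2\to3348/36=93$.

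For \eqref{eq:cubic-compensator-cumulant}, expand $A_2(c)^2-A_2(1)^2=(1-c)\mathbb E g\{2\mathbb E f^2-(1+c)\mathbb E g\}$ and collect terms; this produces the covariance/variance identity. The inequality follows by dropping the nonnegative variance term and bounding the covariance by $\|f^2\|_2\|g-\mathbb Eg\|_2$. The moment bounds \eqref{eq:cubic-row-moment-bounds} then give $\|f^2\|_2\|g-\mathbb Eg\|_2\le C\|S_m\|_{\HS}^2m/N^4$.

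\emph{Part (iii).} Use the deterministic bounds $|f|\le m$ and $0\le g\le m$. These give $|\log Z_m(u,c)|\le|u|m+u^2m\le C\beta^2m$, $u^2|A_2|\le C\beta^4m^2$, and $u^4|\Gamma_{4,m}|\le C\beta^8m^4$. Multiply by \eqref{eq:cubic-orbit-source-bridge-tail} with $c_1=c_0$, which applies since $\epsilon<c_0$, and sum over $m<n$. The result is at most $C(\beta^2n^2+\beta^4n^3+\beta^8n^5)e^{-c'd/\beta^2}$. This is $o(1)$ because $n\le d^A$ and $e^{-c'd/\beta^2}\le e^{-c'd}$.

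The main obstacle is the exact bookkeeping in (ii): the combinatorial identity together with the two Funk--Hecke evaluations, including checking that no further index patterns survive.
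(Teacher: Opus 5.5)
Your proposal is correct and follows the paper's proof essentially step for step. Part (i) verifies the hypotheses of Lemma~\ref{lem:cubic-sixth-row-remainder} via \eqref{eq:cubic-bounded-source}, part (ii) uses Newton's identity together with the Funk--Hecke reproduction \(\int Z_{3,d}(x,z)Z_{3,d}(y,z)^3\,d\sigma_d(z)=\lambda_dZ_{3,d}(x,y)\), and part (iii) multiplies crude deterministic bounds by the prefix tail \eqref{eq:cubic-orbit-source-bridge-tail}. The deviations are minor. In (i) you obtain the smallness condition from \(\|S_m\|_{\HS}^2\le m\,\ell(S_m)\) and the hypothesis \(\beta^2\ell(S_m)\le c_0d\), rather than from the hereditary bound on \(\mathcal G_n^\dagger\). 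You identify the constant \(93\) through Hermite moments rather than through the paper's exact rational formula for \(\lambda_d\).
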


\begin{proof}
We prove the three assertions in order.  Step 1 verifies
\eqref{eq:cubic-sixth-row-hypotheses} to apply
Lemma~\ref{lem:cubic-sixth-row-remainder}.
Step 2 evaluates the coefficients in
\eqref{eq:cubic-complete-fourth-cumulant}--\eqref{eq:cubic-compensator-cumulant}.
Step 3 combines deterministic bounds with
Lemma~\ref{lem:cubic-regular-prefix-estimates} to prove
\eqref{eq:cubic-exceptional-row-restoration}.

\emph{Step 1: verifying the conditions for the expansion.}
Apply Lemma~\ref{lem:cubic-subset-tensor-bounds} to the full sample of
\(n\) pairs with \(I=\{1,\ldots,m\}\).
Its bound \eqref{eq:cubic-prefix-source} gives
\(\|S_m\|_{\HS}^2\le C_A\{m+(m^2/N)\log^3(en/m)\}\) for \(m\ge1\).
Taking the supremum over \(m\le n\) and using \(S_0=0\) and
\(n\beta^2=O(Nd)\) yields
\begin{equation}
 \sup_{m\le n}\frac{\beta^4(m+\|S_m\|_{\HS}^2)}{N^2}
 \le C\left(\frac{\beta^4n}{N^2}+\frac{\beta^4n^2}{N^3}\right)
 \le C\left(\frac{\beta^2d}{N}+\frac{d^2}{N}\right)=o(1).
 \label{eq:cubic-uniform-row-smallness}
\end{equation}
For a prefix satisfying \(\beta^2\ell(S_m)\le c_0d\), choose \(c_0>0\) so small
that \(2|u|\ell(S_m)\le c_{\rm src}d\) for \(|u|\le C_\star\beta^2\).
Equation~\eqref{eq:cubic-bounded-source} then gives
\[
 \mathbb E_we^{2uf_m(w)}
 \le\exp\left(\frac{4Cu^2\|S_m\|_{\HS}^2}{N^2}\right)
 =1+o(1),
\]
using \eqref{eq:cubic-uniform-row-smallness}.
Together these verify the hypotheses of
Lemma~\ref{lem:cubic-sixth-row-remainder}, with \(C_0=2\) for
sufficiently large \(d\), so \eqref{eq:cubic-sixth-row-remainder}
holds uniformly in the stated parameter range.

\emph{Step 2: evaluating the expansion coefficients.}
Isotropy gives
\(A_2(c)=\mathbb E_w(f^2-cg)=(\|S_m\|_{\HS}^2-cm)/N^2\).
To compute \(\Gamma_{4,m}(1)\), we first rewrite
\(f^4-6f^2g+3g^2\) using Newton's identities
\cite[Chapter~I, Section~2, equation~(2.11$'$)]{Macdonald1995}, which give
\begin{equation}
  f^4-6f^2g+3g^2=24e_{4,m}(w)-8p_1p_3+6p_4,
  \label{eq:cubic-newton-fourth-identity}
\end{equation}
where \(p_k=\sum_{j\le m}x_j^k\) and
\(x_j=\langle w_j,w\rangle_{\HS}\), so \(p_1=f\) and \(p_2=g\).
Since \(p_1p_3=\sum_{j,k\le m}x_jx_k^3\) and
\(p_4=\sum_{j\le m}x_j^4\), we compute
\(\mathbb E_w(x_jx_k^3)\) for all \(j,k\le m\) to evaluate
\(\mathbb E_w(-8p_1p_3+6p_4)\).

For fixed \(x\), the function \(z\mapsto Z_{3,d}(x,z)\) belongs to
\(\cH_{3,d}\), whereas \(Z_{3,d}(y,z)^3\) depends on \(z\) only through
\(\langle y,z\rangle\).  The Funk--Hecke formula
\cite[Section~1.2]{DaiXu2013} therefore gives
\[
 \int Z_{3,d}(x,z)Z_{3,d}(y,z)^3\,d\sigma_d(z)
 =\lambda_d Z_{3,d}(x,y).
\]
Setting \(x=y\) determines the coefficient, since \(Z_{3,d}(y,y)=1\).
For \(x_j=Z_{3,d}(X_j,\widetilde X)Z_{3,d}(X_j',\widetilde X')\),
independence of \(\widetilde X\) and \(\widetilde X'\) and this integral
identity give
\[
 \mathbb E_w(x_jx_k^3)
 =\lambda_d^2 Z_{3,d}(X_j,X_k)Z_{3,d}(X_j',X_k')
 =\lambda_d^2h_{jk}.
\]
In particular, \(\mathbb E_wx_j^4=\lambda_d^2\) because \(h_{jj}=1\).
Recall \(E_m=\sum_{1\le j<k\le m}h_{jk}\), so
\(\sum_{j,k\le m}h_{jk}=m+2E_m\).  Substituting these moments gives
\begin{equation}
  \mathbb E_w(-8p_1p_3+6p_4)
  =-8\lambda_d^2\sum_{j,k\le m}h_{jk}+6m\lambda_d^2
  =\lambda_d^2(-2m-16E_m).
  \label{eq:cubic-repeated-index-cancellation}
\end{equation}
Taking expectations in \eqref{eq:cubic-newton-fourth-identity},
substituting \eqref{eq:cubic-repeated-index-cancellation}, and subtracting
\(3A_2(1)^2\) gives \eqref{eq:cubic-complete-fourth-cumulant}.
Finally, substituting \eqref{eq:cubic-zonal-polynomial} and using
\eqref{eq:cubic-spherical-even-moments} yields
\[
  \lambda_d=
  \frac{108(31d^3-51d^2-12d+140)}
  {d(d-1)^3(d+2)(d+4)(d+6)(d+8)(d+10)}
  =\{93+o(1)\}N^{-2}.
\]

Since \(A_2(c)=A_2(1)+(1-c)\mathbb E_wg\), direct subtraction gives
\begin{align*}
 \Gamma_{4,m}(c)-\Gamma_{4,m}(1)
 &=6(1-c)(\mathbb E_wf^2g-\mathbb E_wf^2\mathbb E_wg)
   -3(1-c^2)\{\mathbb E_wg^2-(\mathbb E_wg)^2\}\\
 &\le6(1-c)|\operatorname{Cov}_w(f^2,g)|.
\end{align*}
The hypercontractive inequality \eqref{eq:cubic-product-hypercontractivity},
applied with \((H,p)=(S_m,4)\), gives
\begin{equation}
  \mathbb E_wf^4=\|f\|_{L^4(P_0)}^4
  \le C\frac{\|S_m\|_{\HS}^4}{N^4}.
  \label{eq:cubic-local-fourth-moment}
\end{equation}
Together with
\(\|g\|_{L^2(P_0)}\le\|g\|_{L^{12}(P_0)}\le Cm/N^2\) from
\eqref{eq:cubic-sixth-row-moments}, Cauchy--Schwarz gives
\[
 |\operatorname{Cov}_w(f^2,g)|
 \le(\mathbb E_wf^4)^{1/2}(\mathbb E_wg^2)^{1/2}
 \le C\frac{m\|S_m\|_{\HS}^2}{N^4}.
\]
This proves \eqref{eq:cubic-compensator-cumulant}.

\emph{Step 3: prefixes with \(\beta^2\ell(S_m)>c_0d\).}
Assume now that \(\beta^2\ell(S_n)\le\epsilon d\), where
\(0<\epsilon<c_0\).  To prove
\eqref{eq:cubic-exceptional-row-restoration}, we first bound its summand
for every fixed prefix.  We have \(|f_m|\le m\) and \(0\le g_m\le m\).
Centering and isotropy of \(w\) give \(\mathbb E_wf_m=0\) and
\(\mathbb E_wg_m=\sum_{j\le m}\|w_j\|_{\HS}^2/N^2=m/N^2\).
Jensen's inequality therefore yields
\[
 \log Z_m(u,c)
 =\log\mathbb E_we^{uf_m-cu^2g_m/2}
 \ge u\mathbb E_wf_m-\frac{cu^2}{2}\mathbb E_wg_m
 =-\frac{cu^2m}{2N^2}.
\]
For the upper bound, \(uf_m-cu^2g_m/2\le |u|m\) implies
\(Z_m(u,c)\le e^{|u|m}\), hence \(\log Z_m(u,c)\le |u|m\).
Together with the definitions of \(A_2(c)\) and \(\Gamma_{4,m}(c)\)
in Lemma~\ref{lem:cubic-sixth-row-remainder}, these bounds give, for
\(m\ge1\), \(|u|\le C_\star\beta^2\), and \(0\le c\le1\),
\[
\begin{aligned}
 |\log Z_m(u,c)|
 &\le |u|m+\frac{cu^2m}{2N^2}\le C\beta^2m,\\
 u^2|A_2(c)|
 &\le u^2(m^2+m)\le C\beta^4m^2,\\
 u^4|\Gamma_{4,m}(c)|
 &\le u^4\big\{m^4+6m^3+3m^2+3(m^2+m)^2\big\}
 \le C\beta^8m^4.
\end{aligned}
\]
The summand is zero when \(m=0\).  Under
\(\beta^2\ell(S_n)\le\epsilon d\),
Lemma~\ref{lem:cubic-regular-prefix-estimates}, specifically
\eqref{eq:cubic-orbit-source-bridge-tail} with \(c_1=c_0\), gives
\(\mathbb P_{\rm ord}\{\beta^2\ell(S_m)>c_0d\}
\le e^{-c'd/\beta^2}\), uniformly in \(m\le n\).
Multiplying the three deterministic bounds by this probability and
summing over \(m<n\) yields
\begin{align*}
 &\sum_{m<n}\mathbb E_{\rm ord}\!\left[
 \1_{\{\beta^2\ell(S_m)>c_0d\}}
 \{|\log Z_m(u,c)|+u^2|A_2(c)|+u^4|\Gamma_{4,m}(c)|\}\right]\\
 &\quad\le C\sum_{m<n}(\beta^2m+\beta^4m^2+\beta^8m^4)e^{-c'd/\beta^2}
 \le C(\beta^2n^2+\beta^4n^3+\beta^8n^5)e^{-c'd/\beta^2}=o(1),
\end{align*}
since \(n\le d^A\) and \(0<|\beta|\le1\).
This proves \eqref{eq:cubic-exceptional-row-restoration}.
The bound is uniform over full samples satisfying the stated conditions,
so it also holds after averaging over any exchangeable law \(Q\)
supported on such samples.
\end{proof}

\subsection{Summed estimates and entropy bounds}
\label{subsec:cubic-local-summed-estimates}

To bound the right-hand side of \eqref{eq:cubic-shared-entropy-sum},
we estimate
\[
 (1-\eta)\sum_{m<n}\mathbb E_Q
 \log Z_m\!\left(\frac{\beta^2}{1-\eta},1-\eta\right)
\]
by summing \eqref{eq:cubic-sixth-row-remainder}, as justified by
Lemma~\ref{lem:cubic-regular-row-expansion}.
The fourth-order coefficient in \eqref{eq:cubic-complete-fourth-cumulant}
contains \(24\mathbb E_we_{4,m}(w)\).
For any exchangeable law \(Q\) of the paired sample, the definitions of
\(e_{4,m}\) and \(T_{4,n}\) give
\begin{equation}
\begin{aligned}
 \sum_{m<n}\mathbb E_Q\mathbb E_we_{4,m}(w)
 &=\sum_{m<n}\frac{\binom m4}{\binom n4}\mathbb E_QT_{4,n}
 =\frac{n-4}{5}\mathbb E_QT_{4,n},\\
 \beta^8\left|\sum_{m<n}\mathbb E_Q\mathbb E_we_{4,m}(w)\right|
 &\le\frac{\beta^8n}{5}\mathbb E_Q|T_{4,n}|,
\end{aligned}
 \label{eq:cubic-fourth-prefix-identity}
\end{equation}
where we used \(\sum_{m=4}^{n-1}\binom m4=\binom n5\).

Lemma~\ref{lem:cubic-parameterized-row-ledger} combines
\eqref{eq:cubic-fourth-prefix-identity} with the quadratic terms and
the remainder in \eqref{eq:cubic-sixth-row-remainder} to bound the
sum in \eqref{eq:cubic-shared-entropy-sum}.
It also gives an auxiliary bound on
\(\log\mathbb E[\1_{\mathcal E}e^{\kappa\beta^2E_n}]\) by applying
the same expansion with \(c=0\).
Both estimates retain their dependence on \(R\) in
\(\|S_n\|_{\HS}^2\le(1+R)n\).

\begin{lemma}[Summed row and exponential-moment bounds]
\label{lem:cubic-parameterized-row-ledger}
Assume \eqref{eq:cubic-local-standing-assumptions}.
Let \(\mathcal E\subseteq\mathcal E_n^{\rm reg}\) be invariant under
permutations of the paired sample.  For some \(1\le R\le d^A\), suppose
that on \(\mathcal E\)
\begin{equation}
 \|S_n\|_{\HS}^2\le(1+R)n,\qquad E_n\ge0,
 \qquad \beta^2\ell(S_n)\le\epsilon d,
 \label{eq:cubic-parameterized-row-hypotheses}
\end{equation}
where \(\epsilon>0\) is sufficiently small for a fixed
\(0<\eta_\star<1\).

\smallskip
\noindent\textup{(i)} For every exchangeable law \(Q\) with
\(Q(\mathcal E)=1\) and every \(0<\eta\le\eta_\star\),
\begin{align}
 &\sum_{m<n}\mathbb E_Q\biggl[
  (1-\eta)\log Z_m(\beta^2/(1-\eta),1-\eta)
  -\frac{\beta^4}{2N^2}(\|S_m\|_{\HS}^2-m)
  -\frac{\eta\beta^4}{2(1-\eta)N^2}\|S_m\|_{\HS}^2
 \biggr]\notag\\
 &\qquad\le C\left(\beta^2R^2+R^3\frac{\beta^4d^4}{N^2}\right)
     +C_4\beta^8n\mathbb E_Q|T_{4,n}|+o(1).
\label{eq:cubic-parameterized-reserved-row-ledger}
\end{align}
\smallskip
\noindent\textup{(ii)} Under \(P_0^{\otimes n}\), for every sufficiently
small fixed \(\kappa>0\),
\begin{equation}
 \log\mathbb E\left[\1_{\cal E}e^{\kappa \beta^2E_n}\right]
 \le C_\kappa\left(Rd^2+\beta^2R^2+R^3\frac{\beta^4d^4}{N^2}\right)+o(1).
 \label{eq:cubic-parameterized-linear-ledger}
\end{equation}
The choice of \(\epsilon\) and the constants may depend on
\(\eta_\star\); they are uniform in \(R,\beta,n,d,\eta\) in the
displayed ranges.
\end{lemma}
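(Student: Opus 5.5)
For part (i), I will work prefix by prefix and split according to whether \(\beta^2\ell(S_m)\le c_0d\). Take \(C_\star=2\) in Lemma~\ref{lem:cubic-regular-row-expansion}, so that \(u=\beta^2/(1-\eta)\) is admissible for \(\eta\le\eta_\star\) once \(\eta_\star\) is fixed. Choose \(\epsilon<c_0\), which makes part~(iii) of that lemma available. Exchangeability lets me replace \(\mathbb E_Q F(w_1,\ldots,w_m)\) by \(\mathbb E_Q\mathbb E_{\rm ord}F\). On the exceptional prefixes, \eqref{eq:cubic-exceptional-row-restoration} bounds \(|\log Z_m|\) and the \(A_2\) terms by \(o(1)\) after summation. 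The subtracted quadratic terms are \(u^2A_2\)-type quantities, so they are covered by the same bound.

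On the good prefixes, I apply \eqref{eq:cubic-sixth-row-remainder} with \(c=1-\eta\). Using \(A_2(c)=(\|S_m\|_{\HS}^2-cm)/N^2\), the quadratic term \((1-\eta)u^2A_2/2\) equals exactly \(\frac{\beta^4}{2N^2}(\|S_m\|_{\HS}^2-m)+\frac{\eta\beta^4}{2(1-\eta)N^2}\|S_m\|_{\HS}^2\). This is an algebraic identity, the same one as \eqref{eq:cubic-row-cancellation}, and it cancels the subtracted terms. What remains on the good prefixes is the quartic term \((1-\eta)u^4\Gamma_{4,m}(c)/24\) and the sextic remainder.

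For the sextic remainder, \eqref{eq:cubic-parameterized-Hilbert-bridge} with \(k=3\) gives a sum over \(m<n\) of order \(\beta^{12}R^3n^4/N^6\). Using \(n\beta^2\asymp Nd\), this is \(O(R^3\beta^4d^4/N^2)\). For the quartic term, I decompose \(\Gamma_{4,m}(c)\) with \eqref{eq:cubic-complete-fourth-cumulant} and \eqref{eq:cubic-compensator-cumulant}. The term \(24\mathbb E_we_{4,m}\) sums, via \eqref{eq:cubic-fourth-prefix-identity}, to at most \(C_4\beta^8n\,\mathbb E_Q|T_{4,n}|\). The term \(-16\lambda_d^2E_m\) is nonpositive in expectation, because exchangeability gives \(\mathbb E_QE_m=\binom m2\binom n2^{-1}\mathbb E_QE_n\ge0\), and so I drop it. The term \(-2m\lambda_d^2\) and \(-3A_2(1)^2\) are nonpositive and are also dropped. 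The compensator bound \((1-c)m\|S_m\|_{\HS}^2/N^4\), together with \eqref{eq:cubic-parameterized-Hilbert-bridge}, contributes \(\eta\beta^8Rn^3/N^4\), and this is \(O(\beta^2R)\). The contribution of \(u^4\) times the moment bounds, if I bound them crudely, is \(\beta^8R^2n^3/N^4\asymp\beta^2R^2\). That produces the \(C\beta^2R^2\) term.

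The main obstacle is keeping the sign information precise. The \(e_4\) piece must remain signed until summation, so it has to be summed before any absolute value is taken. I must also make sure that restricting to good prefixes does not break the exchangeable identity \eqref{eq:cubic-fourth-prefix-identity}. I handle this by writing \(\1_{\rm good}=1-\1_{\rm bad}\) and absorbing the bad part through \eqref{eq:cubic-exceptional-row-restoration}, noting that \(|\mathbb E_we_{4,m}|\le C\beta^{-8}\) times the \(\Gamma_4\) bound.

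For part (ii), I use the same chain-rule decomposition with \(c=0\) and \(H=\kappa\beta^2 f_m\). By \eqref{eq:cubic-restricted-variational} and \eqref{eq:cubic-linear-entropy-duality}, the quantity \(\log\mathbb E[\1_{\mathcal E}e^{\kappa\beta^2E_n}]\) is bounded by \(\sup_Q\sum_m\mathbb E_Q\log Z_m(\kappa\beta^2,0)\) over exchangeable \(Q\). Each row gives \(\kappa^2\beta^4\|S_m\|_{\HS}^2/(2N^2)\) plus the quartic and sextic terms, with exceptional prefixes handled as before. Summing \(\mathbb E\|S_m\|_{\HS}^2\le CRm\) yields \(\kappa^2\beta^4Rn^2/N^2\asymp Rd^2\). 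The quartic term \(\Gamma_{4,m}(0)=\mathbb E f^4-3(\mathbb E f^2)^2\) is bounded by \(\|S_m\|_{\HS}^4/N^4\), which gives \(\beta^2R^2\). The sextic term gives \(R^3\beta^4d^4/N^2\). Since \(\kappa\) is small, \(2|u|\ell(S_m)\le c_{\rm src}d\) holds on the good prefixes.
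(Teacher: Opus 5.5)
Your proposal is correct and follows the paper's proof essentially step by step. You restore the exceptional prefixes via \eqref{eq:cubic-exceptional-row-restoration} and cancel the quadratic term exactly. You decompose the quartic coefficient with \eqref{eq:cubic-complete-fourth-cumulant}, \eqref{eq:cubic-compensator-cumulant} and \eqref{eq:cubic-fourth-prefix-identity}, using $\mathbb E_QE_m\ge0$ from exchangeability. You bound the sextic remainder by \eqref{eq:cubic-parameterized-Hilbert-bridge}, and you repeat the argument at $(u,c)=(\kappa\beta^2,0)$ for (ii).

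There are two small fixes. First, take $C_\star=(1-\eta_\star)^{-1}$ rather than $C_\star=2$, which only covers $\eta_\star\le1/2$. Second, restore the bad prefixes for the whole term $u^4\Gamma_{4,m}(c)$ before decomposing. That way the sign argument for $-16\lambda_d^2E_m$, not just the $e_{4,m}$ identity, is applied to the full sum over $m$.
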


\begin{proof}
We apply the expansion in Lemma~\ref{lem:cubic-regular-row-expansion}
at \((u,c)=(\beta^2/(1-\eta),1-\eta)\) to prove
\eqref{eq:cubic-parameterized-reserved-row-ledger}.
For \eqref{eq:cubic-parameterized-linear-ledger}, we first apply
conditional entropy duality and then use the same expansion at
\((u,c)=(\kappa\beta^2,0)\).

\emph{Step 1: the summed row bound.}
Put \(u=\beta^2/(1-\eta)\) and \(c=1-\eta\).
Apply Lemma~\ref{lem:cubic-regular-row-expansion}(i), (iii) with
\(C_\star=(1-\eta_\star)^{-1}\) and choose \(\epsilon<c_0\).
Equation~\eqref{eq:cubic-sixth-row-remainder} holds whenever
\(\beta^2\ell(S_m)\le c_0d\).
By exchangeability of \(Q\), averaging
\eqref{eq:cubic-exceptional-row-restoration} under \(Q\) bounds the
sums of \(\log Z_m(u,c)\), \(u^2A_2(c)\), and \(u^4\Gamma_{4,m}(c)\)
over the complementary prefixes by \(o(1)\).
We may therefore sum the two coefficients over all prefixes, obtaining
\begin{equation}
\begin{aligned}
 (1-\eta)\sum_{m<n}\mathbb E_Q\log Z_m(u,c)
 &=\frac{(1-\eta)u^2}{2}\sum_{m<n}\mathbb E_QA_2(c)
   +\frac{(1-\eta)u^4}{24}\sum_{m<n}\mathbb E_Q\Gamma_{4,m}(c)\\
 &\quad+(1-\eta)\sum_{m<n}\mathbb E_Q\!\left[
   \1_{\{\beta^2\ell(S_m)\le c_0d\}}R_{6,m}(u,c)\right]+o(1).
\end{aligned}
 \label{eq:cubic-summed-row-decomposition}
\end{equation}

The quadratic coefficient gives the two subtractions in
\eqref{eq:cubic-parameterized-reserved-row-ledger}, since
\[
 \frac{1-\eta}{2}\left(\frac{\beta^2}{1-\eta}\right)^2A_2(1-\eta)
 =\frac{\beta^4}{2N^2}(\|S_m\|_{\HS}^2-m)
    +\frac{\eta\beta^4}{2(1-\eta)N^2}\|S_m\|_{\HS}^2.
\]
To bound the fourth-order coefficient and the remainder in
\eqref{eq:cubic-summed-row-decomposition}, we use exchangeability and
Lemma~\ref{lem:cubic-regular-prefix-estimates} to obtain
\[
 \mathbb E_Q\|S_m\|_{\HS}^{2k}
 =\mathbb E_Q\mathbb E_{\rm ord}\|S_m\|_{\HS}^{2k}
 \le C_kR^km^k,\qquad k\in\{1,2,3\}.
\]
Here \eqref{eq:cubic-parameterized-Hilbert-bridge} gives \(k=2,3\),
and Cauchy--Schwarz gives \(k=1\).
For the fourth-order coefficient, exchangeability and \(E_n\ge0\) also give
\[
 \mathbb E_QE_m=\frac{m(m-1)}{n(n-1)}\mathbb E_QE_n\ge0.
\]
Thus the terms \(\lambda_d^2(-2m-16E_m)\) and \(-3A_2(1)^2\) in
\eqref{eq:cubic-complete-fourth-cumulant} have nonpositive
\(Q\)-expectations.  Combining that identity with
\eqref{eq:cubic-compensator-cumulant} and
\eqref{eq:cubic-fourth-prefix-identity} yields
\begin{align*}
 \frac{\beta^8}{24(1-\eta)^3}\sum_{m<n}\mathbb E_Q\Gamma_{4,m}(1-\eta)
 &\le\frac{\beta^8(n-4)}{5(1-\eta)^3}\mathbb E_QT_{4,n}
    +\frac{C\eta\beta^8}{N^4}\sum_{m<n}m\mathbb E_Q\|S_m\|_{\HS}^2\\
 &\le C_4\beta^8n\mathbb E_Q|T_{4,n}|
       +\frac{CR\beta^8n^3}{N^4}
 \le C_4\beta^8n\mathbb E_Q|T_{4,n}|+C\beta^2R^2.
\end{align*}
The last step uses
\(R\beta^8n^3/N^4=O(R\beta^2d^3/N)=O(\beta^2R^2)\),
which follows from \(n\beta^2\asymp Nd\), \(N\asymp d^3\), and \(R\ge1\).
For the remainder \(R_{6,m}\), equations
\eqref{eq:cubic-sixth-row-remainder} and
\eqref{eq:cubic-parameterized-Hilbert-bridge} with \(k=3\) give
\begin{equation}
\begin{aligned}
 \sum_{m<n}\mathbb E_Q\!\left[
 \1_{\{\beta^2\ell(S_m)\le c_0d\}}
 |R_{6,m}(\beta^2/(1-\eta),1-\eta)|\right]
 &\le\frac{C\beta^{12}}{N^6}\sum_{m<n}(\mathbb E_Q\|S_m\|_{\HS}^6+m^3)\\
 &\le C R^3\frac{\beta^{12}n^4}{N^6}
 \le C R^3\frac{\beta^4d^4}{N^2}.
\end{aligned}
 \label{eq:cubic-parameterized-sixth-error}
\end{equation}
Substituting the identity for \(A_2(1-\eta)\), the bound on
\(\sum_{m<n}\mathbb E_Q\Gamma_{4,m}(1-\eta)\), and
\eqref{eq:cubic-parameterized-sixth-error} into
\eqref{eq:cubic-summed-row-decomposition} proves
\eqref{eq:cubic-parameterized-reserved-row-ledger}.

\emph{Step 2: the linear exponential-moment bound.}
If \(P_0^{\otimes n}(\mathcal E)=0\), then
\eqref{eq:cubic-parameterized-linear-ledger} holds with \(\log0=-\infty\).
Otherwise, the restricted Gibbs formula
\eqref{eq:cubic-restricted-variational} and permutation symmetrization
reduce the proof to bounding
\(\kappa\beta^2\mathbb E_QE_n-D(Q\Vert P_0^{\otimes n})\)
over exchangeable \(Q\) with \(Q(\mathcal E)=1\) and finite relative
entropy.  Since \(E_n=\sum_{m<n}f_m(w_{m+1})\), apply
\eqref{eq:cubic-linear-entropy-duality} to the conditional law
\(Q_{m+1}\) with \(H(w)=\kappa\beta^2f_m(w)\).
Taking \(Q\)-expectations, summing over \(m<n\), and applying the
entropy chain rule, we obtain
\[
 \kappa\beta^2\mathbb E_QE_n-D(Q\Vert P_0^{\otimes n})
 \le\sum_{m<n}\mathbb E_Q\log Z_m(\kappa\beta^2,0).
\]
For fixed sufficiently small \(0<\kappa\le1\), the parameters
\((u,c)=(\kappa\beta^2,0)\) lie in the range verified in Step 1.
Here \(A_2(0)=\|S_m\|_{\HS}^2/N^2\), and
\eqref{eq:cubic-local-fourth-moment} gives
\[
 |\Gamma_{4,m}(0)|
 =\left|\mathbb E_wf_m^4-3(\mathbb E_wf_m^2)^2\right|
 \le4\mathbb E_wf_m^4
 \le C\frac{\|S_m\|_{\HS}^4}{N^4}.
\]
Using \eqref{eq:cubic-sixth-row-remainder}, controlling the prefixes
with \(\beta^2\ell(S_m)>c_0d\) by
\eqref{eq:cubic-exceptional-row-restoration}, and applying the moment
bounds for \(\|S_m\|_{\HS}^{2k}\), \(k=1,2,3\), from Step 1, we obtain
\begin{align*}
 \sum_{m<n}\mathbb E_Q\log Z_m(\kappa\beta^2,0)
 &\le\frac{\kappa^2\beta^4}{2N^2}\sum_{m<n}\mathbb E_Q\|S_m\|_{\HS}^2
      +\frac{C_\kappa\beta^8}{N^4}\sum_{m<n}\mathbb E_Q\|S_m\|_{\HS}^4\\
 &\quad+\frac{C_\kappa\beta^{12}}{N^6}\sum_{m<n}(\mathbb E_Q\|S_m\|_{\HS}^6+m^3)+o(1)\\
 &\le C_\kappa\left(\frac{R\beta^4n^2}{N^2}
              +\frac{R^2\beta^8n^3}{N^4}
              +\frac{R^3\beta^{12}n^4}{N^6}\right)+o(1)\\
 &\le C_\kappa\left(Rd^2+\beta^2R^2
              +\frac{R^3\beta^4d^4}{N^2}\right)+o(1).
\end{align*}
Taking the supremum over \(Q\) proves
\eqref{eq:cubic-parameterized-linear-ledger}.
\end{proof}

To substitute \eqref{eq:cubic-parameterized-reserved-row-ledger}
into \eqref{eq:cubic-shared-entropy-sum}, we first evaluate its two
quadratic sums.
For any exchangeable law \(Q\) of the paired sample,
\(\mathbb E_Qh_{ij}\) is the same for all \(i\ne j\).
Since \(\|w_i\|_{\HS}=1\), we obtain
\begin{equation}
  \mathbb E_Q\|S_m\|_{\HS}^2
  =m+\frac{m(m-1)}{n(n-1)}\mathbb E_Q(\|S_n\|_{\HS}^2-n).
  \label{eq:cubic-exact-prefix-bridge}
\end{equation}
Set \(b_n=\beta^4(n-2)/(3N^2)\).  Summing
\eqref{eq:cubic-exact-prefix-bridge} over \(0\le m<n\) and using
\(\sum_{m<n}m(m-1)=n(n-1)(n-2)/3\) and
\(\|S_n\|_{\HS}^2-n=2E_n\) yields the two required identities
\begin{gather}
  \frac{\beta^4}{2N^2}\sum_{m<n}
     \mathbb E_Q(\|S_m\|_{\HS}^2-m)
  =b_n\mathbb E_QE_n,
  \label{eq:cubic-prefix-linear-identity}\\
  \sum_{m<n}\mathbb E_Q\|S_m\|_{\HS}^2
  =\frac{n(n-1)}2
    +\frac{n-2}{3}\mathbb E_Q(\|S_n\|_{\HS}^2-n).
  \label{eq:cubic-prefix-diagonal-identity}
\end{gather}

For \(Q\) satisfying the hypotheses of
Lemma~\ref{lem:cubic-parameterized-row-ledger} and
\(D(Q\Vert P_0^{\otimes n})<\infty\), substituting
\eqref{eq:cubic-parameterized-reserved-row-ledger},
\eqref{eq:cubic-prefix-linear-identity}, and
\eqref{eq:cubic-prefix-diagonal-identity} into
\eqref{eq:cubic-shared-entropy-sum} gives
\begin{equation}
\begin{aligned}
 \mathbb E_QF_n^{(2)}(\beta)-D(Q\Vert P_0^{\otimes n})
 &\le \frac{b_n}{1-\eta}\mathbb E_QE_n
      +\frac{\eta\beta^4n(n-1)}{4(1-\eta)N^2}
      -\eta D(Q\Vert P_0^{\otimes n})\\
 &\quad+C_4\beta^8n\mathbb E_Q|T_{4,n}|
      +C\left(\beta^2R^2+\frac{R^3\beta^4d^4}{N^2}\right)+o(1).
\end{aligned}
 \label{eq:cubic-summed-variational-bound}
\end{equation}
The bound \eqref{eq:cubic-parameterized-linear-ledger} will be used
in the tail estimates to control the linear term
\(b_n\mathbb E_QE_n/(1-\eta)\).
For fixed \(\eta>0\), Lemma~\ref{lem:cubic-fourth-U-exponential}
gives \(C_4\beta^8n\mathbb E_Q|T_{4,n}|
\le(\eta/2)D(Q\Vert P_0^{\otimes n})+o(1)\).
Its proof uses the moment estimate in
Lemma~\ref{lem:cubic-fourth-U-moments} and the Gibbs variational
formula \eqref{eq:cubic-restricted-variational}.

\begin{lemma}[Exponential moments and entropy bound for the fourth-order statistic]
\label{lem:cubic-fourth-U-exponential}
Under \eqref{eq:cubic-local-standing-assumptions}, for every fixed \(C_0>0\),
\begin{equation}
 \mathbb E_{P_0^{\otimes n}}\left[
  \1_{\mathcal E_n^{\rm reg}}
  \exp\{C_0\beta^8n|T_{4,n}|\}\right]\le1+o(1).
 \label{eq:cubic-fourth-U-exponential}
\end{equation}
Consequently, for every fixed \(c,\eta>0\) and every law \(Q\) with
\(Q(\mathcal E_n^{\rm reg})=1\) and \(D(Q\Vert P_0^{\otimes n})<\infty\),
\begin{equation}
 c\beta^8n\mathbb E_Q|T_{4,n}|
 \le\eta D(Q\Vert P_0^{\otimes n})+o(1).
 \label{eq:cubic-fourth-entropy-control}
\end{equation}
The error terms are uniform when \(n\beta^2/(Nd)\) stays in a fixed compact
subset of \((0,\infty)\), and the error in
\eqref{eq:cubic-fourth-entropy-control} is also uniform over these laws \(Q\).
\end{lemma}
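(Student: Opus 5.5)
The plan is to replace \(T_{4,n}\) by the bounded completely degenerate statistic \(Z_{4,n}\) from Lemma~\ref{lem:cubic-fourth-U-moments}, since \eqref{eq:cubic-fourth-truncated-identity} gives \(T_{4,n}=Z_{4,n}\) on \(\mathcal E_n^{\rm reg}\). Then
\[
 \mathbb E\bigl[\1_{\mathcal E_n^{\rm reg}}e^{C_0\beta^8n|T_{4,n}|}\bigr]
 \le\mathbb E\,e^{C_0\beta^8n|Z_{4,n}|},
\]
and it suffices to show that the right side is \(1+o(1)\).

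Expand the exponential as a power series, \(\mathbb E e^{\lambda|Z|}=1+\sum_{p\ge1}\lambda^p\mathbb E|Z|^p/p!\), with \(\lambda=C_0\beta^8n\). Bound each moment with \eqref{eq:cubic-fourth-U-moment}, using \(\|Z\|_p\le\|Z\|_2\) for \(p=1\). This gives
\[
 \lambda^p\|Z_{4,n}\|_p^p\le(C_Ap^2a_d)^p,\qquad
 a_d:=C_0\beta^8n^3\frac{\log^{12}d}{N^6}.
\]
Since \(n\beta^2\asymp Nd\), \(|\beta|\le1\), and \(N\asymp d^3\), we have \(\beta^8n^3\le Cn^3\beta^6\asymp N^3d^3\). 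Hence \(a_d\le Cd^3\log^{12}d/N^3\asymp d^{-6}\log^{12}d\), uniformly when \(r_n\) stays in a compact set.

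The main obstacle is that the factor \(p^{2p}/p!\approx(ep)^p\) makes the full series diverge, because the hypercontractive bound only gives \(\exp(\lambda^{1/2})\)-type tails. I would split the sum at \(p\le p_0=d^2\). For these terms \(p^2a_d\le d^{-2}\log^{12}d\), so \(\sum_{1\le p\le p_0}(C p^2a_d)^p/p!\le\sum_p(C'pa_d)^p\to0\), since \(pa_d\le d^{-4}\log^{12}d\).

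For large values I would avoid the series and use the uniform kernel bound instead. Deterministically, \(|Z_{4,n}|\le n^4C_A\log^{12}d/N^6\), so \(\lambda|Z_{4,n}|\le C\beta^8n^5\log^{12}d/N^6\le C(Nd)^4n\log^{12}d/N^6\), which is \(O(d^{A+1})\) up to logarithmic factors. I would then split the expectation on the event \(\{\lambda|Z|\le1\}\), where \(e^x\le1+ex\) and \(\lambda\mathbb E|Z|\le\lambda\|Z\|_2=o(1)\). On the complement, bound the contribution by \(e^{\lambda\|Z\|_\infty}\,\mathbb P(\lambda|Z|>1)\). Markov's inequality at order \(p=d^2\) gives \(\mathbb P(\lambda|Z|>1)\le(Cd^4a_d)^{d^2}\le e^{-d^2\log d}\). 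This beats \(e^{Cd^{A+1}\log^{12}d}\) only if \(A+1<2\), which fails in general.

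To avoid this, I would instead use Markov's inequality at level \(t\): \(\mathbb P(\lambda|Z|>t)\le\inf_p(Cp^2a_d/t)^p\). Choosing \(p\asymp(t/a_d)^{1/2}\) yields \(\mathbb P(\lambda|Z|>t)\le\exp\{-c(t/a_d)^{1/2}\}\). Integrating \(\int_1^\infty e^{t}\mathbb P(\lambda|Z|>t)\,dt\) then requires \(e^{t-c(t/a_d)^{1/2}}\) to be integrable. This holds only up to \(t\le c^2/a_d\), which is \(d^6\) up to logarithmic factors, and that range covers the deterministic maximum \(\lambda\|Z\|_\infty\) only when \(A\le5\). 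This is the step I expect to be delicate. If it fails for larger \(A\), I would truncate more sharply, replacing \(\|Z\|_\infty\) by a bound on \(\mathcal E_n^{\rm reg}\) obtained from the Hoeffding decomposition. On \(\mathcal E_n^{\rm reg}\), \(|T_{4,n}|\le\sum|\gamma\gamma'|\) can be bounded through the Gram estimates of Lemma~\ref{lem:cubic-hereditary-square-profile}, which should give \(\lambda|T_{4,n}|=O(d^{6-\epsilon})\) and bring us back into the range where the tail integral converges.

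Finally, the entropy bound \eqref{eq:cubic-fourth-entropy-control} follows from the Gibbs/Donsker--Varadhan inequality applied with \(H=(c/\eta)\beta^8n|T_{4,n}|\) on \(\mathcal E_n^{\rm reg}\):
\[
 \mathbb E_QH-D(Q\Vert P_0^{\otimes n})\le\log\mathbb E\bigl[\1_{\mathcal E_n^{\rm reg}}e^{H}\bigr]\le\log(1+o(1)),
\]
using \eqref{eq:cubic-restricted-variational} together with \eqref{eq:cubic-fourth-U-exponential} for \(C_0=c/\eta\). Multiplying by \(\eta\) gives the claim. The bound is uniform in \(Q\) because its right-hand side does not depend on \(Q\), and uniform in \(r_n\in K\) through the bound on \(a_d\).
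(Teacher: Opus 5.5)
Your skeleton matches the paper's. You pass from $T_{4,n}$ to $Z_{4,n}$ via \eqref{eq:cubic-fourth-truncated-identity}. You turn the moment bound \eqref{eq:cubic-fourth-U-moment} into a tail bound $\mathbb P(\lambda|Z_{4,n}|>t)\le2\exp\{-c(t/a_d)^{1/2}\}$. You integrate $e^{t}$ against that tail up to the deterministic cap on $\lambda|Z_{4,n}|$. Finally you deduce \eqref{eq:cubic-fourth-entropy-control} from \eqref{eq:cubic-restricted-variational} with $H=(c/\eta)\beta^8n|T_{4,n}|$, and that last step is exactly right.

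As written, however, the proposal does not establish \eqref{eq:cubic-fourth-U-exponential}. You conclude that the tail integral reaches the cap only when $A\le5$. For larger $A$ you rely on an unproved assertion that a Hoeffding-decomposition argument ``should give'' $\lambda|T_{4,n}|=O(d^{6-\epsilon})$ on $\mathcal E_n^{\rm reg}$. Nothing in the proposal supports that claim, so the key step is left open.

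The obstruction is an artifact of estimating the two competing quantities separately and lossily. When you wrote $a_d\le Cd^3\log^{12}d/N^3$, you used $|\beta|\le1$ and discarded the factor $\beta^2=r_nNd/n$, which can be as small as $d^{4-A}$. In fact $a_d=C_0r_n^3\beta^2d^3\log^{12}d/N^3\asymp C_0\log^{12}d/(nd^2)$. Likewise the cap is
\[
 \lambda\|Z_{4,n}\|_\infty\le C_0C_A\beta^8n^5\log^{12}d/N^6\asymp C_0n\log^{12}d/d^2,
\]
which is at most $d^{A-2}$ up to logarithms, not $d^{A+1}$. The two quantities move in opposite directions as $n$ grows, and
\[
 a_d\,\lambda\|Z_{4,n}\|_\infty
 \le C_0^2C_A\frac{\beta^{16}n^8\log^{24}d}{N^{12}}
 \le C_0^2C_{A,K}\frac{d^8\log^{24}d}{N^4}
 \le C_0^2C_{A,K}'\frac{\log^{24}d}{d^4}\longrightarrow0,
\]
uniformly in $A$ and in $r_n\in K$. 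Hence $\lambda\|Z_{4,n}\|_\infty\le c^2/(4a_d)$ for large $d$. Then $t\le(c/2)(t/a_d)^{1/2}$ holds on the whole range of integration, and
\[
 \mathbb E e^{\lambda|Z_{4,n}|}-1
 =\int_0^{\lambda\|Z_{4,n}\|_\infty}e^{t}\,\mathbb P(\lambda|Z_{4,n}|>t)\,dt
 \le2\int_0^\infty e^{-(c/2)(t/a_d)^{1/2}}\,dt=\frac{16a_d}{c^2}=o(1).
\]
This is precisely the paper's argument; its condition $R_d\upsilon_d\le Cd^8\log^{24}d/N^4=o(1)$ is the product bound above. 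With this observation, the power-series split at $p_0=d^2$ and the Hoeffding-decomposition fallback are both unnecessary.
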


\begin{proof}
By \eqref{eq:cubic-fourth-truncated-identity},
\(T_{4,n}=Z_{4,n}\) on \(\mathcal E_n^{\rm reg}\), so
\eqref{eq:cubic-fourth-U-exponential} follows once we prove
\(\mathbb E e^{C_0\beta^8n|Z_{4,n}|}\le1+o(1)\) under
\(P_0^{\otimes n}\).  We obtain this bound by integrating a tail
probability, then deduce \eqref{eq:cubic-fourth-entropy-control} from
\eqref{eq:cubic-restricted-variational}.

Set
\(\upsilon_d:=C_A\beta^8n^3\log^{12}d/N^6\).
Equation \eqref{eq:cubic-fourth-U-moment} and \(n\beta^2\asymp Nd\)
give, for every \(p\ge2\),
\begin{equation}
 \|\beta^8nZ_{4,n}\|_p\le \upsilon_dp^2,
 \qquad
 \upsilon_d\le C\frac{\beta^2 d^3\log^{12} d}{N^3}=o(1).
 \label{eq:cubic-fourth-scaled-moments}
\end{equation}
By \eqref{eq:cubic-fourth-scaled-moments}, monotonicity of
\(L^p\)-norms for \(1\le p<2\), and the sub-Weibull moment--tail
equivalence \cite[Theorem~2.1]{VladimirovaGirardNguyenArbel2020},
there is an absolute \(c_*>0\) such that
\[
 \mathbb P\{\beta^8n|Z_{4,n}|>t\}
 \le2\exp\left(-c_*\sqrt{\frac{t}{\upsilon_d}}\right),
 \qquad t\ge0.
\]
The kernel bound in Lemma~\ref{lem:cubic-fourth-U-moments} also gives
\[
 \beta^8n|Z_{4,n}|
 \le\beta^8n\binom n4\|\widetilde K_{4,d}\|_\infty
 \le R_d:=C_A\frac{\beta^8n^5\log^{12}d}{N^6}.
\]
Using \(n\beta^2\asymp Nd\) and \(N\asymp d^3\), we obtain
\begin{equation}
 R_d\upsilon_d
 \le C\frac{d^8\log^{24}d}{N^4}
 \le C\frac{\log^{24} d}{d^4}=o(1).
 \label{eq:cubic-fourth-cap-scale-product}
\end{equation}

For fixed \(C_0>0\), \eqref{eq:cubic-fourth-cap-scale-product} implies,
for sufficiently large \(d\) and \(0\le t\le R_d\),
\[
 C_0t
 \le C_0\sqrt{R_d\upsilon_d}\sqrt{\frac{t}{\upsilon_d}}
 \le\frac{c_*}{2}\sqrt{\frac{t}{\upsilon_d}}.
\]
The tail-probability integral therefore satisfies
\[
\begin{aligned}
 \mathbb E e^{C_0\beta^8n|Z_{4,n}|}-1
 &=C_0\int_0^{R_d}
    e^{C_0t}\mathbb P\{\beta^8n|Z_{4,n}|>t\}\,dt\\
 &\le2C_0\int_0^{R_d}
    \exp\left(C_0t-c_*\sqrt{\frac{t}{\upsilon_d}}\right)\,dt\\
 &\le2C_0\int_0^\infty
    \exp\left(-\frac{c_*}{2}\sqrt{\frac{t}{\upsilon_d}}\right)\,dt
 =O_{C_0}(\upsilon_d).
\end{aligned}
\]
Together with \eqref{eq:cubic-fourth-truncated-identity}, this proves
\eqref{eq:cubic-fourth-U-exponential}.

Finally, let \(Q(\mathcal E_n^{\rm reg})=1\) and
\(D(Q\Vert P_0^{\otimes n})<\infty\).
Applying \eqref{eq:cubic-restricted-variational} with
\(H=(c/\eta)\beta^8n|T_{4,n}|\) gives
\[
\begin{aligned}
 c\beta^8n\mathbb E_Q|T_{4,n}|-\eta D(Q\Vert P_0^{\otimes n})
 &\le\eta\log\mathbb E_{P_0^{\otimes n}}\!\left[
 \1_{\mathcal E_n^{\rm reg}}e^{(c/\eta)\beta^8n|T_{4,n}|}\right]\\
 &\le\eta\log(1+C_{c/\eta}\upsilon_d)
 \le C_{c,\eta}\upsilon_d=o(1).
\end{aligned}
\]
The last bound is independent of \(Q\), which proves
\eqref{eq:cubic-fourth-entropy-control} with the stated uniformity.
\end{proof}

\subsection{Preliminary estimates for the first inequality}
\label{subsec:cubic-local-deletion}

This subsection and Section~\ref{subsec:cubic-direct-orbit-closure}
establish the first inequality in \eqref{eq:cubic-local-roadmap}.
Figure~\ref{fig:cubic-first-inequality-regions}
shows the successive restrictions leading to \(\mathcal R_n\), followed
by its division into the negative tail, central interval, and positive tail.
Fix \(\epsilon_\star>0\)
sufficiently small for
Lemma~\ref{lem:cubic-parameterized-row-ledger} with \(\eta_\star=1/2\).

\begin{figure}[!htbp]
\centering
\begin{tikzpicture}[x=1cm,y=1cm,>=stealth,
  every node/.style={font=\footnotesize},
  region/.style={draw,align=center,text width=3.15cm,
    minimum height=0.85cm,inner sep=3pt}]
\node[region] (start) at (1.75,0)
  {$\mathcal E_n^{\rm reg}\cap\{s_n<\delta\}$};
\node[region] (small) at (5.6,0) {$s_n<\underline s_d$};
\node[region] (energy) at (9.45,0) {$\Delta\le v_\star$};
\node[region] (direction) at (13.3,0)
  {$\beta^2\ell(S_n)\le\epsilon_\star d$};
\draw[->] (start) -- (small);
\draw[->] (small) -- (energy);
\draw[->] (energy) -- (direction);
\node[region] (meso) at (1.75,-1.4)
  {$\underline s_d\le s_n<\delta$\\Lemma~\ref{lem:cubic-submacroscopic-deletion}};
\node[region] (high) at (5.6,-1.4)
  {$\Delta>v_\star$\\Lemma~\ref{lem:cubic-high-energy-entropy}};
\node[region] (large) at (9.45,-1.4)
  {$\beta^2\ell(S_n)>\epsilon_\star d$\\Lemma~\ref{lem:cubic-clipped-orbit-tail}};
\draw[->] (start) -- (meso);
\draw[->] (small) -- (high);
\draw[->] (energy) -- (large);

\node[region,text width=14.4cm,minimum height=0.65cm] (regular) at (7.5,-2.6)
  {$\mathcal R_n=\mathcal E_n^{\rm reg}\cap
    \{s_n<\underline s_d,\ \Delta\le v_\star,\
      \beta^2\ell(S_n)\le\epsilon_\star d\}$};
\draw[->] (direction.south) -- (13.3,-2.0) -- (7.5,-2.0) -- (regular.north);
\node[region,text width=4.2cm,minimum height=1.05cm] (negative) at (2.5,-4.2)
  {$\Delta<-K_dd^2/\beta^2$\\Negative tail\\\eqref{eq:cubic-local-negative-tail}};
\node[region,text width=4.2cm,minimum height=1.05cm] (central) at (7.5,-4.2)
  {$|\Delta|\le K_dd^2/\beta^2$\\Central interval retained\\Section~\ref{subsec:cubic-local-centered}};
\node[region,text width=4.2cm,minimum height=1.05cm] (positive) at (12.5,-4.2)
  {$\Delta>K_dd^2/\beta^2$\\Positive tail\\Lemma~\ref{lem:cubic-positive-energy-tail}};
\draw (regular.south) -- (7.5,-3.3);
\draw[->] (7.5,-3.3) -- (2.5,-3.3) -- (negative.north);
\draw[->] (7.5,-3.3) -- (central.north);
\draw[->] (7.5,-3.3) -- (12.5,-3.3) -- (positive.north);
\end{tikzpicture}
\caption{Region reduction for the first inequality in
\eqref{eq:cubic-local-roadmap}.
Here $\Delta=\|S_n\|_{\HS}^2-n$, $\underline s_d=d^{-1}\log^4d$,
$v_\star=n/(\beta^2\log^4d)$, and
$K_d\uparrow\infty$ satisfies $K_d=o((\log d)^{1/8})$.
Conditions accumulate along each path.
The three side branches and the two tails represent regions $\mathcal A$ with
$\mathbb E_{P_0^{\otimes n}}[\1_{\mathcal A}W_{n,\beta}]=o(1)$.
The central interval is retained for the second inequality in
\eqref{eq:cubic-local-roadmap}.}
\label{fig:cubic-first-inequality-regions}
\end{figure}


\begin{lemma}[Mesoscopic overlap deletion]
\label{lem:cubic-submacroscopic-deletion}
Under \eqref{eq:cubic-local-standing-assumptions}, there is a
sequence \(\underline s_d\downarrow0\), which we take to be
\(\underline s_d=d^{-1}\log^4 d\), such that
\[
  \frac{d\underline s_d}{\log d}\longrightarrow\infty,
\]
and, for every sufficiently small fixed \(\delta>0\),
\begin{equation}
  \mathbb E_{P_0^{\otimes n}}
  \left[\1_{{\cal G}_n^\dagger(X)}\1_{{\cal G}_n^\dagger(X')}
  W_{n,\beta}\1_{\{\underline s_d\le s_n\le\delta\}}\right]
  =o(1).
  \label{eq:cubic-mesoscopic-deletion}
\end{equation}
\end{lemma}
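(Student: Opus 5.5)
The plan is a first-moment (annealed) bound over the mesoscopic shell, split into dyadic layers $s_n\in[2^k\underline s_d,2^{k+1}\underline s_d)$ up to $\delta$. The only tool needed pointwise is \eqref{eq:cubic-local-linear-reward}:
\[
W_{n,\beta}\le e^{r_n nd\,s_n/2}.
\]
On the layer $s_n\approx s$ the weight is therefore at most $e^{r_n nd s}$ up to a constant factor in the exponent. Summing over layers, it suffices to show that the $P_0^{\otimes n}$-probability of $\mathcal E_n^{\rm reg}\cap\{s_n\ge s\}$ is at most $\exp\{-c\,nd\,s\}$. The constant $c$ must be large enough to beat $r_n nd s/2$ with $r_n\in K$, up to a gain of order $nd\,s$. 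Any residual error must be $e^{-\omega(\log d)}$ so that the number of layers, $O(\log d)$, is absorbed.

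The key estimate is a large-deviation bound for $\|R(X,X')\|_{\HS}^2$ at the scale $s/N$. Fix $X\in\mathcal G_n^\dagger$, so that by \eqref{eq:cubic-hereditary-covariance-profile}
\[
\|\mathbf Q\|_{\rm op}^2\le Cn/N .
\]
As in Proposition~\ref{prop:cubic-weighted-overlap}, $F(X')=\|\mathbf Q^{\mathsf T}\mathbf R\|_{\HS}/\sqrt n=\sqrt{n\,s_n/N}$ has mean at most $1$ and Lipschitz constant at most $C/\sqrt N$ for $n\asymp Nd$. The spherical log-Sobolev inequality then gives
\[
P\{F\ge 1+t\}\le\exp(-c\,dNt^2).
\]
With $t\approx\sqrt{ns/N}$ the exponent is of order $c\,d\,n s$, which is the right order but with a constant that need not exceed $r_n/2$.

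To obtain the sharp constant I would not use the Lipschitz bound. Instead, I would bound the restricted moment generating function
\[
\mathbb E\bigl[\1_{\mathcal E_n^{\rm reg}}e^{\theta\beta^2E_n}\bigr]
\]
for some fixed $\theta>1$, and then apply Chernoff. This mgf is the content of Lemma~\ref{lem:cubic-parameterized-row-ledger}(ii), but that lemma is stated only for small $\kappa$. The extension is possible because at mesoscopic overlap $\|S_n\|_{\HS}^2\le(1+R)n$ with $R\approx ns/N\le C d\delta$, so the error terms $C_\kappa(Rd^2+\dots)$ are $o(nd\,s)$ once $s\ge\underline s_d=d^{-1}\log^4 d$. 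The condition $\beta^2\ell(S_n)\le\epsilon d$ must first be imposed, via the operator-norm tail of Lemma~\ref{lem:cubic-operator-norm-tail}. The layer probability then satisfies
\[
\log P\{E_n\ge \tfrac12 ns\}\le -\theta\beta^2\tfrac{ns}{2}\cdot\tfrac{n}{N}+o(nds),
\]
which beats the linear reward $\tfrac{r_n}{2} nd\,s$ precisely because the second-order coefficient in the row expansion is $\beta^4/N^2\cdot\|S_m\|^2$. This is smaller by the factor $\eta\asymp\beta^2 n/N^2\asymp d/N\to0$, i.e.\ it has a quadratic rather than linear dependence on $s$. Concretely, a Gaussian-type bound
\[
\log\mathbb E\,e^{\lambda(\|S_n\|^2-n)}\le C\lambda^2 n^3/N^2+\dots
\]
optimized at $\lambda\approx sN/n$ gives a rate $c\,s^2 N n\gg nd\,s$ only when $s\gg d/N$. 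That holds throughout the shell, and the requirement $d\underline s_d/\log d\to\infty$ is what absorbs the polylogarithmic losses from $\mathcal G_n^\dagger$, the union over layers, and the Taylor corrections.

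The main obstacle is getting a large-deviation rate strictly larger than the linear reward $r_n nd s/2$ uniformly down to $s=\underline s_d$. The naive Lipschitz concentration loses constants, and the quadratic-in-$s$ rate of the second approach degenerates near $\delta$, where the matrix potential of Proposition~\ref{prop:cubic-matrix-branch-gap} is really needed. I would first attempt the mgf route for small $s\le\delta_0$, choosing $\delta$ small enough that $C\delta\le\kappa_K$. For the upper end of the shell I would fall back on the variational bound $\mathcal V_{d,r}(Q)\le-\kappa_K N\|Q\|_{\HS}^2$, combined with the interpolation of Proposition~\ref{prop:cubic-Gaussian-variational-bound} applied to the second moment with overlap constraint $s_n\in[s,2s)$. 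The scaling of the resulting negative bound, $-\kappa_K s\,nd$, is exactly what beats the polynomial covering costs, since $\log J=o(nd\,\underline s_d)$ requires the cover radius to scale with $\sqrt{s/N}$.
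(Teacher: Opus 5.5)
You are right that the log-Sobolev bound for $F=\sqrt{ns_n/N}$ only gives a rate $c\,nd\,s$ with an uncontrolled constant, which cannot beat the weight on a layer. The replacement you propose does not close, however.

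The mgf/Chernoff step relies on the row expansion behind Lemma~\ref{lem:cubic-parameterized-row-ledger}(ii). That lemma needs $\beta^2\ell(S_n)\le\epsilon d$ on the event, since this is what keeps $\beta^2\ell(S_m)\le c_0d$ for most prefixes. You propose to impose the condition with Lemma~\ref{lem:cubic-operator-norm-tail}, but it is far too weak on this shell:
\begin{itemize}
\item Lemma~\ref{lem:cubic-operator-norm-tail} gives a probability of order $\exp(-c\epsilon^2d^{5/2}/\beta^2)$. On $\mathcal G_n^\dagger(X)\cap\mathcal G_n^\dagger(X')\cap\{s_n\ge\underline s_d\}$ one has $\log W_{n,\beta}=(1+o(1))r_nnd\,s_n/2\ge c\,d^4\log^4d/\beta^2$. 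Weight times probability is therefore $\exp\{\Theta(d^4\log^4 d/\beta^2)\}$, not $o(1)$.
\item Lemma~\ref{lem:cubic-clipped-orbit-tail}, with rate $n/\log^3d$, is also too weak. The paper uses it only where $W_{n,\beta}\le e^{n/(2\log^4 d)}$, which lies below $\underline s_d$.
\item Membership in $\mathcal G_n^\dagger$ gives only $\beta^2\ell(S_n)\le C_A(r_n+o(1))d$ through Lemma~\ref{lem:cubic-subset-tensor-bounds}, and this constant is not small.
\end{itemize}
So the step on which your whole mgf route rests is missing. Conversely, if the $\ell$-restriction were available, the row-expansion errors would be $o(nd\,s)$ uniformly for $s\le\delta$, much as in Lemma~\ref{lem:cubic-positive-energy-tail}. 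The top of the shell is therefore not where the difficulty lies.

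Your fallback for the top of the shell fails for two separate reasons. First, Proposition~\ref{prop:cubic-matrix-branch-gap} requires $r<r_{\mathrm{tens}}$, whereas the lemma is stated for every compact $K\subset(0,\infty)$. Second, the interpolation of Proposition~\ref{prop:cubic-Gaussian-variational-bound} bounds the quenched quantity $\mathbb E_g\log Z$, not the annealed integral $\mathbb E[\1\,W_{n,\beta}]$ needed here. Jensen goes the wrong way, which is why macroscopic overlaps need observation-dependent truncation rather than a second-moment bound.

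The missing idea is that no sharp constant is needed. Write $\mathcal E_s^{\rm reg}=\mathcal G_n^\dagger(X)\cap\mathcal G_n^\dagger(X')\cap\{s\le s_n<2s\}$. A bound $\mathbb P(\mathcal E_s^{\rm reg})\le\exp(-c\,nd\,s^{2/3})$ has a rate superlinear relative to the reward as $s\downarrow0$. It beats $W_{n,\beta}\le e^{Cnds}$ once $\delta$ is small enough that $Cs\le(c/2)s^{2/3}$ on $(0,\delta]$. The $O(\log d)$ dyadic layers are absorbed because $nd\,\underline s_d^{2/3}\to\infty$.

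The paper obtains this rate by cross-fitting, in three steps.
\begin{enumerate}
\item Fix a balanced split $I\cup J$. Conditional exchangeability on $\mathcal E_s^{\rm reg}$ gives $\mathbb E[\|R_I-R\|_{\HS}^2\mid\mathcal E_s^{\rm reg}]\le C/n=o(s/N)$. Hence, with conditional probability $1-o(1)$, $\langle R_J,NR_I\rangle_{\HS}\ge c_*s$.
\item Set $b_j=\|NR_Iq_{3,d}(X_j')\|$. Lemma~\ref{lem:cubic-hereditary-square-profile} gives $|J|^{-1}\sum_jb_j^2\le Cs$ and $|J|^{-1}\sum_jb_j^4\le Cs^2$; this is where $d\underline s_d/\log d\to\infty$ enters.
\item Conditional on $\sigma((X_i,X_i')_{i\in I},(X_j')_{j\in J})$, the $X_j$ with $j\in J$ are still iid uniform. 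Also $\langle R_J,NR_I\rangle_{\HS}=|J|^{-1}\sum_jb_j\langle q_{3,d}(X_j),v_j\rangle$ with fixed unit vectors $v_j$. Truncate the weights at $b_j\le K_0s^{1/3}$; the discarded part is at most $C|J|s^2/(K_0^3s)$ by the fourth-moment bound. Lemma~\ref{lem:cubic-directional-mgf}, valid only for $|\theta|\le c_0d$, together with Chernoff then gives rate $|J|\min\{Ns,ds^{2/3}\}\asymp nd\,s^{2/3}$.
\end{enumerate}
The heavy-weight truncation forced by that $c_0d$ range is exactly where the $s^{2/3}$ comes from. No control of $\ell(S_n)$ and no variational potential is used on this shell.
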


\begin{proof}
All probabilities and expectations in this proof are under
\(P_0^{\otimes n}\).  Step 1 uses a fixed partition and conditional
exchangeability to construct an event \(\mathcal A_s\) with
\(\mathbb P(\mathcal E_s^{\rm reg}\cap\mathcal A_s)
\ge(1-o(1))\mathbb P(\mathcal E_s^{\rm reg})\), where
\(\mathcal E_s^{\rm reg}=\{s\le s_n<2s,\ X\in\mathcal G_n^\dagger,
X'\in\mathcal G_n^\dagger\}\).
Step 2 applies Lemma~\ref{lem:cubic-directional-mgf} and Chernoff's
inequality after truncating the scalar-product weights to obtain
\(\mathbb P(\mathcal A_s)\le\exp(-cnd s^{2/3})\), which also bounds
\(\mathbb P(\mathcal E_s^{\rm reg})\) after decreasing \(c\).
Step 3 combines this bound with \(W_{n,\beta}\le e^{Cnds}\) on the same
region and sums over \(s=2^j\underline s_d\le\delta\), \(j\ge0\).

\emph{Step 1: a scalar-product lower bound and weight-moment bounds.}
Fix \(s\in[\underline s_d,\delta]\), and put
\(\mathcal E_s=\{s\le s_n<2s\}\).
When \(\mathbb P(\mathcal E_s^{\rm reg})=0\), the contribution
\(\mathbb E[\1_{\mathcal E_s^{\rm reg}}W_{n,\beta}]\) is zero.
It therefore suffices to consider \(s\) for which
\(\mathbb P(\mathcal E_s^{\rm reg})>0\), so that conditioning on
\(\mathcal E_s^{\rm reg}\) is defined.
The assumptions give \(n\ge\underline c Nd\) for some fixed
\(\underline c>0\), so Lemma~\ref{lem:cubic-hereditary-square-profile}
applies to both samples.  Recall
\(w_i=q_{3,d}(X_i)q_{3,d}(X_i')^{\mathsf T}\), and let
\(R=n^{-1}\sum_iw_i\), so that \(s_n=N\|R\|_{\HS}^2\).
Fix \(I=\{1,\ldots,\lfloor n/2\rfloor\}\) and \(J=[n]\setminus I\),
and define
\[
  R_I:=\frac1{|I|}\sum_{i\in I}w_i,
  \qquad C_I:=NR_I,
\]
with \(R_J\) defined analogously.  We write
\[
 \langle R_J,C_I\rangle_{\HS}
 =\frac1{|J|}\sum_{j\in J}
 \langle q_{3,d}(X_j),C_Iq_{3,d}(X_j')\rangle
\]
and put \(b_j=\|C_Iq_{3,d}(X_j')\|\) for \(j\in J\).
Let
\(\mathcal B_I(s)\) be the event
\[
 \frac1{|J|}\sum_{j\in J}b_j^2\le Cs,
 \qquad
 \frac1{|J|}\sum_{j\in J}b_j^4\le Cs^2,
\]
where \(C\) is a sufficiently large fixed constant.
For \(\mathcal A_s=\mathcal B_I(s)\cap
\{\langle R_J,C_I\rangle_{\HS}\ge c_*s\}\), we will prove
\begin{equation}
 \mathbb P(\mathcal A_s\mid\mathcal E_s^{\rm reg})
 \ge1-\varepsilon_{n,s},
 \qquad \varepsilon_{n,s}=C\sqrt{\frac{N}{ns}}=o(1),
 \label{eq:cubic-conditional-split-success}
\end{equation}
uniformly in \(s\), with fixed \(c_*>0\).
Step 1(a) bounds \(N\|R_I-R\|_{\HS}^2\).
On the event where this quantity is \(o(s)\), Step 1(b) proves
\(\langle R_J,C_I\rangle_{\HS}\ge c_*s\), and Step 1(c) verifies
the two moment bounds defining \(\mathcal B_I(s)\).

\emph{Step 1(a): approximate \(R\) by \(R_I\).}
The product law and
\(\mathcal E_s^{\rm reg}\) are invariant under simultaneous permutations
of the pairs \((X_i,X_i')\), so the \(w_i\) remain exchangeable
conditional on \(\mathcal E_s^{\rm reg}\).
Thus \(\mathbb E[\langle w_i,w_j\rangle_{\HS}\mid\mathcal E_s^{\rm reg}]\)
has the same value for every \(i\ne j\).  Expanding
\(\|R\|_{\HS}^2=n^{-2}\sum_{i,j}\langle w_i,w_j\rangle_{\HS}\)
and using \(\|w_i\|_{\HS}=1\) identifies this value as
\[
 \mathbb E[\langle w_i,w_j\rangle_{\HS}\mid\mathcal E_s^{\rm reg}]
 =\frac{n\mathbb E[\|R\|_{\HS}^2\mid\mathcal E_s^{\rm reg}]-1}{n-1},
 \qquad i\ne j.
\]
In the expansion
\(R_I-R=\sum_i(\1_{\{i\in I\}}/|I|-1/n)w_i\), the coefficients sum
to zero and their squares sum to \((n-|I|)/(n|I|)\).
Substituting the displayed conditional expectation therefore gives
\begin{equation}
 \mathbb E[\|R_I-R\|_{\HS}^2\mid\mathcal E_s^{\rm reg}]
 =\frac{n-|I|}{|I|(n-1)}
   \left(1-\mathbb E[\|R\|_{\HS}^2\mid\mathcal E_s^{\rm reg}]\right)
 \le\frac Cn.
 \label{eq:cubic-conditional-subsample-variance}
\end{equation}
Moreover, \(n\ge\underline c Nd\) and \(s\ge\underline s_d\) give
\(N/(ns)\le C/(ds)\le C(\log d)^{-4}=o(1)\).
Markov's inequality applied to
\eqref{eq:cubic-conditional-subsample-variance} therefore yields
\begin{equation}
 \mathbb P\!\left\{N\|R_I-R\|_{\HS}^2>\sqrt{Ns/n}
       \,\middle|\,\mathcal E_s^{\rm reg}\right\}
 \le\varepsilon_{n,s}.
 \label{eq:cubic-conditional-split-deviation}
\end{equation}
\emph{Step 1(b): bound \(\langle R_J,C_I\rangle_{\HS}\) from below.}
Work on
\(\mathcal E_s^{\rm reg}\cap
\{N\|R_I-R\|_{\HS}^2\le\sqrt{Ns/n}\}\).
Since \(\sqrt{Ns/n}/s=\sqrt{N/(ns)}=o(1)\), this gives
\(N\|R_I-R\|_{\HS}^2=o(s)\).
Using \(R_J=R-(|I|/|J|)(R_I-R)\) and expanding the scalar product gives
\begin{equation}
 \langle R_J,C_I\rangle_{\HS}
 =s_n-\frac{N|I|}{|J|}\|R_I-R\|_{\HS}^2
   +\frac{N(|J|-|I|)}{|J|}\langle R,R_I-R\rangle_{\HS}
 =s_n+o(s)\ge c_*s.
 \label{eq:cubic-pilot-cross-identity}
\end{equation}
The final equality and inequality use \(s\le s_n<2s\),
\(N\|R_I-R\|_{\HS}^2=o(s)\),
and the bound
\[
 \left|N\frac{|J|-|I|}{|J|}\langle R,R_I-R\rangle_{\HS}\right|
 \le\frac Cn\sqrt{s_nN\|R_I-R\|_{\HS}^2}=o(s),
\]
which follows from Cauchy--Schwarz and \(|\,|J|-|I|\,|\le1\).

\emph{Step 1(c): bound the second and fourth moments of the weights.}
Continue on the event in Step 1(b).
The triangle inequality and \(N\|R_I-R\|_{\HS}^2=o(s)\) give
\[
 N^{-1}\|C_I\|_{\HS}^2=N\|R_I\|_{\HS}^2=s_n\{1+o(1)\}\le Cs.
\]
Also, \eqref{eq:cubic-hereditary-covariance-profile} gives
\[
 \|C_I\|_{\rm op}
 \le N\left\|\frac1{|I|}\sum_{i\in I}q_{3,d}(X_i)q_{3,d}(X_i)^{\mathsf T}\right\|_{\rm op}^{1/2}
       \left\|\frac1{|I|}\sum_{i\in I}q_{3,d}(X_i')q_{3,d}(X_i')^{\mathsf T}\right\|_{\rm op}^{1/2}
 \le C.
\]
We can therefore apply \eqref{eq:cubic-hereditary-fourth-profile} with
\(B=C_I^{\mathsf T}C_I/C^2\preceq I_N\).
Since \(\tr(C_I^{\mathsf T}C_I)/N\le Cs\), this gives
\[
 \frac1{|J|}\sum_{j\in J}b_j^4
 \le C\left(s^2+\frac{s}{d}
       +\frac{N\log d}{|J|}s\right)
 \le Cs^2
\]
because \(ds/\log d\to\infty\) and \(N/|J|\le C/d\).
Cauchy--Schwarz also gives
\(|J|^{-1}\sum_{j\in J}b_j^2\le
(|J|^{-1}\sum_{j\in J}b_j^4)^{1/2}\le Cs\).
Thus the two inequalities defining \(\mathcal B_I(s)\) hold.
Together with \eqref{eq:cubic-pilot-cross-identity}, they give
\[
 \mathcal E_s^{\rm reg}\cap
 \{N\|R_I-R\|_{\HS}^2\le\sqrt{Ns/n}\}
 \subseteq\mathcal A_s.
\]
Combining this inclusion with
\eqref{eq:cubic-conditional-split-deviation} proves
\eqref{eq:cubic-conditional-split-success}.

\emph{Step 2: truncation and conditional concentration.}
We now work under the unconditioned product law \(P_0^{\otimes n}\) and set
\(\mathcal F_I=\sigma((X_i,X_i')_{i\in I},(X_j')_{j\in J})\).
Then \(\mathcal B_I(s)\in\mathcal F_I\), and conditional on
\(\mathcal F_I\), the variables \((X_j)_{j\in J}\) are independent with
law \(\sigma_d\).
The weights \(b_j\) and the unit vectors
\(v_j=C_Iq_{3,d}(X_j')/b_j\) for \(b_j>0\) are fixed under this
conditioning.
To apply Lemma~\ref{lem:cubic-directional-mgf} to the \(j\)th term
at parameter \(\theta=\lambda b_j\), we need \(\lambda b_j\le c_0d\).
We therefore introduce a cutoff \(B_s>0\) and retain the indices
\(J_s=\{j\in J:0<b_j\le B_s\}\).
On \(\mathcal B_I(s)\), the fourth-moment bound gives
\[
 \left|\sum_{j\in J:\,b_j>B_s}
 b_j\langle q_{3,d}(X_j),v_j\rangle\right|
 \le\sum_{j\in J:\,b_j>B_s}b_j
 \le\frac{\sum_{j\in J}b_j^4}{B_s^3}
 \le\frac{C|J|s^2}{B_s^3}.
\]
Consequently, if \(\langle R_J,C_I\rangle_{\HS}\ge c_*s\), then
\begin{equation}
 \sum_{j\in J_s}b_j\langle q_{3,d}(X_j),v_j\rangle
 \ge |J|\left(c_*s-\frac{Cs^2}{B_s^3}\right).
 \label{eq:cubic-weight-truncation}
\end{equation}

The set \(J_s\) is \(\mathcal F_I\)-measurable.
Conditional independence and Lemma~\ref{lem:cubic-directional-mgf},
applied with \(\theta=\lambda b_j\), give on \(\mathcal B_I(s)\)
\[
 \log\mathbb E\!\left[
 e^{\lambda\sum_{j\in J_s}b_j\langle q_{3,d}(X_j),v_j\rangle}
 \,\middle|\,\mathcal F_I\right]
 \le\frac{C\lambda^2}{N}\sum_{j\in J_s}b_j^2
 \le\frac{C\lambda^2|J|s}{N},
 \qquad 0\le\lambda\le\frac{c_0d}{B_s}.
\]
Taking \(\lambda=c\min\{N,d/B_s\}\), with \(c>0\) sufficiently small,
in Chernoff's inequality yields
\begin{equation}
 \mathbb P\left\{\sum_{j\in J_s}b_j
  \langle q_{3,d}(X_j),v_j\rangle\ge\frac{c_*}{2}|J|s
  \,\middle|\,\mathcal F_I\right\}
 \le\exp\left(-c|J|\min\left\{Ns,\frac{ds}{B_s}\right\}\right).
 \label{eq:cubic-weighted-directional-Bernstein}
\end{equation}
To make the lower bound in \eqref{eq:cubic-weight-truncation} at least
\(c_*|J|s/2\), we require \(B_s^3\ge(2C/c_*)s\).
We therefore take
\(B_s=K_0s^{1/3}\), with \(K_0\) fixed and \(C/K_0^3\le c_*/2\).
Then \(ds/B_s=K_0^{-1}ds^{2/3}\), and
\(Ns^{1/3}/d\to\infty\) uniformly for
\(s\ge\underline s_d=d^{-1}\log^4d\), since \(N\asymp d^3\).
Thus \(|J|\asymp n\), \eqref{eq:cubic-weight-truncation}, and
\eqref{eq:cubic-weighted-directional-Bernstein} give, on
\(\mathcal B_I(s)\),
\begin{equation}
 \mathbb P\left\{
  \langle R_J,C_I\rangle_{\HS}\ge c_*s\,\middle|\,\mathcal F_I\right\}
 \le \exp(-cnd s^{2/3}).
 \label{eq:cubic-cross-fitted-overlap-tail}
\end{equation}
Since \(\mathcal B_I(s)\in\mathcal F_I\),
\eqref{eq:cubic-conditional-split-success} and
\eqref{eq:cubic-cross-fitted-overlap-tail} give
\begin{equation}
\begin{aligned}
 (1-\varepsilon_{n,s})\mathbb P(\mathcal E_s^{\rm reg})
 &\le\mathbb P(\mathcal E_s^{\rm reg}\cap\mathcal A_s)
 \le\mathbb P(\mathcal A_s)\\
 &=\mathbb E\!\left[
  \1_{\mathcal B_I(s)}
  \mathbb P\!\left\{\langle R_J,C_I\rangle_{\HS}\ge c_*s
       \,\middle|\,\mathcal F_I\right\}\right]
 \le\exp(-cnd s^{2/3}).
\end{aligned}
\label{eq:cubic-conditional-probability-transfer}
\end{equation}
Since \(\varepsilon_{n,s}=o(1)\) and \(nd s^{2/3}\to\infty\) uniformly
for \(\underline s_d\le s\le\delta\),
\eqref{eq:cubic-conditional-probability-transfer} gives, after decreasing \(c\),
\begin{equation}
  P_0^{\otimes n}\{s\le s_n<2s,
    {\cal G}_n^\dagger(X),{\cal G}_n^\dagger(X')\}
  \le \exp(-cnd s^{2/3}),
  \qquad \underline s_d\le s\le\delta.
  \label{eq:cubic-root-overlap-tail}
\end{equation}

\emph{Step 3: summation with the weight \(W_{n,\beta}\).}
Equation~\eqref{eq:cubic-local-linear-reward} and the upper bound on
\(r_n\) give
\(W_{n,\beta}\le e^{Cnds}\) on \(\mathcal E_s\).
Choose \(\delta>0\) sufficiently small that
\(Cs\le(c/2)s^{2/3}\) for every \(0<s\le\delta\), where
\(c\) is the constant in \eqref{eq:cubic-root-overlap-tail}.
Set \(s_j=2^j\underline s_d\) for integers \(j\ge0\).
The intervals \([s_j,2s_j)\) with \(s_j\le\delta\) cover
\([\underline s_d,\delta]\).
Their number is \(1+\lfloor\log_2(\delta/\underline s_d)\rfloor=O(\log d)\).
Thus \eqref{eq:cubic-root-overlap-tail} and
\eqref{eq:cubic-local-linear-reward} yield
\begin{align*}
 &\mathbb E_{P_0^{\otimes n}}\big[\1_{\mathcal G_n^\dagger(X)}
   \1_{\mathcal G_n^\dagger(X')}W_{n,\beta}
   \1_{\{\underline s_d\le s_n\le\delta\}}\big]\\
 &\quad\le\sum_{j\ge0:\,s_j\le\delta}
     e^{Cnd s_j}P_0^{\otimes n}(\mathcal E_{s_j}^{\rm reg})
 \le C\log d\,\exp(-cnd\,\underline s_d^{2/3})
 =o(1).
\end{align*}
The last step uses \(\underline s_d=d^{-1}\log^4 d\) and
\(n\ge\underline c Nd\), and proves \eqref{eq:cubic-mesoscopic-deletion}.

\end{proof}

Lemma~\ref{lem:cubic-coarse-terminal-deletions} applies the probability
bounds in Lemmas~\ref{lem:cubic-operator-norm-tail} and
\ref{lem:cubic-fourth-U-moments}, together with
\eqref{eq:cubic-local-linear-reward}, to the region
\(\bigl|\|S_n\|_{\HS}^2-n\bigr|\le b_\star d^2\log d/\beta^2\).

\begin{lemma}[Coarse full-sample deletions on the logarithmic band]
\label{lem:cubic-coarse-terminal-deletions}
Under \eqref{eq:cubic-local-standing-assumptions}, fix \(\epsilon_\star>0\)
as at the start of this subsection.  There exists a constant \(b_\star>0\)
such that
\begin{equation}
 \mathbb E\Bigl[
  \1_{\mathcal E_n^{\rm reg}}
  W_{n,\beta}\1_{\{\lvert\|S_n\|_{\HS}^2-n\rvert\le b_\star d^2\log d/\beta^2\}}
  \1_{\{
     \|S_n\|_{\rm op}>\epsilon_\star d/\beta^2
     \ \text{or}\ 
     \beta^8n|T_{4,n}|>1\}}
 \Bigr]=o(1).
\label{eq:cubic-coarse-terminal-deletions}
\end{equation}
\end{lemma}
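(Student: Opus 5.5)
The plan is to bound the weight pointwise on the band and multiply by the two tail probabilities. On the band, the pointwise bound \eqref{eq:cubic-local-linear-reward} gives
\[
 W_{n,\beta}\le e^{\beta^2(\|S_n\|_{\HS}^2-n)/2}\le e^{b_\star d^2\log d/2}.
\]
Hence the expectation in \eqref{eq:cubic-coarse-terminal-deletions} is at most
\[
 e^{b_\star d^2\log d/2}\Bigl[
   P_0^{\otimes n}\{\mathcal E_n^{\rm reg},\ \|S_n\|_{\rm op}>\epsilon_\star d/\beta^2\}
   +P_0^{\otimes n}\{\mathcal E_n^{\rm reg},\ \beta^8n|T_{4,n}|>1\}\Bigr].
\]
The work is to show that each probability beats the factor $e^{b_\star d^2\log d/2}$ once $b_\star$ is fixed small enough.

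\emph{Operator-norm term.} Lemma~\ref{lem:cubic-operator-norm-tail} is stated for a sequence $\epsilon_d\to0$, so I first fix any $\epsilon_d$ satisfying \eqref{eq:cubic-local-epsilon-conditions}, for instance $\epsilon_d=d^{-1/8}$. For large $d$ this gives $\epsilon_d\le\epsilon_\star$, so the event $\{\|S_n\|_{\rm op}>\epsilon_\star d/\beta^2\}$ is contained in $\{\|S_n\|_{\rm op}>\epsilon_d d/\beta^2\}$. The lemma then bounds the probability by
\[
 2\exp(-c\epsilon_d^2d^{5/2}/\beta^2)\le2\exp(-c\,d^{9/4}).
\]
Since $d^{9/4}\gg d^2\log d$, this beats $e^{b_\star d^2\log d/2}$ for any fixed $b_\star$. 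Alternatively, one can rerun the proof of Lemma~\ref{lem:cubic-operator-norm-tail} with a fixed threshold $T=\epsilon_\star d/\beta^2$; the same Chernoff step then gives $\exp(-c\epsilon_\star^2d^{5/2}/\beta^2)$. Either route suffices.

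\emph{Fourth-order term.} I take the same $\epsilon_d$, so that $\epsilon_d\le1$ eventually. Then $\{\beta^8n|T_{4,n}|>1\}\subseteq\{\beta^8n|T_{4,n}|>\epsilon_d\}$, and \eqref{eq:cubic-fourth-U-tail} bounds the probability by $e^{-d^2\log d}$. Multiplying by $e^{b_\star d^2\log d/2}$ gives $e^{-(1-b_\star/2)d^2\log d}=o(1)$ provided $b_\star<2$. I would choose $b_\star=1$.

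The only real constraint, and so the expected main obstacle, is this balance between the growth $e^{b_\star d^2\log d/2}$ and the fourth-order tail, which is only $e^{-d^2\log d}$. That tail came from choosing $p=d^2$ in the moment bound \eqref{eq:cubic-fourth-U-moment}. It is exactly why the band width is $b_\star d^2\log d/\beta^2$ with a small constant and not anything larger. The operator-norm tail is comfortably stronger at $d^{9/4}$. Finally, both probability bounds hold on $\mathcal E_n^{\rm reg}$ uniformly over $|\beta|\le1$ and $r_n\in K$, so adding the two contributions proves \eqref{eq:cubic-coarse-terminal-deletions}.
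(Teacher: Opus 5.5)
Your proposal is correct and follows the paper's proof: bound $W_{n,\beta}\le e^{b_\star d^2\log d/2}$ on the band via \eqref{eq:cubic-local-linear-reward}, then control the two events with Lemma~\ref{lem:cubic-operator-norm-tail} and \eqref{eq:cubic-fourth-U-tail}, using the inclusions given by $\epsilon_d\le\epsilon_\star$ and $\epsilon_d\le1$. Your choices $\epsilon_d=d^{-1/8}$ and $b_\star=1$ are a valid concrete instance of the paper's requirement $b_\star<2\min(c_\star,1)$.
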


\begin{proof}
For sufficiently large \(d\), \(\epsilon_d<\epsilon_\star\).
Lemma~\ref{lem:cubic-operator-norm-tail} therefore gives
\[
 \mathbb P\left\{\mathcal E_n^{\rm reg},\
       \|S_n\|_{\rm op}>\epsilon_\star d/\beta^2\right\}
 \le2\exp\left(-\frac{c\epsilon_d^2d^{5/2}}{\beta^2}\right)
 \le e^{-c_\star d^2\log d}.
\]
The last inequality follows from
\eqref{eq:cubic-local-epsilon-conditions} and \(|\beta|\le1\).

Since \(\epsilon_d<1\), Lemma~\ref{lem:cubic-fourth-U-moments} gives
\[
 \mathbb P\{\mathcal E_n^{\rm reg},\ \beta^8n|T_{4,n}|>1\}
 \le\mathbb P\{\mathcal E_n^{\rm reg},\
                     \beta^8n|T_{4,n}|>\epsilon_d\}
 \le e^{-d^2\log d}.
\]
Finally, throughout the full-sample band,
\[
 W_{n,\beta}\le\exp\{\beta^2(\|S_n\|_{\HS}^2-n)/2\}
 \le\exp\left(b_\star d^2\log d/2\right).
\]
Multiplying the two tail bounds by this upper bound for \(W_{n,\beta}\)
shows that the left-hand side of \eqref{eq:cubic-coarse-terminal-deletions}
is at most
\[
 e^{b_\star d^2\log d/2}
       \left(e^{-c_\star d^2\log d}+e^{-d^2\log d}\right)
 \le2e^{-\{\min(c_\star,1)-b_\star/2\}d^2\log d}=o(1),
\]
provided \(b_\star<2\min(c_\star,1)\).
\end{proof}

\subsection{Proof of the first inequality}
\label{subsec:cubic-direct-orbit-closure}

This subsection proves the first inequality in \eqref{eq:cubic-local-roadmap}, reducing
the integral over \(s_n<\delta\) to
\(\bigl|\|S_n\|_{\HS}^2-n\bigr|\le K_dd^2/\beta^2\) at an additive cost of \(o(1)\).
To apply \eqref{eq:cubic-summed-variational-bound} to the positive tail,
we first bound the weighted integral over
\(\|S_n\|_{\HS}^2-n>n/(\beta^2\log^4d)\) by \(o(1)\).
On the remaining region, imposing
\(\beta^2\ell(S_n)\le\epsilon_\star d\) changes the weighted integral by \(o(1)\),
with \(\epsilon_\star\) chosen at the start of
Section~\ref{subsec:cubic-local-deletion}.

Probabilities and expectations in this subsection are under \(P_0^{\otimes n}\).
Lemma~\ref{lem:cubic-high-energy-entropy} first bounds the weighted integral over
\(\{s_n<\delta,\ \|S_n\|_{\HS}^2-n>n/(\beta^2\log^4d)\}\).

\begin{lemma}[Weighted upper-tail bound for \(\|S_n\|_{\HS}^2-n>v_\star\)]
\label{lem:cubic-high-energy-entropy}
Set \(v_\star=n/(\beta^2\log^4d)\). There are \(\delta_0,c>0\), depending only on the fixed parameters,
such that, uniformly for \(0<\delta\le\delta_0\),
\begin{equation}
 \mathbb E\big[
  \1_{\mathcal E_n^{\rm reg}}W_{n,\beta}
  \1_{\{s_n<\delta,\ \|S_n\|_{\HS}^2-n>v_\star\}}\big]
 \le e^{-cn/\log^4 d}.
 \label{eq:cubic-high-energy-entropy}
\end{equation}
\end{lemma}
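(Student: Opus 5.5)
The plan is to reduce \eqref{eq:cubic-high-energy-entropy} to a probability tail that beats the linear reward \eqref{eq:cubic-local-linear-reward}, summed over a dyadic decomposition in \(\Delta=\|S_n\|_{\HS}^2-n\). On \(\{\Delta\in[v,2v)\}\) we have \(W_{n,\beta}\le e^{\beta^2 v}\). Since \(\Delta\le n^2s_n/N\le Cnd\delta/\beta^2\), at most \(O(\log d)\) dyadic scales \(v=2^jv_\star\) occur. It therefore suffices to show, uniformly in these \(v\),
\[
 P_0^{\otimes n}\{\mathcal E_n^{\rm reg},\ s_n<\delta,\ v\le\Delta<2v\}
 \le\exp\{-(1+c)\beta^2v\},
\]
and then observe that \(c\beta^2 v_\star\ge cn/\log^4d\); the \(O(\log d)\) factor is absorbed into \(c\).

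The intended route to this tail is a cross-fitting (split) argument in the style of Lemma~\ref{lem:cubic-submacroscopic-deletion}, but written in terms of \(\Delta\) rather than \(s_n\). Split \([n]=I\cup J\) into halves and write \(\Delta=\Delta_I+\Delta_J+2\langle S_I,S_J\rangle_{\HS}\), where \(\Delta_I=\|S_I\|_{\HS}^2-|I|\). The point of using the cross term is that it carries no diagonal contribution. If \(\Delta\ge v\), then either \(\langle S_I,S_J\rangle_{\HS}\ge v/4\), or one of \(\Delta_I,\Delta_J\) is at least \(v/4\).

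For the cross term I would repeat Step~2 of Lemma~\ref{lem:cubic-submacroscopic-deletion}. Condition on \(\mathcal F_I\), truncate the weights \(b_j=\|S_Iq_{3,d}(X_j')\|\) using the fourth-moment bound \eqref{eq:cubic-hereditary-fourth-profile}, which is available on \(\mathcal E_n^{\rm reg}\) since \(\|S_I\|_{\rm op}\) is controlled by \eqref{eq:cubic-hereditary-covariance-profile}. Then apply Lemma~\ref{lem:cubic-directional-mgf} and Chernoff. This should give an exponent of order \(\min\{Nv^2/(n+v\,n/N),\ dv/B\}\). Using \(n\beta^2\asymp Nd\) and \(v\ge v_\star\), this is \(\gg\beta^2v\), because \(Nv/n\gg\beta^2\) precisely when \(v\gg n\beta^2/N\asymp d\).

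For the half-sample excesses \(\Delta_I\) I would argue recursively: apply the same split to \(I\), with \(v\) replaced by \(v/4\) and \(n\) by \(n/2\). The recursion stops after \(O(\log\log d)\) levels, at the first level where \(v/4^k\) falls below the subsample's diagonal scale. There, \eqref{eq:cubic-prefix-source} bounds \(\|S_I\|_{\HS}^2\le|I|K_{|I|}\) deterministically on \(\mathcal G_n^\dagger\), so the excess event is empty. The cost is that the levels multiply the number of events by a factor polylogarithmic in \(d\), which is harmless.

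The main obstacle is getting the constant \(1+c\) in the exponent, rather than merely some constant, at the smallest scales \(v\approx v_\star\) when \(\beta\) is of order one. There the cross-term exponent \(Nv^2/n\) exceeds \(\beta^2v\) only by the factor \(Nv_\star/(n\beta^2)\asymp 1/(\beta^2d\log^4 d)\cdot N/\dots\), and this margin must be checked against the truncation loss \(Cs^2/B_s^3\). If that check fails, the fallback is to use Lemma~\ref{lem:cubic-parameterized-row-ledger}(ii) together with the restricted Gibbs formula \eqref{eq:cubic-restricted-variational} to compare directly with the linear tilt. At these scales this is possible because \(R\approx v/n\) is small, so the right side of \eqref{eq:cubic-parameterized-linear-ledger} is \(O(d^2\,v/n)=o(\beta^2 v)\). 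The remaining issue in that route is that (ii) is stated only for small \(\kappa\); to reach a tilt \(\kappa>1\) I would rerun its proof at \(\kappa=1+c\), where the restriction \(s_n<\delta\) (with \(\delta\le\delta_0\) small) supplies the needed smallness of \(\beta^2\ell(S_m)/d\).
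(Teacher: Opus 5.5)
\textbf{There is a genuine gap in your treatment of the diagonal-block excesses.} With a fixed split, your trichotomy forces you to bound $\mathbb P\{\Delta_{I_k}\ge v/4^k\}$ by $e^{-(1+c)\beta^2v}$ at every depth $k$, and the recursion has no base case. The deterministic bound \eqref{eq:cubic-prefix-source} gives only
$\Delta_{I_k}\le (C_A-1)|I_k|+C_A|I_k|^2\log^3(e2^k)/N$, with $|I_k|=n/2^k$. The second term scales exactly like the threshold $v/4^k$. It exceeds the threshold whenever $v<C_An^2/N$, and this holds throughout $\{s_n<\delta\}$ because there $v\le\Delta<\delta n^2/N$. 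So $\{\Delta_{I_k}\ge v/4^k\}$ never becomes empty: lowering the threshold enlarges the event.

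The obstruction is structural, not just a weak bound. Suppose the whole sample is tilted toward a feature mean $M$ with $n^2\|M\|_{\HS}^2\approx v$. Then every block of size $n/2^k$ automatically carries excess about $v/4^k$, while tilting only that block costs roughly a $2^{-k}$ fraction of the full large-deviation cost. Since the per-level margin over $\beta^2v$ is only a fixed power of $d$, the recursion cannot be pushed down to singletons.

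\textbf{The missing idea is that you never need $\Delta_I,\Delta_J$: by exchangeability, a typical split already puts a fixed fraction of $\Delta$ into the cross term.} For a uniform balanced split, $\mathbb E_I\langle S_I,S_J\rangle_{\HS}\ge\Delta/4$. On $\mathcal E_n^{\rm reg}$, \eqref{eq:cubic-degree-three-feature-coherence} gives $\operatorname{Var}_I\langle S_I,S_J\rangle_{\HS}\le C\sum_{i<j}h_{ij}^2\le Cn^2\log^{12}d/N^2=o(v_\star^2)$. Hence at least half the splits satisfy $\langle S_I,S_J\rangle_{\HS}\ge v/8$ and $\|S_I\|_{\HS}^2+\|S_J\|_{\HS}^2\le n+2v$. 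Put $\mathcal A_v=\mathcal E_n^{\rm reg}\cap\{s_n<\delta,\ v<\Delta\le2v\}$. Since $\mathcal A_v$ and $P_0^{\otimes n}$ are permutation invariant, any fixed split then satisfies $\mathbb P(\mathcal A_v)\le2\,\mathbb P(\mathcal A_v\cap\{\langle S_I,S_J\rangle_{\HS}\ge v/8,\ \|S_I\|_{\HS}^2\le n+2v\})$. This is Step~1 of the paper's proof.

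With this in place, your conditional Chernoff step does close, and the small-scale check you left open succeeds. Using $\|S_I\|_{\rm op}\le Cn/N$ and \eqref{eq:cubic-hereditary-fourth-profile}, one gets
$\sum_jb_j^4\lesssim n(n+v)^2/N^2+(n+v)(n/N)^2\{\log d+n/(Nd)\}$. The range-limited exponent $dv^{4/3}/(\sum_jb_j^4)^{1/3}$ then beats $\beta^2v$ by a factor $\gtrsim(d/\log^5d)^{1/3}$ near $v_\star$. Near $\delta n^2/N$ the factor is only $\gtrsim\delta^{-1/3}$, which is why $\delta_0$ must be small. Your variance proxy should be $\sum_jb_j^2/N\lesssim n(n+v)/N^2$ rather than the expression you wrote, but that regime is harmless.

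Repaired this way, your route differs from the paper only after Step~1. The paper conditions on the good event and lower-bounds the relative entropy of the conditional $J$-law, using its mean $M_I$, \eqref{eq:cubic-directional-mgf} through entropy duality, and H\"older's inequality. You would instead use the dual truncated Chernoff bound, as in Lemma~\ref{lem:cubic-submacroscopic-deletion}.

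Your fallback is not available. The row expansions behind \eqref{eq:cubic-parameterized-linear-ledger} require $\beta^2\ell(S_m)\le c_0d$. The restriction $s_n<\delta$ gives only $\beta^2\ell(S_n)\le\beta^2\|S_n\|_{\HS}<r_nd\sqrt{\delta N}$, which is far larger than $d$.
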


\begin{proof}
Write \(\Delta=\|S_n\|_{\HS}^2-n\) and
\(\mathcal A_v=\mathcal E_n^{\rm reg}\cap\{s_n<\delta,\ v<\Delta\le2v\}\).
We use dyadic peeling, first proving
\[
 \mathbb P(\mathcal A_v)\le2e^{-8\beta^2v},
 \qquad v\ge v_\star,
\]
and then summing over \(v=2^jv_\star\), \(j\ge0\).
Step 1 constructs a balanced split with a large cross inner product.
Step 2 bounds the conditional feature mean and fourth moment, and
Step 3 converts these bounds into a lower bound for relative entropy.
Step 4 deduces the displayed probability estimate and sums the weighted
integrals, using \(W_{n,\beta}\le e^{\beta^2v}\) on \(\mathcal A_v\)
from \eqref{eq:cubic-local-linear-reward}.

\emph{Step 1: a balanced split on a fixed interval.}
Fix \(v\ge v_\star\).
Equation~\eqref{eq:cubic-degree-three-feature-coherence}, applied to both
samples on \(\mathcal E_n^{\rm reg}\), gives
\[
 \sum_{i<j}h_{ij}^2\le Cn^2\log^{12} d/N^2.
\]
Choose a uniform balanced split \(I\cup J=[n]\), independently of the
paired sample, and write \(S_I=\sum_{i\in I}w_i\),
\(S_J=\sum_{j\in J}w_j\), and \(C_{I,J}=\langle S_I,S_J\rangle_{\HS}\).
Write \(\mathbb P_I\) for the law of this split and \(\mathbb E_I\) for
expectation with respect to it, with the sample fixed.
For even \(n\), put \(\varepsilon_i=\1_{\{i\in I\}}-\1_{\{i\in J\}}\).
Then
\[
 C_{I,J}=\frac14
   \left(\Delta-\sum_{i\ne j}\varepsilon_i\varepsilon_jh_{ij}\right),
 \qquad
 \mathbb E_I C_{I,J}=\frac{n\Delta}{4(n-1)}.
\]
For distinct indices,
\(\mathbb E_I\varepsilon_i\varepsilon_j=-1/(n-1)\) and
\(\mathbb E_I\varepsilon_i\varepsilon_j\varepsilon_k\varepsilon_l
 =3/\{(n-1)(n-3)\}\).
For an edge \(e=\{i,j\}\), write \(\zeta_e=\varepsilon_i\varepsilon_j\)
and \(h_e=h_{ij}\).  Then
\[
 \operatorname{Cov}_I(\zeta_e,\zeta_f)=
 \begin{cases}
  1-(n-1)^{-2},&e=f,\\
  -n/(n-1)^2,&|e\cap f|=1,\\
  2n/\{(n-1)^2(n-3)\},&e\cap f=\varnothing.
 \end{cases}
\]
Using \(2|h_eh_f|\le h_e^2+h_f^2\), with at most \(2n\) adjacent
and \(n^2\) disjoint partners for each edge, gives
\begin{align*}
 \operatorname{Var}_I(C_{I,J})
 &=\frac14\sum_{e,f}h_eh_f\operatorname{Cov}_I(\zeta_e,\zeta_f)\\
 &\le C\left(\sum_eh_e^2+\frac1n\sum_{|e\cap f|=1}|h_eh_f|
               +\frac1{n^2}\sum_{e\cap f=\varnothing}|h_eh_f|\right)
 \le C\sum_eh_e^2.
\end{align*}
For odd \(n\), adjoin a zero vector, split the resulting \(n+1\) vectors
equally, and then remove the zero vector.
The same calculation gives
\(\mathbb E_I C_{I,J}\ge\Delta/4\) and
\(\operatorname{Var}_I(C_{I,J})\le C\sum_eh_e^2\), with
\(|I|,|J|\ge n/3\).
Thus, for either parity of \(n\), Chebyshev's inequality gives,
uniformly on \(\mathcal A_v\),
\[
 \mathbb P_I\{C_{I,J}<v/8\}
 \le\frac{64\operatorname{Var}_I(C_{I,J})}{v^2}
 \le\frac{Cn^2\log^{12}d}{N^2v_\star^2}
 =\frac{C\beta^4\log^{20}d}{N^2}=o(1),
\]
where \(|\beta|\le1\) and \(N\asymp d^3\) give
\(\beta^4\log^{20}d/N^2=O(d^{-6}\log^{20}d)\to0\), uniformly in \(\beta\).
Hence at least \(1/2\) of the splits satisfy
\begin{equation}
 C_{I,J}\ge v/8,\qquad
 \|S_I\|_{\HS}^2+\|S_J\|_{\HS}^2\le n+2v.
 \label{eq:cubic-high-energy-good-split}
\end{equation}
The second inequality follows from
\(\|S_I\|_{\HS}^2+\|S_J\|_{\HS}^2=\|S_n\|_{\HS}^2-2C_{I,J}\).

\emph{Step 2: conditional feature means and fourth moments.}
The bound \(\mathbb P(\mathcal A_v)\le2e^{-8\beta^2v}\) holds when
\(\mathbb P(\mathcal A_v)=0\), so assume \(\mathbb P(\mathcal A_v)>0\).
The joint law of the paired sample and the independent split is
\(P_0^{\otimes n}\otimes\mathbb P_I\).
Let \(\mathcal D_v\) be the event that \(\mathcal A_v\) and both inequalities in
\eqref{eq:cubic-high-energy-good-split} hold. Step 1 gives
\((P_0^{\otimes n}\otimes\mathbb P_I)(\mathcal D_v)
\ge\mathbb P(\mathcal A_v)/2>0\), so we may define
\(\mathsf Q_v=(P_0^{\otimes n}\otimes\mathbb P_I)(\,\cdot\mid\mathcal D_v)\).

Under \(\mathsf Q_v\), fix \(I\) and \((X_i,X_i')_{i\in I}\), and write
\[
 \nu_{J\mid I}=\mathcal L_{\mathsf Q_v}\bigl(
     (X_j,X_j')_{j\in J}\mid I,(X_i,X_i')_{i\in I}\bigr).
\]
For \(m=|J|\), let \(\nu_{j\mid I}\) be its \(j\)-th pair marginal,
put \(\pi_I=m^{-1}\sum_{j\in J}\nu_{j\mid I}\), and denote the
\(X'\)-marginal of \(\pi_I\) by \(\pi_{I,X'}\).
With \(t=v/n\), define
\[
 M_I=\mathbb E_{\pi_I}q_{3,d}(X)q_{3,d}(X')^{\mathsf T}
     =\frac1m\mathbb E_{\nu_{J\mid I}}S_J,\qquad
 s=N\|M_I\|_{\HS}^2.
\]
Our goal is to bound \(s\) in terms of \(t\) and control
\(\mathbb E_{\pi_{I,X'}}\|NM_Iq_{3,d}(X')\|^4\).
Taking \(\nu_{J\mid I}\)-expectations in
\eqref{eq:cubic-high-energy-good-split}, with \(S_I\) fixed, and
applying Cauchy--Schwarz and Jensen's inequality give
\begin{gather*}
 \frac v8\le\mathbb E_{\nu_{J\mid I}}C_{I,J}
 =m\langle S_I,M_I\rangle_{\HS}
 \le m\|S_I\|_{\HS}\|M_I\|_{\HS}
 \le m\sqrt{n+2v}\,\|M_I\|_{\HS},\\
 m^2\|M_I\|_{\HS}^2
 =\bigl\|\mathbb E_{\nu_{J\mid I}}S_J\bigr\|_{\HS}^2
 \le\mathbb E_{\nu_{J\mid I}}\|S_J\|_{\HS}^2\le n+2v.
\end{gather*}
Since \(n/3\le m\le n\), these imply
\begin{equation}
 c\frac{Nv^2}{n^2(n+v)}
 \le s\le C\frac{N(n+v)}{n^2},
 \quad\text{hence}\quad
 c\frac{\beta^2t^2}{d(1+t)}
 \le s\le C\frac{\beta^2(1+t)}d,\qquad s\le C\delta.
 \label{eq:cubic-high-energy-conditional-scale}
\end{equation}
The bound \(s\le C\delta\) follows from
\(n+v<\|S_n\|_{\HS}^2<\delta n^2/N\) on \(\mathcal A_v\).
The constants are uniform over the conditioned coordinates and \(r_n\in K\).

To obtain a fourth-moment bound in terms of \(s\), set \(B_I=NM_I\).
Averaging \eqref{eq:cubic-hereditary-covariance-profile} on \(J\) under
\(\nu_{J\mid I}\) gives
\[
 \mathbb E_{\pi_I}q_{3,d}(X)q_{3,d}(X)^{\mathsf T}\preceq\frac{C}{N}I_N,
 \qquad
 \mathbb E_{\pi_{I,X'}}q_{3,d}(X')q_{3,d}(X')^{\mathsf T}\preceq\frac{C}{N}I_N.
\]
For unit vectors \(\mathbf e,\mathbf f\in\mathbb R^N\),
Cauchy--Schwarz now gives
\begin{align*}
 |\mathbf e^{\mathsf T}M_I\mathbf f|
 &=\left|\mathbb E_{\pi_I}\langle\mathbf e,q_{3,d}(X)\rangle
                 \langle\mathbf f,q_{3,d}(X')\rangle\right|\\
 &\le\big(\mathbb E_{\pi_I}\langle\mathbf e,q_{3,d}(X)\rangle^2
           \mathbb E_{\pi_I}\langle\mathbf f,q_{3,d}(X')\rangle^2\big)^{1/2}
 \le\frac{C}{N}.
\end{align*}
Taking the supremum over these unit vectors gives
\(\|M_I\|_{\rm op}\le C/N\) and \(\|B_I\|_{\rm op}\le C\).
Choose a fixed \(C_0\) with \(\|B_I\|_{\rm op}\le C_0\).
Since \eqref{eq:cubic-hereditary-fourth-profile} holds simultaneously for
all positive contractions, we may apply it to \(B_I^{\mathsf T}B_I/C_0^2\)
for the \(X'\)-sample and average under \(\nu_{J\mid I}\).
Using \(\operatorname{tr}(B_I^{\mathsf T}B_I)=Ns\) and
\(N/m\le C_K\beta^2/d\), we obtain, with \(q_d=(1+\beta^2\log d)/d\),
\begin{equation}
  \mathbb E_{\pi_{I,X'}}\|B_Iq_{3,d}(X')\|^4
  \le C\left(s^2+\frac{s}{d}+\frac{N\log d}{m}s\right)
  \le Cs(s+q_d).
  \label{eq:cubic-high-energy-fourth-profile}
\end{equation}

\emph{Step 3: a lower bound for \(D(\pi_I\Vert P_0)\).}
We prove \(D(\pi_I\Vert P_0)\ge24\beta^2v/n\) by first decomposing the
relative entropy with respect to \(X'\).  Since \(P_0=\sigma_d\otimes\sigma_d\),
the entropy chain rule gives
\[
 D(\pi_I\Vert P_0)
 =D(\pi_{I,X'}\Vert\sigma_d)
   +\mathbb E_{\pi_{I,X'}}D(\pi_I(\,\cdot\mid X')\Vert\sigma_d)
 \ge\mathbb E_{\pi_{I,X'}}D(\pi_I(\,\cdot\mid X')\Vert\sigma_d).
\]
Put \(r(X')=\|\mathbb E_{\pi_I}[q_{3,d}(X)\mid X']\|\le1\).
For fixed \(X'\) with \(r(X')>0\), we have
\begin{equation}
 D(\pi_I(\,\cdot\mid X')\Vert\sigma_d)
 \ge\theta r(X')-\frac{C\theta^2}{N}
 \ge\frac{c_1Nr(X')^2}{2\{1+(N/d)r(X')\}},
 \label{eq:cubic-high-energy-directional-entropy}
\end{equation}
where the first inequality follows from
\eqref{eq:cubic-linear-entropy-duality} applied to
\(\pi_I(\,\cdot\mid X')\otimes\sigma_d\) with exponent
\(\theta\langle\mathbb E_{\pi_I}[q_{3,d}(X)\mid X']/r(X'),q_{3,d}(x)\rangle\),
together with \eqref{eq:cubic-directional-mgf} in the same unit direction.
The second inequality follows by choosing
\(\theta=c_1Nr(X')/\{1+(N/d)r(X')\}\le c_1d\),
with \(c_1\le c_0\) and \(Cc_1\le1/2\).
When \(r(X')=0\), \eqref{eq:cubic-high-energy-directional-entropy}
holds with \(\theta=0\) by nonnegativity of relative entropy.

Write \(\mu_I=\mathbb E_{\pi_{I,X'}}r(X')\).
By the entropy decomposition above,
\eqref{eq:cubic-high-energy-directional-entropy}, and convexity of
\(x\mapsto x^2/\{1+(N/d)x\}\) on \([0,\infty)\), Jensen's inequality gives
\[
 D(\pi_I\Vert P_0)
 \ge cN\mathbb E_{\pi_{I,X'}}\frac{r(X')^2}{1+(N/d)r(X')}
 \ge c\frac{N\mu_I^2}{1+(N/d)\mu_I}.
\]
To bound \(\mu_I\) from below, use
\(s=\mathbb E_{\pi_I}\langle q_{3,d}(X),B_Iq_{3,d}(X')\rangle\).
H\"older's inequality, \(0\le r(X')\le1\), and
\eqref{eq:cubic-high-energy-fourth-profile} give
\begin{align*}
 s&\le\mathbb E_{\pi_{I,X'}}[r(X')\|B_Iq_{3,d}(X')\|]\\
  &\le(\mathbb E_{\pi_{I,X'}}r^{4/3})^{3/4}
       (\mathbb E_{\pi_{I,X'}}\|B_Iq_{3,d}(X')\|^4)^{1/4}
   \le\mu_I^{3/4}\{Cs(s+q_d)\}^{1/4}.
\end{align*}
Consequently, \(\mu_I\ge cs/(s+q_d)^{1/3}\).
Since \(t\ge1/(\beta^2\log^4d)\) and \(t^2/(1+t)\) is increasing,
\eqref{eq:cubic-high-energy-conditional-scale} gives
\[
 s\ge\frac{c\beta^2t^2}{d(1+t)}
 \ge\frac{c}{d(\beta^2\log^8d+\log^4d)}
 \ge\frac{c'}{d\log^8d}.
\]
Since \(s+q_d\le C\), we have \(\mu_I\ge cs\) and hence
\((N/d)\mu_I\ge cN/(d^2\log^8d)\to\infty\) uniformly.
Thus, for large \(d\),
\[
 D(\pi_I\Vert P_0)
 \ge c\frac{N\mu_I^2}{2(N/d)\mu_I}
 \ge c'd\mu_I\ge c''\frac{ds}{(s+q_d)^{1/3}}.
\]
For \(t\le1\), \eqref{eq:cubic-high-energy-conditional-scale} gives
\(ds\ge c\beta^2t^2\) and \(s+q_d\le C\log d/d\), so
\[
 \frac{D(\pi_I\Vert P_0)}{\beta^2t}
 \ge ct(d/\log d)^{1/3}
 \ge c\,d^{1/3}/\log^{13/3} d\longrightarrow\infty,
\]
where \(t\ge1/(\beta^2\log^4d)\) and \(\beta^2\le1\) give the last inequality.
For \(t\ge1\), \eqref{eq:cubic-high-energy-conditional-scale} gives
\(ds\ge c\beta^2t\) and \(s\le C\delta\), hence
\[
 \frac{D(\pi_I\Vert P_0)}{\beta^2t}
 \ge \frac{c}{(\delta+\log d/d)^{1/3}}.
\]
Choose \(\delta_0>0\) so that \(c/(2\delta_0)^{1/3}\ge24\),
with \(c\) as in the last display.  For \(0<\delta\le\delta_0\) and
all sufficiently large \(d\), we have \(\log d/d\le\delta_0\), so the
ratio is at least \(24\) when \(t\ge1\).
The lower bound for \(t\le1\) also exceeds \(24\) for large \(d\).
Thus, uniformly in both ranges,
\(D(\pi_I\Vert P_0)\ge24\beta^2v/n\).

\emph{Step 4: relative entropy and dyadic summation.}
We first pass from the bound for \(\pi_I\) to a bound for the full
conditioned law \(\mathsf Q_v\).  With \(I\) and
\((X_i,X_i')_{i\in I}\) fixed, the relative-entropy decomposition gives
\[
 D(\nu_{J\mid I}\Vert P_0^{\otimes m})
 =D\bigl(\nu_{J\mid I}\Vert
               {\textstyle\bigotimes_{j\in J}}\nu_{j\mid I}\bigr)
   +\sum_{j\in J}D(\nu_{j\mid I}\Vert P_0).
\]
The first term on the right is nonnegative.  Since \(m\ge n/3\),
convexity in the first argument, \(\pi_I=m^{-1}\sum_{j\in J}\nu_{j\mid I}\),
and Step 3 give
\[
 D(\nu_{J\mid I}\Vert P_0^{\otimes m})
 \ge\sum_{j\in J}D(\nu_{j\mid I}\Vert P_0)
 \ge m D(\pi_I\Vert P_0)\ge8\beta^2v.
\]

Under \(P_0^{\otimes n}\otimes\mathbb P_I\), conditional on \(I\), the
pairs indexed by \(I\) and \(J\) have independent laws
\(P_0^{\otimes |I|}\) and \(P_0^{\otimes m}\).
Applying the entropy chain rule first to \(I\) and then to
\((X_i,X_i')_{i\in I}\) gives
\begin{align*}
 &D(\mathsf Q_v\Vert P_0^{\otimes n}\otimes\mathbb P_I)
 =D(\mathcal L_{\mathsf Q_v}(I)\Vert\mathbb P_I)\\
 &\quad+\mathbb E_{\mathsf Q_v}D\left(
      \mathcal L_{\mathsf Q_v}((X_i,X_i')_{i\in I}\mid I)
          \,\middle\Vert\,P_0^{\otimes |I|}\right)
       +\mathbb E_{\mathsf Q_v}D(\nu_{J\mid I}\Vert P_0^{\otimes m})\\
 &\quad\ge\mathbb E_{\mathsf Q_v}D(\nu_{J\mid I}\Vert P_0^{\otimes m})
 \ge8\beta^2v,
\end{align*}
where the first inequality drops the two nonnegative relative entropies.

Since \(\mathsf Q_v=(P_0^{\otimes n}\otimes\mathbb P_I)
(\,\cdot\mid\mathcal D_v)\), its density and relative entropy are
\begin{gather*}
 \frac{d\mathsf Q_v}{d(P_0^{\otimes n}\otimes\mathbb P_I)}
 =\frac{\1_{\mathcal D_v}}
        {(P_0^{\otimes n}\otimes\mathbb P_I)(\mathcal D_v)},\\
 D(\mathsf Q_v\Vert P_0^{\otimes n}\otimes\mathbb P_I)
 =\mathbb E_{\mathsf Q_v}\log
       \frac{1}{(P_0^{\otimes n}\otimes\mathbb P_I)(\mathcal D_v)}
 =-\log(P_0^{\otimes n}\otimes\mathbb P_I)(\mathcal D_v).
\end{gather*}
To obtain a bound for \(\mathbb P(\mathcal A_v)\) from this identity,
recall that, for each sample in
\(\mathcal A_v\), Step 1 gives
\(\mathbb P_I\{C_{I,J}\ge v/8\}\ge1/2\), and the second inequality in
\eqref{eq:cubic-high-energy-good-split} then holds as well.  Consequently,
\[
 \frac12\mathbb P(\mathcal A_v)
 \le\mathbb E\big[\1_{\mathcal A_v}\mathbb P_I\{C_{I,J}\ge v/8\}\big]
 =(P_0^{\otimes n}\otimes\mathbb P_I)(\mathcal D_v)
 \le e^{-8\beta^2v}.
\]
This proves \(\mathbb P(\mathcal A_v)\le2e^{-8\beta^2v}\).
For the final summation, use the disjoint decomposition
\[
 \mathcal E_n^{\rm reg}\cap\{s_n<\delta,\ \Delta>v_\star\}
 =\bigsqcup_{j\ge0}\mathcal A_{2^jv_\star}.
\]
Using \(W_{n,\beta}\le e^{\beta^2v}\) on \(\mathcal A_v\) and
\(\beta^2v_\star=n/\log^4d\), we obtain
\begin{align*}
 \mathbb E\big[\1_{\mathcal E_n^{\rm reg}}W_{n,\beta}
       \1_{\{s_n<\delta,\,\Delta>v_\star\}}\big]
 &\le\sum_{j\ge0}
       e^{\beta^2 2^jv_\star}\mathbb P(\mathcal A_{2^jv_\star})\\
 &\le2\sum_{j\ge0}e^{-7\cdot2^j n/\log^4 d}
 \le4e^{-7n/\log^4 d}\le e^{-cn/\log^4 d}.
\end{align*}
The last line uses \(2^j\ge j+1\) and \(n/\log^4 d\to\infty\).
This proves \eqref{eq:cubic-high-energy-entropy}.
\end{proof}

On the remaining region \(\|S_n\|_{\HS}^2-n\le v_\star\),
Lemma~\ref{lem:cubic-clipped-orbit-tail}, applied with
\(\epsilon=\epsilon_\star\), bounds the weighted integral over
\(\{\beta^2\ell(S_n)>\epsilon_\star d\}\) by \(o(1)\).

\begin{lemma}[Tail bound for \(\ell(S_n)\)]
\label{lem:cubic-clipped-orbit-tail}
For every fixed \(\epsilon>0\), there is \(c>0\) such that, for all
sufficiently large \(d\),
\begin{equation}
 \mathbb P\{\mathcal E_n^{\rm reg},\,
                 \ell(S_n)>\epsilon d/\beta^2\}
 \le \exp(-cn/\log^3 d).
 \label{eq:cubic-clipped-orbit-tail}
\end{equation}
Consequently, after decreasing \(c\),
\begin{equation}
 \mathbb E\big[
  \1_{\mathcal E_n^{\rm reg}} W_{n,\beta}
  \1_{\{\|S_n\|_{\HS}^2-n\le v_\star\}}
  \1_{\{\ell(S_n)>\epsilon d/\beta^2\}}\big]
 \le \exp(-cn/\log^3 d).
 \label{eq:cubic-clipped-orbit-weighted}
\end{equation}
\end{lemma}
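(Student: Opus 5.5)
The plan is to copy the cross-fitting argument of Lemma~\ref{lem:cubic-operator-norm-tail}, replacing the operator norm by \(\ell\). The goal is a probability bound, and the weighted bound \eqref{eq:cubic-clipped-orbit-weighted} will then follow from \eqref{eq:cubic-local-linear-reward}. Indeed, on \(\{\|S_n\|_{\HS}^2-n\le v_\star\}\) we have
\[
W_{n,\beta}\le e^{\beta^2v_\star/2}=e^{n/(2\log^4d)},
\]
which is negligible against \(e^{-cn/\log^3d}\). So everything reduces to \eqref{eq:cubic-clipped-orbit-tail}.

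Write \(T=\epsilon d/\beta^2\). By definition of \(\ell\), on the event \(\ell(S_n)>T\) there exist \(x,y\in\Sph^{d-1}\) such that
\[
\Bigl|\sum_iZ_{3,d}(x,X_i)Z_{3,d}(y,X_i')\Bigr|>T.
\]
First discretize \(x,y\). Using the Lipschitz bound \eqref{eq:cubic-feature-Lipschitz} together with Cauchy--Schwarz and \eqref{eq:cubic-zonal-hereditary}, moving \(x\) by \(\eta\) changes the sum by at most
\[
C\eta\Bigl(\sum_iZ_{3,d}(y,X_i')^2\Bigr)^{1/2}n^{1/2}\le C\eta\sqrt{n(1+n\log^3d/N)}.
\]
A mesh \(\eta=d^{-C}\) therefore costs \(o(T)\), and the net has cardinality \(e^{Cd\log d}\). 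This is \(o(n/\log^3d)\), since \(n\asymp Nd/\beta^2\ge d^4\).

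Next, for a fixed pair \((x,y)\) of net points, I work on \(\mathcal E_n^{\rm reg}\) and condition on the \(X'\)-sample. The weights \(a_i=Z_{3,d}(y,X_i')\) then satisfy \(\sum a_i^2\le C(1+n\log^3d/N)\le Cd\log^3d/\beta^2\) by \eqref{eq:cubic-zonal-hereditary}, and \(\max|a_i|\le C\log^{3/2}d/\sqrt N\) by \eqref{eq:cubic-pair-separation} after intersecting with a regularity event for \(y\); the latter I would instead obtain by truncating as in Step~2 of Lemma~\ref{lem:cubic-operator-norm-tail}. The variables \(Z_{3,d}(x,X_i)=\langle q_{3,d}(x),q_{3,d}(X_i)\rangle\) are independent, so Lemma~\ref{lem:cubic-directional-mgf} with \(\theta=\lambda a_i\) gives
\[
\log\mathbb E e^{\lambda\sum a_iZ_{3,d}(x,X_i)}\le C\lambda^2\sum a_i^2/N
\]
for \(\lambda\max|a_i|\le c_0d\). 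Chernoff then gives a tail \(\exp(-c\min\{T^2N/\sum a_i^2,\,Td/\max|a_i|\})\). The first term is \(\gtrsim \epsilon^2 dN/(\beta^2\log^3d)\asymp n/\log^3d\), which is exactly the claimed rate. The second is of order \(d^2\sqrt N/(\beta^2\log^{3/2}d)\), which is even larger. A union bound over the net and over the sign then finishes.

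The main obstacle is the conditioning issue. The weight bounds hold only on \(\mathcal E_n^{\rm reg}\), while the concentration must be computed under the product law. I would handle this as in Lemma~\ref{lem:cubic-operator-norm-tail}: define the weight event \(\mathcal B\) using the \(X'\)-sample alone, so that it is measurable with respect to the conditioning \(\sigma\)-field and contains \(\mathcal E_n^{\rm reg}\). Large \(|a_i|\) are then removed by a fourth-moment truncation via \eqref{eq:cubic-hereditary-fourth-profile} applied with \(B=q_{3,d}(y)q_{3,d}(y)^{\mathsf T}\), mirroring \eqref{eq:cubic-terminal-truncation-error}, rather than relying on a pointwise maximum over the net point \(y\).
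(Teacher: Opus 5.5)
Your reduction of \eqref{eq:cubic-clipped-orbit-weighted} to \eqref{eq:cubic-clipped-orbit-tail} and your net argument are fine. The gap is the concentration step: it cannot reach the rate \(e^{-cn/\log^3d}\), and that rate is exactly what the weighted bound needs. On \(\{\|S_n\|_{\HS}^2-n\le v_\star\}\) the weight can be as large as \(e^{n/(2\log^4d)}\), with \(n/\log^4d\asymp d^4/(\beta^2\log^4d)\).

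Lemma~\ref{lem:cubic-directional-mgf} holds only for \(|\theta|\le c_0d\). So \(\lambda\le c_0d/\max_i|a_i|\), and your exponent is \(\min\{T^2N/\sum_ia_i^2,\ Td/\max_i|a_i|\}\), as you wrote. Your claim that the second term is the larger one is an arithmetic slip. Even granting \(\max_i|a_i|\lesssim\log^{3/2}d/\sqrt N\), that term is \(\asymp d^2\sqrt N/(\beta^2\log^{3/2}d)\asymp d^{7/2}/(\beta^2\log^{3/2}d)\). This is smaller than \(n/\log^3d\asymp d^4/(\beta^2\log^3d)\) by a factor \(\log^{3/2}d/\sqrt d\). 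With the fourth-moment cutoff you actually propose, you need \(b\gtrsim 1/(\epsilon\sqrt d)\) up to a logarithm, as in \eqref{eq:cubic-terminal-truncation-error}. The exponent then drops to order \(\epsilon^2d^{5/2}/\beta^2\), the rate of Lemma~\ref{lem:cubic-operator-norm-tail}. Neither exponent beats \(n/(2\log^4d)\).

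Better bookkeeping cannot repair this route. Once you pass to \(\mathcal B\cap\{Z>T/8\}\) with \(\mathcal B\) depending only on \(X'\), the \(X\)-sample is unconstrained. Place \(k\asymp T\sqrt N\) of the \(X_i\) within a fixed angle of \(\operatorname{sgn}(a_i)x\), at indices with \(|a_i|\gtrsim N^{-1/2}\); this has probability \(e^{-O(kd)}\) and already forces \(Z>T/8\) for typical \(X'\). So that probability is at least \(e^{-C\epsilon d^{7/2}/\beta^2}\). The regularity of the \(X\)-sample, which forbids such clusters, has to be retained.

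The missing idea is to truncate both factors at the scale \(\tau_d=(C_1\log^3d/N)^{1/2}\), and to use boundedness instead of the directional MGF.

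On \(\mathcal E_n^{\rm reg}\), apply \eqref{eq:cubic-zonal-hereditary} to the sample-dependent set \(J=\{i:|Z_{3,d}(x,X_i)|>\tau_d\}\). Since \(\log(en/|J|)\le 1+A\log d\), for \(C_1\) large this gives \(\sum_{i\in J}Z_{3,d}(x,X_i)^2\le C+\tfrac12|J|\tau_d^2\le C+\tfrac12\sum_{i\in J}Z_{3,d}(x,X_i)^2\). So the discarded \(\ell^2\)-mass is \(O(1)\) uniformly in \(x\), and likewise for \(y\) and \(X'\).

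Combined with \(\sum_iZ_{3,d}(x,X_i)^2\le Cn/N\), Cauchy--Schwarz shows that replacing both factors by their truncations \(a_i(x),a_i'(y)\) changes \(q_{3,d}(x)^{\mathsf T}S_nq_{3,d}(y)\) by \(O(\sqrt{n/N})=o(T)\). This holds deterministically and uniformly in \(x,y\).

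For fixed net points, the products \(a_i(x_0)a_i'(y_0)\) are independent under the unconditioned law \(P_0^{\otimes n}\). They are centered by oddness, bounded by \(\tau_d^2\), and have total variance at most \(n/N^2\). Scalar Bernstein then gives \(\exp(-c\min\{T^2N^2/n,\ T/\tau_d^2\})=\exp(-cn/\log^3d)\), and the union bound over \(e^{O(d\log d)}\) net pairs is harmless.

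This is the paper's route. It also removes the conditioning issue you raised, because \(\mathcal E_n^{\rm reg}\) enters only through the deterministic truncation inequality.
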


\begin{proof}
Put \(T=\epsilon d/\beta^2=\epsilon n/(r_nN)\).
Step 1 approximates \(q_{3,d}(x)^{\mathsf T}S_nq_{3,d}(y)\)
by a sum of bounded terms, uniformly in \(x,y\), with error \(o(T)\).
Step 2 uses a finite net and Bernstein's inequality to prove
\eqref{eq:cubic-clipped-orbit-tail}.  Step 3 then applies
\(W_{n,\beta}\le e^{n/(2\log^4d)}\) on
\(\{\|S_n\|_{\HS}^2-n\le v_\star\}\) to obtain
\eqref{eq:cubic-clipped-orbit-weighted}.

\emph{Step 1: a uniform truncation bound.}
For \(x,y\in\mathbb S^{d-1}\), write
\(\alpha_i(x)=Z_{3,d}(x,X_i)\) and
\(\alpha_i'(y)=Z_{3,d}(y,X_i')\).
To bound the sum of squares of the terms discarded by truncation, set
\(\tau_d^2=C_1\log^3d/N\),
\(J=\{i:|\alpha_i(x)|>\tau_d\}\), and \(k=|J|\).
On \(\mathcal E_n^{\rm reg}\), \eqref{eq:cubic-zonal-hereditary} holds
simultaneously for all subsets, so it applies to this sample-dependent
\(J\) whenever \(k\ge1\).  Choosing the fixed constant \(C_1\)
sufficiently large, we obtain
\[
 \sum_{i\in J}\alpha_i(x)^2
 \le C\left(1+\frac{k}{N}\log^3\frac{en}{k}\right)
 \le C+\frac12k\tau_d^2
 \le C+\frac12\sum_{i\in J}\alpha_i(x)^2,
\]
where the second inequality uses \(\log(en/k)\le1+A\log d\), and
the last uses \(\alpha_i(x)^2>\tau_d^2\) for \(i\in J\).
Subtracting half the sum from both sides gives
\(\sum_{i\in J}\alpha_i(x)^2\le2C\), also valid when \(J\) is empty.
Together with \eqref{eq:cubic-zonal-hereditary} for \(J=[n]\), this gives,
after enlarging \(C\) and uniformly in \(x\),
\begin{equation}
 \sum_i\alpha_i(x)^2\1_{\{|\alpha_i(x)|>\tau_d\}}\le C,
 \qquad \sum_i\alpha_i(x)^2\le Cn/N.
 \label{eq:cubic-orbit-clipping-profile}
\end{equation}
Here \(1+n/N\le Cn/N\), since \(n/N\ge(\min K)d\to\infty\).
The same bounds hold for \(\alpha_i'(y)\).  Define the truncated variables
\(a_i(x)=\alpha_i(x)\1_{\{|\alpha_i(x)|\le\tau_d\}}\) and
\(a_i'(y)=\alpha_i'(y)\1_{\{|\alpha_i'(y)|\le\tau_d\}}\).
Expanding
\(\sum_i(\alpha_i\alpha_i'-a_ia_i')
=\sum_i(\alpha_i-a_i)\alpha_i'+\sum_i a_i(\alpha_i'-a_i')\),
then applying Cauchy--Schwarz and
\eqref{eq:cubic-orbit-clipping-profile}, gives on
\(\mathcal E_n^{\rm reg}\)
\begin{equation}
 \sup_{x,y}\left|
 q_{3,d}(x)^{\mathsf T}S_nq_{3,d}(y)
       -\sum_i a_i(x)a_i'(y)\right|
 \le C\sqrt{n/N}=o(T).
 \label{eq:cubic-orbit-truncation-error}
\end{equation}
The last equality follows from
\(\sqrt{n/N}/T=(r_n/\epsilon)\sqrt{N/n}\to0\).

\emph{Step 2: a deterministic net and Bernstein's inequality.}
To control the supremum
\(\ell(S_n)=\sup_{x,y\in\mathbb S^{d-1}}
|q_{3,d}(x)^{\mathsf T}S_nq_{3,d}(y)|\),
let \(\mathcal N_d\) be a Euclidean \(h\)-net of
\(\mathbb S^{d-1}\), with \(h=T/(24n)\).
The sphere covering bound \cite[Corollary~4.2.13]{Vershynin2018}
gives
\[
 |\mathcal N_d|\le(1+2/h)^d,\qquad
 \log|\mathcal N_d|^2\le C_{\epsilon,K}d\log N,
\]
since \(h=\epsilon/(24r_nN)\).
For net points \(x_0,y_0\) with
\(\|x-x_0\|_2,\|y-y_0\|_2\le h\),
\eqref{eq:cubic-feature-Lipschitz}, the unit feature norms, and
\(\|S_n\|_{\rm op}\le n\) give
\[
 \left|q_{3,d}(x)^{\mathsf T}S_nq_{3,d}(y)
       -q_{3,d}(x_0)^{\mathsf T}S_nq_{3,d}(y_0)\right|
 \le3n(\|x-x_0\|_2+\|y-y_0\|_2)\le T/4.
\]
Thus \(\ell(S_n)>T\) implies
\(|q_{3,d}(x_0)^{\mathsf T}S_nq_{3,d}(y_0)|>3T/4\)
for some \(x_0,y_0\in\mathcal N_d\).
Applying \eqref{eq:cubic-orbit-truncation-error} at this pair, with
\(C\sqrt{n/N}\le T/4\) for large \(d\), gives
\begin{equation}
 \{\mathcal E_n^{\rm reg},\ \ell(S_n)>T\}
 \subseteq
 \bigcup_{x_0,y_0\in\mathcal N_d}
 \left\{\left|\sum_i a_i(x_0)a_i'(y_0)\right|>T/2\right\}.
 \label{eq:cubic-orbit-net-reduction}
\end{equation}

It remains to bound the probabilities on the right-hand side of
\eqref{eq:cubic-orbit-net-reduction} under \(P_0^{\otimes n}\).
For a fixed pair \(x_0,y_0\in\mathcal N_d\), the variables
\(\xi_i=a_i(x_0)a_i'(y_0)\) are independent under \(P_0^{\otimes n}\).
Oddness of the cubic kernel and the symmetric truncation give
\(\mathbb E a_i(x_0)=\mathbb E a_i'(y_0)=0\).
Moreover, \eqref{eq:cubic-addition-formula} and orthonormality give
\(\mathbb E\alpha_1(x_0)^2=\mathbb E\alpha_1'(y_0)^2=1/N\).
Independence of \(X_i\) and \(X_i'\) therefore yields
\[
 \mathbb E\xi_i=0,\qquad |\xi_i|\le\tau_d^2,\qquad
 \sum_i\mathbb E\xi_i^2
 =n\mathbb E a_1(x_0)^2\,\mathbb E a_1'(y_0)^2\le n/N^2.
\]
Bernstein's inequality \cite[Section~2.8]{BoucheronLugosiMassart2013}
yields
\[
 \mathbb P\left(\left|\sum_i\xi_i\right|>T/2\right)
 \le2\exp\left(-\frac{T^2}{8(n/N^2+\tau_d^2T/6)}\right)
 \le2\exp(-cn/\log^3 d).
\]
The last inequality uses \(T=\epsilon n/(r_nN)\) and
\(n/N^2+\tau_d^2T/6\le C_{\epsilon,K}n\log^3d/N^2\).
Combining this probability bound with
\eqref{eq:cubic-orbit-net-reduction} and the union bound gives
\[
 \mathbb P\{\mathcal E_n^{\rm reg},\ \ell(S_n)>T\}
 \le2|\mathcal N_d|^2e^{-cn/\log^3 d}
 \le\exp(C_{\epsilon,K}d\log N-cn/\log^3 d)
 \le e^{-c'n/\log^3 d}.
\]
Here \(d\log N=o(n/\log^3d)\), since
\(n\ge(\min K)Nd\) and \(N\asymp d^3\).
This proves \eqref{eq:cubic-clipped-orbit-tail}.

\emph{Step 3: restoring the likelihood weight.}
To deduce \eqref{eq:cubic-clipped-orbit-weighted},
\eqref{eq:cubic-local-linear-reward} gives
\(W_{n,\beta}\le e^{\beta^2v_\star/2}=e^{n/(2\log^4d)}\) on
\(\{\|S_n\|_{\HS}^2-n\le v_\star\}\).
By \eqref{eq:cubic-clipped-orbit-tail}, we have
\begin{align*}
 \mathbb E\big[\1_{\mathcal E_n^{\rm reg}}W_{n,\beta}
    \1_{\{\|S_n\|_{\HS}^2-n\le v_\star,\ \ell(S_n)>T\}}\big]
 &\le e^{n/(2\log^4 d)}
       \mathbb P\{\mathcal E_n^{\rm reg},\ \ell(S_n)>T\}\\
 &\le e^{n/(2\log^4 d)-c'n/\log^3 d}
 \le e^{-c''n/\log^3 d}.
\end{align*}
This proves \eqref{eq:cubic-clipped-orbit-weighted}.
\end{proof}


For sufficiently small fixed \(\delta>0\), recall the region in
Figure~\ref{fig:cubic-first-inequality-regions},
\[
 \mathcal R_n=\mathcal E_n^{\rm reg}\cap
  \{s_n<\underline s_d,\ \|S_n\|_{\HS}^2-n\le v_\star,
          \ \beta^2\ell(S_n)\le\epsilon_\star d\}.
\]
Lemma~\ref{lem:cubic-submacroscopic-deletion} bounds the integral over
\(\underline s_d\le s_n<\delta\) by \(o(1)\).
Lemmas~\ref{lem:cubic-high-energy-entropy} and
\ref{lem:cubic-clipped-orbit-tail}, the latter with \(\epsilon=\epsilon_\star\),
give the same bound on
\(\{s_n<\delta,\ \|S_n\|_{\HS}^2-n>v_\star\}\) and
\(\{\|S_n\|_{\HS}^2-n\le v_\star,\ \beta^2\ell(S_n)>\epsilon_\star d\}\),
respectively.  Since \(W_{n,\beta}\ge0\), we obtain
\begin{equation}
 \mathbb E[\1_{\mathcal E_n^{\rm reg}}W_{n,\beta}\1_{\{s_n<\delta\}}]
 \le\mathbb E[\1_{\mathcal R_n}W_{n,\beta}]+o(1).
 \label{eq:cubic-first-regular-reduction}
\end{equation}

It remains to control the tails of \(\|S_n\|_{\HS}^2-n\) on
\(\mathcal R_n\).
Lemma~\ref{lem:cubic-positive-energy-tail} combines
Lemmas~\ref{lem:cubic-parameterized-row-ledger}
and~\ref{lem:cubic-fourth-U-exponential} to treat the entire positive tail
\(\|S_n\|_{\HS}^2-n>K_dd^2/\beta^2\).
The negative tail follows directly from
\eqref{eq:cubic-local-linear-reward}.
Corollary~\ref{cor:cubic-local-central-reduction} combines these two bounds
with \eqref{eq:cubic-first-regular-reduction}.

\begin{lemma}[Weighted upper-tail bound]
\label{lem:cubic-positive-energy-tail}
Under \eqref{eq:cubic-local-standing-assumptions}, fix
\(\epsilon_\star>0\) as at the start of
Section~\ref{subsec:cubic-local-deletion}.
For every \(K_d\uparrow\infty\),
\begin{equation}
 \mathbb E_{P_0^{\otimes n}}\!\left[
  \1_{\mathcal E_n^{\rm reg}}W_{n,\beta}
  \1_{\{s_n<\underline s_d,\ \beta^2\ell(S_n)\le\epsilon_\star d\}}
  \1_{\{\|S_n\|_{\HS}^2-n>K_dd^2/\beta^2\}}
 \right]=o(1).
 \label{eq:cubic-positive-energy-tail}
\end{equation}
The error is uniform for \(r_n\) in a fixed compact subset
\(K\subset(0,\infty)\).
\end{lemma}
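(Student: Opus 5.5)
We peel the positive tail dyadically and bound each shell by a restricted Gibbs variational argument that combines the summed row bound with the fourth-order entropy control. Write \(\Delta=\|S_n\|_{\HS}^2-n=2E_n\). The region in \eqref{eq:cubic-positive-energy-tail} is contained in \(\mathcal R_n\cap\{\Delta>K_dd^2/\beta^2\}\), where \(\Delta\le v_\star=n/(\beta^2\log^4d)\). For \(v=2^jK_dd^2/\beta^2\le v_\star\), set
\[
 \mathcal E_v=\mathcal E_n^{\rm reg}\cap\{v<\Delta\le2v,\ s_n<\underline s_d,\ \beta^2\ell(S_n)\le\epsilon_\star d\}.
\]
This event is permutation invariant. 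On it, \(E_n\ge0\), \(\|S_n\|_{\HS}^2\le(1+R)n\) with \(R=2v/n\le 2/(\beta^2\log^4 d)\), and the orbit bound holds. Hence the hypotheses \eqref{eq:cubic-parameterized-row-hypotheses} of Lemma~\ref{lem:cubic-parameterized-row-ledger} are satisfied, with \(R\) replaced by \(\max\{R,1\}\).

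The key step bounds \(\log\mathbb E[\1_{\mathcal E_v}W_{n,\beta}]\). Starting from \eqref{eq:cubic-shared-restricted-gibbs}, we restrict to exchangeable \(Q\) supported on \(\mathcal E_v\) and use \eqref{eq:cubic-summed-variational-bound} with a fixed \(\eta\in(0,1/2]\). Lemma~\ref{lem:cubic-fourth-U-exponential} absorbs the term \(C_4\beta^8n\mathbb E_Q|T_{4,n}|\) into \((\eta/2)D(Q\Vert P_0^{\otimes n})+o(1)\). With \(R\) of order one, the remaining errors are \(C(\beta^2R^2+R^3\beta^4d^4/N^2)=O(1)\). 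The term \(\eta\beta^4n^2/N^2\asymp\eta r_n^2d^2\) is deterministic and of order \(d^2\). What remains is
\[
 \frac{b_n}{1-\eta}\mathbb E_QE_n-\frac\eta2 D(Q\Vert P_0^{\otimes n}),
\]
with \(b_n\asymp\beta^4 n/N^2\asymp\beta^2 r_n d/N\), which is a tiny multiple of \(\beta^2\). By \eqref{eq:cubic-restricted-variational} this is at most \((\eta/2)\log\mathbb E[\1_{\mathcal E_v}e^{\kappa\beta^2E_n}]\) with \(\kappa=2b_n/(\eta(1-\eta)\beta^2)\), which is small. Since \(E_n>v/2\) on \(\mathcal E_v\), we will not apply \eqref{eq:cubic-parameterized-linear-ledger} directly. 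Instead we split \(\kappa\beta^2E_n=\kappa_0\beta^2E_n-(\kappa_0-\kappa)\beta^2E_n\) with a fixed small \(\kappa_0\). Lemma~\ref{lem:cubic-parameterized-row-ledger}(ii) bounds the \(\kappa_0\)-part by \(C(Rd^2+1)\), and \(E_n>v/2\) bounds the second part by \(-(\kappa_0/4)\beta^2v\).

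Combining these estimates gives \(\log\mathbb E[\1_{\mathcal E_v}W_{n,\beta}]\le C d^2(1+R)-c\beta^2 v+O(1)\). However, the positive \(d^2\) terms do not sit well beside \(-c\beta^2 v\ge -cK_dd^2\) with \(K_d\to\infty\). This \(d^2\) bookkeeping is the main obstacle. The factor \(\eta\beta^4n^2/(4(1-\eta)N^2)\) must cancel exactly against the diagonal part of \(b_n\mathbb E_QE_n\) after using \eqref{eq:cubic-prefix-diagonal-identity}, just as the \(\eta\)-choice \(\eta=\beta^2(n-2)/(3N^2)\) does in the roadmap. So we would instead take \(\eta\) of order \(b_n/\beta^2\), mirroring \eqref{eq:cubic-main-entropy-reduction}. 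This makes the deterministic term \(\binom n3\beta^6/N^4=O(1)\) and leaves \(\eta[\beta^2\mathbb E_QE_n-D]\). That quantity is the linear free energy at strength of order one times \(\eta\). On \(\mathcal E_v\) it is bounded via Lemma~\ref{lem:cubic-parameterized-row-ledger}(ii) by \(\eta\cdot C(Rd^2+\dots)=O(\beta^2 R d^3/N)=O(1)\).

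Once the deterministic terms are \(O(1)\), we extract the decay from the entropy cost of the shell. Jensen applied to \(1+x\le e^x\) and the event \(\Delta>v\) force \(D(Q\Vert P_0^{\otimes n})\ge c\beta^2 v\cdot(\text{gap})\) via the overlap-concentration rate of Proposition~\ref{prop:cubic-weighted-overlap}. Alternatively, we can use directly \(P_0^{\otimes n}(\mathcal E_v)\le e^{-c v^2N/n^2\cdot d}\)-type bounds from \eqref{eq:cubic-conditional-overlap-tail} and then Cauchy–Schwarz against the \(O(1)\) exponential moment. Either route yields \(\mathbb E[\1_{\mathcal E_v}W_{n,\beta}]\le C e^{-c\beta^2 v/d^{2}\cdot d^2}\le Ce^{-c2^jK_d}\). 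Summing over \(j\) gives \(O(e^{-cK_d})=o(1)\), and the bound is uniform for \(r_n\in K\).
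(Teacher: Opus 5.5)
The gap is that you discard a correct argument because of a reversed inequality, and the route you substitute does not close. On the shell with \(v=2^jK_dd^2/\beta^2\) one has \(\beta^2v\ge K_dd^2\), so \(-c\beta^2v\le-cK_dd^2\), not \(\ge\). Your first-pass bound \(\log\mathbb E[\1_{\mathcal E_v}W_{n,\beta}]\le Cd^2(1+R)-c\beta^2v+o(1)\) therefore already finishes the proof. Indeed, \(d^2/(\beta^2v)\le1/K_d\to0\), and when \(R=2v/n\ge1\), \(d^2R/(\beta^2v)=2d^2/(n\beta^2)=2d/(r_nN)\to0\). So the right side is at most \(-c'2^jK_dd^2\), and the dyadic sum is \(o(1)\). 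This is exactly the paper's proof. It fixes \(\eta=1/4\) and uses \eqref{eq:cubic-fourth-entropy-control} to absorb \(C_4\beta^8n\mathbb E_Q|T_{4,n}|\) into \((\eta/2)D(Q\Vert P_0^{\otimes n})\). It then combines the Gibbs bound \(D(Q\Vert P_0^{\otimes n})\ge\kappa\beta^2\mathbb E_QE_n-\log\mathbb E[\1_{\mathcal A_t}e^{\kappa\beta^2E_n}]\) with \eqref{eq:cubic-parameterized-linear-ledger}, \(b_n/\beta^2\to0\), and \(E_n>t/2\). The exact cancellation at scale \(d^2\) produced by \(\eta=\beta^2(n-2)/(3N^2)\) is needed only on the central band in Lemma~\ref{lem:cubic-centered-simple-edge}, where the target is the sharp constant \(e^{r^3}\). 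Here the target is \(o(1)\), and errors of order \(Rd^2\) are harmless.

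The substitute route fails at several points.

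With \(\eta\asymp d/N\) one has \(b_n=\beta^2\eta\), so the only negative term, \(-\eta D(Q\Vert P_0^{\otimes n})\), is offset by \(\eta\beta^2\mathbb E_QE_n\). Getting decay would then require \(D(Q\Vert P_0^{\otimes n})\ge(1+c)\beta^2\mathbb E_QE_n\) on the shell. That is a sharp large-deviation rate the available estimates do not give for \(v\le v_\star\).

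Moreover, \eqref{eq:cubic-fourth-entropy-control} requires a fixed entropy coefficient. Its proof needs \(C_0\sqrt{R_d\upsilon_d}\) small, which fails for \(C_0\asymp1/\eta\asymp d^2\). You also cannot instead restrict to \(\{\beta^8n|T_{4,n}|\le\epsilon_d\}\), because \(\beta^2\Delta\) ranges up to order \(n\log^4d\gg d^2\log d\).

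The decay mechanisms you list are too weak. At \(j=0\) the overlap tail \eqref{eq:cubic-conditional-overlap-tail} gives only \(\exp\{-cNd(v/n)^2\}=\exp\{-cK_d^2d^3/(r_n^2N)\}\), of order \(e^{-cK_d^2}\), against a weight up to \(e^{\beta^2v}=e^{K_dd^2}\). No bound on \(\mathbb E[\1_{\mathcal E_v}W_{n,\beta}^2]\) is available for Cauchy--Schwarz. And \(c\beta^2v=c2^jK_dd^2\), not \(c2^jK_d\).

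Finally, the lemma's region is not contained in \(\{\Delta\le v_\star\}\). The constraint \(s_n<\underline s_d\) only gives \(\Delta<\underline s_dn^2/N\), so \(R\) ranges up to \(\max\{1,2\underline s_dn/N\}\) with \(\beta^2R\le C_K\log^4d\). Even below \(v_\star\), \(R\) is not \(O(1)\) when \(\beta\to0\). What is needed is \eqref{eq:cubic-positive-shell-errors}, namely \(\beta^2R^2+R^3\beta^4d^4/N^2=o(Rd^2)\), after which these errors are again dominated by \(\beta^2v\).
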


\begin{proof}
All expectations are under \(P_0^{\otimes n}\), unless another law is
specified.  We use dyadic peeling, bounding the weighted integral on
each interval \(t<\|S_n\|_{\HS}^2-n\le2t\), subject to the restrictions
in \eqref{eq:cubic-positive-energy-tail}, by \(e^{-c\beta^2t}\).
Step 1 verifies the conditions of
Lemma~\ref{lem:cubic-parameterized-row-ledger} on each interval and
controls its error terms.  Step 2 starts from
\eqref{eq:cubic-summed-variational-bound}, controls its term involving
\(T_{4,n}\) by \eqref{eq:cubic-fourth-entropy-control}, and uses
\eqref{eq:cubic-parameterized-linear-ledger} to convert
the remaining negative relative-entropy term into a negative multiple
of \(\beta^2t\).
These estimates yield \eqref{eq:cubic-positive-shell-bound} through
\eqref{eq:cubic-shared-restricted-gibbs}.  Step 3 sums the bounds over
\(t=2^jK_dd^2/\beta^2\).

\emph{Step 1: the parameter and error bounds on each interval.}
For \(t\ge K_dd^2/\beta^2\), set
\[
 \mathcal A_t=\mathcal E_n^{\rm reg}\cap
 \{s_n<\underline s_d,\ \beta^2\ell(S_n)\le\epsilon_\star d,\
              t<\|S_n\|_{\HS}^2-n\le2t\},
 \qquad R_t=\max\{1,2t/n\}.
\]
If \(\mathbb P(\mathcal A_t)=0\), its weighted integral is zero.
Otherwise, on \(\mathcal A_t\),
\[
 \|S_n\|_{\HS}^2\le n+2t\le(1+R_t)n,\qquad
 E_n=(\|S_n\|_{\HS}^2-n)/2>t/2.
\]
Recall that \(\underline s_d=d^{-1}\log^4d\).
Since \(s_n=N\|S_n\|_{\HS}^2/n^2<\underline s_d\) on
\(\mathcal A_t\), we have \(t<\underline s_dn^2/N\), and hence
\[
 1\le R_t\le\max\{1,2\underline s_dn/N\}\le n\le d^A,\qquad
 \beta^2R_t\le1+2r_nd\underline s_d\le C_K\log^4d
\]
for all sufficiently large \(d\).  The event \(\mathcal A_t\) is
invariant under permutations of the paired sample and satisfies
\eqref{eq:cubic-parameterized-row-hypotheses} with \(R=R_t\) and
\(\epsilon=\epsilon_\star\).
To control the error terms in
\eqref{eq:cubic-summed-variational-bound} and
\eqref{eq:cubic-parameterized-linear-ledger}, use \(N\asymp d^3\) to obtain
\begin{equation}
 \frac{\beta^2R_t^2+R_t^3\beta^4d^4/N^2}{R_td^2}
 =\frac{\beta^2R_t}{d^2}+\frac{(\beta^2R_t)^2d^2}{N^2}
 \le C_K\left(\frac{\log^4d}{d^2}+\frac{\log^8d}{d^4}\right)=o(1),
 \label{eq:cubic-positive-shell-errors}
\end{equation}
uniformly over these intervals.

\emph{Step 2: the weighted integral on each interval.}
By \eqref{eq:cubic-shared-restricted-gibbs}, it suffices to prove
\[
 \mathbb E_QF_n^{(2)}(\beta)-D(Q\Vert P_0^{\otimes n})
 \le-c\beta^2t
\]
for some fixed \(c>0\), uniformly over exchangeable laws \(Q\) with
\(Q(\mathcal A_t)=1\) and \(D(Q\Vert P_0^{\otimes n})<\infty\).
Fix \(\eta=1/4\) and recall \(b_n=\beta^4(n-2)/(3N^2)\).
Applying \eqref{eq:cubic-summed-variational-bound} with \(R=R_t\),
then using \eqref{eq:cubic-positive-shell-errors} and
\(\beta^4n^2/N^2=r_n^2d^2\), gives
\[
 \mathbb E_QF_n^{(2)}(\beta)-D(Q\Vert P_0^{\otimes n})
 \le\frac{b_n}{1-\eta}\mathbb E_QE_n
       -\eta D(Q\Vert P_0^{\otimes n})
       +C_4\beta^8n\mathbb E_Q|T_{4,n}|+CR_td^2+o(1).
\]
We first control \(C_4\beta^8n\mathbb E_Q|T_{4,n}|\).
Since \(Q(\mathcal E_n^{\rm reg})=1\),
\eqref{eq:cubic-fourth-entropy-control} in
Lemma~\ref{lem:cubic-fourth-U-exponential}, with \(c=C_4\) and entropy
coefficient \(\eta/2\), gives
\[
 C_4\beta^8n\mathbb E_Q|T_{4,n}|
 \le\frac{\eta}{2}D(Q\Vert P_0^{\otimes n})+o(1).
\]
After this substitution, the relative-entropy term is
\(-(\eta/2)D(Q\Vert P_0^{\otimes n})\).  To bound it, fix \(\kappa>0\)
sufficiently small for \eqref{eq:cubic-parameterized-linear-ledger}.
The Gibbs variational formula \eqref{eq:cubic-restricted-variational} gives
\[
 D(Q\Vert P_0^{\otimes n})
 \ge\kappa\beta^2\mathbb E_QE_n
       -\log\mathbb E[\1_{\mathcal A_t}e^{\kappa\beta^2E_n}]
 \ge\kappa\beta^2\mathbb E_QE_n-C_\kappa R_td^2-o(1),
\]
where the second inequality uses \eqref{eq:cubic-parameterized-linear-ledger}
with \(\mathcal E=\mathcal A_t\) and \(R=R_t\), and
\eqref{eq:cubic-positive-shell-errors} gives
\(\beta^2R_t^2+R_t^3\beta^4d^4/N^2=o(R_td^2)\).
Since \(b_n/\beta^2\le r_nd/(3N)=o(1)\), for large \(d\) we have
\(b_n/(1-\eta)\le\eta\kappa\beta^2/4\).
Substituting the entropy lower bound and using \(E_n>t/2\) on
\(\mathcal A_t\), we obtain
\[
\begin{aligned}
 \mathbb E_QF_n^{(2)}(\beta)-D(Q\Vert P_0^{\otimes n})
 &\le\left(\frac{b_n}{1-\eta}-\frac{\eta\kappa\beta^2}{2}\right)
       \mathbb E_QE_n+CR_td^2+o(1)\\
 &\le-\frac{\eta\kappa}{8}\beta^2t+CR_td^2+o(1).
\end{aligned}
\]
The constants and errors are uniform in \(t\) and \(Q\).
The term \(CR_td^2\) is negligible relative to \(\beta^2t\), since
\[
 \frac{R_td^2}{\beta^2t}
 \le\frac{d^2}{\beta^2t}+\frac{2d^2}{n\beta^2}
 \le\frac1{K_d}+\frac{2d}{r_nN}=o(1).
\]
Also, \(\beta^2t\ge K_dd^2\to\infty\), so
\(\mathbb E_QF_n^{(2)}(\beta)-D(Q\Vert P_0^{\otimes n})
\le-c\beta^2t\) uniformly over \(Q\), for some fixed \(c>0\).
Taking the supremum in
\eqref{eq:cubic-shared-restricted-gibbs} and exponentiating yields,
after decreasing \(c\),
\begin{equation}
 \mathbb E[\1_{\mathcal A_t}W_{n,\beta}]
 \le e^{-c\beta^2t},\qquad t\ge K_dd^2/\beta^2.
 \label{eq:cubic-positive-shell-bound}
\end{equation}

\emph{Step 3: summation over the positive tail.}
Let \(t_j=2^jK_dd^2/\beta^2\) for \(j\ge0\).
The events \(\mathcal A_{t_j}\) partition the region in
\eqref{eq:cubic-positive-energy-tail}.  Applying
\eqref{eq:cubic-positive-shell-bound} and \(2^j\ge j+1\) gives
\[
 \sum_{j\ge0}\mathbb E[\1_{\mathcal A_{t_j}}W_{n,\beta}]
 \le\sum_{j\ge0}e^{-c2^jK_dd^2}
 \le\frac{e^{-cK_dd^2}}{1-e^{-cK_dd^2}}=o(1).
\]
This proves \eqref{eq:cubic-positive-energy-tail}.
\end{proof}

Corollary~\ref{cor:cubic-local-central-reduction} combines
Lemma~\ref{lem:cubic-positive-energy-tail} with the pointwise bound
\eqref{eq:cubic-local-linear-reward} to obtain the first inequality
in \eqref{eq:cubic-local-roadmap}.

\begin{corollary}[Reduction to the central region]
\label{cor:cubic-local-central-reduction}
Under \eqref{eq:cubic-local-standing-assumptions},
let \(K_d\uparrow\infty\) satisfy \(K_d=o\{(\log d)^{1/8}\}\).
For sufficiently small fixed \(\delta>0\),
\begin{equation}
 \mathbb E\!\left[
 \1_{\mathcal E_n^{\rm reg}}W_{n,\beta}\1_{\{s_n<\delta\}}\right]
 \le
 \mathbb E\!\left[\1_{\mathcal E_n^{\rm reg}}W_{n,\beta}
             \1_{\{\lvert\|S_n\|_{\HS}^2-n\rvert\le K_dd^2/\beta^2\}}\right]+o(1).
 \label{eq:cubic-local-central-reduction}
\end{equation}
The \(o(1)\) is uniform for \(r_n\) in a fixed compact subset
\(K\subset(0,\infty)\).
\end{corollary}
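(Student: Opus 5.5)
The plan is to assemble three earlier estimates. First, by \eqref{eq:cubic-first-regular-reduction}, which combines Lemma~\ref{lem:cubic-submacroscopic-deletion}, Lemma~\ref{lem:cubic-high-energy-entropy} and Lemma~\ref{lem:cubic-clipped-orbit-tail} with \(\epsilon=\epsilon_\star\), the left side of \eqref{eq:cubic-local-central-reduction} is at most \(\mathbb E[\1_{\mathcal R_n}W_{n,\beta}]+o(1)\), once \(\delta\le\delta_0\) is taken small enough for both Lemma~\ref{lem:cubic-submacroscopic-deletion} and Lemma~\ref{lem:cubic-high-energy-entropy}. We will then split \(\mathcal R_n\) according to \(\Delta=\|S_n\|_{\HS}^2-n\) into the negative tail \(\Delta<-K_dd^2/\beta^2\), the central interval \(|\Delta|\le K_dd^2/\beta^2\), and the positive tail \(\Delta>K_dd^2/\beta^2\). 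Since \(W_{n,\beta}\ge0\) and \(\mathcal R_n\subseteq\mathcal E_n^{\rm reg}\), the central piece is bounded by the right side of \eqref{eq:cubic-local-central-reduction}.

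For the negative tail we use \eqref{eq:cubic-local-negative-tail}: the pointwise bound \eqref{eq:cubic-local-linear-reward} gives \(W_{n,\beta}\le e^{\beta^2\Delta/2}\le e^{-K_dd^2/2}\) there, so this piece is \(o(1)\). The positive tail lies inside \(\mathcal E_n^{\rm reg}\cap\{s_n<\underline s_d,\ \beta^2\ell(S_n)\le\epsilon_\star d,\ \Delta>K_dd^2/\beta^2\}\), because \(\mathcal R_n\) carries exactly these restrictions. Lemma~\ref{lem:cubic-positive-energy-tail} therefore bounds it by \(o(1)\). Dropping the additional condition \(\Delta\le v_\star\) is harmless here, since the integrand is nonnegative.

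Two points need checking. The first is uniformity of all \(o(1)\) terms for \(r_n\in K\); each lemma invoked already asserts this. The second is the choice of \(K_d\). Any \(K_d\uparrow\infty\) suffices for both tails, and the growth restriction \(K_d=o\{(\log d)^{1/8}\}\) is only recorded for its later use in Section~\ref{subsec:cubic-local-centered}. I would still verify that the positive-tail threshold \(K_dd^2/\beta^2\) is compatible with the region \(\Delta\le v_\star\) appearing in \(\mathcal R_n\): we have \(v_\star/(K_dd^2/\beta^2)=n/(K_dd^2\log^4d)\to\infty\) because \(n\asymp Nd/\beta^2\). This ensures the three regions genuinely partition \(\mathcal R_n\), though the inclusion argument does not actually require it.

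The main obstacle is already absorbed into Lemma~\ref{lem:cubic-positive-energy-tail}, so the corollary itself is routine bookkeeping. The step requiring the most care is confirming that the regions named in \eqref{eq:cubic-first-regular-reduction} match the hypotheses of each lemma exactly. In particular, \(s_n<\delta\) must be upgraded to \(s_n<\underline s_d\) before the positive-tail lemma can be applied, and that upgrade is precisely what Lemma~\ref{lem:cubic-submacroscopic-deletion} provides.
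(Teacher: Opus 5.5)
Your proposal is correct and follows the paper's proof essentially step for step: you reduce to \(\mathcal R_n\) via \eqref{eq:cubic-first-regular-reduction}, remove the positive tail by Lemma~\ref{lem:cubic-positive-energy-tail} using \(\mathcal R_n\subseteq\mathcal E_n^{\rm reg}\cap\{s_n<\underline s_d,\ \beta^2\ell(S_n)\le\epsilon_\star d\}\), remove the negative tail by the pointwise bound \eqref{eq:cubic-local-linear-reward}, and bound the central piece using \(W_{n,\beta}\ge0\) and \(\mathcal R_n\subseteq\mathcal E_n^{\rm reg}\). Your remark that the growth restriction \(K_d=o\{(\log d)^{1/8}\}\) is not used here, and is needed only later in Section~\ref{subsec:cubic-local-centered}, is also accurate.
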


\begin{proof}
Fix \(\delta>0\) within the range of
\eqref{eq:cubic-first-regular-reduction}, and recall \(\mathcal R_n\)
defined there.  By that inequality, it suffices to remove the
positive and negative tails of \(\|S_n\|_{\HS}^2-n\) on \(\mathcal R_n\).

Since \(\mathcal R_n\subseteq\mathcal E_n^{\rm reg}
\cap\{s_n<\underline s_d,\ \beta^2\ell(S_n)\le\epsilon_\star d\}\),
Lemma~\ref{lem:cubic-positive-energy-tail} gives
\[
 \mathbb E\big[\1_{\mathcal R_n}W_{n,\beta}
       \1_{\{\|S_n\|_{\HS}^2-n>K_dd^2/\beta^2\}}\big]=o(1).
\]
The negative tail satisfies
\[
 \mathbb E\big[\1_{\mathcal R_n}W_{n,\beta}
       \1_{\{\|S_n\|_{\HS}^2-n<-K_dd^2/\beta^2\}}\big]
 \le e^{-K_dd^2/2}=o(1),
\]
by \(W_{n,\beta}\le e^{\beta^2(\|S_n\|_{\HS}^2-n)/2}\).
Removing these two tails leaves
\(\mathcal R_n\cap\{\lvert\|S_n\|_{\HS}^2-n\rvert\le K_dd^2/\beta^2\}\).
Since \(\mathcal R_n\subseteq\mathcal E_n^{\rm reg}\) and
\(W_{n,\beta}\ge0\), enlarging this remaining set proves
\eqref{eq:cubic-local-central-reduction}.
\end{proof}

\subsection{Proof of the second inequality}
\label{subsec:cubic-local-centered}

Corollary~\ref{cor:cubic-local-central-reduction} leaves the integral over
\(\bigl|\|S_n\|_{\HS}^2-n\bigr|\le K_dd^2/\beta^2\).
Corollary~\ref{cor:cubic-central-weighted-deletions} first inserts the
operator-norm and fourth-order restrictions needed for the sharp row
estimate.  Lemma~\ref{lem:cubic-centered-simple-edge} then bounds the
retained integral by \(\exp\{\binom n3\beta^6/N^4+o(1)\}\), completing
the second inequality.

\begin{corollary}[Weighted restrictions on the central region]
\label{cor:cubic-central-weighted-deletions}
Under \eqref{eq:cubic-local-standing-assumptions} and
\eqref{eq:cubic-local-epsilon-conditions}, let \(K_d\uparrow\infty\)
satisfy \(K_d=o\{(\log d)^{1/8}\}\).  Define
\[
 \mathcal B_{n,d}=\{\bigl|\|S_n\|_{\HS}^2-n\bigr|\le K_dd^2/\beta^2\},\quad
 \mathcal O_{n,d}=\{\|S_n\|_{\rm op}\le\epsilon_dd/\beta^2\},\qquad
 \mathcal V_{n,d}=\{\beta^8n|T_{4,n}|\le\epsilon_d\}.
\]
Then
\begin{align}
 \mathbb E_{P_0^{\otimes n}}\!
 [\1_{\mathcal E_n^{\rm reg}}W_{n,\beta}
       \1_{\mathcal B_{n,d}}\1_{\mathcal O_{n,d}^c}]&=o(1),
 \label{eq:cubic-terminal-operator-deletion}\\
 \mathbb E_{P_0^{\otimes n}}\!
 [\1_{\mathcal E_n^{\rm reg}}W_{n,\beta}
       \1_{\mathcal B_{n,d}}\1_{\mathcal V_{n,d}^c}]&=o(1).
 \label{eq:cubic-fourth-U-weighted-deletion}
\end{align}
\end{corollary}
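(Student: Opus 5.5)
Both bounds follow the template of Lemma~\ref{lem:cubic-coarse-terminal-deletions}: a pointwise upper bound for \(W_{n,\beta}\) on \(\mathcal B_{n,d}\), multiplied by a superexponentially small probability for the excluded event. On \(\mathcal B_{n,d}\), the bound \eqref{eq:cubic-local-linear-reward} gives
\[
 W_{n,\beta}\le\exp\{\beta^2(\|S_n\|_{\HS}^2-n)/2\}\le\exp(K_dd^2/2).
\]
Since \(K_d=o\{(\log d)^{1/8}\}\), this weight is at most \(e^{o(d^2\log d)}\). The weight therefore costs strictly less than any bound of the form \(e^{-cd^2\log d}\) with fixed \(c>0\).

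For \eqref{eq:cubic-terminal-operator-deletion}, I apply Lemma~\ref{lem:cubic-operator-norm-tail} directly, at the threshold \(\epsilon_d\). It gives
\[
 P_0^{\otimes n}\{\mathcal E_n^{\rm reg},\ \mathcal O_{n,d}^c\}
 \le2\exp(-c\epsilon_d^2d^{5/2}/\beta^2)\le2\exp(-c\epsilon_d^2d^{5/2}).
\]
By \eqref{eq:cubic-local-epsilon-conditions}, \(\epsilon_d^2\sqrt d/\log d\to\infty\), so the exponent is \(\epsilon_d^2d^{5/2}\gg d^2\log d\). It dominates \(K_dd^2/2\). Hence the product of the weight bound and this probability is \(o(1)\).

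For \eqref{eq:cubic-fourth-U-weighted-deletion}, I use \eqref{eq:cubic-fourth-U-tail} of Lemma~\ref{lem:cubic-fourth-U-moments}. It gives \(P_0^{\otimes n}\{\mathcal E_n^{\rm reg},\ \mathcal V_{n,d}^c\}\le e^{-d^2\log d}\). Multiplying by \(e^{K_dd^2/2}\) yields \(\exp\{-d^2\log d+o(d^2\log d)\}\), which is \(o(1)\). Both estimates are uniform for \(r_n\) in the compact set \(K\), because the constants in the two lemmas are uniform there and the weight bound does not involve \(r_n\).

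I do not expect a real obstacle. The only point to check is that the tail exponents in the two lemmas genuinely beat the weight \(K_dd^2/2\). This is why the hypothesis \(K_d=o\{(\log d)^{1/8}\}\) matters: any \(K_d=o(\log d)\) would already suffice. The other requirement is that both tail lemmas hold at the \(d\)-dependent threshold \(\epsilon_d\) rather than at a fixed \(\epsilon_\star\), which they do by their statements under \eqref{eq:cubic-local-epsilon-conditions}.
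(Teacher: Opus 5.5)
Your proposal is correct and is essentially the paper's proof. The paper also bounds \(W_{n,\beta}\le e^{K_dd^2/2}\) on \(\mathcal B_{n,d}\) via \eqref{eq:cubic-local-linear-reward}. It multiplies this by the tail bounds of Lemmas~\ref{lem:cubic-operator-norm-tail} and~\ref{lem:cubic-fourth-U-moments} at the threshold \(\epsilon_d\), and concludes using \(|\beta|\le1\), \(K_d=o(\log d)\), and \(\epsilon_d^2\sqrt d/K_d\to\infty\).
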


\begin{proof}
On \(\mathcal B_{n,d}\), \eqref{eq:cubic-local-linear-reward} gives
\begin{equation}
 W_{n,\beta}\le e^{\beta^2(\|S_n\|_{\HS}^2-n)/2}\le e^{K_dd^2/2}.
 \label{eq:cubic-terminal-operator-weight}
\end{equation}
Combining \eqref{eq:cubic-terminal-operator-weight} with
Lemmas~\ref{lem:cubic-operator-norm-tail} and
\ref{lem:cubic-fourth-U-moments}, respectively, yields
\[
\begin{aligned}
 \mathbb E[\1_{\mathcal E_n^{\rm reg}}W_{n,\beta}
       \1_{\mathcal B_{n,d}}\1_{\mathcal O_{n,d}^c}]
 &\le2\exp\left(\frac{K_dd^2}{2}
              -\frac{c\epsilon_d^2d^{5/2}}{\beta^2}\right)=o(1),\\
 \mathbb E[\1_{\mathcal E_n^{\rm reg}}W_{n,\beta}
       \1_{\mathcal B_{n,d}}\1_{\mathcal V_{n,d}^c}]
 &\le\exp\left(\frac{K_dd^2}{2}-d^2\log d\right)=o(1).
\end{aligned}
\]
The limits follow from \(|\beta|\le1\) and \(K_d=o(\log d)\), since
\eqref{eq:cubic-local-epsilon-conditions} gives
\[
 \frac{\epsilon_d^2\sqrt d}{K_d}
 =\frac{\epsilon_d^2\sqrt d}{\log d}\frac{\log d}{K_d}
 \longrightarrow\infty.
\]
\end{proof}

We now apply the shared row estimates on
\(\mathcal E_n^{\rm reg}\cap\mathcal B_{n,d}\cap
\mathcal O_{n,d}\cap\mathcal V_{n,d}\).
Lemma~\ref{lem:cubic-centered-simple-edge} identifies the leading
constant in the resulting upper bound.

\begin{lemma}[Partition-function bound near the diagonal scale]
\label{lem:cubic-centered-simple-edge}
Under \eqref{eq:cubic-local-standing-assumptions}, let \(K_d\uparrow\infty\)
satisfy \(K_d=o\{(\log d)^{1/8}\}\).
\begin{equation}
  \mathbb E_{P_0^{\otimes n}}\left[
    \1_{{\cal G}_n^\dagger(X)}\1_{{\cal G}_n^\dagger(X')}
    W_{n,\beta}\1_{\{\lvert\|S_n\|_{\HS}^2-n\rvert\le K_dd^2/\beta^2\}}
  \right]
  \le
  \exp\left\{\binom n3\frac{\beta^6}{N^4}+o(1)\right\}.
  \label{eq:cubic-centered-simple-edge-bound}
\end{equation}
The \(o(1)\) is uniform when \(n\beta^2/(Nd)\) stays in a compact subset of
\((0,\infty)\).
\end{lemma}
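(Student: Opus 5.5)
The plan follows the roadmap displayed at the start of this appendix for the second inequality. First, Corollary~\ref{cor:cubic-central-weighted-deletions} lets us replace the indicator of \(\mathcal B_{n,d}\) by \(\1_{\mathcal E^{\rm term}_{n,d}}\) at additive cost \(o(1)\), where
\[
\mathcal E^{\rm term}_{n,d}:=\mathcal E_n^{\rm reg}\cap\mathcal B_{n,d}\cap\mathcal O_{n,d}\cap\mathcal V_{n,d}\cap\{E_n\ge -K_dd^2/(2\beta^2)\}.
\]
This event is permutation invariant. Since the right side of \eqref{eq:cubic-centered-simple-edge-bound} is bounded below by a positive constant, an additive \(o(1)\) can be absorbed into the exponent. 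On \(\mathcal O_{n,d}\) we have \(\beta^2\ell(S_n)\le\beta^2\|S_n\|_{\rm op}\le\epsilon_dd\), so the hypothesis \(\beta^2\ell(S_n)\le\epsilon d\) of Lemmas~\ref{lem:cubic-regular-prefix-estimates} and~\ref{lem:cubic-regular-row-expansion} holds for any fixed small \(\epsilon\). We then apply \eqref{eq:cubic-shared-restricted-gibbs} and \eqref{eq:cubic-shared-entropy-sum} with \(\eta=\beta^2(n-2)/(3N^2)=o(1)\), over exchangeable \(Q\) supported on \(\mathcal E^{\rm term}_{n,d}\).

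Next we expand each row \((1-\eta)\log Z_m(\beta^2/(1-\eta),1-\eta)\) via Lemma~\ref{lem:cubic-regular-row-expansion}, with the exceptional prefixes handled by \eqref{eq:cubic-exceptional-row-restoration}. The quadratic coefficient reproduces the two subtracted terms in \eqref{eq:cubic-parameterized-reserved-row-ledger}. On \(\mathcal B_{n,d}\) we have \(\|S_n\|_{\HS}^2\le(1+R)n\) with \(R=O(1)\), because \(K_dd^2/\beta^2=o(n)\) using \(n\beta^2\asymp Nd\). The sixth-order remainder is therefore \(O(\beta^4d^4/N^2)=o(1)\), and the \(\eta\)-compensator term in \eqref{eq:cubic-compensator-cumulant} is \(O(\eta\beta^8n^3/N^4)=o(1)\).

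The fourth-order term needs care, and I expect it to be the main obstacle. The repeated-index part \(\lambda_d^2(-2m-16E_m)\), summed over \(m\), is \(O(\beta^8n^2/N^4)+O(\beta^8 n\,\mathbb E_Q E_n/N^4)\). Here \(|E_n|\le K_dd^2/\beta^2\), so this contribution is \(o(1)\), and we cannot rely on its sign as the tail lemmas did. The \(-3A_2^2\) part is nonpositive. The distinct-index part is at most \(C\beta^8n\,\mathbb E_Q|T_{4,n}|\le C\epsilon_d=o(1)\) on \(\mathcal V_{n,d}\), by \eqref{eq:cubic-fourth-prefix-identity}. This pointwise restriction is essential, since a crude moment bound only yields order \(\beta^2\). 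After these bounds, \eqref{eq:cubic-shared-entropy-sum} combined with \eqref{eq:cubic-prefix-linear-identity}--\eqref{eq:cubic-prefix-diagonal-identity} gives
\[
\mathbb E_QF_n^{(2)}-D(Q\Vert P_0^{\otimes n})\le \frac{b_n}{1-\eta}\mathbb E_QE_n+\frac{\eta\beta^4n(n-1)}{4(1-\eta)N^2}-\eta D(Q\Vert P_0^{\otimes n})+o(1).
\]

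The final step absorbs the linear term into the entropy. We have \(b_n/(1-\eta)=\eta\beta^2/(1-\eta)\), so the first and third terms together equal \(\eta\{\beta^2\mathbb E_QE_n/(1-\eta)-D(Q\Vert P_0^{\otimes n})\}\). The Gibbs formula bounds this by \(\eta\log\mathbb E[\1_{\mathcal E^{\rm term}}e^{\beta^2E_n/(1-\eta)}]\), and I would bound that quantity by Lemma~\ref{lem:cubic-parameterized-row-ledger}(ii). Because \(\kappa\) there must be small, the remaining obstacle is the coefficient. I would first try running the same row expansion directly at \(u=\beta^2/(1-\eta)\), \(c=0\), using the \(c_0=0\) case of Lemma~\ref{lem:cubic-sixth-row-remainder}. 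This gives
\[
\log\mathbb E[\,\cdot\,]\le \frac{\beta^4}{2N^2}\sum_m\mathbb E_Q\|S_m\|_{\HS}^2+o(1)=\frac{\beta^4n(n-1)}{4N^2}(1+o(1))+O(1).
\]
Multiplying by \(\eta\) produces the second copy \(\eta\beta^4n(n-1)/(4N^2)\), so the total is \(\frac{\eta\beta^4n(n-1)}{4N^2}\bigl(1+\frac1{1-\eta}\bigr)=\binom n3\beta^6/N^4+o(1)\). The \(O(1)\) error is harmless because it is multiplied by \(\eta=o(1)\). Uniformity in \(r_n\in K\) follows because every constant used depends only on \(A\) and \(K\).
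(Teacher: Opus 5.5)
Your proposal is correct and follows the paper's proof essentially step for step: the same restriction to $\mathcal E^{\rm term}_{n,d}$, the same $\eta=\beta^2(n-2)/(3N^2)$, the same treatment of the fourth-order cumulant through $\mathcal V_{n,d}$ and the bound $|\mathbb E_QE_n|\le K_dd^2/(2\beta^2)$, and the same final Gibbs absorption, done by a direct $c=0$ row expansion rather than Lemma~\ref{lem:cubic-parameterized-row-ledger}(ii). The only differences are bookkeeping: you fold the linear part of the second reserved sum into the exponent $\beta^2/(1-\eta)$, while the paper bounds that part by $o(1)$ using $Q(\mathcal B_{n,d})=1$ and works at exponent $\beta^2$; and in your $c=0$ expansion the fourth- and sixth-order contributions are $O(1)$, not $o(1)$, which is harmless after multiplication by $\eta$, as you note.
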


\begin{proof}
Step 1 uses Corollary~\ref{cor:cubic-central-weighted-deletions} and
\eqref{eq:cubic-shared-restricted-gibbs} to reduce the desired bound to
an upper bound on
\(\mathbb E_QF_n^{(2)}(\beta)-D(Q\Vert P_0^{\otimes n})\).
Step 2 shows that the fourth-order terms and higher-order remainders
contribute at most \(o(1)\), leaving the two quadratic sums and the
relative-entropy term in \eqref{eq:cubic-entropy-reserved-ledger}.
Step 3 evaluates these sums and bounds
\(b_n\mathbb E_QE_n-\eta D(Q\Vert P_0^{\otimes n})\) to obtain
\(\binom n3\beta^6/N^4+o(1)\).

\emph{Step 1: restricted variational reduction.}
Recall \(\mathcal B_{n,d}\), \(\mathcal O_{n,d}\), and
\(\mathcal V_{n,d}\) from Corollary~\ref{cor:cubic-central-weighted-deletions},
and let
\(\mathcal E^{\rm term}_{n,d}
=\mathcal E_n^{\rm reg}\cap\mathcal B_{n,d}\cap
\mathcal O_{n,d}\cap\mathcal V_{n,d}\).
Corollary~\ref{cor:cubic-central-weighted-deletions} gives
\begin{align*}
 0&\le\mathbb E[\1_{\mathcal G_n^\dagger(X)}\1_{\mathcal G_n^\dagger(X')}
                  W_{n,\beta}\1_{\mathcal B_{n,d}}]
       -\mathbb E[\1_{\mathcal E^{\rm term}_{n,d}}W_{n,\beta}]\\
 &\le\mathbb E[\1_{\mathcal G_n^\dagger(X)}\1_{\mathcal G_n^\dagger(X')}
        W_{n,\beta}\1_{\mathcal B_{n,d}}
        (\1_{\mathcal O_{n,d}^c}+\1_{\mathcal V_{n,d}^c})]=o(1).
\end{align*}

Applying \eqref{eq:cubic-shared-restricted-gibbs} to
\(\mathcal E^{\rm term}_{n,d}\) gives
\begin{equation}
 \log\mathbb E\!\left[\1_{\mathcal E^{\rm term}_{n,d}}W_{n,\beta}\right]
 \le \sup_{Q:Q(\mathcal E^{\rm term}_{n,d})=1}
 \{\mathbb E_QF_n^{(2)}(\beta)-D(Q\Vert P_0^{\otimes n})\}+o(1).
 \label{eq:cubic-restricted-gibbs}
\end{equation}
The event \(\mathcal E^{\rm term}_{n,d}\) and
\(F_n^{(2)}(\beta)\) are invariant under permutations of the paired
sample.  Thus the permutation averaging used in
\eqref{eq:cubic-shared-restricted-gibbs} allows us to restrict the
supremum to exchangeable \(Q\) with finite relative entropy.

\emph{Step 2: bounding the variational expression.}
Take \(\eta=\beta^2(n-2)/(3N^2)\).
For every exchangeable law \(Q\) in \eqref{eq:cubic-restricted-gibbs},
\eqref{eq:cubic-shared-entropy-sum} gives
\[
 \mathbb E_QF_n^{(2)}(\beta)-D(Q\Vert P_0^{\otimes n})
 \le(1-\eta)\sum_{m<n}\mathbb E_Q
 \log Z_m\!\left(\frac{\beta^2}{1-\eta},1-\eta\right)
 -\eta D(Q\Vert P_0^{\otimes n}).
\]
Here \(\eta=O(d/N)=o(1)\), so \(1-\eta\ge1/2\) eventually, and
\(b_n=\beta^2\eta\) by \eqref{eq:cubic-prefix-linear-identity}.
We estimate the logarithmic sum on the right using
\eqref{eq:cubic-sixth-row-remainder}.

On \(\mathcal E^{\rm term}_{n,d}\),
\(\|S_n\|_{\HS}^2\le n+K_dd^2/\beta^2\le2n\) eventually and
\(\beta^2\ell(S_n)\le\beta^2\|S_n\|_{\rm op}\le\epsilon_dd\).
Since \(\epsilon_d\to0\) by \eqref{eq:cubic-local-epsilon-conditions},
Lemmas~\ref{lem:cubic-regular-prefix-estimates}
and~\ref{lem:cubic-regular-row-expansion} apply with \(R=1\) and
\(C_\star=2\).  For prefixes satisfying
\(\beta^2\ell(S_m)\le c_0d\), applying
\eqref{eq:cubic-sixth-row-remainder} at
\((u,c)=(\beta^2/(1-\eta),1-\eta)\) gives
\[
\begin{aligned}
 (1-\eta)\log Z_m\!\left(\frac{\beta^2}{1-\eta},1-\eta\right)
 &\le\frac{\beta^4}{2(1-\eta)}A_2(1-\eta)
    +\frac{\beta^8}{24(1-\eta)^3}\Gamma_{4,m}(1-\eta)\\
 &\quad+\frac{C\beta^{12}}{N^6}
       (\|S_m\|_{\HS}^6+m^3).
\end{aligned}
\]
By exchangeability, \eqref{eq:cubic-exceptional-row-restoration} bounds
the total \(Q\)-expected absolute contribution of the logarithms and
their quadratic and fourth-order terms on
\(\{\beta^2\ell(S_m)>c_0d\}\) by \(o(1)\).
Taking \(Q\)-expectations, summing over \(m<n\), and using
\(A_2(1-\eta)=(\|S_m\|_{\HS}^2-(1-\eta)m)/N^2\) gives
\[
\begin{aligned}
 &(1-\eta)\sum_{m<n}\mathbb E_Q
       \log Z_m\!\left(\frac{\beta^2}{1-\eta},1-\eta\right)\\
 &\quad\le\frac{\beta^4}{2N^2}\sum_{m<n}
       \mathbb E_Q(\|S_m\|_{\HS}^2-m)
       +\frac{\eta\beta^4}{2(1-\eta)N^2}\sum_{m<n}
       \mathbb E_Q\|S_m\|_{\HS}^2\\
 &\qquad+\frac{\beta^8}{24(1-\eta)^3}\sum_{m<n}
       \mathbb E_Q\Gamma_{4,m}(1-\eta)
       +\frac{C\beta^{12}}{N^6}\sum_{m<n}
       \mathbb E_Q(\|S_m\|_{\HS}^6+m^3)+o(1).
\end{aligned}
\]
We retain the two quadratic sums and bound the last two sums by
\(o(1)\).  For the fourth-order sum, we estimate
\(\beta^8\sum_{m<n}\mathbb E_Q\Gamma_{4,m}(1-\eta)/\{24(1-\eta)^3\}\)
from above.
First consider \(\Gamma_{4,m}(1)\) in
\eqref{eq:cubic-complete-fourth-cumulant}.
The terms \(-2m\lambda_d^2\) and \(-3A_2(1)^2\) are nonpositive.
For the term \(-16\lambda_d^2E_m\),
\eqref{eq:cubic-exact-prefix-bridge} gives
\[
\begin{aligned}
 \left|\frac{\beta^8}{24(1-\eta)^3}
       \sum_{m<n}\mathbb E_Q(-16\lambda_d^2E_m)\right|
 &=\frac{2\beta^8\lambda_d^2(n-2)}{9(1-\eta)^3}
       |\mathbb E_QE_n|\\
 &\le CK_d\frac{\beta^6nd^2}{N^4}
 \le CK_d\frac{\beta^4d^3}{N^3}=o(1),
\end{aligned}
\]
where we used \(|\mathbb E_QE_n|\le K_dd^2/(2\beta^2)\) from
\(Q(\mathcal B_{n,d})=1\),
\(\lambda_d=O(N^{-2})\), \(1-\eta\ge1/2\), and
\(n\beta^2=r_nNd\) with \(r_n\in K\).
For the terms \(24\mathbb E_we_{4,m}(w)\), the constraint
\(Q({\cal V}_{n,d})=1\) gives
\(\beta^8n\mathbb E_Q|T_{4,n}|\le\epsilon_d\).
Applying \eqref{eq:cubic-fourth-prefix-identity}, we obtain
\begin{equation}
 \frac{\beta^8}{(1-\eta)^3}
 \left|\sum_{m<n}\mathbb E_Q\mathbb E_we_{4,m}(w)\right|
 \le \frac{\beta^8n}{5(1-\eta)^3}\mathbb E_Q|T_{4,n}|
 \le \frac{\epsilon_d}{5(1-\eta)^3}=o(1).
 \label{eq:cubic-terminal-fourth-error}
\end{equation}
To replace \(\Gamma_{4,m}(1)\) by \(\Gamma_{4,m}(1-\eta)\), use
\eqref{eq:cubic-compensator-cumulant} with \(c=1-\eta\).
Together with \eqref{eq:cubic-parameterized-Hilbert-bridge} for \(R=1\),
it gives
\[
 \frac{\beta^8}{24(1-\eta)^3}\sum_{m<n}
    \mathbb E_Q\Gamma_{4,m}(1-\eta)
 \le\frac{\beta^8}{24(1-\eta)^3}\sum_{m<n}
    \mathbb E_Q\Gamma_{4,m}(1)
    +C\eta\frac{\beta^8n^3}{N^4}\le o(1).
\]
For the sixth-order remainders in \eqref{eq:cubic-sixth-row-remainder},
\eqref{eq:cubic-parameterized-Hilbert-bridge} with \(R=1\) gives
\begin{equation}
  \frac{C\beta^{12}}{N^6}\sum_{m<n}
  \mathbb E_Q(\|S_m\|_{\HS}^6+m^3)
  \le C\frac{\beta^{12}n^4}{N^6}
  =O(\beta^4d^4/N^2)=o(1).
  \label{eq:cubic-higher-row-ledger}
\end{equation}

Substituting these estimates into the bound for
\((1-\eta)\sum_{m<n}\mathbb E_Q
\log Z_m(\beta^2/(1-\eta),1-\eta)\) gives
\begin{equation}
 \sum_{m<n}\mathbb E_Q\bigg[
  (1-\eta)\log Z_m\!\left(\frac{\beta^2}{1-\eta},1-\eta\right)
  -\frac{\beta^4}{2N^2}(\|S_m\|_{\HS}^2-m)
  -\frac{\eta\beta^4}{2(1-\eta)N^2}\|S_m\|_{\HS}^2
  \bigg]\le o(1).
  \label{eq:cubic-sharp-row-ledger}
\end{equation}
Substituting \eqref{eq:cubic-sharp-row-ledger} into
\eqref{eq:cubic-shared-entropy-sum} gives
\begin{align}
  \mathbb E_QF_n^{(2)}(\beta)-D(Q\Vert P_0^{\otimes n})
  \le{}&
  \frac{\beta^4}{2N^2}\sum_{m<n}
     \mathbb E_Q(\|S_m\|_{\HS}^2-m)
     -\eta D(Q\Vert P_0^{\otimes n}) \notag\\
  &+\frac{\eta \beta^4}{2(1-\eta) N^2}
     \sum_{m<n}\mathbb E_Q\|S_m\|_{\HS}^2+o(1).
  \label{eq:cubic-entropy-reserved-ledger}
\end{align}

\emph{Step 3: the linear term and the leading constant.}
By \eqref{eq:cubic-prefix-linear-identity}, the first sum in
\eqref{eq:cubic-entropy-reserved-ledger} equals
\(b_n\mathbb E_QE_n=\eta\beta^2\mathbb E_QE_n\).
To control it together with \(-\eta D(Q\Vert P_0^{\otimes n})\),
the Gibbs variational formula \eqref{eq:cubic-restricted-variational}
reduces the problem to estimating
\[
  \Xi_{n,d}(\beta^2):=
  \log\mathbb E[\1_{{\cal E}^{\rm term}_{n,d}}e^{\beta^2E_n}].
\]
As in \eqref{eq:cubic-restricted-gibbs}, it suffices to consider
exchangeable \(Q\) supported on \({\cal E}^{\rm term}_{n,d}\) with
finite relative entropy.  Since
\(E_n=\sum_{m<n}f_m(w_{m+1})\), applying
\eqref{eq:cubic-linear-entropy-duality} to each conditional law
with \(H(w)=\beta^2f_m(w)\) and using the entropy chain rule gives
\[
 \beta^2\mathbb E_QE_n-D(Q\Vert P_0^{\otimes n})
 \le\sum_{m<n}\mathbb E_Q\log Z_m(\beta^2,0).
\]
Apply Lemma~\ref{lem:cubic-regular-row-expansion} at
\((u,c)=(\beta^2,0)\), with the full-sample hypotheses verified in
Step 2.  Combining \eqref{eq:cubic-local-fourth-moment} with the remainder
bound \eqref{eq:cubic-sixth-row-remainder} and the exceptional-prefix
estimate \eqref{eq:cubic-exceptional-row-restoration} gives
\begin{align*}
 \sum_{m<n}\mathbb E_Q\log Z_m(\beta^2,0)
 &\le\frac{\beta^4}{2N^2}\sum_{m<n}\mathbb E_Q\|S_m\|_{\HS}^2
    +\frac{C\beta^8}{N^4}\sum_{m<n}\mathbb E_Q\|S_m\|_{\HS}^4\\
 &\quad+\frac{C\beta^{12}}{N^6}\sum_{m<n}
       \big(\mathbb E_Q\|S_m\|_{\HS}^6+m^3\big)
    +C(\beta^2n^2+\beta^4n^3+\beta^8n^5)e^{-cd/\beta^2}.
\end{align*}
We estimate the quadratic sum using
\eqref{eq:cubic-prefix-diagonal-identity} and
\(Q(\mathcal B_{n,d})=1\), which give
\[
\begin{aligned}
 \frac{\beta^4}{2N^2}\sum_{m<n}\mathbb E_Q\|S_m\|_{\HS}^2
 &=\frac{\beta^4n(n-1)}{4N^2}
   +\frac{\beta^4(n-2)}{6N^2}\mathbb E_Q(\|S_n\|_{\HS}^2-n)\\
 &\le\frac{\beta^4n(n-1)}{4N^2}
   +\frac{K_d\beta^2(n-2)d^2}{6N^2},
\end{aligned}
\]
where \(\beta^2(n-2)d^2/N^2\le r_nd^3/N=O(1)\).
For the fourth- and sixth-order sums,
\eqref{eq:cubic-parameterized-Hilbert-bridge} with \(R=1\) gives
\[
 \frac{C\beta^8}{N^4}\sum_{m<n}\mathbb E_Q\|S_m\|_{\HS}^4
 +\frac{C\beta^{12}}{N^6}\sum_{m<n}
     \big(\mathbb E_Q\|S_m\|_{\HS}^6+m^3\big)
 \le\frac{C\beta^8n^3}{N^4}+\frac{C\beta^{12}n^4}{N^6}
 =O(1).
\]
The term \(C(\beta^2n^2+\beta^4n^3+\beta^8n^5)e^{-cd/\beta^2}\)
is \(o(1)\) by \eqref{eq:cubic-exceptional-row-restoration}.
Taking the supremum over \(Q\) and using
\(K_d=o\{(\log d)^{1/8}\}=o(d^2)\) yields
\begin{equation}
 \Xi_{n,d}(\beta^2)
 \le\frac{\beta^4n(n-1)}{4N^2}+o(d^2).
 \label{eq:cubic-restricted-linear-partition}
\end{equation}

Returning to \(b_n\mathbb E_QE_n-\eta D(Q\Vert P_0^{\otimes n})\),
use \(b_n=\beta^2\eta\) and \eqref{eq:cubic-restricted-variational} to obtain
\begin{equation}
  b_n\mathbb E_QE_n-\eta D(Q\Vert P_0^{\otimes n})
  \le\eta\Xi_{n,d}(\beta^2)
  \le\frac{\eta \beta^4n(n-1)}{4N^2}+o(1).
  \label{eq:cubic-linear-entropy-absorption}
\end{equation}
Here \(\eta=O(d/N)=O(d^{-2})\), so the \(o(d^2)\) error in
\eqref{eq:cubic-restricted-linear-partition} becomes \(o(1)\).
It remains to evaluate the last sum in
\eqref{eq:cubic-entropy-reserved-ledger}.
By \eqref{eq:cubic-prefix-diagonal-identity},
\[
 \frac{\eta\beta^4}{2(1-\eta)N^2}
       \sum_{m<n}\mathbb E_Q\|S_m\|_{\HS}^2
 =\frac{\eta\beta^4n(n-1)}{4(1-\eta)N^2}
  +\frac{\eta\beta^4(n-2)}{6(1-\eta)N^2}
       \mathbb E_Q(\|S_n\|_{\HS}^2-n).
\]
The second term is \(o(1)\), since \(Q(\mathcal B_{n,d})=1\) gives
\[
 \left|\frac{\eta\beta^4(n-2)}{6(1-\eta) N^2}
       \mathbb E_Q(\|S_n\|_{\HS}^2-n)\right|
 \le\frac{C\eta\beta^2 nK_dd^2}{N^2}
 \le\frac{CK_dd^4}{N^2}=o(1).
\]
Substituting this evaluation and
\eqref{eq:cubic-linear-entropy-absorption} into
\eqref{eq:cubic-entropy-reserved-ledger} gives, uniformly over the laws
\(Q\) in \eqref{eq:cubic-restricted-gibbs},
\begin{align*}
  \mathbb E_QF_n^{(2)}(\beta)-D(Q\Vert P_0^{\otimes n})
  &\le
  \frac{\eta \beta^4n(n-1)}{4N^2}
  +\frac{\eta \beta^4n(n-1)}{4(1-\eta) N^2}+o(1)\\
  &=\frac{\beta^6n(n-1)(n-2)}{12N^4}(1+(1-\eta)^{-1})+o(1)\\
  &=\binom n3\frac{\beta^6}{N^4}+o(1).
\end{align*}
Here \(\eta=\beta^2(n-2)/(3N^2)=o(1)\) and
\(n^3\beta^6/N^4=r_n^3d^3/N=O(1)\).
Substituting this bound into \eqref{eq:cubic-restricted-gibbs} and
exponentiating bounds the restricted integral by
\(\exp\{\binom n3\beta^6/N^4+o(1)\}\).
The \(o(1)\) difference established in Step 1 can be absorbed into
this bound, since \(\exp\{\binom n3\beta^6/N^4\}\ge1\).
This proves \eqref{eq:cubic-centered-simple-edge-bound}.
\end{proof}

\begin{proof}[Proof of Proposition~\ref{prop:cubic-raw-local-partition}]
Set \(\underline c=\min K>0\).  The sampling condition in
Lemma~\ref{lem:cubic-hereditary-square-profile} holds because
\(n/(Nd)=r_n/\beta^2\ge\underline c\).
Take \(\mathcal G_n^\dagger=\mathcal G_{n,M_\star}^\dagger\) from that
lemma.  Its construction gives the inclusion
\(\mathcal G_n^\dagger\subseteq\mathcal G_{n,M_\star}\), the stated invariances, and
\(\rho_n(\mathcal G_n^\dagger)=1-o(d^{-M_\star})\), without using \(\beta\).
With \(J=[n]\), \eqref{eq:cubic-hereditary-covariance-profile} gives
\[
 \left\|\sum_{i=1}^nq_{3,d}(X_i)q_{3,d}(X_i)^{\mathsf T}\right\|_{\rm op}
 \le C\frac{n}{N}.
\]
The polynomial formula \eqref{eq:cubic-zonal-polynomial} and
\eqref{eq:cubic-raw-pair-separation} give
\[
 \max_{i\ne j}|Z_{3,d}(X_i,X_j)|
 \le C\left\{\left(\frac{\log d}{\sqrt d}\right)^3
             +\frac{\log d}{d^{3/2}}\right\}
 \le C\frac{\log^3 d}{\sqrt N}.
\]
It remains to prove \eqref{eq:cubic-raw-local-partition}.
Under the pairing of coordinates, \(P_0^{\otimes n}\) is
\(\rho_n\otimes\rho_n\), and
\(\1_{\mathcal E_n^{\rm reg}}
 =\1_{\mathcal G_n^\dagger}(X)\1_{\mathcal G_n^\dagger}(X')\).
Choose \(\delta>0\) in the range of
Corollary~\ref{cor:cubic-local-central-reduction} and
\(K_d\uparrow\infty\) with \(K_d=o\{(\log d)^{1/8}\}\).
Combining \eqref{eq:cubic-local-central-reduction} with
Lemma~\ref{lem:cubic-centered-simple-edge} gives
\begin{align*}
 \mathbb E\big[\1_{\mathcal E_n^{\rm reg}}W_{n,\beta}
                    \1_{\{s_n<\delta\}}\big]
 &\le\mathbb E\big[\1_{\mathcal E_n^{\rm reg}}W_{n,\beta}
                    \1_{\{\lvert\|S_n\|_{\HS}^2-n\rvert\le K_dd^2/\beta^2\}}\big]+o(1)\\
 &\le\exp\left\{\binom n3\frac{\beta^6}{N^4}+o(1)\right\}+o(1)
 \longrightarrow e^{r^3},
\end{align*}
because \(\binom n3\beta^6/N^4=(1+o(1))r_n^3d^3/(6N)\to r^3\).
This proves \eqref{eq:cubic-raw-local-partition}.
Uniform boundedness on \(K\) also follows by taking a convergent
subsequence of \(r_n\) if a uniform bound were to fail.
\end{proof}


\section*{Statements and Declarations}
\paragraph{Author contributions.}
All authors reviewed the manuscript.

\paragraph{Competing interests.}
The authors declare no competing interests.

\paragraph{Data availability.}
No empirical datasets were generated or analysed in this theoretical study.

\paragraph{Use of AI-assisted tools.}
During preparation of this manuscript, the authors used OpenAI tools for
assistance with mathematical reasoning and proof checking, exposition,
language editing, and LaTeX preparation. The authors are responsible for
the mathematical claims, proofs, references, and final text.

\bibliographystyle{alpha}
\begingroup\emergencystretch=2em\bibliography{references}\endgroup
\end{document}